\newtheorem{thm}{Theorem}[section]
\newtheorem{lem}[thm]{Lemma}
\newtheorem{prp}[thm]{Proposition}
\newtheorem{cor}[thm]{Corollary}
\newtheoremstyle{roman} 
    {8.0pt plus 2.0pt minus 4.0pt}                    
    {8.0pt plus 2.0pt minus 4.0pt}                    
    {\normalfont}                
    {}                           
    {\bfseries}                  
    {.}                          
    {5pt plus 1pt minus 1pt}     
    {}  
\theoremstyle{roman}
\newtheorem{notation}[thm]{Notation}
\newtheorem{example}[thm]{Example}
\newtheorem{remark}[thm]{Remark}
\newtheorem{reduction}[thm]{Reduction}
\theoremstyle{plain}
\newcommand{\Step}[1]{\noindent {\it Step #1.}} 
\newcommand{\rem}[1]{}
\newcommand{\gap}[1]{{\color{blue} #1}}
\newcommand{\N}{\mathbb{N}}
\newcommand{\Z}{\mathbb{Z}}
\newcommand{\HH}{{\mathrm{H}}}
\newcommand{\frakm}{{\mathfrak{m}}}
\newcommand{\frakp}{{\mathfrak{p}}}
\newcommand{\frakq}{{\mathfrak{q}}}
\newcommand{\calP}{{\mathcal{P}}}
\newcommand{\calR}{{\mathcal{R}}}
\newcommand{\calS}{{\mathcal{S}}}
\newcommand{\bfB}{{\mathbf{B}}}
\newcommand{\bfG}{{\mathbf{G}}}
\newcommand{\bfK}{{\mathbf{K}}}
\newcommand{\bfU}{{\mathbf{U}}}
\newcommand{\bfV}{{\mathbf{V}}}
\newcommand{\bfX}{{\mathbf{X}}}
\newcommand{\veps}{\varepsilon}
\newcommand{\vphi}{\varphi}
\newcommand{\idealof}{\unlhd} 
\newcommand{\suchthat}{\,:\,}
\newcommand{\where}{\,|\,}
\newcommand{\quo}[1]{\overline{#1}}
\newcommand{\Trings}[1]{\left< #1 \right>}
\newcommand{\trings}[1]{\langle {#1} \rangle}
\DeclareMathOperator{\ann}{ann}
\DeclareMathOperator{\Aut}{Aut} %
\DeclareMathOperator{\Br}{Br} %
\DeclareMathOperator{\Cent}{Cent} %
\DeclareMathOperator{\disc}{disc} %
\DeclareMathOperator{\End}{End} %
\DeclareMathOperator{\Hom}{Hom} %
\DeclareMathOperator{\id}{id} %
\DeclareMathOperator{\im}{im} %
\DeclareMathOperator{\ind}{ind} %
\DeclareMathOperator{\Int}{Int} %
\DeclareMathOperator{\Jac}{Jac} %
\DeclareMathOperator{\lcm}{lcm}
\DeclareMathOperator{\Max}{Max}
\DeclareMathOperator{\Nr}{Nr} %
\DeclareMathOperator{\Nrd}{Nrd} %
\newcommand{\op}{\mathrm{op}} %
\DeclareMathOperator{\rank}{rank}
\DeclareMathOperator{\Spec}{Spec} %
\DeclareMathOperator{\Sym}{Sym} %
\DeclareMathOperator{\Tr}{Tr} %
\DeclareMathOperator{\Trd}{Trd} %
\newcommand{\nGL}[2]{\mathrm{GL}_{#2}({#1})}
\newcommand{\nMat}[2]{\mathrm{M}_{#2}(#1)}
\newcommand{\trans}{{\mathrm{t}}}
\newcommand{\uGL}{{\mathbf{GL}}}
\newcommand{\uPGL}{{\mathbf{PGL}}}
\newcommand{\uSL}{{\mathbf{SL}}}
\newcommand{\uO}{{\mathbf{O}}}
\newcommand{\uSO}{{\mathbf{SO}}}
\newcommand{\uSp}{{\mathbf{Sp}}}
\newcommand{\uU}{{\mathbf{U}}}
\newcommand{\umu}{{\boldsymbol{\mu}}}
\newcommand{\Zar}{\mathrm{Zar}}
\newcommand{\et}{\mathrm{\acute{e}t}}
\newcommand{\fppf}{\mathrm{fppf}}
\newcommand{\fpqc}{\mathrm{fpqc}}
\newcommand{\units}[1]{{#1^\times}}
\numberwithin{equation}{section} 
\renewcommand{\rank}{\operatorname{rk}}
\newcommand{\Herm}[2][]{\mathcal{H}^{#1}(#2)}     
\newcommand{\rrk}{\operatorname{rrk}}         
\newcommand{\Hyp}[2][]{\mathds{h}^{#1}_{#2}}      
\newcommand{\Lag}{{\mathrm{Lag}}}
\newcommand{\uLag}{{\mathbf{Lag}}}
\newcommand{\rproj}[1]{{\calP({#1})}}              
\newcommand{\SMatII}[4]{\left[\begin{array}{cc} {#1} & {#2} \\ {#3} &
{#4} \end{array}\right]}
\newcommand{\smallSMatII}[4]{\left[\begin{smallmatrix} {#1} & {#2} \\ {#3} &
{#4} \end{smallmatrix}\right]}
\newcommand{\Aff}{{\mathcal{A}f\!f}}  
\renewcommand{\Sym}{{\mathcal{S}}}    
\renewcommand{\Cent}{{\mathrm{Z}}}    
\title[An Exact Sequence of Witt Groups]{An $8$-Periodic
Exact Sequence of Witt Groups of Azumaya Algebras with Involution}
\author{Uriya A.\ First$^*$}
\date{\today}
\address{$^*$Department of Mathematics, University of Haifa}
\email{uriya.first@gmail.com}
\keywords{
Witt group, 
hermitian form, 
separable algebra, 
Azumaya algebra, 
involution,
Brauer group, 
classical algebraic group,
isometry group, 
reductive group scheme, 
homogeneous space, 
purity, 
discriminant%
}
\subjclass[2020]{
11E81, 
16H05, 
19G12. 
}
\begin{document}

\maketitle

\begin{abstract}
Given an Azumaya algebra with involution $(A,\sigma)$ over a commutative ring
$R$ and some auxiliary data, we 
construct an $8$-periodic chain complex
involving the Witt groups of $(A,\sigma)$ and other algebras with involution, and prove 
it is exact when $R$ is semilocal.
When $R$ is a field, this recovers an  $8$-periodic exact sequence
of Witt groups of  Grenier-Boley and Mahmoudi, which in turn generalizes exact
sequences of Parimala--Sridharan--Suresh
and Lewis.
We  apply this result in several ways: We establish the Grothendieck--Serre
conjecture on principal homogeneous bundles and the local purity
conjecture
for certain outer forms of $\uGL_n$ and $\uSp_{2n}$, provided some assumptions on $R$.
We show that a $1$-hermitian form over a quadratic \'etale or quaternion Azumaya
algebra over a semilocal ring $R$ is isotropic if and only if its trace
(a quadratic form over
$R$)
is isotropic, generalizing a result of Jacobson. 	
We also apply it to characterize the kernel of the restriction
map $W(R)\to W(S)$ when $R$ is a (non-semilocal) $2$-dimensional regular domain
and $S$ is a quadratic \'etale $R$-algebra, generalizing a theorem of Pfister.
In the process, we establish  many fundamental results concerning   Azumaya algebras with involution
and hermitian forms over them.
\end{abstract}

\section*{Introduction}

	Central simple algebras with involution over fields,
	in the sense of \cite[\S2]{Knus_1998_book_of_involutions},
	play a major role in the study of classical algebraic groups.
	Indeed, all forms of $\uGL_n$, $\uO_n$  and $\uSp_{2n}$ arise as
	the algebraic groups of unitary elements in a central simple algebra with involution.
	
	When the base field is replaced with a (commutative) ring
	$R$ (always with $2\in\units{R}$), the role of central simple algebras with involution
	is played by   Azumaya  algebras with involution. These
	are   the locally free $R$-algebras with $R$-involution $(A,\sigma)$
	which specialize to a central simple algebra with involution at the residue 
	field of every prime $\frakp\in\Spec R$. 
	
	The Witt group of $\veps$-hermitian forms over $(A,\sigma)$, 
	denoted $W_\veps(A,\sigma)$, is an important invariant of $(A,\sigma)$,
	capturing fine arithmetic properties. For example,
	when $A$ is a field $F$ 
	and $\sigma=\id_F$, the affirmation of the
	quadratic form version of Milnor's conjecture by 
	Orlov, Vishik and Voevodsky \cite{Orlov_2007_Minlor_conj_quad_forms} shows
	that the cohomology groups $\HH^i_{\et}(F,\umu_{2,F})$ can be recovered
	from $W(F):=W_1(F,\id_F)$; this was recently generalized to the case
	where $A $ is a semilocal commutative ring by Jacobson \cite{Jacobson_2018_cohomological_invariants}.
	
	In this paper, we introduce an $8$-periodic chain
	complex involving the Witt group of $(A,\sigma)$ ---
	an Azumaya algebra with involution over $R$ ---
	and prove it is exact when $R$ is semilocal.
	Several applications of the exactness are then presented.

\subsection*{The Main Result}

	Let $R$ be a   commutative  ring with $2\in\units{R}$,
	let $(A,\sigma)$ be an Azumaya $R$-algebra with involution
	($\sigma$ is applied exponentially)
	and    let $\veps\in A$
	be a central element such that $\veps^\sigma\veps=1$.
	Let $\lambda,\mu\in \units{A}$ be elements
	satisfying  $\lambda^\sigma=-\lambda$, $\mu^\sigma=-\mu$, $\lambda\mu=-\mu\lambda$
	and $\lambda^2\in R$.
	Then  the centralizer of $\lambda$ in $A$, denoted
	$B$, is Azumaya     over its center
	and $\tau_1:=\sigma|_B$ and $\tau_2:=\Int(\mu^{-1})\circ \tau_1$
	are involutions of $B$.
    We construct an  octagon, i.e.\ an $8$-periodic  chain complex,   
	of Witt groups:
	\begin{equation}\label{EQ:into-oct}
    \xymatrix{
    W_\veps(A,\sigma) \ar[r]^{\pi_1^{(\veps)}} & 
    W_\veps(B,\tau_1) \ar[r]^{\rho_1^{(\veps)}} & 
    W_{-\veps}(A,\sigma) \ar[r]^{\pi_2^{(-\veps)}} & 
    W_{\veps}(B,\tau_2) \ar[d]^{\rho_2^{(\veps)}}\\
    W_{-\veps}(B,\tau_2) \ar[u]^{\rho_2^{(-\veps)}} & 
    W_{\veps}(A,\sigma)  \ar[l]^{\pi_2^{(\veps)}} & 
    W_{-\veps}(B,\tau_1) \ar[l]^{\rho_1^{(-\veps)}} & 
    W_{-\veps}(A,\sigma) \ar[l]^{\pi_1^{(-\veps)}} 
    }
    \end{equation}
    Its maps  are induced by
    functors between the relevant categories of hermitian
    forms; their definition, which depends on $\lambda$ and $\mu$, 
    is given in \ref{subsec:octagon} below.
    
	The main result of this paper (Theorem~\ref{TH:exactness}) 
	asserts that the octagon \eqref{EQ:into-oct}
	is exact when $R$ is semilocal.    
    
\medskip

	When $R$ is a field,   \eqref{EQ:into-oct} is isomorphic
    to an   octagon of Witt groups introduced by Grenier-Boley and 
    Mahmoudi \cite[\S6]{Grenier_2005_octagon_of_Witt_grps}, who also proved
    it is exact.\footnote{
		The term $W_{-\veps}(B,\tau_1)$
		on the bottom row of \eqref{EQ:into-oct}
		and the maps adjacent to it
		differ from their counterparts in \cite[p.~980]{Grenier_2005_octagon_of_Witt_grps}. 
		However, the octagons become  
		the same once identifying the term $W_{-\veps}(B,\tau_1)$ on the bottom
		row of \eqref{EQ:into-oct}
		with the corresponding term $W_{\veps}(B,\tau_1)$ in 
		{\it op.\ cit.}
		via \emph{$\lambda$-conjugation} (``scaling by $\lambda$'') 
		in the sense of \ref{subsec:conjugation} below. 
    } 
    Many special cases of the latter result were known previously.
    For example, 
    Parimala, Sridharan and Suresh \cite[Appendix]{Bayer_1995_Serre_conj_II}
    established the exactness of the top row
    of \eqref{EQ:into-oct} when $R$ is a field.
    Furthermore, the $5$-term and $7$-term exact sequences of Witt groups
    that Lewis \cite{Lewis_1982_improved_exact_sequences} associates
    to quadratic field extensions and quaternion division algebras, respectively,
    can be recovered from \eqref{EQ:into-oct} when $R$ is a field.
    Predating Lewis, 
    Baeza  \cite[Korollar~2.9]{Baeza_1974_torsion_of_Witt_group_semilocal}
    and
    Mandelberg \cite[Proposition~2.1]{Mandelberg_1975_classification_of_quad_forms_semilocal_ring} 
    established the exactness of Lewis' $5$-term   sequence at two  places
    when $R$ is semilocal; 
	we extend these works in \ref{subsec:applications:low-degree}, showing that both
	of  Lewis' sequences  
	remain exact when base ring is semilocal.

	When $R$ is a general, the octagon  \eqref{EQ:into-oct} seem related to the 
	octagon of $L$-groups
	considered by Ranicki in \cite[Remark~22.22]{Ranicki_1992_algebraic_L_theory}.
	Other octagons involving Witt groups  of central simple algebras with involution
	appear in \cite{Lewis_1983_Witt_groups_equivariant_forms} and
	\cite{Lewis_1985_periodicity_of_Clifford_algebras}.
	
\medskip

	In the process of  proving the exactness of   \eqref{EQ:into-oct} when 
	$R$ is semilocal,
	we give necessary and sufficient conditions for a hermitian space
	to be in the image 
	of the functors $\pi_1^{(\pm \veps)},\pi_2^{(\pm \veps)},\rho_1^{(\pm \veps)}, \rho_2^{(\pm \veps)}$ 
	(the exactness of the octagon answers this only up to Witt equivalence),
	see Theorem~\ref{TH:finer-exactness}.
	These conditions, which seem novel even when $R$ is a field,
	involve the Brauer classes of $A$ and $B$  and the discriminant of the hermitian space at hand; 
	they are needed  for some of the applications.
	For example, 
	given a unimodular $(-\veps)$-hermitian space $(P,f)$ over $(A,\sigma)$,
	we show that
	there exists a unimodular $ \veps $-hermitian space $(Q,g)$ over $(B,\tau_1)$
	such that $\rho_1^{(\veps)}(Q,g)\cong (P,f)$ if and only if
	$\pi_2(P,f)$ is hyperbolic and at least one of the following hold: 
	(1)~$(\tau_2, \veps)$ is not orthogonal (see~\ref{subsec:Az-alg-inv}), 
	(2)~the
	Brauer class of $B$ is nontrivial, 
	(3)~$(\tau_2, \veps)$ is   orthogonal, $n:=\frac{\rank_R P}{\deg A}$ is even
	and the discriminant of $f$ (see~\ref{subsec:disc}) equals $ \lambda^n (\units{R})^2$.
	We further we show that any \emph{anistropic} hermitian space whose Witt class
	lives in the kernel of some map in \eqref{EQ:into-oct} is the image of a hermitian space
	under the functor corresponding to the preceding map in
	\eqref{EQ:into-oct}, see Corollary~\ref{CR:exactness-anisotropic}.

\medskip
	
	While proving that the octagon \eqref{EQ:into-oct} is exact
	when $R$ is a field takes only several pages, showing
	the exactness when $R$ is semilocal is significantly more involved;
	the proof occupies most of this paper, and is surveyed in \ref{subsec:proof-overview}.
	One reason why the field case is   simpler
	is the fact that when $R$ is a field, 
	every Witt class contains a representative with no isotropic vectors,
	which allows for a short clean proof; see
	Remark~\ref{RM:Es-for-field}. 
	In contrast,
	the proof when $R$ is semilocal  relies on
	two ingredients: careful  analysis of the image of the functors $\pi_*^{(\pm \veps)}$
	and $\rho_*^{(\pm\veps)}$ when $R$ is a field,
	and lifting of  information from the residue fields of $R$ to $R$ itself, usually using results
	from \cite{First_2020_orthogonal_group}. 
	The complexity of the former ingredient manifests
	in the length of Theorem~\ref{TH:finer-exactness}, which describes the images
	of 	$\pi_*^{(\pm \veps)}$ and $\rho_*^{(\pm\veps)}$ when $R$ is semilocal.
	
\medskip

	We do not know if \eqref{EQ:octagon}
	remains exact when $R$ is not assumed to be semilocal.
	However, Theorem~\ref{TH:kernel-of-rest-quads} below 
	(see also the proof of Corollary~\ref{CR:exactness-for-quad-etale})	
	can be regarded
	as a positive partial result when $R$ is a regular $2$-dimensional domain.
	We further note that
	if the Witt group $W_\veps(A,\sigma)$
	is replaced by the Zariski sheaf
	associated to the presheaf $U\mapsto W_\veps(A_{U},\sigma_{U})$
	on $\Spec R$, then \eqref{EQ:octagon} becomes an exact sequence of sheaves.
	Indeed, it is exact   at the  stalks.

	The octagon \eqref{EQ:into-oct} also seems to be related with Bott periodicity.
	Clarifying  this connection seems an interesting problem,
	which may lead to new insights.

\subsection*{Applications}

	A celebrated
	application of
	the well-known exactness
	of 	\eqref{EQ:into-oct} when $R$ is a field is
	Bayer-Fluckiger and Parimala's proof of Serre's Conjecture II for classical groups
	\cite{Bayer_1995_Serre_conj_II}.

	Knowing that \eqref{EQ:into-oct} is exact when $R$ is semilocal
	allows for a new set of applications.
	Here, we use it to establish some open cases of the Grothendieck--Serre
	conjecture  and the  local purity
	conjecture, 
	show that the trace of a $1$-hermitian form over a quadratic \'etale or quaternion
	Azumaya algebra 
	is isotropic if and only if the original form is isotropic,	
	and 
	characterize the kernel of the restriction map $W (R )\to W (S )$
	when $R$ is an arbitrary $2$-dimensional  regular domain and $S$ is a quadratic \'etale 
	$R$-algebra.	
	Further applications appear in \cite{Bayer_2019_Gersten_Witt_complex_prerprint}.

\medskip

	In more detail,  given a  regular local ring $R$   with fraction field $K$,
	Grothendieck 
	\cite[Remark~3, pp.~26-27]{Grothendieck_1958_homological_torsion},
	\cite[Remark~1.11.a]{Grothendieck_68_groupe_de_Brauer_II}
	and Serre 
	\cite[p.~31]{Serre_1958_espaces_fibres_algebriques}
	conjectured that for every reductive (connected) group
	$R$-scheme $\bfG$, the kernel of the restriction map 
	\[\HH^1_{\et}(R,\bfG)\to \HH^1_{\et}(K,\bfG) \]
	is trivial.
	Under the same assumptions,
	the  local purity conjecture  predicts that
	\[
	\im \left(\HH^1_{\et}(R,\bfG)\to \HH^1_{\et}(K,\bfG)\right)=
	{\textstyle\bigcap_{\frakp\in R^{(1)}}} \left(\HH^1_{\et}(R_\frakp,\bfG)\to \HH^1_{\et}(K,\bfG)\right),
	\]
	where $R^{(1)}$ is the set of height-$1$ primes  of   $R$;
	we then say that purity holds for $\bfG$.
	In fact, both conjectures are  believed to hold under
	the milder assumption that $R$ is a regular \emph{semilocal} domain,
	which we assume through the following paragraphs.
	

	The Grothendieck--Serre   conjecture  was addressed by numerous authors
	and is known to hold in many cases.
	Most notably, Nisnevich \cite{Nisnevich_1984_rationally_triv_PHSs}
	proved
	the conjecture when $R$ is a discrete
	valuation ring, Guo 
	\cite{Guo_2019_Groth_Serre_conj_semilocal_Dedekind}
	established the case where
	$R$ is a semilocal Dedekind domain,
	and
	Fedorov--Panin \cite{Fedorov_2015_Grothendieck_Serre_conj}
	and Panin \cite{Panin_2020_Grothendieck_Serre_R_contains_finite_fld}
	proved the conjecture when
	$R$ contains a field.
	Many positive results for particular groups $\bfG$ 
	are known as well, see  \cite[\S5]{Panin_2018_Grothendieck_Serre_conj_survey} for
	a survey.

	The local purity conjecture is also known to hold in many cases:
	Colloit-Th\'el\`ene and Sansuc showed
	that it holds for all reductive group schemes
	when $\dim R\leq 2$, even without assuming that $R$ is semilocal,
	see \cite[Corollary~6.14]{Colliot_1979_quadratic_fiberations}.
	When $R$ is a regular  local ring containing a field $k$ of characteristic $0$, purity
	was established for $\uO_n$, $\uSO_n$, $\uPGL_n$, $\uSL_{1}(A)$ ($A$ is a central simple
	$k$-algebra), $\uSL_n/\umu_d$ ($d\mid n$) and $\mathbf{Spin}_n$
	in \cite{Panin_2010_purity_conj_reductive_grps}, and
	for groups of type $\mathrm{G}_2$ in \cite{Chernousov_2007_purity_for_G2}.
	In fact, for $\uO_n$, it is enough to assume that $k$ is any
	field of characteristic not $2$,
	see Scully \cite[p.~12]{Scully_Artin_Springer_over_semilocal}
	and also
	Panin--Pimenov \cite[Corollary~3.1]{Panin_2010_rational_isotropy_implies_isotropy}.

\medskip
	
	To relate  the octagon \eqref{EQ:into-oct} to the Grothendieck--Serre conjecture, let $(A,\sigma)$
	be a degree-$n$ Azumaya $R$-algebra with involution  and
	let $\uU(A,\sigma)$ denote the group $R$-scheme of unitary
	elements in $(A,\sigma)$. Then $\uU(A,\sigma)$ is a form of $\uGL_n$, $\uO_n$ or $\uSp_n$,
	depending on whether $\sigma$ is unitary, orthogonal or symplectic, respectively.
	We show (Proposition~\ref{PR:GS-equiv-conds}) that if the restriction map 
	\begin{align}\label{EQ:into-rest-W}
	W_1(A,\sigma)\to W_1(A_K,\sigma_K) 
	\end{align}
	is injective, then the Grothendieck--Serre conjecture holds for the neutral component of $\uU(A,\sigma)$
	(and, more generally, for the neutral component of the isometry
	group  scheme of any unimodular $1$-hermitian form over $(A,\sigma)$);
	this is well-known when $A=R$ \cite[Proposition~1.2]{Colliot_1979_quadratic_forms_over_semilocal_rings}.

	In accordance with the Grothendieck--Serre conjecture,
	it is conjectured that \eqref{EQ:into-rest-W} is  
	injective when $R$ is regular semilocal.
	Provided $2\in\units{R}$, 
	this has
	been established by Balmer--Walter \cite[Corollary~10.4]{Balmer_2002_Gersten_Witt_complex}
	(see also Pardon \cite{Pardon_1982_Gersten_conjecture})
	and Balmer--Preeti \cite[p.~3]{Balmer_2005_shifted_Witt_groups_semilocal}
	when $\dim R\leq 4$ and $A=R$,
	and when $R$ is local and contains a field by Gille  \cite[Theorem~7.7]{Gille_2013_coherent_herm_Witt_grps}.
	We use the former result and the exactness
	of \eqref{EQ:into-oct} to establish
	the injectivity of $W_1(A,\sigma)\to W_1(A_K,\sigma_K)$ in when
	$\dim R\leq 4$ and one of the following hold:
	\begin{enumerate}[label=(\arabic*)]
		\item $\sigma$ is unitary and $\ind A=1$;  
		\item $\sigma$ is symplectic and $\ind A\leq 2$;
	\end{enumerate}
	see Theorem~\ref{TH:GS-new}.
	As a result, the Grothendieck--Serre conjectures holds for $\bfU(A,\sigma)$ if (1) or (2)
	holds and $\dim R\leq 4$
	(Corollary~\ref{CR:GS-new}).

	By similar means, we use   \eqref{EQ:into-oct} together  
	with results
	of Gille \cite[Theorem~7.7]{Gille_2013_coherent_herm_Witt_grps} 
	and Scully \cite[Theorem~5.1]{Scully_Artin_Springer_over_semilocal}
	to show that purity holds for $\uU(A,\sigma)$ in cases (1) and (2), provided
	that $R$ is  regular  local and contains a field  of characteristic not $2$
	(Theorem~\ref{TH:purity-new}).
	Here, the exactness of \eqref{EQ:into-oct} is not sufficient, and
	we have to use the finer information provided by  Theorem~\ref{TH:finer-exactness} about 
	the image  of $\pi_*^{(\pm\veps)}$, $\rho_*^{(\pm \veps)}$.
	
\medskip

	Suppose next that $R$ is any semilocal ring and let $(A,\sigma)$
	be a quadratic \'etale $R$-algebra with its standard involution,
	or a degree-$2$ Azumaya $R$-algebra with its (unique) symplectic involution.
	Write $\Tr$ for the trace map from $A$ to $R$.
	If $(P,f)$ is a unimodular $1$-hermitian space over $(A,\sigma)$,
	then $(P,\Tr\circ f)$ is a unimodular $1$-hermitian space over $(R,\id_R)$.
	We show that $\Tr\circ f$ is isotropic if and only if $f$ is isotropic
	(Theorem~\ref{TH:Jacobson-semilocal}).
	When $R$ is a field, this  goes back to Jacobson \cite{Jacobson_1940_note_on_hermitian_forms}
	(see also \cite[Theorems~10.1.1, 10.1.7]{Scharlau_1985_quadratic_and_hermitian_forms}).
	The quick proof in the case $R$ is a field does not apply over rings, see
	Remark~\ref{RM:why-is-Jacobson-difficult}, and we instead appeal
	to our Theorem~\ref{TH:finer-exactness}.

\medskip

	Finally, assume that $R$ is any regular domain%
	, possibly non-semilocal,
	let $S$ be a quadratic \'etale $R$-algebra
	and let $\theta$ be its standard involution  
	(see \ref{subsec:quad-etale}).
	When $S$ is a field, a famous theorem of Pfister \cite[Theorem~I.5.2]{Scharlau_1985_quadratic_and_hermitian_forms}
	states that the kernel of the restriction
	map $W (R )\to W (S )$ is generated by the diagonal quadratic form $\langle 1,-\alpha\rangle$,
	where $S=R[\sqrt{\alpha}]$. Using our main result and 
	Colloit-Th\'el\`ene and Sansuc's purity result
	\cite[Corollary~6.14]{Colliot_1979_quadratic_fiberations},
	we generalize Pfister's theorem to the case where $\dim R\leq 2$, showing that
	the sequence
	\[
	W_1(S,\theta)\xrightarrow{[g] \mapsto [\Tr_{S/R}\circ g]} W (R )\xrightarrow{[f]\mapsto [f_S]}
	W(S )
	\]
	is exact in the middle (Theorem~\ref{TH:kernel-of-rest-quads}).
	This result also applies in the generality of quadratic \'etale coverings of regular
	integral $2$-dimensional schemes.

\subsection*{Additional Results}

	The first two sections of this paper are concerned with generalizing
	many fundamental  results about central simple algebras with
	involution and hermitian forms over them 
	to the context of   Azumaya algebras with involution over semilocal rings.
	For example, 
	letting $(A,\sigma)$ denote an Azumaya algebra with involution
	over a semilocal ring $R$ with $2\in\units{R}$,
	and letting $(P,f)$ be a unimodular $\veps$-hermitian space over $(A,\sigma)$,
	it is shown that:
	\begin{itemize} 
		\item $A$ contains a \emph{full} idempotent
		$e\in A$ with 
		$\deg eAe=\ind A$ (Theorem~\ref{TH:index-description}).
		\item If $\sigma$ is orthogonal or unitary,
		then the idempotent $e$   can be chosen
		to satisfy $e^\sigma=e$ (Theorem~\ref{TH:invariant-primitive-idempotent}).
		\item $(P,f)$ cancels from orthogonal sums (Theorem~\ref{TH:Witt-cancellation});
		this is essentially due to Reiter
		\cite{Reiter_1975_Witts_extension_theorem} and		
		Keller \cite{Keller_1988_cancellation_of_quadratic_forms}.
		\item If the Witt class of $(P,f)$ is $0$, then $(P,f)$ 
		is hyperbolic. If $(P',f')$ is Witt equivalent to $(P,f)$
		and $P\cong P'$, then $(P,f)\cong (P',f')$
		(Theorem~\ref{TH:trivial-in-Witt-ring}).
		\item If $(\sigma,\veps)$ is orthogonal or unitary, 
		then $(P,f)$ is diagonalizable whenever $P$ is a free $A$-module
		(Proposition~\ref{PR:diagonalizable-herm-forms}).
		\item When $\Cent(A)$ is connected, the isometry group of $(P,f)$ acts transitively
		on the set of Lagrangians of $(P,f)$, provided
		it is nonempty (Lemma~\ref{LM:Lag-transitive-action}).
	\end{itemize}
	We note that the first result is false when $R$ is not semilocal, see \cite{Antieu_2014_unramified_division_algebras}.
	The second result is particularly convenient when hermitian Morita theory
	is needed.


\subsection*{Outline}

	Sections~\ref{sec:separable}
	and~\ref{sec:hermitian} are preliminary and recall
	Azumaya algebras with involution and hermitian forms, respectively.
	In Section~\ref{sec:octagon}, we construct the octagon \eqref{EQ:into-oct}, prove it is
	a chain complex,  
	and survey the proof of its exactness when $R$ is semilocal. 
	The proof itself is carried in Sections~\ref{sec:preparation}--\ref{sec:Eii}  and is   concluded
	in Section~\ref{sec:completion-of-proof}.
	Finally, the applications to the Grothendieck--Serre
	conjecture, the local purity
	conjecture and the generalizations of Jacobson and Pfister's theorems are given in Section~\ref{sec:applications}.
	
\subsection*{Acknowledgements}

We are grateful to Eva Bayer-Fluckiger for suggesting us the project at hand.
We further thank Eva Bayer-Fluckiger and Raman Parimala for many
useful conversations and suggestions.
The research was partially conducted at the department of mathematics
at University of British Columbia,
where the author was supported by a post-doctoral fellowship.
We thank Ori Parzanchevski for  encouragement and motivation.

We are also grateful to anonymous referees for   many
useful suggestions which have improved the exposition.

\section*{Notation and Conventions}

	Throughout this paper, a ring means a commutative (unital) ring.
	Algebras are unital and associative, but not necessarily commutative.
	\emph{We assume   that $2$ is invertible in all rings and algebras.}
	
	Unless otherwise indicated, $R$ denotes a   ring.
	Unadorned tensors and $\Hom$-sets are always taken over $R$.
	An $R$-ring means a commutative $R$-algebra.
	Given $\frakp\in \Spec R$, we let $k(\frakp)$ denote
	the fraction field of $R/\frakp$.

	Let $S$ be an $R$-ring.
	Given (right) $R$-modules $M$, $N$ and $f\in\Hom(M,N)$,
	we write $M_S:=M\otimes S$ and $f_S:=f\otimes \id_S\in\Hom_S(M_S,N_S)$.
	When $S=k(\frakp)$ for $\frakp\in \Spec R$,
	we write $M(\frakp)=M_{k(\frakp)}$ and $f(\frakp)=f_{k(\frakp)}$,
	and let $m(\frakp)$
	denote the image  of $m\in M$ in $M(\frakp)$.
	When $S=R_\frakp$, we write
	$M_\frakp=M_{R_\frakp}$ and $f_\frakp=f_{R_\frakp}$.
	
	Let  $M$ be a finite  (i.e.\ finitely
	generated) projective $R$-module.
	The $R$-rank of $M$, denoted $\rank_R M$,
	is   the function $\Spec R\to \Z_{\geq 0}$ sending $\frakp$
	to $\dim_{k(\frakp)} M(\frakp)$; it is locally
	constant relative to the Zariski topology  \cite[Theorem~2.3.5]{Ford_2017_separable_algebras}.
	Thus, when $R$ is connected, we shall freely regard $\rank_RM$ as an integer.
	
	Statements and operations involving locally constant functions from $\Spec R$ to $\Z$
	should be interpreted point-wise. 
	For example, the sum   of two such functions is taken point-wise,
	and relations such as ``$<$'' should be understood as holding after evaluation 
	at every $\frakp\in\Spec R$.
	
	We will   need
	to compare integer-valued functions defined on spectra
	of different rings. 
	To that end, given a ring homomorphism  $\iota:R\to S$ and  
	$f:\Spec R\to \Z$, define $\iota f:\Spec S\to \Z$
	by $(\iota f)(\frakq)=f(\iota^{-1}(\frakq))$.
	For example,  $\iota \rank_R M=\rank_S M_S$.
	In addition,   if $S$ is finite projective
	over $R$ and $N$ is a finite projective $S$-module of rank
	that is constant along the fibers of $\Spec S\to \Spec R$,
	then  
	\begin{align}\label{EQ:rank-change-of-rings}
	\rank_SN\cdot \iota\rank_RS=\iota\rank_RN.
	\end{align}

	Given an $R$-algebra $A$, the units, the center, the Jacobson radical and the opposite algebra of
	$A$ are denoted $\units{A}$, $\Cent(A)$, $\Jac A$ and $A^\op$, respectively.
	We write $\Cent_A(X)$ for the centralizer of a subset $X\subseteq A$ in $A$.
	The category of finite   projective right $A$-modules is denoted $\rproj{A}$.
	If $a\in \units{A}$, then $\Int(a)$ denotes the inner automorphism
	$x\mapsto axa^{-1}:A\to A$. Given  an $R$-ring $S$, and $P,Q\in\rproj{A}$,
	the natural map
	$\Hom_{A}(P,Q)\otimes S\to \Hom_{A_S}(P_S,Q_S)$ is an isomorphism
	\cite[Theorem~1.3.26]{Ford_2017_separable_algebras}%
	, and we shall freely
	identify these $S$-modules.
	
	In situations when an abelian group $M$ can be regarded as a module over multiple $R$-algebras,
	we shall sometimes write $M_A$ to denote ``$M$, viewed as a right $A$-module''.
	In particular, $A_A$ denotes ``$A$, viewed as a right module over itself''.
	Similar notation will be applied to left modules, but with the subscript written
	on the left, e.g., ${}_AA$.

	If $e\in A$ is an idempotent, we shall freely
	identify
	$\End_A(eA)$ with $eAe$, where $eAe$ acts on $eA$ via multiplication on the left.
	We say that $e$ is \emph{full} if $AeA=A$,
	or equivalently, if   $eA$ is a progenerator \cite[\S18B]{Lam_1999_lectures_on_modules_rings}.
	(A right $A$-module
	$M$ is called a progenerator
	if $M$ is finite projective and $A_A$ is isomorphic to a summand of $M^n$ for some $n\in\N$.) 
	The idempotent $e$ is called \emph{primitive} if $e\neq 0$ and $eAe$
	contains no idempotents except $0$ and $e$.
	
	
	An $R$-algebra with involution means a pair $(A,\sigma)$
	consisting of an $R$-algebra $A$ and an
	{\it $R$-linear} involution $\sigma:A\to A$. Involutions
	are applied exponentially to elements of $A$, i.e., $a^\sigma$ stands for $\sigma(a)$.
	Given $\veps\in\Cent(A)$ with
	$\veps^{\sigma}\veps=1$, we let
	$\Sym_\veps(A,\sigma)=\{a\in A\suchthat a=\veps a^\sigma\}$.

\section{Azumaya Algebras With Involution}
\label{sec:separable}

We recall the definition and some properties of Azumaya algebras
with involution, giving particular attention to the case
where the base ring $R$ is semilocal.
When $R$ is a field, all the material can be found in \cite[Chapter~I]{Knus_1998_book_of_involutions}.

\subsection{Separable Projective Algebras}

Recall that an $R$-algebra $A$ is called
separable if $A$ is projective when endowed with the right $A^\op\otimes A$-module
structure  determined by $a\cdot (x^\op\otimes y)=xay$, or equivalently,
if the
right $A^\op\otimes A$-module homomorphism   $x^\op\otimes y\mapsto xy:A^\op\otimes A\to A$ admits 
an $A^\op\otimes A$-linear section.

By definition, the \emph{Azumaya} $R$-algebras are the central separable $R$-algebras,
and the \emph{finite \'etale} $R$-algebras are the finite projective commutative separable  $R$-algebras.
There are many other equivalent definitions, see \cite{Ford_2017_separable_algebras} and \cite[III.\S5]{Knus_1991_quadratic_hermitian_forms}, for instance.

In the sequel, we shall often consider $R$-algebras $A$ such that
$A$ is Azumaya over $\Cent(A)$ and $\Cent(A)$ is finite \'etale over $R$.
The following proposition lists
a few   convenient equivalent characterizations of such algebras,
which we  call   \emph{separable projective} after condition \ref{item:sep-proj:trivial-def}.

\begin{prp}
	Let $A$ be an $R$-algebra.
	The following conditions are equivalent.
	\begin{enumerate}[label=(SP\arabic*)]
	\item \label{item:sep-proj:Az-over-fin-et} $A$ is Azumaya over $\Cent(A)$
	and $\Cent(A)$ is finite \'etale over $R$.
	\item \label{item:sep-proj:trivial-def} $A$ is projective as an $R$-module
	and separable as an $R$-algebra.
	\item \label{item:sep-proj:fibers-def} $A$ is finite projective as an $R$-module 
	and, for all $\frakm\in \Max R$, the $k(\frakm)$-algebra $A(\frakm)$
	is semisimple and its center is a   product
	of separable field extensions of $k(\frakm)$.
\end{enumerate}
\end{prp}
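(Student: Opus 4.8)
The plan is to prove the cycle $\ref{item:sep-proj:trivial-def}\Rightarrow\ref{item:sep-proj:Az-over-fin-et}\Rightarrow\ref{item:sep-proj:fibers-def}\Rightarrow\ref{item:sep-proj:trivial-def}$, relying on standard structure theory of separable algebras (as in \cite{Ford_2017_separable_algebras}) and the fact that separability is insensitive to base change and composes in towers. Throughout I would use freely that an $R$-algebra $A$ that is finite projective as an $R$-module is separable over $R$ if and only if $A(\frakm)$ is a separable $k(\frakm)$-algebra for every $\frakm\in\Max R$ (fiberwise criterion for separability of finite projective algebras), and the analogous fiberwise criterion for a finite projective commutative algebra to be \'etale.

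\textbf{$\ref{item:sep-proj:trivial-def}\Rightarrow\ref{item:sep-proj:Az-over-fin-et}$.} Assume $A$ is $R$-projective and $R$-separable. First, a separable $R$-algebra that is $R$-projective is automatically \emph{finite} projective over $R$: separability gives an $A^\op\otimes A$-linear section of the multiplication map, hence a separability idempotent $e=\sum x_i^\op\otimes y_i$, and the finitely many $y_i$ generate $A$ as a left (in fact two-sided) $R[x_i]$-module; projectivity then upgrades ``finitely generated'' to ``finite projective''. Next, $\Cent(A)$ is a separable $R$-algebra: this is a standard consequence of $A$ being separable over $R$ (the center of a separable algebra is separable over the base, see \cite{Ford_2017_separable_algebras}), and it is $R$-projective as a direct summand of $A$ once we know $A$ is separable over $\Cent(A)$ — or more directly one invokes that $\Cent(A)$ is finite projective over $R$ because it is a separable $R$-subalgebra of the finite projective module $A$. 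So $\Cent(A)$ is finite \'etale over $R$. Finally $A$ is separable over $\Cent(A)$ by transitivity of separability (separable over $R$ $\Rightarrow$ separable over any intermediate commutative separable $R$-subalgebra contained in its center), and it is central over $\Cent(A)$ by definition; hence $A$ is Azumaya over $\Cent(A)$.

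\textbf{$\ref{item:sep-proj:Az-over-fin-et}\Rightarrow\ref{item:sep-proj:fibers-def}$.} Let $C=\Cent(A)$. Since $C$ is finite \'etale over $R$ and $A$ is Azumaya over $C$ (hence finite projective over $C$), $A$ is finite projective over $R$ by transitivity of finite projectivity. Fix $\frakm\in\Max R$ and put $k=k(\frakm)$. Then $C(\frakm)=C\otimes_R k$ is a finite \'etale $k$-algebra, i.e.\ a finite product $\prod_j \ell_j$ of finite separable field extensions $\ell_j/k$, and $A(\frakm)=A\otimes_C C(\frakm)=\prod_j (A\otimes_C \ell_j)$ where each $A\otimes_C\ell_j$ is an Azumaya $\ell_j$-algebra, i.e.\ a central simple $\ell_j$-algebra. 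Thus $A(\frakm)$ is semisimple and its center is $\prod_j \ell_j$, a product of separable field extensions of $k$. This gives \ref{item:sep-proj:fibers-def}.

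\textbf{$\ref{item:sep-proj:fibers-def}\Rightarrow\ref{item:sep-proj:trivial-def}$.} Assume $A$ is finite projective over $R$ and each $A(\frakm)$ is semisimple with center a product of separable field extensions of $k(\frakm)$. Then each $A(\frakm)$ is a finite product of central simple algebras over finite separable extensions of $k(\frakm)$, so $A(\frakm)$ is a separable $k(\frakm)$-algebra. By the fiberwise criterion for separability of finite projective algebras, $A$ is separable over $R$; since it is finite projective over $R$ by hypothesis, \ref{item:sep-proj:trivial-def} holds.

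\textbf{Main obstacle.} The one genuinely delicate point is the step, used in $\ref{item:sep-proj:trivial-def}\Rightarrow\ref{item:sep-proj:Az-over-fin-et}$, that the center $\Cent(A)$ of an $R$-projective separable $R$-algebra is itself finite \'etale over $R$ — i.e.\ that separability of $A/R$ descends to $\Cent(A)/R$ and that $\Cent(A)$ is finite projective over $R$. Over a field this is classical, but over a general base it requires either the structural result on centers of separable algebras or a descent-to-fibers argument combined with constancy of ranks (using that $\rank_R$ of a finite projective module is locally constant, as recalled in the Notation section). I would cite \cite{Ford_2017_separable_algebras} (or \cite[III.\S5]{Knus_1991_quadratic_hermitian_forms}) for this; everything else is a transitivity/base-change bookkeeping exercise.
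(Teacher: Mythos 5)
Your proof is correct and rests on the same standard structure theory the paper invokes (the DeMeyer--Ingraham/Ford results: the center of a separable algebra is separable and the algebra is Azumaya over it, separable projective implies finitely generated, and the fiberwise criterion for separability of finite projective algebras); the only difference is organizational, proving the cycle \ref{item:sep-proj:trivial-def}$\Rightarrow$\ref{item:sep-proj:Az-over-fin-et}$\Rightarrow$\ref{item:sep-proj:fibers-def}$\Rightarrow$\ref{item:sep-proj:trivial-def} rather than the two biconditionals \ref{item:sep-proj:Az-over-fin-et}$\Leftrightarrow$\ref{item:sep-proj:trivial-def} and \ref{item:sep-proj:trivial-def}$\Leftrightarrow$\ref{item:sep-proj:fibers-def}. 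The ``main obstacle'' you flag (that $\Cent(A)$ is finite projective over $R$) is handled exactly as you suggest, via the summand argument of Lemma~\ref{LM:separable-subring-I}.
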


\begin{proof}
	\ref{item:sep-proj:Az-over-fin-et}$\implies$\ref{item:sep-proj:trivial-def} follows from 
	\cite[Theorem~II.3.4(iii), Theorem~II.3.8]{DeMeyer_1971_separable_algebras}.
	\ref{item:sep-proj:trivial-def}$\implies$\ref{item:sep-proj:Az-over-fin-et} follows from
	\cite[Theorem~II.3.8, Lemma~II.3.1]{DeMeyer_1971_separable_algebras}.
	\ref{item:sep-proj:trivial-def}$\implies$\ref{item:sep-proj:fibers-def} 
	follows from \cite[Proposition~II.2.1, Corollary~II.2.4]{DeMeyer_1971_separable_algebras}
	and the fact that \ref{item:sep-proj:trivial-def} continues to hold after base-change.
	\ref{item:sep-proj:fibers-def}$\implies$\ref{item:sep-proj:trivial-def}
	follows from 
	\cite[Theorem~II.7.1, Corollary~II.2.4]{DeMeyer_1971_separable_algebras}.
\end{proof}

We collect several facts about separable projective algebras. 

\begin{lem}[{\cite[Proposition~2.14]{Saltman_1999_lectures_on_div_alg}}]\label{LM:projective-transfer}
	Let $A$ be a separable   
	$R$-algebra and let $M$ be a right 
	$A$-module.
	If $M$ is projective over $R$, then $M$
	is projective over $A$. The  converse
	holds when $A$ is projective over $R$.
\end{lem}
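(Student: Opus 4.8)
The plan is to use a separability element of $A$ to exhibit $M$ as an $A$-module direct summand of $M\otimes_R A$, and then to deduce both implications from the standard fact that direct summands of projective modules are projective.

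First I would record the element-theoretic form of separability: the splitting of the multiplication map $A^{\op}\otimes A\to A$ provided by separability is equivalent to the existence of finitely many $x_i,y_i\in A$ with $\sum_i x_iy_i=1$ and $\sum_i ax_i\otimes y_i=\sum_i x_i\otimes y_ia$ in $A\otimes_R A$ for every $a\in A$. For the first assertion, consider the surjective right $A$-module homomorphism $p\colon M\otimes_R A\to M$, $m\otimes a\mapsto ma$, and define $s\colon M\to M\otimes_R A$ by $s(m)=\sum_i mx_i\otimes y_i$. Applying the $R$-linear map $M\otimes_R A\otimes_R A\to M\otimes_R A$, $m\otimes a\otimes b\mapsto ma\otimes b$, to the identity $m\otimes\bigl(\sum_i ax_i\otimes y_i\bigr)=m\otimes\bigl(\sum_i x_i\otimes y_ia\bigr)$ shows that $s(ma)=s(m)\cdot a$, so $s$ is right $A$-linear; and $p\circ s=\id_M$ since $\sum_i x_iy_i=1$. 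Hence $M$ is a direct summand of $M\otimes_R A$ as a right $A$-module. If in addition $M$ is projective over $R$, pick $N$ with $M\oplus N\cong R^{(I)}$; tensoring over $R$ with $A$ shows that $M\otimes_R A$ is a direct summand of the free right $A$-module $A^{(I)}$, hence projective over $A$, and therefore so is its summand $M$.

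The converse is formal and uses no separability: if $A$ is projective over $R$ and $M$ is projective over $A$, then $M$ is a direct summand of some free $A$-module $A^{(I)}$, which is projective over $R$ because $A$ is; a direct summand of an $R$-projective module is again $R$-projective.

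The only genuine content is the construction of the section $s$ in the first part, so the main (and rather mild) obstacle is checking its $A$-linearity, which is precisely the defining relation of the separability element; note that no finiteness hypothesis on $M$ is required at any point. This reproduces the argument of \cite[Proposition~2.14]{Saltman_1999_lectures_on_div_alg}.
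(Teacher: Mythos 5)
Your proof is correct and is precisely the standard separability-idempotent argument; the paper gives no proof of its own but simply cites \cite[Proposition~2.14]{Saltman_1999_lectures_on_div_alg}, and your argument reproduces that reference's proof (section $s(m)=\sum_i mx_i\otimes y_i$ of the evaluation map $M\otimes_RA\to M$, plus the formal converse). Nothing is missing.
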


\begin{lem}\label{LM:separable-subring-I}
	Let $A$ be a separable projective $R$-algebra and let
	$S\subseteq \Cent(A)$ be an $R$-subalgebra such
	that $S$ is separable   over $R$ or an $R$-summand of $\Cent(A)$.
	Then $A$ is separable projective over $S$
	and $S$ is separable projective over $R$.
\end{lem}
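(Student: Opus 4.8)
The plan is to set $C:=\Cent(A)$, so that \ref{item:sep-proj:Az-over-fin-et} gives that $A$ is Azumaya over $C$ and $C$ is finite \'etale over $R$; since $S\subseteq C=\Cent(A)$, we may regard $A$ as an $S$-algebra, still with center $C\supseteq S$. Two things must be checked: that $S$ is projective and separable over $R$, and that $A$ is projective and separable over $S$ (i.e.\ separable projective in the sense of \ref{item:sep-proj:trivial-def}). I would prove the statement about $S$ first and deduce the one about $A$ from it.

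For $S$ I would distinguish the two hypotheses. If $S$ is an $R$-summand of $C$, then $S$ is finite projective over $R$, being a summand of the finite projective module $C$, and $S$ is separable over $R$ by the fiberwise criterion \ref{item:sep-proj:fibers-def}: for each $\frakm\in\Max R$ the $R$-split inclusion $S\hookrightarrow C$ stays injective after tensoring with $k(\frakm)$ over $R$, so $S(\frakm)$ is a subalgebra of $C(\frakm)$, which by \ref{item:sep-proj:fibers-def} for $C$ is a finite product of finite separable field extensions of $k(\frakm)$; and a subalgebra of such an algebra is a reduced finite-dimensional commutative $k(\frakm)$-algebra, hence a finite product of fields, each of which embeds as a subfield into one of those separable extensions and so is itself finite separable over $k(\frakm)$. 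If instead $S$ is separable over $R$, the issue is to prove $S$ is projective over $R$; here I would first argue that $C$ is finite \'etale over $S$, namely $C$ is module-finite over $S$ (a finite generating set of $C$ as an $R$-module also generates it over $S\supseteq R$), separable over $S$ (a commutative $R$-algebra separable over $R$ is separable over any intermediate ring, see \cite{DeMeyer_1971_separable_algebras}), and projective over $S$ by Lemma~\ref{LM:projective-transfer} applied to the separable $R$-algebra $S$ and the $S$-module $C$. The exact sequence $0\to S\to C\to C/S\to 0$ of $S$-modules then splits, its cokernel being finitely presented and, $C$ being faithfully flat over $S$, also flat --- hence finite projective --- over $S$; so $S$ is a direct summand of $C$ over $S$, and \emph{a fortiori} over $R$, whence $S$ is finite projective over $R$. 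Combined with the hypothesis that $S$ is separable over $R$, this shows $S$ is separable projective over $R$.

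The assertion about $A$ is then quick: $A$ is separable over $S$ because it is separable over $R$ and $S\subseteq\Cent(A)$ \cite{DeMeyer_1971_separable_algebras}, while $A$ is projective over $S$ by Lemma~\ref{LM:projective-transfer} applied to the separable $R$-algebra $S$ (just established) and the $S$-module $A$, which is projective over $R$ by \ref{item:sep-proj:trivial-def}. Hence $A$ is separable projective over $S$.

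I expect the genuine obstacle to lie in the case ``$S$ separable over $R$'' of the second step: extracting projectivity of $S$ over $R$ from separability alone truly uses that $S$ sits inside the module-finite algebra $C$, since a separable algebra need not be module-finite in general (for instance $\Q$ over $\Z$). The bootstrap through ``$C$ is finite \'etale over $S$'' and the splitting of the structure map of a finite \'etale algebra as a module homomorphism is the device I would use; the only delicate point is that $C/S$ is projective over $S$, which rests on $C$ being faithfully flat over $S$ --- and $\Spec C\to\Spec S$ is surjective because the fibers of $C$ over $S$ are all nonzero (by Nakayama, as $S\hookrightarrow C$).
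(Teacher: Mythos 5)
Your overall structure matches the paper's: the summand case is handled exactly as in the paper (projectivity of $S$ is immediate from being a summand of the finite projective module $\Cent(A)$, and separability is checked fiberwise via \ref{item:sep-proj:fibers-def}), and in the separable case both arguments obtain projectivity of $S$ over $R$ by exhibiting $S$ as an $S$-module direct summand of a finite projective $R$-module containing it. The only structural difference is the choice of ambient module: the paper first shows $A$ is separable, projective and faithful over $S$ and then invokes \cite[Corollary~II.4.2]{DeMeyer_1971_separable_algebras} to split $S\hookrightarrow A$, whereas you split $S\hookrightarrow C:=\Cent(A)$ after showing $C$ is finite \'etale over $S$. Either works, and your treatment of the remaining points (separability of $A$ over $S$ via \cite{DeMeyer_1971_separable_algebras}, projectivity of $A$ over $S$ via Lemma~\ref{LM:projective-transfer}) is the paper's.

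The one step that does not hold up as written is your justification that $C/S$ is flat over $S$: ``$C$ being faithfully flat over $S$, also flat'' is not a valid inference. A quotient of a faithfully flat module by a free rank-one submodule need not be flat; for instance $S^2/S\cdot(x,y)$ over $S=k[x,y]$ is not flat at the origin. What actually saves the argument is a property of the specific element $1_C$, not of $C$ as a module: since $C$ is finite, projective and faithful over $S$, the image of $1_C$ in $C\otimes_S k(\frakq)$ is the identity of a nonzero algebra for every $\frakq\in\Spec S$, so by Nakayama $1_C$ extends to a basis of $C_\frakq$ over $S_\frakq$; hence $C/S$ is finitely presented and locally free, therefore projective, and the sequence splits. (Alternatively: a finite faithfully flat ring map is universally injective, and a pure exact sequence with finitely presented cokernel splits; or simply cite \cite[Corollary~II.4.2]{DeMeyer_1971_separable_algebras} with $C$ or $A$ as the ambient algebra, as the paper does.) Your closing parenthetical shows you sensed where the weight of the argument lies, but the inference you actually wrote down is the wrong one and should be replaced by one of these.
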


\begin{proof}	
	Suppose first that $S$ is separable over $R$.
	That $A$ is separable over $S$ follows
	from \cite[Proposition~II.1.12]{DeMeyer_1971_separable_algebras}.
	Since $A$ is projective over $R$ and $S$ is separable over $R$,
	the algebra $A$ is projective as an $S$-module by Lemma~\ref{LM:projective-transfer}.
	It is  faithful over $S$ since $S$ is a subring of $A$.
	Now, by \cite[Corollary II.4.2]{DeMeyer_1971_separable_algebras},
	$S$ is a summand of $A$,
	so $S$ is projective over $R$.
	
	If $S$ is summand of $Z:=\Cent(A)$, then
	$S\in\rproj{R}$. Thus, for every
	$\frakp\in \Spec R$, the map $S (\frakp)\to Z(\frakp)$
	is injective. Since $Z(\frakp)$ is a finite product
	of separable field extensions of $k(\frakp)$,
	the same holds for $S(\frakp)$, and we conclude
	that $S$ is also separable.
	Proceed as in the previous paragraph.
\end{proof}

\begin{lem}\label{LM:center-base-change}
	Let $A$ be a separable projective  $R$-algebra,
	let $B\subseteq A$ be a separable projective $R$-subalgebra
	and let $S$ be an $R$-ring.
	Then the   natural map $\Cent_A(B)\otimes S\to \Cent_{A_S}(B_S)$
	is an isomorphism.  In particular, 
	$\Cent(A)\otimes S= \Cent(A_S)$.
\end{lem}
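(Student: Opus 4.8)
The plan is to reduce the statement about centralizers to a statement about the functor $\Hom_A(-,-)$, for which base change is already known. First I would observe that for any $R$-algebra $A$ and any $R$-subalgebra $B$, the centralizer $\Cent_A(B)$ is precisely $\End_{B\otimes A^\op}(A)$, where $A$ is viewed as a $(B\otimes A^\op)$-module via $a\cdot(b\otimes c^\op)=bac$: indeed an $R$-endomorphism of $A$ as a right $A$-module is left multiplication by a unique element of $A$, and it commutes with the left $B$-action exactly when that element centralizes $B$. Thus $\Cent_A(B)=\End_{B\otimes A^\op}(A_{B\otimes A^\op})$ naturally in $R$, and similarly $\Cent_{A_S}(B_S)=\End_{B_S\otimes A_S^\op}(A_S)=\End_{(B\otimes A^\op)_S}(A_S)$.

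The key step is then to apply the base-change isomorphism for $\Hom$-modules over a separable projective algebra recalled in the Notation section, namely that $\Hom_\Lambda(P,Q)\otimes S\xrightarrow{\sim}\Hom_{\Lambda_S}(P_S,Q_S)$ whenever $P,Q\in\rproj\Lambda$. To invoke it with $\Lambda=B\otimes A^\op$ and $P=Q=A$, I need two things: that $\Lambda$ is separable projective over $R$, and that $A$ is a finite projective $\Lambda$-module. For the former, $A^\op$ is separable projective because $A$ is (separability and $R$-projectivity are preserved under $\op$), $B$ is separable projective by hypothesis, and a tensor product over $R$ of separable projective $R$-algebras is again separable projective — separability of a tensor product of separable algebras is standard, and the tensor product of $R$-projective modules is $R$-projective. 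For the latter, $A$ is finite projective as an $R$-module, hence finite projective as a $\Lambda$-module by Lemma~\ref{LM:projective-transfer} applied to the separable $R$-algebra $\Lambda$ (here I use that $\Lambda$ acts on $A$, which is $R$-projective, so the lemma's first assertion gives $\Lambda$-projectivity, and finiteness is clear since $A$ is already $R$-finite and $R\to\Lambda$ makes every $R$-generating set a $\Lambda$-generating set).

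Putting these together, $\Cent_A(B)\otimes S=\End_{B\otimes A^\op}(A)\otimes S\xrightarrow{\sim}\End_{(B\otimes A^\op)_S}(A_S)=\Cent_{A_S}(B_S)$, and one checks that this composite is the natural map $c\otimes s\mapsto cs$ by tracing through the identifications (the isomorphism of the Notation section sends $\phi\otimes s$ to $\phi_S\cdot s$, which for $\phi=$ left multiplication by $c$ is left multiplication by $cs$). The final assertion $\Cent(A)\otimes S=\Cent(A_S)$ is the special case $B=R$, noting $R$ is trivially separable projective over itself. The main obstacle I anticipate is purely bookkeeping: verifying that the abstract $\End$-isomorphism, once transported through the identification $\Cent_A(B)=\End_{B\otimes A^\op}(A)$, really is the \emph{natural} map in the statement rather than merely \emph{an} isomorphism; this is routine but must be checked, since the lemma is later used to identify specific elements. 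One could alternatively bypass this by a local argument — reduce to $R$ local, then to the strictly henselian or even algebraically closed residue situation where $A$ becomes a matrix algebra and the centralizer computation is explicit — but the $\End$-functor argument is cleaner and avoids case analysis on the type of $A$.
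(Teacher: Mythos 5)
Your proof is correct and is essentially the paper's own argument: both identify $\Cent_A(B)$ with the endomorphism ring of $A$ as a module over $C=B\otimes A^\op$, use Lemma~\ref{LM:projective-transfer} to get $C$-projectivity of $A$, and then apply the base-change isomorphism for $\Hom$ over $C$. The only (cosmetic) quibble is that your formula $a\cdot(b\otimes c^\op)=bac$ defines a \emph{left} $B\otimes A^\op$-module structure (equivalently a right $B^\op\otimes A$-module), as the paper states it, not a right $B\otimes A^\op$-action; this does not affect the argument.
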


\begin{proof}
	Write $C=B\otimes A^\op$ and view $A$ as a left $C$-module 
	by setting $(b\otimes a^\op)\cdot x=bxa$ ($a,x\in A$, $b\in B$).
	Since $C$ is separable   over $R$ and $A\in\rproj{R}$,
	Lemma~\ref{LM:projective-transfer} implies
	that $A$ is projective as a $C$-module.
	Thus, the natural map $\End_C(A)\otimes S\to\End_{C_S}(A_S)$
	is an isomorphism.
	However, $\End_C(A)\cong \Cent_A(B)$ via $\vphi\mapsto \vphi(1)$,
	and likewise   $\End_{C_S}(A_S)\cong \Cent_{A_S}(B_S)$.
	The resulting isomorphism 	$\Cent_B(A)\otimes S\to \End_C(A)\otimes S
	\to \End_{C_S}(A_S)\to \Cent_{A_S}(B_S)$
	is   the natural map $\Cent_A(B)\otimes S\to \Cent_{A_S}(B_S)$ and the proposition follows.
\end{proof}

\begin{lem}\label{LM:Jac-of-separable-alg}
	Let $A$ be a separable projective $R$-algebra.
	Then $\Jac A=\Jac R \cdot A=\bigcap_{\frakm\in \Max R}\frakm A$.
\end{lem}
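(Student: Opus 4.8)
The plan is to prove the chain of relations
\[
\Jac R\cdot A\ \subseteq\ \Jac A\ \subseteq\ \bigcap_{\frakm\in\Max R}\frakm A\ =\ \Jac R\cdot A ,
\]
which immediately yields all three asserted equalities. The first inclusion will use only that $A$ is module-finite over $R$; the second that each fibre $A(\frakm)$ is semisimple; and the final identity that $A$ is finite projective over $R$. All three of these inputs are supplied by condition \ref{item:sep-proj:fibers-def}.

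For the first inclusion I would argue via simple modules. Recall that $\Jac R\subseteq\Cent(A)$ and that $\Jac A=\bigcap_M\ann_A M$, the intersection ranging over all simple left $A$-modules $M$. Fix such an $M$; it is cyclic over $A$, hence finitely generated over $R$ since $A$ is module-finite over $R$. As $M\neq 0$, Nakayama's lemma gives $\Jac R\cdot M\subsetneq M$; moreover $\Jac R\cdot M$ is an $A$-submodule of $M$ because $\Jac R$ is central, so simplicity forces $\Jac R\cdot M=0$. Then $(\Jac R\cdot A)M=\Jac R\cdot(AM)=\Jac R\cdot M=0$, so $\Jac R\cdot A\subseteq\ann_A M$; intersecting over all $M$ gives $\Jac R\cdot A\subseteq\Jac A$.

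For the second inclusion, fix $\frakm\in\Max R$ and note that $A/\frakm A=A(\frakm)$ is semisimple by \ref{item:sep-proj:fibers-def}, so $\Jac(A/\frakm A)=0$. Since the image of the Jacobson radical under any surjection of rings lies in the Jacobson radical of the target (an immediate consequence of the characterization of $\Jac$ via units), the image of $\Jac A$ in $A/\frakm A$ vanishes, i.e.\ $\Jac A\subseteq\frakm A$; intersecting over $\frakm$ gives $\Jac A\subseteq\bigcap_{\frakm}\frakm A$. For the final identity I would use finite projectivity: write $A$ as a direct summand of a free module $R^n$, say $R^n=A\oplus A'$, so that $\frakm R^n=\frakm A\oplus\frakm A'$ and hence $\frakm A=\frakm R^n\cap A$ for every $\frakm$. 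As $\frakm R^n$ is the set of $n$-tuples with all coordinates in $\frakm$, we have $\bigcap_{\frakm}\frakm R^n=\Jac R\cdot R^n$, whence
\[
\bigcap_{\frakm\in\Max R}\frakm A=\bigcap_{\frakm}\bigl(\frakm R^n\cap A\bigr)=\Bigl(\bigcap_{\frakm}\frakm R^n\Bigr)\cap A=(\Jac R\cdot R^n)\cap A=\Jac R\cdot A .
\]
Combining the three steps closes the chain, forcing equality throughout.

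I expect the only point needing genuine care to be this last identity $\bigcap_{\frakm}\frakm A=\Jac R\cdot A$: it can fail for $R$-algebras that are flat but not finitely generated projective when $\Max R$ is infinite (for instance $A=\Q$ over $R=\Z$), so it is essential to exploit projectivity through the direct-summand argument rather than flatness alone. Everything else reduces to standard facts about the Jacobson radical — its description as the intersection of annihilators of simple modules, its vanishing on semisimple rings, and its behaviour under surjections — together with Nakayama's lemma.
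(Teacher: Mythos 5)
Your proof is correct. It follows the same three-step chain as the paper's proof ($\Jac R\cdot A\subseteq \Jac A\subseteq \bigcap_{\frakm}\frakm A=\Jac R\cdot A$), and the middle inclusion is handled identically (semisimplicity of the fibres $A(\frakm)$ plus the fact that surjections shrink the radical), but the two outer steps are executed differently. For the first inclusion the paper simply cites Knus, Corollary~II.4.2.4; you reprove it from scratch via annihilators of simple modules, centrality of $\Jac R$ in $A$, and Nakayama, which is a complete and standard argument. The genuine divergence is in the last step: the paper tensors the exact sequence $0\to\Jac R\to R\to\prod_{\frakm}R/\frakm$ with the flat module $A$ and then invokes finite presentation of $A$ to commute the tensor product with the (possibly infinite) product, reading off $\bigcap_{\frakm}\frakm A=\Jac R\cdot A$ from left-exactness at $A$. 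You instead realize $A$ as a direct summand of $R^n$ and perform the intersection inside the free module, where the identity $\bigcap_{\frakm}\frakm R^n=\Jac R\cdot R^n$ is transparent; the only microscopic step left implicit is that $(\Jac R\cdot R^n)\cap A=\Jac R\cdot A$, which follows from the same splitting $R^n=A\oplus A'$ you already used for $\frakm$. Both arguments ultimately use the same hypothesis (finite projective, i.e.\ flat and finitely presented), but yours avoids the tensor-versus-product commutation entirely and is the more elementary of the two; your closing remark that flatness alone fails (e.g.\ $\Q$ over $\Z$) correctly pinpoints why the finiteness hypothesis cannot be dropped in either version.
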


\begin{proof}
	Since $A$ is finite over $R$, we have $\Jac R\cdot A\subseteq \Jac A$
	\cite[Corollary~II.4.2.4]{Knus_1991_quadratic_hermitian_forms}.
	In addition, for all $\frakm\in \Max R$, the ring $A/\frakm A= A(\frakm)$
	is semisimple by \ref{item:sep-proj:fibers-def}, hence $\Jac A\subseteq  \bigcap_{\frakm\in \Max R}\frakm A$.
	It remains to show that $\bigcap_{\frakm\in \Max R}\frakm A\subseteq \Jac R\cdot A$.

	Consider the  
	exact sequence of $R$-modules $0\to \Jac R\to R\to \prod_{\frakm\in \Max R} R/\frakm$.
	Since $A$ is a flat over $R$,
	tensoring with $A$  gives an exact sequence
	$0\to \Jac R\otimes A\to R\otimes A \to (\prod_{\frakm\in\Max R} R/\frakm)\otimes A$.
	Furthermore, since $A$ is finitely presented,
	the natural map  $(\prod_{\frakm\in\Max R} R/\frakm)\otimes A\to \prod_{\frakm\in \Max R}
	(R/\frakm)\otimes A\cong \prod_{\frakm\in \Max R}A/\frakm A$ is an isomorphism
	\cite[Proposition~4.44]{Lam_1999_lectures_on_modules_rings}.
	Thus, $0\to \Jac R\otimes A\to A\to\prod_{\frakm\in \Max R}A/\frakm A$
	is exact, and the exactness at $A$ means that $\bigcap_{\frakm\in \Max R}\frakm A= \Jac R\cdot A$.
\end{proof}

	We also record the following general lemmas.

	\begin{lem}\label{LM:invertability-test}
		Let $A$ be a finite $R$-algebra and let $a\in A$. 
		If   $a(\frakm)\in\units{A(\frakm)}$ for all $\frakm\in\Max R$, then $a\in\units{A}$.
	\end{lem}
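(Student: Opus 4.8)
The plan is to reduce the statement to the local situation and invoke Nakayama's lemma. First I would observe that, since $A$ is finite over $R$, for each $\frakm \in \Max R$ the algebra $A/\frakm A$ is finite-dimensional over $k(\frakm)$, but I do not even need this; the hypothesis directly gives that $a + \frakm A$ is a unit in $A/\frakm A$ for every $\frakm$. The goal is to produce a two-sided inverse of $a$ in $A$.

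The key steps, in order, are as follows. Consider the left $A$-module $M = A/aA$. For each $\frakm \in \Max R$ we have $M/\frakm M \cong A(\frakm)/a(\frakm)A(\frakm)$, which vanishes because $a(\frakm)$ is a unit in $A(\frakm)$. Thus $M = \frakm M$ for every maximal ideal $\frakm$ of $R$, hence $M = (\Jac R) M$ (intersecting, or simply using that $\frakm M = M$ for one maximal ideal already forces the localization $M_\frakm$ to vanish). Since $M$ is a finite $R$-module, Nakayama's lemma applied either locally at each $\frakm$ or globally via $\Jac R$ gives $M = 0$, i.e. $aA = A$. So there is $b \in A$ with $ab = 1$. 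The same argument applied to $N = A/Aa$, using that $a(\frakm)$ is also left-invertible, yields $c \in A$ with $ca = 1$. Then $c = c(ab) = (ca)b = b$, so $b = c$ is a genuine two-sided inverse and $a \in \units{A}$.

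Alternatively, and perhaps more cleanly, I would phrase it purely locally: it suffices to show $a_\frakm \in \units{A_\frakm}$ for every $\frakm \in \Max R$, since an element of $A$ that is a unit in every localization $A_\frakm$ is a unit in $A$ (the ideal $\{r \in R : ra \text{ is invertible}\}$, or rather the annihilator of $A/aA$ together with that of $A/Aa$, is not contained in any maximal ideal). For fixed $\frakm$, the module $A_\frakm/a_\frakm A_\frakm$ is finite over the local ring $R_\frakm$ and its reduction mod $\frakm R_\frakm$ is $A(\frakm)/a(\frakm)A(\frakm) = 0$, so Nakayama gives $a_\frakm A_\frakm = A_\frakm$; symmetrically $A_\frakm a_\frakm = A_\frakm$, and a one-sided inverse that is also one-sided invertible on the other side is a two-sided inverse.

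There is essentially no main obstacle here — the only point requiring a moment's care is that right-invertibility plus left-invertibility of $a$ (rather than just one of them) is needed to conclude $a \in \units{A}$, since $A$ is noncommutative; this is why I reduce $A/aA$ \emph{and} $A/Aa$ to zero, and the hypothesis $a(\frakm) \in \units{A(\frakm)}$ supplies both directions. Everything else is Nakayama's lemma for finite modules over (semi)local rings together with the standard fact that a module over $R$ vanishes iff all its localizations at maximal ideals vanish.
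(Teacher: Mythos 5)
Your proof is correct and follows essentially the same route as the paper: the paper shows the multiplication map $x\mapsto ax$ is surjective by checking surjectivity modulo every maximal ideal and invoking a Nakayama-type result for finite modules, which is exactly your argument that the cokernel $A/aA$ vanishes, followed by the same symmetric treatment of $Aa$ to get a two-sided inverse. (The only slightly loose phrase is the aside that $M=\frakm M$ for all $\frakm$ gives $M=(\Jac R)M$ ``by intersecting''; this implication is not immediate in general, but your local version of Nakayama at each $\frakm$ is correct and suffices.)
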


	\begin{proof}
		Consider the map $\phi :A\to A$ given by $\phi(x)=ax$.
		Then $\phi (\frakm):A(\frakm)\to A(\frakm)$ is bijective for all $\frakm\in \Max R$.
		As $A$ is a finite $R$-module,   $\phi$ is surjective 
		(\cite[Tag \href{https://stacks.math.columbia.edu/tag/05GE}{05GE}]{DeJong_2018_stacks_project} 
		with $f=1$), so $aA=\im \phi=A$. Likewise, $Aa=A$, so $a\in \units{A}$.
	\end{proof}

    \begin{lem}\label{LM:semilocal-direct-sum-reduction}
        Let $A$ be a finite projective $R$-algebra,
        let $P\in\rproj{A}$  and let $U$ and $V$ be summands of $P$.
        Suppose that $P(\frakm)=U(\frakm)\oplus V(\frakm)$
        for all $\frakm\in \Max R$.
        Then $P=U\oplus V$.
    \end{lem}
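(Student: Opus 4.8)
The plan is to reduce everything to the two summand inclusions and then glue locally, using that over a semilocal ring a surjection (resp.\ an isomorphism) of finite modules can be detected fiberwise. First I would record the hypotheses as honest maps: since $U$ and $V$ are summands of $P$, there are $A$-module maps $\iota_U\colon U\to P$, $\iota_V\colon V\to P$ (the inclusions) together with retractions, so I obtain a single $A$-linear map
\[
\psi:=\iota_U\oplus\iota_V\colon U\oplus V\longrightarrow P .
\]
All of $U\oplus V$ and $P$ are finite projective $A$-modules (being summands of finite projectives), hence finite $R$-modules because $A$ is finite over $R$. The hypothesis $P(\frakm)=U(\frakm)\oplus V(\frakm)$ for every $\frakm\in\Max R$ says precisely that $\psi(\frakm)\colon U(\frakm)\oplus V(\frakm)\to P(\frakm)$ is a bijection of $k(\frakm)$-vector spaces for all $\frakm\in\Max R$.

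Next I would promote fiberwise bijectivity to a global isomorphism. For surjectivity: $P$ is a finite $R$-module and $\coker\psi$ is a finite $R$-module with $(\coker\psi)(\frakm)=\coker(\psi(\frakm))=0$ for all $\frakm\in\Max R$, so Nakayama (in the form already invoked in the proof of Lemma~\ref{LM:invertability-test}, namely \cite[Tag \href{https://stacks.math.columbia.edu/tag/05GE}{05GE}]{DeJong_2018_stacks_project}) gives $\coker\psi=0$, i.e.\ $\psi$ is surjective. For injectivity: since $P\in\rproj{A}$, the surjection $\psi$ splits, say $P=\psi(U\oplus V)\cong (U\oplus V)\oplus K$ with $K=\ker(\text{splitting})$ a finite projective $A$-module, hence finite projective over $R$; then $K(\frakm)=0$ for all $\frakm\in\Max R$ forces $K=0$ (a finite projective module of rank $0$ at every maximal ideal is zero), so $\psi$ is an isomorphism. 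Pulling this back along $\iota_U,\iota_V$ shows $P=\iota_U(U)+\iota_V(V)$ with the sum direct, and identifying $U,V$ with their images inside $P$ gives $P=U\oplus V$.

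The only mild subtlety — which I would expect to be the ``main obstacle'' insofar as there is one — is bookkeeping the identifications: ``$U$ and $V$ are summands of $P$'' must be used to produce the comparison map $\psi$ and, conversely, the conclusion $P=U\oplus V$ should be read as an internal direct sum via the original inclusions, so one has to check that the splitting produced above is compatible with those inclusions. This is automatic because $\psi$ restricts to $\iota_U$ on $U$ and to $\iota_V$ on $V$ by construction. Alternatively, and perhaps more cleanly, I could avoid the cokernel/kernel dance by applying Lemma~\ref{LM:invertability-test} directly: choose a decomposition $P\cong U\oplus V'$ of the $A$-module $P$ (possible since $U$ is a summand), and work with the idempotent $\pi\in\End_A(P)$ corresponding to the summand $V$; the hypothesis says $\pi(\frakm)$ and the idempotent $\pi_{U}(\frakm)$ cutting out $U(\frakm)$ are complementary for every $\frakm$, and one lifts the relation ``$U\oplus V=P$'' by testing invertibility of $\iota_U\oplus\iota_V$ on $\End_A(P)$ fiberwise. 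Either route is short; I would present the cokernel–then–kernel argument as the primary one since it uses only results already available in the excerpt.
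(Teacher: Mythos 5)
Your proof is correct and follows essentially the same route as the paper: both show the addition map $\psi\colon U\oplus V\to P$ is surjective via Nakayama applied fiberwise, then split $\psi$ using projectivity of $P$ and kill the kernel $K$ by a rank/fiber argument (the paper compares ranks, you observe $K(\frakm)=0$ — the same fact). No gaps; note that, as in the paper, semilocality of $R$ is not actually needed.
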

	
    \begin{proof}
    	We need to show that  $\psi:U\times V\to P$
    	given by $\psi(u,v)=u+v$ is an isomorphism. 
    	By assumption, $\im\psi+P\frakm=P$ for all $\frakm\in \Max R$.
    	By Nakayama's Lemma $\ann_R (P/\im \psi) $ is not contained
    	in any maximal ideal, so it must be $R$. Thus, $P/\im\psi=0$ and $\psi$ is onto.
    	Let $K=\ker \psi$. Since $P$ is projective,
    	$\psi$ splits and $U\times V\cong P\times K$,
    	hence $\rank_R U+\rank_R V=\rank_RP+\rank_RP$.
    	Since  $P(\frakm)=U(\frakm)\oplus V(\frakm)$
    	for all $\frakm\in \Max R$,
    	we have $\rank_RU+\rank_R V=\rank_RP$,
    	so $\rank_RK=0$ and $\ker\psi=K=0$.
    \end{proof}	

\subsection{Azumaya Algebras}
\label{subsec:Azumaya-algebras}

	We refer the reader to 
	\cite[7.\S3]{Ford_2017_separable_algebras},
	\cite[III.\S5.3]{Knus_1991_quadratic_hermitian_forms}
	or \cite[Chapter~3]{Saltman_1999_lectures_on_div_alg}
	for  the definition of the \emph{Brauer group} of $R$.
	We denote it as $\Br R$ and write its binary operation additively.
	The \emph{Brauer class} of an Azumaya $R$-algebra $A$ is denoted $[A]$.
	
	As usual, the \emph{degree} of an Azumaya $R$-algebra $A$
	is $\deg A:=\sqrt{\rank_R A}$,
	and  its \emph{index}   is $\ind A:=\gcd\{\deg A'\where A'\in [A]\}$.  
	Recall that  both $\deg A$ and $\ind A$ are functions from $\Spec R$ to $\N$,
	and that the ``$\gcd$'' in the definition of $\ind A$ is evaluated point-wise.
	Since $A$ is a finite projective $R$-module, $\deg A$ is locally
	constant relative to the Zariski topology, and with a little more work, one sees that the same holds for $\ind A$.
	When $R$ is connected, both $\deg A$ and $\ind A$
	are constant and may be regarded as elements of $\N$.
	
	We alert the reader that in general,
	there may be no $A'\in [A]$ with $\deg A'=\ind A$; see \cite{Antieu_2014_unramified_division_algebras}.
	However, this is true
	when $R$ is semilocal, by  Theorem~\ref{TH:index-description} below. 
	
	\begin{thm}[Saltman {\cite{Saltman_1981_Brauer_group_is_torsion}}]\label{TH:period-divides-index}
		Let $A$ be an Azumaya  $R$-algebra of degree dividing $n\in \N$.
		Then   $n\cdot [A]=0$ in $\Br R$.
	\end{thm}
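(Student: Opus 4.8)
The plan is to reduce the statement to the classical fact that the Brauer group of a field is torsion, using the standard local-global and deformation toolkit for Azumaya algebras. First I would reduce to the case where $R$ is local: the Brauer group of $R$ injects into $\prod_{\frakm \in \Max R} \Br R_\frakm$ is \emph{not} true in general, so instead I would argue that it suffices to show $n[A]$ dies locally, i.e.\ $n[A_\frakm] = 0$ in $\Br R_\frakm$ for every $\frakm$, together with a patching/descent argument; alternatively, and more robustly, one invokes that $\Br R \to \HH^2_{\et}(R, \uGm)$ is injective and that $n[A]$, being represented by $A^{\otimes n}$, is detected in \'etale cohomology, which satisfies a local-to-global spectral sequence. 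Actually the cleanest route is: the class $n[A]$ is killed after passing to a faithfully flat cover (namely one splitting $A$), so $n[A]$ lies in the ``cohomological'' part, and the theorem is then a statement about the vanishing of a specific cohomology class. I would cite the original argument of Saltman, whose strategy I now sketch.

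The heart of Saltman's argument is a reduction of the problem to fields by a limiting/Noetherian-approximation argument followed by a deformation. Concretely: (1) Reduce to $R$ Noetherian by writing $R$ as a filtered colimit of its finitely generated $\Z[\tfrac12]$-subalgebras and noting that $A$, being finitely presented, descends to some such subalgebra, and $\Br(-)$ commutes with filtered colimits of rings. (2) For $R$ Noetherian local with maximal ideal $\frakm$, use the structure theory of Azumaya algebras to relate $A$ to its fiber $A(\frakm)$: the fiber is a central simple $k(\frakm)$-algebra of degree $\mid n$, so by the classical theorem (Brauer group of a field is $n$-torsion when represented by a degree-$n$ algebra) we have $n[A(\frakm)] = 0$, i.e.\ $A(\frakm)^{\otimes n} \cong \nMat{k(\frakm)}{n^n}$. (3) Lift this splitting: the key technical input is that a splitting of $A^{\otimes n}$ over the residue field lifts to a splitting over $R$ (or at least over the $\frakm$-adic completion, then descend by Artin approximation / the fact that $\Br R \to \Br \hat R$ is injective for $R$ excellent local, or by faithfully flat descent since $\hat R$ is faithfully flat over $R$). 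Lifting idempotents and using Hensel's lemma for the smooth scheme $\uPGL$ (or the functor of splittings, which is smooth) makes this work.

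The step I expect to be the main obstacle is (3), the lifting of the splitting from the residue field to $R$ itself. The cohomological obstruction to lifting lives in $\HH^2$ with coefficients supported on $\frakm$, and one needs either smoothness of the relevant classifying stack (so that $\HH^1_{\et}(R, \uPGL_{n^n}) \to \HH^1_{\et}(k(\frakm), \uPGL_{n^n})$ is surjective in the local case — this is where one uses that $\uGL$-torsors and hence Azumaya algebras of fixed degree deform, via the vanishing of the relevant $\Ext^2$ or via Hensel's lemma applied to the smooth scheme of algebra structures), or a direct argument manipulating the Azumaya algebra $A^{\otimes n}$ and an idempotent reducing to a rank-one idempotent in $A(\frakm)^{\otimes n} \cong \nEnd{}{V}$. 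Once one has a full idempotent $e \in A^{\otimes n}$ with $eA^{\otimes n}e$ of degree $1$, i.e.\ $\cong R$, that exhibits $A^{\otimes n}$ as Brauer-trivial, so $n[A] = 0$. (Note this is consistent with Theorem~\ref{TH:index-description} which, over semilocal $R$, produces exactly such full idempotents realizing the index.) For the global (non-local) case I would then either patch these local trivializations using that $\Br R$ is a subgroup of $\HH^2_{\et}(R,\uGm)$ which has a Mayer--Vietoris/descent mechanism, or simply cite Saltman's paper directly, as the excerpt does, since the result is being quoted rather than reproved.
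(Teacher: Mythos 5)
First, note that the paper does not prove this statement at all: it is quoted from Saltman's article and used as a black box, so there is no internal proof to compare yours against, and your closing remark that one would simply cite \cite{Saltman_1981_Brauer_group_is_torsion} is indeed what the paper does. Taken as a proof, however, your sketch has a genuine gap at its load-bearing step. The plan ``check that $n[A]$ dies over each residue field, then lift'' fails because for a general (non-Henselian) local ring the restriction $\Br R\to \Br k(\frakm)$ is not injective: for instance $R=\R[t]_{(t^2+1)}$ has residue field $\C$, so every Azumaya $R$-algebra splits at the closed point, yet $[\mathbb{H}\otimes_\R R]\neq 0$ in $\Br R$ (it stays nonzero over $\R(t)$). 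Hensel's lemma and idempotent lifting, which you invoke, are unavailable outside the Henselian case; the fallback through the completion rests on the unproved (and delicate) injectivity of $\Br R\to \Br \hat{R}$; and even granting every local statement, you concede yourself that $\Br R\to\prod_{\frakm}\Br R_{\frakm}$ need not be injective, and no actual patching mechanism is supplied to repair this. So none of the three variants you float closes the argument.

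The routes that do work are different from what you develop. The cohomological one, which you mention only in passing, is already complete and needs no local-to-global spectral sequence: on a clopen piece of $\Spec R$ where $\deg A=d$ (the degree being locally constant, and $d\mid n$ there), the class of $A$ in $\HH^2_{\et}(R,\uGm)$ is the image of its $\uPGL_d$-torsor under the boundary map of $1\to\umu_{d}\to\uSL_d\to\uPGL_d\to 1$, hence lies in the image of the $d$-torsion group $\HH^2_{\et}(R,\umu_{d})$; since $\Br R\to \HH^2_{\et}(R,\uGm)$ is injective (Grothendieck), $d[A]=0$ and therefore $n[A]=0$. Saltman's own point, by contrast, was to give a cohomology-free argument producing an explicit splitting module for $A^{\otimes n}$; whatever its details, it cannot be the residue-field-and-Henselization reduction you attribute to him, since that reduction is invalid. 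If you want a self-contained proof to splice in here, write out the boundary-map argument; otherwise the citation alone is the right choice.
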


%

	Given $P\in\rproj{A}$, the reduced rank $\rank_RP$ is (point-wise) divisible by $\deg A$;
	indeed, by \cite[pp.~5--6]{Knus_1998_book_of_involutions}, $\deg A(\frakp)\mid \dim_{k(\frakp)} P(\frakp)$
	for all $\frakp\in\Spec R$.
	It is therefore convenient to introduce the \emph{reduced $A$-rank} of $P$, defined by
	\[
	\rrk_AP:=\rank_RP/\deg A  .
	\]
	This agrees with the \emph{reduced dimension} 
	defined in {\it op.\ cit.}	
	when $R$ is a field.
	For example, $\rrk_A (A_A)=\deg A$.
	If $\iota:R\to S$ is a ring homomorphism, then $\rrk_{A_S}P_S=\iota \rrk_AP$.
	In particular, $\deg A_S=\iota \deg A$.	
	
	\begin{remark}\label{RM:Azumaya-over-center}
		If $A$ is an $R$-algebra which is Azumaya over its center  $\Cent(A)$,
		then we regard $\deg A$, $\ind A$ and $\rrk_AP$ ($P\in \rproj{A}$)
		as functions from $\Spec \Cent(A)$ to $\Z$. Note also that
		$[A]$ is a member of $\Br \Cent(A)$, rather than $\Br R$.
	\end{remark}

	We record a number of properties of the reduced rank which will
	be used many times in the sequel.

	\begin{prp}\label{PR:progenerator-iff-pos-red-rank}
		Let $A$ be an Azumaya $R$-algebra and let $P\in\rproj{A}$.
		Then $\rrk_AP>0$ if and only if $P$ is a progenertor.
	\end{prp}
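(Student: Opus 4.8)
The plan is to argue locally on $\Spec R$, using the fact that both conditions—$\rrk_A P > 0$ and ``$P$ is a progenerator''—are Zariski-local on $R$ and detected fiberwise. First I would recall that $P$ is a progenerator over $A$ if and only if $A_A$ is a summand of $P^n$ for some $n$; since $P$ is already finite projective over $A$, the only issue is the generator condition, i.e.\ whether $A_A$ divides some power of $P$. Equivalently (by the Morita-theoretic characterization in \cite[\S18B]{Lam_1999_lectures_on_modules_rings}), $P$ is a progenerator iff the trace ideal of $P$ is all of $A$, iff $\frakp \notin \Spec R$ is such that $P(\frakp) = 0$ for no $\frakp$—more precisely, iff $P(\frakm) \neq 0$ for all $\frakm \in \Max R$ (for $A$ Azumaya, a nonzero module over $A(\frakm)$, which is a central simple $k(\frakm)$-algebra, is automatically a generator).

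The key step is the equivalence over a field: if $k$ is a field and $A(\frakm)$ is a central simple $k(\frakm)$-algebra, then a finite-dimensional right $A(\frakm)$-module $P(\frakm)$ satisfies $\rrk_{A(\frakm)} P(\frakm) = \dim_{k(\frakm)} P(\frakm) / \deg A(\frakm) > 0$ if and only if $P(\frakm) \neq 0$, and any nonzero module over a central simple algebra is a progenerator (all modules are direct sums of copies of the unique simple module, which is itself a progenerator). So over $k(\frakm)$ the two conditions coincide, both being equivalent to $P(\frakm) \neq 0$.

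Now I would glue: since $\rrk_A P \colon \Spec \Cent(A) \to \Z_{\geq 0}$ is locally constant (it equals $\rank_R P / \deg A$, and $\rank_R P$ and $\deg A$ are locally constant by the discussion in the excerpt), the condition $\rrk_A P > 0$ holds iff it holds at every point iff it holds at every $\frakm \in \Max R$, i.e.\ iff $\dim_{k(\frakm)} P(\frakm) > 0$ for all $\frakm$. On the other side, $P$ is a progenerator over $A$ iff the trace ideal $\tau(P) = \sum_{\phi \in \Hom_A(P, A)} \phi(P)$ equals $A$; since $A$ is finite over $R$ and $\tau(P)$ is a two-sided ideal, $\tau(P) = A$ iff $\tau(P)$ is not contained in any $\frakm A$ (using $\Jac A = \bigcap_{\frakm} \frakm A$ from Lemma~\ref{LM:Jac-of-separable-alg}, and Nakayama), iff $\tau(P)(\frakm) = A(\frakm)$ for all $\frakm$, iff $P(\frakm)$ is a progenerator over $A(\frakm)$ for all $\frakm$ (trace ideal commutes with the base change $R \to k(\frakm)$ because $\Hom$ does, $A$ and $P$ being finite projective). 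Combining with the field case, both conditions reduce to the same fiberwise condition $P(\frakm) \neq 0$ for all $\frakm \in \Max R$, and hence are equivalent.

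The main obstacle is the bookkeeping around base change of the trace ideal and confirming that $\tau(P) \otimes k(\frakm) \to \tau(P(\frakm))$ is an isomorphism; this uses that $\Hom_A(P, A) \otimes S \to \Hom_{A_S}(P_S, A_S)$ is an isomorphism (stated in the Notation section) together with finiteness of $P$ over $A$, so the image of the evaluation map behaves well under $\otimes k(\frakm)$. Everything else is a routine local-global argument, and the field case is standard Morita theory for central simple algebras.
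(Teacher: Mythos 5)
Your proposal is correct and follows essentially the same route as the paper: both directions reduce to the fiberwise statement that $P(\frakm)\neq 0$ over the central simple algebras $A(\frakm)$, with the generator condition controlled by the trace ideal and a Nakayama argument over all $\frakm\in\Max R$. The paper phrases the compatibility of the trace ideal with passage to $k(\frakm)$ by lifting a surjection $P(\frakm)^n\to A(\frakm)$ along the projectivity of $P$, which is the same content as your appeal to the isomorphism $\Hom_A(P,A)\otimes k(\frakm)\cong \Hom_{A(\frakm)}(P(\frakm),A(\frakm))$.
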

	
	\begin{proof}
		Since $\rrk_A(A_A)=\deg A>0$,  if $P$ is a progenerator,
		then $\rrk_A P>0$.
		
		To see the converse, let $T=\sum_{\phi} \im \phi$
		where $\phi$ ranges over $\Hom_A(P,A)$. It is enough
		to prove that $T=A$, see \cite[pp.~7--8]{Ford_2017_separable_algebras}. 
		Fix   $\frakm\in \Max R$. 
		Since $(\rrk_AP)(\frakm)>0$, the $A(\frakm)$-module
		$P(\frakm)$ is nonzero. Since $A(\frakm)$ is simple artinian, there exists
		$n\in\N$ and a surjection $\vphi:P(\frakm)^n\to A(\frakm)$.
		Since $P$ is projective, there exists $\hat{\vphi}\in \Hom_A(P^n, A)$
		such that $\vphi=\hat{\vphi}(\frakm)$,
		hence $\im(\hat{\vphi})+\frakm A=A$. Since $\im(\hat{\vphi})\subseteq T$,
		this means that $T+\frakm A=A$, or rather, $(A/T)\frakm=A/T$.
		By Nakayama's Lemma $\ann_R (A/T)$ is not contained in $\frakm$.
		As this holds for all $\frakm\in\Max R$,
		we must have $A/T=0$, so $T=A$.
	\end{proof}

	\begin{prp}\label{PR:degree-of-endo-ring}
		Let $A$ be an Azumaya $R$-algebra
		and suppose that $P\in\rproj{A}$ satisfies 
		$\rrk_A P>0$. Then:
		\begin{enumerate}[label=(\roman*)]
			\item  $B:=\End_A(P)$ is an Azumaya $R$-algebra, $\deg B=\rrk_AP$ and $[B]=[A]$.  
			\item For all $Q\in \rproj{B}$, we have $\rrk_B Q=\rrk_A (Q\otimes_BP)$.
			\item \gap{}For every $B'\in [A]$, there exists $P'\in \rproj{A}$
			with $B'\cong \End_A(P')$ and $\rrk_{A}P'=\deg B'>0$.
		\end{enumerate}
	\end{prp}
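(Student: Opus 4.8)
The plan is to prove (i) first, then derive (ii) and (iii) from it. For (i), the statement is Zariski-local on $\Spec R$: Azumaya-ness, the equality of degrees (both sides being locally constant $\Z$-valued functions), and the equality of Brauer classes can all be checked after passing to a Zariski cover, and $\End_A(P)$ commutes with such localization since $P$ is finite projective. So I may assume $R$ is local, hence $P$ is a free $A$-module, say $P\cong (A_A)^n$ with $n=\rrk_A(A_A)^{-1}\rrk_A P=(\deg A)^{-1}\rank_R P / \ldots$ — more carefully, $\rank_R P = n\deg A$ forces $n=\rrk_A P$ (a genuine positive integer once $R$ is connected). Then $\End_A(P)\cong \nMat{A}{n}$ as $R$-algebras, which is Azumaya over $R$ because $A$ is and matrix algebras over Azumaya algebras are Azumaya; moreover $\deg \nMat{A}{n} = n\deg A = \rrk_A P$ and $[\nMat{A}{n}]=[A]$ in $\Br R$ by the definition of the Brauer group. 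Reassembling over the original $R$: $\rrk_A P>0$ means $P$ is a progenerator by Proposition~\ref{PR:progenerator-iff-pos-red-rank}, so $B=\End_A(P)$ is Azumaya over $R$ with $[B]=[A]$ (this is standard Morita theory for Azumaya algebras, e.g.\ it is precisely the statement that $P$ being a progenerator induces the identity on the Brauer group), and the rank computation $\rank_R B = (\rrk_A P)^2(\deg A)^2 \cdot (\deg A)^{-2}\cdots$ — again best done locally — gives $\deg B=\rrk_A P$.

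For (ii), Morita theory gives an equivalence $\rproj{B}\to\rproj{A}$, $Q\mapsto Q\otimes_B P$ (using that $P$ is a $(B,A)$-bimodule progenerator on both sides). I would compute $\rrk_A(Q\otimes_B P)$ by reducing to $R$ local and $Q$ free over $B$, say $Q\cong(B_B)^m$; then $Q\otimes_B P\cong P^m$, so $\rank_R(Q\otimes_B P)=m\rank_R P = m(\deg A)(\rrk_A P)=m(\deg A)(\deg B)$, whence $\rrk_A(Q\otimes_B P)=m\deg B=\rrk_B Q$. Both sides are locally constant functions, so equality over local rings suffices; alternatively one can argue directly with the rank identity $\rank_R(Q\otimes_B P)=\rank_B Q\cdot\rank_R P/\ldots$ but the local-free reduction is cleanest.

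For (iii), given $B'\in[A]$, by definition of Brauer equivalence there exist $m,m'\in\N$ with $\nMat{B'}{m}\cong\nMat{A}{m'}$; alternatively, since $[B']=[A]$ and both are Azumaya over $R$, there is a progenerator $P'\in\rproj{A}$ with $\End_A(P')\cong B'$ — this is the surjectivity half of the Morita-theoretic description of the Brauer group (every algebra Brauer-equivalent to $A$ is the endomorphism ring of an $A$-progenerator), which I would cite from \cite{Ford_2017_separable_algebras} or \cite{Saltman_1999_lectures_on_div_alg}. Then $P'$ is a progenerator, so $\rrk_A P'>0$ by Proposition~\ref{PR:progenerator-iff-pos-red-rank}, and part (i) applied to $P'$ gives $\deg B'=\deg\End_A(P')=\rrk_A P'$. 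The main obstacle is not any single hard step but rather pinning down the precise Morita-theoretic inputs over a general (not semilocal) base ring $R$ — in particular (iii), where one needs that the map $[P]\mapsto\End_A(P)$ from isomorphism classes of $A$-progenerators surjects onto the Brauer class of $A$; once the right reference is invoked, everything else is the routine local-freeness reduction sketched above.
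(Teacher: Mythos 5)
Your reduction for (i) and (ii) rests on the claim that over a local ring $R$ every $P\in\rproj{A}$ is a \emph{free} $A$-module, and that claim is false. Finite projective modules over an Azumaya algebra over a local ring need not be free: take $R$ local, $A=\nMat{R}{2}$ and $P=R^{1\times 2}$ the module of row vectors. Then $P\in\rproj{A}$ with $\rrk_AP=1$, but $A_A\cong P^2$ has reduced rank $2$, so $P$ is not free (indeed $\End_A(P)\cong R$, which is not a matrix algebra over $A$). The same problem recurs in your proof of (ii), where you assume $Q$ is free over $B$ after localizing. So the identification $\End_A(P)\cong\nMat{A}{n}$ and the ensuing degree and rank computations do not go through as written. (There is also an arithmetic slip: if $P\cong(A_A)^n$ then $\rank_RP=n(\deg A)^2$ and $\rrk_AP=n\deg A$, not $n$; the conclusion $\deg\nMat{A}{n}=\rrk_AP$ still holds, but only in the cases where freeness actually occurs.)

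What survives: your appeal to the global Morita-theoretic facts — that the endomorphism ring of an $A$-progenerator is Azumaya and Brauer-equivalent to $A$, and, for (iii), that every algebra in $[A]$ arises as $\End_A(P')$ for a progenerator $P'$ — is legitimate with a proper citation, and (iii) as you set it up is essentially the paper's argument (the paper cites a theorem of Bass). For the degree in (i) and for (ii), the correct replacement for your local-freeness step is to specialize all the way to residue fields and then to their algebraic closures, where $A(\frakp)$ is a matrix algebra and its modules are explicitly $\nMat{k}{m\times n}$'s; both sides of the identities are locally constant functions, so this suffices. This is exactly how the paper proves (ii). For (i) the paper avoids localization entirely: since $P$ is a faithful finite projective $R$-module, $\End_R(P)$ is Azumaya, $A^\op$ embeds in it, and the double-centralizer theorem gives $A^\op\otimes B\cong\End_R(P)$, from which $\deg A\cdot\deg B=\rank_RP$ and $[A^\op]+[B]=0$ follow at once. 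You should either adopt one of these routes or justify freeness after a finer (fppf) base change; Zariski localization alone does not deliver it.
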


	\begin{proof}	
		(i)
		By Proposition~\ref{PR:progenerator-iff-pos-red-rank},
		$P_A$ is a progenerator, and in particular faithful.
		Thus,  $A^\op$ embeds as an $R$-subalgebra of $\End_R(P)$ via
		$a^\op\mapsto [x\mapsto xa]$, and $B=\Cent_{A^\op}(\End_R(P))$.
		Since  both $A^\op$ and $\End_R(P)$ are Azumaya $R$-algebras,
		$B$ is Azumaya
		over $R$ and
		$A^\op\otimes B\cong \End_R(P)$  \cite[Theorem II.4.3]{DeMeyer_1971_separable_algebras}.
		This 
		implies that $\deg A^\op\cdot\deg B  =\deg\End_R(P)=\rank_R P$. It
		follows that
		$\deg B=\rrk_AP$  and $[A^\op]+[B]=[\End_R(P)]=0$, so $[A]=[B]$.

		(ii) By   definition of the reduced rank, it enough
		to check the statement after specializing to $k(\frakp)$ for all $\frakp\in\Spec R$.
		(Recall that   $\psi\mapsto \psi\otimes \id_{k(\frakp)}: B(\frakp)=\End_A(P)\otimes k(\frakp)\to \End_{A(\frakp)}(P(\frakp))$
		is an isomorphism because $P\in\rproj{A}$.)
		Now that $R$ is a field, we may regard the reduced rank as an integer
		and further specialize to the algebraic closure, as it would not affect the reduced rank.
		When $R$ is algebraically
		closed, we may
		assume that $A= \nMat{R}{n}$, $P= \nMat{R}{m\times n}$, $B= \nMat{R}{m}$, $Q=\nMat{R}{t\times m}$,
		and checking that $\rrk_B Q=t=\rrk_A (Q\otimes_AP)$ is routine.
		
		(iii) \gap{}By a Theorem of Bass, see \cite[Theorem 9.2]{First_2015_morita_equiv_to_op},
		exists is an progenerator $P'\in \rproj{A}$ such that $B'\cong \End_A(P')$.
		The claim now follows from Proposition~\ref{PR:progenerator-iff-pos-red-rank}
		and (i). 
	\end{proof}

	\begin{cor}\label{CR:degree-of-endo-ring}
		Let $A$ be an Azumaya $R$-algebra and let $e\in A$
		be an idempotent. Then $e$ is full (i.e.\ $AeA=A$)
		if and only if $\rrk_A eA>0$. In this case,
		$eAe$ is an Azumaya $R$-algebra, $\deg eAe=\rrk_AeA$,
		$[A]=[eAe]$ and for all $P\in \rproj{A}$,
		we have $\rrk_AP=\rrk_{eAe}Pe$.
	\end{cor}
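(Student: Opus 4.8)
The plan is to read this off from Proposition~\ref{PR:degree-of-endo-ring} by taking the module $P=eA$, using the identification $\End_A(eA)=eAe$ fixed in the conventions section. First I would note that $eA$, being a direct summand of $A_A$, lies in $\rproj{A}$, so that Propositions~\ref{PR:progenerator-iff-pos-red-rank} and~\ref{PR:degree-of-endo-ring} are all applicable to it.

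For the first assertion I would combine two facts: $e$ is full precisely when $eA$ is a progenerator (recalled in the conventions), and, by Proposition~\ref{PR:progenerator-iff-pos-red-rank}, $eA$ is a progenerator precisely when $\rrk_A eA>0$. Assuming now that $\rrk_A eA>0$, Proposition~\ref{PR:degree-of-endo-ring}(i) applied with $P=eA$ and $B=\End_A(eA)=eAe$ immediately gives that $eAe$ is Azumaya over $R$, that $\deg eAe=\rrk_A eA$, and that $[eAe]=[A]$.

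It then remains to prove $\rrk_A P=\rrk_{eAe}(Pe)$ for every $P\in\rproj{A}$. Here I would invoke standard Morita theory for the full idempotent $e$: the functors $-\otimes_A Ae\colon\rproj{A}\to\rproj{eAe}$ and $-\otimes_{eAe} eA\colon\rproj{eAe}\to\rproj{A}$ are mutually inverse equivalences, using $AeA=A$ to identify $Ae\otimes_{eAe} eA$ with $A$ as an $(A,A)$-bimodule. Since $P\otimes_A Ae\cong Pe$ as right $eAe$-modules, this shows $Pe\in\rproj{eAe}$ and $Pe\otimes_{eAe} eA\cong P$. Applying Proposition~\ref{PR:degree-of-endo-ring}(ii) with the module there taken to be $eA$ (so $B=eAe$) and with $Q=Pe$ then yields $\rrk_{eAe}(Pe)=\rrk_A(Pe\otimes_{eAe} eA)=\rrk_A P$, as desired.

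I do not anticipate a genuine obstacle here: the statement is essentially a bookkeeping consequence of Proposition~\ref{PR:degree-of-endo-ring}. The only point demanding mild care is keeping the left/right bimodule conventions straight and verifying the harmless identification $P\otimes_A Ae\cong Pe$ of right $eAe$-modules; once that is in place the argument is formal.
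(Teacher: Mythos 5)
Your proposal is correct and follows essentially the same route as the paper: fullness of $e$ is equivalent to $\rrk_A eA>0$ via Proposition~\ref{PR:progenerator-iff-pos-red-rank}, the Azumaya/degree/Brauer-class claims come from Proposition~\ref{PR:degree-of-endo-ring}(i) with $\End_A(eA)=eAe$, and the rank identity comes from Proposition~\ref{PR:degree-of-endo-ring}(ii) via Morita theory. The only (immaterial) difference is that the paper applies part (ii) with $eAe$, $A$, $Ae$ in the roles of $A$, $B$, $P$ (so the needed identification is $P\otimes_A Ae\cong Pe$), whereas you apply it with $A$, $eAe$, $eA$ and instead verify $Pe\otimes_{eAe}eA\cong P$ using $AeA=A$; both are valid.
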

	
	\begin{proof}
		Recall that $e$ is full if and only if
		$eA_A$ is a progenerator, and this is equivalent
		to $\rrk_A eA>0$ by
		Proposition~\ref{PR:progenerator-iff-pos-red-rank}. 
		Since $eAe=\End_A(eA)$,
		the first three assertions   follow from
		Proposition~\ref{PR:degree-of-endo-ring}(i).
		For the last assertion, note that by Morita theory,
		$Ae\in\rproj{eAe}$ is a progenerator and
		$A=\End_{eAe}(Ae)$ \cite[Corollary~18.21]{Lam_1999_lectures_on_modules_rings}. 
		Applying  Proposition~\ref{PR:degree-of-endo-ring}(ii)
		with $eAe$, $A$, $Ae$ in place of $A$, $B$, $P$,
		we see that $\rrk_{eAe} Pe=\rrk_{eAe} (P\otimes_A Ae)=\rrk_AP$.
	\end{proof}
	
	\begin{cor}\label{CR:index-divides-rrk}
		Let $A$ be an Azumaya $R$-algebra
		and let $P\in\rproj{A}$. Then $\ind A\mid \rrk_AP$.
	\end{cor}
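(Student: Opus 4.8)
The plan is to reduce to the case where $\rrk_A P$ is everywhere positive and then invoke the structure theory already established. The asserted divisibility is a statement about the $\Z$-valued locally constant functions $\ind A$ and $\rrk_A P$ on $\Spec R$, to be verified pointwise. At any prime $\frakp$ with $(\rrk_A P)(\frakp)=0$ the relation $(\ind A)(\frakp)\mid 0$ is trivial, so it suffices to treat primes in the locus $W:=\{\frakp\in\Spec R : (\rrk_A P)(\frakp)>0\}$, which is clopen since $\rrk_A P$ is continuous into discrete $\Z$. Letting $e\in R$ be the idempotent cutting out $W$, we have $R\cong eR\times(1-e)R$, and $\Br$, $\deg$, $\rrk$ and $\ind$ all decompose accordingly; in particular $(\ind A)|_W=\ind(eA)$, because every Azumaya $eR$-algebra in the class $[eA]$ is the $W$-part of an Azumaya $R$-algebra in $[A]$. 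Hence we may replace $R$ by $eR$ and assume $\rrk_A P>0$ on all of $\Spec R$.

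Under this assumption, Proposition~\ref{PR:progenerator-iff-pos-red-rank} shows that $P$ is a progenerator over $A$, and Proposition~\ref{PR:degree-of-endo-ring}(i) then gives that $B:=\End_A(P)$ is an Azumaya $R$-algebra with $[B]=[A]$ and $\deg B=\rrk_A P$. Since $B$ represents the Brauer class $[A]$, the definition $\ind A=\gcd\{\deg A' : A'\in[A]\}$ (the $\gcd$ taken pointwise) forces $\ind A\mid \deg B=\rrk_A P$, as claimed. One could equally well run this last step through Corollary~\ref{CR:degree-of-endo-ring} applied to a full idempotent $f\in A$ with $fA\cong$ a suitable summand of $P$, but the endomorphism-algebra formulation is cleaner.

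The only mildly delicate point is the reduction in the first paragraph, namely that restricting an Azumaya algebra to a clopen subscheme is compatible with the index, so that isolating the locus $\{\rrk_A P>0\}$ is legitimate; this is routine once one unwinds $\Br R\cong \Br(eR)\times\Br((1-e)R)$ together with the fact that degree functions split as pairs. Alternatively, the bookkeeping can be avoided altogether by fixing $\frakp$, passing to a clopen neighborhood of $\frakp$ contained in $W$ on which $\rrk_A P$ is constant, and running the endomorphism-algebra argument over the corresponding factor ring. I do not expect any genuine obstacle: the content of the corollary lies entirely in Propositions~\ref{PR:progenerator-iff-pos-red-rank} and~\ref{PR:degree-of-endo-ring}, the remainder being manipulation of locally constant functions.
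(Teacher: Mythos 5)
Your argument is correct, and its core coincides with the paper's: once $\rrk_AP>0$, apply Proposition~\ref{PR:degree-of-endo-ring}(i) to get $\End_A(P)\in[A]$ with $\deg\End_A(P)=\rrk_AP$, whence $\ind A\mid\rrk_AP$ by the definition of the index. The only divergence is in the reduction to positive reduced rank. You isolate the clopen locus $\{\rrk_AP>0\}$ via an idempotent and check that index, degree and Brauer class all split along the factorization $R\cong eR\times(1-e)R$; this works, but it is exactly the bookkeeping you flag as ``mildly delicate.'' The paper sidesteps it entirely with a one-line trick: since $\ind A\mid\deg A=\rrk_A(A_A)$, replace $P$ by $P\oplus A$, which shifts $\rrk_AP$ by a multiple of $\ind A$ and makes it everywhere positive without changing the divisibility claim. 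That substitution buys you a proof with no case analysis and no appeal to the compatibility of $\ind$ with clopen decompositions; your version buys nothing extra in generality, so the paper's reduction is strictly preferable, though yours is sound.
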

	
	\begin{proof}
		Since $\ind A\mid \deg A=\rrk_AA$, we may replace
		$P$ with $P\oplus A$ and assume that $\rrk_AP>0$.
		By Proposition~\ref{PR:degree-of-endo-ring}(i),
		$\rrk_AP=\deg \End_A(P)$ and $\End_A(P)\in [A]$, so
		$\rrk_AP$  is divisible by $\ind A$.
	\end{proof}

\begin{prp}
	\label{PR:centralized-in-Az-alg}
	Let $A$ be an Azumaya $R$-algebra, let $S$ be a 
	finite \'etale $R$-subalgebra of $A$ and
	let $\iota:R\to S$ be the inclusion map.  Then $B:=\Cent_A(S)$ is Azumaya over 
	$S$ and $[B]=[A\otimes S]$
	in $\Br S$. Furthermore, $A$ is projective as a right $S$-module,
	and
	if $\rank_S A_A$ is constant along
	the fibers of $\Spec S\to \Spec R$, then:
	\begin{enumerate}[label=(\roman*)]
	\item  $\deg B\cdot \iota\rank_R S =\iota\deg A$,  
	and $\rrk_BP=\iota\rrk_AP$ for all $P\in\rproj{A}$, and
	\item $\iota \rrk_A (Q\otimes_B A)=\iota \rank_RS\cdot \rrk_B Q$
	for all $Q\in\rproj{A}$
	such that $\rrk_BQ$ is constant
	along the fibers of $\Spec S\to \Spec R$.
	\end{enumerate}
\end{prp}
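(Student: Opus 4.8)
The plan is to realise $B=\Cent_A(S)$ as an endomorphism algebra over the Azumaya $S$-algebra $A\otimes_R S$, which reduces the whole statement to results already established in this section. The key move is to give $A$ the structure of a right $A\otimes_R S$-module by the \emph{twisted} action $x\cdot(a\otimes s)=sxa$ for $a,x\in A$ and $s\in S\subseteq A$; this is well defined precisely because $S$ is commutative. The algebra $A\otimes_R S$ is Azumaya over $S$, and it is separable projective over $R$ by \ref{item:sep-proj:fibers-def} (its fibre at $\frakm\in\Max R$ is a product of central simple algebras over separable field extensions of $k(\frakm)$), so Lemma~\ref{LM:projective-transfer} gives $A\in\rproj{A\otimes_R S}$. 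A short computation shows that an $(A\otimes_R S)$-linear endomorphism of $A$ is left multiplication by some $c\in A$ with $cs=sc$ for all $s\in S$, i.e.\ by an element of $\Cent_A(S)=B$, and conversely every such $c$ defines one; hence $c\mapsto(x\mapsto cx)$ is a ring isomorphism $B\xrightarrow{\sim}\End_{A\otimes_R S}(A)$.

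Since $S$ embeds in $A$, the base change $A\otimes_S k(\frakq)$ is nonzero for every $\frakq\in\Spec S$, so $\rrk_{A\otimes_R S}(A)>0$ everywhere on $\Spec S$ and $A$ is a progenerator over $A\otimes_R S$ by Proposition~\ref{PR:progenerator-iff-pos-red-rank}. Applying Proposition~\ref{PR:degree-of-endo-ring}(i) over $S$ to the module $A$ then shows that $B\cong\End_{A\otimes_R S}(A)$ is Azumaya over $S$ with $\deg B=\rrk_{A\otimes_R S}(A)$ and $[B]=[A\otimes_R S]$ in $\Br S$; and $A$ is projective as a right $S$-module by a second use of Lemma~\ref{LM:projective-transfer} (now with $S\subseteq A$ acting by right multiplication, $S$ separable over $R$). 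This proves all the assertions preceding (i).

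The numerical parts (i) and (ii) are then rank bookkeeping built on the change-of-rings formula \eqref{EQ:rank-change-of-rings}. For (i): the $S$-module underlying $A$ as an $(A\otimes_R S)$-module is $A$ with $S$ acting by left multiplication, whose rank agrees fibrewise with $\rank_S A_A$ (the left and right $S$-module structures on $A$ induced from $S\subseteq A$ have equal rank fibrewise, by the identity $\dim\varepsilon M=\dim M\varepsilon$ for an idempotent $\varepsilon$ in a central simple algebra); hence $\deg B=\rank_S A_A/\iota\deg A$, and since $\rank_S A_A$ is constant along the fibres of $\Spec S\to\Spec R$, \eqref{EQ:rank-change-of-rings} applied to $A_S$ turns this into $\deg B\cdot\iota\rank_R S=\iota\rank_R A/\iota\deg A=\iota\deg A$. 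For the second identity in (i): any $P\in\rproj A$ lies in $\rproj B$ by Lemma~\ref{LM:projective-transfer}, and fibrewise $P(\frakp)$ is the same multiple of the simple $A(\frakp)$-module that $A(\frakp)$ is, so $\rank_S P=(\rrk_A P/\deg A)\cdot\rank_S A_A$ along each fibre and is in particular constant along fibres; then \eqref{EQ:rank-change-of-rings} for $P_S$ together with the degree formula gives $\rrk_B P=\rank_S P/\deg B=\iota\rank_R P/\iota\deg A=\iota\rrk_A P$. For (ii) (where $Q\in\rproj B$), applying Proposition~\ref{PR:degree-of-endo-ring}(ii) over $S$ with $A\otimes_R S$, $A$, $B\cong\End_{A\otimes_R S}(A)$ in the roles of $A$, $P$, $B$ there yields $\rrk_B Q=\rrk_{A\otimes_R S}(Q\otimes_B A)$, the module $Q\otimes_B A$ restricting along $A\otimes 1\hookrightarrow A\otimes_R S$ to the right $A$-module in the statement; since $\rank_S(Q\otimes_B A)=\rrk_B Q\cdot\iota\deg A$ is constant along the fibres ($\iota\deg A$ is, by construction, and $\rrk_B Q$ is by hypothesis), \eqref{EQ:rank-change-of-rings} converts this into $\iota\rrk_A(Q\otimes_B A)=\rank_S(Q\otimes_B A)\cdot\iota\rank_R S/\iota\deg A=\rrk_B Q\cdot\iota\rank_R S$.

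I expect the main obstacle to be the very first step: recognising that $\Cent_A(S)$ is exactly $\End_{A\otimes_R S}(A)$ for the twisted action $x\cdot(a\otimes s)=sxa$ — the naive actions either give the wrong algebra or fail to be well defined. Once this identification is in hand the argument is essentially formal, the only point needing care being the verification, in (i) and (ii), that the various $S$-module ranks fed into \eqref{EQ:rank-change-of-rings} are constant along the fibres of $\Spec S\to\Spec R$; this is precisely the purpose of the stated hypotheses.
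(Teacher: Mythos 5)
Your proposal is correct, and it takes a genuinely different route from the paper. The paper disposes of the structural assertions ($B$ Azumaya over $S$, $[B]=[A\otimes S]$) by citing the commutant theorem from Saltman, and then proves (i) and (ii) by base-changing to the algebraic closure of each residue field of $R$ (justified by Lemma~\ref{LM:center-base-change}), where $A=\nMat{R}{n}$, $S\cong R^t$ and $B$ becomes an explicit block-diagonal subalgebra, so that the rank identities can be read off from matrix shapes. You instead reprove the commutant theorem in this setting by exhibiting $B$ as $\End_{A\otimes_R S}(A)$ for the twisted right action $x\cdot(a\otimes s)=sxa$ --- essentially the same device the paper uses in the proof of Lemma~\ref{LM:center-base-change}, but over the base $S$ --- and then extract everything from Propositions~\ref{PR:progenerator-iff-pos-red-rank} and~\ref{PR:degree-of-endo-ring} together with \eqref{EQ:rank-change-of-rings}. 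Your route is self-contained and keeps the key numerical identities at the level of general Morita-theoretic rank formulas rather than explicit matrices; its cost is the bookkeeping of which of the several $S$-module structures on $A$, $P$ and $Q\otimes_BA$ is in play at each step, plus the fibrewise verifications (left versus right $S$-rank of $A$, constancy of $\rank_S P$ along fibres) that you correctly flag. Two small points to tighten: the assertion that $\rrk_{A\otimes_RS}(A)>0$ needs slightly more than ``$S$ embeds in $A$'' --- use that $A$ is a \emph{faithful finite projective} $S$-module (so $A_\frakq\neq 0$ for every $\frakq\in\Spec S$ by Nakayama), or that $S$ is an $S$-module summand of $A$; and when you invoke Lemma~\ref{LM:projective-transfer} to get $P\in\rproj{B}$ in (i), note first that $P$ is finite projective over $R$ and that $B$ is separable over $R$ (being Azumaya over the finite \'etale $R$-algebra $S$). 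Neither point affects the validity of the argument.
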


\begin{proof}
	That $B$ is Azumaya over $S$ and $[B]=[A\otimes S]$ is well-known,
	see \cite[Theorem~3.10]{Saltman_1999_lectures_on_div_alg}, for instance.
	That $A$ is projective as a right $S$-module follows from Lemma~\ref{LM:projective-transfer}.
	
	By Lemma~\ref{LM:center-base-change},  it
	is enough to prove (i) and (ii) after  base changing to every
	residue field of $R$, so assume $R$ is a field.
	In fact, we may further base-change to an algebraic closure
	of $R$ and assume that $R$ is algebraically closed.
	In this case, $S\cong R^t$ for $t=\rank_RS$ and $A=\nMat{R}{n}$ for $n=\deg A$.
	
	Let $e_{i j}\in \nMat{R}{n}$ denote the $n\times n$ matrix with $1$ at the $(i,j)$-entry
	and zeroes elsewhere.
	Let $f_1,\dots,f_t$ denote the   primitive idempotents
	of $S$. Since $\rrk_S A_S$ is constant, $\dim_R Af_i$ is independent
	of $i$, so ${}_A Af_1\cong\dots\cong {}_A Af_t$.
	This means that each $f_i$ is an idempotent of rank $s:=\frac{n}{t}$
	in $A=\nMat{R}{n}$. Since all such idempotents
	are conjugate, we may   choose
	the identification of $A$ with $\nMat{R}{n}$
	such that $f_i=\sum_{j=(i-1)s+1}^{is}e_{jj}$.
	Thus, 
	\[B=\left[
	\begin{matrix}
	\nMat{R}{s} & & \\
	& \ddots & \\
	& & \nMat{R}{s}
	\end{matrix}\right]\subseteq \nMat{R}{n}.
	\] 
	Furthermore, every right $A$-module is isomorphic to $\nMat{R}{m\times n}$
	for some $m\geq 0$ and any right $B$-module with constant $S$-rank
	is isomorphic to $\nMat{R}{\ell\times s } \times \dots \times\nMat{R}{\ell\times s } $
	($t$ times) for some $\ell \geq 0$. 
	Now, verifying (i) and (ii)     is straightforward.
\end{proof}

	The requirement that $\rank_S A_A$ is contstant along the fibers
	of $\Spec S\to \Spec R$ is guaranteed when $\rank_R S=\deg A$.
	
	\begin{cor}\label{CR:rank-over-max-etale}
		Let $A$ be an Azumaya $R$-algebra and let $S$ be a finite \'etale
		subalgebra of $A$ such that $\rank_RS=\deg A$.
		Then $\rank_S A_A=\iota\deg A$ ($\iota:R\to S$
		is the inclusion), $S=\Cent_A(S)$ and $[A_S]=0$.
	\end{cor}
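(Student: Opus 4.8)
The plan is to derive Corollary~\ref{CR:rank-over-max-etale} directly from Proposition~\ref{PR:centralized-in-Az-alg} by specializing to the case $\rank_R S = \deg A$ and then checking that the hypotheses of the proposition are met. First I would invoke the fact recorded after the proof of Proposition~\ref{PR:centralized-in-Az-alg}: since $\rank_R S = \deg A$, the function $\rank_S A_A$ is automatically constant along the fibers of $\Spec S \to \Spec R$. Indeed, $A$ is projective as a right $S$-module by Lemma~\ref{LM:projective-transfer}, and the change-of-rings formula \eqref{EQ:rank-change-of-rings} gives $\rank_S A_A \cdot \iota\rank_R S = \iota\rank_R A = \iota(\deg A)^2$; since $\iota\rank_R S = \iota\deg A$, we get $\rank_S A_A = \iota\deg A$, which is manifestly constant along the fibers. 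This already proves the first assertion and also licenses the use of parts (i) and (ii) of Proposition~\ref{PR:centralized-in-Az-alg}.

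Next I would establish $S = \Cent_A(S)$. Set $B = \Cent_A(S)$. By Proposition~\ref{PR:centralized-in-Az-alg}, $B$ is Azumaya over $S$ and part~(i) applies, giving $\deg B \cdot \iota\rank_R S = \iota\deg A$; combined with $\iota\rank_R S = \iota\deg A$, this forces $\deg B = 1$ (as a function $\Spec S \to \N$), i.e.\ $\rank_S B = 1$. An Azumaya $S$-algebra of degree $1$ equals $S$ (more precisely, the structure map $S \to B$ is an isomorphism, since it is a map of finite projective $S$-modules of rank $1$ that splits $S \hookrightarrow B$, or one can cite that degree-$1$ Azumaya algebras are trivial in the Brauer group and have rank $1$), so $B = S$, i.e.\ $S = \Cent_A(S)$. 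This shows $S$ is a maximal commutative étale subalgebra.

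Finally, for the Brauer class: Proposition~\ref{PR:centralized-in-Az-alg} asserts $[B] = [A \otimes S] = [A_S]$ in $\Br S$. Since we have just shown $B = S$, its Brauer class is trivial, hence $[A_S] = 0$ in $\Br S$. This completes all three assertions.

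The only mild subtlety — and the step I would be most careful about — is the passage ``$\deg B = 1 \implies B = S$''. It is genuinely elementary here because $B$ is given to be Azumaya over $S$ with $B \supseteq S$: the inclusion $S \hookrightarrow B$ is a split injection of finite projective $S$-modules, and $\rank_S B = (\deg B)^2 = 1 = \rank_S S$, so the inclusion is an isomorphism of $S$-modules, hence of $S$-algebras. No appeal to the structure theory of the Brauer group is strictly needed, though one could alternatively note that $\rank_S B = 1$ already gives $[A_S] = [B] = 0$ and that a rank-$1$ $S$-algebra containing $S$ must equal $S$. Apart from this, the corollary is a straightforward unwinding of the proposition in the special case $\rank_R S = \deg A$, so I do not anticipate any real obstacle.
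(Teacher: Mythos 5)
Your deduction of $S=\Cent_A(S)$ and $[A_S]=0$ from Proposition~\ref{PR:centralized-in-Az-alg} (via $\deg B=1$ and the splitting $B=S\oplus C$ with $\rank_SC=0$) is fine and is exactly how the paper disposes of those two assertions. The problem is your proof of the first assertion, $\rank_SA_A=\iota\deg A$, which is the only part the paper regards as needing an argument — and your argument for it is circular. The identity \eqref{EQ:rank-change-of-rings}, $\rank_SN\cdot\iota\rank_RS=\iota\rank_RN$, is stated in the paper \emph{under the hypothesis} that $\rank_SN$ is constant along the fibers of $\Spec S\to\Spec R$; without that hypothesis it is simply false (take $S=R\times R$ and $N=Se_1$). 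You apply it to $N=A_S$ in order to compute $\rank_SA_A$ and then observe that the answer is constant along fibers — but you needed that constancy before you were allowed to divide. The ``fact recorded after the proof of Proposition~\ref{PR:centralized-in-Az-alg}'' that you invoke to get constancy is precisely the sentence introducing this corollary; in the paper it is justified \emph{by} the corollary, not available prior to it. In general, $\rank_SA_A$ is only locally constant on $\Spec S$, and a fiber of $\Spec S\to\Spec R$ has up to $\deg A$ points, so constancy along fibers is a genuine claim.

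What is missing is the content of the paper's proof. Without assuming constancy one only has the counting identity $(\rank_RA)(\frakp)=\sum_i[K_i:k(\frakp)]\,\dim_{K_i}(A\otimes_SK_i)$ for $S(\frakp)=\prod_iK_i$, i.e.\ $n^2=\sum_i[K_i:k(\frakp)]\,\dim_{K_i}(A\otimes_SK_i)$ with $\sum_i[K_i:k(\frakp)]=n:=\deg A(\frakp)$. To force each $\dim_{K_i}(A\otimes_SK_i)$ to equal $n$ one needs two further inputs: $A$ is projective as a right $A^{\op}\otimes S$-module (Lemma~\ref{LM:projective-transfer}, since $A^{\op}\otimes S$ is separable projective), whence $\deg A_S=\iota\deg A$ divides $\rank_SA_A$ pointwise on $\Spec S$; and $\rank_SA_A>0$ everywhere because $A$ is faithful over $S$. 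Then each summand is a positive multiple of $n$, and the two displayed equalities force every multiplier to be $1$. Your proposal skips the divisibility and positivity steps entirely, so the first assertion is not established.
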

	
	\begin{proof}
		We only need to show that $\rank_S A_A=\iota\deg A$.
		The remaining assertions then follow from Proposition~\ref{PR:centralized-in-Az-alg}.

		The algebra $A$ is an $(A,S)$-bimodule and hence
		a right module over $A^\op\otimes S$. 
		By Lemma~\ref{LM:projective-transfer}, $A_{A^\op\otimes S}$ is projective,
		so $\deg A_S\mid \rank_S A_A$. Furthermore,   $\rank_S A_A>0$ because $A$ is faithful
		as a right $S$-module. 
		Let $\frakp\in\Max R$ and write $S(\frakp)=\prod_{i=1}^t K_i$,
		where $K_i$ is a $k(\frakp)$-field.
		We need to show that $\dim_{K_i} (A\otimes_S K_i)=\deg A(\frakp)$.
		Write $n=\deg A(\frakp)$.
		Then  $n^2=\dim_{k(\frakp)}A(\frakp)=\sum_i [K_i:k(\frakp)] \dim_{K_i}(A\otimes_S K_i)$
		and $\sum_i [K_i:k(\frakp)]=\dim_{k(\frakp)}S(\frakp)=n$.
		Since $\dim_{K_i}(A\otimes_SK_i)$ is positive and divisible by $n$,
		we must have $\dim_{K_i}(A\otimes_SK_i)=n$ for all $i$,
		as required.
	\end{proof}

\subsection{Quadratic \'Etale Algebras}
\label{subsec:quad-etale}

Finite \'etale $R$-algebras of $R$-rank $2$ are
also called \emph{quadratic \'etale} algebras. 
Every such algebra $S$ admits a unique
$R$-involution $\theta:S\to S$
such that $R=\{s\in S\suchthat s^\theta=s\}$;
it is given by $x^\theta=\Tr_{S/R}(x)-x$ and satisfies
$\Nr_{S/R}(x)=x^\theta x$.\footnote{
	If $A$ is a  finite projective $R$-algebra of rank $n\in\N$, 
	then the \emph{trace} and \emph{norm} maps $\Tr_{A/R},\Nr_{A/R}:A\to R$
	take $a\in A$ to $-c_1(a)$ and $(-1)^n c_n(a)$, respectively,
	where $X^n+c_1(a)X^{n-1}+\dots+c_n(a)X^0$ is the characteristic polynomial
	of $[x\mapsto ax]\in \End_R(A)$ in the sense of \cite[Example~5.3.3]{Ford_2017_separable_algebras}.
} See
\cite[Proposition I.1.3.4]{Knus_1991_quadratic_hermitian_forms} for its uniqueness.
Following \cite[I.\S1.3]{Knus_1991_quadratic_hermitian_forms},
we call $\theta$ the \emph{standard $R$-involution} of $S$.

For example, the $R$-algebra  $R\times R$ is quadratic \'etale 
and its  standard involution
is the exchange involution $(x,y)\mapsto (y,x)$.
Furthermore, by our standing assumption that $2\in\units{R}$,
the $R$-algebra $R[x\where x^2=a]$ is quadratic \'etale
whenever $a\in\units{R}$ (use \ref{item:sep-proj:fibers-def} above), and its standard
involution is determined by $x^\theta=-x$.

\begin{lem}\label{LM:non-connected-S}
	Let $S$ be a quadratic \'etale $R$-algebra.
	If $R$ is connected and $S$ is not connected,
	then $S\cong R\times R$
	as $R$-algebras.
\end{lem}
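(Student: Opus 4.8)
The plan is to exploit the idempotent structure of $S$ together with the fact that $S$ is finite \'etale of rank $2$ over a connected ring $R$. First I would note that since $S$ is not connected, it contains a nontrivial idempotent $e \in S$, i.e.\ $e \neq 0$ and $e \neq 1$, giving a direct product decomposition $S = eS \times (1-e)S$ of $R$-algebras. Each factor $eS$ and $(1-e)S$ is a finite projective $R$-module, being a summand of $S$, and their ranks (locally constant functions $\Spec R \to \Z_{\geq 0}$) sum to $\rank_R S = 2$. Since $R$ is connected, each of $\rank_R eS$ and $\rank_R(1-e)S$ is a constant integer; call them $r_1$ and $r_2$ with $r_1 + r_2 = 2$ and (since $e \neq 0, 1$) both $r_1, r_2 \geq 1$. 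Hence $r_1 = r_2 = 1$.

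Next I would argue that an $R$-algebra which is a finite projective $R$-module of constant rank $1$ and contains $R$ as a unital subalgebra must equal $R$. Indeed, if $T$ is such an algebra, the structure map $R \to T$ is a map of rank-$1$ projective $R$-modules sending $1$ to $1$; locally on $\Spec R$ it is multiplication by a unit, hence it is an isomorphism of $R$-modules, and being also a ring homomorphism it identifies $T$ with $R$. Applying this to $T = eS$ (with unit element $e$, viewed as an $R$-algebra via $R \to eS$, $a \mapsto ae$) and to $T = (1-e)S$, we get $eS \cong R$ and $(1-e)S \cong R$ as $R$-algebras. Therefore $S = eS \times (1-e)S \cong R \times R$ as $R$-algebras, as desired.

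The only mild subtlety — and what I would treat as the main point to get right — is the rank bookkeeping: one must invoke that $\rank_R$ of a finite projective module is a locally constant function (as recalled in the Notation section, citing \cite{Ford_2017_separable_algebras}), that $R$ connected forces it to be a genuine integer, and that ranks are additive on direct sums (so $\rank_R eS + \rank_R(1-e)S = 2$). One should also confirm that $e \neq 0$ and $e \neq 1$ really do force both summands to have positive rank: a nonzero summand of a finite projective module over a connected ring has everywhere-positive, hence $\geq 1$, rank, because a projective module is zero iff it is zero at every prime. Everything else is the elementary observation that a unital subalgebra inclusion between rank-$1$ projectives is forced to be an isomorphism. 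No serious obstacle arises; the lemma is essentially a structural triviality once the rank-$2$ \'etale hypothesis is unpacked.
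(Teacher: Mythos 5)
Your proof is correct, but it takes a genuinely different route from the paper's. You split $S = eS \times (1-e)S$ at a nontrivial idempotent and then do pure rank bookkeeping: additivity of $\rank_R$ on direct summands, constancy of rank over the connected base, positivity of the rank of a nonzero summand, and finally the standard fact that a commutative finite projective $R$-algebra of constant rank $1$ is identified with $R$ by its structure map (your local argument that the identity element generates, hence the map is locally multiplication by a unit, is the right justification and worth spelling out). Notably, your argument never uses that $S$ is separable --- it works for any commutative finite projective $R$-algebra of rank $2$ --- whereas the paper's proof leans on the standard involution $\theta$ of the quadratic \'etale algebra: it shows $e^\theta e$ is a non-invertible idempotent of $R$, hence $0$, deduces $e + e^\theta = 1$ from connectedness of $R$, and then exhibits explicit mutually inverse maps $r \mapsto er$ and $s \mapsto s + s^\theta$ between $R$ and $eS$. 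The paper's route buys explicit isomorphisms (and the useful byproduct $e^\theta = 1 - e$, which reappears elsewhere in the text); yours buys greater generality and avoids invoking the involution machinery. Both are complete.
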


\begin{proof}
	Let $\theta$ denote the standard $R$-involution of $S$,
	and let $e\in S$ be a nontrivial idempotent.
	Then $e^\theta e$ is a non-invertible idempotent of $R$, hence $e^\theta e=0$
	(because $R$ is connected).
	This means that $e+e^\theta$ is also an idempotent in $R$, and it is nonzero
	because $e(e+e^\theta)=e\neq 0$. Since $R$ is connected,   $e+e^\theta=1$.
	It is now routine to check that
	$r\mapsto er:R\to eS$ and $s\mapsto s+s^\theta: eS\to R$
	are mutually inverse. Since the former map is an 
	$R$-algebra homomorphism, we see that $R\cong eS$ as $R$-algebras,
	and similarly  $R\cong e^\theta S=(1-e)S$. The lemma
	follows because $S\cong eS\times (1-e)S$.
\end{proof}

\begin{lem}\label{LM:invs-of-quad-et-algs}
	Let $S$ be a quadratic \'etale $R$-algebra and
	let $\sigma:S\to S$ be an $R$-involution.
	Then there exists a factorization $R=R_1\times R_2$
	such that $\sigma_{R_1}:S_{R_1}\to S_{R_1}$ is the standard
	$R_1$-involution of $S_{R_1}$
	and $\sigma_{R_2}:S_{R_2}\to S_{R_2}$ is the identity.
	In particular, if $R$ is connected, then $\sigma$ is either
	the standard $R$-involution of $S$ or $\id_S$.
\end{lem}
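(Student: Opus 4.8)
The plan is to reduce to the two extreme cases of the structure of $S$ --- namely $S\cong R\times R$ and $S$ connected over a connected base --- by splitting $R$ into a product along a suitable idempotent. First I would introduce the standard involution $\theta$ of $S$ and consider the $R$-linear composite $\phi:=\sigma\circ\theta:S\to S$. Since $\sigma$ and $\theta$ are both $R$-involutions of the commutative $R$-algebra $S$, $\phi$ is an $R$-algebra automorphism of $S$ with $\phi^2=\id_S$ (using that $\theta$ is central in the sense that it is the unique standard involution, and that any two involutions differ by an automorphism which in the rank-$2$ commutative case commutes with $\theta$; alternatively compute directly on the generator). The desired factorization $R=R_1\times R_2$ should be read off from $\phi$: on the part where $\phi=\id_S$ we will have $\sigma=\theta$, and on the part where $\phi=\theta$ we will have $\sigma=\theta\circ\theta=\id_S$.

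To make this precise I would work Zariski-locally. The group scheme $\underline{\Aut}_R(S)$ of $R$-algebra automorphisms of a quadratic \'etale algebra is the constant group scheme $\Z/2\Z$ (this follows from \ref{item:sep-proj:fibers-def} together with Lemma~\ref{LM:non-connected-S}: \'etale-locally, indeed Zariski-locally after passing to a finite cover, $S$ becomes $R\times R$, whose only $R$-automorphisms are the identity and the exchange involution). Hence $\Hom_{R\text{-alg}}(\Z/2\Z,\underline{\Aut}_R(S))=\{\id,\theta\}$ after any faithfully flat base change, and by descent the set of $R$-involutions of $S$ is $\{\id_S,\theta\}$ over each connected component of $\Spec R$. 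More concretely, I would avoid descent machinery and argue as in Lemma~\ref{LM:non-connected-S}: the set $Z:=\{\,s\in S : s^\sigma = s\,\}$ is an $R$-subalgebra of $S$ containing $R$; I claim there is an idempotent $f\in R$ with $Z_{R_1}=S_{R_1}$ and $Z_{R_2}=R_2$, where $R_1=fR$, $R_2=(1-f)R$. Granting the claim, on $R_1$ we get $\sigma_{R_1}=\id$, and on $R_2$ the subring fixed by $\sigma_{R_2}$ is exactly $R_2$, so by the uniqueness clause of \cite[Proposition I.1.3.4]{Knus_1991_quadratic_hermitian_forms} (quoted just before the lemma) $\sigma_{R_2}$ is the standard involution of $S_{R_2}$. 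After renaming, this is the assertion with the roles of $R_1,R_2$ swapped relative to the statement, which is fine.

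To produce the idempotent $f$, I would check the two possibilities fiberwise and then globalize. For each $\frakm\in\Max R$, the fiber $S(\frakm)$ is either $k(\frakm)\times k(\frakm)$ or a separable quadratic field extension of $k(\frakm)$, and in either case the only $k(\frakm)$-involutions are $\id$ and the standard one; so for each $\frakm$, $\sigma(\frakm)$ is one of these two, partitioning $\Max R$ (equivalently $\Spec R$, since $\deg$-type invariants are locally constant) into the locus where $\sigma$ is trivial and the locus where it is standard. Because $S$ is finite projective and these loci are cut out by the vanishing/non-vanishing of $x^\sigma+x$ on a local generator (and by a rank computation of $Z$), they are open and closed, giving the clopen decomposition $\Spec R=\Spec R_1\sqcup\Spec R_2$ and hence the idempotent $f$. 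Over a connected $R$ one of the two pieces is empty, yielding the final sentence. The main obstacle I anticipate is the bookkeeping needed to show that the locus ``$\sigma=\theta$'' is genuinely clopen --- i.e.\ that $\{s\in S:s^\sigma=s\}$ is a finite projective $R$-module of locally constant rank, so that the rank being $1$ versus $2$ is an open-and-closed condition; this is where I would use that $2\in\units R$ (to split $S=R\oplus S^{\theta=-1}$ with $S^{\theta=-1}$ a rank-$1$ projective) and compare $\sigma$ and $\theta$ on the summand $S^{\theta=-1}$, where $\sigma$ acts by multiplication by a global section $u$ of $\underline{\Aut}$ satisfying $u^2=1$, whose $\pm1$-eigenspaces are the sought clopen pieces.
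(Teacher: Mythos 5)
Your argument is correct, but note that the paper does not actually prove this lemma: it records it as a restatement of \cite[Proposition~III.4.1.2]{Knus_1991_quadratic_hermitian_forms}. So what you have written is a genuine self-contained proof rather than a variant of the paper's argument. Within your proposal, the load-bearing step is the last one, and it can be isolated cleanly so that everything before it becomes unnecessary: since $2\in\units{R}$, write $S=R\oplus L$ with $L:=\Sym_{-1}(S,\theta)$ a rank-one projective $R$-module; the conjugate $\sigma\circ\theta\circ\sigma$ is an $R$-involution of $S$ with fixed ring $\sigma(R)=R$, hence equals $\theta$ by the uniqueness clause of \cite[Proposition~I.1.3.4]{Knus_1991_quadratic_hermitian_forms}, so $\sigma$ commutes with $\theta$ and therefore preserves $L$; then $\sigma|_L\in\End_R(L)=R$ is an element $u$ with $u^2=1$ (using that $\ann_R L=0$), and the idempotent $\frac{1}{2}(1+u)$ produces the factorization directly, with $\sigma=\id$ on the component where $u=1$ and $\sigma=\theta$ on the component where $u=-1$. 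This renders the fiberwise/clopen-locus bookkeeping of your middle paragraphs superfluous, and it also supplies the justification you left informal for why $\phi=\sigma\circ\theta$ squares to the identity. The only other point worth tightening is the appeal to ``$\Hom_{R\text{-alg}}(\Z/2\Z,\underline{\Aut}_R(S))$'', which is not meaningful as written, but you rightly discard that route in favor of the direct computation.
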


\begin{proof}
	This is a restatement of  \cite[Proposition~III.4.1.2]{Knus_1991_quadratic_hermitian_forms}.
\end{proof}

\begin{lem}\label{LM:splitting-quad-et-algs}
	Let $S$ be a quadratic \'etale $R$-algebra.
	Then $S_S\cong S\times S$ as $S$-algebras.
\end{lem}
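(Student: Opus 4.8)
The plan is to show that any quadratic \'etale $R$-algebra $S$ becomes split after base change to itself, i.e.\ $S \otimes_R S \cong S \times S$ as $S$-algebras, by exhibiting the splitting idempotent directly via the multiplication map. First I would set up the $S$-algebra homomorphism $m \colon S_S = S \otimes_R S \to S$ given by $x \otimes y \mapsto xy$, which is $S$-linear for the left tensor factor; its kernel is generated by elements of the form $s \otimes 1 - 1 \otimes s$. Since $S$ is finite projective of rank $2$ over $R$, locally on $\Spec R$ it has an $R$-basis $\{1, \omega\}$, and I would compute that $S_S$ is a free $S$-module of rank $2$ with $S$-basis $\{1 \otimes 1, 1 \otimes \omega\}$; then $\xi := \omega \otimes 1 - 1 \otimes \omega$ satisfies $m(\xi) = 0$, and the key point is to check that $\xi^2 \in \units{S}\cdot\xi$, or more precisely that $\delta := (\omega\otimes 1 - 1\otimes\omega)^2$ lies in $\units{R} \subseteq S$. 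Indeed $\delta$ is (up to a unit) the discriminant of the quadratic \'etale algebra, which is a unit precisely because $S$ is \'etale over $R$ (one can see this from \ref{item:sep-proj:fibers-def}, since the discriminant of a separable quadratic field extension in characteristic $\neq 2$ is nonzero).

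Having produced $\xi$ with $\xi^2 = \delta \in \units{R}$ and $m(\xi) = 0$, I would then form the element $e := \tfrac{1}{2}\bigl(1 \otimes 1 + \delta^{-1}\eta\,\xi\bigr)$ for a suitable unit $\eta$ (essentially arranging $\eta\xi$ to be an element whose square is $1$), and verify that $e$ is an idempotent in $S_S$ with $e \neq 0, 1$. The complementary idempotent is $1 - e$, and both $eS_S$ and $(1-e)S_S$ are finite projective $S$-modules; comparing ranks (each has $S$-rank $1$, since $S_S$ has $S$-rank $2$ and the two summands are interchanged by the standard involution $\theta \otimes \id_S$, hence have equal rank) shows each is free of rank $1$ over $S$. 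Therefore $S_S \cong eS_S \times (1-e)S_S \cong S \times S$ as $S$-algebras, the last isomorphism because the projection $S_S \to eS_S$ composed with a trivialization is an $S$-algebra map onto $S$.

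An alternative, perhaps cleaner route I would consider: reduce to the two standard forms of quadratic \'etale algebras. By faithfully flat descent / locally on $\Spec R$ in the Zariski (or \'etale) topology, and using that the statement $S_S \cong S\times S$ can be checked after the faithfully flat base change $R\to S$ itself is not circular here — rather, I would argue that since $S$ is \'etale and finite of rank $2$, after base change to $S$ the algebra $S\otimes_R S$ acquires a section of $m$, namely the diagonal, and a finite \'etale algebra of rank $2$ over a ring which admits an algebra map to the base ring must be isomorphic to the base ring times its complement. Concretely: $m\colon S_S \to S$ is a surjective $S$-algebra map between finite projective $S$-algebras with $S_S$ of constant rank $2$; its kernel $I$ is an ideal with $S_S = S\cdot 1 \oplus I$ as $S$-modules (the diagonal splits $m$), $I$ is finite projective of rank $1$, and because $S_S$ is \'etale over $S$ the idempotent-lifting / "\'etale algebras are products near an algebra-point" argument shows $I = (1-e)S_S$ for an idempotent $e$, whence $S_S \cong S \times (1-e)S_S$ and the second factor is rank $1$, hence a quadratic \'etale... no, rank $1$, hence $\cong S$.

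The main obstacle is verifying that the relevant discriminant element is a \emph{unit} — equivalently, that the idempotent $e$ actually exists globally and is nontrivial — since over a general (possibly disconnected, non-reduced) ring $R$ one cannot simply pick a square root or reason fiberwise without care. I expect to handle this by reducing to $\Max R$ via \ref{item:sep-proj:fibers-def} and Lemma~\ref{LM:invertability-test}: the element $\delta = (\omega\otimes 1 - 1\otimes\omega)^2$ is invertible iff it is invertible at every $k(\frakm)$, and at each maximal ideal $S(\frakm)$ is a product of separable quadratic (or trivial) extensions of $k(\frakm)$, for which the discriminant is manifestly nonzero. This, together with Lemma~\ref{LM:semilocal-direct-sum-reduction} or a direct rank computation for the splitting $S_S = eS_S \oplus (1-e)S_S$, should complete the argument without any serious computation.
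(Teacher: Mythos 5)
Your overall strategy --- split $S\otimes_R S$ by a nontrivial idempotent sitting in the kernel of the multiplication map $m$ --- is the standard one; the paper itself gives no argument and simply cites \cite[III.\S4.1]{Knus_1991_quadratic_hermitian_forms}. However, the central computation in your first route is wrong. The element $\delta:=(\omega\otimes 1-1\otimes\omega)^2$ does \emph{not} lie in $R$ (nor in $S\cdot(1\otimes 1)$): already for $S=R[\omega\where \omega^2=a]$ one gets $\delta=2a(1\otimes 1)-2\,\omega\otimes\omega$. What is true --- and what you also assert in the same sentence, inconsistently with the ``more precisely'' reformulation --- is that $\xi^2=u\xi$, where $\xi=\omega\otimes1-1\otimes\omega$ and $u$ is the image of $\pm(\omega-\omega^\theta)\in\units{S}$ (indeed, writing $\omega^2=t\omega-n$ with $t=\Tr_{S/R}(\omega)$, $n=\Nr_{S/R}(\omega)$, one computes $\xi^2=(\omega-\omega^{\theta})\xi$, and $(\omega-\omega^\theta)^2=t^2-4n$ is a unit because $S/R$ is \'etale and $2\in\units{R}$ --- this is the part your fiberwise reduction via \ref{item:sep-proj:fibers-def} correctly handles). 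Since $m(\xi)=0$, the element $\delta=u\xi$ is a zero divisor, not a unit, and your proposed idempotent $e=\tfrac12(1\otimes1+\delta^{-1}\eta\,\xi)$ cannot be formed: no scalar multiple $\eta\xi$ has square $1$, because $(\eta\xi)^2=\eta^2u\xi\in\ker m$. The repair is immediate: take $e:=u^{-1}\xi$, which is idempotent directly from $\xi^2=u\xi$, is nonzero and $\neq 1$ in every fiber (its coordinate on $\omega\otimes1$ is a unit, and $m(e)=0$), and then your rank count and the fiberwise-nonzero argument for $S\to eS_S$ finish the proof. Equivalently, $S_S\cong S[X]/\bigl((X-\omega)(X-\omega^\theta)\bigr)$ with the two factors comaximal since $\omega-\omega^\theta\in\units{S}$, and CRT gives the splitting.

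A second, smaller issue: you choose $\omega$ only Zariski-locally on $\Spec R$, so the local idempotents must be shown to glue; this is not the same as the invertibility of the discriminant, with which your last paragraph conflates it. They do glue, because $e=u^{-1}\xi$ is unchanged when $\omega$ is replaced by $a\omega+b$ ($a\in\units{R}$, $b\in R$), both $\xi$ and $u$ scaling by $a$; alternatively, work globally with the separability idempotent of $S/R$, exactly as the paper does in the proof of Lemma~\ref{LM:dfn-of-pi-A-B}(i). Your alternative route (the kernel of the split surjection $m$ on the rank-$2$ \'etale $S$-algebra $S_S$ is generated by an idempotent) is sound and is essentially the cited argument of Knus, though as written it leans on that standard fact without proof.
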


\begin{proof}
	This follows from the discussion in  \cite[III.\S4.1]{Knus_1991_quadratic_hermitian_forms}.
\end{proof}


\begin{lem}\label{LM:quad-etale-over-semilocal}
	Suppose that $R$ is semilocal
	and let $S$ be a quadratic \'etale $R$-algebra
	with standard involution $\theta$.
	Then there exists $\lambda\in S$
	such that $\lambda^2\in \units{R}$, $\lambda^\theta=-\lambda$
	and $\{1,\lambda\}$ is an $R$-basis of $S$.
\end{lem}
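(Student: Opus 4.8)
The plan is to exhibit $\lambda$ as a free generator of the ``trace-zero'' part of $S$, and then read off the three required properties almost formally from the formulas $x^\theta=\Tr_{S/R}(x)-x$ and $\Nr_{S/R}(x)=x^\theta x$ of \ref{subsec:quad-etale} together with \ref{item:sep-proj:fibers-def}; the only nonformal input will be that a rank-one projective module over a semilocal ring is free.

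First I would consider the $R$-linear map $\tfrac12\Tr_{S/R}\colon S\to R$. Since $\Tr_{S/R}(1)=\rank_R S=2$ and $2\in\units R$, this map is a retraction of the structure map $R\to S$, so $S=R\cdot 1\oplus S^-$ as $R$-modules, where $S^-:=\{x\in S\suchthat \Tr_{S/R}(x)=0\}=\ker(\tfrac12\Tr_{S/R})$. Thus $S^-$ is a direct summand of the finite projective $R$-module $S$, hence finite projective of constant rank $\rank_R S-1=1$; as $R$ is semilocal, $S^-$ is free of rank one, say $S^-=R\lambda$. Then $\{1,\lambda\}$ is an $R$-basis of $S$. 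From $x^\theta=\Tr_{S/R}(x)-x$ and $\Tr_{S/R}(\lambda)=0$ we get $\lambda^\theta=-\lambda$, and hence $\lambda^2=-\lambda^\theta\lambda=-\Nr_{S/R}(\lambda)\in R$ (equivalently, $(\lambda^2)^\theta=(\lambda^\theta)^2=\lambda^2$).

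It then remains only to check $\lambda^2\in\units R$, i.e.\ that $\lambda^2$ lies in no maximal ideal of $R$. Fix $\frakm\in\Max R$; since $R\to S$ is split injective, so is $k(\frakm)\to S(\frakm)$, so $\lambda^2\in\frakm$ is equivalent to the vanishing of the image $\lambda(\frakm)^2$ of $\lambda^2$ in $S(\frakm)$. By \ref{item:sep-proj:fibers-def}, $S(\frakm)$ is a finite product of separable field extensions of $k(\frakm)$, hence reduced, so $\lambda(\frakm)^2=0$ would force $\lambda(\frakm)=0$; but $\{1,\lambda\}$ base-changes to a $k(\frakm)$-basis of $S(\frakm)$, so $\lambda(\frakm)\neq 0$, a contradiction. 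Hence $\lambda^2\notin\frakm$ for all $\frakm\in\Max R$, so $\lambda^2\in\units R$, and $\lambda$ has all the required properties. There is no real obstacle here: the argument is formal once one grants the freeness of the rank-one projective $S^-$ (the Picard group of a semilocal ring being trivial) and the reducedness of the fibres $S(\frakm)$; if one wished to avoid the former, one could instead start from any $R$-basis $\{1,y\}$ of $S$ — which exists for the same reason — and put $\lambda:=2y-\Tr_{S/R}(y)$, to which the same computations apply since $2\in\units R$.
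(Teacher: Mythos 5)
Your proof is correct and follows essentially the same route as the paper: decompose $S=R\oplus S^-$ (your trace-zero part $S^-$ is exactly $\Sym_{-1}(S,\theta)$, since $x^\theta=-x$ iff $\Tr_{S/R}(x)=0$), use that a rank-one projective over a semilocal ring is free to get the generator $\lambda$, compute $\lambda^2=-\Nr_{S/R}(\lambda)\in R$, and rule out $\lambda^2\in\frakm$ via the \'etaleness of the fibre $S(\frakm)$ — the paper phrases this last step as $k(\frakm)[x\where x^2=0]$ not being \'etale, you phrase it as reducedness of $S(\frakm)$, which is the same point. No gaps.
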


\begin{proof}	
	Since $2\in\units{R}$, we have $S=\Sym_1(S,\theta)\oplus \Sym_{-1}(S,\theta)=
	R\oplus \Sym_{-1}(S,\theta)$, and so 
	$\Sym_{-1}(S,\theta)$ is a rank-$1$ projective $R$-module.
	Since $R$ is semilocal,  $\Sym_{-1}(S,\theta)$ is free.
	Let $\lambda$ be a generator of $\Sym_{-1}(S,\theta)$.
	Then $\lambda^2=-\lambda\cdot\lambda^\theta=-\Nr_{S/R}(\lambda)\in R$ and $\{1,\lambda\}$
	is an $R$-basis of $S$.
	As a result, $S\cong R[x\where x^2-a]$, where $a=\lambda^2$.
	If $a\notin \units{R}$, then there exists $\frakm\in \Max R$ with $a\in\frakm$,
	and it follows that
	$S(\frakm)\cong k(\frakm)[x\where x^2=0]$ is not \'etale over $k(\frakm)$.
	Thus, we must have $\lambda^2=a\in\units{R}$.
\end{proof}

\subsection{Azumaya Algebras With Involution}
\label{subsec:Az-alg-inv}

	Recall our standing assumption that $2\in\units{R}$.
	An \emph{Azumaya  algebra with involution}\footnote{
		This should be understood as ``Azumaya algebra-with-involution''
		rather than ``Azumaya-algebra with involution''.
	} over   $R$
	is an $R$-algebra with involution $(A,\sigma)$
	such that $A$ is separable projective over $R$
	and the homomorphism $r\mapsto r\cdot 1_A:R\to A$
	identifies $R$ with the $\sigma$-fixed elements
	of $\Cent(A)$. {\it Note that $A$ is not necessarily
	Azumaya as an $R$-algebra.} Rather, $A$ is Azumaya over $\Cent(A)$, so that
	$\deg A$ is a function from $\Spec \Cent(A)$ to $\Z$ and $[A]\in \Br \Cent(A)$, 
	cf.\ Remark~\ref{RM:Azumaya-over-center}.
	
	If $(A,\sigma)$ is an Azumaya $R$-algebra with involution and $S$ is an $R$-ring,
	then $(A_S,\sigma_S)$ is an Azumaya $S$-algebra with involution.
	Indeed, $\Cent(A_S)=\Cent(A)\otimes S$ by Lemma~\ref{LM:center-base-change}, and
	the exact sequence $0\to R\xrightarrow{r\mapsto r\cdot 1_A} \Cent(A)\xrightarrow{a\mapsto a-a^\sigma} R\to 0$
	is split
	at $\Cent(A)$ because $\Cent(A)=R1_A\oplus \Sym_{-1}(\Cent(A),\sigma)$, 
	so it remains exact after tensoring with $S$.
	Together, this means that $s\mapsto s\cdot 1_A:S\to  \{a\in \Cent(A_S)\suchthat a-a^{\sigma_S}=0\}$ 
	is an isomorphism, hence our claim.

	\begin{example}\label{EX:Azumaya-over-fixed-subring}
		Let $A$ be a separable projective $R$-algebra,
		let $\sigma:A\to A$ be an $R$-involution
		and let $R_1:=\{s\in\Cent(A)\suchthat s^\sigma=s\}$.
		Then $(A,\sigma)$ is an Azumaya $R_1$-algebra with involution.
		Indeed, $R_1$ is a $R$-summand of $\Cent(A)$ because $2\in\units{R}$,
		so by Lemma~\ref{LM:separable-subring-I},
		$A$ is separable projective over $R_1$ and
		$R_1$ is finite \'etale over $R$.
	\end{example}
	
	When $R $ is a field $F$,
	an Azumaya $F$-algebra with involution, $(A,\sigma)$,
	is a central simple $F$-algebra with involution in the sense
	of \cite[pp.~13, 20]{Knus_1998_book_of_involutions}.
	The  center of $A$ is then either $F$
	or a quadratic \'etale   extension of $F$.
	In first case, $A$ is a central simple $F$-algebra
	and $\sigma$ can be either of \emph{orthogonal} or \emph{symplectic} type,
	see 
	\cite[\S2.A]{Knus_1998_book_of_involutions}. When $\sigma$
	is symplectic, $\deg A$ must be  even \cite[Proposition~2.6]{Knus_1998_book_of_involutions}.
	In the case $\Cent(A)\neq F$, the center is either $F\times F$
	or a quadratic separable field extension of $F$,
	and $\sigma$ is said to be of \emph{unitary} type, see  
	\cite[\S2.B]{Knus_1998_book_of_involutions}.
	
\medskip

	Returning to the case $R$ is arbitrary, we turn to
	define the \emph{type} of the involution $\sigma$.
	In fact, it will be   convenient  to define
	the type of a pair $(\sigma,\veps)$, where
	$\veps\in\Cent(A)$ satisfies $\veps^\sigma \veps=1$, with
	the type of $\sigma$ being the type of $(\sigma,1)$.
	
	To that end, suppose first that $R$ is a field. 
	We say that the type of $(\sigma,\veps)$
	is unitary if $\sigma$ is unitary, i.e., when $\Cent(A)\neq R$.
	Suppose now that $\Cent(A)=R$. Then $\veps\in \{\pm1\}$
	and $\sigma$ is either
	orthogonal or symplectic.  
	We say that $(\sigma,\veps)$ is of orthogonal type if
	either
	$\sigma$ is   orthogonal and $\veps=1$, or $\sigma$ is symplectic
	and $\veps=-1$. In all other cases, $(\sigma,\veps)$ is said to
	be of symplectic type. 
	
	When $R$ is arbitrary, the
	type of $(\sigma,\veps)$
	is the function from
	$\Spec R$ to the set $\{\text{orthogonal},\text{symplectic},\text{unitary}\}$
	assigning $\frakp$ the type of $(\sigma(\frakp),\veps(\frakp))$.
	The type of $\sigma$ is the type of $(\sigma,1)$;
	this agrees with the definition of \cite[III.\S8]{Knus_1991_quadratic_hermitian_forms}.
	We also say that $(\sigma,\veps)$ is orthogonal (resp.\ symplectic, unitary) at $\frakp$ if 
	$(\sigma(\frakp),\veps(\frakp))$
	is orthogonal (resp.\ symplectic, unitary).
	The pair  $(\sigma,\veps)$ is called orthogonal (resp.\ symplectic, unitary) if
	this holds at all primes $\frakp\in \Spec R$. 
	We remark that $(\sigma,\veps)$ is unitary
	if and only if $\sigma$ (i.e.\ $(\sigma,1)$) is unitary.

\medskip

	Recall that $\Sym_\veps(A,\sigma)=\{a\in A\suchthat \veps a^\sigma=a\}$.

	\begin{prp}\label{PR:types-of-involutions-Az}
		Let $(A,\sigma)$ be an   Azumaya $R$-algebra with involution,
		let $\veps\in \Cent(A)$ be an element
		satisfying $\veps^\sigma \veps=1$ and write $n=\deg A$.
		\begin{enumerate}[label=(\roman*)]
		\item  $(\sigma,\veps)$ is orthogonal if and only if $\rank_R\Sym_\veps(A,\sigma)=\frac{1}{2}n(n+1)$
		and $\Cent(A)=R$.
		\item   $(\sigma,\veps)$ is symplectic if and only if $\rank_R\Sym_\veps(A,\sigma)=\frac{1}{2}n(n-1)$
		and $\Cent(A)=R$.
		\item   $(\sigma,\veps)$ is unitary if and only if $\rank_R \Cent(A)=2$.
		In this case, $\Cent(A)$ is a quadratic \'etale
		$R$-algebra, $\sigma|_{\Cent(A)}$ is its standard involution
		and $\rank_R\Sym_\veps(A,\sigma)=n^2$.
		\item  There exists a factorization $R\cong R_o\times R_s\times R_u$
		such that $(\sigma_{R_o},\veps\otimes 1_{R_o})$
		is orthogonal,
		$(\sigma_{R_s},\veps\otimes 1_{R_s})$
		is symplectic and
		$(\sigma_{R_u},\veps\otimes 1_{R_u})$
		is unitary.
		\item 		
		If $R$ is connected, then $(\sigma,\veps)$ is either orthogonal, symplectic
		or unitary.
		\end{enumerate}
	\end{prp}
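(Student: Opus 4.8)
The strategy is to reduce everything to the field case, which is essentially classical, and then propagate the resulting rank identities and factorizations over the (possibly disconnected) base ring $R$. The main technical input is the observation that $\Sym_\veps(A,\sigma)$ is a finite projective $R$-module whose formation commutes with base change: since $2\in\units{R}$, the map $a\mapsto\frac12(a+\veps a^\sigma)$ is an idempotent $R$-linear endomorphism of $A$ with image $\Sym_\veps(A,\sigma)$, so $\Sym_\veps(A,\sigma)$ is a summand of the finite projective $R$-module $A$, hence finite projective, and $\Sym_\veps(A,\sigma)\otimes S=\Sym_{\veps_S}(A_S,\sigma_S)$ for every $R$-ring $S$. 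In particular $\rank_R\Sym_\veps(A,\sigma)$ is a locally constant function $\Spec R\to\Z$ whose value at $\frakp$ is $\dim_{k(\frakp)}\Sym_{\veps(\frakp)}(A(\frakp),\sigma(\frakp))$; similarly $\rank_R\Cent(A)$ is locally constant with value $\dim_{k(\frakp)}\Cent(A(\frakp))$ by Lemma~\ref{LM:center-base-change}, and by the definition of the type of $(\sigma,\veps)$ the type is itself a locally constant function of $\frakp$.

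\smallskip

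First I would settle the field case. So assume $R=F$ is a field. If $\Cent(A)\neq F$ then by the discussion in~\ref{subsec:Az-alg-inv}, $\Cent(A)$ is a quadratic \'etale $F$-algebra, $\sigma|_{\Cent(A)}$ is its standard involution (it fixes exactly $F$), $\sigma$ is unitary, and one computes $\dim_F\Sym_\veps(A,\sigma)=\dim_F A/2=n^2$ — the last equality because multiplication by $\veps$ followed by $\sigma$ is an $F$-semilinear involution of $A$ (semilinear over $\Cent(A)/F$), so its fixed space has half the $F$-dimension. This proves (iii) over $F$, including the converse direction: if $\rank_F\Cent(A)=2$ then $\Cent(A)\neq F$, so $\sigma$ is unitary by definition. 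If instead $\Cent(A)=F$, then $\veps\in\{\pm1\}$ and $\sigma$ is orthogonal or symplectic; the dimension of $\Sym_\veps(A,\sigma)$ is $\frac12 n(n+1)$ in the orthogonal-type case and $\frac12 n(n-1)$ in the symplectic-type case — this is the standard computation for $(\sigma,1)$ with $\sigma$ orthogonal/symplectic, and scaling $\veps$ by $-1$ interchanges $\Sym_\veps$ and $\Sym_{-\veps}$, which is exactly the bookkeeping built into the definition of ``type of $(\sigma,\veps)$''. Since $\frac12 n(n+1)$, $\frac12 n(n-1)$ and $n^2$ are pairwise distinct positive integers (for $n\geq1$; note $n\geq 2$ in the symplectic case so $\frac12 n(n-1)\geq 1$), the three numerical conditions in (i), (ii), (iii) are mutually exclusive and exhaust all cases, which gives the ``only if'' directions and, combined, the ``if'' directions as well. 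This proves (i), (ii), (iii) over a field.

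\smallskip

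Now I return to general $R$ and deduce (iii) first: by the field case applied at each $\frakp$, $(\sigma,\veps)$ is unitary at $\frakp$ iff $\rank_R\Cent(A)$ takes the value $2$ at $\frakp$, and when this holds at all $\frakp$ we get $\Cent(A)$ of constant rank $2$, hence quadratic \'etale, with $\sigma|_{\Cent(A)}$ fixing exactly $R$ (by the definition of an Azumaya algebra with involution) and therefore equal to the standard involution by the uniqueness in Lemma~\ref{LM:invs-of-quad-et-algs} (or \cite[Proposition I.1.3.4]{Knus_1991_quadratic_hermitian_forms}); the rank formula $\rank_R\Sym_\veps(A,\sigma)=n^2$ holds fiberwise hence globally. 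For (i) and (ii): if $(\sigma,\veps)$ is orthogonal (at all $\frakp$) then the fiberwise computation gives $\rank_R\Sym_\veps(A,\sigma)=\frac12 n(n+1)$ and $\rank_R\Cent(A)=1$, i.e.\ $\Cent(A)=R$; conversely if these two conditions hold then at each $\frakp$ we are in the $\Cent=k(\frakp)$ case and the dimension forces orthogonal type (by the distinctness of the three dimension formulas established in the field case). The symplectic statement (ii) is identical. For (iv), the three conditions ``$(\sigma,\veps)$ is orthogonal/symplectic/unitary at $\frakp$'' partition $\Spec R$ into three subsets which, being defined by locally constant-valued conditions (the rank functions above), are open; hence they are also closed, so each is of the form $\Spec R_\bullet$ for an idempotent-cut factorization $R\cong R_o\times R_s\times R_u$, and on each factor $(\sigma,\veps)$ has the corresponding constant type by construction. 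Finally (v) is immediate from (iv): if $R$ is connected then in the factorization $R\cong R_o\times R_s\times R_u$ exactly one factor is nonzero.

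\smallskip

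The only genuinely non-routine point is verifying the dimension formula $\dim_F\Sym_\veps(A,\sigma)=n^2$ in the unitary case and reconciling the $\veps\neq 1$ cases with the definition of ``type of $(\sigma,\veps)$'' — both amount to the semilinear-involution dimension count plus careful tracking of how replacing $\veps$ by $-\veps$ toggles orthogonal/symplectic labels. Everything else is either cited (the classical orthogonal/symplectic dimension counts, Lemma~\ref{LM:center-base-change}, the structure of quadratic \'etale algebras) or is the formal ``spread out from fibers'' argument using that a locally constant decomposition of $\Spec R$ into the three type-loci yields a ring factorization. I do not expect any obstacle beyond this bookkeeping.
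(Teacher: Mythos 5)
Your proof is correct and follows essentially the same route as the paper: settle the field case, use that $\Sym_\veps(A,\sigma)$ and $\Cent(A)$ are $R$-summands of $A$ commuting with base change to spread the rank identities over $\Spec R$, and read off (iv)--(v) from local constancy of the ranks. The only (harmless) variation is in the unitary field case, where you use semilinear descent to get $\dim_F\Sym_\veps=n^2$ while the paper uses Hilbert~90 to reduce to $\veps=1$; also note your parenthetical claim that the three dimensions are pairwise distinct fails for $n=1$ ($\tfrac12 n(n+1)=n^2$), but this is never needed since the unitary case is already separated by the condition $\rank_R\Cent(A)=2$.
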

	
	\begin{proof}
		Suppose first that $R$ is a field.
		If $\Cent(A)=R$, then $\veps\in\{\pm 1\}$ 
		and (i)--(iii) follow from
		\cite[Proposition~2.6]{Knus_1998_book_of_involutions}.
		If $\Cent(A)\neq R$, then by  Hibert's Theorem 90, there exist $\delta\in \Cent(A) $
		such that $\delta^\sigma\delta^{-1}=\veps^{-1}$.
		One readily checks that
		$\delta \cdot \Sym_1(A,\sigma)=\calS_{\veps}(A,\sigma)$,
		so  $\dim_R\Sym_\veps(A,\sigma)=\dim_R\calS_{1}(A,\sigma)$,
		and the right hand side is $n^2$ by \cite[Proposition~2.17]{Knus_1998_book_of_involutions}.
		It follows that (i)--(iii) hold in this case as well.
		
		Parts (i)--(iii) for general $R$
		will follow from the field case if we show that
		the natural maps
		$ \Cent(A)(\frakp)\to \Cent(A(\frakp))  $
		and $(\Sym_\veps(A,\sigma))(\frakp)\to \Sym_\veps(A(\frakp),\sigma(\frakp))$
		are isomorphisms
		for all $\frakp\in \Spec R$.
		The former isomorphism is Lemma~\ref{LM:center-base-change}.
		To establish the second, note that the short exact
		sequence $  \Sym_\veps(A,\sigma)\to A\to \Sym_{-\veps}(A,\sigma)$
		in which the right arrow is given by $a\mapsto a-\veps a^\sigma$
		is split, because $2\in\units{R}$, and thus it remains exact after
		base-change along $R\to k(\frakp)$.
		
		Now, part (iv) follows readily from the fact
		that  $\rank_R \Cent(A)$ and $\rank_R \Sym_\veps(A,\sigma)$
		are locally constant functions, and part (v) follows  from (iv).
	\end{proof}

	\begin{cor}\label{CR:type-conjugation}
		Let $(A,\sigma)$ be an Azumaya $R$-algebra with involution and
		let $\veps\in \Cent(A)$ be an element
		satisfying $\veps^\sigma \veps=1$.
		\begin{enumerate}[label=(\roman*)]
			\item For every
			$\delta\in\Cent(A)$ satisfying $\delta^\sigma\delta=1$
			and every  $\mu\in \Sym_{\delta}(A,\sigma)\cap \units{A}$,
			the pair
			$(A,\Int(\mu)\circ \sigma)$ is an Azumaya
			$R$-algebra with involution and 
			the type of $(\Int(\mu)\circ \sigma,\delta\veps)$ is the same
			as the type of $(\sigma,\veps)$.
			\item For every idempotent $e\in A$ with $\rrk_AeA>0$ and $e^\sigma=e$,
			the pair $(eAe,\sigma|_{eAe})$ is an Azumaya
			$R$-algebra with involution and the type
			of $(\sigma|_{eAe},e\veps)$ is the same
			as the type of $(\sigma,\veps)$.		
		\end{enumerate}
	\end{cor}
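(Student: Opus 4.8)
The plan is to reduce both statements to a rank computation via Proposition~\ref{PR:types-of-involutions-Az}, after first checking that the maps in question are genuinely involutions of Azumaya algebras over $R$ in the sense of~\ref{subsec:Az-alg-inv}. For part~(i), I would first verify that $\Int(\mu)\circ\sigma$ is an involution: using $\mu^\sigma=\delta^{-1}\mu$ (which is $\mu\in\Sym_\delta(A,\sigma)$ rewritten with $\delta^\sigma=\delta^{-1}$) and $\delta\in\Cent(A)$, a direct computation gives $(\Int(\mu)\circ\sigma)^2=\Int(\mu\mu^{-\sigma})\circ\id=\Int(\delta)=\id$ since $\delta$ is central; it is visibly $R$-linear. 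Next, $(\Int(\mu)\circ\sigma)|_{\Cent(A)}=\sigma|_{\Cent(A)}$ because conjugation is trivial on the center, so the $\sigma$-fixed subring of $\Cent(A)$ is unchanged and equals $R$; hence $(A,\Int(\mu)\circ\sigma)$ is an Azumaya $R$-algebra with involution. I should also check $(\delta\veps)^{\Int(\mu)\circ\sigma}(\delta\veps)=1$: again conjugation acts trivially on central elements, so this is $(\delta\veps)^\sigma(\delta\veps)=\delta^\sigma\veps^\sigma\delta\veps=(\delta^\sigma\delta)(\veps^\sigma\veps)=1$.

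For the type, I would use Proposition~\ref{PR:types-of-involutions-Az}: since $\Cent(A)$ and the ranks $\rank_R\Cent(A)$, $\rank_R\Sym_{\delta\veps}(A,\Int(\mu)\circ\sigma)$ are all insensitive to the passage from $\sigma$ to $\Int(\mu)\circ\sigma$ (the center and its fixed ring being literally the same), the unitary case is immediate, and in the remaining cases $\Cent(A)=R$ so I only need $\rank_R\Sym_{\delta\veps}(A,\Int(\mu)\circ\sigma)=\rank_R\Sym_\veps(A,\sigma)$. For that it suffices to exhibit an $R$-module isomorphism between the two: one checks that $a\mapsto \mu^{-1}a$ sends $\Sym_\veps(A,\sigma)$ into $\Sym_{\delta\veps}(A,\Int(\mu)\circ\sigma)$, using $\veps a^\sigma=a$ and $\mu^\sigma=\delta^{-1}\mu$; it is bijective with inverse $b\mapsto\mu b$. (Alternatively, since both ranks are locally constant by Proposition~\ref{PR:types-of-involutions-Az}'s proof, reduce to $R$ a field and invoke the field case.) Then Proposition~\ref{PR:types-of-involutions-Az}(i)--(iii), comparing the rank of $\Sym_{\delta\veps}$ against $\tfrac12 n(n\pm1)$, forces the type to match.

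For part~(ii), Corollary~\ref{CR:degree-of-endo-ring} already gives that $eAe$ is Azumaya over $R$ with $\deg eAe=\rrk_A eA=:m>0$, and $\sigma|_{eAe}$ makes sense because $e^\sigma=e$ and is clearly an $R$-linear involution; its restriction to $\Cent(eAe)$: since $\Cent(eAe)=e\Cent(A)$ (as $eAe$ is Azumaya over $R$ with $\Cent(eAe)\supseteq e\Cent(A)$, and ranks match because $\Cent(A)(\frakp)\hookrightarrow\Cent(A(\frakp))$ is onto by Lemma~\ref{LM:center-base-change}), the $\sigma$-fixed part is $e\cdot R=Re$, the center of $eAe$ over $R$, so $(eAe,\sigma|_{eAe})$ is an Azumaya $R$-algebra with involution. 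The type claim again reduces, via Proposition~\ref{PR:types-of-involutions-Az}, to the unitary/non-unitary dichotomy (which only sees $\Cent$, unchanged up to the idempotent) and, in the central case, to checking $\rank_R\Sym_{e\veps}(eAe,\sigma|_{eAe})=\tfrac12 m(m\pm1)$ with the same sign as for $\Sym_\veps(A,\sigma)=\tfrac12 n(n\pm1)$. Here I expect the main obstacle: one cannot directly compare ranks as in part~(i), so I would instead pass to residue fields (both sides are locally constant by the argument in the proof of Proposition~\ref{PR:types-of-involutions-Az}), reduce to $R=F$ a field, and then use hermitian Morita theory — the idempotent $e$ with $e^\sigma=e$ induces a Morita equivalence between $(A,\sigma)$ and $(eAe,\sigma|_{eAe})$ compatible with the $\veps$- and $(e\veps)$-hermitian structures — so the type is a Morita invariant (see \cite[Chapter~I]{Knus_1998_book_of_involutions}). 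Alternatively, over an algebraically closed field one computes $\Sym_{e\veps}(eAe,\sigma|_{eAe})$ explicitly after diagonalizing $\sigma$, matching the half-integer $\tfrac12 m(m\pm 1)$ to the right parity. Either way the conclusion follows from Proposition~\ref{PR:types-of-involutions-Az}(i)--(iii).
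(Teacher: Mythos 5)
Your overall strategy is the same as the paper's: check the Azumaya-with-involution axioms directly, then reduce the type comparison to the rank criteria of Proposition~\ref{PR:types-of-involutions-Az}. One concrete slip in part (i): the map $a\mapsto \mu^{-1}a$ does \emph{not} send $\Sym_\veps(A,\sigma)$ into $\Sym_{\delta\veps}(A,\Int(\mu)\circ\sigma)$ — if you carry out the check with $\mu^\sigma=\delta^{-1}\mu$ you get $(\delta\veps)(\mu^{-1}a)^{\Int(\mu)\circ\sigma}=\delta^2\mu a\mu^{-2}$, which need not equal $\mu^{-1}a$. You have the two directions swapped: the correct isomorphism is $x\mapsto\mu x\colon\Sym_\veps(A,\sigma)\to\Sym_{\delta\veps}(A,\Int(\mu)\circ\sigma)$ (since $(\mu a)^{\Int(\mu)\circ\sigma}=\delta^{-1}\mu a^\sigma$), with inverse $b\mapsto\mu^{-1}b$; this is exactly the map the paper uses, and the rank equality and hence the conclusion are unaffected. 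For part (ii), your treatment of the center and the unitary case matches the paper; for the orthogonal/symplectic comparison at each residue field the paper simply cites an external reference (a result on transfer of type along $\sigma$-invariant full idempotents), whereas you propose to reprove it via hermitian Morita theory or an explicit computation over an algebraically closed field ($e^\sigma=e$ gives a $b$-orthogonal splitting $F^n=\im e\oplus\ker e$ with $b|_{\im e}$ nondegenerate of the same symmetry type). That is a correct and self-contained substitute for the citation.
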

	
	\begin{proof}
		(i) 
		Checking that $(A,\Int(\mu)\circ \sigma)$ is an Azumaya
		$R$-algebra with involution is straightforward.
		It is routine to check that 
		$x\mapsto \mu x:\Sym_\veps(A,\sigma)\to \Sym_{\delta\veps}(A,\Int(\mu)\circ \sigma)$
		is an $R$-module isomorphism,
		hence $\rank_R   \Sym_\veps(A,\sigma)=
		\rank_R \Sym_{\delta\veps}(A,\Int(\mu)\circ \sigma)$. By
		Proposition~\ref{PR:types-of-involutions-Az}, this means
		that $(\sigma,\veps)$ and $(\Int(\mu)\circ \sigma,\delta\veps)$
		have the same type.
		
		(ii) Write $\sigma_e:=\sigma|_{eAe}$.
		By Corollary~\ref{CR:degree-of-endo-ring}, $eAe$ is Azumaya over $\Cent(A)$.
		In particular, $a\mapsto ea$ defines
		an isomorphism $\Cent(A)\to \Cent(eAe)$.
		This isomorphism is compatible with $\sigma$,
		so $r\mapsto er:R\to eAe$ identifies $R$ with the $\sigma_e$-fixed
		elements in $\Cent(eAe)$. Thus, $(eAe,\sigma_e)$ is an Azumaya
		$R$-algebra with involution.
		
		Let $\frakp\in\Spec R$.
		Since $\rank_R \Cent(A)=\rank_R \Cent(eAe)$, Proposition~\ref{PR:types-of-involutions-Az}
		implies that  $(\sigma,\veps)$ is unitary at $\frakp$ if and only
		if $(\sigma_e,e\veps)$ is unitary at $\frakp$.
		Furthermore, by \cite[Proposition~2.12]{First_2015_Witts_extension_theorem},
		$(\sigma,\veps)$ is orthogonal at $\frakp$ if and only if
		$(\sigma_e,e\veps)$ is orthogonal at $\frakp$.
		Thus, $(\sigma,\veps)$ and $(\sigma_e,\veps e)$ have the same type. 
	\end{proof}

	For later reference, we   record the following easy consequence of Lemma~\ref{LM:Jac-of-separable-alg}
	and the Chinese Remainder Theorem.
	
	\begin{lem}\label{LM:mod-Jac-decomposition-inv}
		Let $(A,\sigma)$ be an Azumaya algebra with involution
		over a semilocal ring $R$.
		Write $\quo{A}=A/\Jac A$
		and let $\quo{\sigma}:\quo{A}\to \quo{A}$
		be the induced involution.
		Then $(\quo{A},\quo{\sigma})\cong \prod_{\frakm\in\Max R}(A(\frakm),\sigma(\frakm))$ 
		as $R$-algebras with involution,
		and each factor $(A(\frakm),\sigma(\frakm))$
		is a central simple $k(\frakm)$-algebra with involution.
	\end{lem}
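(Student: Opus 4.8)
The plan is to combine Lemma~\ref{LM:Jac-of-separable-alg} with the Chinese Remainder Theorem, while keeping track of the involution. Since $R$ is semilocal, $\Max R=\{\frakm_1,\dots,\frakm_r\}$ is finite, and $k(\frakm_i)=R/\frakm_i$ as each $\frakm_i$ is maximal. First I would invoke Lemma~\ref{LM:Jac-of-separable-alg} to get $\Jac A=\Jac R\cdot A$, whence $\quo{A}=A/\Jac A\cong A\otimes_R(R/\Jac R)$ as $R$-algebras. Since $\Jac R=\bigcap_i\frakm_i$ and the $\frakm_i$ are pairwise comaximal, the Chinese Remainder Theorem gives an $R$-algebra isomorphism $R/\Jac R\xrightarrow{\sim}\prod_{i=1}^r k(\frakm_i)$, and tensoring it with $A$ over $R$ produces an $R$-algebra isomorphism $\quo{A}\xrightarrow{\sim}\prod_{i=1}^r A(\frakm_i)$.

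Next I would verify that this isomorphism intertwines $\quo{\sigma}$ with $\prod_i\sigma(\frakm_i)$. Let $e_1,\dots,e_r$ be the primitive idempotents of $R/\Jac R$, so that the $i$-th projection is multiplication by $e_i$. Viewed in $\quo{A}$ through $r\mapsto r\cdot 1_A$, these idempotents are $\quo{\sigma}$-fixed because $\sigma$ is $R$-linear (and fixes $1_A$); hence $\quo{\sigma}$ stabilizes each summand $\quo{A}e_i=A\otimes_R(R/\Jac R)e_i=A(\frakm_i)$ and restricts there to $\sigma\otimes\id_{k(\frakm_i)}=\sigma(\frakm_i)$. This establishes the isomorphism of $R$-algebras with involution.

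Finally, each $(A(\frakm_i),\sigma(\frakm_i))$ is an Azumaya $k(\frakm_i)$-algebra with involution, as recorded in \ref{subsec:Az-alg-inv} for base change of $(A,\sigma)$ along any $R$-ring; since $k(\frakm_i)$ is a field, this is precisely a central simple $k(\frakm_i)$-algebra with involution in the sense of \cite[pp.~13,~20]{Knus_1998_book_of_involutions}. The whole argument is essentially bookkeeping, and I do not expect a real obstacle; the only point deserving a word is the $\quo{\sigma}$-equivariance of the product decomposition, which is immediate from the $R$-linearity of $\sigma$.
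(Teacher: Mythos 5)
Your proof is correct and follows exactly the route the paper indicates (the lemma is stated as an immediate consequence of Lemma~\ref{LM:Jac-of-separable-alg} and the Chinese Remainder Theorem, with no further proof given): $\Jac A=\Jac R\cdot A$, CRT on $R/\Jac R$, tensoring with $A$, and the $\quo{\sigma}$-equivariance via the $\sigma$-fixed central idempotents. The final identification of each factor as a central simple algebra with involution via base change is also the intended argument.
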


\subsection{Azumaya Algebras Over Semilocal Rings}
\label{subsec:semilocal-Azumaya-algs}

	We now specialize to the case where $R$ is semilocal and establish
	several results about Azumaya algebras
	and Azumaya algebras with involution.

    \begin{lem}\label{LM:rank-determines}
    	Let $A$ be an Azumaya algebra
    	over a semilocal ring $R$  and let
    	$P,Q\in \rproj{A}$.
    	Then $P\cong Q$ if and only if $\rrk_A P=\rrk_AQ$.
        Furthermore, $P$ is isomorphic to a summand
        of $Q$ if and only if $\rrk_AP\leq \rrk_AQ$.
    \end{lem}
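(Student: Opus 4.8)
The plan is to reduce everything to the local case by localizing at each maximal ideal, using that an Azumaya algebra over a semilocal ring has only finitely many maximal ideals to keep track of, and then to invoke the theory of the reduced rank together with cancellation results already available.

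First I would prove the statement when $R$ is local. In that case, write $\quo{A}=A/\Jac A$; by Lemma~\ref{LM:Jac-of-separable-alg}, $\Jac A=\Jac R\cdot A$, so $\quo A$ is a central simple algebra over the residue field, hence simple artinian. By Lemma~\ref{LM:projective-transfer} (applied over the residue field) every finite projective $A$-module is projective, and since $A$ is local-ish in the relevant sense (idempotents lift from $\quo A$ because $\Jac A$ is nil modulo a complete-enough situation — more precisely, because $A$ is finite over the local ring $R$, idempotents lift along $A\to \quo A$), a finite projective right $A$-module $P$ is determined by $P/P\Jac A=P\otimes_A\quo A$, a finite-dimensional right $\quo A$-module. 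Over the simple artinian ring $\quo A$, such a module is determined by its length, which is exactly $\rrk_A P$ evaluated at the closed point (up to the normalization $\deg A$). Thus $P\cong Q$ iff $P\otimes_A\quo A\cong Q\otimes_A\quo A$ iff $\rrk_A P=\rrk_A Q$, and $P$ embeds as a summand of $Q$ iff $P\otimes_A\quo A$ does, iff $\rrk_A P\le\rrk_A Q$.

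Next I would pass from local to semilocal. One direction is trivial: $P\cong Q$ implies $\rrk_A P=\rrk_A Q$, and a summand relation implies the rank inequality, both by additivity of the reduced rank. For the converse, suppose $\rrk_A P=\rrk_A Q$. Localizing at each $\frakm\in\Max R$ gives $P_\frakm\cong Q_\frakm$ by the local case. I want to glue these into a global isomorphism. The clean way: the functor $\Hom_A(P,Q)$ commutes with localization, and surjectivity/bijectivity of a map between finite $A$-modules can be checked at the finitely many maximal ideals (Nakayama, as in the proof of Lemma~\ref{LM:invertability-test} and Lemma~\ref{LM:semilocal-direct-sum-reduction}). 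Concretely, choose for each $\frakm$ an isomorphism $\phi_\frakm\colon P_\frakm\to Q_\frakm$; since $\Hom_A(P,Q)$ is finite over the semilocal ring $R$ and surjects onto each $\Hom_A(P,Q)\otimes k(\frakm)=\Hom_{A(\frakm)}(P(\frakm),Q(\frakm))$, and isomorphisms form a Zariski-open (indeed a union of residue-field-detectable) condition, by the Chinese Remainder Theorem one can pick a single $\phi\in\Hom_A(P,Q)$ whose reduction mod each $\frakm$ is an isomorphism $P(\frakm)\to Q(\frakm)$. Then $\phi$ is surjective by Nakayama applied to $\coker\phi$, and since $Q$ is projective $\phi$ splits, giving $P\cong Q\oplus\ker\phi$; taking reduced ranks forces $\rrk_A\ker\phi=0$, so $\ker\phi=0$ by Proposition~\ref{PR:progenerator-iff-pos-red-rank} (a nonzero finite projective module over an Azumaya algebra has positive reduced rank — or more directly, $\ker\phi$ is finite projective and vanishes at every residue field). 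For the summand statement, when $\rrk_A P\le\rrk_A Q$ set $N$ to be a finite projective $A$-module with $\rrk_A N=\rrk_A Q-\rrk_A P$ (such $N$ exists: take a suitable summand of a free module, e.g.\ built from idempotents in $A$, or realize it residue-field-wise and lift), so that $\rrk_A(P\oplus N)=\rrk_A Q$, whence $P\oplus N\cong Q$ by the equality case.

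The main obstacle I expect is the gluing step — producing a \emph{single} global homomorphism $P\to Q$ that is an isomorphism at every maximal ideal. The residue-field reductions $\Hom_{A(\frakm)}(P(\frakm),Q(\frakm))$ are handled by the local case, but one must be careful that "being an isomorphism" is detected mod $\Jac A$ (true because surjectivity is, by Nakayama, and then projectivity of $Q$ upgrades a surjection to a split surjection, and the reduced-rank bookkeeping kills the kernel). This is exactly the pattern of Lemmas~\ref{LM:invertability-test} and~\ref{LM:semilocal-direct-sum-reduction}, so it should go through smoothly once phrased correctly; the only mild subtlety is ensuring the auxiliary module $N$ in the summand statement genuinely exists over $A$ rather than just over the residue fields, which again follows by lifting idempotents along $A\to\quo A$.
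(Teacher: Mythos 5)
Your argument for the isomorphism statement is essentially the paper's: both of you lift a fiberwise isomorphism from the semisimple quotient $A\otimes R/\Jac R\cong\prod_{\frakm}A(\frakm)$ using projectivity, upgrade it to a surjection by Nakayama, split it, and kill the kernel by reduced-rank bookkeeping. (Your detour through the local case is unnecessary, and the parenthetical appeal to ``idempotents lift along $A\to\quo{A}$'' is not needed and not justified in general; the Nakayama-plus-splitting mechanism you also invoke is what actually carries the argument.)

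The genuine gap is in the summand statement. You reduce it to the existence of some $N\in\rproj{A}$ with $\rrk_AN=\rrk_AQ-\rrk_AP$, but neither of your suggested constructions works. Producing such an $N$ by ``lifting'' a residue-field realization amounts to lifting an idempotent along $A\to A/\Jac A$ (or along $\nMat{A}{n}\to\nMat{\quo{A}}{n}$), and this is not automatic for a finite algebra over a semilocal ring; indeed, Lemma~\ref{LM:projective-covering-criterion} shows that lifting the idempotent is \emph{equivalent} to the existence of the projective module you are trying to build, so the suggestion is circular. Worse, in the paper the fact that every admissible reduced rank is realized by a projective module is a \emph{consequence} of this lemma (it is the first step in the proof of Theorem~\ref{TH:index-description}), so it cannot be assumed here. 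The repair is to run your own lifting argument with the arrow pointing the other way: since $\dim_{k(\frakm)}P(\frakm)\le\dim_{k(\frakm)}Q(\frakm)$ and $A(\frakm)$ is simple artinian, there is a surjection $Q_S\onto P_S$ over $S=R/\Jac R$; lift it to $\psi\colon Q\to P$ using projectivity of $Q$, conclude $\psi$ is onto by Nakayama, and split it using projectivity of $P$, which exhibits $P$ as a summand of $Q$ directly. This is exactly what the paper does, and note that your chosen direction $P\to Q$ would not serve here, since being a split injection is not detected modulo $\Jac A$ as cleanly as being a surjection.
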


    \begin{proof}
    	The ``only if'' part of both statements
    	is clear.
    	
		We first prove the ``if'' part of the second statement.  
		Since $\rrk_AP\leq \rrk_AQ$,
		we have $\dim_{k(\frakm)}P(\frakm)\leq \dim_{k(\frakm)}Q(\frakm)$
		for all $\frakm\in\Max R$. 
		Since $A(\frakm)$ is a central simple $k(\frakm)$-algebra,
		this means that $P(\frakm)$ is isomorphic to an $A(\frakm)$-summand
		of $Q(\frakm)$.
		
		Write $S=R/\Jac R$. Since $R$ is semilocal, we have $S=\prod_{\frakm\in \Max R}k(\frakm)$,
		$A_S=\prod_{\frakm\in \Max R} A(\frakm)$, $P_S=\prod_{\frakm\in \Max R}P(\frakm)$
		and $Q_S=\prod_{\frakm\in \Max R}Q(\frakm)$; the products are all finite.
		By the previous paragraph there exists an $A$-module
		epimorphism $\vphi:Q_S\to P_S$.
		Since $P$ is projective, $\vphi$ lifts to an $A$-module
		homomorphism $\psi:Q\to P$.
		Since $\im \vphi=P_S$, we have $\im \psi+P\frakm=P$ for all $\frakm\in \Max R$.
		Thus, as in the proof of Lemma~\ref{LM:semilocal-direct-sum-reduction},
		$\im \psi =P$. Since $P$ is projective, this means
		that $P$ is isomorphic to a summand of $Q$.
		
		To prove the ``if'' part of the first statement,
		argue as above and note that $\ker \psi=0$, because $\rrk_AP=\rrk_AQ$.
    \end{proof}

    \begin{thm}\label{TH:index-description}
    	Let $A$ be an Azumaya algebra over a semilocal ring $R$.
    	Then
    	there exists an idempotent $e\in A$
    	such that $eAe\in [A]$
    	and  $\rrk_A eA=\deg eAe=\ind A$.
    \end{thm}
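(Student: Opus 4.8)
The plan is to translate the statement into a purely numerical one via the correspondence between idempotents of $A$ and direct summands of the right module $A_A$, and then to observe that the set of reduced $A$-ranks of finite projective right $A$-modules is a numerical monoid which necessarily contains $\ind A$.

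First I would record a reduction: it suffices to produce some $P\in\rproj A$ with $\rrk_A P=\ind A$. Indeed, given such $P$, we have $\rrk_A P=\ind A\mid\deg A=\rrk_A(A_A)$ (since $A\in[A]$), so Lemma~\ref{LM:rank-determines} makes $P$ isomorphic to a direct summand of $A_A$. Picking a complementary summand and letting $e\in A$ be the idempotent of $\End_A(A_A)\cong A$ (acting by left multiplication) that projects onto $P$, we get $eA\cong P$ and hence $\rrk_A(eA)=\ind A>0$; Corollary~\ref{CR:degree-of-endo-ring} then says $e$ is full, $eAe$ is Azumaya over $R$, $\deg eAe=\rrk_A(eA)=\ind A$ and $[eAe]=[A]$, which is exactly the theorem.

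To construct such a $P$, assume first that $R$ is connected, so that all ranks, degrees and reduced ranks are integers, and set $S:=\{\rrk_A P: P\in\rproj A\}\subseteq\Z_{\ge 0}$. Then $0,\deg A\in S$; the set $S$ is closed under addition since $\rrk_A(P\oplus Q)=\rrk_A P+\rrk_A Q$; and $S$ is closed under \emph{truncated subtraction}, because if $\rrk_A P\le\rrk_A Q$ then Lemma~\ref{LM:rank-determines} exhibits $P$ as a summand of $Q$, whose complement has reduced rank $\rrk_A Q-\rrk_A P$. Any subset of $\Z_{\ge 0}$ with these properties equals $g\Z_{\ge 0}$ for $g=\min(S\setminus\{0\})$ (repeatedly subtract $g$ and use minimality). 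On the other hand, Proposition~\ref{PR:degree-of-endo-ring}(i),(iii) identifies $S\setminus\{0\}$ with $\{\deg B':B'\in[A]\}$ — every positive reduced rank is $\deg\End_A P$ with $\End_A P\in[A]$, and conversely every such degree is realized — so $\gcd(S\setminus\{0\})=\ind A$ by definition of the index; since $\gcd(g\Z_{\ge 0}\setminus\{0\})=g$ we conclude $\ind A=g\in S$. For a general semilocal $R$, I would split $R\cong R_1\times\cdots\times R_m$ into its finitely many connected (still semilocal) components, note $A\cong\prod_j A_j$ with $A_j$ Azumaya over $R_j$ and $\ind A_j=\ind A|_{\Spec R_j}$ (using $\Br R=\prod_j\Br R_j$), apply the connected case to each $A_j$, and take the product of the resulting modules; Step~1 then finishes.

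The argument is short once Lemma~\ref{LM:rank-determines} is granted, and that comparison/cancellation statement for finite projective modules over semilocal rings — proved via Nakayama's lemma at the residue fields — is where the substance really sits; it is also the step with no analogue over a general base ring, matching the observation recorded after the theorem (through the example cited there) that the conclusion can fail for non-semilocal $R$. The only mild bookkeeping is the passage to connected components and the verification that $\ind A$ is exactly the least positive reduced $A$-rank.
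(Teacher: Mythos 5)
Your proposal is correct and follows essentially the same route as the paper: reduce to producing $P\in\rproj{A}$ with $\rrk_AP=\ind A$ over a connected factor of $R$, use Proposition~\ref{PR:degree-of-endo-ring}(iii) to identify $\ind A$ with $\gcd\{\rrk_AP\suchthat \rrk_AP>0\}$, use Lemma~\ref{LM:rank-determines} to get closure of the set of reduced ranks under differences, and finish with Lemma~\ref{LM:rank-determines} and Corollary~\ref{CR:degree-of-endo-ring} to realize $P$ as $eA$. Your explicit "numerical monoid equals $g\Z_{\geq 0}$" packaging is just a slightly more spelled-out version of the paper's gcd argument.
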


    \begin{proof}
		We first claim that there exists $P\in\rproj{A}$ with $\rrk_AP=\ind A$.		
		Write $R=\prod_{i=1}^tR_i$, where each $R_i$ is connected.
		By working over each factor separately, we may assume
		that $R$ is connected. As a result,
		$\rrk_AP$ is constant for all $P\in\rproj{A}$.
		
		Since every $B\in [A]$ is isomorphic
		to $\End_A(P)$ for some $P\in \rproj{A}$
		with $ \rrk_AP>0$ and
		$\deg B=\rrk_AP$ (Proposition~\ref{PR:degree-of-endo-ring}(iii)),
		we have
		$\ind A=\gcd\{\rrk_AP\where P\in\rproj{A},\rrk_AP>0\}$.
		Thus, in order to establish the existence of $P\in\rproj{A}$
		with $\rrk_AP=\ind A$,
		it is  enough to show that
        for any $P,Q\in \rproj{A}$ with $\rrk_AQ\leq \rrk_AP$,
        there exists $S\in\rproj{A}$ with $\rrk_AS=\rrk_AP-\rrk_AQ$.
        This follows readily from Lemma~\ref{LM:rank-determines}.

		Let $P\in\rproj{A}$ be a module with $\rrk_AP=\ind A$.
		By Lemma~\ref{LM:rank-determines}, $P$ is isomorphic
		to a summand of $A_A$, because $\rrk_AP\leq \deg A=\rrk_AA_A$.
		Therefore,
		there exists an idempotent $e\in A$
		such that $P\cong eA$.
		The theorem now follows from Corollary~\ref{CR:degree-of-endo-ring}.
    \end{proof}

	We now turn to consider Azumaya $R$-algebras with involution.

	\begin{lem}\label{LM:invertible-symmetric-elements}
		Let $(A,\sigma)$ be an Azumaya algebra with involution
		over a semilocal ring $R$ and let $\veps\in\Cent(A)$
		be an element with $\veps^\sigma\veps=1$.
		If for every $\frakm\in \Max R$,  the type of $(\sigma ,\veps )$  at $\frakm$ is not symplectic,
		or $ \deg A (\frakm)$ is even,
		then $\Sym_\veps (A,\sigma)\cap \units{A}\neq \emptyset$.
	\end{lem}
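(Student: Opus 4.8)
The plan is to reduce to the residue fields of $R$ and then lift. By Lemma~\ref{LM:invertability-test}, it suffices to produce an element $a\in\Sym_\veps(A,\sigma)$ whose image $a(\frakm)$ lies in $\units{A(\frakm)}$ for every $\frakm\in\Max R$; equivalently, writing $\quo A=A/\Jac A$ and using Lemma~\ref{LM:Jac-of-separable-alg} together with Lemma~\ref{LM:mod-Jac-decomposition-inv}, it suffices to find an invertible element of $\Sym_{\quo\veps}(\quo A,\quo\sigma)=\prod_{\frakm\in\Max R}\Sym_{\veps(\frakm)}(A(\frakm),\sigma(\frakm))$ and then lift it to $\Sym_\veps(A,\sigma)$. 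The lifting step is clean: the short exact sequence $0\to\Sym_\veps(A,\sigma)\to A\xrightarrow{a\mapsto a-\veps a^\sigma}\Sym_{-\veps}(A,\sigma)\to 0$ is split since $2\in\units R$ (as in the proof of Proposition~\ref{PR:types-of-involutions-Az}), so it stays exact modulo $\Jac A$; hence $\Sym_\veps(A,\sigma)\to\Sym_{\quo\veps}(\quo A,\quo\sigma)$ is surjective. Thus the problem is reduced to the case where $R$ is a field $k$ and $(A,\sigma)$ is a central simple $k$-algebra with involution, and the hypothesis becomes: $(\sigma,\veps)$ is orthogonal, unitary, or symplectic-with-$\deg A$ even.

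For the field case I would split into the three types. In the unitary case, $\Cent(A)=k(\sqrt d)$ is a quadratic étale extension; by Hilbert 90 there is $\delta\in\Cent(A)$ with $\delta^\sigma\delta^{-1}=\veps^{-1}$, and then (as in Proposition~\ref{PR:types-of-involutions-Az}) $x\mapsto\delta x$ carries $\Sym_1(A,\sigma)$ isomorphically onto $\Sym_\veps(A,\sigma)$; since $1\in\Sym_1(A,\sigma)\cap\units A$, the element $\delta\in\Sym_\veps(A,\sigma)\cap\units A$. In the orthogonal case ($\veps=1$, $\sigma$ orthogonal) the identity $1$ is a symmetric unit, and if $\veps=-1$ with $\sigma$ symplectic (the other orthogonal-type possibility), $\Sym_{-1}(A,\sigma)$ is the space of skew elements, which for a symplectic involution always contains invertible elements (e.g. when $A=\nMat k{2m}$ with the standard symplectic involution, the defining skew form is invertible; the general statement follows from Proposition~\ref{PR:types-of-involutions-Az} after passing to a splitting field and descending, or one can invoke that any symplectic involution on a split algebra is $\Int(u)$-conjugate to the standard one for some skew unit $u$). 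The symplectic case proper ($\veps=1$, $\sigma$ symplectic, $\deg A=2m$ even): here $\Sym_1(A,\sigma)$ is the space of $\sigma$-symmetric elements; over a splitting field $\bar k$ we may take $A=\nMat{\bar k}{2m}$ with $\sigma=\Int(J)\circ(\text{transpose})$ for the standard alternating form $J$, and then $J$ itself is a symmetric unit; to descend to $k$ one argues that the set of invertible elements in the $k$-vector space $\Sym_1(A,\sigma)$ is a nonempty Zariski-open subset of $\Sym_1(A,\sigma)\otimes_k\bar k$, hence, being a nonempty open subvariety of affine space defined over $k$, it has a $k$-point when $k$ is infinite; for finite $k$ one argues separately, or more uniformly one uses that $\Sym_1(A,\sigma)$ contains a unit iff the hermitian form $(A,\sigma)$ represents a nondegenerate $\veps$-hermitian form of rank one over itself — a fact already available in the literature on central simple algebras with involution (e.g.\ \cite[Chapter~I]{Knus_1998_book_of_involutions}).

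The main obstacle is the symplectic case over a possibly finite or otherwise non-generic field: one cannot simply invoke a density argument, and one must actually exhibit a symmetric unit. The cleanest route is probably to avoid the case division above and instead argue uniformly that for a central simple algebra with involution $(A,\sigma)$ over any field, $\Sym_\veps(A,\sigma)$ contains an invertible element \emph{unless} $(\sigma,\veps)$ is symplectic with $\deg A$ odd — this is exactly the classical statement that a nondegenerate $\veps$-hermitian form of any rank $\ge 1$ exists over $(A,\sigma)$ precisely in these cases, which can be extracted from \cite[\S6]{Knus_1998_book_of_involutions}; the excluded case is genuinely obstructed because a symplectic involution forces even degree on the algebra, so an odd-degree algebra with $\veps$ making $(\sigma,\veps)$ symplectic simply has no rank-one form. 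Once the field case is in hand, the reduction machinery in the first paragraph finishes the proof with no further difficulty.
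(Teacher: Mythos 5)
Your overall strategy --- reduce to the residue fields (your splitting of $a\mapsto a-\veps a^\sigma$ is the same device as the paper's symmetrization $a\mapsto\frac{1}{2}(a+\veps a^\sigma)$ after the Chinese Remainder Theorem, followed by Lemma~\ref{LM:invertability-test}) and then settle the field case by type --- is exactly the paper's. Two points in your field-case analysis need repair. First, your explicit case division omits the sub-case where $\sigma$ is orthogonal and $\veps=-1$; this is the \emph{only} place the hypothesis that $\deg A(\frakm)$ be even is actually used, since there $\Sym_{-1}(A,\sigma)$ is the space of alternating elements, which contains no unit in odd degree. Second, in the case $\sigma$ symplectic, $\veps=1$, the element $J$ is not a symmetric unit for $\Int(J)\circ\trans$: one has $J^\sigma=-J$, so $J$ is skew. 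No splitting-field or density argument is needed there at all, because $1\in\Sym_1(A,\sigma)\cap\units{A}$ whenever $\veps=1$. Both gaps are closed by the uniform classical statement you invoke in your last paragraph, which is precisely \cite[Corollary~2.8]{Knus_1998_book_of_involutions} (rather than \S 6 of that book): for an involution of the first kind, $\Sym_1(A,\sigma)$ always meets $\units{A}$, and $\Sym_{-1}(A,\sigma)$ does whenever $\deg A$ is even. That is the reference the paper itself uses after first disposing of the case $\Cent(A)\neq R$; note also that your Hilbert-90 step in the unitary case should accommodate $\Cent(A)\cong k\times k$ (where $\delta=(\alpha,1)$ works directly for $\veps=(\alpha,\alpha^{-1})$), as the paper does.
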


	\begin{proof}
		Suppose first that $R$ is a field and let $S=\Cent(A)$.
		Then either $S$ is a field, or $S=R\times R$.
		If $S$ is a field, then the map $s\mapsto \veps s^\sigma:S\to S$
		is an involution and its nonzero fixed points    are contained
		in $\Sym_\veps (A,\sigma)\cap \units{A}$. If there are no such points,
		then $s=-\veps s^\sigma$ for all $s\in S$, which   implies
		$\veps=-1$ (take $s=1$)
		and $\sigma|_S=\id_S$. In this case, $\Sym_\veps (A,\sigma)\cap \units{A}\neq\emptyset$
		by \cite[Corollary~2.8]{Knus_1998_book_of_involutions}.
		If $S=R\times R$, then $\sigma|_S$ is the exchange involution
		$(x,y)\mapsto (y,x)$ and $\veps=(\alpha,\alpha^{-1})$ for some $\alpha\in \units{R}$,
		so $(\alpha,1)\in \Sym_\veps (A,\sigma)\cap \units{A}$.

		For general $R$, let $\frakm_1,\dots,\frakm_t$
		denote the maximal ideals of $R$.
		By the previous paragraph, for each $i\in\{1,\dots,t\}$,
		there exists $a_i\in \Sym_{\veps}(A(\frakm_i),\sigma(\frakm_i))\cap \units{A(\frakm_i)}$.
		By the Chinese Remainder Theorem,
		there exists $a\in A$
		with $a(\frakm_i)=a_i$, for all $i$.
		Replacing $a$ with $\frac{1}{2}(a+\veps a^\sigma)$, we may
		assume that $a\in \Sym_\veps(A,\sigma)$.
		By Lemma~\ref{LM:invertability-test},
		$a\in\units{A}$, so we are done.
	\end{proof}
	
	We finish with showing that idempotent $e$ of Theorem~\ref{TH:index-description}
	can sometimes be chosen to be invariant under a given involution of
	$A$.

	\begin{prp}\label{PR:invariant-primitive-idempotent}
		Let $(A,\sigma)$ be a central simple
		algebra with involution over a field
		$F$ and let $n$ be
		a natural number divisible by $\ind A$
		and not exceeding $\deg A$. If
		$\sigma$ is symplectic, we further require that $n$
		is even. Then
		there exists an idempotent
		$e\in A$ such that $e^\sigma=e$ and $\deg eAe=\rrk_AeA=n$
	\end{prp}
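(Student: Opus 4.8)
The natural route is through the correspondence between involutions and hermitian forms. We may assume $n>0$. If $\Cent(A)=F\times F$, the claim is elementary: then $A=A_1\times A_2$ with $A_2\cong A_1^{\op}$ and $\sigma$ interchanging the two factors via an isomorphism $\phi\colon A_2\to A_1$, and (over the field $F$, using $\ind A_1\mid n\le\deg A_1$) a block idempotent $e_1\in A_1\cong\nMat{D_1}{k_1}$ with $\rrk_{A_1}(e_1A_1)=n$ gives the $\sigma$-invariant idempotent $e:=(e_1,\phi^{-1}(e_1))$, for which $\deg(eAe)=\rrk_A(eA)=n$ by Corollary~\ref{CR:degree-of-endo-ring}. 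So assume $K:=\Cent(A)$ is a field. By the structure theory of central simple algebras with involution \cite[\S4]{Knus_1998_book_of_involutions}, we may write $(A,\sigma)\cong(\End_D(V),\mathrm{ad}_h)$, where $(D,\theta)$ is a division $K$-algebra with involution, $V$ is a finite-dimensional right $D$-module, $\veps\in\units{\Cent(D)}$ satisfies $\veps^\theta\veps=1$, $(V,h)$ is a nondegenerate $\veps$-hermitian space over $(D,\theta)$, and the type of $(\theta,\veps)$ equals the type of $\sigma$. Set $d:=\deg D=\ind A$ and $k:=\dim_DV$, so that $\deg A=kd$; the hypotheses become $d\mid n$ and $n\le kd$, whence $m:=n/d$ is an integer with $1\le m\le k$.

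Now suppose $W\subseteq V$ is a nondegenerate $D$-subspace with $\dim_DW=m$; since $h|_W$ is nondegenerate, $V=W\perp W^\perp$, and the projection $e_W$ of $V$ onto $W$ along $W^\perp$ is an idempotent of $A=\End_D(V)$ with $e_W^{\mathrm{ad}_h}=e_W$ (because $W$ and $W^\perp$ are mutually orthogonal). A computation of $K$-dimensions gives $\rrk_A(e_WA)=(\dim_DV)(\dim_DW)(\dim_KD)/\deg A=md=n$, and as $n>0$, Corollary~\ref{CR:degree-of-endo-ring} yields $\deg(e_WAe_W)=\rrk_A(e_WA)=n$. Hence it suffices to prove the claim: \emph{$(V,h)$ admits a nondegenerate $D$-subspace of $D$-dimension $m$}, where we are moreover free to use that $n$, and so possibly $m$, is even when $\sigma$ is symplectic.

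To prove the claim, take a Witt decomposition $(V,h)\cong(V_0,h_0)\perp H$ with $(V_0,h_0)$ anisotropic and $H$ an orthogonal sum of $r\ge 0$ hyperbolic $\veps$-hermitian planes over $(D,\theta)$ (so $\dim_DV_0+2r=k$). If $m\le\dim_DV_0$, any $D$-subspace of $V_0$ of dimension $m$ is nondegenerate since $h_0$ is anisotropic. Otherwise set $t:=m-\dim_DV_0$, so $0<t\le 2r$, and it is enough to find a nondegenerate $D$-subspace of $H$ of dimension $t$. The orthogonal sum of $\floor{t/2}$ of the $r$ hyperbolic planes is nondegenerate of dimension $2\floor{t/2}$; if $t$ is even we are done, and if $t$ is odd we adjoin a nondegenerate line inside one further hyperbolic plane: writing such a plane as $e_1D\oplus e_2D$ with $h(e_1,e_1)=h(e_2,e_2)=0$ and $h(e_1,e_2)=1$, the vector $v=e_1+e_2c$ has $h(v,v)=c+\veps c^\theta$, and some $c\in D$ gives $h(v,v)\ne 0$ --- take $c=1$ if $\veps\ne-1$, and if $\veps=-1$ take any $c$ with $c^\theta\ne c$, which exists unless $\theta=\id_D$, i.e.\ unless $D=F$. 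Then $vD$ is a nondegenerate line, and $vD$ together with $\floor{t/2}\le r-1$ of the remaining planes gives a nondegenerate $D$-subspace of $H$ of dimension $t$. The one configuration not covered is $D=F$, $\theta=\id_F$, $\veps=-1$, i.e.\ $(V,h)$ a nondegenerate alternating bilinear form over $F$; then $V_0=0$, every nondegenerate subspace is even-dimensional, and $\sigma=\mathrm{ad}_h$ is symplectic with $\ind A=1$, so $m=n$ is even by hypothesis and an orthogonal sum of $m/2$ hyperbolic planes serves.

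The main obstacle is the subspace claim of the second paragraph: pinning down the unique exceptional configuration (alternating bilinear forms) in which nondegenerate subspaces of a prescribed odd dimension can fail to exist, and checking that it matches exactly the hypothesis that $n$ be even in the symplectic case. The structure-theory input is classical, and the reduced-rank bookkeeping via Corollary~\ref{CR:degree-of-endo-ring} is routine.
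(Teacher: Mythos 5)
Your proof is correct, but it takes a genuinely different route from the paper's. The paper argues by induction on the existence of nontrivial $\sigma$-invariant idempotents: when none exist it invokes \cite[Theorem~8.2]{First_2015_general_bilinear_forms} to conclude that $e=1$ is forced (the constraints on $n$ then leave no other choice), and otherwise it splits $A$ along a nontrivial invariant idempotent $u$ into the Peirce blocks $uAu$ and $(1-u)A(1-u)$, writes $n=n_1+n_2$ with each $n_i$ satisfying the divisibility and size constraints in the corresponding block, and recurses; the ingredients are Corollaries~\ref{CR:degree-of-endo-ring} and~\ref{CR:type-conjugation}(ii). You instead pass to the classical presentation $(A,\sigma)\cong(\End_D(V),\mathrm{ad}_h)$ and reduce the statement to producing a nondegenerate $D$-subspace of prescribed dimension $m=n/\ind A$, which you settle by a Witt decomposition; your handling of the split-center case, the orthogonality of the projection $e_W$, and the dimension count are all correct. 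What your approach buys is transparency about the symplectic hypothesis: the unique obstruction to odd-dimensional nondegenerate subspaces is the alternating-bilinear-form case ($D=F$, $\theta=\id$, $\veps=-1$), which is exactly where the evenness of $n$ is consumed. What the paper's approach buys is brevity and uniformity --- it never needs the form-theoretic description of $\sigma$ --- at the cost of outsourcing the base case to an external structural theorem and of the (small, unstated) combinatorial check that the decomposition $n=n_1+n_2$ with the required properties always exists.
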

	
	\begin{proof}
		If $A$ contains no $\sigma$-invariant idempotents
		other than $0$ and $1$,
		then \cite[Theorem~8.2]{First_2015_general_bilinear_forms}
		(for instance) implies that
		$e=1$ is the required idempotent.
		Suppose now that $u\in A$ is a nontrivial $\sigma$-invariant
		idempotent and let $v=1-u$. 
		Since $AuA$ is a nonzero two-sided ideal of $A$ invariant under $\sigma$,
		and since $(A,\sigma)$ is a simple ring with involution, $AuA=A$, and likewise
		$AvA=A$.		
		Now, by Corollary~\ref{CR:degree-of-endo-ring},
		$\deg uAu+\deg vAv=\rrk_A (uA\oplus vA)=\rrk_A A=\deg A$,
		$\ind A=\ind uAu=\ind vAv$,
		by Corollary~\ref{CR:type-conjugation}(ii),
		$\sigma|_{uAu}$ and $\sigma|_{vAv}$ have the same type as $\sigma$.
		Express $n$ as $n_1+n_2$ with $n_1\leq \deg uAu$, $n_2\leq \deg vAv$
		and such that $n_1$, $n_2$ are divisible by $\ind A$, or $\lcm\{2,\ind A\}$
		if $\sigma$ is symplectic. Applying induction to 
		$(uAu,\sigma|_{uAu})$ and $(vAv,\sigma|_{vAv})$,
		we get $\sigma$-invariant idempotents $e_1\in uAu$, $e_2\in vAv$
		with $\deg e_iAe_i=n_i$ ($i=1,2$). Take $e=e_1+e_2$.
	\end{proof}
	
%
	
	\begin{lem}\label{LM:inv-idempotent-lift}
		Let $(A,\sigma)$ be an $R$-algebra with involution,
		let $\quo{A}=A/\Jac A$ and let $\quo{\sigma}:\quo{A}\to \quo{A}$
		denote the induced involution.
		Let $\eta\in \quo{A}$ be a $\sigma$-invariant idempotent.
		If $\eta$ is the image of an idempotent in $A$,
		then $\eta$ is the image of a $\sigma$-invariant idempotent in $A$.
	\end{lem}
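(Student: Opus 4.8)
The plan is to start from an arbitrary idempotent lifting $\eta$ and to \emph{symmetrize} it using a variant of the classical ``range projection'' construction, where the role usually played by a small self-adjoint element is played instead by an element of $\Jac A$.

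First I would produce two idempotents mapping to $\eta$. By hypothesis there is an idempotent $e_0\in A$ whose image in $\quo A$ is $\eta$. Since $\sigma$ is an anti-automorphism, $e_0^\sigma$ is again an idempotent, and its image in $\quo A$ is $\quo\sigma(\eta)=\eta$ because $\eta$ is $\sigma$-invariant. Hence $e_0$ and $e_0^\sigma$ have the same image in $\quo A=A/\Jac A$, so $j:=e_0-e_0^\sigma\in\Jac A$. This last fact is the only place where the hypotheses on $\eta$ enter.

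Next I would introduce the symmetrizing unit $a:=1-j^2=1-(e_0-e_0^\sigma)^2$. Since $j\in\Jac A$ we have $j^2\in\Jac A$, so $a\in 1+\Jac A$ is automatically a unit of $A$ --- crucially, \emph{no nilpotency of $\Jac A$ is needed}, which is why the argument works for an arbitrary $R$-algebra with involution and why one cannot (and need not) extract a square root by a power series. Expanding, $a=1-e_0+e_0e_0^\sigma+e_0^\sigma e_0-e_0^\sigma$, and a direct computation using only $e_0^2=e_0$ and $(e_0^\sigma)^2=e_0^\sigma$ gives $ae_0=e_0a=e_0e_0^\sigma e_0$ and $ae_0^\sigma=e_0^\sigma a=e_0^\sigma e_0e_0^\sigma$; in particular $a$, and hence $a^{-1}$, commutes with $e_0$, with $e_0^\sigma$, and with $e_0e_0^\sigma$. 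Moreover $j^\sigma=e_0^\sigma-e_0=-j$, so $a^\sigma=1-(j^\sigma)^2=a$, whence $a^{-\sigma}=a^{-1}$.

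Finally I would set $e:=e_0e_0^\sigma a^{-1}$ and check it is the desired idempotent. It is $\sigma$-invariant: $e^\sigma=a^{-\sigma}(e_0^\sigma)^\sigma e_0^\sigma=a^{-1}e_0e_0^\sigma=e_0e_0^\sigma a^{-1}=e$, using that $a^{-1}$ commutes with $e_0e_0^\sigma$. It is idempotent: commuting $a^{-1}$ past $e_0e_0^\sigma$ gives $e^2=(e_0e_0^\sigma)^2a^{-2}$, and one verifies the identity $(e_0e_0^\sigma)^2=e_0e_0^\sigma a$ by expanding $e_0e_0^\sigma a$ with the formula for $a$, so that $e^2=e_0e_0^\sigma a\cdot a^{-2}=e$. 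Lastly, the image of $a$ in $\quo A$ is $1-(\eta-\eta)^2=1$, so the image of $e$ in $\quo A$ is $\eta\cdot\eta\cdot 1=\eta$, as required. I do not expect any genuine obstacle here; the only points demanding care are the ones flagged above --- realizing that $1-j^2$ is a unit purely because $j\in\Jac A$, and then the short bookkeeping establishing that $a$ is central for $e_0$ and $e_0^\sigma$ and that $(e_0e_0^\sigma)^2=e_0e_0^\sigma a$, both of which are routine consequences of the two idempotency relations.
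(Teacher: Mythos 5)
Your proof is correct; I checked each computation ($ae_0=e_0a=e_0e_0^\sigma e_0$, $ae_0^\sigma=e_0^\sigma a=e_0^\sigma e_0e_0^\sigma$, $a^\sigma=a$, $(e_0e_0^\sigma)^2=e_0e_0^\sigma a$) and they all hold using only the two idempotency relations, so $e=e_0e_0^\sigma a^{-1}$ is indeed a $\sigma$-invariant idempotent lifting $\eta$. Your route is genuinely different from the paper's. The paper also starts from an idempotent $e_0$ lifting $\eta$, but then argues module-theoretically: it shows $e_0A+(1-e_0)^\sigma A=A$ via Nakayama's lemma (using that the images of $e_0$ and $e_0^\sigma$ agree), shows the intersection is zero because $1-e_0+e_0^\sigma$ is a unit (it lies in $1+\Jac A$), and extracts the desired idempotent from the Peirce decomposition $1=e_1+f_1$ of the resulting direct sum, verifying $e_1^\sigma=e_1$ by a short calculation. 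Both arguments hinge on exactly the same structural fact — that an element of $1+\Jac A$ is a unit, with no nilpotency needed — applied to $1-e_0+e_0^\sigma$ in the paper and to $1-(e_0-e_0^\sigma)^2$ in your version. What yours buys is an explicit closed formula for the symmetrized idempotent (the classical ``range projection'' construction from operator algebras) and a proof that is pure element-level bookkeeping; what the paper's buys is a more conceptual picture of $e_1$ as the projection onto $e_0A$ along $(1-e_0)^\sigma A$, which fits the module-theoretic style of the surrounding lemmas. Either proof would serve.
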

	
	\begin{proof}
		Denote the image of $a\in A$ in $\quo{A}$ as $\quo{a}$.
		Let $e\in A$ be an idempotent with $\quo{e}=\eta$.
		Since $\eta=\eta^\sigma$, we have $eA+(1-e)^\sigma A+\Jac A=A$,
		so $eA+(1-e)^\sigma A=A$ by Nakayama's Lemma.
		On the other hand, if $a\in eA\cap (1-e)^\sigma A$,
		then $(1-e)a=e^\sigma a=0$, hence $( 1-e +e^\sigma)a=0$.
		Since $\quo{1-e+e^\sigma}=\quo{1}$, we have $1-e+e^\sigma\in \units{A}$,
		so   $a=0$.
		Thus, $A=eA\oplus (1-e)^\sigma A$. Write $1=e_1+f_1$
		with $e_1\in eA$, $f_1\in (1-e)^\sigma A$.
		It is well-known that $e_1$ and $f_1$ are idempotents
		satisfying $e_1A=eA$ and $f_1=(1-e)^\sigma A$.
		Now, $e_1-e_1^\sigma e_1=(1-e_1)^\sigma e_1=f_1^\sigma e_1=
		((1-e)^\sigma f_1)^\sigma ee_1=f_1^\sigma (1-e)ee_1=0$,
		so $e_1=e_1^\sigma e_1$.
		It follows that $e_1^\sigma=(e_1^\sigma e_1 )^\sigma=e_1^\sigma e_1=e_1^\sigma$.
		Finally, since $\quo{e_1}\in \eta \quo{A}$ and $\quo{1-e_1}\in (1-\eta) \quo{A}$,
		we must have $\quo{e_1}=\eta$, because $\quo{A}=\eta \quo{A}\oplus (1-\eta)\quo{A}$
		and $\quo{1}=\eta+(1-\eta)$.
	\end{proof}
	
	\begin{lem}\label{LM:projective-covering-criterion}
		Let $A$ be a semilocal $R$-algebra,
		let $\quo{A}:=A/\Jac A$ and let $\eta\in \quo{A}$
		be an idempotent. Then there exists an idempotent
		$e\in A$ with $\quo{e}:=e+\Jac A=\eta$
		if and only if there exists $P\in\rproj{A}$
		such that $\quo{P}:=P/P\Jac A\cong \eta \quo{A}$
		as right $A$-modules.
	\end{lem}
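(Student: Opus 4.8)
The plan is to prove the two implications separately: the forward one by an explicit construction, the reverse one by a lifting argument.

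For the forward direction, given an idempotent $e\in A$ with $\bar e:=e+\Jac A=\eta$, I would take $P:=eA$. Then $P$ is a direct summand of $A_A$, hence lies in $\rproj{A}$. Since $\Jac A$ is an ideal, $P\Jac A=eA\Jac A=e\Jac A$, and the quotient map $A\to\bar A$ restricts to a surjection $eA\twoheadrightarrow\bar e\bar A=\eta\bar A$ whose kernel is $eA\cap\Jac A$. A one-line check (if $x\in eA\cap\Jac A$ then $x=ex\in e\Jac A$, and the reverse inclusion is clear) shows that $eA\cap\Jac A=e\Jac A=P\Jac A$, whence $\bar P=P/P\Jac A\cong\eta\bar A$.

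For the reverse direction, suppose $P\in\rproj{A}$ and fix an isomorphism $\phi\colon\bar P\xrightarrow{\ \sim\ }\eta\bar A$. Since $P$ is projective and $A\to\bar A$ is surjective, the $A$-linear composite $P\twoheadrightarrow\bar P\xrightarrow{\phi}\eta\bar A\hookrightarrow\bar A$ lifts to some $\psi\in\Hom_A(P,A)$; since $A$ is projective and $P\twoheadrightarrow\bar P$ is surjective, the $A$-linear composite $A\twoheadrightarrow\bar A\twoheadrightarrow\eta\bar A\xrightarrow{\phi^{-1}}\bar P$ lifts to some $\chi\in\Hom_A(A,P)$. By construction the image of $\chi\psi\in\End_A(P)$ under the reduction ring homomorphism $\End_A(P)\to\End_{\bar A}(\bar P)$ is $\id_{\bar P}$. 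The crucial point is that this forces $\chi\psi$ to be an automorphism of $P$: I would argue that the kernel of the reduction homomorphism lies in $\Jac\End_A(P)$, using that for $f$ in that kernel and any $g$ the endomorphism $\id_P-gf$ is surjective by Nakayama's lemma, and a surjective endomorphism of the finitely generated projective module $P$ is automatically injective because its kernel is a direct summand of $P$ that vanishes modulo $\Jac A$ — here the semilocality hypothesis enters, via the semisimplicity of $\bar A$, to cancel the summand — and hence vanishes by Nakayama. Consequently $\chi\psi\in\id_P+\Jac\End_A(P)$ is invertible. Setting $\theta:=(\chi\psi)^{-1}\chi\in\Hom_A(A,P)$ we get $\theta\psi=\id_P$, so $\psi\theta\in\End_A(A_A)=A$ is left multiplication by an idempotent $e\in A$; then $eA=\psi(P)\cong P$, and unwinding the definitions of $\psi$ and $\chi$ shows that $\overline{\psi\theta}$ is left multiplication by $\eta$ on $\bar A$, i.e.\ $\bar e=\eta$.

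The only real difficulty is the invertibility of $\chi\psi$ in the reverse direction; everything else is routine bookkeeping. It is worth noting that some finiteness-type hypothesis is genuinely necessary here, since over an arbitrary ring a finitely generated projective module can be isomorphic to a proper direct summand of itself.
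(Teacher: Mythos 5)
Your proof is correct, and the reverse implication takes a genuinely different route from the paper's. The paper views $P\to \quo{P}\cong\eta\quo{A}$ as a projective cover, plays it against the epimorphism $a\mapsto\eta\quo{a}:A_A\to\eta\quo{A}$ to realize $P$ as a summand $e_1A$ of $A_A$ with $\quo{e_1}\quo{A}\cong\eta\quo{A}$, and then invokes the fact that idempotents of the semisimple ring $\quo{A}$ with isomorphic images and isomorphic complements are conjugate, lifting the conjugating unit to $A$. You instead lift the two halves of the isomorphism $\quo{P}\cong\eta\quo{A}$ to maps $\psi:P\to A$ and $\chi:A\to P$, show $\chi\psi$ is an automorphism because $\ker\bigl(\End_A(P)\to\End_{\quo{A}}(\quo{P})\bigr)\subseteq\Jac\End_A(P)$, and read off the idempotent $\psi(\chi\psi)^{-1}\chi$ directly. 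Your version is self-contained (no projective covers, no appeal to the conjugacy exercise in Lam) and produces $\quo{e}=\eta$ on the nose rather than after a conjugation; the paper's is shorter given the cited machinery. Both arguments use semilocality in the same essential place, namely the semisimplicity of $\quo{A}$, which you need to cancel the summand $\overline{\ker h}$ (and which the paper needs for the conjugacy of idempotents); your closing remark correctly identifies why some such hypothesis is unavoidable.
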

	
	\begin{proof}
		For the ``only if'' part, take $P=eA$. We turn to prove the ``if'' part.

		Note that $P\to \quo{P}\cong \eta\quo{A}$ is a projective
		covering;  denote this map by $f$.
		Consider the surjective
		homomorphism $g:A_A\to \eta\quo{A}$ given by $g(a)= \eta \quo{a}$.
		Since $f:P\to \eta\quo{A}$ is a projective
		covering, there exists a factorization $A_A=P_1\oplus Q$
		and an isomorphism $P\to P_1$
		such that   
		the composition
		$P\to P_1\xrightarrow{g} \eta\quo{A}$ is $f$.
		In particular, $\quo{P_1}=\eta\quo{A}$. Choose
		an  idempotent $e_1\in A$ such that $P_1=e_1A$.
		Then $\quo{e_1}\quo{A}=\eta\quo{A}$ and $\quo{(1-e_1)}\quo{A}\cong
		\quo{A}/\quo{e_1}\quo{A}=\quo{A}/\eta\quo{A}\cong (1-\eta)\quo{A}$.
		Now, by \cite[Exercise 21.16]{Lam_1991_first_course},
		there exists $x\in \units{\quo{A}}$ with $x\quo{e'}x^{-1}=\eta$.
		Choose $y\in A$ with $\quo{y}=x$ and take $e=y e_1y^{-1}$.
	\end{proof}

	\begin{thm}\label{TH:invariant-primitive-idempotent}
		Let $(A,\sigma)$ be an Azumaya algebra with involution
		over a semilocal   ring $R$.
		Write $S:=\Cent(A)$  and let $n\in\Gamma(\Spec S,\N)$.
		Suppose that $n$ is invariant under $\sigma|_S$
		and satisfies  $\ind A\mid n$ and $n\leq \deg A $.
		If $\sigma$ is symplectic at  $\frakp\in \Spec R$,
		we also require that $ n(\frakp)$ is even.
		Then there exists an idempotent
		$e\in A$ such that $e^\sigma=e$ and $\deg eAe=\rrk_A eA=n$.
	\end{thm}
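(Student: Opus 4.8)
The plan is to reduce to the residue fields of $R$: apply the field case, Proposition~\ref{PR:invariant-primitive-idempotent}, in every fiber, assemble the resulting idempotents into a $\quo\sigma$-invariant idempotent of $\quo A:=A/\Jac A$, and lift it to a $\sigma$-invariant idempotent of $A$ via Lemmas~\ref{LM:projective-covering-criterion} and~\ref{LM:inv-idempotent-lift}. Throughout one works over $S:=\Cent(A)$, which is semilocal (being finite over the semilocal ring $R$) and over which $A$ is Azumaya; so Theorem~\ref{TH:index-description}, Lemma~\ref{LM:rank-determines} and Corollary~\ref{CR:degree-of-endo-ring} all apply with $S$ in place of the base ring. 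We may assume $n>0$ everywhere on $\Spec S$, handling the clopen subsets of $\Spec S$ on which $n$ vanishes by taking the corresponding part of $e$ to be $0$; then $\ind A>0$ and $n/\ind A$ is a well-defined locally constant function on $\Spec S$.

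The construction proceeds in two halves. First, by Theorem~\ref{TH:index-description} there is an idempotent $e_0\in A$ with $\rrk_A e_0A=\ind A$; set $P:=(e_0A)^{\oplus(n/\ind A)}$, formed component by component over the finitely many connected components of $\Spec S$, so that $P\in\rproj{A}$ and $\rrk_A P=n$. Second, for each $\frakm\in\Max R$ let $n_\frakm$ be the common value of $n$ on the primes of $S$ lying over $\frakm$; this is well defined because, by Proposition~\ref{PR:types-of-involutions-Az}(iii), there is more than one such prime only when $\sigma(\frakm)$ is unitary with $\Cent(A(\frakm))\cong k(\frakm)\times k(\frakm)$, in which case $\sigma|_S$ interchanges the two primes and $n$ is $\sigma|_S$-invariant by hypothesis. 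From $\ind A\mid n\leq\deg A$ on $\Spec S$ (and the fact that the index does not increase under base change) one gets that $n_\frakm$ is divisible by $\ind A(\frakm)$ and bounded by $\deg A(\frakm)$, and $n_\frakm$ is even whenever $\sigma(\frakm)$ is symplectic. Hence Proposition~\ref{PR:invariant-primitive-idempotent}, applied to the central simple $k(\frakm)$-algebra with involution $(A(\frakm),\sigma(\frakm))$ and the integer $n_\frakm$, yields a $\sigma(\frakm)$-invariant idempotent $e_\frakm\in A(\frakm)$ with $\rrk_{A(\frakm)}e_\frakm A(\frakm)=n_\frakm$. Put $\eta:=(e_\frakm)_\frakm\in\prod_{\frakm\in\Max R}A(\frakm)=\quo A$, which is a $\quo\sigma$-invariant idempotent by Lemma~\ref{LM:mod-Jac-decomposition-inv}.

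It then remains to lift $\eta$ and check the ranks. Since $P\Jac A=P\Jac R$ by Lemma~\ref{LM:Jac-of-separable-alg}, we have $P/P\Jac A\cong\prod_\frakm P(\frakm)$, and $\rrk_{A(\frakm)}P(\frakm)=n_\frakm=\rrk_{A(\frakm)}e_\frakm A(\frakm)$, so $P(\frakm)\cong e_\frakm A(\frakm)$ by Lemma~\ref{LM:rank-determines} applied over the semilocal ring $\Cent(A(\frakm))$; therefore $P/P\Jac A\cong\eta\quo A$. By Lemma~\ref{LM:projective-covering-criterion}, $\eta$ is the image of an idempotent of $A$, and then by Lemma~\ref{LM:inv-idempotent-lift} it is the image of a $\sigma$-invariant idempotent $e\in A$. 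Finally $(eA)/(eA)\Jac A=\eta\quo A$ has $\frakm$-component $e_\frakm A(\frakm)$, so $\rrk_A eA$ agrees with $n$ at every closed point of $\Spec S$; as both functions are locally constant and every connected component of $\Spec S$ contains a closed point, this forces $\rrk_A eA=n$. Since $n>0$, Corollary~\ref{CR:degree-of-endo-ring} shows $e$ is full and $\deg eAe=\rrk_A eA=n$, as wanted.

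The step I expect to be the main obstacle is the bookkeeping between locally constant functions on $\Spec R$, on $\Spec S$, and on the individual residue fields---in particular, matching the globally prescribed rank function $n$ on $\Spec S$ with the per-residue-field integers $n_\frakm$ (this is exactly where the $\sigma|_S$-invariance of $n$ is used, to cope with the split-unitary fibers), and producing a finite projective $A$-module of the \emph{exact} reduced rank $n$ needed to feed Lemma~\ref{LM:projective-covering-criterion}. Once these are set up correctly, the lifting is immediate from the cited lemmas.
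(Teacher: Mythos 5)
Your proof is correct and follows the paper's own argument essentially step for step: apply the field case (Proposition~\ref{PR:invariant-primitive-idempotent}) in each residue fiber, assemble the resulting idempotents into a $\quo{\sigma}$-invariant idempotent of $A/\Jac A$, match it against a projective module of reduced rank $n$, and lift via Lemmas~\ref{LM:projective-covering-criterion} and~\ref{LM:inv-idempotent-lift}. You additionally spell out two points the paper leaves implicit --- the explicit construction of $P$ with $\rrk_AP=n$ from the index idempotent, and the well-definedness of $n_\frakm$ on split-unitary fibers via the $\sigma|_S$-invariance of $n$ --- both of which are handled correctly.
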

	
	We remark that $\ind A$ is a $\sigma|_S$-invariant function from $\Spec S$ to $\N$.
	
	\begin{proof}
		Let $\frakm_1,\dots,\frakm_t$
		denote the maximal ideals of $R$.
		We use Lemma~\ref{LM:mod-Jac-decomposition-inv}
		to identify
		$\quo{A}:=A/\Jac A$ with $\prod_{i=1}^t A(\frakm_i)$.
		Since  $\ind A(\frakm_i)\mid (\ind A)(\frakm_i)$,
		we may apply Proposition~\ref{PR:invariant-primitive-idempotent}
		to $(A(\frakm_i),\sigma(\frakm_i))$ and $n(\frakm_i)$
		and get a $\sigma$-invariant idempotent $\eta_i\in A(\frakm_i)$
		with $\rrk_{A(\frakm_i)} \eta_i A(\frakm_i)=n(\frakm_i)$.
		Let $ \eta =(\eta_i)_{i=1}^t\in \quo{A}$.
		Then $ \eta ^{\quo{\sigma}}=\eta$.
		
		By Theorem~\ref{TH:index-description},
		there exists $P\in\rproj{A}$ such that $\rrk_AP=n$.
		Comparing reduced ranks, one sees
		that $\quo{P}=P/P\Jac A\cong P\otimes_A\quo{A}$ is
		isomorphic to $\eta\quo{A}$. Thus, 
		by Lemmas~\ref{LM:inv-idempotent-lift}
		and~\ref{LM:projective-covering-criterion}, there exists
		a $\sigma$-invariant idempotent $e\in A$ projecting onto $\eta$.
		Since $\rrk_{A(\frakm_i)} eA(\frakm_i)=\rrk_{A(\frakm_i)}\eta_iA(\frakm_i)=n(\frakm_i)$
		for all $1\leq i\leq t$, and
		since $\rrk_AeA$ is locally constant,
		we must have $\rrk_AeA=n$.
	\end{proof}

\section{Hermitian Forms}
\label{sec:hermitian}

	This section concerns with hermitian forms, mainly
	over Azumaya
	algebras with involution, and related objects.
	See \cite[Chapter I]{Knus_1991_quadratic_hermitian_forms} for an extensive discussion
	of hermitian forms in general.

	Throughout this section, 
	$(A,\sigma)$ denotes an $R$-algebra with involution and
    $\veps$ is an element of $\Cent(A)$ satisfying $\veps^\sigma \veps=1$.
	Recall our standing assumption that   $2\in\units{R}$.

\subsection{Hermitian Forms}

	We define $\veps$-hermitian spaces over $(A,\sigma)$ in the usual way, i.e.,
	as pairs $(P,f)$ where $P\in\rproj{A}$
	and $f:P\times P\to A$ is a  biadditive map
	satisfying $f(xa,yb)=a^\sigma f(x,y)b$ and $f(x,y)=\veps f(y,x)^\sigma$
	($x,y\in P$, $a,b\in A$). We also say that $f$ is an $\veps$-hermitian
	form on $P$.

	Given  $\veps$-hermitian spaces $(P,f)$, $(P',f')$   over
	$(A,\sigma)$, an isometry $(P,f)\to (P',f')$ is
	an $A$-module isomorphism $\vphi:P\to P'$
	such that $f'(\vphi x,\vphi y)=f(x,y)$ ($x,y\in P$).
	If such an isometry exists, we write $(P,f)\cong (P',f')$  or   $f\cong f'$.
	The 
	group of isometries from $(P,f)$ into itself is denoted $U(f)$. 
	Orthogonal  
	sums of  hermitian spaces or hermitian forms are
	defined in the usual way and are written using the symbol $\oplus$.
	The $n$-fold orthogonal sum $(P,f)\oplus \dots \oplus (P,f)$ 
	is denoted $n\cdot (P,f)$.
	
	\begin{example}\label{EX:diagonal-forms}
		Let $a_1,\dots,a_n\in \Sym_{\veps}(A,\sigma)$. 
		Then the map $f:A^n\times A^n\to A$
		given by $f((x_i),(y_i))=\sum_i x_i^\sigma a_iy_i$ is
		an $\veps$-hermitian form over $(A,\sigma)$. We call $f$ a diagonal  
		form and denote it by
		$\langle a_1,\dots,a_n\rangle_{(A,\sigma)}$. A hermitian
		form which is
		isomorphic to a diagonal  form is called diagonalizable.
	\end{example}
	
	Given $P\in \rproj{A}$, let 
	$P^*$ denote $\Hom_A(P,A)$ endowed with the \emph{right} $A$-module structure
	given by $(\phi a)x=a^\sigma (\phi x)$ ($\phi\in P^*$, $a\in A$, $x\in P$).
	If $f$ is an $\veps$-hermitian form on $P$, then  the map $x\mapsto f(x,-):P\to P^*$ is 
	an $A$-module homomorphism.
	When it is an isomorphism, we say that $(P,f)$, or $f$, is  \emph{unimodular}\footnote{
	Some texts use ``regular'' or ``nondegenerate''.}.
	The category of unimodular $\veps$-hermitian spaces over $(A,\sigma)$ with isometries as 
	its morphisms is
    denoted
    \[\Herm[\veps]{A,\sigma} .\]

	We shall need the following versions of Witt's Cancellation Theorem
	and Witt's Extension Theorem.
	The cancellation is derived from cancellation
	results of  Reiter \cite[Theorem~6.2]{Reiter_1975_Witts_extension_theorem}
	and Keller \cite[Theorem~3.4.2]{Keller_1988_cancellation_of_quadratic_forms}.

	\begin{thm}
    	\label{TH:Witt-cancellation}
    	Suppose that $(A,\sigma)$ is an Azumaya $R$-algebra
    	with involution and $R$ is semilocal,
    	and
    	let $(P_1,f_1),(P_2,f_2),(Q,g)\in \Herm[\veps]{A,\sigma}$.
    	If $f_1\oplus g\cong f_2\oplus g$, then $f_1\cong f_2$.
    \end{thm}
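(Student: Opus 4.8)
The plan is to reduce the statement to known cancellation theorems in the literature, using the structural results on Azumaya algebras already established in the preceding sections. The main point is that $R$ being semilocal and $(A,\sigma)$ being Azumaya with involution puts us squarely in the scope of the cancellation results of Reiter \cite{Reiter_1975_Witts_extension_theorem} and Keller \cite{Keller_1988_cancellation_of_quadratic_forms}, once one checks their hypotheses are met. First I would recall that, since $A$ is separable projective over $R$ and $R$ is semilocal, $A$ is a semiperfect ring: by Lemma~\ref{LM:Jac-of-separable-alg} we have $\Jac A=\Jac R\cdot A$ and $A/\Jac A\cong \prod_{\frakm\in\Max R}A(\frakm)$ is a finite product of simple artinian rings, hence semisimple; idempotents lift modulo $\Jac A$ because $A$ is $\Jac R$-adically complete along each factor (or simply because $A$ is finite over the semilocal ring $R$). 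Thus $A$ is semiperfect with involution, and every finite projective $A$-module is a direct sum of indecomposable projectives with local endomorphism rings.

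Next I would invoke the cancellation theorem in this setting. Keller's theorem \cite[Theorem~3.4.2]{Keller_1988_cancellation_of_quadratic_forms} and Reiter's \cite[Theorem~6.2]{Reiter_1975_Witts_extension_theorem} both provide Witt cancellation for $\veps$-hermitian forms over a form ring $(A,\sigma,\veps)$ provided $A$ is semiperfect (or, in Reiter's formulation, satisfies suitable finiteness and hence the relevant exchange/Krull--Schmidt properties) and $2\in\units{A}$ — both of which hold here. Concretely: given the isometry $\phi\colon (P_1,f_1)\oplus (Q,g)\xrightarrow{\sim}(P_2,f_2)\oplus (Q,g)$, one applies the cancellation theorem with the distinguished summand $(Q,g)$ to conclude $(P_1,f_1)\cong (P_2,f_2)$. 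A point worth spelling out is that the cited results are typically stated over a ring with a single involution and a fixed $\veps$ in the center of $A$ with $\veps^\sigma\veps=1$, which is exactly our standing hypothesis; no reduction to the split or field case is needed, though if one preferred, one could first pass to $A/\Jac A=\prod_\frakm A(\frakm)$ using Witt cancellation over fields (valid since $2$ is invertible) and then lift, using that an isometry over $A/\Jac A$ between unimodular forms lifts to an isometry over $A$ by the standard projective-cover argument (as in the proof of Lemma~\ref{LM:projective-covering-criterion}) combined with unimodularity.

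The main obstacle is bookkeeping rather than mathematical depth: matching the precise hypotheses of Reiter's and Keller's theorems to the present framework (semiperfectness of $A$, the form-parameter conventions, and the fact that $\veps$ need not be $\pm 1$ when $\Cent(A)\neq R$, i.e.\ in the unitary case), and ensuring the semilocal-over-$R$ hypothesis genuinely delivers the Krull--Schmidt property for $\rproj{A}$ that the cancellation proofs require. Once those identifications are in place, the conclusion is immediate. If a self-contained argument were preferred over citation, I would run the standard two-step scheme — first establish cancellation of the hyperbolic plane (which reduces, via Witt's extension / transitivity of the isometry group on isotropic vectors, to the classification of unimodular forms modulo $\Jac A$), then handle a general $(Q,g)$ by induction on $\rrk_A Q$ after splitting off hyperbolic parts using the fact that over the semisimple ring $A/\Jac A$ every form decomposes — but citing the existing literature is cleaner and is what the preceding sentence in the excerpt already signals.
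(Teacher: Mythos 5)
Your proposal takes the same route as the paper: reduce to the cancellation theorems of Keller and Reiter. The structural observations you make (that $A$ is semiperfect, that $A/\Jac A$ is a finite product of simple artinian rings via Lemma~\ref{LM:Jac-of-separable-alg}, that $2\in\units{A}$) are correct and are indeed the background needed to place $(A,\sigma,\veps)$ in the scope of those theorems.

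However, you flag ``matching the precise hypotheses'' as the main obstacle and then do not actually carry it out, and the one hypothesis that genuinely requires an argument is not the one you focus on. The issue is not the Krull--Schmidt property for $\rproj{A}$ (which does hold, as you say), but a condition on the hermitian space one is cancelling \emph{to}: Keller's Theorem~3.4.2(iii) requires a certain invariant $r$ attached to $(P_1,f_1)$ to vanish, which by Lemma~\ref{LM:mod-Jac-decomposition-inv} amounts to $P_1(\frakm)\neq 0$ for \emph{every} $\frakm\in\Max R$. This can fail: $P_1$ may be nonzero yet vanish at some maximal ideals. The paper therefore first writes $R=\prod_i R_i$ with each $R_i$ connected and works over each factor separately, so that $\rank_R P_1$ is either identically zero (in which case $P_1=P_2=0$ and there is nothing to prove) or everywhere positive (in which case the condition $r=0$ holds and Keller's theorem applies). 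Your proposal, as written, would apply Keller's theorem without this reduction, and the application is not justified when $\rank_R P_1$ is not everywhere positive. The fix is exactly the two-step reduction above; with it, your argument matches the paper's.
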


    \begin{proof}
        Write $R=\prod_iR_i$ with each $R_i$ a connected semilocal ring. Working over each factor
        separately, we may assume that $R$ is connected.
       	Under this assumption, we may further assume that $\rank_R P_1>0$, because
       	otherwise $\rank_R P_1=\rank_RP_2=0$, which 
       	implies $P_1=P_2=0$ and $f_1\cong f_2$.
       	At this point, we claim that we may apply Keller's cancellation result 
       	\cite[Theorem~3.4.2(iii)]{Keller_1988_cancellation_of_quadratic_forms}
       	and conclude that $f_1\cong f_2$. 
		Indeed, in order to apply Keller's theorem, 
		we need to check that the number $r$ defined in {\it op.~cit.} is $0$
		for the hermitian space $(P_1,f_1)$. By Lemma~\ref{LM:mod-Jac-decomposition-inv},
		this is equivalent to having $P_1(\frakm)\neq 0$ for all $\frakm\in \Max R$,
		and this holds by our assumption that $\rank_R P_1>0$.
       	Alternatively, one can   use Reiter's version of
       	Witt's Extension Theorem \cite[Theorem~6.2]{Reiter_1975_Witts_extension_theorem},
       	which applies under similar conditions,
       	to conclude the proof.
    \end{proof}

	\begin{thm}\label{TH:Witt-extension}
		Suppose that $R$ is a henselian local ring
		and $(A,\sigma)$ is a finite
		$R$-algebra with involution.
		Let $(P,f)\in\Herm[\veps]{A,\tau}$
        and let $U,V$ be summands of $P$.
        Then any isometry
		$f|_{U\times U}\to f|_{V\times V}$ extends to an isometry
        of $f$.
    \end{thm}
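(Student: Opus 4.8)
The plan is to deduce the theorem from the known extension theorem over the residue field together with a deformation‑theoretic lifting argument, the latter exploiting that $R$ is henselian. Write $\sigma$ for the involution, $\frakm$ for the maximal ideal of $R$, $k:=R/\frakm$, and $\bar X:=X\otimes_R k$ for the reduction of any object $X$. Identifying $V$ with its image in $P$, what must be shown is that the given isometry $\psi$, viewed as an $A$-linear map $U\to P$, is of the form $\varphi\circ\iota_U$ for some $\varphi\in U(f)$, where $\iota_U\colon U\hookrightarrow P$ is the inclusion. Consider the limit-preserving functor on $R$-algebras
\[
\mathbf X\colon\ S\longmapsto\{\,\varphi\in U(f_S)\ :\ \varphi\circ(\iota_U)_S=\psi_S\,\};
\]
we must prove $\mathbf X(R)\neq\emptyset$.

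First I would note $\mathbf X(k)\neq\emptyset$. Indeed $(\bar A,\bar\sigma)$ is a finite-dimensional $k$-algebra with involution, $(\bar P,\bar f)$ a unimodular $\veps$-hermitian space over it, $\bar U$ and $\bar V$ summands of $\bar P$, and $\bar\psi$ an isometry $\bar f|_{\bar U}\to\bar f|_{\bar V}$; Witt's extension theorem over a field — which holds for arbitrary finite-dimensional algebras with involution when $2$ is a unit, see \cite{First_2015_Witts_extension_theorem, First_2015_general_bilinear_forms} — produces $\bar\varphi\in U(\bar f)$ with $\bar\varphi\circ\iota_{\bar U}=\bar\psi$.

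Next I would show $\mathbf X$ is formally smooth over $R$; granting this, $\bar\varphi$ lifts to a point of $\mathbf X(R)$ — the desired isometry — because $R$ is henselian local (for $(A,\sigma)$ separable projective over $R$, which is the case used in this paper, $\mathbf X$ is moreover an affine $R$-scheme of finite type, so formal smoothness makes it smooth and the henselian lifting property applies directly; in general one argues by successive approximation over the completion of $R$ followed by Artin approximation). To check formal smoothness, let $S\twoheadrightarrow S_0$ be a square-zero extension of $R$-algebras with kernel $I$ and $\varphi_0\in\mathbf X(S_0)$. Since $P\in\rproj{A}$, the map $\End_{A_S}(P_S)\to\End_{A_{S_0}}(P_{S_0})$ is surjective, so $\varphi_0$ lifts to some $\wtilde{\varphi}\in\End_{A_S}(P_S)$; writing $\flat$ for the adjoint involution of $(P,f)$ on $\End_A(P)$, one has $\wtilde{\varphi}^\flat\wtilde{\varphi}=1+\delta$ with $\delta=\delta^\flat\in\End_{A_S}(P_S)\otimes I$, and replacing $\wtilde{\varphi}$ by $\wtilde{\varphi}(1-\tfrac12\delta)$ — legitimate as $2\in\units{R}$ — yields an isometry $\varphi_1\in U(f_S)$ lifting $\varphi_0$. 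Now $\varphi_1\circ(\iota_U)_S$ and $\psi_S$ are isometric embeddings of $U_S$ into $P_S$ agreeing modulo $I$, so their difference lies in $I\otimes_{S_0}\{\,Y\in\Hom_A(U,P)_{S_0}:\ f(Yx,y)+f(x,Yy)=0\ \text{for all }x,y\,\}$. To correct $\varphi_1$ to a point of $\mathbf X(S)$ it therefore suffices that every such ``infinitesimally isometric'' map $Y\colon U\to P$ be the restriction of an $f$-skew endomorphism of $P$ (one then composes $\varphi_1$ with $1$ plus a suitable $I$-multiple of such an endomorphism); this is the heart of the matter.

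The main obstacle is exactly that extension statement. I expect to prove it by choosing an $A$-module splitting $P=U\oplus W$, writing $f$ and the sought endomorphism in block form with respect to it, and observing that the hypothesis on $Y$ precisely encodes the ``$U\times U$'' component of skew-symmetry while the remaining blocks can be solved for — the solvability resting on the unimodularity of $f$ (invertibility of its Gram matrix) together with $2\in\units{R}$. The remaining ingredients are standard: the field case \cite{First_2015_general_bilinear_forms}, the reduction of the statement ``formally smooth and limit-preserving $\Rightarrow$ liftable over the henselian local ring $R$'' to the scheme case (or to Artin approximation), and, in the separable projective case, the fact that the isometry group scheme $\mathbf U(f)$ is then smooth and affine over $R$, via the Cayley transform and $2\in\units{R}$.
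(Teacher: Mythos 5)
Your route is genuinely different from the paper's. The paper's proof is two lines: by a theorem of Azumaya, a module-finite algebra over a henselian local ring is \emph{semiperfect}, and the statement is then Corollary~4.9 of \cite{First_2015_Witts_extension_theorem}, which proves Witt extension over semiperfect rings directly (essentially by lifting idempotents and reducing to the semisimple quotient $A/\Jac A$). Your deformation-theoretic argument is self-contained where it applies, and its core new ingredient is correct: the ``infinitesimal extension'' step does work. Writing $P=U\oplus W$, a map $Y=(Y_1,Y_2)\colon U\to U\oplus W$ with $f(Yx,y)+f(x,Yy)=0$ on $U\times U$ extends to a skew endomorphism $Z=\left[\begin{smallmatrix}Y_1&B\\Y_2&D\end{smallmatrix}\right]$: the skewness conditions decouple into the given $U\times U$ identity, the requirement $f(x,(B+D)v)=-f(Yx,v)$ for $x\in U$, $v\in W$, and a $(-\veps)$-hermitian constraint on the $W\times W$ pairing $f(\,\cdot\,,(B+D)\,\cdot\,)$, which one may simply set to zero; unimodularity of $f$ then produces $B+D$, and the construction preserves ``coefficients in $I$''. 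Together with your lifting of $\varphi_0$ to an isometry via $\wtilde{\varphi}(1-\tfrac12\delta)$, this does give formal smoothness of $\mathbf X$.

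The genuine gap is the passage from formal smoothness plus $\mathbf X(k)\neq\emptyset$ to $\mathbf X(R)\neq\emptyset$ in the stated generality. The theorem assumes only that $A$ is \emph{finite} over an arbitrary henselian local ring $R$: then $P\in\rproj{A}$ need not be projective over $R$, the modules $\End_A(P)$ and $\Hom_A(U,P)$ need not be finite projective (or even finitely presented, if $R$ is non-noetherian) over $R$, and $\mathbf X$ is not known to be representable by a scheme, so the henselian lifting property for smooth schemes is not available. Your fallback fails too: successive approximation requires $\mathbf X(\what{R})=\invlim\mathbf X(R/\frakm^n)$, which needs noetherianity, and Artin--Popescu approximation requires $R$ to be an excellent (G-ring) henselian local ring, which a general henselian local ring is not. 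So as written your proof establishes the theorem only when $A$ is finite \emph{projective} over $R$ (which does cover every application in this paper, all of which take $R$ a field or $(A,\sigma)$ Azumaya), but not the statement as given. To close the gap you would either need to restrict the hypotheses, or replace the lifting step by the semiperfect-ring argument the paper actually uses.
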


    \begin{proof}
    	By a theorem of Azumaya
    	\cite[Theorem 24]{Azumaya_1951_maximally_central_algebras},
    	$A$ is a \emph{semiperfect} ring.
    	The theorem therefore follows from \cite[Corollary~4.9]{First_2015_Witts_extension_theorem}.
    \end{proof}

\subsection{The Witt Group}
\label{subsec:Witt-grp}

	As usual, a \emph{Lagrangian} of a unimodular hermitian space 
	$(P,f)\in \Herm[\veps]{A,\sigma}$ is a summand $L$ of $P$ such that
    $L=L^\perp:=\{x\in P \suchthat f(x,L)=0\}$. If $(P,f)$ admits a Lagrangian,
    it is called \emph{metabolic}.
    A convenient way to verify that an $A$-submodule $L\leq P$
    with $f(L,L)=0$
    is a Lagrangian is to exhibit another submodule
    $M\leq P$ such that $f(M,M)=0$ and $L\oplus M=P$.
    If such $L$ and $M$ exist, $(P,f)$ is called \emph{hyperbolic}.
    In this case, the map $x\mapsto f(x,-):M\to L^*$ is an isomorphism
	of $A$-modules, and the induced map $P=L\oplus M\to L\oplus L^*$
	is an isometry  from $(P,f)$
	to   $(L\oplus L^*,\Hyp[\veps]{L})$, where $\Hyp[\veps]{L}$
    is the $\veps$-hermitian form given by
    \[\Hyp[\veps]{L}(x\oplus \phi,x'\oplus \phi')=
    \phi x'+\veps (\phi' x)^\sigma
    \]
    ($x,x'\in P$, $\phi,\phi'\in P^*$). 
    Since we assume that $2\in\units{A}$, any Lagrangian $L$
    admits a Lagrangian $M$ with $L\oplus M=P$ 
    \cite[Proposition I.3.7.1]{Knus_1991_quadratic_hermitian_forms},
    so  metabolic spaces are hyperbolic.
    Therefore,
    we shall 
	only consider hyperbolic spaces in the sequel.

\medskip    
    
    Recall that the Witt group of $\veps$-hermitian forms over
    $(A,\sigma)$, denoted
    \[
    W_\veps(A,\sigma),
    \]
    is the Grothendieck group of $\Herm[\veps]{A,\sigma}$, relative to orthogonal sum,
    divided by the subgroup spanned by the (representatives of) hyperbolic spaces.
    The   class represented by $(P,f)$  in $W_\veps(A,\sigma)$
    is denoted $[P,f]$ or $[f]$.
	Two forms $f$, $f'$ representing the same element in $W_\veps(A,\sigma)$
	will be called Witt-equivalent; this   happens if and only
	if there exist hyperbolic forms $h,h'$
	such that $f\oplus h\cong f'\oplus h'$. Note that $-[f]=[-f]$
	because $f\oplus (-f)$ is hyperbolic.
	
	\begin{example}\label{EX:exchange-involution}
		We say that $\sigma:A\to A$ is an \emph{exchange involution}
		if there exists   an idempotent $\eta\in\Cent(A)$
		such that $\eta^\sigma=1-\eta$.
		For example, this is the case if $(A,\sigma)$
		is an Azumaya $R$-algebra with involution
		and $\Cent(A)=R\times R$, because $\sigma|_{\Cent(A)}$
		is the involution $(r,s)\mapsto (s,r)$ (see Proposition~\ref{PR:types-of-involutions-Az}).
		In this situation, 
		there exists an $R$-algebra $B$
		such that $(A,\sigma)\cong (B\times B^\op,(x,y^\op)\mapsto (y,x^\op))$,
		hence the name ``exchange involution''.
		Indeed, take $B=\eta A$; the required isomorphism $A\to B\times B^\op$
		is given
		by $a\mapsto (\eta a,(\eta a^\sigma)^\op)$.

		It easy to see that for any $P\in\rproj{A}$,
		we have $P=P\eta\oplus P\eta^\sigma$. Furthermore, if
		$f$ is a unimodular $\veps$-hermitian form on $P$,
		then $f(P\eta,P\eta)=f(P\eta^\sigma,P\eta^\sigma)=0$ (because
		$\eta^\sigma\eta=0$),
		so $f$ is hyperbolic
		and $f\cong \Hyp[\veps]{ P\eta}$.
		From this we see that every  $(P,f)\in\Herm[\veps]{A,\sigma}$
		is hyperbolic  and is determined up to isomorphism
		by the isomorphism class of $P$.
		In particular, $W_\veps(A,\sigma)=0$.
	\end{example}

	Recall that	
	an $\veps$-hermitian space $(P,f)\in\Herm[\veps]{A,\sigma}$
	is called \emph{isotropic} if $P$ admits a nonzero summand $M$
	such that $f(M,M)=0$. When no such $M$ exists,
	$(P,f)$  is called \emph{anisotropic}. 
	We alert the reader that at this level of generality,
	the existence of $0\neq x\in P$ such that $f(x,x)=0$ does not imply
	that $(P,f)$ is isotropic. However, when $A$ is semisimple artinian,
	$xA$ is a summand of $P$, so  $f$ is isotropic
	if and only if $f(x,x)=0$ for some nonzero $x\in P$.
	 
	\begin{prp}[{\cite[Proposition~I.3.7.9]{Knus_1991_quadratic_hermitian_forms}}] \label{PR:ansio-Witt-equivalent}
		Every $(P,f)\in\Herm[\veps]{A,\sigma}$ 
		can be written as an orthogonal sum
		of    an anisotropic space and a  hyperbolic space.
		In particular, $(P,f)$ is Witt equivalent to an anisotropic 
		$\veps$-hermitian space.
	\end{prp}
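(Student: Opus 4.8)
My plan is to deduce the second assertion from the first and to prove the first by peeling off hyperbolic summands one at a time. The reduction is immediate: a hyperbolic space represents $0$ in $W_\veps(A,\sigma)$, so once $(P,f)$ is written as $(P_0,f_0)\perp(P_1,f_1)$ with $(P_0,f_0)$ anisotropic and $(P_1,f_1)$ hyperbolic, it is automatically Witt equivalent to $(P_0,f_0)$. For the orthogonal decomposition itself: if $(P,f)$ is already anisotropic, take the hyperbolic part to be $0$; otherwise there is a nonzero summand $M$ of $P$ with $f|_{M\times M}=0$, and one step consists of splitting off a hyperbolic orthogonal summand having $M$ as a Lagrangian.

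For that step, I would observe that since $f$ is unimodular and $M$ is a summand, the orthogonal complement $M^\perp=\{x\in P:f(x,M)=0\}$ is the kernel of the split surjection $x\mapsto f(x,-)|_M\colon P\to M^*$ (the isomorphism $P\xrightarrow{\sim}P^*$ followed by the restriction $P^*\to M^*$, which splits because $M$ is a summand), hence itself a summand of $P$ containing $M$. A standard adjustment using $2\in\units{A}$ then produces a totally isotropic summand $N$ of $P$ such that $M\oplus N$ is a summand on which $f$ restricts to a form isometric to $\Hyp[\veps]{M}$; as this restriction is unimodular, the summand $M\oplus N$ splits off orthogonally (see \ref{subsec:Witt-grp} for metabolic $\Rightarrow$ hyperbolic), giving $(P,f)\cong\Hyp[\veps]{M}\perp(P_1,f_1)$ with $\rank_RP_1=\rank_RP-2\rank_RM$.

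I would then iterate on $(P_1,f_1)$. Over a field this terminates immediately, since $\rank M\geq1$ forces $\rank_RP_1<\rank_RP$. For general $R$ the only genuine issue is termination, which I would handle by dévissage: decompose $R$ into a finite product of rings over each of which $\rank_RP$ is constant (legitimate since $\rank_RP$ is locally constant with finite image) to reduce to $\rank_RP=n$ constant, and then induct on $n$. In the inductive step, choosing a nonzero totally isotropic summand $M$ and writing $R=R'\times R''$ with $\Spec R'=\supp M:=\{\frakp\in\Spec R:M(\frakp)\neq0\}$, one has $\rank_{R'}M\geq1$, so over $R'$ a further finite decomposition making $\rank M$ constant turns the splitting above into a strict drop of the constant rank and the inductive hypothesis applies; over $R''$ the summand $M$ vanishes, and one recurses there with a fresh totally isotropic summand.

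The main obstacle is exactly the termination of this recursion over $R''$: one needs $(P,f)$ to admit a hyperbolic orthogonal summand whose support is maximal, so that its orthogonal complement has no nonzero totally isotropic summand and is therefore anisotropic. The mechanism I would use is that two totally isotropic summands with disjoint supports can be added to form a totally isotropic summand supported on the union, so the collection of such supports is directed in the Boolean algebra of idempotents of $R$; combined with the uniform bound $\rank_RM\leq\tfrac12\rank_RP$ for totally isotropic $M$, this should force a support-maximal hyperbolic orthogonal summand to exist, with anisotropic complement by construction. Alternatively — and this is the quickest route for the actual write-up — the statement is Proposition~I.3.7.9 of \cite{Knus_1991_quadratic_hermitian_forms}, whose proof can simply be quoted.
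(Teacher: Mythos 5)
The paper gives no argument of its own here: the proposition is proved by the citation to \cite[Proposition~I.3.7.9]{Knus_1991_quadratic_hermitian_forms}, so your closing remark that one can simply quote that reference is exactly the paper's proof, and your reduction of the second assertion to the first is of course correct. Your self-contained argument is also sound through the splitting step: for a nonzero totally isotropic summand $M$, the complement $M^\perp=\ker(P\to M^*)$ is a summand, a complement $N_0$ of $M^\perp$ pairs perfectly with $M$, and the usual adjustment using $2\in\units{R}$ makes $M\oplus N_0$ an orthogonal summand isometric to $\Hyp[\veps]{M}$.

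The gap is in the termination argument for a base with infinitely many idempotents, precisely where you hedge with ``should force''. First, support-maximality is not the right notion: if $M_0$ has maximal support $U_0$ but rank $1$ there while the space admits a rank-$2$ totally isotropic summand over $U_0$, the orthogonal complement of $\Hyp[\veps]{M_0}$ still carries a nonzero totally isotropic summand supported inside $U_0$, and no contradiction with maximality of the support arises; what is needed is a totally isotropic summand whose locally constant rank function $\rrk$ (or $\rank_R$) is \emph{pointwise} maximal. Second, even granting that the family of these rank functions is closed under pointwise maximum (which it is, by splitting $R$ along the clopen locus where one dominates the other), a sup-semilattice of locally constant functions bounded by $\tfrac12\rank_RP$ need not have a greatest element: the bound controls the values, not the length of strictly increasing chains, and over a ring such as an infinite product of fields one has arbitrarily long strictly increasing chains of clopen supports. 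So the existence of the desired maximal hyperbolic summand is asserted rather than proved, and this is exactly the hard point of the statement for general $R$. Note, however, that every invocation of the proposition in the paper has $R$ a field or semilocal; such a ring has only finitely many idempotents (every nonzero clopen piece of $\Spec R$ contains a maximal ideal), so your d\'evissage reduces to connected $R$, where each splitting strictly decreases the constant rank and induction terminates with no maximality argument needed. Either restrict your direct proof to that case, or fall back on the citation as the paper does.
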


	We proceed with showing that if $(A,\sigma)$ is an Azumaya $R$-algebra with involution 
	and $R$ is semilocal,
	then every hermitian form representing $0$ in $W_\veps(A,\sigma)$ is hyperbolic.
	Furthermore, two hermitian spaces in the same Witt class having the same
	reduced rank are isomorphic.
	These  statements may already
	fail for $(A,\sigma)=(R,\id_R)$
	if $R$ is not semilocal;  see    \cite[Example~1.2.6]{Balmer_2005_Witt_groups}, for instance.

	\begin{lem}\label{LM:unimodular-implies-rrk-sigma-inv}
		Suppose that $(A,\sigma)$
		is an   Azumaya $R$-algebra with involution 
		and let $P\in \rproj{A}$.
		Then $\rank_RP=\rank_RP^*$ and $\rrk_AP^*=\sigma  \rrk_AP  $, i.e.,
		$(\rrk_AP^*)(\frakp)=(\rrk_AP )(\frakp^\sigma)$
		for all $\frakp\in \Spec \Cent(A)$.
		In particular, if there exists a unimodular
		$\veps$-hemritian form on $P$, then
		$\rrk_AP$ is $\sigma$-invariant.
	\end{lem}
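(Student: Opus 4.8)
The plan is to prove the three assertions in order: first $\rank_R P = \rank_R P^*$, then the identity $\rrk_A P^* = \sigma\,\rrk_A P$ at the level of functions on $\Spec\Cent(A)$, and finally deduce $\sigma$-invariance of $\rrk_A P$ from the existence of a unimodular $\veps$-hermitian form. Since $\rank_R$ and $\rrk_A$ are defined pointwise (the reduced rank via $\dim_{k(\frakp)}$ of the fiber) and since both quantities commute with base change to residue fields — using that $\Hom_A(P,A)\otimes S \xrightarrow{\sim} \Hom_{A_S}(P_S,A_S)$ and hence $(P^*)(\frakp) \cong (P(\frakp))^*$ — it suffices to verify everything after base-changing to $k(\frakp)$. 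So I would immediately reduce to the case where $R$ is a field, and then (since the reduced rank is insensitive to further base change and passing to a splitting field only permutes the factors of $\Cent(A)$ compatibly with $\sigma$) to the case where $R$ is algebraically closed.

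First I would handle $\rank_R P = \rank_R P^*$. Over an algebraically closed field, either $A \cong \nMat{R}{n}$ with $\Cent(A) = R$, in which case $P \cong \nMat{R}{m\times n}$ and $P^* \cong \nMat{R}{n\times m}$ as a right module, so both have $R$-dimension $mn$; or $A \cong \nMat{R}{n}\times\nMat{R}{n}$ with $\Cent(A) = R\times R$, where the computation is componentwise and identical. In fact this equality is cleaner than it looks: $P \mapsto P^*$ is an additive duality on $\rproj{A}$ sending $A_A$ to $({}_AA)$ viewed as a right module via $\sigma$, which has the same $R$-rank as $A_A$; since every finite projective $A$-module is a summand of $A^k$, additivity gives $\rank_R P^* = \rank_R P$ in general, with no reduction needed. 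I would state it this way to avoid the case analysis.

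Next, the core identity $\rrk_A P^* = \sigma\,\rrk_A P$. The point is that the duality $P\mapsto P^*$ is built from the ordinary $\Cent(A)$-linear dual twisted by $\sigma$, so it intertwines the $A$-module structures in a way that swaps the two "halves" of $\Cent(A)$ when $\sigma$ is the exchange involution, and acts as the standard dual otherwise. Concretely, over an algebraically closed field with $\Cent(A) = R$ (so $\sigma$ orthogonal or symplectic, hence $\sigma$ fixes the unique prime $\frakp$ of $\Cent(A)$): $\rrk_A P^* = \dim_R P^* / \deg A = \dim_R P/\deg A = \rrk_A P$, and $\frakp^\sigma = \frakp$, so the asserted equality holds trivially. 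With $\Cent(A) = R\times R$ and $\sigma$ the exchange involution, write $P = P_1\times P_2$ over $A = A_1\times A_2$; one checks $(P_1\times P_2)^* \cong P_2^{*_1}\times P_1^{*_2}$ where $*_i$ is the $A_i$-dual (this is exactly where $\sigma$ swapping the idempotents enters), so $\rrk_A P^*$ evaluated at the prime corresponding to $A_1$ equals $\rrk_{A_2} P_2 = \rrk_A P$ evaluated at the prime corresponding to $A_2$, which is precisely $(\rrk_A P^*)(\frakp) = (\rrk_A P)(\frakp^\sigma)$. The mild obstacle here is bookkeeping: making sure the identification of $\Spec\Cent(A)$ with its two points is compatible with how $\sigma$ permutes them and how base change to a residue field interacts with Proposition~\ref{PR:types-of-involutions-Az}(iii), but this is routine once set up carefully. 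Rather than splitting into cases over a closed field, a cleaner route is: the map $x\mapsto f(x,-)$ for $f = \Hyp[\veps]{A_A}$ (or directly the canonical pairing $A^*\otimes A\to \Cent(A)$) shows $P^*$ is the image of $P$ under the anti-equivalence $\rproj{A}\to\rproj{A}$ induced by $-^\op$ composed with $\Int$ of the element giving $\sigma$; tracking reduced ranks through this anti-equivalence gives the $\sigma$-twist directly. I would present whichever of these is shortest in the write-up; the computational version over a closed field is the safest.

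Finally, the consequence. If $f$ is a unimodular $\veps$-hermitian form on $P$, then by definition $x\mapsto f(x,-)$ is an $A$-module isomorphism $P \xrightarrow{\sim} P^*$, so $\rrk_A P = \rrk_A P^*$ as functions on $\Spec\Cent(A)$. Combining with the identity just proved, $(\rrk_A P)(\frakp) = (\rrk_A P^*)(\frakp) = (\rrk_A P)(\frakp^\sigma)$ for every $\frakp\in\Spec\Cent(A)$, which is exactly the statement that $\rrk_A P$ is $\sigma$-invariant. I expect the main obstacle to be none of the substance but rather getting the functional/pointwise formalism for $\sigma$ acting on $\Spec\Cent(A)$ stated precisely, together with the compatibility of $(-)^*$ with base change; both are already available in the excerpt (the $\Hom$-base-change isomorphism is recalled in the Notation section, and $\rrk$-base-change in \S\ref{subsec:Azumaya-algebras}), so the proof should be short.
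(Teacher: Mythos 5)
Your proposal is correct and follows essentially the same route as the paper: reduce pointwise to the residue fields, treat the case $\Cent(A)$ connected (where $\sigma$ fixes the unique prime and $P\cong P^*$ by semisimplicity/length) separately from the case $\Cent(A)=R\times R$ with the exchange involution (where the dual swaps the two factors $B$ and $B^\op$ of equal degree), and then deduce $\sigma$-invariance from $P\cong P^*$. The only caveat is your side remark that ``additivity gives $\rank_RP^*=\rank_RP$ with no reduction needed'': additivity of $(-)^*$ plus $\rank_R(A_A)^*=\rank_RA$ only yields the equality for the sum $\rank_RP^*+\rank_RQ^*$ over a decomposition $P\oplus Q\cong A^k$, so one still needs the fibrewise length argument (or your explicit matrix computation) to get it for $P$ alone --- but since you explicitly fall back on the computational version, this does not affect the proof.
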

	
	\begin{proof}	
		Write $S=\Cent(A)$. It is enough to prove lemma
		after specializing to the residue fields of $R$,
		so assume $R$ is a field. 
		
		If $S$ is connected,
		then $\sigma \rrk_AP =\rrk_AP$
		and $A$ is a simple artinian ring. Length considerations
		force $P\cong P^*$, hence $\rank_RP=\rank_RP^*$
		and  $\rrk_AP^*=\rrk_AP=\sigma \rrk_AP $. 
		
		If $S$ is not connected, then $S=R\times R$
		and $\sigma|_R$ is the exchange involution. Thus,
		as in Example~\ref{EX:exchange-involution},
		there exists a central simple $R$-algebra $B$
		such that
		$(A,\sigma)\cong (B\times B^\op,\tau)$
		where $(x,y^\op)^\tau=(y,x^\op)$.
		Identifying $A$ with $B\times B^\op$, we can write $P=P_1\times P_2$
		where $P_1\in\rproj{B}$ and $P_2\in\rproj{B^\op}$.
		Regarding $P_1$ and $P_2$ as $A$-modules, 
		one readily checks that $(1_B,0_B^\op)$ annihilates
		$P_1^*$ and  $(0_B,1_B^\op)$ annihilates $P_2^*$, so $P_1^*\in \rproj{B^\op}$
		and $P_2^*\in\rproj{B}$. Since $*:\rproj{A}\to \rproj{A}$ is a duality 
		and $\rproj{A}$ is  abelian semisimple,
		$P_1$ and $P_1^*$  have the same $A$-length,
		so $\operatorname{length}_B P_1=\operatorname{length}_{B^\op} P_1^*$.
		Likewise, $\operatorname{length}_{B^\op} P_2=\operatorname{length}_{B } P_2^*$.
		Since $B$ and $B^\op$ are central simple $R$-algebras of equal degree,
		this means that $\rank_RP=\rank_RP^*$
		and $\rrk_AP^*=\rrk_A(P_2^*\times P_1^*)=\sigma \rrk_AP $.
	
		Finally, if $P$ carries a unimodular hermitian form,
		then $P\cong P^*$, so $\rrk_AP=\rrk_A P^*=\sigma \rrk_AP $.
	\end{proof}

	\begin{lem}\label{LM:rank-determines-hyperbolic}
		Suppose that $(A,\sigma)$
		is an Azumaya algebra with involution over a semilocal ring $R$,
		and 
		let $(P_1,f_1),(P_2,f_2)\in\Herm[\veps]{A,\sigma}$ be hyperbolic.
		If $\rrk_A P_1\leq \rrk_A P_2$, then there is $V\in\rproj{A}$
		such that $f_1\oplus\Hyp[\veps]{V}\cong f_2$.
		In particular, if $\rrk_A P_1=\rrk_A P_2$, then $f_1\cong f_2$.
	\end{lem}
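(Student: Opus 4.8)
The plan is to reduce immediately to the case where $R$ is connected, exactly as in the proof of Theorem~\ref{TH:Witt-cancellation}: write $R$ as a finite product of connected semilocal rings and argue over each factor. So assume from now on that $R$ is connected; then all reduced ranks of $A$-modules are constant on each connected component of $\Spec\Cent(A)$, and by Proposition~\ref{PR:types-of-involutions-Az}(v) the pair $(\sigma,\veps)$ is orthogonal, symplectic or unitary. In the orthogonal and symplectic cases $\Cent(A)=R$, and in the unitary case $\Cent(A)$ is quadratic \'etale, hence by Lemma~\ref{LM:non-connected-S} either connected or isomorphic to $R\times R$. This leaves two cases: (a) $\Cent(A)$ is connected; (b) $\Cent(A)\cong R\times R$, i.e.\ $\sigma$ is an exchange involution in the sense of Example~\ref{EX:exchange-involution}. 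In either case I would first pick Lagrangians $L_i\in\rproj{A}$ with $(P_i,f_i)\cong\Hyp[\veps]{L_i}$, so that $P_i\cong L_i\oplus L_i^*$ and, using Lemma~\ref{LM:unimodular-implies-rrk-sigma-inv}, $\rrk_A P_i=\rrk_A L_i+\rrk_A L_i^*=\rrk_A L_i+\sigma\rrk_A L_i$. Since $\Hyp[\veps]{L_1\oplus V}\cong\Hyp[\veps]{L_1}\oplus\Hyp[\veps]{V}$, it will suffice to find $V\in\rproj{A}$ with $\Hyp[\veps]{L_1\oplus V}\cong\Hyp[\veps]{L_2}$.

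In case (a) this is easy: $\rrk_A$ is a single integer on the connected space $\Spec\Cent(A)$, and $\sigma$ fixes it, so $\rrk_A P_i=2\rrk_A L_i$. The hypothesis $\rrk_A P_1\le\rrk_A P_2$ thus gives $\rrk_A L_1\le\rrk_A L_2$, whence $L_1$ is isomorphic to a summand of $L_2$ by Lemma~\ref{LM:rank-determines}; writing $L_2\cong L_1\oplus V$ with $V\in\rproj{A}$, we get $f_2\cong\Hyp[\veps]{L_2}\cong\Hyp[\veps]{L_1}\oplus\Hyp[\veps]{V}\cong f_1\oplus\Hyp[\veps]{V}$.

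The main obstacle is case (b): here $\Spec\Cent(A)$ is two copies of $\Spec R$ interchanged by $\sigma$, the hypothesis only constrains the ``diagonal'' quantity $\rrk_A L_i+\sigma\rrk_A L_i$, and in general $L_1$ need \emph{not} be a summand of $L_2$, so the case-(a) argument fails. Instead I would use the structure of Example~\ref{EX:exchange-involution}: $W_\veps(A,\sigma)=0$, every unimodular $\veps$-hermitian space over $(A,\sigma)$ is hyperbolic, and its isometry class is determined by the isomorphism class of its underlying $A$-module. Hence $f_1\oplus\Hyp[\veps]{V}\cong f_2$ holds as soon as $P_1\oplus V\oplus V^*\cong P_2$, which by Lemma~\ref{LM:rank-determines} and Lemma~\ref{LM:unimodular-implies-rrk-sigma-inv} amounts to $\rrk_A P_1+\rrk_A V+\sigma\rrk_A V=\rrk_A P_2$. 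Writing $A\cong B\times B^\op$ with $B=\eta A$ (where $\eta^\sigma=1-\eta$), an $A$-module has reduced rank $(\rrk_B M,\rrk_{B^\op}N)$ on the two sheets, where $M,N$ are its $B$- and $B^\op$-components. Since $P_i$ is unimodular, $\rrk_A P_i$ is $\sigma$-invariant, hence of the form $(m_i,m_i)$; the hypothesis gives $n:=m_2-m_1\ge0$, and $\ind B\mid n$ by Corollary~\ref{CR:index-divides-rrk}. Using Theorem~\ref{TH:index-description} to produce $P_0\in\rproj{B}$ with $\rrk_B P_0=\ind B$, set $W=(n/\ind B)\cdot P_0$ and $V=(W,0)\in\rproj{B\times B^\op}=\rproj{A}$; then $\rrk_A V=(n,0)$, $\sigma\rrk_A V=(0,n)$, and $\rrk_A P_1+\rrk_A V+\sigma\rrk_A V=(m_2,m_2)=\rrk_A P_2$, as required.

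Finally, the ``in particular'' clause follows uniformly from either construction: the $V$ obtained satisfies $\rrk_A V+\sigma\rrk_A V=\rrk_A\Hyp[\veps]{V}=\rrk_A f_2-\rrk_A f_1$, so if $\rrk_A P_1=\rrk_A P_2$ then $\rrk_A V+\sigma\rrk_A V=0$; as reduced ranks are nonnegative this forces $\rrk_A V=0$, hence $V=0$, hence $f_1\cong f_2$.
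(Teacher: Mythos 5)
Your proof is correct and follows essentially the same route as the paper: reduce to $R$ connected, split on whether $\Cent(A)$ is connected, compare ranks of Lagrangians via Lemma~\ref{LM:rank-determines} and Lemma~\ref{LM:unimodular-implies-rrk-sigma-inv}, and invoke Example~\ref{EX:exchange-involution} in the exchange case. The only (harmless) difference is in case (b), where the paper gets $V$ more directly by applying Lemma~\ref{LM:rank-determines} to the $A$-modules $P_1\eta$ and $P_2\eta$ (so that $P_1\eta\oplus V\cong P_2\eta$), avoiding your detour through $\ind B$ and Theorem~\ref{TH:index-description}.
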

	
	\begin{proof}
		Write $S=\Cent(A)$.
		As in the proof of Theorem~\ref{TH:Witt-cancellation},
		we may assume that $R$ is connected.
		
		Suppose first that $S$ is not connected. By  Lemma~\ref{LM:non-connected-S},   $S=R\times R$.
		Put $\eta=(1_R,0_R)\in S$. Then by 
		Example~\ref{EX:exchange-involution}, 
		$f_1\cong \Hyp[\veps]{P_1\eta}$ and $f_2\cong \Hyp[\veps]{P_2\eta}$.
		The assumption $\rrk_A P_1\leq \rrk_A P_2$ means
		that $\rrk_A P_1\eta \leq \rrk_A P_2\eta$, so by Lemma~\ref{LM:rank-determines},
		there is $V\in\rproj{A}$ such that $P_1\eta\oplus V\cong P_2\eta$. Then
		$f_1\oplus\Hyp[\veps]{V}\cong \Hyp[\veps]{P_1\eta}\oplus\Hyp[\veps]{V}\cong
		\Hyp[\veps]{P_2\eta}\cong f_2$.
		
		Now assume   that $S$ is connected and
		write $f_1\cong \Hyp[\veps]{U_1}$
		and $f_2\cong \Hyp[\veps]{U_2}$ with $U_1,U_2\in\rproj{A}$.
		Lemma~\ref{LM:unimodular-implies-rrk-sigma-inv} 
		and the connectivity of $S$ imply that
		$2\rrk_AU_1=\rrk_AP_1\leq \rrk_AP_2=2\rrk_AU_2$. By Lemma~\ref{LM:rank-determines},
		there is $V\in\rproj{A}$ such that 
		$U_1\oplus V\cong U_2$.	Then $f_1\oplus\Hyp[\veps]{V} \cong f_2$.
	\end{proof}	
	
	\begin{thm}\label{TH:trivial-in-Witt-ring}
		Suppose that $(A,\sigma)$
		is an Azumaya algebra with involution over a semilocal ring $R$,
		and let $(P,f),(P',f')\in \Herm[\veps]{A,\sigma}$.
		\begin{enumerate}[label=(\roman*)]
			\item If $\rrk_AP\leq \rrk_A P'$, then there exists
			$V\in \rproj{A}$ such that $f\oplus\Hyp[\veps]{V}\cong f'$.
			\item If $[f]=0$, then $f$ is hyperbolic.
			\item If $[f]=[f']$ and $\rrk_AP=\rrk_A P'$, then $f\cong f'$.
		\end{enumerate}
	\end{thm}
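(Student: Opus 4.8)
The plan is to establish part~(i) first and then obtain parts~(ii) and~(iii) as immediate corollaries of it. For part~(i), assuming $[f]=[f']$ and $\rrk_A P\le\rrk_A P'$, the idea is: by the characterization of Witt equivalence recorded in~\ref{subsec:Witt-grp}, there are hyperbolic spaces $h,h'$ with $f\oplus h\cong f'\oplus h'$; write $M$ and $M'$ for their underlying $A$-modules. Passing to underlying modules in this isometry and comparing reduced ranks gives $\rrk_A P+\rrk_A M=\rrk_A P'+\rrk_A M'$, whence $\rrk_A M'\le\rrk_A M$ by the hypothesis $\rrk_A P\le\rrk_A P'$. Lemma~\ref{LM:rank-determines-hyperbolic} then produces $V\in\rproj{A}$ with $h'\oplus\Hyp[\veps]{V}\cong h$, and substituting this into $f\oplus h\cong f'\oplus h'$ yields $(f\oplus\Hyp[\veps]{V})\oplus h'\cong f'\oplus h'$. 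Applying Witt cancellation (Theorem~\ref{TH:Witt-cancellation}) to cancel $h'$ gives $f\oplus\Hyp[\veps]{V}\cong f'$, as desired.

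Part~(ii) is then the instance of part~(i) with the zero object of $\Herm[\veps]{A,\sigma}$ in the role of $f$ and the given $f$ in the role of $f'$: its hypotheses hold because $[0]=0=[f]$ and $0\le\rrk_A P$, and the conclusion is that $\Hyp[\veps]{V}\cong f$ for some $V$, i.e.\ $f$ is hyperbolic. Part~(iii) is the instance of part~(i) applied to the given $f$ and $f'$: it produces $V$ with $f\oplus\Hyp[\veps]{V}\cong f'$, and passing to underlying modules and reduced ranks while using $\rrk_A P=\rrk_A P'$ forces $\rrk_A V+\rrk_A V^*=0$; since reduced ranks are nonnegative this gives $\rrk_A V=0$, hence $V=0$, and therefore $f\cong f'$.

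I expect no genuine obstacle here: all the substantive input --- Witt cancellation over semilocal rings and the structure of hyperbolic spaces of prescribed reduced rank --- is already available, so what remains is organizational. The one point that wants a little care is that in part~(i) one must avoid computing $\rrk_A M$ or $\rrk_A M'$ on the nose; because of the $\sigma$-twisting built into the dual (Lemma~\ref{LM:unimodular-implies-rrk-sigma-inv}) these need not be $\sigma$-invariant, but the argument only uses the inequality $\rrk_A M'\le\rrk_A M$, which is a formal consequence of the additivity of the reduced rank over orthogonal sums. It is also worth recording, for parts~(ii) and~(iii), that a finite projective $A$-module of reduced rank $0$ is the zero module, which is immediate from $\rrk_A(-)=\rank_R(-)/\deg A$.
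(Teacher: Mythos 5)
Your proposal is correct and follows essentially the same route as the paper: write the Witt equivalence as $f\oplus h\cong f'\oplus h'$ with $h,h'$ hyperbolic, compare reduced ranks, invoke Lemma~\ref{LM:rank-determines-hyperbolic} to absorb $h'$ into $h$ up to a hyperbolic summand $\Hyp[\veps]{V}$, and cancel via Theorem~\ref{TH:Witt-cancellation}; parts (ii) and (iii) are then the same specializations the paper uses. Your cautionary remark about not identifying the modules $U$ and $W\oplus V$ themselves (only the hyperbolic forms) matches the parenthetical warning in the paper's own proof.
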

	
	\begin{proof}
		(i) There are $U,W\in\rproj{A}$ such that
		$f\oplus \Hyp[\veps]{U}\cong f'\oplus \Hyp[\veps]{W}$.
		Then $\rrk_A (U\oplus U^*)-\rrk_A(W\oplus W^*)=\rrk_A P'-\rrk_AP\geq 0$,
		so by Lemma~\ref{LM:rank-determines-hyperbolic},
		there is $V\in\rproj{A}$ such that $\Hyp[\veps]{U}\cong \Hyp[\veps]{W}\oplus\Hyp[\veps]{V}$.
		(Caution: $U\cong W\oplus V$ is {\it a priori} not guaranteed.)
		Then $(f\oplus \Hyp[\veps]{V})\oplus \Hyp[\veps]{W}\cong f\oplus \Hyp[\veps]{U}\cong f'\oplus \Hyp[\veps]{W}$.
		By Theorem~\ref{TH:Witt-cancellation}, this means that $f\oplus\Hyp[\veps]{V}\cong f'$.
		
		(ii) Apply (i) with $(P,f)$ being the zero hermitian space.
		
		(iii) By (i), $f\oplus\Hyp[\veps]{V}\cong f'$ for some $V\in\rproj{A}$, and   $V=0$
		because $\rrk_AV=\rrk_AP'-\rrk_AP=0$.
	\end{proof}

	We also record the following useful corollary to Lemma~\ref{LM:unimodular-implies-rrk-sigma-inv}.
	
	\begin{cor}\label{CR:constant-even-ranks}
		Suppose that $(A,\sigma)$
		is an   Azumaya $R$-algebra with involution  
		and let $(P,f)\in \Herm[\veps]{A,\sigma}$.
		\begin{enumerate}[label=(\roman*)]
			\item If $R$ is connected, then $\rrk_AP$ is constant.
			\item 
			If $S:=\Cent(A)$ is connected and $f$ is hyperbolic,
			then there exists $V\in\rproj{A}$ with $\rrk_AP=2\rrk_AV$.
		\end{enumerate}
	\end{cor}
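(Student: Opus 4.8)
The plan is to treat the two parts separately; in both I reduce to the elementary fact that a locally constant function on a connected space is constant, combined with Lemma~\ref{LM:unimodular-implies-rrk-sigma-inv}. Throughout, write $S=\Cent(A)$, and note that for any $Q\in\rproj{A}$ the reduced rank $\rrk_A Q=\rank_S Q/\deg A$ is a locally constant function on $\Spec S$, being a quotient of locally constant functions.

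For part (i), since $f$ is a unimodular $\veps$-hermitian form on $P$, Lemma~\ref{LM:unimodular-implies-rrk-sigma-inv} shows that $\rrk_A P$ is $\sigma$-invariant. As $R$ is connected, Proposition~\ref{PR:types-of-involutions-Az}(v) tells us that $(\sigma,\veps)$ is orthogonal, symplectic or unitary. If it is orthogonal or symplectic, then $S=R$ by Proposition~\ref{PR:types-of-involutions-Az}(i)--(ii), so $\Spec S=\Spec R$ is connected and the locally constant function $\rrk_A P$ is constant. If $(\sigma,\veps)$ is unitary, then $S$ is a quadratic \'etale $R$-algebra by Proposition~\ref{PR:types-of-involutions-Az}(iii). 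When $S$ is connected we are done as before; when $S$ is not connected, Lemma~\ref{LM:non-connected-S} gives an isomorphism $S\cong R\times R$ under which $\sigma|_S$ becomes the exchange involution, so $\Spec S$ is the disjoint union of two copies of the connected space $\Spec R$, interchanged by the automorphism of $\Spec S$ induced by $\sigma$. Then $\rrk_A P$ is constant on each of the two components, and its $\sigma$-invariance forces these two values to agree; hence $\rrk_A P$ is constant.

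For part (ii), the hyperbolicity of $f$ provides summands $L,M$ of $P$ with $f(L,L)=f(M,M)=0$ and $L\oplus M=P$, and then $(P,f)$ is isometric to $(L\oplus L^*,\Hyp[\veps]{L})$ (see \ref{subsec:Witt-grp}); in particular $L\in\rproj{A}$ and $\rrk_A P=\rrk_A L+\rrk_A L^*$. By Lemma~\ref{LM:unimodular-implies-rrk-sigma-inv}, $\rrk_A L^*=\sigma\,\rrk_A L$. Since $S$ is connected, the locally constant function $\rrk_A L$ is constant, and therefore so is $\sigma\,\rrk_A L$, with the same value; thus $\rrk_A L^*=\rrk_A L$ and $\rrk_A P=2\,\rrk_A L$, so $V:=L$ works. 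The only part of the argument needing any care is tracking how $\sigma$ permutes the connected components of $\Spec S$ in the non-connected case of (i); note that the connectedness of $S$, not merely of $R$, is genuinely used in (ii), since the conclusion can fail when $S\cong R\times R$, e.g.\ for $(P,f)$ with $P\cong A_A$.
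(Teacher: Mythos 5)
Your proof is correct and follows essentially the same route as the paper: both parts rest on Lemma~\ref{LM:unimodular-implies-rrk-sigma-inv} together with the structure of $S$ as either $R$ or a quadratic \'etale $R$-algebra with $\sigma|_S$ its standard involution, and your part (ii) is the paper's argument verbatim. The only cosmetic difference is in (i), where the paper avoids your case split on the connectedness of $S$ by noting that $\sigma$ acts transitively on the fibers of $\Spec S\to\Spec R$ and then invoking \eqref{EQ:rank-change-of-rings} to transfer constancy from $\rank_R P$ to $\rank_S P$; your component-swapping argument for $S\cong R\times R$ accomplishes the same thing.
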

	
	\begin{proof}
		(i) 
		If $S=R$, then this is clear.
		If $S\neq R$, then $S$ is a quadratic \'etale $R$-algebra
		and $\sigma|_S$ is the standard $R$-involution of $S$ (see~\ref{subsec:quad-etale}).
		Thus, $\sigma$ acts transitively on every fiber of 
		$\Spec S\to \Spec R$.
		By Lemma~\ref{LM:unimodular-implies-rrk-sigma-inv},
		this means that
		$\rrk_AP$ is constant on the fibers of $\Spec S\to \Spec R$.
		Thus, by   \eqref{EQ:rank-change-of-rings},
		we have $\iota \rank_RP=\iota \rank_RS\cdot \rank_SP=2\rank_SP$, where $\iota:R\to S$
		is the inclusion.
		Since the left hand side is constant ($R$ is connected),
		$\rrk_AP$ is also constant.

		(ii) There exists $V\in\rproj{A}$ such that $P=V\oplus V^*$.
		By Lemma~\ref{LM:unimodular-implies-rrk-sigma-inv},
		$\rrk_AP=\rrk_AV+\sigma\rrk_AV$,
		and $\sigma\rrk_AV=\rrk_AV$ because $S$ is connected.
	\end{proof}

\subsection{Base Change}
\label{subsec:herm-base-change}

	Let $R\to S$ be a ring homomorphism.
	Given an $\veps$-hermitian space $(P,f)$ over $(A,\sigma)$,
	define
	its \emph{base change}   along $R\to S$ to be the $\veps$-hermitian
	space $(P_S,f_S)$ over $(A_S,\sigma_S)$, where $P_S=P\otimes S$
	and $f_S$ is determined by $f_S(x\otimes s,y\otimes t)=f(x,y)\otimes st$
	($x,y\in P$, $s,t\in S$). It is well-known that if $(P,f)$
	is unimodular, resp.\ hyperbolic, then so is $(P_S,f_S)$.
	When $S=k(\frakp)$ for $\frakp\in\Spec R$,
	we shall write $f(\frakp)$ instead of $f_{k(\frakp)}$.

\medskip

	Let $\rho: (B,\tau)\to (A,\sigma)$
	be a homomorphism of $R$-algebras with involution
	and let $\delta\in \Cent(B)$ be an element
	such that $\delta^\tau\delta=1$ and $\veps:=\rho(\delta)\in \Cent(A)$.
	We view $A$ as a left $B$-module  via $\rho$.
	For every $\delta$-hermitian space $(Q,g)$ over $(B,\tau)$,
	define   $\rho(Q,g)$ to be $(Q\otimes_BA,\rho g)$,
	where $\rho g:(Q\otimes_BA)\times (Q\otimes_BA)\to A$
	is the biadditive pairing determined by $\rho g(x\otimes a,x'\otimes a') =a^\sigma \cdot \rho(f(x,x'))\cdot a'$
	($x,x'\in P$, $a,a'\in A$).
	It is routine to check that $\rho(Q,g)$ is an $\veps$-hermitian space over $(A,\sigma)$.
	Furthermore, it is unimodular, resp.\ hyperbolic, when $(Q,g)$ is.
	The assignment $\rho$ extends to a functor
	$\rho:\Herm[\delta]{B,\tau}\to \Herm[\veps]{A,\sigma}$
	by setting $\rho\vphi=\vphi\otimes_B\id_A$.\footnote{
		We do not write $\rho(Q,g)$
		as $(P_A,f_A)$ because we
		reserve the subscript notation for base change relative to the base ring $R$.
	}

\subsection{Adjoint Involutions}
\label{subsec:adjoint-inv}

	Let $(P,f)\in\Herm[\veps]{A,\sigma}$.
	It is well-known that there exists a unique
	$R$-linear involution $\theta:\End_A(P)\to\End_A(P)$
	satisfying
	$f(\vphi x,y)=f(x,\vphi^\theta y)$ for all $\vphi\in\End_A(P)$, see
	\cite[I.\S9.2]{Knus_1991_quadratic_hermitian_forms}. 
	It is called the adjoint involution of $f$.
	Notice that $U(f)$ coincides with the group
	$U(\End_A(P),\theta):=\{\vphi\in \End_A(P)\suchthat \vphi^\theta \vphi= \vphi \vphi^\theta=1\}$.
	
	If $R\to S$ is a ring homomorphism, then $P\in\rproj{A}$ implies that the natural
	map $\End_A(P)\otimes S\to \End_{A_S}(P_S)$ is an isomorphism.
	Under this isomorphism, $\theta_S$ is the adjoint involution of $f_S$.
	
	\begin{example}\label{EX:adoint-inv-diag-form}
		Let $\alpha,\beta\in \calS_\veps(A,\sigma)\cap \units{A}$ and consider
		the diagonal binary $\veps$-hermitian form $\langle \alpha,\beta\rangle_{(A,\sigma)}$
		on $A^2$ (notation as in Example~\ref{EX:diagonal-forms}). 
		Direct computation shows that, upon realizing $\End_A(A^2_A)$
		as $\nMat{A}{2}$, the adjoint involution of $\langle \alpha,\beta\rangle_{(A,\sigma)}$
		is given by $[\begin{smallmatrix} x & y\\ z & w \end{smallmatrix}]\mapsto
		[\begin{smallmatrix} \alpha^{-1}x^\sigma\alpha &  \alpha^{-1} z^\sigma \beta \\ 
		\beta^{-1}  y^\sigma \alpha & \beta^{-1}w^\sigma\beta \end{smallmatrix}]$
		($x,y,z,w\in A$).
		When $\alpha,\beta\in\Cent(A)$, this simplifies
		into $[\begin{smallmatrix} x & y\\ z & w \end{smallmatrix}]\mapsto
		[\begin{smallmatrix}  x^\sigma  & \gamma z^\sigma   \\ 
		\gamma^{-1}  y^\sigma  &  w^\sigma  \end{smallmatrix}]$,
		where $\gamma=\alpha^{-1}\beta$ lives in $\Sym_1(\Cent(A),\sigma)$.
	\end{example}
	
	\begin{prp}\label{PR:type-of-adjoint}
		Suppose that $(A,\sigma)$ is an Azumaya $R$-algebra with involution. 
		Let $(P,f)\in\Herm[\veps]{A,\sigma}$
		and let $\theta:\End_A(P)\to\End_A(P)$ be the adjoint
		involution of $f$.
		If   $\rrk_AP>0$, then $(\End_A(P),\theta)$
		is  an Azumaya $R$-algebra with involution
		and $\theta$ and
		$(\sigma,\veps)$   
		have the same type.
	\end{prp}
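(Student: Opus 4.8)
The plan is to prove the two assertions separately: the fact that $(\End_A(P),\theta)$ is an Azumaya $R$-algebra with involution directly, and the statement about types by reduction to the case of a field.

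\emph{The algebra with involution.} Since $\rrk_A P>0$, the module $P$ is a progenerator over $A$ (Proposition~\ref{PR:progenerator-iff-pos-red-rank}). Applying Proposition~\ref{PR:degree-of-endo-ring}(i) to $A$ viewed as an Azumaya algebra over $S:=\Cent(A)$ (cf.\ Remark~\ref{RM:Azumaya-over-center}), we obtain that $B:=\End_A(P)$ is Azumaya over $S$ with $\Cent(B)=S$. As $(A,\sigma)$ is an Azumaya $R$-algebra with involution, $S$ is finite \'etale over $R$; hence $B$ is Azumaya over its center $S$ and $S$ is finite \'etale over $R$, so $B$ is separable projective over $R$ by \ref{item:sep-proj:Az-over-fin-et}$\implies$\ref{item:sep-proj:trivial-def}. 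It remains to identify the $\theta$-invariant elements of $\Cent(B)$ with $R$. For $s\in S$ let $\mu_s\in B$ be the scalar endomorphism $x\mapsto xs$; these exhaust $\Cent(B)$ under the identification $\Cent(B)=S$. For all $x,y\in P$ one has $f(\mu_s x,y)=s^\sigma f(x,y)=f(x,y)\,s^\sigma=f(x,\mu_{s^\sigma}y)$, so by uniqueness of the adjoint involution $\mu_s^\theta=\mu_{s^\sigma}$. Thus $\theta|_{\Cent(B)}$ corresponds to $\sigma|_S$, whose fixed ring is $\{z\in\Cent(A):z^\sigma=z\}=R$. Therefore $(B,\theta)$ is an Azumaya $R$-algebra with involution.

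\emph{The type.} The type of $(\sigma,\veps)$, and likewise that of $\theta=(\theta,1)$, is defined fibrewise over $\Spec R$, and for each $\frakp\in\Spec R$ the natural isomorphism $\End_A(P)\otimes k(\frakp)\xrightarrow{\sim}\End_{A(\frakp)}(P(\frakp))$ carries $\theta(\frakp)$ to the adjoint involution of $f(\frakp)$; so it suffices to treat the case where $R$ is a field, which I now assume. Since $\Cent(B)=\Cent(A)$, Proposition~\ref{PR:types-of-involutions-Az}(iii) shows that $(\sigma,\veps)$ is unitary if and only if $(\theta,1)$ is, namely exactly when $\Cent(A)\neq R$; this settles the unitary case. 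If $\Cent(A)=R$, both pairs are either orthogonal or symplectic, and it remains to show they agree. For this I would pass to a splitting field $L$ of $A$ — legitimate, since for a pair whose center is a field, being orthogonal versus symplectic is read off the dimension of its symmetric elements (Proposition~\ref{PR:types-of-involutions-Az}(i),(ii)), which is unchanged under field extension. Over $L$, write $(A_L,\sigma_L)\cong(\End_L(V),\mathrm{ad}_b)$ with $b$ a nondegenerate bilinear form on a finite-dimensional $L$-space $V$ that is symmetric if $\sigma$ is orthogonal and alternating if $\sigma$ is symplectic; Morita theory identifies $P_L$ with $W\otimes_L V$ for an $L$-space $W$ and $B_L$ with $\End_L(W)$, and under this identification $f_L$ corresponds to a nondegenerate bilinear form $c$ on $W$ whose symmetry type is that of $b$ multiplied by $\veps$, with $\theta_L$ becoming $\mathrm{ad}_c$. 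Hence $\theta_L$ is orthogonal precisely when $c$ is symmetric, i.e.\ precisely when $b$ is symmetric and $\veps=1$, or $b$ is alternating and $\veps=-1$ — which is exactly the condition that $(\sigma,\veps)$ be orthogonal. (Alternatively, this matching of types over a field is classical; see \cite[Chapter~I]{Knus_1998_book_of_involutions}.)

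The structural part is routine given the results already established; the main obstacle is the last step, namely unwinding the hermitian Morita correspondence over the split algebra and checking that the symmetry types combine as claimed, which pins down the orthogonal/symplectic dichotomy.
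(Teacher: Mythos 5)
Your proposal is correct and follows essentially the same route as the paper: Proposition~\ref{PR:degree-of-endo-ring}(i) gives that $\End_A(P)$ is Azumaya over $\Cent(A)$, the identity $\theta|_{\Cent(A)}=\sigma|_{\Cent(A)}$ (which you verify explicitly where the paper calls it "easy to check") gives the Azumaya-algebra-with-involution structure, and the type comparison is reduced pointwise to the field case, where the paper simply cites \cite[Theorem~4.2(1)]{Knus_1998_book_of_involutions} for the orthogonal/symplectic dichotomy that you sketch via a splitting field and hermitian Morita theory.
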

	
	\begin{proof}
		Write $S=\Cent(A)$. 
		By Proposition~\ref{PR:degree-of-endo-ring}(i), 
		$\End_A(P)$ is Azumaya over $S$.
		It is easy to check that $\theta|_S=\sigma|_S$,
		hence $(\End_A(P),\theta)$
		is an Azumaya $R$-algebra with involution.
		To see that the types of $\theta$
		and $(\sigma,\veps)$ coincide, we need
		to check that they coincide at every $\frakp\in\Spec R$,
		so we may assume $R$ is a field.
		In this case, it is clear that $\theta$
		is unitary if and only if $(\sigma,\veps)$
		is unitary. For the orthogonal and
		symplectic cases, see \cite[Theorem 4.2(1)]{Knus_1998_book_of_involutions}.
	\end{proof}

	The converse of Proposition~\ref{PR:type-of-adjoint},
	namely, that every involution of $\End_A(P)$
	is adjoint to some hermitian form, holds when $R$ is a field;
	see \cite[Theorem 4.2]{Knus_1998_book_of_involutions}.
	In fact, it holds in general if one allows hermitian forms to take
	values in     $(A^\op,A)$-progenerators; see \cite{First_2015_general_bilinear_forms}
	and \cite[\S3--4]{First_2015_morita_equiv_to_op}.
	We shall need a special case of the latter observation.
	
	\begin{prp}\label{PR:involution-is-adjoint}
		Suppose that $(A,\sigma)$ is an Azumaya $R$-algebra with involution,
		$S:=\Cent(A)$ is semilocal  
		and   $A=\End_S(Q)$ for some $Q\in\rproj{S}$.
		Then there exists $\delta\in S$
		with $\delta^\sigma\delta=1$
		and  a unimodular $\delta$-hermitian
		form $g:Q\times Q\to S$ over $(S,\sigma|_S)$
		such that $\sigma$ is   adjoint to $g$. 
		One has $\delta=1$ when $\sigma$ is orthogonal
		and $\delta=-1$ when $\sigma$ is symplectic.
	\end{prp}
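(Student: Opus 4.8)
The plan is to produce $g$ by transporting a standard hermitian form on $Q$ along the unit of $A$ that compares $\sigma$ with the adjoint involution of that standard form.

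\emph{Construction of a base form.} First I would exhibit \emph{some} unimodular $1$-hermitian form $g_0$ on $Q$ over $(S,\sigma|_S)$. Since $S$ is semilocal it has finitely many maximal ideals, hence finitely many connected components, so $S\cong\prod_iS_i$ with each $S_i$ connected semilocal; over $S_i$ the module $Q\otimes_SS_i$ is free, say of rank $m_i$, and on $S_i^{m_i}$ the ``identity'' form $((x_j),(y_j))\mapsto\sum_jx_j^{\sigma|_S}y_j$ is unimodular and $1$-hermitian. Summing over $i$ produces $g_0$. Let $\theta_0\colon A\to A$ be the involution of $A=\End_S(Q)$ adjoint to $g_0$; exactly as in the proof of Proposition~\ref{PR:type-of-adjoint} one has $\theta_0|_S=\sigma|_S$.

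\emph{Comparing $\sigma$ and $\theta_0$.} Both $\sigma$ and $\theta_0$ are $\sigma|_S$-semilinear over $S$, so $\sigma\circ\theta_0$ is an $S$-algebra automorphism of $A$. Because $\rrk_SQ=\rank_SQ=\deg A>0$, the module $Q$ is a progenerator over $S$ (Proposition~\ref{PR:progenerator-iff-pos-red-rank}), and $\Pic S=0$ as $S$ is semilocal; hence every $S$-algebra automorphism of $A=\End_S(Q)$ is inner. Thus $\sigma\circ\theta_0=\Int(u)$ for some $u\in\units A$, i.e.\ $\sigma=\Int(u)\circ\theta_0$. From $\sigma^2=\id_A$ one computes $\sigma^2=\Int\bigl(u(u^{\theta_0})^{-1}\bigr)$, so $u(u^{\theta_0})^{-1}\in\Cent(A)=S$ and therefore $u^{\theta_0}=\delta u$ for a unique $\delta\in\units S$; applying $\theta_0$ to this identity and using $\theta_0^2=\id_A$ forces $\delta^{\sigma|_S}\delta=1$, in particular $\delta^\sigma\delta=1$.

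\emph{The form $g$.} Define $g(x,y):=g_0(x,u^{-1}y)$ for $x,y\in Q$. The remaining verifications are direct computations: $g$ is $S$-sesquilinear over $(S,\sigma|_S)$; $g$ is unimodular, since $x\mapsto g(x,-)$ is the composite of the adjoint isomorphism $Q\to Q^{*}$ of $g_0$ with the automorphism $\psi\mapsto\psi\circ u^{-1}$ of $Q^{*}$; $g$ is $\delta$-hermitian, using $g_0(y,u^{-1}x)=g_0(u^{-1}x,y)^{\sigma|_S}$ together with $(u^{-1})^{\theta_0}=(u^{\theta_0})^{-1}=\delta^{-1}u^{-1}$ and $\delta^{\sigma|_S}\delta=1$; and the adjoint involution of $g$ on $\End_S(Q)=A$ is $\sigma$, since $g(\phi x,y)=g_0(x,\phi^{\theta_0}u^{-1}y)=g_0\bigl(x,u^{-1}(u\phi^{\theta_0}u^{-1})y\bigr)=g(x,\phi^\sigma y)$ for all $\phi\in A$. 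This gives the required $\delta$ and $g$.

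\emph{The type assertion.} Applying Proposition~\ref{PR:type-of-adjoint} with $(S,\sigma|_S)$, $\delta$, $(Q,g)$ in place of $(A,\sigma)$, $\veps$, $(P,f)$ (legitimate since $\rrk_SQ=\deg A>0$) shows that $\sigma$, being adjoint to $g$, has the same type as $(\sigma|_S,\delta)$. If $\sigma$ is orthogonal then $\Cent(A)=R$ by Proposition~\ref{PR:types-of-involutions-Az}(i), so $\sigma|_S=\id_R$ and $\delta\in R$ with $\delta^2=1$; writing $\delta=2e-1$ with $e=\tfrac12(1+\delta)$ an idempotent, orthogonality of $(\id_R,\delta)$ at every prime (Proposition~\ref{PR:types-of-involutions-Az}(i) again) means $e$ vanishes at no prime, hence $e=1$ and $\delta=1$. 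Symmetrically, if $\sigma$ is symplectic then $\delta=-1$. I expect the one genuinely non-formal point to be the claim that every $S$-algebra automorphism of $\End_S(Q)$ is inner over the semilocal base $S$ --- i.e.\ lifting the field-level Skolem--Noether statement via $\Pic S=0$; everything else is bookkeeping with adjoint involutions and the explicit twist $g=g_0(-,u^{-1}(-))$.
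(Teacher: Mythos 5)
Your route is genuinely different from the paper's. The paper invokes Saltman's theorem that any involution of $\End_S(Q)$ is adjoint to a unimodular sesquilinear form valued in a rank-one $S$-module $L$, uses semilocality only to trivialize $L$, and then normalizes the resulting form into a $\delta$-hermitian one; you instead build a reference $1$-hermitian form $g_0$ by hand and twist it by a unit supplied by the Rosenberg--Zelinsky/Skolem--Noether theorem (every $S$-algebra automorphism of $\End_S(Q)$ is inner because $Q$ is an $S$-progenerator and $\Pic S=0$). That comparison step, the computation $\delta^{\sigma}\delta=1$, the verification that $g=g_0(-,u^{-1}(-))$ is unimodular, $\delta$-hermitian and has adjoint involution $\sigma$, and the deduction of the type assertion from Proposition~\ref{PR:type-of-adjoint} together with Lemma~\ref{LM:mu-two-check} all check out. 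What your approach buys is self-containedness (no appeal to Saltman's classification of involutions of endomorphism algebras), at the price of having to produce a base form explicitly.

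That is where the one genuine gap sits. Decomposing $S$ into connected components $S_i$ is fine, but $\sigma|_S$ may permute these components nontrivially; this happens precisely in the split unitary case, e.g.\ $S=R\times R$ with the exchange involution (Example~\ref{EX:exchange-involution}), which the proposition must cover. If $e_i^{\sigma}=e_j$ with $j\neq i$, then for $x,y\in Qe_i$ one has $x^{\sigma|_S}y\in S_j\cdot S_i=0$, so your ``identity form'' $\sum_k x_k^{\sigma|_S}y_k$ vanishes identically on $Qe_i\times Qe_i$ and is certainly not unimodular there. The repair is to group the components into $\sigma|_S$-orbits: on a $\sigma$-fixed component the identity form does work, while on a swapped pair $\{S_i,S_j\}$ one must first note that $\sigma$ restricts to an anti-isomorphism $\End_{S_i}(Qe_i)\to\End_{S_j}(Qe_j)$, which forces the two free ranks to agree, and then take the exchange (hyperbolic) pairing $\bigl((x,x'),(y,y')\bigr)\mapsto\sum_k\bigl((x'_k)^{\sigma|_S}y_k+(x_k)^{\sigma|_S}y'_k\bigr)$ on $Qe_i\oplus Qe_j$, as in Example~\ref{EX:exchange-involution}. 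With that patch the rest of your argument goes through unchanged.
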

	
	\begin{proof}
		By \cite[Theorem~4.2]{Saltman_1978_Azumaya_algebras_w_involution}
		(or, alternatively, \cite[Proposition~4.6]{First_2015_morita_equiv_to_op}),
		there exist    $\delta_1\in S$
		with $\delta_1^\sigma \delta_1=1$,
		a rank-$1$ projective $S$-module  $L$,
		a $\sigma|_S$-linear involutive automorphism $\tau:L\to L$,
		and a unimodular $L$-valued $ \sigma|_S$-sesquilinear form
		$g:Q\times Q\to L$ satisfying $g(x,y)=\delta_1 g(y,x)^\tau$
		and having  $\sigma$ as its adjoint involution.
		(Here, unimodularity means that $x\mapsto g(x,-):P\to \Hom_S(P,L)$
		is bijective.) Since $S$ is semilocal, $L\cong S_S$,
		so we may assume $L=S$. 
		Put $\delta_2= (1_S)^\tau$. Then $\sigma\circ \tau\in \End_S(S)=S$
		maps $1_S$ to $\delta_2^\sigma$, and so it coincides
		with the $S$-endomorphism $s\mapsto \delta_2^\sigma s:S\to S$.
		As a result, $s^{\tau \sigma}=\delta_2^\sigma s$, or rather $s^\tau =\delta_2  s^\sigma$, for all $s\in S$.
		Taking $s=\delta_2$ and noting that $\delta_2^\tau=1$ (because $1^\tau=\delta_2$),
		we get $\delta_2\delta_2^\sigma =1$.
		Thus, $g:P\times P\to S$ is a unimodular $\delta_1\delta_2$-hermitian
		form over $(S,\sigma|_S)$ with adjoint involution $\sigma$. 
		Write $\delta=\delta_1\delta_2 $.

		By Proposition~\ref{PR:type-of-adjoint},
		the type of $\sigma$
		is the same as the type of $(\sigma|_S,\delta)$.
		Thus,  $\delta=1$ if $\sigma$ is orthogonal
		and $\delta=-1$ if $\sigma$ is symplectic.
	\end{proof}
	
	The following proposition can be proved
	directly, but we   use Theorem~\ref{TH:invariant-primitive-idempotent}
	together with adjoint involutions to give a short proof.
	
	\begin{prp}\label{PR:diagonalizable-herm-forms}
		Suppose that $(A,\sigma)$ is an Azumaya $R$-algebra with involution
		and $R$ is semilocal.
		Let $(P,f)\in\Herm[\veps]{A,\sigma}$
		and suppose that $P$ is a free $A$-module.
		If $(\sigma,\veps)$
		is symplectic at $\frakp\in \Spec R$,
		we also assume that $2\mid (\deg A)(\frakp)$.
		Then $(P,f)$ is   diagonalizable (see Example~\ref{EX:diagonal-forms}).
	\end{prp}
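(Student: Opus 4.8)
The plan is to reduce the problem to splitting off an orthogonal ``line'' and then to induct on the rank, converting the splitting into an assertion about invariant idempotents that is handled by Theorem~\ref{TH:invariant-primitive-idempotent}. First I would dispose of the case $P=0$ (where $f=\langle\,\rangle_{(A,\sigma)}$ vacuously) and write $R$ as a finite product of connected rings --- possible since a semilocal ring has only finitely many idempotents --- so as to assume $R$ connected. Then $\rrk_A P$ is a positive constant (Corollary~\ref{CR:constant-even-ranks}(i)), $P\cong A^m$ for a fixed integer $m\geq 1$, and $\rrk_A P=m\deg A$; I would argue by induction on $m$.

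For the inductive step I would pass to the adjoint picture. Set $E=\End_A(P)$ and let $\theta$ be the adjoint involution of $f$. By Proposition~\ref{PR:type-of-adjoint}, $(E,\theta)$ is an Azumaya $R$-algebra with involution with $\Cent E=\Cent A=:S$ and with $\theta$ of the same type as $(\sigma,\veps)$; moreover $[E]=[A]$ and $\deg E=\rrk_A P=m\deg A$ by Proposition~\ref{PR:degree-of-endo-ring}(i), so $\ind E=\ind A$. The key verification is that the function $n:=\deg A$ on $\Spec S$ satisfies the hypotheses of Theorem~\ref{TH:invariant-primitive-idempotent} for $(E,\theta)$: it is $\theta|_S$-invariant (equivalently $\sigma|_S$-invariant) because $mn=\rrk_A P$ is $\sigma$-invariant by Lemma~\ref{LM:unimodular-implies-rrk-sigma-inv}, applied to the unimodular form $f$, and $m$ is a fixed positive integer; it is divisible by $\ind E=\ind A$ since $\deg A$ is one of the degrees occurring in $[A]$; it satisfies $n=\deg A\leq m\deg A=\deg E$; and at any $\frakp$ where $\theta$ --- hence $(\sigma,\veps)$ --- is symplectic, $n(\frakp)=(\deg A)(\frakp)$ is even by the standing hypothesis of the proposition. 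Theorem~\ref{TH:invariant-primitive-idempotent} then produces a $\theta$-invariant idempotent $e\in E$ with $\deg eEe=\rrk_E(eE)=\deg A$.

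The rest is bookkeeping. Using $e^\theta=e$ and the defining identity of the adjoint involution one checks that $f(eP,(1-e)P)=0$, so $P=eP\oplus(1-e)P$ is an orthogonal decomposition; since the isomorphism $x\mapsto f(x,-)\colon P\to P^*$ carries each summand into the corresponding summand of $P^*=(eP)^*\oplus((1-e)P)^*$, both $f|_{eP}$ and $f|_{(1-e)P}$ are unimodular and $(eP)^\perp=(1-e)P$. Now $eEe\cong\End_A(eP)$ has degree $\deg A>0$, so $eP\ne 0$ and (by connectedness) $\rrk_A(eP)$ is a positive constant; Proposition~\ref{PR:degree-of-endo-ring}(i) gives $\rrk_A(eP)=\deg\End_A(eP)=\deg eEe=\deg A$, whence $eP\cong A_A$ by Lemma~\ref{LM:rank-determines}. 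Transporting $f|_{eP}$ along an isomorphism $A_A\to eP$ turns it into a unimodular $\veps$-hermitian form on $A_A$, i.e.\ into $\langle a_1\rangle_{(A,\sigma)}$ with $a_1\in\Sym_\veps(A,\sigma)\cap\units A$. Meanwhile $(1-e)P$ is a summand of $A^m$ of reduced rank $(m-1)\deg A$, hence free of rank $m-1$ (Lemma~\ref{LM:rank-determines}), and $f|_{(1-e)P}$ is unimodular and satisfies the same evenness hypothesis, so it is diagonalizable by induction, say $\cong\langle a_2,\dots,a_m\rangle_{(A,\sigma)}$. Concatenating yields $(P,f)\cong\langle a_1,\dots,a_m\rangle_{(A,\sigma)}$.

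I expect no single hard step: the content is entirely in arranging that $n=\deg A$ meets \emph{all} the hypotheses of Theorem~\ref{TH:invariant-primitive-idempotent}. The delicate points are the $\sigma|_S$-invariance of $\deg A$ (which I get for free from $\rrk_A P$ being $\sigma$-invariant once $R$ is connected) and the symplectic-parity condition --- this is exactly where the proposition's assumption ``$2\mid(\deg A)(\frakp)$ when $(\sigma,\veps)$ is symplectic at $\frakp$'' is used, transferred to $\theta$ via the equality of types in Proposition~\ref{PR:type-of-adjoint}.
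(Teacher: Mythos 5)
Your proposal is correct and follows essentially the same route as the paper's proof: pass to the adjoint involution $\theta$ on $\End_A(P)$, invoke Theorem~\ref{TH:invariant-primitive-idempotent} to produce a $\theta$-invariant idempotent $e$ of reduced rank $\deg A$, split off $(eP,f|_{eP\times eP})\cong\langle a_1\rangle_{(A,\sigma)}$, and induct. Your extra verifications (connectedness reduction, $\sigma|_S$-invariance of $\deg A$) are correct details the paper leaves implicit.
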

	
	\begin{proof}				
		If $P=0$, there is nothing to prove, so assume $P\neq 0$.
		Since $P$ is free, we have $\rrk_AP>0$.
		Let $B =\End_A(P)$
		and let $\theta:B\to B$ be the adjoint involution of
		$f$. By Proposition~\ref{PR:degree-of-endo-ring}(i), $\ind B=\ind A\mid \deg A$
		and $\deg A\leq \rrk_AP=\deg B$.
		Furthermore,
		if $(\sigma,\veps)$, equivalently
		$\theta$, is symplectic at $\frakp\in \Spec R$,
		then $2\mid (\deg A)(\frakp)$. Thus, by Theorem~\ref{TH:invariant-primitive-idempotent},
		there exists an idempotent $e\in B$ such that $e^\sigma =e$
		and $\rrk_BeB=\deg A$.
		Write $e'=1_B-e$.
		It is easy to check that $(P,f)=(eP,f|_{eP\times eP})\oplus
		(e'P,f|_{e'P\times e'P})$.
		Furthermore, by Proposition~\ref{PR:degree-of-endo-ring}(ii),
		$\rrk_A eP=\rrk_A eB\otimes_B P=\rrk_B eB=\deg A$,
		so $eP\cong A_A$ (Lemma~\ref{LM:rank-determines}).
		Thus, $f|_{eP\times eP}\cong\langle a\rangle$
		for some $a\in\Sym_{\veps}(A,\sigma)$. Proceed
		by induction on $(e'P,f|_{e'P\times e'P})$.
	\end{proof}

\subsection{The Isometry Group Scheme}
\label{subsec:isometry-group}

	Suppose that $A$  is finite projective over $R$
	and let $(P,f)\in\Herm[\veps]{A,\sigma}$.
	By \cite[Appendix]{Bayer_2016_patching_weak_approximation},
	the functor $S\mapsto U(f_S)$ from $R$-rings to groups
	is represented by a smooth affine group $R$-scheme,  
    denoted $\uU(f)$. Since $\uU(f)\to \Spec R$
    is smooth, by \cite[Corollaire~15.6.5]{EGA_iv_1967}, $\uU(f)$
    admits a unique open   subgroup, $\uU^0(f)$,
    such that $\uU^0(f)\to \Spec R$ is connected, i.e., the
    fiber $\uU^0(f)\times_R k(\frakp)$ over  $\Spec k(\frakp)$
    is connected for all $\frakp\in\Spec R$. Moreover, $\uU^0(f)\to \Spec R$
    is geometrically connected 
    \cite[\href{https://stacks.math.columbia.edu/tag/04KV}{Tag 04KV}]{DeJong_2018_stacks_project}. We 
	call $\uU^0(f)$ the neutral component of $\uU(f)$ and    
    write
    $U^0(f)=\uU^0(f)(R)$.

 	\begin{remark}\label{RM:Weil-rest-of-U}
		In case the base ring $R$ is not clear from the context, we shall
   		write $\uU_R(f)$, $\uU_R^0(f)$, $U_R^0(f)$ instead of $\uU(f)$, $\uU^0(f)$, $U^0(f)$.
   		However, somewhat conveniently, if $A$ is projective
   		over $R_1:=\Sym_1(\Cent(A),\sigma)$, e.g., when $A$ is separable projective
   		over $R$ (Example~\ref{EX:Azumaya-over-fixed-subring}), 
   		then $U^0(f)$ is independent of the base ring $R$.
   		
   		Indeed,
   		by \cite[Proposition~2.4.6(1)]{Ford_2017_separable_algebras},
   		$R_1$ is an $R_1$-summand of $A$, and therefore $R_1$ is finite projective
   		over $R$.
   		Note that $\uU_{R }(f)=\calR_{R_1/R}\uU_{R_1}(f)$,
		where $\calR_{R_1/R}$ is the Weil restriction;
		see \cite[\S7.6]{Bosch_1990_Neron_models}, for instance.
		By \cite[Proposition~7.6.2(i)]{Bosch_1990_Neron_models},
		the $R_1/R$-Weil restriction of an open immersion   is an open immersion,
		so $\calR_{R_1/R}\uU_{R_1}^0(f)$ is open
		in $\uU_{R }(f)$. In addition,
		since  $\uU_{R_1}^0(f)\to \Spec R_1$
		is  geometrically connected, affine and smooth,
		the fibers of $\calR_{R_1/R}\uU_{R_1}^0(f)\to\Spec R$
		are connected
		(\cite[Proposition~A.5.9]{Conrad_2015_pseudo_reductive_groups_ed_ii}).
		As a result,
		$\uU^0_R(f)$ conincides with $\calR_{R_1/R}\uU_{R_1}^0(f)$
		and  $U^0_R(f)=(\calR_{R_1/R}\uU_{R_1}^0)(R)=U_{R_1}^0(f)$.
		In particular, $U_R^0(f)$ is determined by $(A,\sigma)$ and is independent
		of $R$.
	\end{remark}
	
	\begin{remark}\label{RM:relation-to-U-A-sigma}
		Keeping the previous assumptions, write $E=\End_A(P)$
		and let   $\theta$ denote the adjoint involution of $f$.
		Then $\uU(f)$ coincides with $\uU(E,\theta)$, the group
		$R$-scheme representing the functor 
		$S\mapsto U(E_S,\theta_S):=\{x\in E_S\suchthat {x^\theta x=1}\}$.
		Indeed, for any $R$-ring $S$, we have $\uU(f)(S)=U(f_S)=U(E_S,\theta_S)$
		upon identifying $\End_{A_S}(P_S)$ with $\End_A(P)_S$.
		As a result, $\uU^0(f)$ is the neutral component of $\uU(E,\theta)\to \Spec R$,
		denoted $\uU^0(E,\theta)$.
	\end{remark}

	We describe $\uU^0(f)$ explicitly when $(A,\sigma)$
	is an Azumaya $R$-algebra with involution and $\rrk_AP>0$.
	Since we can factor  $R$ as $\prod_{i=1}^t R_i$
	such that $\rrk_{A\otimes R_i} P_{R_i}$ is constant for all $i$, it
	is enough to consider the case where $\rrk_AP$ is constant.
	By Proposition~\ref{PR:types-of-involutions-Az}(v),
	$(\sigma,\veps)$
	is now either orthogonal, symplectic or unitary.

	When $(\sigma,\veps)$ is symplectic or unitary, 
	the   fibers of $\uU(f)\to \Spec R$
	are well-known to be outer forms of $\uSp_{2n}$
	or $\uGL_n$, respectively; see \cite[\S23A]{Knus_1998_book_of_involutions}. Thus, they are connected and
	$\uU^0(f)=\uU(f)$.

	Suppose now that $(\sigma,\veps)$
	is orthogonal, let $E=\End_A(P)$  and let $\theta:E\to E$
	be the adjoint involution  of $f$. 
	Then $(E,\theta)$ is an Azumaya
	$R$-algebra with an orthogonal involution (Proposition~\ref{PR:type-of-adjoint}).
	Let $\umu_{2,R}\to \Spec R$
	denote the affine group $R$-scheme representing 
	the functor $S\mapsto \mu_2(S):=\{s\in S\suchthat s^2=1\}$.
	For every $R$-ring $S$, 
	the reduced norm map, $\Nrd_{E_S/S} :E_S\to S$ (see~\cite[p.~410]{Ford_2017_separable_algebras}),
	is compatible with base change and
	restricts to a group homomorphism $U(f_S)=U(E_S,\theta_S)\to \mu_2(S)$.\footnote{
		One can  show that $\Nrd_{E/R}$ maps $U(E,\theta)$ to $\mu_2(R)$, and similarly after
		base-changing to $S$, as follows:
		By
		\cite[III.\S8.5]{Knus_1991_quadratic_hermitian_forms} 
		or \cite[Theorems 5.17 \& 5.37, Examples~7.3 \& 7.4]{First_2020_involutions_of_Azumaya_algs},
		there exists  a faithfully flat $R$-ring $R'$ 
		such that $(E_{R'},\theta_{R'})\cong (\nMat{R'}{n},\mathrm{t})$,
		where $\mathrm{t}$ is the transpose involution. 
		Now, for all $x\in U(\nMat{R'}{n},\mathrm{t})$, we have
		$\Nrd(x)^2=\det(x)^2=\det(x^{\mathrm{t}}x)=1$,
		so $\Nrd(x)\in \mu_2(R')$.		
		As a result,  $\Nrd_{E/R}$ maps
		$U(E,\theta)$ to $R\cap \mu_2(R')=\mu_2(R)$.
	}
	Thus, it determines a morphism of affine group $R$-schemes
	\[\Nrd:\uU(f)\to \umu_{2,R}.\]
	The scheme-theoretic kernel of this morphism --- call it $\bfK$ --- is   $\uU^0(f)$.
	Indeed, $\bfK$ is   open in $\uU(f)$
	because the trivial group $R$-scheme $\mathbf{1}$ is open in $\umu_{2,R}$ (recall
	that $2\in\units{R}$),
	and the fiber  of $\bfK $ over $\frakp\in \Spec R$ is $\uU(E(\frakp),\theta(\frakp))$,
	which is a form of $\uSO_n$ for $n=\deg A(\frakp)$ \cite[\S23B]{Knus_1998_book_of_involutions}, hence
	connected.

	We conclude the previous discussion with:
	
	\begin{prp}\label{PR:U-zero-description}
		Let $(A,\sigma)$ be an Azumaya $R$-algebra with involution
		and
		let $(P,f)\in \Herm[\veps]{A,\sigma}$. Assume 
		that $\rrk_AP>0$. If $(\sigma,\veps)$
		is symplectic or unitary, then $\uU^0(f)=\uU(f)$.
		If $(\sigma,\veps)$
		is orthogonal, then $\uU^0(f)=\ker(\Nrd:\uU(f)\to \umu_{2,R})$. 
	\end{prp}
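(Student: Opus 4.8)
The plan is to exploit the defining property of $\uU^0(f)$ — it is the unique open subgroup of $\uU(f)$ whose fibres over $\Spec R$ are geometrically connected — and to analyse those fibres prime by prime. By Remark~\ref{RM:relation-to-U-A-sigma} I may work with $\uU(E,\theta)$ in place of $\uU(f)$, where $E=\End_A(P)$ and $\theta$ is the adjoint involution of $f$; since $\rrk_A P>0$, Proposition~\ref{PR:type-of-adjoint} shows that $(E,\theta)$ is an Azumaya $R$-algebra with involution of the same type as $(\sigma,\veps)$, and Proposition~\ref{PR:types-of-involutions-Az} gives $\Cent(E)=\Cent(A)=R$ in the orthogonal and symplectic cases. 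For each $\frakp\in\Spec R$ the fibre $\uU(f)\times_R k(\frakp)=\uU(E(\frakp),\theta(\frakp))$ is the isometry group scheme of a hermitian form over a central simple $k(\frakp)$-algebra with involution, hence a classical group over $k(\frakp)$: a form of $\uSp_{2n}$, of $\uGL_n$, or of $\uO_n$ according as its type is symplectic, unitary, or orthogonal — see \cite[\S23A]{Knus_1998_book_of_involutions}.

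In the symplectic and unitary cases these fibres are connected, so $\uU(f)\to\Spec R$ has connected fibres. As $\uU(f)$ is then an open subgroup of itself with connected fibres, uniqueness of the neutral component forces $\uU^0(f)=\uU(f)$. (Before splitting into types one may, if desired, first factor $R$ so that $\rrk_A P$ is constant on each factor — this is harmless, since $\uU^0$ and kernels of group-scheme morphisms are formed fibrewise over $\Spec R$ — and then appeal to Proposition~\ref{PR:types-of-involutions-Az}(v); but this is not strictly necessary once the type is prescribed at every prime.)

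The orthogonal case is where the real work lies. Now $E$ is genuinely Azumaya over $R$, so the reduced norm $\Nrd_{E/R}\colon E\to R$ is defined and compatible with base change. I would first verify that $\Nrd_{E/R}$ sends $U(E_S,\theta_S)$ into $\mu_2(S)$ for every $R$-ring $S$: by the splitting results for Azumaya algebras with orthogonal involution (\cite[III.\S8.5]{Knus_1991_quadratic_hermitian_forms} or \cite[Theorems~5.17 \& 5.37]{First_2020_involutions_of_Azumaya_algs}) there is a faithfully flat $R$-ring $R'$ with $(E_{R'},\theta_{R'})\cong(\nMat{R'}{n},\mathrm{t})$, and on that ring $\Nrd(x)^2=\det(x)^2=\det(x^{\mathrm{t}}x)=1$ for every isometry $x$; faithful flatness then transfers this back to $R$, and hence to every $S$. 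Thus $\Nrd$ induces a morphism of $R$-group schemes $\uU(f)\to\umu_{2,R}$; let $\bfK$ denote its scheme-theoretic kernel. Since $2\in\units{R}$, the trivial subgroup $\bfone$ is open (and closed) in $\umu_{2,R}$, so $\bfK$ is open in $\uU(f)$. Finally, the fibre of $\bfK$ over $\frakp$ is $\ker\bigl(\Nrd\colon\uU(E(\frakp),\theta(\frakp))\to\umu_{2,k(\frakp)}\bigr)$, i.e.\ the corresponding special-orthogonal form, which is connected \cite[\S23B]{Knus_1998_book_of_involutions}. Therefore $\bfK$ is an open subgroup of $\uU(f)$ with connected fibres, and uniqueness of the neutral component yields $\bfK=\uU^0(f)$.

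\textbf{Main obstacle.} Everything reduces to a fibrewise argument on top of the uniqueness of the neutral component, so the substantive input is confined to the orthogonal case: that the reduced norm restricts to a $\umu_2$-valued morphism on the isometry group (requiring a faithfully flat splitting of $(E,\theta)$ followed by a descent), and that the kernel of this morphism is fibrewise connected (the forms of $\uSO_n$). Both are classical facts over fields but rely on the structure theory of algebras with orthogonal involution, which is the only real content beyond bookkeeping.
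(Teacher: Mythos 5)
Your proposal is correct and follows essentially the same route as the paper: reduce to $\uU(E,\theta)$ for the adjoint involution, observe that in the symplectic and unitary cases the fibres are forms of $\uSp_{2n}$ or $\uGL_n$ and hence connected, and in the orthogonal case show via a faithfully flat splitting that $\Nrd$ defines a morphism $\uU(f)\to\umu_{2,R}$ whose scheme-theoretic kernel is open (as $\mathbf{1}$ is open in $\umu_{2,R}$ since $2\in\units{R}$) with fibres that are forms of $\uSO_n$, then invoke uniqueness of the neutral component. This matches the paper's argument, including the optional preliminary factorization of $R$ to make $\rrk_AP$ constant.
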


	The following lemma is convenient 
	for verifying equalities in $\mu_2(R)$.
	
	\begin{lem}\label{LM:mu-two-check}
		Let $\alpha,\beta\in \mu_2(R)$. Then $\alpha=\beta$
		if and only if $\alpha(\frakm)=\beta(\frakm)$ for all
		$\frakm\in \Max R$.
	\end{lem}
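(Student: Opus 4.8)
The plan is to reduce to the case of a single element and exploit that an element of $\mu_2(R)$ congruent to $1$ modulo every maximal ideal lies in $\Jac R$. The forward implication is immediate, so the content is the converse. First I would put $\gamma=\alpha\beta$; since $\beta^2=1$ we have $\beta^{-1}=\beta$, hence $\gamma=\alpha\beta^{-1}$ and $\gamma^2=\alpha^2\beta^{-2}=1$, so $\gamma\in\mu_2(R)$, and clearly $\alpha=\beta$ if and only if $\gamma=1$ (multiply $\alpha\beta=1$ on the right by $\beta$). For each $\frakm\in\Max R$ the hypothesis gives $\gamma(\frakm)=\alpha(\frakm)\beta(\frakm)=\alpha(\frakm)^2=1$, so $\gamma-1\in\frakm$.

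Next I would invoke $\bigcap_{\frakm\in\Max R}\frakm=\Jac R$ to conclude $\gamma-1\in\Jac R$. Because $2\in\units{R}$ and $1+\Jac R\subseteq\units{R}$, the element $\gamma+1=2+(\gamma-1)=2\bigl(1+\tfrac{1}{2}(\gamma-1)\bigr)$ lies in $\units{R}$. Finally, $\gamma^2=1$ factors as $(\gamma-1)(\gamma+1)=0$; multiplying by $(\gamma+1)^{-1}$ forces $\gamma-1=0$, i.e.\ $\gamma=1$, and therefore $\alpha=\beta$.

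I do not anticipate any real obstacle: the only step needing (standard) care is the passage from ``$\gamma-1$ lies in every maximal ideal'' to ``$\gamma-1\in\Jac R$'', after which invertibility of $2$ and of $1+\Jac R$ closes the argument at once. As an alternative I could use the bijection $s\mapsto\tfrac{1}{2}(1-s)$ between $\mu_2(R)$ and the idempotents of $R$ (valid since $2\in\units{R}$), reducing the claim to the fact that an idempotent lying in $\Jac R$ must vanish; but the direct computation above is shorter and self-contained.
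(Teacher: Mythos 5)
Your argument is correct, and it is essentially the paper's proof: the paper reduces to $\gamma=\alpha^{-1}\beta$, notes that $\tfrac{1}{2}(1-\gamma)$ is an idempotent lying in $\Jac R$ (hence zero), which is exactly the ``alternative'' you mention at the end, while your main route merely unrolls that same fact into the explicit factorization $(\gamma-1)(\gamma+1)=0$ with $\gamma+1\in\units{R}$. Both hinge on the same two ingredients ($2\in\units{R}$ and $1+\Jac R\subseteq\units{R}$), so there is no substantive difference.
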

	
	\begin{proof}
		Write $\gamma=\alpha^{-1}\beta$.
		It is enough to prove that if $\gamma(\frakm)=1$
		for all $\frakm\in\Max R$, then $\gamma=1$.
		Note that $\frac{1}{2}(1-\gamma)$ is an idempotent.
		If $\gamma(\frakm)=1$ for all $\frakm\in \Max R$, then $1-\gamma\in \Jac R$,
		so the idempotent $\frac{1}{2}(1-\gamma)$ must be $0$ and $\gamma=1$.
	\end{proof}

	Following are  two theorems  that  will play a major role in the sequel.

    \begin{thm}\label{TH:U-zero-mapsto-onto-closed-fibers}
        Suppose that  $(A,\sigma)$ is  an Azumaya  $R$-algebra with involution and $R$ is semilocal,
        and let $(P,f)\in\Herm[\veps]{A,\sigma}$.
        Then the specialization map 
        $U^0(f)\to \prod_{\frakm\in\Max R} U^0(f(\frakm))$
        is surjective. 
    \end{thm}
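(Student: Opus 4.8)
The plan is to realize $\uU^0(f)$ as a reductive group $R$-scheme and to reduce the assertion to the surjectivity of reduction on its $R$-points. First I would decompose $R$. Since $\rrk_AP$ is a $\sigma$-invariant locally constant function on $\Spec\Cent(A)$ (Lemma~\ref{LM:unimodular-implies-rrk-sigma-inv}), the locus where it vanishes is cut out by an idempotent of $R$; over the factor where $\rrk_AP=0$ the module $P$ is zero and the target group is trivial, so I may assume $\rrk_AP>0$. Then, by Proposition~\ref{PR:types-of-involutions-Az}(iv), I may further factor $R$ so that $(\sigma,\veps)$ is purely orthogonal, symplectic, or unitary, and in each case the discussion of~\ref{subsec:isometry-group} together with Proposition~\ref{PR:U-zero-description} shows that $\uU^0(f)$ is a reductive group $R$-scheme --- in particular smooth and affine with geometrically connected fibres --- which is clopen in $\uU(f)$ and whose fibre over $\frakm\in\Max R$ is $\uU^0(f(\frakm))$. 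Since $R/\Jac R\cong\prod_{\frakm\in\Max R}k(\frakm)$ and $R$-points of an affine scheme distribute over finite products of rings, $\prod_\frakm U^0(f(\frakm))=\uU^0(f)(R/\Jac R)$, so the theorem becomes the statement that
\[
\uU^0(f)(R)\longrightarrow \uU^0(f)(R/\Jac R)
\]
is surjective.

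For this I would invoke the general principle that a smooth affine group scheme with connected fibres over a semilocal ring has surjective reduction onto $R/\Jac R$ [cf.\ \cite{First_2020_orthogonal_group}], and recall the mechanism. When $\uU^0(f)$ admits a split maximal torus over $R$ this is elementary: the big open cell relative to a Borel $R$-subgroup is an open subscheme $R$-isomorphic to a product of copies of $\nGa{R}$ and $\nGm{R}$, whose $R$-points surject onto its $(R/\Jac R)$-points --- for the $\nGm{R}$-factors one lifts units, which is possible because $R$ is semilocal --- and over each field factor $k(\frakm)$ the point $\bar g_\frakm$ is a product of a bounded number of points of this cell, so concatenating lifts produces the desired $R$-point. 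In general there need be no split torus over $R$; one then first lifts $\bar g=(\bar g_\frakm)_\frakm$ fibrewise to the $\frakm$-adic completions $\widehat R_\frakm$ (complete, hence henselian, local rings with residue fields $k(\frakm)$) using smoothness of $\uU^0(f)$, obtaining a point over $\widehat R:=\prod_\frakm\widehat R_\frakm$, and then descends it to $R$ via the structure theory of reductive group schemes over semilocal rings (such a group admits a maximal torus over $R$ by that theory). In all cases the resulting $g$ lies in $\uU^0(f)(R)$, not merely in $\uU(f)(R)$: since $\uU^0(f)$ is clopen in $\uU(f)$ and $g(\frakm)\in\uU^0(f(\frakm))$ for every $\frakm\in\Max R$, the preimage of $\uU^0(f)$ under $g\colon\Spec R\to\uU(f)$ is a clopen subset containing all maximal ideals, hence all of $\Spec R$.

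The main obstacle is precisely this surjectivity of reduction for a reductive group scheme over a semilocal ring that is neither local nor complete: it is \emph{not} a formal consequence of smoothness, since it fails for smooth affine \emph{schemes} lacking a group structure --- for instance the affine conic $x^2+y^2=-1$ over $\Z_{(p)}$ ($p$ odd) is smooth and has a point over its residue field $\F_p$ but no $\Z_{(p)}$-point (indeed no $\Q$-point) --- so the reductive structure, in particular the existence of a maximal torus over $R$ and the big-cell decomposition, must be used essentially, and the genuinely delicate point is the case of a non-quasi-split form $\uU^0(f)$, where no split torus is available over $R$. A secondary, bookkeeping issue is the orthogonal case, where $\uU^0(f)=\ker(\Nrd\colon\uU(f)\to\umu_{2,R})$ (Proposition~\ref{PR:U-zero-description}): there one must verify membership in the kernel and agreement of reductions at each maximal ideal, using Lemma~\ref{LM:mu-two-check} to compare values in $\mu_2$.
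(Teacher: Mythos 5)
Your reduction is correct and is essentially the paper's: after splitting $R$ into connected factors (equivalently, factoring by the type of $(\sigma,\veps)$ and discarding the locus where $P$ vanishes), one identifies $\prod_{\frakm}U^0(f(\frakm))$ with $\uU^0(f)(R/\Jac R)$ and the theorem becomes the surjectivity of $\uU^0(f)(R)\to\uU^0(f)(R/\Jac R)$. The paper performs this reduction by passing to the adjoint involution $\theta$ on $E=\End_A(P)$ (so that $\uU^0(f)=\uU^0(E,\theta)$ with $(E,\theta)$ Azumaya over $R$) and then \emph{cites} \cite[Theorems~2 and~6]{First_2020_orthogonal_group} for that surjectivity. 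If your appeal to that reference is taken as the justification, your proof coincides with the paper's.

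The gap is in the ``mechanism'' you offer in place of the citation. First, there is no ``general principle'' that a smooth affine group scheme with connected fibres over a semilocal ring has surjective reduction modulo $\Jac R$ that one can invoke off the shelf; the cited reference proves this for the specific groups $\uU^0(E,\theta)$, and the orthogonal case is the main theorem of that paper, not a corollary of reductivity. Second, your treatment of the non-quasi-split case does not work: having lifted $\bar g$ fibrewise to a point of $\uU^0(f)$ over $\prod_{\frakm}\widehat R_\frakm$, there is no way to ``descend it to $R$'' --- $R\to\prod_\frakm\widehat R_\frakm$ is faithfully flat, but descending a point requires a descent datum, which a lift chosen fibrewise does not carry. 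Third, the existence of a maximal torus over semilocal $R$ (true by SGA3) does not produce the big open cell you use: that requires a \emph{split} maximal torus together with a Borel $R$-subgroup, i.e.\ essentially that $\uU^0(f)$ be split, which fails for general $(A,\sigma)$. So the sketch cannot replace the citation; either quote \cite[Theorems~2 and~6]{First_2020_orthogonal_group} as the paper does, or supply the (substantial) argument of that paper. Your closing observation about the orthogonal bookkeeping is fine, and your clopen argument showing that the lift lands in $\uU^0(f)(R)$ rather than merely $\uU(f)(R)$ is correct.
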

    
    \begin{proof}
		Write $R=\prod_{i=1}^t R_i$ with each $R_i$ connected. 
		Working over each factor separately, we may assume $R$
		is connected. We may further assume that $\rrk_AP> 0$.
		
		Let $E$ and $\theta$ be as in Remark~\ref{RM:relation-to-U-A-sigma}
		and write $U^0(E,\theta)=\bfU^0(E,\theta)(R)=U^0(f)$.
		Then $(E,\theta)$ is Azumaya over $R$ (Proposition~\ref{PR:type-of-adjoint}),
		$\theta$ is either orthogonal,  symplectic, or  unitary
		(Propositions~\ref{PR:types-of-involutions-Az}(v)),
		and
		the theorem is equivalent to  $U^0(E,\theta)\to\prod_{\frakm\in \Max R}U^0(E(\frakm),\theta(\frakm))$
		being surjective.
		This holds by \cite[Theorem~2]{First_2020_orthogonal_group}
		(and Proposition~\ref{PR:U-zero-description})
		when $\theta$ is orthogonal and by \cite[Theorem~6]{First_2020_orthogonal_group}
		when $\theta$ is not orthogonal.
    \end{proof}

    \begin{remark}
    	Under the assumptions of Theorem~\ref{TH:U-zero-mapsto-onto-closed-fibers},
		the specialization map $U(f)\to\prod_{\frakm\in \Max R} U(f(\frakm))$
		may fail to be surjective in general.
		For example, take
		$R$ to be a connected semilocal ring with two maximal
		ideals, let $(A,\sigma)=(R,\id_R)$ 
		and consider the $1$-hermitian form $f(x,y)=xy$ on $R$.
    \end{remark}

	\begin{thm}  \label{TH:criterion-for-det-one}
		Suppose  that $(A,\sigma)$ is an Azumaya $R$-algebra with    involution,
		$(\sigma,\veps)$ is orthogonal  and 
		$R$ is    semilocal.
		Let $(P,f)\in\Herm[\veps]{A,\sigma}$ be   hermitian space with $\rrk_A P>0$.
		Then $\Nrd:U(f)\to\mu_2(R)$ is surjective
		if and only if $[A]=0$.
	\end{thm}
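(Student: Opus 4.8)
The plan is to pass to the endomorphism algebra and treat the two implications separately. Set $E=\End_A(P)$ and let $\theta$ be the adjoint involution of $f$. By Remark~\ref{RM:relation-to-U-A-sigma} the group scheme $\uU(f)$ is $\uU(E,\theta)$, and the map $\Nrd$ in the statement is $\Nrd_{E/R}$ restricted to $U(E,\theta)$; by Proposition~\ref{PR:type-of-adjoint} the pair $(E,\theta)$ is an Azumaya $R$-algebra with orthogonal involution, with $[E]=[A]$ (Proposition~\ref{PR:degree-of-endo-ring}(i)) and $\deg E=\rrk_AP>0$. Since $U(f)$, $\mu_2(R)$, the condition ``$[A]=0$'' and $\Nrd$ all respect a decomposition of $R$ into connected rings, I would reduce at once to $R$ connected, so that $\deg E=:m$ is a positive integer and $\mu_2(R)=\{\pm1\}$ (as $2\in\units R$).

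To prove that $[A]=0$ implies surjectivity of $\Nrd$: if $[A]=0$ then $[E]=0$, so $E\cong\End_R(Q)$ for a progenerator $Q$, and over the connected semilocal ring $R$ this $Q$ is free of rank $m$; hence $E\cong\nMat{R}{m}$ and $\Nrd_{E/R}=\det$. By Proposition~\ref{PR:involution-is-adjoint} the involution $\theta$ is adjoint to a unimodular symmetric bilinear form $g$ on $R^m$ (the parameter $\delta$ there is $1$ because $\theta$ is orthogonal), so $U(E,\theta)=O(g)$; by Proposition~\ref{PR:diagonalizable-herm-forms} (applied to $(R,\id_R)$) we may take $g=\langle c_1,\dots,c_m\rangle$ with $c_i\in\units R$, and then $\operatorname{diag}(-1,1,\dots,1)\in O(g)$ has determinant $-1$. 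Together with the identity this shows $\Nrd$ maps onto $\{\pm1\}=\mu_2(R)$.

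For the converse I would argue the contrapositive: assume $[A]\neq 0$, equivalently $[E]\neq 0$, so $\ind E\ge 2$; suppose for contradiction that some $\vphi\in U(E,\theta)$ has $\Nrd_{E/R}(\vphi)=-1$. For each $\frakm\in\Max R$ the specialization $\vphi(\frakm)\in U(E(\frakm),\theta(\frakm))$ has reduced norm $-1$, which by the known field case forces $E(\frakm)$ to be split; thus $\bar E:=E/\Jac E\cong\prod_{\frakm}E(\frakm)$ is a matrix algebra over $R/\Jac R$. Now I would invoke Theorem~\ref{TH:U-zero-mapsto-onto-closed-fibers}: choosing a hyperplane reflection $\tau_\frakm$ in each $O(f(\frakm))$, the tuple $(\vphi(\frakm)\tau_\frakm^{-1})_\frakm$ has reduced norm $1$ at every $\frakm$, so it lies in $\prod_{\frakm}U^0(f(\frakm))$ and lifts to some $\omega\in U^0(f)$; then $\psi:=\omega^{-1}\vphi\in U(f)$ satisfies $\psi(\frakm)=\tau_\frakm$ for every $\frakm$, and $\tfrac12(1-\psi)$ reduces modulo $\Jac E$ to a $\bar\theta$-symmetric idempotent of reduced rank $1$. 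The goal would be to upgrade this, via Lemmas~\ref{LM:projective-covering-criterion} and~\ref{LM:inv-idempotent-lift}, to a genuine $\theta$-symmetric idempotent $e\in E$ with $\rrk_E(eE)=1$; once this is achieved, $eEe$ is an Azumaya $R$-algebra of degree $1$, so $eEe=R$, and $[A]=[E]=[eEe]=0$ by Corollary~\ref{CR:degree-of-endo-ring} --- the desired contradiction.

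The main obstacle is exactly this last upgrade, and it cannot be carried out naively: the residue algebras $E(\frakm)$ being split does not by itself force $[E]=0$ (the map $\Br R\to\prod_{\frakm}\Br k(\frakm)$ is not injective in general), and correspondingly $\tfrac12(1-\bar\psi)$ lifts to an idempotent of $E$ precisely when $E$ already carries a module of reduced rank $1$, i.e.\ precisely when $[E]=0$. So the improper isometry $\vphi$ must genuinely be used over $R$ rather than only modulo $\Jac R$. I would try to do this by a Witt-extension/patching argument in the spirit of Theorems~\ref{TH:Witt-extension} and~\ref{TH:Witt-cancellation}, realizing the local reflections $\tau_\frakm$ through an actual orthogonal decomposition of $(P,f)$ over $R$ possessing a reduced-rank-$1$ summand; an alternative route would be to compute the class of the $\uU^0(f)$-torsor $\{\omega\in\uU(f):\Nrd\omega=-1\}$ in $\HH^1_{\et}(R,\uU^0(f))$ against $[A]$ by means of Clifford-algebra and discriminant invariants. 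Making this step go through is where the real work of the proof lies.
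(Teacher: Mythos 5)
Your reduction to the adjoint involution $(E,\theta)=(\End_A(P),\theta)$ and to connected $R$ matches the paper's setup, and your proof of the direction ``$[A]=0\Rightarrow\Nrd$ surjective'' is complete and correct: with $[E]=0$ over connected semilocal $R$ you get $E\cong\nMat{R}{m}$, $\theta$ adjoint to a diagonalizable symmetric form by Propositions~\ref{PR:involution-is-adjoint} and~\ref{PR:diagonalizable-herm-forms}, and $\operatorname{diag}(-1,1,\dots,1)$ is an improper isometry.

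The converse, however, is not proved. You correctly diagnose the obstruction yourself: the residue-field argument only shows that $E(\frakm)$ is split for every $\frakm\in\Max R$, and this does not detect $[E]$, since $\Br R\to\prod_\frakm\Br k(\frakm)$ need not be injective; likewise, the $\bar\theta$-symmetric idempotent of reduced rank $1$ that you produce in $E/\Jac E$ lifts to $E$ (via Lemma~\ref{LM:projective-covering-criterion}) only if $E$ already admits a module of reduced rank $1$, i.e.\ only if $[E]=0$ --- which is exactly what you are trying to prove. The two escape routes you sketch (a Witt-extension/patching argument realizing the local reflections over $R$, or a torsor computation in $\HH^1_{\et}(R,\uU^0(f))$) are not carried out, so the argument is circular at precisely the point where the content lies. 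For what it is worth, the paper does not prove this implication in situ either: after the same reduction to the adjoint involution it invokes \cite[Theorem~1]{First_2020_orthogonal_group}, an external result of the author about reduced norms of isometries of Azumaya algebras with orthogonal involution over semilocal rings, where the genuine work is done. So your proposal should be regarded as a correct proof of one implication plus an accurate description of why the other implication is hard, rather than a proof of the theorem.
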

	
	\begin{proof}
		As in the proof of Theorem~\ref{TH:U-zero-mapsto-onto-closed-fibers},
		we may assume
		that $R$ is connected, and hence, $\mu_2(R)=\{\pm 1\}$.
		Now, the theorem follows by
		applying  \cite[Theorem~1]{First_2020_orthogonal_group}
		to the adjoint involution of $f$. Note that
		$\End_A(P)$ is Azumaya over $R$ because $\rrk_AP>0$ (Proposition~\ref{PR:degree-of-endo-ring}(i)).
	\end{proof}

   	We finish with the following well-known theorem.	
	
	\begin{thm}\label{TH:rational-variety}
		Suppose that $R$ is a field and $A$ is finite dimensional over $R$,
		and	
		and let $(P,f)\in \Herm[\veps]{A,\sigma}$.
		Then $\uU^0(f)\to \Spec R$ is a rational variety.
	\end{thm}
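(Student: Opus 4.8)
The plan is to exhibit $\uU^0(f)$ as birationally equivalent to an affine space over $R$ by means of the Cayley transform. The first step is to replace $(A,\sigma)$ by $(E,\theta)$, where $E=\End_A(P)$ is the (finite-dimensional) endomorphism algebra and $\theta$ is the adjoint involution of the unimodular form $f$ (see~\ref{subsec:adjoint-inv}); by Remark~\ref{RM:relation-to-U-A-sigma} this identifies $\uU^0(f)$ with $\uU^0(E,\theta)$, and no semisimplicity of $E$ will be needed. Since $\uU^0(f)$ is smooth and geometrically connected over the field $R$ (\ref{subsec:isometry-group}), it is a geometrically integral $R$-variety; moreover it is the identity component of the algebraic group $\uU(f)=\uU(E,\theta)$, hence an open and closed subscheme of $\uU(f)$.

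Next I would introduce the affine space of $\theta$-skew elements: the functor $T\mapsto\{x\in E_T\suchthat x^{\theta_T}=-x\}$ is represented by the affine space on the finite-dimensional $R$-vector space $\Sym_{-1}(E,\theta)$, the point being that $0\to\Sym_{-1}(E,\theta)\to E\to\Sym_1(E,\theta)\to 0$ splits because $2\in\units R$ and therefore stays exact after any base change. Let $V\subseteq\Sym_{-1}(E,\theta)$ be the open subscheme on which $1+x\in\units E$; it contains $0$, hence is a nonempty (dense) open of an affine space. On $V$ the Cayley transform $c(x)=(1-x)(1+x)^{-1}$ is a morphism. Applying the anti-automorphism $\theta$ --- which carries $\units E$ onto $\units E$, so that $(1+x)^\theta=1-x$ is automatically invertible on $V$ --- and using $(1+x)(1-x)=1-x^2=(1-x)(1+x)$, one computes $c(x)^\theta c(x)=c(x)c(x)^\theta=1$, so $c$ factors through $\uU(E,\theta)$. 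Because $V$ is irreducible and $c(0)=1$ lies in the open and closed subscheme $\uU^0(f)$, the morphism $c$ in fact lands in $\uU^0(f)$.

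Finally I would construct the inverse rational map. On the open subscheme $W=\{g\in\uU^0(f)\suchthat 1+g\in\units E\}$ --- which is nonempty, since $1\in W$ because $2=1+1\in\units E$, and hence dense in the irreducible $\uU^0(f)$ --- put $d(g)=(1-g)(1+g)^{-1}$. Using $g^\theta=g^{-1}$ one checks $d(g)^\theta=-d(g)$, while $1+d(g)=2(1+g)^{-1}$ shows $d$ maps $W$ into $V$; similarly $1-c(x)=2x(1+x)^{-1}$ and $1+c(x)=2(1+x)^{-1}$, together with $2\in\units R$, give $c\circ d=\id_W$ and $d\circ c=\id_V$. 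Hence $c$ restricts to an isomorphism $V\xrightarrow{\ \sim\ }W$ of dense open subschemes, so $\uU^0(f)$ is birational to the affine space $\Sym_{-1}(E,\theta)$ and is therefore a rational variety. I expect the only step needing genuine care to be the verification that the image of $c$ lands in the \emph{neutral} component of $\uU(f)$ rather than in some other connected component (in the orthogonal case $\uU(f)$ really does have two components); this is settled by the irreducibility of $V$, and the remaining identities are routine formal manipulations in $(E,\theta)$, valid as soon as $2\in\units R$.
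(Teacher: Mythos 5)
Your proposal is correct and follows essentially the same route as the paper: pass to the adjoint involution $\theta$ on $E=\End_A(P)$ and exhibit a birational equivalence between the affine space on $\Sym_{-1}(E,\theta)$ and $\uU^0(f)$ via the Cayley transform (your formulas differ from the paper's only by a sign convention). The paper's proof is just a sketch, and your verification of the mutually inverse identities and of the fact that the irreducible source forces the image into the neutral component fills in exactly the details it omits.
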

	
	\begin{proof}[Proof (sketch)]
		We already know that $\uU^0(f)\to \Spec R$ is irreducible. 
		Let $\theta$
		be the adjoint involution of $f$
		and let $\bfV$ denote
		the affine $R$-variety
		representing the functor $S\mapsto \Sym_{-1}(\End_{A_S}(P_S),\theta_S)$;
		it is isomorphic to $\mathbb{A}^n_R$ for $n=\dim_R \Sym_{-1}(\End_A(P),\theta)$.
		A birational equivalence between
		$\bfV$ and $\uU^0(f)$ is given
		by the Cayley transform, $y\mapsto (1+y)(1-y)^{-1}:\bfV\dashrightarrow \uU^0(f)$,
		and its inverse,
		$x\mapsto - (1+x)^{-1}(1-x):\uU^0(f)\dashrightarrow \bfV$.
	\end{proof}

\subsection{More on Lagrangians}
\label{subsec:Lagrangians}

	Throughout this subsection, $(A,\sigma)$ 
	denotes an Azumaya $R$-algebra with involution.
	Given $(P,f)\in \Herm[\veps]{A,\sigma}$,
	let
	\[
	\Lag(f)=\{L\subseteq P\suchthat
	\,\text{$L$ is a Lagrangian of $f$ and $\rrk_AL=\textstyle{\frac{1}{2}}\rrk_AP$}\}.
	\]
	Recall from~\ref{subsec:Witt-grp} that if
	$L$ is a Lagrangian of $f$, then $P=L\oplus L^*$, hence $\rrk_A P=\rrk_AL+\sigma \rrk_AL$
	by  Lemma~\ref{LM:unimodular-implies-rrk-sigma-inv}.
	Thus, if $\sigma|_{\Cent(A)}=\id_{\Cent(A)}$, or if $\Cent(A)$ is connected, then $\Lag(f)$
	consists of all Lagrangians of $f$.

	In this   subsection, we collect  several facts about
	the action of $U^0(f)$ on $\Lag(f)$.
	Some of the results will require the use of sheaves,
	and we refer the reader to \cite[Chapter III.\S2]{Knus_1991_quadratic_hermitian_forms}
	for a scheme-free introduction, or \cite{Milne_1980_etale_cohomology} for an extensive treatment.

\medskip
	
	The map $S\mapsto \Lag(f_S)$
	naturally extends to a functor, $\uLag(f)$, from $R$-rings to
	sets. 
	It is routine to check that
	Lagrangians descend along along faithfully flat ring homomorphisms.
	That is, if $S\to T$ is a faithfully flat map of $R$-rings, $i_1,i_2:T\to T\otimes_ST$
	are the maps $t\mapsto t\otimes 1$ and $t\mapsto 1\otimes t$,
	and
	$L\in \Lag(f_{T})$ satsfies $\uLag(i_1)(L)=\uLag(i_2)(L)$,
	then there exists a unique $L_0\in \Lag(f_S)$ with $(L_0)\otimes_S T=L$;
	consult  \cite[III.\S\S1--2]{Knus_1991_quadratic_hermitian_forms}.
	Thus, $\uLag(f)$ is sheaf relative to the fppf topology
	on the category
	of affine $R$-schemes, denoted $(\Aff/R)_{\fppf}$.\footnote{
		With the appropriate definitions, this functor also extends
		to a sheaf on the site of all $R$-schemes
		with the fpqc topology. 	
	}
	(In fact, it can be shown that $\uLag(f)$ is represented
	by a  non-affine  $R$-scheme, but this fact will not
	be needed in this work.)
	The group $U(f)$ acts on $\Lag(f)$ in a way which is compatible
	with base change, thus giving rise to an
	action of $\uU(f)$ on $\uLag(f)$.

\medskip

	For the next results, given $P,Q\in \rproj{A}$ and $f\in\Hom_A(P,Q)$,
	recall that the dual homomorphism $f^*\in\Hom_A(Q^*,P^*)$ is defined   by $f^*\phi=\phi \circ f$
	($\phi\in Q^*$).

	\begin{lem}\label{LM:Lag-transitive-action}
		Suppose that $(A,\sigma)$
		is an Azumaya $R$-algebra with  involution
		and $R$ is semilocal.
		Let $(P,f)\in \Herm[\veps]{A,\sigma}$.
		Then $U(f)$ acts transitively on $\Lag(f)$, provided it is nonempty.
	\end{lem}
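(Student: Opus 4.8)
The plan is to transport both Lagrangians into a common hyperbolic model and then build the required isometry by hand. Assuming $\Lag(f)\neq\emptyset$, fix $L_1,L_2\in\Lag(f)$; the goal is to produce $g\in U(f)$ with $gL_1=L_2$.

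The first step --- and the only one carrying real content --- is to show that $L_1\cong L_2$ as $A$-modules. By definition of $\Lag(f)$ one has $\rrk_A L_1=\tfrac{1}{2}\rrk_A P=\rrk_A L_2$ as functions on $\Spec\Cent(A)$, and $L_1,L_2$, being summands of $P$, lie in $\rproj{A}$. Since $A$ is Azumaya over $\Cent(A)$ and $\Cent(A)$, being finite over the semilocal ring $R$, is itself semilocal, Lemma~\ref{LM:rank-determines} (applied over the base $\Cent(A)$) yields an $A$-linear isomorphism $\phi\colon L_1\xrightarrow{\ \sim\ }L_2$. This is precisely where semilocality is used; note that Witt's extension theorem (Theorem~\ref{TH:Witt-extension}) is \emph{not} available here, as it requires $R$ to be henselian local, so the semilocal case genuinely needs the module-classification input rather than an extension result.

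Next I would pass to hyperbolic coordinates. Because $2\in\units{A}$, each $L_i$ admits a complementary Lagrangian $M_i$ with $P=L_i\oplus M_i$ (see \ref{subsec:Witt-grp}), and the induced map $\alpha_i\colon(P,f)\to(L_i\oplus L_i^*,\Hyp[\veps]{L_i})$, $(\ell,m)\mapsto(\ell,f(m,-))$, is an isometry sending $L_i$ to $L_i\oplus 0$. On the standard hyperbolic spaces, $\beta:=\phi\oplus(\phi^{-1})^*$ is an isometry $(L_1\oplus L_1^*,\Hyp[\veps]{L_1})\to(L_2\oplus L_2^*,\Hyp[\veps]{L_2})$ carrying $L_1\oplus 0$ onto $L_2\oplus 0$; this is a one-line verification from the defining formula of $\Hyp[\veps]{-}$ together with the twisted $A$-module structure on duals. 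Finally $g:=\alpha_2^{-1}\circ\beta\circ\alpha_1$ is an isometry of $(P,f)$ with $gL_1=\alpha_2^{-1}(\beta(L_1\oplus 0))=\alpha_2^{-1}(L_2\oplus 0)=L_2$, as desired. The main obstacle is really just the module-isomorphism step; everything afterwards is formal bookkeeping with hyperbolic forms.
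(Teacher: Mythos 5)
Your proof is correct and follows essentially the same route as the paper's: both reduce to the module isomorphism $L_1\cong L_2$ via Lemma~\ref{LM:rank-determines} (valid since $\Cent(A)$ is semilocal), then conjugate the isometry $\psi\oplus(\psi^*)^{-1}$ of hyperbolic spaces back to $(P,f)$ through the standard identifications $(P,f)\cong(L_i\oplus L_i^*,\Hyp[\veps]{L_i})$. No gaps.
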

	
	\begin{proof}
		Let $L_1,L_2\in \Lag(f)$.
		As explained in \ref{subsec:Witt-grp},
		we can find isometries
		$\vphi_i:\Hyp[\veps]{L_i}\to f$ ($i=1,2 $)
		such that $\vphi_i$ restricts to the identity
		on $L_i$.
		Since $\rrk_A L_1=\frac{1}{2}\rrk_AP=\rrk_A L_2$,
		there is an $A$-module isomorphism $\psi:L_1\cong L_2$
		(Lemma~\ref{LM:rank-determines}).
		Then
		$\hat{\psi}:=\psi\oplus (\psi^*)^{-1}: \Hyp[\veps]{L_1}\to \Hyp[\veps]{L_2}$
		is an isometry
		taking $L_1$ to $L_2$.
		Now, $\vphi_2\hat{\psi}\vphi_1^{-1}$ is an element of $U(f)$
		taking $L_1$ to $L_2$.
	\end{proof}
	
	\begin{prp}\label{PR:transitive-action-of-uUf}
		Suppose that $(A,\sigma)$ 
		is an Azumaya $R$-algebra with involution,
		and let $(P,f)
		\in \Herm[\veps]{A,\sigma}$.
		When viewed as sheaves on $(\Aff/R)_{\Zar}$ --- the category
		of affine $R$-schemes with the Zariski topology --- the group $\uU(f)$
		acts transitively on $\uLag(f)$, provided $\uLag(f)\neq \boldsymbol{\emptyset}$.
	\end{prp}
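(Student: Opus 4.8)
The plan is to unwind what ``transitive action'' means for sheaves on the Zariski site and then reduce the problem, stalk by stalk, to Lemma~\ref{LM:Lag-transitive-action}. Saying that $\uU(f)$ acts transitively on $\uLag(f)$ amounts to saying that the orbit morphism $\uU(f)\times\uLag(f)\to\uLag(f)\times\uLag(f)$, $(g,L)\mapsto(g\cdot L,L)$, is an epimorphism of Zariski sheaves on $(\Aff/R)_{\Zar}$. (This is meaningful: $\uU(f)$ is an affine $R$-scheme, hence a Zariski sheaf, and $\uLag(f)$, being an fppf sheaf, is in particular a Zariski sheaf.) Concretely, I must show: for every $R$-ring $S$ and every pair $L_1,L_2\in\Lag(f_S)$, there exist $s_1,\dots,s_n\in S$ generating the unit ideal such that for each $i$ there is $g_i\in U(f_{S_{s_i}})=\uU(f)(S_{s_i})$ with $g_i\cdot(L_1)_{S_{s_i}}=(L_2)_{S_{s_i}}$.

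So fix $S$ and $L_1,L_2\in\Lag(f_S)$, and fix a prime $\frakp\in\Spec S$. The ring $S_\frakp$ is local, hence semilocal, and $(A_{S_\frakp},\sigma_{S_\frakp})$ is an Azumaya $S_\frakp$-algebra with involution (see~\ref{subsec:Az-alg-inv}); since the reduced-rank condition defining $\Lag$ is stable under base change, both $(L_1)_{S_\frakp}$ and $(L_2)_{S_\frakp}$ lie in $\Lag(f_{S_\frakp})$. Lemma~\ref{LM:Lag-transitive-action} then yields $g\in U(f_{S_\frakp})$ with $g\cdot(L_1)_{S_\frakp}=(L_2)_{S_\frakp}$. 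Because $\uU(f)$ is affine and smooth over $R$ (see~\ref{subsec:isometry-group}), it is locally of finite presentation over $R$, whence $\uU(f)(S_\frakp)=\varinjlim_{s\in S\setminus\frakp}\uU(f)(S_s)$; so $g$ is the image of some $g'\in U(f_{S_s})$ for a suitable $s\notin\frakp$.

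Next I would spread out the equality of Lagrangians. Over $S_s$, both $M:=g'\cdot(L_1)_{S_s}$ and $N:=(L_2)_{S_s}$ are Lagrangians of $f_{S_s}$, in particular $A_{S_s}$-summands of the finite projective module $P_{S_s}$, and they have the same localization at $\frakp S_s$. Hence the two composites $M\hookrightarrow P_{S_s}\twoheadrightarrow P_{S_s}/N$ and $N\hookrightarrow P_{S_s}\twoheadrightarrow P_{S_s}/M$ of $A_{S_s}$-linear maps vanish after localizing at $\frakp$; since source and target of each are finitely presented $A_{S_s}$-modules, these maps already vanish after base change to $S_{st_1}$, respectively $S_{st_2}$, for suitable $t_1,t_2\notin\frakp$. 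Putting $t=t_1t_2$ and $s_\frakp:=st$, we obtain $M_{S_{s_\frakp}}\subseteq N_{S_{s_\frakp}}$ and $N_{S_{s_\frakp}}\subseteq M_{S_{s_\frakp}}$, i.e.\ $g'\cdot(L_1)_{S_{s_\frakp}}=(L_2)_{S_{s_\frakp}}$ inside $P_{S_{s_\frakp}}$. The distinguished opens $D(s_\frakp)$, for $\frakp\in\Spec S$, cover $\Spec S$; choosing a finite subcover produces the required $s_1,\dots,s_n$ together with $g_1,\dots,g_n$, and transitivity follows. (One could streamline the middle two steps by invoking representability of $\uLag(f)$ by an $R$-scheme locally of finite presentation, which is more than we need; the direct-summand argument avoids it.)

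I expect the main obstacle to be purely organizational: pinning down the correct sheaf-theoretic meaning of ``transitive'' and then executing the two spreading-out steps --- lifting $g$ from the stalk $S_\frakp$ to a principal localization $S_s$ (using that $\uU(f)$ is locally of finite presentation) and promoting a pointwise coincidence of direct summands of $P$ to a coincidence over a Zariski-open neighborhood. Once Lemma~\ref{LM:Lag-transitive-action} is in hand, nothing here is deep.
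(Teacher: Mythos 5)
Your proof is correct and follows essentially the same route as the paper's: reduce to the local rings $S_\frakp$, apply Lemma~\ref{LM:Lag-transitive-action} there, spread the isometry and the equality of Lagrangians out to a principal open neighborhood, and conclude by quasi-compactness of $\Spec S$. The paper compresses your two spreading-out steps into a single ``it is easy to see,'' which you have simply made explicit.
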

	
	\begin{proof}
		The statement  means that
		for every $R$-ring $S$
		and $L,M\in  \Lag(f_{S})$, there
		exist $\alpha_1,\dots,\alpha_t\in S$
		and $\vphi_i\in U(f_{S_{\alpha_i}})$ ($i=1,\dots,t$)
		such that $S=\sum_i\alpha_iS$ and $\vphi_i (L_{S_{\alpha_i}})=M_{S_{\alpha_i}}$
		for all $i$. Here, $S_{\alpha_i}$ denotes
		the localization of $S$ with respect to $\{1,\alpha_i,\alpha_i^2,\dots\}$.
		
		Fix some $\frakp\in\Spec S$. By Lemma~\ref{LM:Lag-transitive-action},
		there exists an isometry $\psi\in U(f_{S_\frakp})$
		with $\psi (L_{S_\frakp})=M_{S_\frakp}$.
		It is easy to see that there exists $\alpha=\alpha^{(\frakp)}\in S\setminus \frakp$
		and $\vphi=\vphi^{(\frakp)}\in U(f_{S_\alpha})$ such
		that $\psi=\vphi_{S_\frakp}$ and $\vphi(L_{S_\alpha})=M_{S_\alpha}$.
		Now, since  $\sum_\frakp\alpha_\frakp S=S$,
		there exist $\frakp_1,\dots,\frakp_t\in S$
		such that $\sum_{i=1}^t\alpha^{(\frakp_i)}S=S$.
		The elements $\alpha_i:=\alpha^{(\frakp_i)}$
		and the isometries $\vphi_i=\vphi^{(\frakp_i)}$
		fulfill all the requirements.
	\end{proof}

	Given $P\in\rproj{A}$ and $b\in\Hom_A(P,P^*)$,
	write $b^\trans$ for the element of $\Hom_A(P,P^*)$
	determined by $(b^\trans x)y=((by)x)^\sigma$ ($x,y\in P$).
	It is straightforward to check that $b^{\trans\trans}=b$
	and $(b\circ \psi)^\trans=\psi^*\circ b^\trans$ for all $\psi\in\End_A(P)$.
	We set $\Sym_\veps(P)=\{b\in\Hom_A(P,P^*)\suchthat b=\veps b^\trans\}$.

	\begin{lem}\label{LM:Borel-subgroup-structure}
		Let $L\in \rproj{A}$ and 
		let $B$ denote
		the subgroup of $U(\Hyp[\veps]{L})$ consisting of
		isometries $\vphi$ satisfying $\vphi (0\oplus L^*)= 0\oplus L^* $. Then,
		writing elements of $\End_A({L\oplus L^*})$ as $2\times 2$ matrices, we have
		\[
		B=\left\{
		\left[
		\begin{matrix}
		a & 0 \\
		b & (a^*)^{-1}
		\end{matrix}
		\right]
		\suchthat 
		a\in \Aut_A(L),\,
		b\in  \Hom_A(L, L^*),
		a^*\circ b\in\Sym_{-\veps}(L) 
		\right\}.
		\]
	\end{lem}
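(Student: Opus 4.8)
The plan is to carry out a direct computation with $2\times2$ matrices, using the identification of $\End_A(L\oplus L^*)$ with $2\times2$ matrices acting on column vectors (the $(1,1)$-entry being the $L\to L$ component, the $(1,2)$-entry the $L^*\to L$ component, and so on). First I would write a general endomorphism as $\vphi=\smallSMatII{a}{c}{b}{d}$ with $a\in\End_A(L)$, $c\in\Hom_A(L^*,L)$, $b\in\Hom_A(L,L^*)$ and $d\in\End_A(L^*)$, so that $\vphi(x\oplus\phi)=(ax+c\phi)\oplus(bx+d\phi)$. The condition $\vphi(0\oplus L^*)\subseteq 0\oplus L^*$ then reads $c\phi=0$ for all $\phi\in L^*$, i.e.\ $c=0$. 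Since an element of $U(\Hyp[\veps]{L})$ is in particular an $A$-module automorphism, a lower-triangular $\vphi=\smallSMatII{a}{0}{b}{d}$ lies in $U(\Hyp[\veps]{L})$ only if $a\in\Aut_A(L)$ and $d\in\Aut_A(L^*)$; conversely these conditions already force invertibility, and then $\vphi(0\oplus L^*)=0\oplus L^*$ automatically. Hence $B$ is exactly the set of lower-triangular elements of $U(\Hyp[\veps]{L})$, and it remains to rewrite the isometry condition as conditions on $a$, $b$, $d$.

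Next I would expand $\Hyp[\veps]{L}(\vphi v,\vphi w)=\Hyp[\veps]{L}(v,w)$ for $v=x\oplus\phi$ and $w=y\oplus\psi$, using the defining formula $\Hyp[\veps]{L}(x\oplus\phi,x'\oplus\phi')=\phi x'+\veps(\phi' x)^\sigma$. Specializing to $x=0$ isolates the identity $(d\phi)(ay)=\phi(y)$ for all $\phi\in L^*$, $y\in L$; by the definition of the dual homomorphism this says $a^*\circ d=\id_{L^*}$, i.e.\ $d=(a^*)^{-1}$. Substituting this back, all the remaining terms cancel except for the requirement $(bx)(ay)+\veps\bigl((by)(ax)\bigr)^\sigma=0$ for all $x,y\in L$. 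Writing $c:=a^*\circ b\in\Hom_A(L,L^*)$ and using $\bigl((a^*\circ b)x\bigr)(y)=(bx)(ay)$ together with the definition of $(-)^\trans$ (namely $(c^\trans x)(y)=\bigl((cy)(x)\bigr)^\sigma$), this becomes $(cx)(y)+\veps(c^\trans x)(y)=0$ for all $x,y$, which is precisely $c=-\veps\,c^\trans$, i.e.\ $a^*\circ b\in\Sym_{-\veps}(L)$. Every implication above is reversible, so this simultaneously gives both inclusions in the asserted description of $B$.

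The argument is completely elementary, and I do not expect a genuine obstacle; the only thing demanding attention is the bookkeeping --- keeping straight the right $A$-module structure on $L^*$, the contravariance of $(-)^*$, and exactly where the factors of $\veps$ and of $\sigma$ sit when passing between $(bx)(ay)$, $a^*\circ b$, and $(a^*\circ b)^\trans$.
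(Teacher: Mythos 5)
Your proposal is correct and follows essentially the same route as the paper: read off $c=0$ from preservation of $0\oplus L^*$, extract $d=(a^*)^{-1}$ by testing the isometry condition on pairs $(0\oplus\phi,\,y\oplus 0)$, and extract $a^*\circ b\in\Sym_{-\veps}(L)$ from pairs $(x\oplus 0,\,y\oplus 0)$, with the identity $(a^*\circ b)^\trans=b^\trans\circ a$ doing the final bookkeeping. The reversibility remark and the observation that invertibility of $a$ and $d$ is forced are exactly the "routine" parts the paper also leaves implicit.
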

	
	\begin{proof}
		That elements of $B$ live in $U(\Hyp[\veps]{L})$ and preserve $0\oplus L^*$ is routine.
		Conversely, every element
		$\vphi\in U(f)$ satisfying  $\vphi (0\oplus L^*)= 0\oplus L^* $
		can be written as $[\begin{smallmatrix} a & 0 \\ b & c\end{smallmatrix}]$
		with $a\in\Aut_A(L)$, $b\in\Hom_A(L,L^*)$, $c\in \Aut_A(L^*)$.
		Let $x,x'\in L$ and $\phi\in L^*$.
		Unfolding the equality
		$\Hyp[\veps]{L}([\begin{smallmatrix} 0 \\ \phi \end{smallmatrix}],
		[\begin{smallmatrix} x' \\ 0 \end{smallmatrix}])=
		\Hyp[\veps]{L}(\vphi[\begin{smallmatrix} 0 \\ \phi \end{smallmatrix}],
		\vphi[\begin{smallmatrix} x' \\ 0 \end{smallmatrix}])$
		gives $\phi x'=(c\phi)(ax')=(a^*(c\phi))x'$, so $a^*c=\id_{L^*}$,
		or rather, $c=(a^*)^{-1}$.
		Unfolding
		$\Hyp[\veps]{L}([\begin{smallmatrix} x \\ 0 \end{smallmatrix}],
		[\begin{smallmatrix} x' \\ 0 \end{smallmatrix}])=
		\Hyp[\veps]{L}(\vphi[\begin{smallmatrix} x \\ 0 \end{smallmatrix}],
		\vphi[\begin{smallmatrix} x' \\ 0 \end{smallmatrix}])$
		gives
		$0=(bx')(ax)+\veps((bx)(ax'))^\sigma=
		(a^*(bx'))x+\veps (b^\trans(ax'))x$, so $a^*\circ b+\veps b^\trans \circ a=0$,
		which means that $a^*\circ b\in \Sym_{-\veps}(L)$.
	\end{proof}

	The following proposition provides
	information about the $\uU^0(f)$-orbits in $\uLag(f)$ when $(A,\sigma)$
	is an Azumaya $R$-algebra with  involution and $(\sigma,\veps)$
	is orthogonal. It will   feature a number of times in the sequel.

	\begin{prp}\label{PR:partition-of-Lag}
		Suppose that $(A,\sigma)$ 
		is an Azumaya $R$-algebra with   involution
		and  $(\sigma,\veps)$ is orthogonal. 
		Let $(P,f)\in\Herm[\veps]{A,\sigma}$, let $L\in\Lag(f)$ and suppose that $\rrk_AP>0$.
		Then there exists a unique $\uU(f)$-equivariant
		natural transformation of functors from $R$-rings to sets,
		\[
		\Phi_L=\Phi_L^{(f)}:\uLag(f)\to\umu_{2,R} ,
		\]
		such that $\Phi_L(L)=1$;
		here, $\uU(f)$ acts on $\umu_{2,R} $ via $\Nrd:\uU(f)\to\umu_{2,R} $.
		The map $\Phi_L$ has the following additional
		properties:
		\begin{enumerate}[label=(\roman*)]
			\item $\Phi_L(M)\Phi_M(K)=\Phi_L(K)$
			and $\Phi_L(M)=\Phi_M(L)$ for all $L,M,K\in \Lag(f)$.
			\item Given $(P',f')\in \Herm[\veps]{A,\sigma}$ 
			and $L'\in \Lag(f')$,
			we have $\Phi_{L\oplus L'}(M\oplus M')=\Phi_L(M)  \Phi_{L'}(M')$
			for all $M\in \Lag(f)$, $M'\in \Lag(f')$.
		\end{enumerate}
	\end{prp}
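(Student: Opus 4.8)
The plan is to define $\Phi_L$ by \emph{transport of structure}. Given an $R$-ring $S$ and $M\in\Lag(f_S)$, Proposition~\ref{PR:transitive-action-of-uUf} (transitivity of the $\uU(f)$-action on $\uLag(f)$ as sheaves on $(\Aff/R)_{\Zar}$, using $\uLag(f)(R)\ni L\neq\boldsymbol{\emptyset}$) furnishes a Zariski covering $S=\textstyle\sum_i\alpha_iS$ together with isometries $\vphi_i\in U(f_{S_{\alpha_i}})$ carrying $L_{S_{\alpha_i}}$ to $M_{S_{\alpha_i}}$, and I would set $\Phi_L(M)$ to be the element of $\mu_2(S)$ whose restriction to $S_{\alpha_i}$ is $\Nrd(\vphi_i)$. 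For this to make sense I need the main lemma of the proof: \emph{if $\vphi\in U(f_T)$ stabilises a Lagrangian of $f_T$, then $\Nrd(\vphi)=1$}. Granting it, any two isometries sending $L_T$ to $M_T$ differ by an isometry stabilising $L_T$, hence have equal reduced norm; so the local data $\Nrd(\vphi_i)$ agree on overlaps and patch, via the sheaf property of $\umu_{2,R}$, to a $\Phi_L(M)$ independent of all choices. Naturality in $S$ is immediate because $\Nrd$ commutes with base change; $\uU(f)$-equivariance (with $\uU(f)$ acting on $\umu_{2,R}$ through $\Nrd$) follows from multiplicativity of $\Nrd$, since $\psi\vphi$ carries $L$ to $\psi M$; and $\Phi_L(L)=1$ by taking $\vphi=\id$.

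To prove the lemma, let $\vphi\in U(f_T)$ fix a Lagrangian $L$ of $f_T$. Since $2\in\units T$, I may pick a complementary Lagrangian and an isometry $f_T\cong\Hyp[\veps]{L}$ identifying $L$ with $L\oplus 0$; writing endomorphisms of $L\oplus L^*$ as $2\times2$ matrices, $\vphi$ becomes $\smallSMatII{a}{c}{0}{d}$ with $a\in\Aut_A(L)$, and unravelling the condition $\Hyp[\veps]{L}(\vphi u,\vphi v)=\Hyp[\veps]{L}(u,v)$ for $u\in L\oplus0$, $v\in0\oplus L^*$ forces $a^*d=\id_{L^*}$, i.e.\ $d=(a^*)^{-1}$ (this is part of the computation in Lemma~\ref{LM:Borel-subgroup-structure}). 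Now $\End_A(L)$ and $\End_A(L^*)$ are Azumaya over $R$, since $\rrk_AL=\rrk_AL^*=\tfrac12\rrk_AP>0$ (Lemma~\ref{LM:unimodular-implies-rrk-sigma-inv} and $\Cent(A)=R$); and, after a faithfully flat base change splitting $A$ and trivialising $L$ and $L^*$ — where $\Nrd$ becomes the determinant and the matrix of $\vphi$ is block triangular — one gets $\Nrd_{\End_A(P)}(\vphi)=\Nrd_{\End_A(L)}(a)\cdot\Nrd_{\End_A(L^*)}\!\big((a^*)^{-1}\big)$. Finally $a\mapsto a^*$ is an algebra isomorphism $\End_A(L)\to\End_A(L^*)^{\op}$, and the reduced norm is preserved by algebra isomorphisms and is insensitive to passing to the opposite algebra, so $\Nrd_{\End_A(L^*)}(a^*)=\Nrd_{\End_A(L)}(a)$; hence the displayed product is $1$, proving the lemma.

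Uniqueness falls out of the same transitivity: if $\Phi$ is an equivariant natural transformation with $\Phi_R(L)=1$, then for $M\in\Lag(f_S)$ and a covering as above, naturality gives $\Phi_{S_{\alpha_i}}(L_{S_{\alpha_i}})=1$ and equivariance gives $\Phi_{S_{\alpha_i}}(M_{S_{\alpha_i}})=\Nrd(\vphi_i)$, which pins down $\Phi_S(M)$ because $\mu_2(S)\to\prod_i\mu_2(S_{\alpha_i})$ is injective; so $\Phi$ coincides with the $\Phi_L$ constructed above. For the additional properties I would argue locally on the relevant base and reduce to identities among reduced norms: choosing local isometries with $\vphi L=M$ and $\psi M=K$, property (i) follows from $\Nrd(\psi\vphi)=\Nrd(\psi)\Nrd(\vphi)$ together with $x=x^{-1}$ in $\mu_2$; and for property (ii) — in which $\rrk_AP'>0$ is understood, so that $\Phi_{L'}^{(f')}$ is defined — choosing local $\vphi,\vphi'$ with $\vphi L=M$, $\vphi'L'=M'$, the isometry $\vphi\oplus\vphi'$ carries $L\oplus L'$ to $M\oplus M'$, and the block-diagonal case of the multiplicativity used above gives $\Nrd(\vphi\oplus\vphi')=\Nrd(\vphi)\Nrd(\vphi')=\Phi_L(M)\,\Phi_{L'}(M')$. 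The one genuinely substantive point is the lemma on stabilizers of Lagrangians; everything else is descent bookkeeping with the sheaf $\umu_{2,R}$.
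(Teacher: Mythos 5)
Your proposal is correct and follows essentially the same route as the paper: the substantive point in both is that an isometry stabilising a Lagrangian is block-triangular with diagonal $(a,(a^*)^{-1})$ and hence has reduced norm $\Nrd(a)\Nrd(a^*)^{-1}=1$ (the paper packages this as $\bfB\subseteq\ker\Nrd$ via Lemma~\ref{LM:Borel-subgroup-structure} and works with the quotient sheaf $\uU(f)/\bfB$, while you glue the local reduced norms directly), after which transitivity, descent for $\umu_{2,R}$, and multiplicativity of $\Nrd$ give existence, uniqueness, and properties (i)--(ii) exactly as in the paper. The only cosmetic difference is that by transporting from $L$ itself you get $\Phi_L(L)=1$ for free, whereas the paper normalises at $0\oplus L^*$ and rescales afterwards.
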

	
	\begin{proof}
		We may assume without loss of generality
		that $(P,f)=(L \oplus L^*,\Hyp[\veps]{L})$ and identify
		$L$ with its copy in $P=L\oplus L^*$. A sheaf
		means a sheaf on the site $(\Aff/R)_{\fppf}$.
		
		Given an $R$-ring $S$, let $B=B(L)$
		be as in Lemma~\ref{LM:Borel-subgroup-structure}.
		Let $\bfB$ denote the subfunctor of $\uU(f)$
		determined by $\bfB(S)=B(L_S)$.
		It is routine to check that $\bfB$ is a group subsheaf of $\uU(f)$.
		We let $\uU(f)/\bfB$ denote the quotient sheaf (note that
		$(\uU(f)/\bfB)(S)$ is in general larger than $U(f_S)/B(L_S)$).
		By definition, $\bfB$ is  the stabilizer of the global section
		$0\oplus L^*$ of $\uLag(f)$ under the action of $\uU(f)$.
		Thus, we have an induced morphism $\Psi:\uU(f)/\bfB\to \uLag(f)$,
		which is an isomorphism by Proposition~\ref{PR:transitive-action-of-uUf}.

		For every $a\in \End_A(L)$, we have $\Nrd(a^*)=\Nrd(a)$.
		Indeed, $a\mapsto a^*:\End_A(L)\to \End_A(L^*)^\op$ is an isomorphism of
		Azumaya $R$-algebras, and thus respects the reduced norm.
		This implies readily that $\bfB\subseteq \ker(\Nrd:\uU(f)\to \umu_{2,R})$.
		As a result, there is an induced $\uU(f)$-equivariant map $\quo{\Nrd}:\uU(f)/\bfB\to \umu_{2,R}$.
		Let $\Phi_0$ denote the composition $\quo{\Nrd}\circ \Psi^{-1} :\uLag(f) \to \umu_{2,R}$.
		Then $\Phi_0$ is $\uU(f)$-equivariant.
		Writing $\xi=\Phi_0(L)\in\mu_2(R)$ and defining $\Phi_L=\xi \cdot \Phi_0$,
		we see that $\Phi_L$ is
		$\uU(f)$-equivariant and satisfies $\Phi_L(L)=1$.

		Suppose that
		$\Phi':\uLag(f)\to \umu_{2,R}$ is another $\uU(f)$-equivariant natural
		transformation satisfying $\Phi'(L)=1$. Let $R'$ be
		an $R$-ring and let $M\in \Lag(f_{R'})$. By Proposition~\ref{PR:transitive-action-of-uUf},
		there exists a faithfully flat   $R'$-algebra $R''$
		and $\vphi\in U(f_{R''})$ 
		such that $\vphi (L_{R''})=M\otimes_{R'}R''$.
		Thus, $\Phi'(M\otimes_RR'')=\Nrd(\vphi)\Phi'(L)=\Nrd(\vphi)\Phi_L(L)=\Phi_L(M\otimes_RR'')$
		in $\mu_2(R'')$. Since $R'\to R''$ is faithfully flat, this means
		that $\Phi'(M)=\Phi_L(M)$ in $\mu_2(R')$, and we have shown that $\Phi'=\Phi_L$.
		
\medskip
		
		We turn to prove   (i) and (ii):
		
		(i)
		We apply Proposition~\ref{PR:transitive-action-of-uUf}  
		to assert the existence of a faithfully flat   $R$-algebra $R'$
		and $\vphi,\psi\in U(f_{R'})$
		such that $\vphi(L_{R'})=M_{R'}$ and $\psi(M_{R'})=K_{R'} $.
		Note that $\Phi_L(M)=\Nrd(\vphi)\Phi_L(L)=\Nrd(\vphi)$
		in $\mu_2(R')$,  and similarly, $\Phi_M(K)=\Nrd(\psi)$,
		$\Phi_L(K)=\Nrd(\psi\vphi)$ and $\Phi_M(L)=\Nrd(\vphi)^{-1}$.
		The identities
		in (i)   follow readily from these equalities
		and the fact that $\mu_2(R)$ is $2$-torsion.
		
		(ii) By Proposition~\ref{PR:transitive-action-of-uUf},
		there exists a faithfully
		flat  $R$-algebra $S$ and $\vphi\in U(f_{S})$, $\vphi'\in U(f'_S)$
		such that $\vphi L=M$ and $\vphi'L'=M'$.
		Then $\Phi_{L\oplus L'}(M\oplus M')=\Phi_{L\oplus L'}((\vphi\oplus \vphi')(L\oplus L'))=
		\Nrd(\vphi\oplus \vphi')=\Nrd(\vphi)\Nrd(\vphi')=\Phi_L(M)\cdot \Phi_{L'}(M')$.
	\end{proof}

\subsection{Conjugation and  Transfer}
\label{subsec:conjugation}

	We now recall two special instances of hermitian Morita equivalence
	that will be used repeatedly in the sequel. We address them simply 
	as ``$\mu$-conjugation'' and ``$e$-transfer''.

\medskip

	Recall that $\veps\in \Cent(A)$ satisfies
	$\veps^\sigma\veps=1$. Let $\delta\in \Cent(A)$ be another element satisfying
	$\delta^\sigma\delta=1$ and let $\mu\in\Sym_\delta(A,\sigma)\cap \units{A}$.
	One readily checks that $\Int(\mu)\circ \sigma$ is also an $R$-involution
	and $(\delta\veps)^{\Int(\mu)\circ \sigma}(\delta\veps)=1$.
	Given $(P,f)\in\Herm[\veps]{A,\sigma}$,
	define $\mu f:P\times P\to A$ by
	$(\mu f)(x,y)=\mu\cdot f(x,y)$.
	Then $\mu f$ is an $\veps\delta$-hermitian form over $(A,\Int(\mu)\circ \sigma)$
	and   
	\[(P,f)\mapsto (P,\mu f):\Herm[\veps]{A,\sigma}\to \Herm[\delta\veps]{A,\Int(\mu)\circ\sigma}\]
	is an equivalence of categories; morphisms are mapped to themselves.
	We call this equivalence \emph{$\mu$-conjugation}.  It has the following properties:
	\begin{enumerate}[label=(c\arabic*)]
		\item \label{item:conj-first} For every $R$-ring $S$, we have $\mu(f_S)=(\mu f)_S$.
		\item $U(f)=U(\mu f)$.
		If  $A$ is finite projective over $R$, then   $\uU(f)=\uU(\mu f)$, $U^0(f)=U^0(\mu f)$
		and $\uU^0(f)=\uU^0(\mu f)$.
		\item The forms $f$ and $\mu f$ have the same Lagrangians. In particular,
		$f$ is hyperbolic if and only if $\mu f$ is hyperbolic.
	\end{enumerate}
	
	Suppose further that $(A,\sigma)$ is an Azumaya $R$-algebra
	with involution. Then, by Corollary~\ref{CR:type-conjugation}(i), $(A,\Int(\mu)\circ \sigma)$ 
	is also an Azumaya $R$-algebra with involution, and the
	types of $(\sigma,\veps)$ and $(\Int(\mu)\circ \sigma,\delta\veps)$
	are equal. When $(\sigma,\veps)$ is orthogonal, we have:
	\begin{enumerate}[resume, label=(c\arabic*)]
		\item $\Lag(f)=\Lag(\mu f)$ and $\uLag(f)=\uLag(\mu f)$.
		\item \label{item:conj-and-Phi} 
		\label{item:conj-last} For every $L\in \Lag(f)=\Lag(\mu f)$,
		the maps $\Phi_L^{(f)}:\uLag(f)\to \umu_{2,R}$ 
		and $\Phi_L^{(\mu f)}:\uLag(\mu f)\to \umu_{2,R}$
		of Proposition~\ref{PR:partition-of-Lag} coincide.
	\end{enumerate}
	(Item \ref{item:conj-and-Phi} follows from the uniqueness part in Proposition~\ref{PR:partition-of-Lag}.)
		
	Items \ref{item:conj-first}--\ref{item:conj-last} 
	allow us to rephrase certain claims about $\veps$-hermitian forms
	over $(A,\sigma)$ as claims about $\delta\veps$-hermitian forms over $(A,\Int(\mu)\circ \sigma)$.
	We shall address this process as  $\mu$-conjugation  in the sequel.

\medskip

	Next, let $e\in A$ be an idempotent such that $e^\sigma=e$
	and $eA_A$ is a progenerator, or equivalently,
	$AeA=A$. 
	When $A$ is Azumaya over its center,
	this is also equivalent to having $\rrk_AeA>0$ (Proposition~\ref{PR:progenerator-iff-pos-red-rank}).
	By Morita theory, 
	the functor $\rproj{A}\to \rproj{eAe}$ sending a module
	$P$ to $Pe$ and a morphism $\vphi:P\to Q$ to $\vphi_e:=\vphi|_{Pe}$ is an equivalence;
	see \cite[Example~18.30]{Lam_1999_lectures_on_modules_rings}.
	
	Write $\sigma_e:=\sigma|_{eAe}$ and note that $(e\veps )^{\sigma_e}(e\veps )=1$.
	Given $(P,f)\in \Herm[\veps]{A,\sigma}$, let $f_e=f|_{Pe\times Pe}$.
	It is well-known, see \cite[Proposition~2.5, Remark~2.1]{First_2015_Witts_extension_theorem}  for instance, that 
	\[(P,f)\mapsto (Pe,f_e)\in \Herm[\veps]{A,\sigma}\to \Herm[e\veps ]{eAe,\sigma_e}\] 
	defines an equivalence of categories; isometries
	$\vphi$ are mapped to $\vphi_e$. We call this equivalence
	\emph{$e$-transfer}. It has the following additional properties:
	\begin{enumerate}[label=(t\arabic*)]
		\item \label{item:e-transfer-first} For every $R$-ring $S$, there is a natural isomorphism $(f_S)_e\cong (f_e)_S$.
		\item The map $\vphi\mapsto \vphi_e$
		defines an isomorphism  $U(f)\to U(f_e)$.
		If $A$ is finite projective over $R$, then it 
		also defines isomorphisms $\uU(f)\to\uU(f_e)$, $U^0(f)\to U^0(f_e)$
		and $\uU^0(f)\to \uU^0(f_e)$.
		\item \label{item:e-transfer-hyperbolic}
		The map $L\mapsto Le$ defines a bijection between
		the Lagrangians of $f$ and the Lagrangians of $f_e$. In particular,
		$f$ is hyperbolic if and only if $f_e$ is hyperbolic.
	\end{enumerate}
	
	Suppose further that $(A,\sigma)$ is an Azumaya $R$-algebra
	with involution. By Corollary~\ref{CR:type-conjugation}(ii), $(eAe,\sigma_e)$
	is also an Azumaya $R$-algebra with involution and the types
	of $(\sigma,\veps)$
	and $(\sigma_e,e\veps)$ are the same. When $(\sigma,\veps)$ is orthogonal,
	we   have:
	\begin{enumerate}[resume, label=(t\arabic*)]
		\item \label{item:e-transfer-and-Nrd} The isomorphism $\vphi\mapsto \vphi_e:\uU(f)\to \uU(f_e)$
		respects the reduced norm. 
		\item \label{item:e-transfer-Lags}
		The map $L\mapsto Le$ defines isomorphisms $\Lag(f)\to \Lag(f_e)$ and $\uLag(f)\to \uLag(f_e)$;
		its inverse is $L'\mapsto L'A$.
		\item \label{item:e-transfer-and-Phi} 
		\label{item:e-transfer-last}
		The composition $\uLag(f)\xrightarrow{{\sim}} \uLag(f_e)\xrightarrow{\Phi_{Le}}\umu_{2,R}$
		coincides
		with $\Phi_L$ (see Proposition~\ref{PR:partition-of-Lag}).
	\end{enumerate}
	(Item \ref{item:e-transfer-and-Nrd} follows from the fact
	that $\vphi\mapsto \vphi_e:\End_A(P)\to\End_{eAe}(Pe)$ is an isomorphism
	of Azumaya algebras and so preserves the reduced norm.
	Item \ref{item:e-transfer-Lags} follows from
	\ref{item:e-transfer-hyperbolic} Corollary~\ref{CR:degree-of-endo-ring}.
	Item \ref{item:e-transfer-and-Phi} follows
	from \ref{item:e-transfer-and-Nrd} and the uniqueness
	part of Proposition~\ref{PR:partition-of-Lag}.)
	Note also that $e$-transfer preserves reduced rank by Corollary~\ref{CR:degree-of-endo-ring}.
	
	Items \ref{item:e-transfer-first}--\ref{item:e-transfer-last} allow us to rephrase 
	certain claims about $\veps$-hermitian forms
	over $(A,\sigma)$ as claims about $e\veps$-hermitian forms over $(eAe,\sigma_e)$.
	We shall address this process as  $e$-transfer in the sequel.

\medskip
	
	As a first example of using conjugation and  transfer, we
	prove the following result, which provides an alternative  way  to
	evaluate $\Phi_L$.

	\begin{prp}\label{PR:computation-of-Phi}
		With the notation of Proposition~\ref{PR:partition-of-Lag},
		let $L,M\in \Lag (f)$.
		For every $\frakp\in \Spec R$,
		let $I_\frakp$ denote the intersection
		of $L(\frakp)$ and $M(\frakp)$ in $P(\frakp)$.
		Then
		\[
		\Phi_L(M)(\frakp)=(-1)^{\rrk_{A(\frakp)}L(\frakp)-\rrk_{A(\frakp)}I_\frakp} 
		\] 
		in $\mu_2(k(\frakp))$. In particular,
		if $P=L\oplus M$, then $\Phi_L(M)=(-1)^{\rrk_AL}$ in $\mu_2(R)$.
	\end{prp}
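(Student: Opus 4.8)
The plan is to reduce the identity to a computation with symmetric bilinear spaces over an algebraically closed field, using base change and hermitian Morita theory, and then to obtain the general case from the displayed special case by means of Proposition~\ref{PR:partition-of-Lag}(ii). First I would record that $\Phi_L$ is compatible with base change: for any ring homomorphism $\iota\colon R\to R'$, its restriction to the category of $R'$-rings is a $\uU(f_{R'})$-equivariant natural transformation $\uLag(f_{R'})\to\umu_{2,R'}$ carrying $L_{R'}$ to $1$ (by naturality of $\Phi_L$ along $\iota$ and $\Phi_L(L)=1$), and since $(\sigma_{R'},\veps_{R'})$ is again orthogonal with $\rrk_{A_{R'}}P_{R'}>0$, the uniqueness clause of Proposition~\ref{PR:partition-of-Lag} forces it to equal $\Phi^{(f_{R'})}_{L_{R'}}$. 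Taking $R'=k(\frakp)$ and invoking naturality once more yields $\Phi_L(M)(\frakp)=\Phi^{(f(\frakp))}_{L(\frakp)}(M(\frakp))$, so we may assume $R=k$ is a field; as $k\hookrightarrow\overline{k}$ on $\mu_2$, the same compatibility lets us assume $k=\overline{k}$ is algebraically closed. Then $\Cent(A)=k$ by orthogonality and $A$ is split, so hermitian Morita theory (cf.\ \cite[Ch.~I]{Knus_1991_quadratic_hermitian_forms}) allows us to replace $(A,\sigma)$ by $(k,\id_k)$ and $(P,f)$ by a nondegenerate symmetric bilinear space $(V,g)$, turning Lagrangians into maximal totally isotropic subspaces, $\uU(f)$ into $\uU(g)$, $\Nrd$ into the determinant, and $\rrk_A$ into $\dim_k$; a further application of the uniqueness clause (over $(k,\id_k)$) identifies the transported map with $\Phi^{(g)}_{L}$.

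Next I would prove the ``in particular'' claim, i.e.\ the case $P=L\oplus M$ (equivalently, $I=L\cap M=0$). Here $\dim_kL=\dim_kM=\tfrac12\dim_kV$, and the perfect pairing $g|_{L\times M}$ lets us identify $(V,g)$ with $(L\oplus L^*,\Hyp[1]{L})$ so that $M=L^*$. Fixing a symmetric isomorphism $b\colon L\to L^*$ (say $e_i\mapsto e_i^*$ for a basis of $L$) and setting $\vphi=\smallSMatII{0}{b^{-1}}{b}{0}\in\End_k(V)$, one checks directly that $\vphi\in\uU(g)(k)$ and $\vphi(L)=L^*=M$; hence $\Phi_L(M)=\det(\vphi)\cdot\Phi_L(L)=\det(\vphi)=(-1)^{\dim_kL}$, which after unwinding the reductions is the asserted formula.

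For the general case, set $I=L\cap M\subseteq V$. Nondegeneracy of $g$ gives $\dim_kI^\perp=\dim_kV-\dim_kI$ and $\dim_k(L+M)=2\dim_kL-\dim_kI$, and since $L,M\subseteq I^\perp$ these numbers agree, whence $L+M=I^\perp$. Choosing a complement $L=I\oplus L_1$ and realizing $(V,g)\cong(L\oplus L^*,\Hyp[1]{L})$, the induced decomposition $L^*=I^*\oplus L_1^*$ (the annihilators of $L_1$ and of $I$) produces an \emph{orthogonal} splitting $(V,g)\cong\Hyp[1]{I}\perp\Hyp[1]{L_1}$ in which $L=I\perp L_1$ and $I^\perp=I\perp(L_1\oplus L_1^*)$; the latter forces $M=I\perp M_1$ with $M_1:=M\cap(L_1\oplus L_1^*)$, and since $L_1\cap M_1\subseteq L\cap M=I$ meets $L_1$ trivially while the dimensions add up, $M_1$ is a Lagrangian of $\Hyp[1]{L_1}$ complementary to $L_1$. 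Now Proposition~\ref{PR:partition-of-Lag}(ii) together with the special case give
\[
\Phi_L(M)=\Phi^{(\Hyp[1]{I})}_{I}(I)\cdot\Phi^{(\Hyp[1]{L_1})}_{L_1}(M_1)=1\cdot(-1)^{\dim_kL_1}=(-1)^{\dim_kL-\dim_kI},
\]
using $\dim_kL_1=\dim_kL-\dim_kI$. Unwinding the reductions rewrites this as the claimed equality $\Phi_L(M)(\frakp)=(-1)^{\rrk_{A(\frakp)}L(\frakp)-\rrk_{A(\frakp)}I_\frakp}$; the concluding assertion about $\mu_2(R)$ then follows from Lemma~\ref{LM:mu-two-check}.

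The part I expect to be most delicate is the very first reduction: one has to be sure that hermitian Morita theory transports not merely the hermitian spaces but also the isometry group schemes (compatibly with reduced norms) and the sheaves $\uLag$ (equivariantly), so that the transported $\Phi_L$ still satisfies --- and is therefore pinned down by --- the hypotheses of the uniqueness clause of Proposition~\ref{PR:partition-of-Lag}. Once the reduction is correctly set up, the determinant computation in the special case and the orthogonal splitting in the general case are routine.
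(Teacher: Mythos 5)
Your proof is correct and follows essentially the same route as the paper's: reduce via base change to an algebraically closed field, transport to $(k,\id_k)$ (the paper does this explicitly by $\mu$-conjugation and $e$-transfer, whose compatibility with $\Nrd$, $\uLag$ and $\Phi_L$ is exactly the content of items (c1)--(c5) and (t1)--(t7) of \ref{subsec:conjugation}, so the ``delicate point'' you flag is already settled there), and then compute the determinant of a swap isometry of determinant $(-1)^{\dim L-\dim I}$ taking $L$ to $M$. The only cosmetic difference is that the paper builds this isometry in one step on $P=N\oplus W\oplus W'$ rather than splitting off $\Hyp[1]{I}$ and invoking the multiplicativity of $\Phi$ from Proposition~\ref{PR:partition-of-Lag}(ii).
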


	We alert the reader that $I_\frakp$
	is in general not the image of $L\cap M$ in $P(\frakp)$.	
	
	\begin{proof}
		It is enough to prove the proposition
		when $R$ is a field  and $\frakp=0$ (the last assertion
		will follow  by virtue of Lemma~\ref{LM:mu-two-check}).
		Note further that base-changing from the field
		$R$ to an algebraic closure does not affect
		the $R$-dimension of $A$, $L$, $M$ and $I_0=L\cap M$,
		and thus $\rrk_A M$, $\rrk_AL$ and $\rrk_A I_0$
		remain unchanged.
		This allows us to further restrict to the case where $R$ is an algebraically
		closed field. In particular, $[A]=0$ in $\Br R$.

		If $\sigma$ is symplectic, then $\veps=-1$ (because $(\sigma,\veps)$ is orthogonal)
		and $\deg A$
		is even. By 
		Lemma~\ref{LM:invertible-symmetric-elements},	
		there exists $\mu\in \Sym_{-1}(A,\sigma)\cap \units{A}$.
		Applying $\mu$-conjugation,
		we may replace $\sigma$, $\veps $, $f$ 
		with $\Int(\mu)\circ \sigma$, $-\veps $, $\mu f$ and
		assume that $\sigma$ is orthogonal and $\veps=1$.

		Now, by Proposition~\ref{PR:invariant-primitive-idempotent},
		there exists an idempotent $e\in A$ with $e^\sigma =e$ and $\deg eAe=1$.
		Applying $e$-transfer, we may 
		replace $A$, $\sigma$, $P$, $f$, $L$, $M$
		with $eAe$, $\sigma_e$, $Pe$, $f_e$, $Le$, $Me$ and
		assume that $A=R$ and $\sigma=\id_R$ henceforth.
		
		Write $I=I_0=L\cap M$ and fix $R$-subspaces
		$W\subseteq L$, $W'\subseteq M$
		such that $L=I\oplus W$ and $M=I\oplus W'$.
		Let $N=(W\oplus W')^\perp$ 
		and fix a basis $\{x_1,\dots,x_n\}$ to $W$.
		The kernel of $y\mapsto f(y,-):M\to L^*$ is $M\cap L^\perp=M\cap L=I$,
		so $y\mapsto f(y,-):W'\to L^*$ is injective. Since any element 
		in the image of this map vanishes on $I$,
		it follows that $y\mapsto f(y,-):W'\to W^*$ is also  injective,
		and thus bijective by conisdering $R$-dimensions.
		This means that there exists a
		basis $\{y_1,\dots,y_n\}$ to $W'$
		satisfying $f(x_i,y_j)=\delta_{ij}$.
		Consequently, $f|_{W\oplus W'}$ is unimodular, and thus  $P=N\oplus W\oplus W'$.
		Let  $\vphi\in \End_R(P)$ denote the endomorphism
		exchanging $x_i$ and $y_i$ and fixing $N$.
		Then $\vphi\in U(f)$ 
		and $\Nrd(\vphi)=(-1)^{\dim_R W }=(-1)^{\rrk_A L-\rrk_A I_0}$.
		Since   $\vphi L=\vphi(W+I)=W'+I=M$,
		we have $\Phi_L(M)=\Nrd(\vphi)$, so
		we are done.
	\end{proof}

\subsection{The Discriminant}
\label{subsec:disc}

	Classically, the discriminant of a nondegenerate symmetric bilinear space $(V,b)$ over a field
	$F$  is
	the coset in $\units{F}/(\units{F})^2$ represented
	by
	$(-1)^{\frac{1}{2}\dim V(\dim V-1)}$ times the determinant of some Gram matrix
	of $b$,
	see \cite[p.~80]{Knus_1998_book_of_involutions}. 
	If $F$ carries a nontrivial involution 
	$\sigma:F\to F$ with a fixed subfield $F_0$, then the discriminant 
	of a unimodular $1$-hermitian space 
	$(V,h)$ over $(F,\sigma)$ is defined similarly, but this time it 
	is regarded as an element of $\units{F_0}/\Nr_{F/F_0}(\units{F})$
	\cite[p.~114]{Knus_1998_book_of_involutions}.
	These definitions do not generalize naively to hermitian forms over $R$-algebras with involution $(A,\sigma)$ because
	projective $A$-modules need not be free. 
	However, in \cite[\S7, \S8, \S10]{Knus_1998_book_of_involutions}, a discriminant invariant  
	was defined for $1$-hermitian forms
	over central simple algebras with an orthogonal or unitary involution. 
	It agrees with the classical discriminant and is compatible
	with extending the base field.
	Moreover, it is invariant
	under conjugation and $e$-transfer (see \ref{subsec:conjugation}),
	because it is defined as an invariant of the adjoint involution of the hermitian space,
	which is unaffected by these operations.
	
	Suppose henceforth that $(A,\sigma)$ is an Azumaya $R$-algebra with involution.
	We will need a generalization of the discriminant defined in 
	\cite[\S7, \S8, \S10]{Knus_1998_book_of_involutions} to $\veps$-hermitian forms over $(A,\sigma)$
	when $(\sigma,\veps)$ is orthogonal or unitary.
	Unfortunately, such a definition seems missing in the literature, and
	introducing one is out of the scope of this work.
	We therefore give an ad hoc generalization of the definition 
	in 
	{\it op.\ cit.}
	to some specific $R,A,\sigma$
	that will be needed in this work, and prove that
	it has desired properties such as
	being invariant under    conjugation and $e$-transfer.
	Specifically, we shall restrict to  rings $R$ which are connected semilocal
	and consider only the cases where (1) $(\sigma,\veps)$ is orthogonal, or
	(2) $\sigma$ is unitary and $[A]=0$ in $\Br \Cent(A)$.

\medskip

	Suppose first that $(\sigma,\veps)$ is orthogonal and $R$ is connected   semilocal.
	Let $(P,f)\in \Herm[\veps]{A,\sigma}$ be a hermitian space such that $n:=\rrk_AP$ is even and positive.
	Write $E=\End_A(P)$ and let $\theta$ denote the adjoint involution
	of $f$. By Lemma~\ref{LM:invertible-symmetric-elements},
	there exists $\vphi\in \Sym_{-1}(E,\theta)\cap \units{E}$. Following \cite[\S7]{Knus_1998_book_of_involutions},
	we define the discriminant of $f$ to be
	\[
	\disc(f)=(-1)^{n/2}\Nrd(\vphi)\cdot (\units{R})^2\in \units{R}/(\units{R})^2.
	\]
	This is well-defined by the following proposition. 
	The discriminant
	of the zero form  is defined to be the trivial class $(\units{R})^2 $.

	\begin{prp}\label{PR:disc-orth-basic-props}
		Under the previous assumptions:
		\begin{enumerate}[label=(\roman*)]
			\item $\disc(f)$ is well-defined, i.e., it is independent of the choice of $\vphi$.
			\item Isomorphic forms have equal discriminants.
			The discriminant is unchanged under $\mu$-conjugation
			and $e$-transfer (see~\ref{subsec:conjugation}).
			\item If $(P',f')\in \Herm[\veps]{A,\sigma}$
			and $\rrk_AP'$ is even, then $\disc (f\oplus f')=\disc(f)\disc(f')$.
			\item If $(A,\sigma)=(R,\id_R)$, $\veps=1$
			and $f=\langle \alpha_1,\dots,\alpha_{2n}\rangle_{(R,\id_R)}$,
			then $\disc(f)\equiv (-1)^n\prod_i\alpha_i \bmod (\units{R})^2$.
			\item If $d:=\deg A$ is even,  $u\in\Sym_{-\veps}(A,\sigma)\cap \units{A}$
			and $a_1,\dots,a_n\in \Sym_{\veps}(A,\sigma)$,
			then $\disc(\langle a_1,\dots,a_n\rangle_{(A,\sigma)})\equiv (-1)^{nd/2}\Nrd(u)^n\prod_{i=1}^n\Nrd(a_i) 
			\bmod (\units{R})^2$.
		\end{enumerate}
	\end{prp}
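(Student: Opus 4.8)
The plan is to reduce each claim, via base change to residue fields, to the known facts about the discriminant of hermitian forms over central simple algebras from \cite[\S7, \S8, \S10]{Knus_1998_book_of_involutions}, using the fact (Lemma~\ref{LM:mod-Jac-decomposition-inv}) that a connected semilocal $R$ has a canonical surjection onto $\prod_{\frakm\in\Max R}k(\frakm)$, while keeping track of the extra information needed over $R$ (namely that $\Nrd$ lands in $\units{R}$, not just in $\prod\units{k(\frakm)}$). The overall strategy: first settle well-definedness (i), then deduce the functorial properties (ii)--(iii), and finally compute the two explicit formulas (iv)--(v) by diagonalizing and reducing to the rank-one case.

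For (i), suppose $\vphi,\vphi'\in\Sym_{-1}(E,\theta)\cap\units{E}$. Both $\vphi^{-1}\vphi'$ and its image in each $E(\frakm)$ are units; over the residue field $k(\frakm)$, the classical theory (\cite[Proposition~7.3]{Knus_1998_book_of_involutions} and the discussion around it, where any two skew units differ by an element of the isometry group of the symmetric bilinear form, so their reduced norms differ by a square times $\Nrd$ of an element of the isometry group) shows $\Nrd(\vphi)\equiv\Nrd(\vphi')\bmod(\units{k(\frakm)})^2$ — more precisely, one checks $\Nrd(\vphi^{-1}\vphi')$ is a square. To lift this to $R$: $\Nrd(\vphi^{-1}\vphi')\in\units R$, and it is a square modulo every $\frakm\in\Max R$; since $R$ is semilocal and $2\in\units R$, an element of $\units R$ that is a square in every residue field is a square in $R$ (by Hensel-type lifting of the idempotent splitting of $R[x]/(x^2-a)$, or directly: $R[x\where x^2=a]$ is \'etale with trivial reduction at each $\frakm$, hence $\cong R\times R$). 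So $\disc(f)$ is well-defined. The map $\Nrd:E\to R$ indeed lands in $\units R$ because $\vphi$ is a unit and $\Nrd$ is multiplicative with $\Nrd(1)=1$.

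For (ii): isometric forms have isomorphic $(E,\theta)$ respecting the choice of $\vphi$, so equal discriminants; invariance under $\mu$-conjugation and $e$-transfer follows because both operations produce a canonically isomorphic pair $(\End_A(P),\theta)$ — for $e$-transfer, $\vphi\mapsto\vphi_e$ is an isomorphism of Azumaya algebras $\End_A(P)\to\End_{eAe}(Pe)$ compatible with the adjoint involutions (Proposition~\ref{PR:type-of-adjoint}, \ref{subsec:conjugation}) and preserving $\Nrd$; and $\rrk$ is preserved (Corollary~\ref{CR:degree-of-endo-ring}), so the sign $(-1)^{n/2}$ is unchanged. For (iii): if $\vphi\in\Sym_{-1}(\End_A(P),\theta)\cap\units{}$ and $\vphi'\in\Sym_{-1}(\End_A(P'),\theta')\cap\units{}$, then $\vphi\oplus\vphi'$ is a skew unit for $f\oplus f'$ (with respect to the block-diagonal embedding $\End_A(P)\times\End_A(P')\hookrightarrow\End_A(P\oplus P')$), its reduced norm is $\Nrd(\vphi)\Nrd(\vphi')$ (reduced norm is multiplicative on block sums of Azumaya subalgebras), and $(-1)^{(n+n')/2}=(-1)^{n/2}(-1)^{n'/2}$ since both $n,n'$ are even.

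For (iv) and (v): by (iii) it suffices to compute $\disc\langle a\rangle$ or $\disc(\langle a,b\rangle)$ in the smallest cases and multiply. In (iv), $(A,\sigma)=(R,\id_R)$, $\veps=1$, $\deg A=1$ is odd so we cannot form $\disc\langle\alpha\rangle$; instead take pairs: $\langle\alpha,\beta\rangle$ over $(R,\id_R)$ has $E=\nMat R2$, and by Example~\ref{EX:adoint-inv-diag-form} the adjoint involution is $\smallSMatII xyzw\mapsto\smallSMatII{x}{\gamma z}{\gamma^{-1}y}{w}$ with $\gamma=\alpha^{-1}\beta$; the element $\smallSMatII{0}{-\gamma}{1}{0}$ is a skew unit with $\Nrd=\det=\gamma$, giving $\disc\langle\alpha,\beta\rangle=(-1)\gamma=-\alpha^{-1}\beta\equiv-\alpha\beta\bmod(\units R)^2$; now iterate over the $n$ pairs $(\alpha_{2i-1},\alpha_{2i})$ using (iii) and (ii) to get $(-1)^n\prod_i\alpha_i$. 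In (v), with $d=\deg A$ even, $u\in\Sym_{-\veps}(A,\sigma)\cap\units A$ (which exists when $d$ is even by Lemma~\ref{LM:invertible-symmetric-elements}), the form $\langle a\rangle_{(A,\sigma)}$ on $A_A$ has $E\cong A$ via right multiplication, $\theta=\Int(a^{-1})\circ\sigma$, so a skew unit is $a^{-1}u$ (check: $(a^{-1}u)^\theta=a^{-1}(a^{-1}u)^\sigma a=a^{-1}\cdot(-\veps) u^\sigma\cdot(\veps^{-1}a^\sigma)^{-1}a$... one verifies $(a^{-1}u)^{\theta}=-a^{-1}u$ using $u^\sigma=-\veps^{-1}u$ and $a^\sigma=\veps^{-1}a$); then $\Nrd(a^{-1}u)=\Nrd(a)^{-1}\Nrd(u)$, and $\disc\langle a\rangle=(-1)^{d/2}\Nrd(a)^{-1}\Nrd(u)\equiv(-1)^{d/2}\Nrd(u)\Nrd(a)\bmod(\units R)^2$; combining the $n$ factors via (iii) yields $(-1)^{nd/2}\Nrd(u)^n\prod_i\Nrd(a_i)$.

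\textbf{Main obstacle.} The genuinely non-formal point is (i): establishing that the reduced norm of a skew unit is well-defined modulo squares \emph{over} $R$, not merely fiberwise. This has two halves — the classical fiberwise statement (any two skew invertible elements of $(E(\frakm),\theta(\frakm))$ have reduced norms differing by a square, which for orthogonal $\theta$ is \cite[Proposition~7.3]{Knus_1998_book_of_involutions} and for unitary $\theta$ is the analogous statement in \S10, using that the isometry group of a unimodular hermitian form over a field is connected up to the reduced-norm/determinant character, so the skew units form a single coset modulo $\units{}^2$ after applying the character) — and the lifting lemma that a unit of a semilocal ring which is a square at every maximal ideal is a square (which uses $2\in\units R$ and \'etaleness of $R[x]/(x^2-a)$). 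Once (i) is in place, everything else is bookkeeping with reduced norms and the additivity of the sign exponent, assembled from Proposition~\ref{PR:type-of-adjoint}, Example~\ref{EX:adoint-inv-diag-form}, Corollary~\ref{CR:degree-of-endo-ring}, and Lemma~\ref{LM:invertible-symmetric-elements}.
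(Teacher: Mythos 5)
Parts (ii)--(v) of your proposal are essentially correct and close to the paper's own argument (the paper handles (iv) by invoking the proof of \cite[Proposition~7.3(3)]{Knus_1998_book_of_involutions} verbatim, and (v) with the skew unit $ua$ rather than your $a^{-1}u$; these are equivalent). The problem is part (i), which is the real content of the proposition.

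Your proof of (i) rests on the lemma: ``a unit of a connected semilocal ring $R$ with $2\in\units{R}$ which is a square in every residue field $k(\frakm)$ is a square in $R$.'' This is false. Take $R=\Z_{(5)}$ and $a=6$: then $a\in\units{R}$ and $a\equiv 1^2\bmod 5$, but $6$ is not a square in $\Z_{(5)}$ (if $p^2=6q^2$ with $p,q\in\Z$, compare the exponents of $2$ on both sides). Correspondingly, your justification breaks at the step ``$R[x\where x^2=a]$ is \'etale with trivial reduction at each $\frakm$, hence $\cong R\times R$'': a quadratic \'etale algebra over a semilocal ring that splits at every maximal ideal need not split, because idempotents of $S/(\Jac R)S$ do not lift to $S$ unless the base is henselian --- here $\Z_{(5)}[\sqrt{6}]$ is a domain. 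So knowing that $\Nrd(\vphi^{-1}\vphi')$ is a square in every $k(\frakm)$ does not make it a square in $\units{R}$, and the well-definedness of $\disc(f)$ does not follow. The paper avoids the issue entirely by running the Pfaffian argument of \cite[Proposition~7.1]{Knus_1998_book_of_involutions} directly over $R$, using the Pfaffian characteristic polynomial over general rings \cite{Knus_1988_Pfaffians_quad_forms}: one writes $\vphi'=\vphi(\vphi^{-1}\vphi')$ with $\vphi^{-1}\vphi'$ symmetric for the symplectic involution $\Int(\vphi^{-1})\circ\theta$, so that its reduced norm is exhibited as the square of an actual element of $R$ (its Pfaffian reduced norm), not merely a square fiberwise. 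Any repair of your argument must likewise produce a global square root; semilocality alone does not supply one.
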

	
	\begin{proof}
		(i) See \cite[Proposition~7.1]{Knus_1998_book_of_involutions}
		for the case $R$ is a field.
		The same proof works when $R$ is general; for the definition
		of the \emph{Pfaffian} over general rings and a  proof that its square
		is the reduced characteristic polynomial, see
		\cite[p.~3]{Knus_1988_Pfaffians_quad_forms}.

		(ii) The definition of $\disc(f)$ depends only on
		the isomorphism class of $(E,\theta)$ and this remains
		unchanged if we replace $f$ with an isomorphic form
		or perform $\mu$-conjugation or $e$-transfer.

		(iii) Write $n'=\rrk_AP'$, let $\theta'$ be the adjoint involution of
		$f'$ and let $\vphi'\in \Sym_{-1}(\End_A(P'),\theta')$.
		One readily  checks
		that 
		\[(f\oplus f')((\vphi\oplus \vphi')(x\oplus x'),y\oplus y')=
		-(f\oplus f')(x\oplus x',(\vphi\oplus \vphi')(y\oplus y'))\]
		for all $x,y\in P$ and $x',y'\in P'$.
		Thus, the adjoint involution of $f\oplus f'$
		takes $\vphi\oplus \vphi'$ to $-(\vphi\oplus \vphi')$,
		and,
		by definition,
		$\disc(f\oplus f')\equiv (-1)^{(n+n')/2}\Nrd(\vphi\oplus \vphi')\equiv 
		(-1)^{n/2}\Nrd(\vphi)(-1)^{n'/2}\Nrd(\vphi')\equiv
		\disc(f)\disc(f')$ modulo $(\units{R})^2$.
		
		(iv) The proof of \cite[Proposition~7.3(3)]{Knus_1998_book_of_involutions}
		applies verbatim.
		
		(v) By (iii), it is enough to prove the case $n=1$.
		Writing $a=a_1$,
		and identifying $\End(A_A)$ with $A$ via
		$\vphi\mapsto \vphi(1_A)$,
		the adjoint involution of $\langle a\rangle$ is  
		$\theta:=\Int(a^{-1})\circ \sigma$.
		Thus, $u a\in \Sym_{-1}(A,\theta)\cap\units{A}$
		and $\disc \langle a\rangle\equiv (-1)^{d/2}\Nrd(u a)$ modulo
		$(\units{R})^2$.
	\end{proof}

	Given a quadratic \'etale  $R$-algebra $S$ with standard involution $\theta$
	(see~\ref{subsec:quad-etale}), we define
	the norm form $n_{S/R} :S\times S\to R$  
	by $n_{S/R}(x,y)= \frac{1}{2}(x^\theta y+y^\theta x)$; it is a $1$-hermitian
	form over $(R,\id_R)$.
	When $R$ is semilocal,  there is $\lambda\in S$ such
	that $\{1,\lambda\}$ is an $R$-basis of $S$,  
	$\lambda^2\in\units{R}$ and $\lambda^\theta=-\lambda$  (Lemma~\ref{LM:quad-etale-over-semilocal}).
	Using this basis to identify $S$ with $R^2$, one finds that 
	$n_{S/R}\cong \trings{1,-\lambda^{2}}_{(R,\id)}$.
	In this case, we define
	\[
	\disc(S/R):=\disc(n_{S/R})=\lambda^2(\units{R})^2.
	\]

	Keeping our assumption that $R$ is connected semilocal, we now
	proceed with defining a discriminant for $\veps$-hermitian spaces
	over $(A,\sigma)$ when  $\sigma$ is unitary
	and $[A]=0$ in $\Br \Cent(A)$.
	Note that the reduced rank
	of any $(P,f)\in \Herm[\veps]{A,\sigma}$ 
	is constant by Corollary~\ref{CR:constant-even-ranks}(i).	
	Write $S=\Cent(A)$ and let $\Nr_{S/R}:S\to R$
	denote the norm map; it is given by $\Nr_{S/R}(x)=x^\sigma x$ because
	$\sigma|_S$ is the standard $R$-involution of $S$.
	
	Suppose first that $\deg A=1$
	and let $(P,f)\in\Herm[\veps]{A,\sigma}$. Then $A=S$ and $P$ is free. 
	Let $\{x_i\}_{i=1}^n$ be an $S$-basis of $P$ and let $g=(f(x_i,x_j))_{i,j}$
	denote the corresponding Gram matrix.
	Since $g$ is $(\sigma,\veps)$-hermitian,
	$\det g=\veps^n (\det  g)^\sigma$.
	When $n=\rrk_AP$ is even, this means that $(-\veps)^{-n/2}\det g= ((-\veps)^{-n/2}\det  g)^\sigma$,
	so $(-\veps)^{-n/2}\det g\in\units{R}$.
	In this case, the discriminant
	of $f$ is defined to be   
	\[
	\disc(f)=(- \veps)^{-n/2}\det g\cdot \Nr_{S/R}(\units{S})\in \units{R}/\Nr_{S/R}(\units{S}).
	\]
	It is easy to see that this is independent of the basis $\{x_i\}_{i=1}^n$. 
	Moreover, isomorphic forms have the same discriminant. 

	We extend this to any $A$ with $[A]=0$
	as follows:
	Use Theorem~\ref{TH:invariant-primitive-idempotent}
	to choose an idempotent $e\in A$ with $e^\sigma=e$
	and $\rrk_A eA=1$. Noting that $eAe\cong S$,
	we define 
	\[
	\disc(f):=\disc(f_e)\in \units{R}/\Nr_{S/R}(\units{S})
	\]
	for every $(P,f) \in \Herm[\veps]{A,\sigma}$ with $\rrk_AP$ even.
	Here, $f_e:Pe\times Pe\to eAe$ is the $e$-transfer of $f$, see \ref{subsec:conjugation}.
	This is well-defined by the following proposition.
	
	\begin{prp}\label{PR:disc-unitary-basic-props}
		Under the previous assumptions:
		\begin{enumerate}[label=(\roman*)]
			\item $\disc(f)$ is well-defined, i.e., it is independent of the choice of $e$.
			\item Isomorphic forms have equal discriminants.
			The discriminant is unchanged under $\mu$-conjugation
			and $e'$-transfer (see~\ref{subsec:conjugation}).
			\item If $(P',f')\in \Herm[\veps]{A,\sigma}$
			and $\rrk_AP'$ is even, then $\disc (f\oplus f')=\disc(f)\disc(f')$.
		\end{enumerate}
	\end{prp}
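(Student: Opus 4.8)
The plan is to reduce all three assertions to the case $\deg A=1$ (so $A=\Cent(A)=S$), where $\disc$ is defined outright via a Gram matrix. The reduction is by $e$-transfer: since $[A]=0$ we have $\ind A=1$, so Theorem~\ref{TH:invariant-primitive-idempotent} supplies $\sigma$-invariant full idempotents $e\in A$ with $\rrk_A eA=1$; for any such $e$, the pair $(eAe,\sigma_e)$ is an Azumaya $R$-algebra with unitary involution, $[eAe]=[A]=0$, and $\deg eAe=1$, whence $(eAe,\sigma_e)\cong(S,\sigma|_S)$ with $\sigma|_S$ the standard involution of the quadratic \'etale algebra $S$ (using Corollary~\ref{CR:type-conjugation}(ii) and the fact that a degree-$1$ Azumaya algebra over $S$ is $S$). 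As $R$ is connected, $\rrk_A P$ is constant (Corollary~\ref{CR:constant-even-ranks}(i)), so $Pe$ is $eAe$-free and the degree-$1$ definition does apply to $f_e$. Throughout I will use two elementary facts about $S$: an element of $R$ invertible in $S$ is already invertible in $R$, and $\Nr_{S/R}(r)=r^2$ for $r\in R$, so $u^{2k}\in\Nr_{S/R}(\units S)$ for every $u\in\units R$ and $k\ge 0$. The whole weight lies in part~(i).

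For part~(i), let $e,e'$ be two admissible idempotents; they have the same reduced $A$-rank $1$, so $eA\cong e'A$ as right $A$-modules over the semilocal ring $R$ (Lemma~\ref{LM:rank-determines}). Realize a pair of mutually inverse isomorphisms as multiplication by $c\in e'Ae$ and $c'\in eAe'$ with $c'c=e$ and $cc'=e'$. Then $\Psi\colon Pe\to Pe'$, $v\mapsto vc'$, is a bijection which is semilinear over the $R$-algebra isomorphism $\kappa\colon eAe\to e'Ae'$, $a\mapsto cac'$; since $\kappa$ is an $R$-algebra isomorphism between two copies of $(S,\sigma|_S)$ and the standard involution of a quadratic \'etale algebra is intrinsic, $\kappa$ intertwines $\sigma_e$ and $\sigma_{e'}$. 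Unwinding the sesquilinearity of $f$ yields, for $v,w\in Pe$,
\[
f_{e'}(\Psi v,\Psi w)=(c')^\sigma f_e(v,w)\,c'=w'\cdot\kappa\!\big(f_e(v,w)\big),\qquad w':=(c')^\sigma c',
\]
where $w'$ is a $\sigma_{e'}$-symmetric unit of $e'Ae'$, hence corresponds under $e'Ae'\cong S$ to an element of $\units R$. Thus $f_{e'}$ equals $w'$ times the $\kappa$-transport of $f_e$; transport along an $R$-algebra isomorphism of quadratic-\'etale algebras with standard involution manifestly preserves $\disc$ (the normalized Gram determinant is already $R$-valued and $\kappa$ fixes $R$), and multiplication by the central unit $w'\in\units R$ multiplies the Gram determinant by $w'^{\,n}$ with $n=\rrk_A P$ even. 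Hence $\disc(f_{e'})=w'^{\,n}\disc(f_e)=\disc(f_e)$, since $w'^{\,n}=(w'^{\,n/2})^2\in\Nr_{S/R}(\units S)$. (When $\deg A=1$ the only admissible idempotent is $1$, so there is nothing to prove; this is where the recursion bottoms out.)

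Parts~(ii) and~(iii) are then short. If $f\cong f'$, applying the $e$-transfer functor gives an isometry $f_e\cong f'_e$ over $(S,\sigma|_S)$; two isometric forms over $(S,\sigma|_S)$ have Gram determinants differing by $\Nr_{S/R}(\det M)$ for the change-of-basis matrix $M$, so $\disc(f)=\disc(f')$. For invariance under $\mu$-conjugation and $e'$-transfer I will use that these are hermitian Morita equivalences which commute, up to natural isomorphism of functors, with $e$-transfer, so it is enough to verify the claim for $\deg A=1$: there $\Int(\mu)\circ\sigma=\sigma$ (the ring $S$ is commutative), $\mu$-conjugation multiplies the Gram matrix by $\mu$ hence its determinant by $\mu^n$, and writing $\mu=\delta\mu^{\sigma|_S}$ gives $\mu^n=\delta^{n/2}\Nr_{S/R}(\mu^{n/2})$, the factor $\delta^{n/2}$ being exactly cancelled by the change of normalization $(-\veps)^{-n/2}$ to $(-\delta\veps)^{-n/2}$; while a degree-$1$ $e'$-transfer is the identity. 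Finally, for~(iii) I fix one admissible $e$ and use it for $f$, $f'$ and $f\oplus f'$; since $(P\oplus P')e=Pe\oplus P'e$ and $(f\oplus f')_e=f_e\oplus f'_e$, the Gram matrix of $(f\oplus f')_e$ is block-diagonal and $(-\veps)^{-(n+n')/2}=(-\veps)^{-n/2}(-\veps)^{-n'/2}$, yielding $\disc(f\oplus f')=\disc(f)\disc(f')$.

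The main obstacle is part~(i): establishing the displayed identity and, within it, checking that the transition isomorphism $\kappa$ respects the involutions (which I expect to dispatch via the rigidity of the standard involution of the quadratic \'etale algebra $S$) and that the unit $w'$ lies in $\units R$; the even parity of $\rrk_A P$ is precisely what turns the residual factor into a norm. The only other point needing care is the assertion, used in~(ii), that $\mu$-conjugation and $e'$-transfer commute with $e$-transfer as hermitian Morita equivalences. If one prefers to avoid tracking this, an alternative is to mimic the orthogonal case (cf.~\ref{subsec:disc}) and show that $\disc(f)$ depends only on the adjoint algebra with involution $(\End_A P,\theta_f)$, which is unchanged by isomorphism, $\mu$-conjugation and $e'$-transfer; this makes (ii) automatic, at the cost of a short argument that two $\veps$-hermitian forms on a free $S$-module with the same adjoint involution differ by multiplication by a central unit.
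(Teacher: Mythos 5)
Your parts (i) and (iii), and the first claim of (ii), are correct and essentially reproduce the paper's own argument: the paper also produces $u\in e'Ae$, $v\in eAe'$ with $uv=e'$, $vu=e$, shows $x\mapsto xv$ is an isometry from $\alpha f_e$ to $f_{e'}$ with $\alpha=v^\sigma v\in\units{R}$, and kills $\alpha^{n}$ using that $n$ is even; your $c,c',w'$ are its $u,v,\alpha$. Your treatment of $e'$-transfer in (ii) is also fine, since by transitivity of transfer a rank-$1$ idempotent inside $e'Ae'$ computes both discriminants and (i) applies.

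The genuine gap is the $\mu$-conjugation invariance in (ii). You assert that $\mu$-conjugation ``commutes, up to natural isomorphism of functors, with $e$-transfer'' and reduce to $\deg A=1$, but this commutation is exactly the nontrivial point. After conjugation the involution becomes $\tau=\Int(\mu)\circ\sigma$, and the $\sigma$-invariant idempotent $e$ used to define $\disc(f)$ need not be $\tau$-invariant, so $(\mu f)_e$ is not even defined; one must pick a different $\tau$-invariant rank-$1$ idempotent $e'$ and compare $(\mu f)_{e'}$ with $f_e$. The comparison isomorphism $x\mapsto xv$ (with $uv=e'$, $vu=e$) is an isometry only up to a scalar $\alpha\in\units{S}$ determined by $\mu v^\sigma v=\alpha e'$ --- and, unlike in part (i), $\alpha$ lies in $\units{S}$ rather than $\units{R}$, so it is not automatic that the resulting factor $\alpha^{2n}\delta^{-n}$ is a norm. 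The paper closes this by verifying first that $\mu v^\sigma v$ actually lies in $e'Ae'$, and then that $\delta(\mu v^\sigma v)^\tau=\mu v^\sigma v$, which gives $\delta\alpha^\sigma=\alpha$, i.e.\ $\alpha^2=\delta\Nr_{S/R}(\alpha)$, whence $\alpha^{2n}\delta^{-n}=\Nr_{S/R}(\alpha)^n\in\Nr_{S/R}(\units{S})$. None of this is captured by your degree-$1$ computation (which is correct as far as it goes, but only handles the case where $e$ happens to remain admissible). Your proposed alternative via adjoint involutions would also work in principle, but the ``short argument'' it defers --- that two forms with the same adjoint involution differ by a central unit, together with the effect of that unit on $\disc$ --- is essentially the same computation, so it is deferred rather than avoided.
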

	
	\begin{proof}
		(i) Let $e'\in A$ be another idempotent with $e'^\sigma=e'$
		and $\rrk_A e'A=1$. Then $eA\cong e'A$ (Lemma~\ref{LM:rank-determines}).
		Every $A$-module homomorphism $eA\to e'A$ is given
		by multiplication on the left with a unique element
		in $e'Ae$, so there exist  $u\in e'Ae$ and $v\in eAe'$
		such that $uv=e'$ and $vu=e$.
		We also see that $v^\sigma v$ is invertible in $e'Ae'=\End_A(e'A)$.
		Since $\deg e'Ae'=1$, we have $e'Ae'=Se'$.
		Write $v^\sigma v=\alpha e'$ with $\alpha\in \units{S}$.
		Then $\alpha\in \units{R}$  because $v^\sigma v$ is fixed under $\sigma$.
		Identifying $eAe$ and $e'Ae'$ with $S=\Cent(A)$,
		it is   routine to check that
		$x\mapsto xv:Pe\to Pe'$ defines an isometry
		from $\alpha f_e$ to $ f_{e'}$ (its inverse is $y\mapsto yu:Pe'\to Pe$).
		Since $\rrk_{eAe} Pe=\rrk_A P$ is even,
		$\disc(\alpha f_{e'})= \disc( f_{e'})$ and
		it follows that $\disc(f_e)=\disc( f_{e'})$.
		
		(ii) 
		If $(P,f)\cong (P',f')$, then $f_e\cong f'_e$,
		so $\disc(f)=\disc(f_e)=\disc(f'_e)=\disc(f')$.
		
		Let $\mu\in\Sym_{\delta}(A,\sigma)$, where $\delta\in\Cent(A)$
		satisfies $\delta^\sigma\delta=1$,
		and write $\tau=\Int(\mu)\circ \sigma$.
		Then $\tau$ is also unitary, and so there exists an idempotent
		$e'\in A$ with $\rrk_A e'A=1$ and $e'^\tau=e'$.
		As in the proof of (i), choose $u\in e'Ae$, $v\in eAe'$
		such that $uv=e'$ and $vu=e$.
		We have $\mu v^\sigma v,u u^\sigma \mu^{-1}\in e'Ae'$
		because  $e'\mu v^\sigma v=\mu \mu^{-1} e' \mu v^\sigma v=
		\mu e'^\sigma v^\sigma v=\mu (ve')^\sigma v=\mu v^\sigma v$ 
		and similarly
		$u u^\sigma \mu^{-1}e'=u u^\sigma \mu^{-1}$.
		Furthermore, $\mu v^\sigma v \cdot u u^\sigma \mu^{-1}=
		\mu v^\sigma e u^\sigma \mu^{-1}=
		\mu (uev)^\sigma \mu^{-1}=\mu e'^\sigma \mu^{-1}=e'^\tau=e'$,
		hence $\mu v^\sigma v\in \units{(e'Ae')}=e'\units{S}$.
		Write $\mu v^\sigma v=\alpha e'$ with $\alpha\in\units{S}$.
		As in the proof of (i), identifying $eAe$ and $e'Ae'$ with $S $, 
		we see that $x\mapsto xv:Pe\to Pe'$ is
		an isometry from $\alpha f_e$ to $(\mu f)_{e'}$.
		Thus, 
		\[\disc((\mu f)_{e'})=\alpha^{2n}\delta^{-n} \disc(f_e) ,\]
		where   $\rrk_AP=2n$.
		Straightforward
		computation shows that $\delta (\mu v^\sigma v)^\tau=\mu v^\sigma v$.
		Since $\tau|_S=\sigma|_S$, this means
		that $\delta \alpha^\sigma=\alpha$, or rather,
		$\alpha^2=\delta\Nr_{S/R}(\alpha)$.
		Thus, $\disc(\mu f)=\disc((\mu f)_{e'})=\alpha^{2n}\delta^{-n} \disc(f_e)=
		\disc(f_e)=\disc(f)$.

		Next, let $e'\in A$ be an idempotent with $e'^\sigma=e'$
		and $\rrk_A e'A>0$. Then, using Theorem~\ref{TH:invariant-primitive-idempotent},
		we can choose an idempotent $e\in e'Ae'$ with $e^\sigma=e$
		and $\rrk_{e'Ae'} eAe'=1$.
		By Corollary~\ref{CR:degree-of-endo-ring},
		$\rrk_{A}eA=\rrk_{e'Ae'}eAe'=1$, so
		$\disc(f)=\disc (f_e)=\disc(f_{e'})$.
		
		(iii) We may replace $f$ and $f'$
		with $f_e$ and $f'_e$ and assume that $A=S$.
		The statement is now straightforward.
	\end{proof}

	We continue to assume that
	$[A]=0$ and $(\sigma,\veps)$ is unitary.
	Let $S=\Cent(A)$ and $\theta=\sigma|_S$.
	Recall that with every $\alpha\in \units{R}$,
	we can associate a crossed produced $R$-algebra
	\[
	(S/R, \alpha).
	\]
	Its underlying $R$-module is the free right $S$-module
	with basis $\{1,u\}$ and its multiplication is determined
	by the product in $S$ and the rules $u^2=\alpha$
	and $su=us^\theta$ for all $s\in S$.
	It is well-known that $(S/R, \alpha)$ is a quaternion
	(i.e.\ degree-$2$) Azumaya $R$-algebra.
	Moreover, the map
	\[
	\alpha\mapsto [(S/R, \alpha)] 
	\]
	determines a group homomorphism from $\units{R}/\Nr_{S/R}(\units{S})$
	to $\Br R$; see \cite[Theorem~7.1a]{Saltman_1999_lectures_on_div_alg} or
	\cite[Lemma III.5.4.1, Corollary~III.5.4.6]{Knus_1991_quadratic_hermitian_forms}.
	
	Following \cite[\S10]{Knus_1998_book_of_involutions}, 
	given  $(P,f)\in \Herm[\veps]{A,\sigma}$
	of even reduced rank, we write
	\[
	D(f)=(S/R, \disc f)
	\]
	and define the \emph{discriminant Brauer class} of $f$
	to be $[D(f)]$.\footnote{
		When $R$ is a field, our definition of $D(f)$ does not agree
		with the definition given in \cite[\S10]{Knus_1998_book_of_involutions}.
		However, both definitions give the same Brauer class by
		\cite[Corollary~10.35]{Knus_1998_book_of_involutions}.
	}
	We remark that since $R$ is semilocal, using the discriminant Brauer class
	instead of the discriminant causes no loss of information:
	
	\begin{prp}
		Assume $R$ is semilocal, let $S$ be a quadratic \'etale
		$R$-algebra 
		and let $\alpha,\beta\in \units{R}$.
		Then $[(S/R,\alpha)]=[(S/R,\beta)]$ if and only
		if $\alpha\equiv \beta\bmod \Nr_{S/R}(\units{S})$.
	\end{prp}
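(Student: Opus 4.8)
The plan is to dispatch the ``if'' direction immediately and reduce the ``only if'' direction to a single splitting criterion. For ``if'': by the cited fact that $\alpha\mapsto[(S/R,\alpha)]$ induces a group homomorphism $\units{R}/\Nr_{S/R}(\units{S})\to\Br R$, if $\alpha\equiv\beta\bmod\Nr_{S/R}(\units{S})$ then $[(S/R,\alpha)]=[(S/R,\beta)]$. For ``only if'', I would use the same homomorphism to reduce to the claim: \emph{if $\gamma\in\units{R}$ and $[(S/R,\gamma)]=0$ in $\Br R$, then $\gamma\in\Nr_{S/R}(\units{S})$.} Granting this, from $[(S/R,\alpha)]=[(S/R,\beta)]$ we get $[(S/R,\alpha\beta^{-1})]=[(S/R,\alpha)]-[(S/R,\beta)]=0$, whence $\alpha\beta^{-1}\in\Nr_{S/R}(\units{S})$, i.e.\ $\alpha\equiv\beta\bmod\Nr_{S/R}(\units{S})$.

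To prove the claim, write $Q=(S/R,\gamma)$ with its standard presentation: $Q=S\oplus uS$ as a right $S$-module, $u^2=\gamma$, and $su=us^\theta$ (equivalently $us=s^\theta u$) for $s\in S$; in particular $S$ is an $R$-summand of $Q$ and $\rank_RQ=4$. Since $[Q]=0$, there is a progenerator $R$-module $P$ with $Q\cong\End_R(P)$, and comparing $R$-ranks gives $\rank_RP=2$. Through $S\hookrightarrow Q\cong\End_R(P)$ the module $P$ becomes a faithful $S$-module (a subalgebra of an endomorphism ring acts faithfully), and $P\in\rproj{S}$ by Lemma~\ref{LM:projective-transfer}. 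For each $\frakp\in\Spec R$, the algebra $S(\frakp)$ is \'etale of $k(\frakp)$-dimension $2$ and $P(\frakp)$ is a faithful $S(\frakp)$-module with $\dim_{k(\frakp)}P(\frakp)=2$, hence free of rank $1$ over $S(\frakp)$; therefore $\rank_SP=1$. Since $S$ is finite over the semilocal ring $R$, it is semilocal, so $P$ is a free $S$-module of rank $1$; fix an $S$-module identification $P\cong S$.

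Now view the left action of $u\in Q$ on $P$. The relation $us=s^\theta u$ says $u(sp)=s^\theta\,u(p)$ for $s\in S$, $p\in P$, so under $P\cong S$ the map $u$ is $x\mapsto cx^\theta$ with $c:=u(1)\in S$, and consequently $u^2$ is $x\mapsto c\,c^\theta x=\Nr_{S/R}(c)\,x$. But $u^2=\gamma\cdot 1_Q$ acts on $P$ as multiplication by $\gamma$, so $\gamma=\Nr_{S/R}(c)$; and $c\in\units{S}$ because $c^\theta c=\gamma\in\units{R}$. This gives $\gamma\in\Nr_{S/R}(\units{S})$, proving the claim.

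The only genuinely nontrivial point is the rank count showing $P$ is free of rank $1$ over $S$, and the semilocal hypothesis is used precisely there; the rest is bookkeeping with the crossed product presentation of $Q$. As an alternative to the fibre-wise argument one could combine Corollary~\ref{CR:rank-over-max-etale} (which, since $\rank_RS=2=\deg Q$, already yields $S=\Cent_Q(S)$) with Proposition~\ref{PR:centralized-in-Az-alg}(i) to compute $\rank_SP$ directly, taking mild care that the $S$-module structures in play agree (after the standard identification $Q^\op\cong Q$).
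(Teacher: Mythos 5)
Your proof is correct, but it takes a genuinely different route from the paper's. You reduce, via the group homomorphism $\units{R}/\Nr_{S/R}(\units{S})\to\Br R$, to showing that $[(S/R,\gamma)]=0$ forces $\gamma\in\Nr_{S/R}(\units{S})$, and you prove this by exhibiting a splitting module: $Q=(S/R,\gamma)\cong\End_R(P)$ with $\rank_RP=2$, showing $P$ is free of rank one over $S$ (this is exactly where semilocality enters), and reading off $\gamma=\Nr_{S/R}(u(1))$ from the $\theta$-semilinearity of the action of $u$. The paper instead works with both algebras at once: it invokes Roy's theorem that Azumaya algebras of equal degree and equal Brauer class over a semilocal ring are isomorphic, upgrades the resulting isomorphism $(S/R,\alpha)\to(S/R,\beta)$ to one fixing $S$ pointwise (again by a semilocal rank argument via Lemma~\ref{LM:rank-determines}), and then compares the anti-commuting parts $uS$ and $vS$ to obtain $\alpha=\Nr_{S/R}(s)\beta$. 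Your version trades the nontrivial external input (Roy's isomorphism theorem) for the essentially definitional fact that a trivial Brauer class is an endomorphism algebra of a progenerator; the only cost is the small amount of bookkeeping needed to justify that $P(\frakp)$ is faithful over $S(\frakp)$ — this does follow, because $S$ is an $R$-module summand of $Q$, so $S(\frakp)\embeds Q(\frakp)\cong\End_{k(\frakp)}(P(\frakp))$, but you assert it without drawing that consequence explicitly, and it is worth a sentence since faithfulness is not in general preserved by base change.
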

	
	\begin{proof}
		We only need to check the ``only if'' part.
		Write $A=(S/R, \alpha)$, $B=(S/R, \beta)$
		and let $\theta$ denote the standard $R$-involution of $S$.
		We have $A=S\oplus uS$ with $u^2=\alpha$
		and $B=S\oplus vS$ with $v^2=\beta$.
		Since $R$ is semilocal and $\deg A=\deg B$,
		there exists an $R$-algebra isomorphism $\iota:A\to B$, see
		\cite[Corollary~3.3]{Roy_1967_derivations_of_Azumaya_algebras}. 
		
		We claim that there is an isomorphism $\psi:A\to B$
		which restricts to the identity on $S$.
		To see this, view $A$ as a right $A_S$-module
		via $x\cdot (a\otimes s)=sxa$ and
		$B$ as a right $A_S$-module via
		$y\cdot (a\otimes s)=sy\cdot \iota a$
		($x\in A$, $y\in B$, $a\in A$, $s\in S$).
		Since $\rank_S ({}_AA)=2=\rank_S ({}_BB)$,
		we have $\rrk_{A_S} A=\rrk_{A_S} B$.
		By Lemma~\ref{LM:rank-determines},
		there exists an $A_S$-module isomorphism
		$\xi:A\to B$. It induces an isomorphism $\Int(\xi):\End_A(A_A)\to \End_B(B_B)$.
		Identifying $\End_A(A_A)$ with $A$
		and $\End_B(B_B)$ with $B$ via $\vphi\mapsto \vphi(1)$,
		we get an isomorphism $\psi:A\to B$.
		Now, for all $s\in S$,
		we have $\psi(s)=(\xi \circ [x\mapsto sx]\circ\xi^{-1})(1_B)=\xi(s\cdot\xi^{-1}(1_B))=
		s\cdot \xi^{-1}\xi(1_B)=s$.
		
		Let $E_A=\{a\in A\suchthat \text{$sa=a s^\theta$ for all $s\in S$}\}$
		and define $E_B$ similarly.
		One readily checks that $E_A=uS$ and $E_B=vS$.
		Since $\psi$ fixes $S$,
		we have $\psi(E_A)\subseteq E_B$.
		Thus, $\psi(u)=vs$ for some $s\in\units{S}$. 
		Now, $\alpha =u^2=\psi(u^2)=(vs)^2=\Nr_{S/R}(s)v^2= \Nr_{S/R}(s)\beta$.
	\end{proof}

\section{An   Octagon of Witt Groups}
\label{sec:octagon}	

In this section, we introduce an $8$-periodic chain complex  --- an octagon, for short ---
of Witt groups of Azumaya
algebras with involution, generalizing a similar octagon defined by 
Grenier-Boley and Mahmoudi   for central simple
algebras with involution  \cite{Grenier_2005_octagon_of_Witt_grps}.
By the end of Section~\ref{sec:completion-of-proof}, we will show that
this octagon is exact when the base ring $R$ is  semilocal.

\subsection{The Octagon}
\label{subsec:octagon}

	Recall that $R$ denotes a ring with $2\in\units{R}$.
	Suppose we are given the following data:
	\begin{enumerate}[label=(G\arabic*)]
		\item \label{item:octagon-given-first}
		\label{item:octagon:Az-alg-with-inv}
		$(A,\sigma)$ is an Azumaya $R$-algebra with involution (see \ref{subsec:Az-alg-inv}),
		\item $\veps\in \Cent(A)$ satisfies $\veps^\sigma \veps=1$,
		\item \label{item:octagon-given-last} $\lambda,\mu \in \units{A} $ satisfy
		$\lambda^\sigma=-\lambda$, $\mu^\sigma=-\mu$, $\lambda\mu=-\mu\lambda$
		and  $\lambda^2\in \Cent(A)$.
	\end{enumerate}
	Define the following:
	\begin{enumerate}[label=(N\arabic*)]
	\item \label{item:octagon-notation-first} $S=\Cent(A)$,
	\item  $B$ is the commutant of $\lambda$ in $A$,
	\item  $T=\Cent(B)$,
	\item  $\tau_1:=\sigma|_B$,
	\item  $\tau_2:=\Int(\mu^{-1})\circ \sigma|_B$, i.e.\ $x^{\tau_2}=\mu^{-1}x^\sigma \mu$
	\end{enumerate}
	Note that $R\subseteq S\subseteq T\subseteq B\subseteq A$ and $\tau_1,\tau_2$
	are $R$-linear involutions on $B$. Also, $\lambda^2\in R$ because $(\lambda^2)^\sigma=(-\lambda)^2=\lambda^2$ and $R=\Sym_1(\Cent(A),\sigma)$.
	
	\begin{lem}\label{LM:octagon-basic-properties}
		In the previous notation, the following hold:
		\begin{enumerate}[label=(\roman*)]
			\item \label{item:T-quad-etale} 
			$T$ is a quadratic \'etale $S$-algebra,
			$\{1,\lambda\}$ is an $S$-basis of $T$
			and $\rank_{T}(A_A)$
			is constant along the fibers of $\Spec T\to \Spec S$.
			\item \label{item:B-Azumaya}  $B$ is an Azumaya $T$-algebra,
			$[B]=[A\otimes_S T]$ in $\Br T$ and
			$\deg B=\frac{1}{2}\iota\deg A$, where $\iota:S\to T$
			is the inclusion map.
			\item \label{item:B-plus-muB} $A=B\oplus \mu B$, $\mu B= B\mu$ and $\mu^2\in B$.
		\end{enumerate}
	\end{lem}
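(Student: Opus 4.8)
The plan is to settle the decomposition in (iii) and the ``constancy'' clause of (i) --- which carry the real content --- and then to read off the remaining assertions from Proposition~\ref{PR:centralized-in-Az-alg}.

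\emph{Identifying $T$.} First I would note $\lambda^2\in\units{R}$: it lies in $R$ by the remark preceding the lemma, and $\lambda^2 A=A$ since $\lambda\in\units{A}$, so $\lambda^2\in\units{R}$ because $A$ is faithfully flat over $R$. Next, $\{1,\lambda\}$ is $S$-linearly independent: if $s_0+s_1\lambda=0$ with $s_0,s_1\in S=\Cent(A)$, then left-multiplying by $\mu$ and using that $s_0,s_1$ are central together with $\mu\lambda=-\lambda\mu$ yields $(s_0-s_1\lambda)\mu=0$, hence $s_0-s_1\lambda=0$ as $\mu\in\units{A}$; comparing with the original relation forces $s_0=s_1=0$. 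Thus $C:=S+S\lambda$ is $S$-free of rank $2$ and, as an $S$-algebra, $C\cong S[x]/(x^2-\lambda^2)$, which is quadratic \'etale over $S$ since $\lambda^2\in\units{S}$ and $2\in\units{R}$; in particular $C$ is a finite \'etale $S$-subalgebra of $A$. As $S$ is central in $A$ we have $\Cent_A(C)=\Cent_A(\lambda)=B$, so Proposition~\ref{PR:centralized-in-Az-alg} (over the base ring $S$, with \'etale subalgebra $C$) shows $B$ is Azumaya over $C$ and $[B]=[A\otimes_S C]$ in $\Br C$. Being Azumaya over $C$ forces $\Cent(B)=C$, so $T=\Cent(B)=C=S+S\lambda$; this gives the first two assertions of (i) and the first two of (ii), save for the degree formula.

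\emph{Part (iii).} The relations $\mu^2\lambda=\lambda\mu^2$ (apply $\mu\lambda=-\lambda\mu$ twice) and $\mu\lambda\mu^{-1}=-\lambda$ give $\mu^2\in\Cent_A(\lambda)=B$ and $\mu B\mu^{-1}=\Cent_A(-\lambda)=B$, so $\mu B=B\mu$. For $A=B\oplus\mu B$: if $x=\mu y$ with $x,y\in B$, then, using $x\lambda=\lambda x$, $y\lambda=\lambda y$ and $\mu\lambda=-\lambda\mu$, one gets $\lambda\mu y=0$, so $x=\mu y=0$; hence $B\cap\mu B=0$ and the $S$-linear map $B\oplus\mu B\to A$ is injective. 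Since $B$ (being Azumaya over the finite \'etale $S$-algebra $T$) and $A$ are finite projective over $S$, and $\mu B\cong B$ as $S$-modules, it suffices to check surjectivity after $-\otimes_S k(\frakp)$ for each $\frakp\in\Spec S$; this reduces, after further base change to an algebraic closure $\bar k$ of $k(\frakp)$, to the case $\bar A:=A\otimes_S\bar k\cong\nMat{\bar k}{n}$, with $\bar B:=\Cent_{\bar A}(\bar\lambda)$ the image of $B$ (Lemma~\ref{LM:center-base-change}). There $\bar\lambda$ is not scalar (it anticommutes with the invertible $\bar\mu$, so $\bar\lambda=c\cdot 1$ would force $c=0$) and $\bar\lambda^2\in\units{\bar k}$, hence $\bar\lambda$ is diagonalisable with the two eigenvalues $\pm\sqrt{\lambda^2}$; these occur with equal multiplicities $n/2$ because $\bar\mu$ interchanges the eigenspaces. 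In a basis adapted to this splitting, $\bar B$ is the algebra of block-diagonal matrices and $\bar\mu$ is block-antidiagonal with invertible off-diagonal blocks, so $\bar B+\bar\mu\bar B=\nMat{\bar k}{n}=\bar A$; hence $B\oplus\mu B\to A$ is onto by Nakayama, and therefore an isomorphism.

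\emph{The constancy clause and the degree formula.} By (iii), left multiplication by $\mu$ is an $S$-linear isomorphism $B\to\mu B$ compatible with the right $B$-action, so $A\cong B^2$ as right $T$-modules, whence $\rank_T(A_A)=2(\deg B)^2$. It thus remains to see that $\deg B$ is constant along the fibres of $\Spec T\to\Spec S$. Fix $\frakp\in\Spec S$; the fibre over $\frakp$ is a single point when $T\otimes_S k(\frakp)$ is a field (nothing to prove), and otherwise consists of two points $\frakq_1,\frakq_2$ with residue field $k(\frakp)$, attached to the central idempotents $e_1,e_2$ of $T\otimes_S k(\frakp)=k(\frakp)\times k(\frakp)$. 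Writing $\bar B=B\otimes_S k(\frakp)=\Cent_{\bar A}(\bar\lambda)$, one has $B\otimes_T k(\frakq_i)=e_i\bar B$, a central simple $k(\frakp)$-algebra; and since $\bar\mu\bar\lambda\bar\mu^{-1}=-\bar\lambda$, conjugation by $\bar\mu$ exchanges $e_1,e_2$ while preserving $\bar B$, hence maps $e_1\bar B$ isomorphically onto $e_2\bar B$, so $\deg B(\frakq_1)=\deg B(\frakq_2)$. Therefore $\rank_T(A_A)$ is constant along the fibres of $\Spec T\to\Spec S$ --- the third assertion of (i) --- and Proposition~\ref{PR:centralized-in-Az-alg}(i) now applies, giving $\deg B\cdot\iota\rank_S T=\iota\deg A$; as $\rank_S T=2$, this yields $\deg B=\tfrac12\iota\deg A$, completing (ii). I expect the genuine obstacle to be the two residue-field arguments --- surjectivity in (iii) and the equality $\deg B(\frakq_1)=\deg B(\frakq_2)$ --- both of which encode the geometric fact that $\mu$ interchanges the two ``halves'' of $A$ determined by $\lambda$; the cleanest route is to descend to residue fields (and, for (iii), to an algebraic closure) and exploit the symmetry provided by $\Int(\mu)$, the rest being formal manipulation with $\lambda,\mu$ and a bookkeeping appeal to Proposition~\ref{PR:centralized-in-Az-alg}.
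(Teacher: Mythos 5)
Your argument is correct and follows the same overall skeleton as the paper's proof (identify $T=S\oplus\lambda S$ as quadratic \'etale via the anti-commutation with $\mu$, invoke Proposition~\ref{PR:centralized-in-Az-alg}, then establish the decomposition), but the two substantive sub-steps are handled by genuinely different and heavier means. For the surjectivity of $B\oplus\mu B\to A$ you pass to algebraically closed residue fields, diagonalize the image of $\lambda$, and conclude by Nakayama; the paper instead writes the global identity $a=\tfrac12(a+\lambda^{-1}a\lambda)+\tfrac12(a-\lambda^{-1}a\lambda)$, whose first summand commutes with $\lambda$ and whose second anti-commutes (using $\lambda^2\in\Cent(A)$), so that $A=B+E$ with $E$ the anti-commutant of $\lambda$, and $E=\mu B$ follows from $\mu B\subseteq E$ and $\mu^{-1}E\subseteq B$ --- no base change at all. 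For the constancy of $\rank_T(A_A)$ along the fibers of $\Spec T\to\Spec S$, you first prove (iii) and then show $\deg B$ is fiber-constant by letting $\Int(\mu)$ swap the two idempotents in the split fibers; the paper obtains constancy before knowing anything about $B$, simply because $\Int(\mu)$ is a $\theta$-semilinear automorphism of $A$ as a right $T$-module ($\theta$ the standard involution of $T$ over $S$), whence $\rank_T(A_A)=\theta\,\rank_T(A_A)$ and $\theta$ acts transitively on the fibers. Your reordering is logically sound, since your proof of (iii) uses only the Azumaya-ness of $B$ over $T$, which Proposition~\ref{PR:centralized-in-Az-alg} supplies without the constancy hypothesis; the trade-off is that the paper's two one-line observations deliver the same conclusions with far less machinery, while your residue-field computation makes the geometric picture (the two eigenspaces of $\lambda$ interchanged by $\mu$) explicit.
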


	\begin{proof}
		Write $T'=S[\lambda]$.
		Since $\lambda^2\in S$, we have $T'=S+\lambda S$.
		If $a\in S\cap \lambda S$,
		then $a $ commutes and anti-commutes with $\mu$,
		so $2a\mu=0$ and  $a=0$.
		Since $\ann_S\lambda=0$, this means that 
		$T'\cong S[x]/(x^2-\lambda^2)$.
		Thus, $T'$ is a quadratic \'etale $S$-algebra (see~\ref{subsec:quad-etale}).

		We claim that $\rank_{T'}A_{A}$
		is constant along the fibers of $\Spec T'\to \Spec S$.
		To see this, note that $\mu \lambda \mu^{-1}=-\lambda$,
		hence $\Int(\mu)|_{T'}$ coincides with the standard $S$-involution of $T'$,
		call it $\theta$.
		This involution acts transitively on every fiber  of $\Spec T'\to \Spec S$,
		so it is enough to show that $\rank_{T'}A_{A}=\theta \rank_{T'}A_{A}$.
		However, this follows from the fact that $\Int(\mu):A\to A$ 
		defines $\theta$-linear isomorphism from $A$,
		viewed as a right $T'$-module, to itself.

		Now, by Proposition~\ref{PR:centralized-in-Az-alg}, 
		$B=\Cent_A(T')$ is an Azumaya $T'$-algebra, $[B]=[A\otimes_S T']$,
		and $2\deg B=\deg B\cdot\iota \rank_S T'=\iota\deg A$,
		where $\iota:S\to T'$ is the inclusion map.
		Since   $T=\Cent(B)=T'$, we have established \ref{item:T-quad-etale}
		and \ref{item:B-Azumaya}.

		To prove \ref{item:B-plus-muB},
		let $E$ denote the set of elements of $A$
		which anti-commute with $\lambda$.
		One readily checks that $\mu B\subseteq E$ and $\mu^{-1}E\subseteq B$,
		hence $E= \mu B$. Likewise, $E=B\mu$, so $\mu B=B\mu$.
		Furthermore,   $B\cap \mu B=B\cap E$ consists of
		elements which commute
		and anti-commutes with $\lambda$, so $B\cap \mu B=0$
		(because   $2\lambda\in\units{A}$).
		Finally, every $a\in A$ can be written
		as $\frac{1}{2}(a+\lambda^{-1}a\lambda)+\frac{1}{2}(a-\lambda^{-1}a\lambda)$.
		It is easy to check, using $\lambda^2\in\Cent(A)$, that $	a+\lambda^{-1}a\lambda\in B$
		and $a-\lambda^{-1}a\lambda\in E$, so $A=B+E=B+\mu B$. We conclude
		that $A=B\oplus \mu B$.	Finally, $\mu^2\in B$ because $\mu^2\lambda=-\mu\lambda\mu=\lambda\mu^2$.
	\end{proof}
	
	Before proceeding, let us present 
	some examples where  
	\ref{item:octagon-given-first}--\ref{item:octagon-given-last} hold.
	
	\begin{example}
		(i) 
		Let $\alpha,\beta\in \units{R}$.
		Take $A$ to be the quaternion Azumaya $R$-algebra
		$R\trings{\lambda,\mu\where \lambda\mu=-\mu\lambda,\lambda^2=\alpha,\mu^2=\beta}$
		and $\sigma$ to be the $R$-involution of $A$ determined by $\lambda^\sigma=-\lambda$
		and
		$\mu^\sigma=-\mu$. Then $A,\sigma,\lambda,\mu$ and any
		$\veps\in \mu_2(R)$ satisfy \ref{item:octagon-given-first}--\ref{item:octagon-given-last}.
		In this case, $\{1,\lambda,\mu,\lambda\mu\}$ is an $R$-basis of $A$,
		and it is routine to check that
		$S=R$,
		$B=R+\lambda R=R[\lambda]$, 
		$\tau_1$ is the standard $R$-involution of $T=B$,
		and $\tau_2=\id_B$.
		
		(ii) Write $A_0,\sigma_0,\lambda_0,\mu_0$ for $A,\sigma,\lambda,\mu$
		defined in (i) and let $(A_1,\sigma_1)$
		be another Azumaya $R$-algebra with involution. Then
		$(A,\sigma):=(A_0\otimes A_1,\sigma_0\otimes \sigma_1)$
		is   also Azumaya over $R$ because
		$\Cent(A)=\Cent(A_0)\otimes \Cent(A_1)=R \otimes \Cent(A_1)$
		(see \cite[Propositio~5.3.10(ii)]{Rowen_1988_ring_theory_II} for the first equality),
		which means that $R=\{a\in \Cent(A)\suchthat a^{\sigma}=a\}$.
		Then $A,\sigma,\lambda:=\lambda_0\otimes 1,\mu:=\mu_0\otimes 1$
		and any $\veps\in\mu_2(R)$
		satisfy \ref{item:octagon-given-first}--\ref{item:octagon-given-last}.
		Writing $S_1=\Cent(A_1)$ and letting $\theta$
		denote the standard $R$-involution of
		$R[\lambda_0]$,
		we have $S=R\otimes S_1$, $B=R[\lambda_0]\otimes A_1$, $T=R[\lambda_0]\otimes S_1$
		(Lemma~\ref{LM:center-base-change}), $\tau_1=\theta\otimes\sigma_1$,
		and $\tau_2=\id_{R[\lambda_0]}\otimes \sigma_1$.
	\end{example}
	
	Using Lemma~\ref{LM:octagon-basic-properties}\ref{item:B-plus-muB}, we can define the following maps:
	\begin{enumerate}[label=(N\arabic*), start=6]
	\item  $\pi_1,\pi_2:A\to B$  are defined by
	$\pi_i(b_1+\mu b_2)=b_i$
	($b_1,b_2\in B$, $i\in \{1,2\}$).
	\end{enumerate}
	(For a definition of $\pi_1$ not involving $\mu$, see Lemma~\ref{LM:dfn-of-pi-A-B}(i)
	below.)
	We now introduce four functors:
	\begin{enumerate}[label=(N\arabic*), resume]
	\item For $i=1,2$, let 
	$\pi^{(\veps)}_i \! : \Herm[\veps]{A,\sigma}\to \Herm[(-1)^{i+1}\veps]{B,\tau_i}$
	be defined by $\pi_i^{( \veps)} (P,f)=(P,\pi_i f)$, where $\pi_i f=\pi_i\circ f$;
	morphisms are mapped to themselves.
	\item For $i=1,2$, let $\rho_i^{(\veps)}:\Herm[\veps]{B,\tau_i}\to \Herm[-\veps]{A,\sigma}$
	be defined by
	$\rho_i^{( \veps)}(Q,g)=({Q\otimes_BA},\rho_i g)$, 
	where $\rho_ig :(Q\otimes_BA)\times (Q\otimes_BA)\to A$ is determined by
	\begin{align*}
	(\rho_1 g)(x\otimes a,x'\otimes a')&= a^\sigma \lambda g(x,x')a',\\
	(\rho_2 g)(x\otimes a,x'\otimes a')&= a^\sigma (\lambda\mu) g(x,x')a'
	\end{align*}
	($x,x'\in Q$, $a,a'\in A$); for a morphism $\vphi$, set  $\rho_i^{( \veps)}\vphi =\vphi\otimes_B\id_A$.
	\end{enumerate}
	When there is no
    risk of confusion,
	we will  drop the superscript ``$(\veps)$''.
	The functors $\pi_1,\pi_2,\rho_1,\rho_2$
	are well-defined by the following lemma.

	\begin{lem}\label{LM:maps-in-oct-are-well-def}
		The assignments   $\pi_1^{(\veps)}$, $\pi_2^{(\veps)}$,
		$\rho_1^{(\veps)}$, $\rho_2^{(\veps)}$ are functors.
		Moreover, they take hyperbolic hermitian forms to hyperbolic hermitian forms.
	\end{lem}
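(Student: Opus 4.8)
The plan is to realize each of the four assignments as a composition of operations that have already been set up in Sections~\ref{sec:hermitian}, namely the base-change functor of \ref{subsec:herm-base-change} and the conjugation equivalences of \ref{subsec:conjugation}, so that functoriality and preservation of hyperbolicity are inherited from those building blocks. Functoriality is in fact formal once well-definedness on objects is known: a morphism is sent to itself (for $\pi_i^{(\veps)}$) or tensored with $\id_A$ (for $\rho_i^{(\veps)}$), so composition and identities are respected; the real content is that each assignment lands in the stated category and carries hyperbolic forms to hyperbolic forms.

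For the $\rho_i^{(\veps)}$, I would proceed as follows. By Lemma~\ref{LM:octagon-basic-properties}\ref{item:T-quad-etale}, $\lambda\in\Cent(B)=T$, and $\lambda$ is a unit of $T$ (its inverse centralizes $\lambda$); since $\lambda^{\tau_1}=\lambda^\sigma=-\lambda$, ``$\lambda$-conjugation'' is an equivalence $\Herm[\veps]{B,\tau_1}\to\Herm[-\veps]{B,\tau_1}$, $(Q,g)\mapsto(Q,\lambda g)$, and composing it with base change along the involution-preserving inclusion $(B,\tau_1)\hookrightarrow(A,\sigma)$ — legitimate with $\delta=-\veps$, as $(-\veps)^{\tau_1}(-\veps)=\veps^\sigma\veps=1$ — reproduces $\rho_1^{(\veps)}$ on the nose. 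For $\rho_2^{(\veps)}$, set $\sigma':=\Int(\mu^{-1})\circ\sigma$; this is an involution of $A$ with $\sigma'|_B=\tau_2$, and $(A,\sigma')$ is an Azumaya $R$-algebra with involution by Corollary~\ref{CR:type-conjugation}(i). Here $\lambda^{\tau_2}=-\mu^{-1}\lambda\mu=\lambda$ (from $\lambda\mu=-\mu\lambda$), so ``$\lambda$-conjugation'' gives $(Q,g)\mapsto(Q,\lambda g)\in\Herm[\veps]{B,\tau_2}$; base change along $(B,\tau_2)\hookrightarrow(A,\sigma')$ (with $\delta=\veps$, using $\veps^{\tau_2}=\veps^\sigma$) lands in $\Herm[\veps]{A,\sigma'}$; and finally ``$(-\mu)$-conjugation'', using $-\mu\in\Sym_{-1}(A,\sigma')\cap\units{A}$ and $\Int(-\mu)\circ\sigma'=\sigma$, lands in $\Herm[-\veps]{A,\sigma}$ and, after simplifying the form value $-\mu\cdot(\mu^{-1}a^\sigma\mu)\cdot\lambda g(x,x')\cdot a'=-a^\sigma\mu\lambda g(x,x')a'=a^\sigma\lambda\mu g(x,x')a'$, returns precisely $\rho_2^{(\veps)}$. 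Since base change and conjugation both send hyperbolic forms to hyperbolic forms, so do $\rho_1^{(\veps)}$ and $\rho_2^{(\veps)}$.

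The $\pi_i^{(\veps)}$ I would handle directly, the underlying observation being that $A=B\oplus\mu B\cong B^{2}$ is free of rank two as a right $B$-module (Lemma~\ref{LM:octagon-basic-properties}\ref{item:B-plus-muB}, $\mu\in\units{A}$), so any $P\in\rproj{A}$ is finite projective over $B$. Biadditivity of $\pi_i f$ is clear, and writing $f(x,y)=c_1+\mu c_2$ with $c_1,c_2\in B$ and using $\mu^\sigma=-\mu$, $\lambda\mu=-\mu\lambda$, $\mu B\mu^{-1}=B$, $\mu^2\in\units{B}$ together with $\tau_2=\Int(\mu^{-1})\circ\sigma|_B$, a short computation gives $\pi_i f(xb,yb')=b^{\tau_i}\pi_i f(x,y)\,b'$ and $\pi_i f(x,y)=(-1)^{i+1}\veps\,(\pi_i f(y,x))^{\tau_i}$ (the sign appearing because $f(y,x)^\sigma=c_1^\sigma-\mu c_2^{\tau_2}$). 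Thus $(P,\pi_i f)$ is a $(-1)^{i+1}\veps$-hermitian space over $(B,\tau_i)$. The one nontrivial point is unimodularity of $\pi_i f$: the adjoint map $x\mapsto\pi_i f(x,-)\colon P\to P^{*}$ (the $(B,\tau_i)$-dual) factors as $P\xrightarrow{\ \sim\ }\Hom_A(P,A)\xrightarrow{\ \phi\mapsto\pi_i\circ\phi\ }\Hom_B(P,B)$, the first arrow being the isomorphism from unimodularity of $f$ over $(A,\sigma)$, and the second being a right $B$-module map into $P^{*}$ because $\pi_i$ is right $B$-linear and satisfies $\pi_i(b^\sigma z)=b^{\tau_i}\pi_i(z)$ for $b\in B$, $z\in A$. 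Since $P\mapsto\bigl(\Hom_A(P,A)\xrightarrow{\pi_i\circ-}\Hom_B(P,B)\bigr)$ is a natural transformation of additive contravariant functors, it suffices to check it is an isomorphism for $P=A_A$, additivity and passage to summands of $A^{n}$ then doing the rest; and for $P=A_A$, in the $B$-basis $\{1,\mu\}$ of $A$ the map becomes $(b_1,b_2)\mapsto(b_1,\mu b_2\mu)$ when $i=1$ and $(c_1,c_2)\mapsto(c_2,\mu^{-1}c_1\mu)$ when $i=2$, both bijections because $b\mapsto\mu b\mu$ and $b\mapsto\mu^{-1}b\mu$ are automorphisms of $B$. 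Hence $\pi_i f$ is unimodular. For hyperbolicity: if $P=L\oplus M$ with $f(L,L)=f(M,M)=0$, then $L$ and $M$ are $B$-summands of $P$ on which $\pi_i f$ vanishes, so $\pi_i f$ is hyperbolic by the criterion of \ref{subsec:Witt-grp}.

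I expect the main obstacle to be exactly the unimodularity of $\pi_i f$: everything else is either formal or a routine unwinding of the four defining relations, but this step genuinely requires identifying the $(B,\tau_i)$-dual of $P$ with $\Hom_A(P,A)$ via $\pi_i$, and the key to doing so cleanly is the reduction to $P=A_A$ together with the explicit rank-two right $B$-basis $\{1,\mu\}$ of $A$.
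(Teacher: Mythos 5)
Your proof is correct. For $\rho_1^{(\veps)}$ and $\rho_2^{(\veps)}$ it is essentially the paper's argument: the paper likewise writes $\rho_1 g=\rho(\lambda g)$ for the base-change functor $\rho:(B,\tau_1)\to(A,\sigma)$, and writes $\rho_2 g=(\lambda\mu)(\rho' g)$ with $\rho':(B,\tau_2)\to(A,\Int((\lambda\mu)^{-1})\circ\sigma)$ — your two-step factorization through $\sigma'=\Int(\mu^{-1})\circ\sigma$ followed by $(-\mu)$-conjugation is the same idea with one extra intermediate stop, and your simplification of the resulting form value checks out. Where you genuinely diverge is the unimodularity of $\pi_i f$. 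The paper proves injectivity of the adjoint by noting that $f(x,P)$ is a right ideal of $A$ contained in $\ker\pi_i=\mu B$, hence zero, and proves surjectivity by exhibiting an explicit section $\phi\mapsto\hat{\phi}$ of $\phi\mapsto\pi_i\circ\phi$ (namely $\hat{\phi}x=\phi x+\phi(x\mu)\mu^{-1}$ for $\pi_1$ and $\hat{\phi}x=\mu\phi(x\mu)\mu^{-1}+\mu\phi x$ for $\pi_2$). You instead observe that $\phi\mapsto\pi_i\circ\phi:\Hom_A(-,A)\to\Hom_B(-,B)$ is a natural transformation of additive contravariant functors and reduce to $P=A_A$, where the $B$-basis $\{1,\mu\}$ makes the map visibly bijective. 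Both are valid; the paper's explicit inverse is in effect your $P=A_A$ computation written so as to avoid the reduction to summands of $A^n$, while your version is conceptually cleaner. One cosmetic slip: $b\mapsto\mu b\mu$ and $b\mapsto\mu^{-1}b\mu$ are additive bijections of $B$ (the first is left multiplication by the unit $\mu^2\in\units{B}$ composed with $\Int(\mu^{-1})|_B$), not ring automorphisms; bijectivity is all you use, so nothing breaks.
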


	\begin{proof}
		Everything is straightforward   
		except the fact that $\pi_1,\pi_2,\rho_1,\rho_2$
		take unimodular hermitian forms to unimodular hermitian forms.
		We verify this fact case-by-case.

		The inclusion $B\to A$
		induces a homomorphisms of $R$-algebras with involution
		$(B,\tau_1)\to (A,\sigma)$, which we denote by $\rho$.
		Given $(Q,g)\in \Herm[\veps]{B,\tau_1}$,
		the hermitian space $\rho_1(Q,g)$ is just $\rho(Q,\lambda g)$, where
		$\lambda g$ is the $\lambda$-conjugation of $g$, see \ref{subsec:conjugation},
		and $\rho$ is base change in the sense of  \ref{subsec:herm-base-change}.
		Since both $\lambda$-conjugation and base change preserve unimodularity, $\rho_1(Q,g)$
		is unimodular.
		
		Similarly,
		to see that $\rho_2(Q,g)$ is unimodular,
		let $\sigma_2:=\Int((\lambda\mu)^{-1})\circ \sigma$
		and note that the inclusion $B\to A$ also defines a morphism
		$\rho':(B,\tau_2)\to (A,\sigma_2)$.
		It is straightforward to check that $\rho_2g=(\lambda\mu)(\rho'g)$,
		so $\rho_2 g$ is unimodular.

		We proceed with checking that $\pi_1f$ is unimodular for all $(P,f)\in\Herm[\veps]{A,\sigma}$.
		If $x\in P$ satisfies $\pi_1f(x,P)=0$, then $f(x,P)$
		is a right ideal of $A$ contained in $\ker \pi=B\mu$. Thus,
		$f(x,P)\subseteq  B\mu \cap   (B\mu)\mu^{-1}=0$, 
		and $x=0$ because $f$ is unimodular.
		Suppose now that $\phi\in\Hom_B(P,B)$.
		Define $\hat{\phi}:P\to A$
		by $\hat{\phi}x = \phi x+\phi(x\mu)\mu^{-1}$.
		It is routine to check that $\hat{\phi}\in\Hom_A(P,A)$
		and $\pi_1\circ \hat{\phi}=\phi$.
		Since $f$ is unimodular, there exists $x\in P$
		with $\hat{\phi}=f(x,-)$, so $\phi=\pi_1f(x,-)$.
		
		That $\pi_2f$ is unimodular is shown similarly;
		define $\hat{\phi}$ by $\hat{\phi}x=\mu\cdot \phi(x\mu)\cdot \mu^{-1}+\mu \phi x$. 
	\end{proof}
	
	Lemma~\ref{LM:maps-in-oct-are-well-def} implies
	that  $\pi_1^{(\veps)}$, $\pi_2^{(\veps)}$,
	$\rho_1^{(\veps)}$, $\rho_2^{(\veps)}$ induce
	maps between the relevant Witt groups.
	These maps can be arranged in an octagon-shaped diagram:
	\begin{equation}\label{EQ:octagon}
    \xymatrix{
    W_\veps(A,\sigma) \ar[r]^{\pi_1^{(\veps)}} & 
    W_\veps(B,\tau_1) \ar[r]^{\rho_1^{(\veps)}} & 
    W_{-\veps}(A,\sigma) \ar[r]^{\pi_2^{(-\veps)}} & 
    W_{\veps}(B,\tau_2) \ar[d]^{\rho_2^{(\veps)}}\\
    W_{-\veps}(B,\tau_2) \ar[u]^{\rho_2^{(-\veps)}} & 
    W_{\veps}(A,\sigma)  \ar[l]^{\pi_2^{(\veps)}} & 
    W_{-\veps}(B,\tau_1) \ar[l]^{\rho_1^{(-\veps)}} & 
    W_{-\veps}(A,\sigma) \ar[l]^{\pi_1^{(-\veps)}} 
    }
    \end{equation}
	We will  see  in Proposition~\ref{PR:octagon-is-a-complex}
	below  that the
    octagon 
    is  a chain complex of abelian groups.

	The octagon is known to be exact when $R$ is a field
    \cite{Grenier_2005_octagon_of_Witt_grps}; see the Introduction
    for the history of this result.
    The purpose of this paper is to extend the exactness of the octagon
    to   semilocal rings.
    Specifically, we prove:

    \begin{thm}\label{TH:exactness}
		Suppose that $R$ is semilocal.
		Then the octagon \eqref{EQ:octagon} is exact.
    \end{thm}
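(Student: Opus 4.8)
The strategy is to reduce the semilocal case to the known field case, but this reduction is nontrivial because Witt classes over a semilocal ring need not have anisotropic representatives. The plan is to work around this by combining three ingredients: (1) the explicit identification of the images of $\pi_*^{(\pm\veps)}$, $\rho_*^{(\pm\veps)}$, i.e., Theorem~\ref{TH:finer-exactness}, which gives necessary and sufficient conditions (in terms of hyperbolicity of the composite, Brauer classes of $A$ and $B$, and the discriminant) for a unimodular space to lie in the image of the preceding functor; (2) the surjectivity of specialization maps on neutral components of isometry groups, Theorem~\ref{TH:U-zero-mapsto-onto-closed-fibers}, together with the transitivity results of~\ref{subsec:Lagrangians}, to lift Lagrangians and isometries from the residue fields $\{k(\frakm)\}_{\frakm\in\Max R}$ to $R$; and (3) the structural results of Section~\ref{sec:hermitian}, chiefly Witt cancellation (Theorem~\ref{TH:Witt-cancellation}), the fact that Witt-trivial implies hyperbolic (Theorem~\ref{TH:trivial-in-Witt-ring}), and the reduced-rank classification of hermitian spaces (Lemma~\ref{LM:rank-determines}). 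First I would reduce to the case where $R$ is connected, by splitting $R=\prod_i R_i$ into connected semilocal factors, since Witt groups and all the functors in~\eqref{EQ:octagon} decompose compatibly along such a product; then by Proposition~\ref{PR:types-of-involutions-Az}(v) the pair $(\sigma,\veps)$ has a single type, and the octagon~\eqref{EQ:octagon} splits the Witt groups into pieces indexed by whether the relevant involution is orthogonal, symplectic, or unitary.

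\textbf{The main computation: exactness at each vertex.} Exactness as a chain complex is already granted (Proposition~\ref{PR:octagon-is-a-complex}), so only the reverse inclusion $\ker\subseteq\im$ is needed. By the octagon's $8$-periodicity and the symmetry exchanging $\veps\leftrightarrow-\veps$ and $\tau_1\leftrightarrow\tau_2$ (via $\lambda$- and $\mu$-conjugation, see~\ref{subsec:conjugation}), it suffices to verify exactness at, say, the four consecutive vertices $W_\veps(B,\tau_1)$, $W_{-\veps}(A,\sigma)$, $W_\veps(B,\tau_2)$, and $W_\veps(A,\sigma)$ — the remaining four following by applying the symmetry. For a vertex of the form $W_*(A,\sigma)$, given a class $[f]$ in the kernel of $\pi_i$, I would first replace $f$ by an anisotropic representative (Proposition~\ref{PR:ansio-Witt-equivalent}); then $[\pi_i f]=0$ means $\pi_i f$ is hyperbolic (Theorem~\ref{TH:trivial-in-Witt-ring}(ii)), and Theorem~\ref{TH:finer-exactness} — applied at each maximal ideal and then lifted — should say precisely that $f$ (or $f$ plus a controlled hyperbolic correction, which vanishes upon passing back to Witt classes) lies in the image of $\rho_{i-1}$. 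The discriminant and Brauer-class side conditions in Theorem~\ref{TH:finer-exactness} are exactly what must be checked to hold automatically once $\pi_i f$ is hyperbolic; here Proposition~\ref{PR:disc-orth-basic-props}, Proposition~\ref{PR:disc-unitary-basic-props}, and the relation $\disc(S/R)=\lambda^2(\units R)^2$ come in. For a vertex $W_*(B,\tau_i)$, given $[g]$ with $[\rho_i g]=0$, the argument is dual: $\rho_i g$ hyperbolic over $(A,\sigma)$, lift a Lagrangian using Theorem~\ref{TH:U-zero-mapsto-onto-closed-fibers} and Lemma~\ref{LM:Lag-transitive-action}, and descend it to produce a hermitian space over $(B,\tau_{i-1})$ mapping to $g$ up to hyperbolics.

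\textbf{The main obstacle.} The hard part will be proving Theorem~\ref{TH:finer-exactness} itself — the precise description of the images of $\pi_*^{(\pm\veps)}$ and $\rho_*^{(\pm\veps)}$ over a semilocal ring. The field case of this is delicate enough (and the paper notes it ``seems novel even when $R$ is a field''), and over a semilocal ring one must simultaneously control three invariants that need not be detected fiberwise in an obvious way: the isomorphism class of the underlying module (handled by reduced rank, Lemma~\ref{LM:rank-determines}), the discriminant as an actual coset in $\units R/(\units R)^2$ or $\units R/\Nr_{S/R}(\units S)$ (not merely its images in the residue fields), and whether a hyperbolic correction term can be absorbed. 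The crux is a lifting argument: given the required data at every $\frakm\in\Max R$, one must produce a single hermitian space over $R$ realizing all of them simultaneously. This is where Theorem~\ref{TH:U-zero-mapsto-onto-closed-fibers} and the sheaf-theoretic transitivity of $\uU^0(f)$ on $\uLag(f)$ (Proposition~\ref{PR:transitive-action-of-uUf}, Proposition~\ref{PR:partition-of-Lag}) do the real work, since one needs not just existence of fiberwise solutions but an \emph{equivariant} matching controlled by the $\Phi_L$ invariant in $\mu_2(R)$ — and Lemma~\ref{LM:mu-two-check} guarantees that checking $\mu_2(R)$-valued identities on residue fields suffices. Once Theorem~\ref{TH:finer-exactness} is in hand, assembling it into exactness of~\eqref{EQ:octagon} at all eight vertices, together with Corollary~\ref{CR:exactness-anisotropic} for the anisotropic refinement, is comparatively routine bookkeeping.
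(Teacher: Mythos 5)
Your proposal follows essentially the same route as the paper: reduce to connected $R$, use the antipodal symmetry of the octagon to cut the work in half, translate exactness at each vertex into the existence of a Lagrangian of $\pi_i f$ (resp.\ $\rho_i g$) that is complementary to the relevant submodule (Theorem~\ref{TH:exactness-equiv-conds}), and then produce such a Lagrangian by solving the problem over each residue field and lifting via the surjectivity of $U^0$ on closed fibers together with the $\Phi_L$/$\Psi_f$ obstructions in $\mu_2(R)$ — which is exactly the content of Theorems~\ref{TH:Ei-holds} and~\ref{TH:Eii-holds} and of the paper's own outline in~\ref{subsec:proof-overview}. One imprecision worth noting: the discriminant and Brauer-class side conditions of Theorem~\ref{TH:finer-exactness} do \emph{not} hold automatically once $\pi_i f$ is hyperbolic; when they fail one must either show the form is itself hyperbolic (degenerate cases) or add a rank-one hyperbolic summand to flip the $\Phi$/$\Psi$ invariant before the Lagrangian exists — so the "controlled hyperbolic correction" you mention in passing is an essential step, not an optional one.
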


	The proof  will  occupy the following four sections
	and   be concluded in Section~\ref{sec:completion-of-proof}; its highlights are
	given
	in \ref{subsec:proof-overview}.
	In the course of the proof, we will also  determine 
	the images of the functors $\pi_1,\pi_2,\rho_1,\rho_2$
	when $T$ is connected  semilocal (the exactness
	of the octagon
	answers this only up to Witt equivalence), see Theorem~\ref{TH:finer-exactness}.
	This finer version will be required
	for some of the applications.
	
\medskip

	The remainder of this section is dedicated to proving that the octagon is a complex,
	providing equivalent conditions for its exactness, and surveying
	how these conditions will be proved under the assumption that $R$ is semilocal.

\subsection{Equivalent Conditions for Exactness}

	Keep the assumptions of \ref{subsec:octagon}.
	In this subsection, we show that  the exactness of the octagon
	\eqref{EQ:octagon} is equivalent
	to a certain list of conditions involving $R,A,\sigma,\veps,\lambda,\mu$. 
	The proof generally follow the same lines
	as the corresponding arguments given in \cite[\S3]{Grenier_2005_octagon_of_Witt_grps} and 
	\cite[Appendix]{Bayer_1995_Serre_conj_II}, both addressing
	the case $S$ is a field.

	Given a right $B$-module $Q$ and $a\in A$, we write $Q\otimes a$ for
	the subset $\{q\otimes a\where q\in Q\}$ of $Q\otimes_B A$; it is a $B$-submodule
	of $Q\otimes_BA$ if $aB\subseteq Ba$.
	If $M$ is a subset of a right $A$-module $P$,   we write $MA$ for the $A$-submodule
	generated by $M$.

\medskip
	
	We begin by showing	 that the octagon is a chain complex for any ring $R$.

	\begin{prp}\label{PR:octagon-is-a-complex}
		In the notation of \ref{subsec:octagon},  \eqref{EQ:octagon} is
		a chain complex of abelian groups.
	\end{prp}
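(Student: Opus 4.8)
The plan is to check that each of the eight compositions of consecutive arrows in \eqref{EQ:octagon} is the zero homomorphism. Since the functors $\pi_i^{(\pm\veps)}$, $\rho_i^{(\pm\veps)}$ are additive and carry hyperbolic forms to hyperbolic forms (Lemma~\ref{LM:maps-in-oct-are-well-def}), it is enough to show that each composed functor sends every unimodular hermitian space to a hyperbolic one. The octagon is carried to itself by the substitution $\veps\leftrightarrow-\veps$, so only four compositions need to be treated: the ``$\rho$-after-$\pi$'' pair $\rho_1^{(\veps)}\pi_1^{(\veps)}$, $\rho_2^{(\veps)}\pi_2^{(-\veps)}$, and the ``$\pi$-after-$\rho$'' pair $\pi_2^{(-\veps)}\rho_1^{(\veps)}$, $\pi_1^{(-\veps)}\rho_2^{(\veps)}$. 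Throughout I will use Lemma~\ref{LM:octagon-basic-properties}\ref{item:B-plus-muB} (so $A=B\oplus\mu B$, $\mu B=B\mu$, $\mu^2\in B$), the relation $\mu\lambda=-\lambda\mu$, the facts that $\lambda\in T\subseteq B$ and that $\Int(\mu)$ stabilizes $B$ (as $\mu B\mu^{-1}=B\mu\mu^{-1}=B$), and the descriptions $\ker\pi_1=\mu B$, $\ker\pi_2=B$.

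\textbf{The $\pi$-after-$\rho$ compositions.} Let $(Q,g)\in\Herm[\veps]{B,\tau_1}$. By \ref{subsec:octagon}, $\rho_1^{(\veps)}(Q,g)$ has underlying $A$-module $Q\otimes_BA$, which by $A=B\oplus\mu B$ decomposes as a direct sum of $B$-submodules $(Q\otimes 1)\oplus(Q\otimes\mu)$, each a finite projective $B$-module. For $x,x'\in Q$ one computes $(\rho_1g)(x\otimes 1,x'\otimes 1)=\lambda g(x,x')\in\lambda B\subseteq B$, and $(\rho_1g)(x\otimes\mu,x'\otimes\mu)=-\mu\lambda g(x,x')\mu=\lambda\cdot\mu g(x,x')\mu$, which again lies in $B$ since $\mu g(x,x')\mu\in\mu^2 B\subseteq B$. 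Hence $\pi_2$ annihilates both $(\rho_1g)(Q\otimes 1,Q\otimes 1)$ and $(\rho_1g)(Q\otimes\mu,Q\otimes\mu)$, so $\pi_2^{(-\veps)}\rho_1^{(\veps)}(Q,g)$ is hyperbolic with complementary Lagrangians $Q\otimes 1$ and $Q\otimes\mu$. The composition $\pi_1^{(-\veps)}\rho_2^{(\veps)}$ is identical, using $\lambda\mu=\mu(-\lambda)\in\mu B=\ker\pi_1$ and the formula for $\rho_2 g$.

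\textbf{The $\rho$-after-$\pi$ compositions.} Let $(P,f)\in\Herm[\veps]{A,\sigma}$. Then $\rho_1^{(\veps)}\pi_1^{(\veps)}(P,f)$ has underlying module $M:=P\otimes_BA$ and form $h(x\otimes a,x'\otimes a')=a^\sigma\lambda\,\pi_1(f(x,x'))\,a'$. Let $\Psi\colon M\to P$, $x\otimes a\mapsto xa$, be the $A$-linear multiplication map; it is a split surjection onto the projective module $P$, so $N:=\ker\Psi$ is a direct summand of $M$ with $\rrk_AN=\rrk_A M-\rrk_A P=\rrk_A P=\tfrac12\rrk_A M$ (using $\rrk_A M=2\rrk_A P$, as $A$ is free of rank $2$ over $B$). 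I will show that $N$ is generated as an $A$-module by the elements $x\otimes\mu-x\mu\otimes 1$ ($x\in P$): these elements lie in $N$ and span an $A$-submodule $N'$ (a direct check using $A=B\oplus\mu B$ and $\mu B=B\mu$), one has $N'+(P\otimes 1)=M$, and $N\cap(P\otimes 1)=0$ because $\Psi$ is injective on $P\otimes 1$; combining these gives $N=N'$. Evaluating $h$ on two such generators, and using $\lambda^\sigma=-\lambda$, $\mu^\sigma=-\mu$, $\mu\lambda=-\lambda\mu$ together with the values of $\pi_1(\mu c)$, $\pi_1(c\mu)$, $\pi_1(\mu c\mu)$ read off from $c=f(x,x')=b_1+\mu b_2$ (e.g.\ $\pi_1(\mu c)=\mu^2 b_2$), the four resulting terms cancel in pairs; hence $h$ vanishes on all pairs of generators, and so, by sesquilinearity, on all of $N\times N$. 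Therefore $N\subseteq N^\perp$, and since $N$ and $N^\perp$ are summands of the unimodular space $(M,h)$ with $\rrk_A N^\perp=\rrk_A M-\rrk_A N=\rrk_A N$, the quotient $N^\perp/N$ is finite projective of reduced rank $0$, hence zero; thus $N=N^\perp$, $(M,h)$ is metabolic, and so hyperbolic because $2\in\units{R}$. The composition $\rho_2^{(\veps)}\pi_2^{(-\veps)}$ is handled verbatim, with $\lambda$, $\pi_1$, $\tau_1$ replaced by $\lambda\mu$, $\pi_2$, $\tau_2$.

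The main obstacle is the bookkeeping in the $\rho$-after-$\pi$ case: identifying $N$ with the $A$-module generated by the elements $x\otimes\mu-x\mu\otimes 1$, and carrying out the term-by-term cancellation in $h$, both require careful use of the non-commutative relations among $\lambda$, $\mu$ and $B$. The remaining verifications are routine.
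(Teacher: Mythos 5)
Your proof is correct and follows essentially the same route as the paper: for the $\pi\circ\rho$ compositions you use the same pair of complementary totally isotropic submodules $Q\otimes 1$ and $Q\otimes\mu$, and for the $\rho\circ\pi$ compositions your module $N=\ker\Psi$, generated by the elements $x\otimes\mu-x\mu\otimes 1$, is exactly the paper's isotropic submodule $L_2$, with the same term-by-term cancellation. The only cosmetic difference is that the paper exhibits a second complementary isotropic submodule $L_1=\{x\mu\otimes 1+x\otimes\mu\}$ to conclude hyperbolicity directly, whereas you use a reduced-rank count to get $N=N^\perp$ and then invoke metabolic $\Rightarrow$ hyperbolic; both are fine.
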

	
	\begin{proof}
		By symmetry, we only need to consider the top row of 	\eqref{EQ:octagon}.
		
\medskip
	
		\noindent\textit{\eqref{EQ:octagon} is a complex at  $W_\veps(A,\sigma)$.}
        Let $(Q,g)\in\Herm[-\veps]{B,\tau_2}$. Then
        $\pi_1\rho_2(Q,g)=(Q\otimes_BA,\pi_1\rho_2 g)$.
        Straightforward calculation shows that the $B$-sumodules $M_1:=Q\otimes 1$ and 
        $M_2:=Q\otimes \mu$
        satisfy $\pi_1\rho_2g(M_1,M_1)=\pi_1\rho_2g(M_2,M_2)=0$
        and $M_1+M_2=Q\otimes_BA$. Thus,
        $\pi_1\rho_2(Q,g)$ is hyperbolic.

\medskip

        \noindent\textit{\eqref{EQ:octagon} is a complex at $W_{-\veps}(A,\sigma)$.}
        The proof for $W_{\veps}(A,\sigma)$ applies  verbatim.

\medskip

        \noindent\textit{\eqref{EQ:octagon} is a complex at $W_\veps(B,\tau_1)$.}
        Let $(P,f)\in\Herm[\veps]{A,\sigma}$.
    	Then $\rho_1\pi_1(P,f)=(P\otimes_B A,\rho_1\pi_1 f)$. Define
    	\begin{align*}
    	L_1&=\{x\mu\otimes 1+ x \otimes \mu\where x\in P\},\\
    	L_2&=\{x\mu\otimes 1- x\otimes \mu \where x\in P\}.
    	\end{align*}
        It is easy to check that $L_1$ and $L_2$ are $B$-submodules of $P\otimes_AB$
        and that $L_1+L_2=P\otimes_AB$ (recall that $2\in\units{R}$).
        We claim that $\rho_1\pi_1 f(L_1,L_1)=\rho_1\pi_1 f(L_2,L_2)=0$.
        Indeed, let $x,y\in P$ and write $f(x,y)=\alpha+\mu\beta$ with $\alpha,\beta\in B$.
        Then
        \begin{align*}
        \rho_1\pi_1f(x &\mu\otimes 1+ x \otimes \mu, y\mu\otimes 1+ y\otimes \mu)\\
        &=  \rho_1\pi_1f(x\mu\otimes 1,y\mu\otimes 1)+\rho_1\pi_1f(x\mu \otimes 1,y\otimes \mu)
        +\rho_1\pi_1f(x\otimes\mu,y\mu\otimes 1)\\
        &\qquad +\rho_1\pi_1f(x\otimes \mu,y\otimes \mu)\\
        &= \lambda\pi_1(\mu^\sigma(\alpha+\mu\beta)\mu)+\lambda\pi_1(\mu^\sigma(\alpha+\mu\beta))\mu
        +\mu^\sigma\lambda\pi_1((\alpha+\mu \beta)\mu)\\
        &\qquad +\mu^\sigma\lambda\pi_1(\alpha+\mu\beta)\mu\\
        &=\lambda\mu^\sigma\alpha\mu+\lambda\mu^\sigma\mu\beta\mu
        +\mu^\sigma\lambda\mu\beta\mu+\mu^\sigma\lambda\alpha\mu\\
        &=-\lambda\mu\alpha\mu-\lambda\mu^2\beta\mu+\lambda\mu^2\beta\mu+\lambda\mu\alpha\mu=0 ,
        \end{align*}
        hence $\rho_1\pi_1 f(L_1,L_1)=0$.
        Likewise, $\rho_1\pi_1f(L_2,L_2)=0$, 
        so  $\rho_1\pi_1 f$ is hyperbolic.

\smallskip

        \noindent\textit{\eqref{EQ:octagon} is a complex at   $W_\veps(B,\tau_2)$.}
        This is similar to the proof of the case $W_\veps(B,\tau_1)$; define
        $L_1$ and $L_2$ in the same manner.
	\end{proof}

	We now give equivalent conditions for the exactness of the octagon
	\eqref{EQ:octagon}.
	
	\begin{thm}\label{TH:exactness-equiv-conds}
		With the notation of \ref{subsec:octagon}, consider the following conditions:
		\begin{enumerate}[label=(E\arabic*)]
			\item  \label{item:Eii} For every $(P,f)\in \Herm[\veps]{A,\sigma}$ such that
    		$[\pi_1(P,f)]=0$ in $W_\veps(B,\tau_1)$, there exists $(P',f')$ in the Witt
    		class of $(P,f)$ and a Lagrangian   $M$ of $\pi_1(P',f')$
    		with  
			$M\cdot A=P'$.
			\item   \label{item:Ei} For every 
			$(Q,g)\in \Herm[\veps]{B,\tau_1}$ such that
    		$[\rho_1(Q,g)]=0$ in $W_{-\veps}(A,\sigma)$, there exists $(Q',g')$ in the Witt
    		class of $(Q,g)$
    		and   a Lagrangian $L$ of $\rho_1(Q',g')$
    		with $L\oplus (Q'\otimes 1)=Q'\otimes_BA$ as $B$-modules.
    		\item  \label{item:Eii-tag} 
    		For every $(P,f)\in \Herm[-\veps]{A,\sigma}$ such that
    		$[\pi_2(P,f)]=0$ in $W_\veps(B,\tau_2)$, there exists $(P',f')$ in the Witt
    		class of $(P,f)$ and a Lagrangian   $M$ of $\pi_2(P',f')$
    		with  
			$M\cdot A=P'$.
    		\item \label{item:Ei-tag} For every 
			$(Q,g)\in \Herm[\veps]{B,\tau_2}$ such that
    		$[\rho_2(Q,g)]=0$ in $W_{-\veps}(A,\sigma)$, there exists $(Q',g')$ in the Witt
    		class of $(Q,g)$
    		and   a Lagrangian $L$ of $\rho_2(Q',g')$
    		with $L\oplus (Q'\otimes 1)=Q'\otimes_BA$  as $B$-modules.
		\end{enumerate}
		Then the exactness of
		 \eqref{EQ:octagon} at
		the terms 
		$W_{\veps}(A,\sigma)$, $W_\veps(B,\tau_1)$, $W_{-\veps}(A,\sigma)$, $W_\veps(B,\tau_2)$    
		on the top row is equivalent
		to the conditions \ref{item:Eii}, \ref{item:Ei},  
		 \ref{item:Eii-tag}, \ref{item:Ei-tag},  respectively. 
	\end{thm}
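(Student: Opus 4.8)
The plan is to verify, for each of the four terms $W_\veps(A,\sigma)$, $W_\veps(B,\tau_1)$, $W_{-\veps}(A,\sigma)$, $W_\veps(B,\tau_2)$ on the top row of \eqref{EQ:octagon}, that exactness there is equivalent to the corresponding condition among \ref{item:Eii}--\ref{item:Ei-tag}. By Proposition~\ref{PR:octagon-is-a-complex} the octagon is a complex, so ``image $\subseteq$ kernel'' holds at each term for free and exactness amounts to the reverse inclusion. The four equivalences come in two shapes: those at an $A$-term, where one must place an element of the kernel of an outgoing $\pi$-map into the image of the incoming $\rho$-map (conditions \ref{item:Eii}, \ref{item:Eii-tag}), and those at a $B$-term, where one places the kernel of an outgoing $\rho$-map into the image of the incoming $\pi$-map (conditions \ref{item:Ei}, \ref{item:Ei-tag}). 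Within each shape the two cases are proved by the same argument after the evident relabelling --- for the $A$-terms, swapping $\pi_1\leftrightarrow\pi_2$, $\rho_1\leftrightarrow\rho_2$, $\tau_1\leftrightarrow\tau_2$, $\lambda\leftrightarrow\lambda\mu$ and $\veps\leftrightarrow-\veps$ where appropriate, the relevant identities being those of \ref{subsec:octagon} and Lemma~\ref{LM:octagon-basic-properties}. Thus I will treat only ``exactness at $W_\veps(A,\sigma)\Leftrightarrow$\ref{item:Eii}'' and ``exactness at $W_\veps(B,\tau_1)\Leftrightarrow$\ref{item:Ei}''. Throughout I use the criterion from \ref{subsec:Witt-grp}: a submodule on which a unimodular form vanishes and which has a complementary such submodule is a Lagrangian.

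For the $A$-term, the map into $W_\veps(A,\sigma)$ is $\rho_2^{(-\veps)}$ and the map out is $\pi_1^{(\veps)}$. If \eqref{EQ:octagon} is exact at $W_\veps(A,\sigma)$ and $(P,f)\in\Herm[\veps]{A,\sigma}$ has $[\pi_1(P,f)]=0$, then $[P,f]=\rho_2^{(-\veps)}[Q,g]$ for some $(Q,g)\in\Herm[-\veps]{B,\tau_2}$; set $(P',f'):=\rho_2^{(-\veps)}(Q,g)=(Q\otimes_BA,\rho_2g)$ and $M:=Q\otimes 1$. The computation in the proof of Proposition~\ref{PR:octagon-is-a-complex} (``complex at $W_\veps(A,\sigma)$'') shows that $Q\otimes 1$ and $Q\otimes\mu$ are complementary $B$-submodules of $Q\otimes_BA$ on which $\pi_1\rho_2 g$ vanishes, so $M$ is a Lagrangian of $\pi_1(P',f')$, and obviously $MA=P'$; this is \ref{item:Eii}. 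Conversely, given $(P,f)$ with $[\pi_1(P,f)]=0$, take $(P',f')$ and $M$ as in \ref{item:Eii}. As $M$ is $\pi_1f'$-isotropic we have $f'(M,M)\subseteq\ker\pi_1=\mu B$, and a short calculation using $\lambda\mu=-\mu\lambda$ and $\lambda^{-1}\in T$ shows $g:=(\lambda\mu)^{-1}\,f'|_{M\times M}$ takes values in $B$; since $\lambda\in\Cent(B)$ and $\tau_2=\Int(\mu^{-1})\circ\sigma|_B$, one checks $g$ is $\tau_2$-sesquilinear and $(-\veps)$-hermitian on the finite projective $B$-module $M$. The multiplication map $M\otimes_BA\to P'$ is surjective; since $A$ is free of rank $2$ as a left $B$-module while $P'\cong M\oplus M^{*}$ over $B$ (the Lagrangian decomposition of $\pi_1f'$, with $M^{*}$ the $B$-dual of $M$) and $\rank_R M^{*}=\rank_R M$ by Lemma~\ref{LM:unimodular-implies-rrk-sigma-inv}, both sides have $R$-rank $2\rank_R M$, so the map is an isomorphism. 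It is then an isometry $\rho_2^{(-\veps)}(M,g)\xrightarrow{\sim}(P',f')$, and in particular $g$ is unimodular, since $\rho_2^{(-\veps)}$ reflects unimodularity. Hence $[P,f]=[\rho_2^{(-\veps)}(M,g)]\in\im\rho_2^{(-\veps)}$.

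For the $B$-term, the map into $W_\veps(B,\tau_1)$ is $\pi_1^{(\veps)}$ and the map out is $\rho_1^{(\veps)}$. If \eqref{EQ:octagon} is exact at $W_\veps(B,\tau_1)$ and $(Q,g)\in\Herm[\veps]{B,\tau_1}$ has $[\rho_1(Q,g)]=0$, then $[Q,g]=\pi_1^{(\veps)}[P,f]$ for some $(P,f)\in\Herm[\veps]{A,\sigma}$; put $(Q',g'):=\pi_1^{(\veps)}(P,f)$, which lies in the Witt class of $(Q,g)$. The proof of Proposition~\ref{PR:octagon-is-a-complex} (``complex at $W_\veps(B,\tau_1)$'') exhibits $L:=\{x\mu\otimes 1+x\otimes\mu:x\in P\}$ as a Lagrangian of $\rho_1(Q',g')$, and using $A=B\oplus B\mu$ one sees directly that $L\cap(P\otimes 1)=0$ and $L+(P\otimes 1)=P\otimes_BA$, i.e.\ $L\oplus(Q'\otimes 1)=Q'\otimes_BA$ as $B$-modules; this is \ref{item:Ei}. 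Conversely, suppose $[\rho_1(Q,g)]=0$ and let $(Q',g')$, $L$ be as in \ref{item:Ei}. Set $P:=(Q'\otimes_BA)/L$ with its $A$-module structure; since $L$ is complementary over $B$ to $Q'\otimes 1$, the composite $Q'\otimes 1\hookrightarrow Q'\otimes_BA\twoheadrightarrow P$ is a $B$-module isomorphism, so $P$ restricted to $B$ is a copy of $Q'$, and transporting $g'$ gives an $\veps$-hermitian $\tau_1$-form $\bar g$ on it. Define, for $x,y\in P$,
\[
f(x,y):=\bar g(x,y)+\mu\bigl(\mu^{-1}\,\bar g(x,y\mu)\,\mu^{-1}\bigr),
\]
where $y\mu$ uses the $A$-action on $P$ and the inner term lies in $B$ because $\mu^2\in B$ and $\mu$ normalizes $B$. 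A direct check --- using $\mu^\sigma=-\mu$, $\mu^2\in B$, $\tau_1=\sigma|_B$, and the transversality of $L$ --- shows $f$ is a well-defined $\sigma$-sesquilinear $\veps$-hermitian form on $P$ with $\pi_1 f=\bar g$. Unimodularity of $f$ is then automatic: the adjoint $P\to P^{*}$ of $f$, whose composite with the reduction $P^{*}=\Hom_A(P,A)\to\Hom_B(P,B)$ equals the unimodular $\bar g$, is injective at every residue field of $R$, hence an isomorphism (its source and target having equal $R$-rank by Lemma~\ref{LM:unimodular-implies-rrk-sigma-inv}). Therefore $\pi_1^{(\veps)}(P,f)\cong(Q',g')$ and $[Q,g]=[Q',g']=\pi_1^{(\veps)}[P,f]\in\im\pi_1^{(\veps)}$.

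The hard part will be the converse at the $B$-terms --- producing a genuine $A$-hermitian form from a $B$-hermitian form carrying the Lagrangian data of \ref{item:Ei}, \ref{item:Ei-tag}. Over a field one may instead take a representative with no isotropic vectors, which makes this step considerably softer; that is essentially the argument of \cite[\S3]{Grenier_2005_octagon_of_Witt_grps} and \cite[Appendix]{Bayer_1995_Serre_conj_II}. Over an arbitrary base ring one must handle the projective modules $Q'\otimes_BA$, $L$ and $(Q'\otimes_BA)/L$ directly, and all the real work sits in the explicit formula for $f$ and in checking it is sesquilinear and unimodular. A secondary, purely clerical, nuisance is tracking the two centers $S=\Cent(A)\subseteq T=\Cent(B)$, with $T/S$ quadratic \'etale and $\tau_i|_T$ equal to $\sigma|_T$ or its composite with the standard involution of $T/S$, which enters every sign computation. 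No semilocal hypothesis on $R$ is needed here: the theorem is a purely formal reformulation of exactness, and the conditions \ref{item:Eii}--\ref{item:Ei-tag} themselves are verified, under the semilocal assumption, in the subsequent sections.
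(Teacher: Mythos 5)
Your proposal is correct and follows essentially the same route as the paper: the forward implications are read off from the explicit Lagrangians constructed in the proof that the octagon is a complex, and the converses are obtained by restricting $(\lambda\mu)^{-1}f'$ (resp.\ $\lambda^{-1}f'$) to the Lagrangian $M$ and, dually, by using the complement $L$ to put an $A$-module structure on $Q'$ and writing down a lifted form $f$ --- your quotient description of that $A$-structure and your formula for $f$ coincide with the paper's map $J$ and its $f(x,y)=g(x,y)-\mu g(Jx,y)$. The only caveats are that the two verifications you defer are where the paper spends nearly all of its effort: checking that $f$ is $\veps$-hermitian requires the identities $g(x,y)+\mu g(Jx,Jy)\mu=0$ and $g(x,Jy)\mu+\mu g(Jx,y)=0$, which come from the \emph{isotropy} $\rho_1 g(L,L)=0$ and not merely from transversality of $L$, and your appeal to ``$\rho_2^{(-\veps)}$ reflects unimodularity'' is true but not free --- it needs either the paper's direct verification that $g$ is unimodular or a residue-field plus Nakayama argument.
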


	\begin{remark}\label{RM:Es-for-field}
		Conditions \ref{item:Eii}--\ref{item:Ei-tag}  
		are not difficult to verify when $R$ is a field.
		We illustrate this for \ref{item:Ei} and
		\ref{item:Eii}.
		
		In the context of \ref{item:Ei},
		using    Proposition~\ref{PR:ansio-Witt-equivalent},
		choose $(Q',g')$ to be anisotropic.
		Since $B$ is semisimple artinian, this means that
		$g'(x,x)\neq 0$ whenever $x\neq 0$.
		Now,  if $L$ is a Lagrangian of $\rho_1 g'$, then every $x\in L\cap (Q'\otimes 1)$
		satisfies $g'(x,x)=0$, hence $L\cap (Q'\otimes 1)=0$.
		On the other hand, by Lemma~\ref{LM:unimodular-implies-rrk-sigma-inv},
		we have
		$2\dim_R L=\dim_RL +\dim_RL^*=\dim_R (Q'\otimes_BA)$,
		and since  $Q'\otimes_BA=(Q'\otimes 1)\oplus (Q'\otimes \mu)$,
		we have $\dim_R (Q'\otimes 1)=\frac{1}{2}\dim_R (Q'\otimes_BA)$,
		so  $R$-dimension considerations 
		force $L\oplus (Q'\otimes 1)=Q'\otimes_BA$.
		
		Similarly, in the context of \ref{item:Eii},
		we may choose $(P',f')$
		so that $f'(x,x)\neq 0$ whenever $x\neq 0$. 
		If $M$ is a Lagrangian of $\pi_1f'$,
		then every $x\in M\cap M\mu$ satisfies
		$\pi_1f(x,x)=\pi_1f(x\mu^{-1},x)=0$, which means
		that $f(x,x)=0$. Thus, $M\cap M\mu=0$. 
		Since $P\cong M\oplus M^*$ (the dual is taken relative to $B$),
		$\rank_{T_0} P=\rank_{T_0} M+\rank_{T_0} M^*$,
		where  $T_0=\Sym_1(T,\tau)$. By Lemma~\ref{LM:unimodular-implies-rrk-sigma-inv},
		$2\rank_{T_0} M=\rank_{T_0} P$, so $2\dim_R M=\dim_R P$
		and $R$-dimension considerations
		force $M+M\mu=P$. In particular, $M  A=P$.
	
    	This argument relies critically
    	on the fact that $R$ is a field, and thus cannot be naively generalized
    	to more general rings.
    \end{remark}

	Before proving Theorem~\ref{TH:exactness-equiv-conds},
	we first prove the following lemma.
	
	\begin{lem}\label{LM:Eii-equiv-formulation}
		With notation as in \ref{subsec:octagon},
		let $(P,f)\in\Herm[\veps]{A,\sigma}$
		and let $M$ be a Lagrangian of $\pi_1 f$, resp.\ $\pi_2f$.
		Then $MA=P$ if and only if $M\oplus M\mu=P$.
	\end{lem}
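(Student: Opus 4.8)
The plan is to reduce the claimed equivalence to a single assertion: if $MA=P$, then the sum $M+M\mu$ is automatically direct; the reverse implication will be immediate. First I would record the structural facts from Lemma~\ref{LM:octagon-basic-properties}\ref{item:B-plus-muB}: $A=B\oplus \mu B$, $\mu B=B\mu$, and $\mu^2\in B$; since $\mu\in\units{A}$ and $\mu^{-2}$ again centralizes $\lambda$, in fact $\mu^2\in\units{B}$. From $\mu B=B\mu$ it follows that $M\mu:=\{m\mu\where m\in M\}$ is again a right $B$-submodule of $P$, and right multiplication by $\mu$ is an $R$-module isomorphism $M\xrightarrow{\sim} M\mu$, so $\rank_R M\mu=\rank_R M$. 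Moreover, since $A=B+B\mu$ and $M$ is a right $B$-module, $MA=MB+MB\mu=M+M\mu$, and this is an $A$-submodule because $(M+M\mu)\mu=M\mu+M\mu^2\subseteq M\mu+M$. Hence ``$MA=P$'' is literally ``$M+M\mu=P$'', and the implication ``$M\oplus M\mu=P\Rightarrow MA=P$'' is trivial.

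For the converse, suppose $MA=P$, i.e.\ $M+M\mu=P$, so that $\psi\colon M\oplus M\mu\to P$, $(m,m')\mapsto m+m'$, is a surjective $R$-module homomorphism. The key point is that $M\oplus M\mu$ and $P$ have the same $R$-rank: indeed $\rank_R M\mu=\rank_R M$, and $\rank_R M=\tfrac12\rank_R P$ because $M$ is a Lagrangian of the unimodular $B$-form $\pi_1 f$ (resp.\ $\pi_2 f$), so $P\cong M\oplus M^{*}$ as right $B$-modules (see~\ref{subsec:Witt-grp}), with $M^{*}=\Hom_B(M,B)$ and $\rank_R M^{*}=\rank_R M$ — the latter seen by passing to each residue field $k(\frakp)$, where $B(\frakp)$ is semisimple (as $B$ is separable projective over $R$) so that $\dim_{k(\frakp)}\Hom_{B(\frakp)}(M(\frakp),B(\frakp))=\dim_{k(\frakp)}M(\frakp)$; alternatively one may apply Lemma~\ref{LM:unimodular-implies-rrk-sigma-inv}, noting that $(B,\tau_1)$ is an Azumaya $\Cent(B)$-algebra... rather, that $(B,\tau_1)$ is Azumaya over $S$ and $(B,\tau_2)$ over $T$. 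A surjection between finite projective $R$-modules of equal rank is an isomorphism (it splits, and its kernel is a finite projective $R$-module of rank $0$, hence zero); therefore $\psi$ is injective, which forces $M\cap M\mu=0$. Thus $M\oplus M\mu=P$.

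The argument is uniform in the two cases of the ``resp.'', since it only uses that $M$ is a Lagrangian of \emph{some} unimodular hermitian form on $P$ over $B$. I do not expect a genuine obstacle here; the only step requiring a little care is the identity $\rank_R M=\tfrac12\rank_R P$, because $(B,\tau_i)$ is in general Azumaya over $S$ or over $T$ and not over $R$, so Lemma~\ref{LM:unimodular-implies-rrk-sigma-inv} is not literally applicable with $R$ as base ring — which is why the residue-field verification sketched above is the cleanest route.
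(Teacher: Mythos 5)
Your proof is correct and follows essentially the same route as the paper's: both arguments come down to the rank identity $2\rank_R M=\rank_R P$ for a Lagrangian $M$ (which the paper obtains via Remark~\ref{RM:Es-for-field} and Lemma~\ref{LM:unimodular-implies-rrk-sigma-inv}, and you verify directly at residue fields) together with the principle that a surjection of finite projective $R$-modules of equal rank is an isomorphism (which the paper packages as a residue-field check plus Lemma~\ref{LM:semilocal-direct-sum-reduction}). The only difference is organizational: you establish surjectivity of $M\oplus M\mu\to P$ globally and conclude by rank, whereas the paper checks directness fiberwise first; both are sound.
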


	\begin{proof}
		It is clear that $M\oplus M\mu=P$ implies $MA=P$, so we   prove
		the converse.
		As both $M$ and $M\mu$ are $R$-summands of $P$, 
		the lemma will follow from Lemma~\ref{LM:semilocal-direct-sum-reduction}
		if we show that $M(\frakm)\oplus M\mu(\frakm)=P(\frakm)$ for all $\frakm\in \Max R$.
		We may therefore assume that $R$ is a field (the setting of \ref{subsec:octagon}
		is preserved under base-change by Lemma~\ref{LM:center-base-change}).
		Since $MA=P$ and $A=B+B\mu$, we have $M+ M\mu=P$.
		We observed in Remark~\ref{RM:Es-for-field} that $2\dim_R M=\dim_R P$,
		so this means that  $M\oplus M\mu=P$.
	\end{proof}

\begin{proof}[Proof of Theorem~\ref{TH:exactness-equiv-conds}]
	We   showed that the octagon is a chain complex in Proposition~\ref{PR:octagon-is-a-complex}.
	Moreover, the proof of that proposition shows that if $(Q,g)=\pi_1(P,f)$ for 
	$(P,f)\in \Herm[\veps]{A,\sigma}$,
	then $\rho_1(Q,f)$ admits a Lagrangian $L$ --- $L_1$ or $L_2$ in
	the notation of that proof --- with $L\oplus (Q\otimes 1)=Q\otimes_BA$.
	Thus, condition \ref{item:Ei} follows from the exactness of the octagon at $W_\veps(B,\tau_1)$,
	and,
	in a similar manner, the exactness of the octagon
	at  $W_{\veps}(A,\sigma)$, $W_{-\veps}(A,\sigma)$, $W_\veps(B,\tau_2)$
	implies   \ref{item:Eii}, \ref{item:Eii-tag}, \ref{item:Ei-tag}, respectively.
	It remains to show the converse.

\medskip

	\noindent \textit{\ref{item:Eii} implies exactness at $W_{\veps}(A,\sigma)$ (top row).}
		Suppose that $(P,f)\in \Herm[\veps]{A,\sigma}$
		satisfies $[\pi_1(P,f)]=0$ in $W_\veps(B,\tau_1)$.
        By \ref{item:Eii} and Lemma~\ref{LM:Eii-equiv-formulation}, we may replace
        $(P,f)$ with a Witt equivalent hermitian space
        to assume that $\pi_1f$ admits a Lagrangian $M$ with
        $P=M\oplus M\mu$. Let $g=(\lambda\mu)^{-1} f|_{M\times M}$. Since $\pi_1f(M,M)=0$, 
        we have
        $g(M,M)\subseteq (\lambda\mu)^{-1}\ker\pi_1= -\lambda^{-1}\mu^{-1}\mu B
        =B$.
        We claim that $g:M\times M\to B$ is a $(-\veps)$-hermitian
        form over $(B,\tau_2)$. Indeed, the sesquilinearily is straightforward, and for all $x,y\in M$,
        we have
        \begin{align*}
            -\veps g(y,x)^{\tau_2}&= 
            -\veps \mu^{-1}f(y,x)^{\sigma}((\lambda\mu)^{-1})^\sigma\mu\\
            &=-\mu^{-1} f(x,y)\lambda^{-1}\mu^{-1}\mu=\mu^{-1}\lambda^{-1}f(x,y)=g(x,y)
        \end{align*}
        (note that  $f(x,y)\in\ker \pi_1= \mu B$  and $\lambda$ anti-commutes with elements
        from $\mu B$).
        
        Next, we claim that $(M,g)$ is unimodular. Suppose
        that $g(x,M)=0$. Then $f(x,M)=0$, hence $f(x,P)=f(x,M+M\mu)=0$, and $x=0$
        by the unimodularity of $f$. Now, let $\phi\in \Hom_B(M,B)$.
        Using $P=M\oplus M\mu$, 
        define $\psi:P\to A$ by $\psi(x+y\mu)=\lambda\mu\cdot  \phi x+ \lambda\mu \cdot \phi y\cdot \mu$ 
        for all $x,y\in M$.
        It is routine to check that $\psi\in\Hom_A(P,A)$. Thus, there exists
        $x\in P$ such that $\psi y=f(x,y)$ for all $y\in P$.
        Furthermore, we have $\pi_1 f(x,M)= \pi_1(\lambda\mu \cdot \phi(M))=0$, 
        hence $x\in M^{\perp(\pi_1f)}=M$.
        Since   $g(x,y)= (\lambda\mu)^{-1} f(x,y)=(\lambda\mu)^{-1} \psi y=\phi y$ for all $y\in M$,
        we have shown that $x\mapsto g(x,-):M\to \Hom_B(M,B)$ is bijective.

		Finally, we show that $\rho_2^{(-\veps)}(M,g) = (M\otimes_BA,\rho_2 g)$
		is isomorphic to $(P,f)$.
		Let $\vphi$ denote the $A$-module homomorphism $x\otimes a\mapsto xa: M\otimes_BA\to P$;
		it is clearly surjective, and it is straightforward to check that $f(\vphi x,\vphi y)=\rho_2g(x,y)$
		for all $x,y\in M\otimes_BA$. The latter means that if $x\in \ker \vphi$, then
		$\rho_2g(x,M\otimes_B A)=0$, and thus  $x=0$ because $\rho_2g$ is unimodular.
		As a result, $\vphi$
		is injective, and therefore an isometry from $\rho_2^{(-\veps)}(M,g)$
		to $(P,f)$.

\medskip
		
		\noindent \textit{\ref{item:Eii-tag} implies exactness at $W_{-\veps}(A,\sigma)$ (top row).}
		This similar to the exactness at $W_{\veps}(A,\sigma)$ with the difference that
		one defines $g=\lambda^{-1} f|_{M\times M}$.
		
\medskip

	\noindent \textit{\ref{item:Ei} implies exactness at $W_\veps(B,\tau_1)$.}
        Let $(Q,g)\in \Herm[\veps]{B,\tau_1}$ be a
       	hermitian space such that 
        $[\rho_1(Q,g)]=0$ in $W_{-\veps}(A,\sigma)$.
        By \ref{item:Ei}, we may replace
		$(Q,g)$ with a Witt-equivalent space
		and        
        assume that $\rho_1 g$ admits a Lagrangian $L$ such that
        such that $L\oplus (Q\otimes 1)=Q\otimes_B A$. Multiplying both sides
        with $\mu$ yields $L\oplus (Q\otimes \mu)=Q\otimes_BA$.
		This means that every $x\in Q$ admits
        a unique element $Jx\in Q$ such that $x\otimes 1+Jx\otimes \mu\in L$.

        The map $J:Q\to Q$ is easily seen to satisfy:
        \begin{align}\label{EQ:J-i}
            J^2x&= x\mu^{-2}\\
            J(xb)&=(Jx)(\mu b\mu^{-1}) \nonumber
        \end{align}
        for all $x\in Q$, $b\in B$. We make $Q$ into an $A$-module by  setting
        \[
        x(b_1+\mu b_2)=xb_1+(Jx)\mu^2b_2\qquad (x\in Q,\, b_1,b_2\in B ) .
        \]
        Using \eqref{EQ:J-i}, it is easy to see that this indeed defines an $A$-module
        structure (one has to verify the identities  $(x\mu)\mu=x(\mu^2)$ and $(xb)\mu=(x\mu)(\mu^{-1}b\mu)$ for
        $b\in B$).

        Since $\rho_1g(L,L)=0$, we have $\rho_1 g(x\otimes 1+Jx\otimes \mu,y\otimes 1+Jy\otimes \mu)=0$
        for all $x,y\in Q$. Since $A=B\oplus\mu B$, this means that
        \begin{align}\label{EQ:J-ii}
        & g(x,y)+\mu g(Jx,Jy)\mu=0,\\
        & g(x,Jy)\mu+\mu g(Jx,y)=0  \nonumber
        \end{align}
        for all $x,y\in Q$.
        Define $f:Q\times Q\to A$ by
        \[
        f(x,y)=g(x,y)-\mu g(Jx,y)\ .
        \]
        We claim that $f$ is an $\veps$-hermitian form over $(A,\sigma)$.
        After unfolding the definitions, this comes down to checking that
        $f(xb,y)=b^\sigma f(x,y)$, $f(x,yb)=f(x,y)b$, $f(x\mu,y)=\mu^{\sigma}f(x,y)$, $f(x,y\mu)=f(x,y)\mu$
        and $f(x,y)=\veps f(y,x)^\sigma$ for all $x,y\in Q$, $b\in B$.
        The first three identities follow easily from \eqref{EQ:J-i}. For the fourth and fifth identities we also
        use \eqref{EQ:J-ii}:
        \begin{align*}
        f(x,y\mu)
        &=g(x,y\mu)-\mu g(Jx,y\mu)\\
        &=g(x,(Jy)\mu^2)-\mu g(Jx,(Jy)\mu^2)\\
        &=g(x,Jy)\mu^2-\mu g(Jx,Jy)\mu^2\\
        &=-\mu g(Jx,y)\mu+g(x,y)\mu\\
        &=(g(x,y)-\mu g(Jx,y))\mu=f(x,y)\mu\ ,
        \end{align*}
        \begin{align*}
        \veps f(y,x)^\sigma
        &= \veps g(y,x)^\sigma-\veps  g(Jy,x)^\sigma\mu^{\sigma}\\
        &= g(x,y) +g(x,Jy)\mu\\
        &=g(x,y)- \mu g(Jx,y)=f(x,y)\ .
        \end{align*}

		We  claim that $(Q_A,f)$ is unimodular.
		Indeed, suppose $f(x,Q)=0$.
		Then, the definition of $f$ implies that $g(x,Q)=0$,
		so $x=0$ by the unimodularity of $g$.
		Now let $\phi\in \Hom_A(Q,A)$. Then $\pi_1\circ \phi\in\Hom_B(Q,B)$,
		hence there exists $x\in Q$
		such that $\pi_1\phi y=g(x,y)$ for all $y\in Q$.
		Define $\psi y:=\phi y-f(x,y)$.
		Then $\psi \in \Hom_A(Q,A)$ satisfies $\im\psi\subseteq \ker \pi_1=\mu B$.
		Since $\im \psi$ is a right ideal in $A$ and $b\mu\notin \mu B$
		for all $0\neq b\in \mu B$, it follows that $\psi=0$ and  
		$\phi y=f(x,y)$ for all $y\in Q$.

        Finally, it is clear that $\pi_1(Q,f)=(Q,g)$, so we
        have verified the exactness at  $W^\veps(B,\tau_1)$.
        
\medskip

	\noindent \textit{\ref{item:Ei-tag} implies exactness at $W_\veps(B,\tau_2)$.}
		Let $(Q,g)\in W^\veps(B,\tau_2)$ be a hermitian space such that
		$[\rho_2(Q,g)]=0$ in $W_\veps(A,\sigma)$.
        By \ref{item:Ei-tag}, we can
        replace $(Q,g)$ with a Witt equivalent space 
        and  assume that $\rho_2 g$ admits a Lagrangian $L$ such that
        such that $L\oplus (Q\otimes 1)=Q\otimes_B A$.
        
		Define $\sigma_2=\Int(\mu^{-1})\circ \sigma$.
		Then $\mu^{-1}$-conjugation (see \ref{subsec:conjugation}) 
		induces a group
		isomorphism $(P,f)\mapsto (P,\mu^{-1}f):W_{-\veps}(A,\sigma)\to W_\veps(A,\sigma_2)$.
		Furthermore, one readily checks that
		$\pi_2(P,f)=\pi_1(P,\mu^{-1}f)$.
		Thus, it is enough to show that there exists $(P,f)\in W_\veps(A,\sigma_2)$
		with $\pi_1(P,f)=(Q,g)$. This can be shown exactly as in
		the proof that \ref{item:Ei} implies exactness at $W_\veps(B,\tau_1)$.
	\end{proof}

	\begin{remark}\label{RM:finer-exactness}
		In the course of proving Theorem~\ref{TH:exactness-equiv-conds},
		we also showed:
		\begin{enumerate}[label=(\roman*)]
			\item \label{item:RM:finer:rho-two-image}
    		Given $(P,f)\in \Herm[\veps]{A,\sigma}$,
    		there exists 
    		$(Q,g)\in \Herm[-\veps]{B,\tau_2}$
    		with $\rho_2g\cong f$
    		if and only if $\pi_1f$ admits a Lagrangian   $M$ 
    		for which
			$M\cdot A=P$.
			\item \label{item:RM:finer:pi-one-image}
			Given $(Q,g)\in \Herm[\veps]{B,\tau_1}$,
    		there exists $(P,f)\in\Herm[\veps]{A,\sigma}$ 
    		with $\pi_1f\cong g$ if and only if 
    		$\rho_1f$ admits a Lagrangian $L$ for which $L\oplus (Q\otimes 1)=Q\otimes_BA$.
    		\item  \label{item:RM:finer:rho-one-image}
    		Given $(P,f)\in \Herm[-\veps]{A,\sigma}$,  
    		there exists $(Q,g)\in\Herm[\veps]{B,\tau_1}$ 
    		with $\rho_1g\cong f$
    		if and only if $\pi_2 f$ admits    a Lagrangian   $M$ 
    		for which
			$M\cdot A=P$.
    		\item  \label{item:RM:finer:pi-two-image}
    		Given $(Q,g)\in \Herm[\veps]{B,\tau_2}$,
    		there exists $(P,f)\in\Herm[-\veps]{A,\sigma}$ 
    		with $\pi_2f\cong g$
    		if and only if $\rho_2f$ admits a Lagrangian $L$ 
    		for which $L\oplus (Q\otimes 1)=Q\otimes_BA$.
		\end{enumerate}
	\end{remark}
	
\subsection{Overview of The Proof of Theorem~\ref{TH:exactness}}
\label{subsec:proof-overview}

	Keep the notation of \ref{subsec:octagon} and suppose that
	$R$ is semilocal.
	Thanks to Theorem~\ref{TH:exactness-equiv-conds}
	and the antipodal symmetry of \eqref{EQ:octagon},
	in order to prove Theorem~\ref{TH:exactness},
	it is enough to establish the conditions \ref{item:Eii}--\ref{item:Ei-tag}.
	The proof is somewhat involved,  so we   outline the argument first.
	
\medskip
	
	Let us consider condition \ref{item:Ei}:
	We are given 
	$(Q,g)\in\Herm[\veps]{B,\tau_1}$  such that $[\rho_1 g]=0$
	in $\Herm[-\veps]{A,\sigma}$ and need
	to find a Lagrangian $L$ of $\rho_1g$ such that $(Q\otimes 1)\oplus L= Q\otimes_B A$,
	possibly after replacing $(Q,g)$ with a Witt equivalent hermitian space.
	We abbreviate $Q\otimes_BA$ to $QA$ and identify $Q$ with its copy $Q\otimes 1$ in $QA$.
	
	Fix a Lagrangian $L' $ of $\rho_1g$; it exists by Theorem~\ref{TH:trivial-in-Witt-ring}(ii).
	We assume that $\rrk_A L'=\frac{1}{2}\rrk_AP$ for simplicity, so
	that $L' \in\Lag(\rho_1 g)$ (see \ref{subsec:Lagrangians}).	
	Let $\frakm_1,\dots,\frakm_t$ denote the maximal ideals of $R$.
	Suppose that we can find, for every $1\leq i\leq t$, an isometry
	$\vphi_i \in U^0(\rho_1g(\frakm_i))$ such that $Q(\frakm_i)\oplus \vphi_i (L' (\frakm_i))=QA(\frakm_i)$.
	Then, by Theorem~\ref{TH:U-zero-mapsto-onto-closed-fibers},
	there exists $\vphi\in U^0(\rho_1g)$ with $\vphi(\frakm_i)=\vphi_i$,
	and by Lemma~\ref{LM:semilocal-direct-sum-reduction}, ${Q\oplus \vphi(L' )}=QA$.
	We may therefore take $L=\vphi(L')$ 
	and the proof of \ref{item:Ei} reduces into proving the existence of $\vphi_1,\dots,\vphi_t$.
	
	Write $k_1=k(\frakm_1)$, $g_1 =g(\frakm_1)$, $Q_1=Q(\frakm_1)$, $L'_1=L'(\frakm_1)$ and so on.
	In ideal circumstances, e.g., when $\sigma_1$ is unitary,
	we have $U^0(\rho_1g_1)=U(\rho_1g_1)$ (Proposition~\ref{PR:U-zero-description}),
	and the existence of $\vphi_1$ can be shown by  proving the existence of some   $L_1\in\Lag(\rho_1 g_1)$
	with $Q_1\oplus L_1=Q_1A_1$ and then using 
	Lemma~\ref{LM:Lag-transitive-action} 
	to assert the existence of $\vphi_1\in U(\rho_1g_1)$ with $\vphi_1(L'_1)=L_1$.
	
	To prove the existence of $L_1$, we    write $g_1$ as a sum of an anisotropic
	form and a hyperbolic form (Proposition~\ref{PR:ansio-Witt-equivalent})
	and treat each case separately. In fact, the anisotropic
	case   has already been addressed in Remark~\ref{RM:Es-for-field}, so only the
	hyperbolic case should be treated.
	In addition, when $k_1$ is infinite, one can   use the rationality of
	the the $k_1$-variety $\uU^0(f_1)$  (see Theorem~\ref{TH:rational-variety})
	to reduce to the case where $k_1$ is algebraically closed (Proposition~\ref{PR:rationality-for-Ei}). 
	Assuming $k_1$ is algebraically
	closed or finite, we have $[A_1]=0$ in $\Br S_1$ and $[B_1]=0$ in $\Br T_1$, in which case  we  can
	further use   $\mu$-conjugation and $e$-transfer (see~\ref{subsec:conjugation})
	to reduce to the case where  $\deg B_1=1$ and $\deg A_1=2$ 
	(Reduction~\ref{RD:common-reduction}). After these reductions, establishing the existence of   $L_1\in \Lag(\rho_1 g_1)$
	with $Q_1\oplus L_1=Q_1A_1$
	becomes a technical   check.
	
	
	Unfortunately, it can happen that $\vphi_1$ does not exist. 
	Specifically, in the context of \ref{item:Ei},
	this can happen when $(\sigma,\veps)$ is orthogonal, $[B]=0$
	and $[A]\neq 0$. 
	In order to understand what goes wrong, it is instructive to view $\uLag(\rho_1g )$ as an $R$-scheme
	on which $\uU^0(\rho_1g )$ acts. Suppose that $R$ is connected 
	for simplicity. 
	Then Propositions~\ref{PR:transitive-action-of-uUf} and~\ref{PR:partition-of-Lag}
	imply that $\uLag(\rho_1g)$ is the disjoint union of two   components,
	both being homogeneous $\uU^0(\rho_1g)$-spaces.
	When $\rrk_BQ$ is even, it turns out that  $\vphi_1$
	exists   when $L'$ lives in   one of these two components,
	but not when it lives in the other   (Corollary~\ref{CR:Ei-good-Lag-is-in-Lag-zero}). 
	Moreover, the former  component  may have no $R$-points.
	To overcome this, we put considerable work  into effectively identifying the ``good'' component 
	of  $\uLag(\rho_1g)$ and 
	understanding when does it have $R$-points --- if it is does not, then  
	$(Q,g)$ must be replaced with a Witt equivalent hermitian space.
	When $\rrk_BQ$ is odd, $\vphi_1$ never exists, but then one
	can prove that $g$ must be hyperbolic (Proposition~\ref{PR:Ei-orthII-odd-rank-forms}) and thus
	Witt equivalent to the zero form.
	
\medskip

	The proofs of conditions  \ref{item:Eii}, 
	\ref{item:Eii-tag} and \ref{item:Ei-tag} follow a similar strategy and share
	similar complications, notably when $(\sigma,\veps)$ or $(\tau_2,\veps)$
	are orthogonal. 
	The   cases where   $\vphi_1,\dots,\vphi_t$
	exist are precisely
	the  ones featuring in  parts
	(i)--(iv) of Theorem~\ref{TH:finer-exactness} below.
	
\medskip

	The argument we outlined is carried in Sections~\ref{sec:preparation}--\ref{sec:completion-of-proof}:
	Section~\ref{sec:preparation} collects some preliminary results, Section~\ref{sec:Ei}
	establishes conditions \ref{item:Ei} and \ref{item:Ei-tag},
	Section~\ref{sec:Eii} establishes
	conditions \ref{item:Eii} and \ref{item:Eii-tag}, and the proof of Theorem~\ref{TH:exactness}
	is concluded in Section~\ref{sec:completion-of-proof}, which also brings some of its by-products.
	
	In order to address \ref{item:Ei} and \ref{item:Ei-tag},
	resp.\ \ref{item:Eii} and \ref{item:Eii-tag}, simultaneously, {\it we 
	replace the setting of \ref{subsec:octagon} 
	with the more robust   Notation~\ref{NT:proof-of-Es} below, and use the latter throughout Sections
	\ref{sec:preparation}--\ref{sec:Eii}. We return to the setting of
	\ref{subsec:octagon}  in Section~\ref{sec:completion-of-proof}.}

\section[Preparation]{Preparation for The Proof of Theorem~\ref{TH:exactness}}
\label{sec:preparation}

	This section collects  preliminary results
	that will be used in proving conditions  \ref{item:Eii}, \ref{item:Ei},
	\ref{item:Eii-tag},  \ref{item:Ei-tag} of
	Theorem~\ref{TH:exactness-equiv-conds} when $R$ is    semilocal.
	
	We begin with 
	replacing the setting of \ref{subsec:octagon} 
	with a new one --- Notation~\ref{NT:proof-of-Es} ---
	that will be in use until the end of Section~\ref{sec:Eii}. The reason for
	the change of notation is two-fold: first,
	it will ultimately allow us to treat \ref{item:Ei} and \ref{item:Ei-tag},
	resp.\ \ref{item:Eii} and \ref{item:Eii-tag}, simultaneously,
	and second, the new notation is amenable to $\mu$-conjugation and $e$-transfer
	in the sense of \ref{subsec:conjugation} (see \ref{subsec:simultaneous} for a precise statement).
	We will explain how Notation~\ref{NT:proof-of-Es} specializes to that of \ref{subsec:octagon}
	(in a few possible ways) in Section~\ref{sec:completion-of-proof},
	where we prove Theorem~\ref{TH:exactness}.

	\begin{notation}\label{NT:proof-of-Es}
		Let $(A,\sigma)$ be an Azumaya $R$-algebra with involution
		and let $\veps\in \Cent(A)$ be an element 
		satisfying $\veps^\sigma=\veps$.
		Write $S=\Cent(A)$
		and let $T$ be a quadratic \'etale $S$-subalgebra of $A$
		such that 
		$T^\sigma=T$
		and $\rank_T{A_A}$ is constant
		along the fibers of $\Spec T\to\Spec S$ (we have $A \in\rproj{T}$
		by Lemma~\ref{LM:projective-transfer}).
		Write $B=\Cent_A(T)$ and $\tau=\sigma|_B$.
		The inclusion $S\to T$ is denoted $\iota$,
		and we let $T_0=\{t\in T\suchthat t^\sigma=t\}$.

		We let $\rho$ denote the inclusion map $B\to A$,
		viewed as a homomorphism of $R$-algebras
		with involution $(B,\tau)\to (A,\sigma)$.
		Given $Q\in\rproj{B}$, we abbreviate $Q\otimes_BA$ to $QA$
		and identify $Q$ as a $B$-submodule of $QA$ via $x\mapsto x\otimes 1$
		(this map is injective because $Q_B$ is flat).
		If $(Q,g)\in\Herm[\veps]{B,\tau}$, we define
		$\rho(Q,g)=(QA,\rho g)$ as in \ref{subsec:herm-base-change}.
		
		We let $\pi:A\to B$ denote a homomorphism
		of $(B,B)$-bimodules such that $\pi|_B=\id_B$.
		Given $(P,f)\in \Herm[\veps]{A,\sigma}$,
		we write $\pi(P,f)=(P,\pi f)$, where $\pi f=\pi \circ f$.
		We shall see below (Lemma~\ref{LM:dfn-of-pi-A-B})
		that   $\pi$ exists, is unique,
		and satisfies
		$\pi\circ \sigma=\tau \circ \pi$.
		Moreover, $\pi (P,f)\in\Herm[\veps]{B,\tau}$.
	\end{notation}

	By Proposition~\ref{PR:centralized-in-Az-alg},
	$B$ is Azumaya over
	$T$, $[B]=[A\otimes_S T]$ in $\Br T$ and $\deg B=\frac{1}{2}\iota\deg A$.
	This means that $B$
	is separable projective over $R$, so by Example~\ref{EX:Azumaya-over-fixed-subring},
	$(B,\tau)$ is Azumaya over $T_0$.
	Proposition~\ref{PR:centralized-in-Az-alg} also tells us that
	\[
	\iota\rrk_A P=\rrk_B P
	\qquad\text{and}\qquad
	\iota\rrk_A QA =2\rrk_B Q
	\]
	for all $P\in\rproj{A}$ and $Q\in\rproj{B}$ such that $\rrk_BQ$
	is constant on the fibers 
	of $\Spec T\to \Spec S$. In addition, $A_B\in\rproj{B}$ by Lemma~\ref{LM:projective-transfer}.
	These facts will be used freely and without comment.
	
	We further note that the assumptions of Notation~\ref{NT:proof-of-Es}
	continue to hold if we base change along a ring homomorphism $R\to S$,
	thanks to Lemma~\ref{LM:center-base-change}.

\subsection{Existence and Uniqueness of $\pi$}

	\begin{lem}\label{LM:dfn-of-pi-A-B}
		With Notation~\ref{NT:proof-of-Es}, the following hold:
		\begin{enumerate}[label=(\roman*)]
			\item 
			There exists a unique $(B,B)$-bimodule homomorphism
			$\pi=\pi_{A,B}:A\to B$ such that $\pi|_B=\id_B$.	
			\item If there exists $\lambda\in T$
			such that $\lambda^2\in \units{S}$
			and $T=S\oplus \lambda S$, then $\pi a=\frac{1}{2}(a+\lambda^{-1}a\lambda)$
			for all $a\in A$.
			\item $\pi\circ \sigma=\tau\circ \pi$.
			\item $E:=\ker \pi$ satisfies $A=B\oplus E$, $E\cdot E=B$, $EA=A$ and $\rrk_B E_B=\deg B$.
			\item When $R$ is semilocal, there exists $\lambda$ as in (ii)
			and $\mu\in\units{A}$ such that $E=\mu B=B\mu$, $\lambda\mu=-\mu\lambda$
			and $\pi(b_1+\mu b_2)=b_1$ for all $b_1,b_2\in B$.\footnote{
				Note that  in contrast
				with \ref{subsec:octagon}, 
				we do no require $\lambda^\sigma=-\lambda$ and $\mu^\sigma=-\mu$.
				Indeed, this cannot be guaranteed in general. The situations
				considered in cases (i) and (ii) of Lemma~\ref{LM:strcture-of-quat}
				below are such examples, the reason being that $\sigma|_B=\id_B$ or $\sigma|_E=\id_E$.
			} If $\deg A=2$,
			then we also have $\mu^2\in\units{S}$.
			\item For all $(P,f)\in\Herm[\veps]{A,\sigma}$,
			we have $(P,\pi f)\in\Herm[\veps]{B,\tau}$, where $\pi f:=\pi \circ f$.
		\end{enumerate}
	\end{lem}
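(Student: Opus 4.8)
The plan is to establish (i)--(iii) together first, then prove (iv), then deduce (v) from (iv), and finally obtain (vi) using (iii) and (iv); residue fields are brought in whenever a structural fact about semisimple algebras is needed. For \emph{uniqueness} in (i): if $\pi,\pi'$ are both $(B,B)$-bimodule retractions $A\to B$, their difference $\delta$ is a $(B,B)$-bimodule map $A\to B$ killing $B$, and I would show $\delta=0$ after localizing at primes of $S=\Cent(A)$; over the (semi)local $S_{\frakp}$, Lemma~\ref{LM:quad-etale-over-semilocal} applied to $T$ supplies $\lambda$ with $\lambda^2\in\units{S_{\frakp}}$ and $T_{\frakp}=S_{\frakp}\oplus\lambda S_{\frakp}$, conjugation by $\lambda$ splits $A_{\frakp}=B_{\frakp}\oplus E$ with $E=\{a:\lambda a=-a\lambda\}$ and $B_{\frakp}=\Cent_A(\lambda)$, and for $e\in E$ the two $B$-linearities force $\lambda\,\delta(e)=\delta(\lambda e)=\delta(-e\lambda)=-\delta(e)\lambda$, so $\delta(e)\in B\cap E=0$. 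For \emph{existence}, I would spread out these local $\lambda$'s to a Zariski cover of $\Spec S$ and set $\pi a=\tfrac12(a+\lambda^{-1}a\lambda)$ on each member; a direct check using $\lambda^2\in\Cent(A)$ shows the image lies in $\Cent_A(\lambda)=B$ and that $\pi|_B=\id_B$, and uniqueness makes the local pieces agree on overlaps, so they glue. This gives (i) and (ii) simultaneously. Part (iii) is then formal: $\sigma\circ\pi\circ\sigma$ is again a $(B,B)$-bimodule retraction onto $B$ (using $B^{\sigma}=B$), hence equals $\pi$ by uniqueness, i.e.\ $\pi\sigma=\sigma\pi=\tau\pi$.

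For (iv), $A=B\oplus E$ with $E=\ker\pi$ is immediate from $\pi^2=\pi$. For $E\cdot E=B$, $EA=A$ and $\rrk_B E_B=\deg B$, I would reduce to the residue fields $k(\frakm)$, $\frakm\in\Max R$: since $\pi$ is a split surjection, $E$ commutes with base change, and over $k(\frakm)$ I would write $A(\frakm)=\prod_j A_j$ (central simple over fields $K_j$), $T(\frakm)=\prod_j T_j$, $B(\frakm)=\prod_j\Cent_{A_j}(T_j)$, $E(\frakm)=\prod_j E_j$ with $E_j$ the $(-1)$-eigenspace of conjugation by a generator of $T_j$. When $T_j$ is a field, $B_j$ is simple and $E_j\cdot E_j$ is a two-sided ideal of $B_j$ which is nonzero (otherwise $E_j$ would be a nilpotent two-sided ideal of the semisimple ring $A_j$), hence all of $B_j$; when $T_j\cong K_j\times K_j$, a Peirce computation identifies $E_j=e_1A_je_2\oplus e_2A_je_1$ and $B_j=e_1A_je_1\oplus e_2A_je_2$, and simplicity of $A_j$ (so $A_je_2A_j=A_j$) yields $(e_1A_je_2)(e_2A_je_1)=e_1A_je_1$, and symmetrically, so again $E_j\cdot E_j=B_j$; in both cases $\dim E_j=\dim B_j$. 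Then $E\cdot E+\frakm B=B$ for all $\frakm$, whence $E\cdot E=B$ by Nakayama over $R$ (as in the proof of Lemma~\ref{LM:semilocal-direct-sum-reduction}); $EA=A$ follows since $A=B\oplus E=(E\cdot E)+E\subseteq EA$; and $\rrk_BE_B=\deg B$ follows from the dimension count together with local constancy of reduced ranks.

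For (v), where $R$ is semilocal (hence so are $S$ and $T$): fix $\lambda$ as in (ii), so $E$ is the anticommutant of $\lambda$. By (iv) and Lemma~\ref{LM:rank-determines} applied to the Azumaya $T$-algebra $B$, $E\cong B$ as right $B$-modules; let $\mu$ be the image of $1$ under such an isomorphism, so $E=\mu B$. Then $b\mapsto b\mu$ is injective ($b\mu=0\Rightarrow bE=0\Rightarrow b(E\cdot E)=bB=0\Rightarrow b=0$), hence an isomorphism $B\xrightarrow{\sim}E$, so also $E=B\mu$ and $\mu B=B\mu$; therefore $B=E\cdot E=(\mu B)(\mu B)=\mu^2B=B\mu^2$, so $\mu^2\in\units{B}\subseteq\units{A}$ and $\mu\in\units{A}$ with $\mu^{-1}=\mu(\mu^2)^{-1}$. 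The properties $\lambda\mu=-\mu\lambda$ (from $\mu\in E$) and $\pi(b_1+\mu b_2)=b_1$ (from (ii) and $A=B\oplus\mu B$) are then clear; and if $\deg A=2$ then $\deg B=1$, i.e.\ $B=T$, so $\mu^2\in\units{T}$, and since $\Int(\mu)|_T$ is the standard involution $\theta$ of $T$ (it is $S$-linear and sends $\lambda$ to $-\lambda$; see~\ref{subsec:quad-etale}) while $\theta(\mu^2)=\mu^2$, we conclude $\mu^2\in T^{\theta}\cap\units{T}=\units{S}$.

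Finally (vi): the $(B,B)$-sesquilinearity of $\pi f$ follows from that of $f$ together with the $(B,B)$-bilinearity of $\pi$ and $b^{\sigma}=b^{\tau}$ for $b\in B$, and the $\veps$-hermitian symmetry from (iii) and $\veps\in S\subseteq B$; so $(P,\pi f)$ lands in $\Herm[\veps]{B,\tau}$ once unimodularity is shown. The map $x\mapsto\pi f(x,-):P\to\Hom_B(P,B)$ is injective because $\pi f(x,P)=0$ makes $f(x,P)$ a right ideal of $A$ contained in $E$, and $E$ contains no nonzero right ideal $I$ of $A$ (as $I\subseteq E$ forces $IE\subseteq(E\cdot E)\cap E=B\cap E=0$, hence $IB=I(E\cdot E)=0$ and $I=0$), so $f(x,P)=0$ and $x=0$ by unimodularity of $f$; surjectivity can then be checked after base change to the residue fields $k(\frakm)$, where the same injectivity, combined with $\dim_{k(\frakm)}\Hom_{B(\frakm)}(P(\frakm),B(\frakm))=\dim_{k(\frakm)}P(\frakm)$, forces bijectivity. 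The step I expect to be the real work is (v) --- producing a genuine \emph{unit} $\mu$ generating $E$ over $B$ --- which is precisely why (iv), and in particular the identity $E\cdot E=B$, must be secured first.
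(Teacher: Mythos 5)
Your overall architecture is sound and in several places genuinely different from the paper's: for (i)--(ii) the paper produces $\pi$ from a separability idempotent $e\in T^{\op}\otimes_S T$ (so that $\pi(a)=ae$ and $B=Ae$), whereas you build $\pi$ by gluing the local formula $a\mapsto\frac12(a+\lambda^{-1}a\lambda)$ over a Zariski cover and prove uniqueness by the $\pm1$-eigenspace decomposition for $\Int(\lambda)$; for $E\cdot E=B$ the paper uses the fact that $E^2+E$ is a two-sided ideal of the Azumaya algebra $A$ together with Nakayama, whereas you compute on the fibers and apply Nakayama; for the unimodularity of $\pi f$ in (vi) the paper explicitly constructs a preimage $\hat\phi$ using elements $u_i,v_i\in E$ with $\sum_iu_iv_i=1$, whereas you check bijectivity fiberwise. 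All of these alternative routes work. One small imprecision in (iv): the containment $E\cdot E\subseteq B$ must be verified at \emph{localizations} of $R$ (where your local $\lambda$ exhibits $E$ as the anticommutant of $\lambda$, so $E\cdot E\subseteq\Cent_A(\lambda)=B$), not merely on residue fields of maximal ideals --- a containment of submodules is not detected by fibers over $\Max R$ alone. Your Nakayama step for the reverse inclusion is fine once the containment is in place.

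The one genuine gap is in (v), at the step ``$b\mapsto b\mu$ is injective, \emph{hence an isomorphism} $B\xrightarrow{\sim}E$.'' An injective homomorphism between finite projective modules of equal rank over a (semi)local ring need not be surjective (multiplication by $p$ on $\Z_{(p)}$), so this inference is invalid, and everything downstream --- $E=B\mu$, $\mu B=B\mu$, and the simplification $(\mu B)(\mu B)=\mu^2B$ --- rests on it. The correct order of deductions, which is essentially the paper's, is available from what you already have: $B=E\cdot E=(\mu B)(\mu B)\subseteq\mu A$ shows directly that $\mu$ is right-invertible, hence $\Nrd_{A/S}(\mu)\in\units{S}$ and $\mu\in\units{A}$ (alternatively use Lemma~\ref{LM:invertability-test}); then $\Int(\mu^{-1})$ carries $\Cent_A(\lambda)=B$ to $\Cent_A(-\lambda)=B$, so $B\mu=\mu B$ and $E=\mu B=B\mu$ because both $\mu B$ and $B\mu$ sit inside the anticommutant $E$ of $\lambda$ and $\mu^{-1}E\subseteq B$. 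With that repair the rest of your (v), including $\mu^2\in\units{S}$ when $\deg A=2$, goes through.
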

	
	\begin{proof}
		(i) Write $T^e=T^\op\otimes_ST$ and $B^e=B^\op\otimes_S B$.
		We view $A$ and $B$ as right $B^e$-modules using their
		evident $(B,B)$-bimodule structure.
		
		Since $T$ is a separable $S$-algebra, 
		the map $\mu:T^e\to T$ sending $x^\op\otimes y$ to $xy$ is split
		as a morphism of $T^e$-modules.
		Let $\xi:T\to T^e$ denote such a splitting,
		and let $e:=\xi(1_T)$. 
		It is well-known that   $e^2=e$
		and $e(1\otimes t)=e(t^\op \otimes 1)$ for all $t\in T$,
		see \cite[Lemma~III.5.1.2]{Knus_1991_quadratic_hermitian_forms}
		and its proof.
		
		Note that $e$ is a central idempotent in $B^e$.
		Thus,  $A=Ae\oplus A(1-e)$,
		and both summands are $B^e$-modules.
		Writing $e=\sum_i u_i^\op\otimes v_i$ with $\{u_i,v_i\}_i\subseteq T$,
		we see that for all $b\in B$, we have
		\[
		b  e=\sum_i u_ibv_i=b\sum_iu_iv_i= b\cdot 1_T=b,
		\]
		because $\sum_iu_iv_i=\mu(e)=1_T$.
		On the other hand, if $a\in Ae$, then for all
		$t\in T$, we have 
		\[
		ta=a(t^\op \otimes 1)=ae(t^\op \otimes 1)=ae(1\otimes t)=
		a(1\otimes t)=at,
		\]
		hence $a\in B$. We conclude that $B=eA$.
		This in turn means that $\pi:a\mapsto  ae$ is a $B^e$-module
		homomorphism, or equivalently,
		a $(B,B)$-bimodule homomorphism, which splits the inclusion $B\to A$.
		
		If $\pi':A\to B$ is another $(B,B)$-module homomorphism splitting
		$B\to A$,
		then $B=Ae\subseteq \ker(\pi-\pi')$. On the other hand, 
		since multiplying on the right by $e$ annihilates $A(1-e)$ while fixing $B$,
		we have   $A(1-e)\subseteq \ker \pi'$.
		Since $\ker \pi=A(1-e)$,
		this means that $\ker (\pi-\pi')\supseteq Ae+A(1-e)=A$,
		so $\pi=\pi'$.
		
		(ii) Using $\lambda^2\in S$ 
		and $B=\Cent_A(T)=\Cent_A(\lambda)$, it is routine to check 
		that $a\mapsto \frac{1}{2}(a+\lambda^{-1}a\lambda)$
		is a $(B,B)$-bimodule homomorphism from $A$ to $B$ 
		which restricts to the identity on $B$.
		This map must  be $\pi$ by (i). 
		
		(iii) The uniqueness of $\pi$   implies that
		$\tau\circ \pi\circ\sigma=\pi$, or rather,
		$\pi\circ \sigma=\tau\circ \pi$.

		(iv) That $A=B\oplus E$ follows from the fact that $\pi:A\to B$ splits
		the inclusion  $A\to B$. 
		Since $\rrk_B A_B=\iota\deg A=2\deg B$,
		this means that $\rrk_B E=\deg B$.

		We proceed with checking that 
		$E\cdot E\subseteq B$. It is enough to prove that this
		holds after localizing at $\frakp$ for all $\frakp\in\Spec R$,
		so we may assume that $R$ is local.
		In this case, by Lemma~\ref{LM:quad-etale-over-semilocal},
		there exists $\lambda\in T$
		such that $\lambda^2\in\units{S}$
		and $T=S\oplus \lambda S$. By (ii), 
		$E$ consists of the elements which anti-commute
		with $\lambda$ while
		$B=\Cent_A(\lambda)$. Thus, $E\cdot E\subseteq B$.
		
		Now, since $A=B\oplus E$ and $BEB\subseteq E$, the set $E^2+E$ is a two sided	
		ideal of $A$. Since $A$ is Azumaya over $S$, there exists
		$I\idealof S$ such that $E^2+E=IA=IB+IE$ \cite[Lemma II.3.5]{DeMeyer_1971_separable_algebras}.
		It follows that  $E=IE$ and $E^2=IB$.
		If $I\neq S$, then $\ann_S E\neq 0$ by Nakayama's Lemma,
		which is impossible
		because $\rrk_BE=\deg B>0$.
		Thus, $I=S$ and $E^2=IB=B$.
		This also means that   $EA\supseteq E^2A=BA=A$. 
		
		(v) The existence of $\lambda$ follows from Lemma~\ref{LM:quad-etale-over-semilocal}.
		By (iv), $\rrk_B E_B=\deg B$,
		so 
		$E_B\cong B_B$ by Lemma~\ref{LM:rank-determines}.
		Let $\mu$ be a generator of $E_B$.
		By (ii), $0=\pi(\mu)=\frac{1}{2}(\mu+\lambda^{-1}\mu\lambda)$,
		so $\lambda\mu=-\mu\lambda$. By  (iv),
		$B=E^2=\mu B\mu B\subseteq \mu A$, so $\mu$ is invertible
		on the right. This means that $\Nrd_{A/S}(\mu)\in\units{S}$, hence  $\mu\in\units{A}$.
		Now, since $B=\Cent_A(\lambda)$ and $E$ is the set of elements
		in $A$ which anti-commute with $\lambda$ (by (ii)), we have $E=\mu B=B\mu$.
		This means that the map $A=B\oplus \mu B\to B$ sending $b_1+\mu b_2$ to $b_1$ ($b_1,b_2\in B$)
		is a $(B,B)$-bimodule homomorphism which restricts to the
		identity on $B$. Therefore, it must coincide with $\pi$.
		Finally, if $\deg A=2$, then $\deg B=1$, so $B=T$
		and $A$ is generated as an $S$-algebra by $\lambda$ and $\mu$.
		Since $\mu^2$ commutes with both $\lambda$ and $\mu$, 
		we have $\mu^2\in\units{\Cent(A)}=\units{S}$.
		
		(vi) 
		It is straightforward to check that $\pi f$ is
		an $\veps$-hermitian form. We need to show that $\pi f$ is unimodular.
		Using (iv),
		choose $\{u_i,v_i\}_{i=1}^t\subseteq E$
		such that $\sum_i u_iv_i=1$. Given $\phi\in \Hom_B(P,B)$,
		define $\hat{\phi}:P\to A$
		by $\hat{\phi}x=\phi x+\sum_i\phi(xu_i)v_i$.
		We claim that $\hat{\phi}\in \Hom_A(P,A)$.
		Indeed,  $\hat{\phi}$ is additive, and for all
		$b\in B$, $b'\in E$ and $x\in P$, we have
		$\hat{\phi}(xb)=\phi (xb)+\sum_i\phi(x bu_i)v_i=
		\phi x\cdot b+\sum_{i,j}\phi(xv_ju_jbu_i)v_i=
		\phi x\cdot b+\sum_{i,j}\phi(xv_j)u_jbu_iv_i=
		\phi x\cdot b+\sum_{j}\phi(xv_j)u_jb=\hat{\phi} x\cdot b$
		and 
		$\hat{\phi}(x b')=\phi (xb')+\sum_i\phi(x b'u_i)v_i=
		\sum_i \phi(xu_iv_ib')+\sum_i\phi x\cdot b'u_iv_i=
		\sum_i\phi (xu_i)v_ib'+\phi x\cdot b'
		=\hat{\phi} x\cdot b'$.
		A similar computation shows that $\phi\mapsto \hat{\phi}:\Hom_B(P,B)\to \Hom_A(P,A)$
		defines an inverse to $\xi\mapsto \pi\circ \xi:\Hom_A(P,A)\to \Hom_B(P,B)$.
		The composition of the latter map with $x\mapsto f(x,-):P\to \Hom_A(P,A)$
		is precisely $x\mapsto \pi f(x,-):P\to \Hom_B(P,B)$,
		so this map is bijective and $\pi f$ is unimodular.
	\end{proof}
	
\subsection{Some Structural Results}

	\begin{lem} \label{LM:strcture-of-quat}
		With Notation~\ref{NT:proof-of-Es},
		suppose that $R$ is semilocal, $S=R$ and
		$\deg B=1$. Then there exist
		$\lambda,\mu\in A$ such that 
		$\lambda^2,\mu^2\in\units{R}$, $\lambda\mu=-\mu\lambda$,
		$\{1,\lambda\}$ is an $R$-basis of $B$,
		$\{1,\lambda,\mu,\mu\lambda\}$ is an $R$-basis of $A$,
		and:
		\begin{enumerate}[label=(\roman*)]
			\item $\lambda^\sigma=\lambda$ and $\mu^\sigma=-\mu$ if $\tau=\id_T$;
			\item $\lambda^\sigma=-\lambda$ and $\mu^\sigma=\mu$ if $\tau$
			is unitary 
			and $\sigma$ is orthogonal;
			\item $\lambda^\sigma=-\lambda$ and $\mu^\sigma=-\mu$ if 
			$\sigma$ is symplectic.
		\end{enumerate}
		Furthermore, 
		$\pi(b_1+\mu b_2)=b_1$ for all $b_1,b_2\in B$.
	\end{lem}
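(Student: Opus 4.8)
The plan is to extract $\lambda$ and a provisional $\mu'$ from Lemma~\ref{LM:dfn-of-pi-A-B}(v), which already delivers every asserted property (for $\mu=\mu'$) except the prescribed signs of $\lambda^\sigma$ and $\mu^\sigma$; the remaining work is to fix the signs, which in one of the three cases requires replacing $\mu'$. First I would reduce to the case $R$ connected, by splitting $R$ into its finitely many connected factors and arguing over each: the data of Notation~\ref{NT:proof-of-Es} is stable under base change and all conclusions are pointwise, so this is harmless, and over a connected factor $\sigma$ is orthogonal or symplectic (Proposition~\ref{PR:types-of-involutions-Az}(v)) and $\tau=\sigma|_T$ is either $\id_T$ or the standard involution $\theta$ of $T$ (Lemma~\ref{LM:invs-of-quad-et-algs}). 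Also $S=R$ and $\deg B=1$ force $\deg A=2$ and $B=T$. Lemma~\ref{LM:dfn-of-pi-A-B}(v) then supplies $\lambda\in T$ with $\lambda^2\in\units R$ and $T=R\oplus\lambda R$, together with $\mu'\in\units A$ such that $E:=\ker\pi=\mu'B=B\mu'$, $\lambda\mu'=-\mu'\lambda$, $\mu'^2\in\units R$ and $\pi(b_1+\mu'b_2)=b_1$; consequently $\{1,\lambda\}$ is an $R$-basis of $B$ and $\{1,\lambda,\mu',\mu'\lambda\}$ is an $R$-basis of $A$, and by Lemma~\ref{LM:dfn-of-pi-A-B}(ii) $E$ consists precisely of the elements of $A$ anticommuting with $\lambda$.

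Next I would determine $\lambda^\sigma$. In case (i), $\lambda\in T$ and $\tau=\id_T$ give $\lambda^\sigma=\lambda$. In case (ii), ``$\tau$ unitary'' means $\tau=\theta$, so $\lambda^\sigma=\lambda^\theta=-\lambda$. In case (iii), $\sigma$ is symplectic, hence $\rank_R\Sym_1(A,\sigma)=1$ by Proposition~\ref{PR:types-of-involutions-Az}(ii); since $R\cdot1$ is a rank-one direct summand of the rank-one projective $\Sym_1(A,\sigma)$, we get $\Sym_1(A,\sigma)=R\cdot1$, and $\tau=\id_T$ would then put $\lambda\in\Sym_1(A,\sigma)=R\cdot1$, contradicting that $\{1,\lambda\}$ is an $R$-basis of $T$; so $\tau=\theta$ and $\lambda^\sigma=-\lambda$. (The same argument shows $\tau=\id_T$ forces $\sigma$ orthogonal, so the three cases are exactly: (i) $\sigma$ orthogonal with $\tau=\id_T$; (ii) $\sigma$ orthogonal with $\tau=\theta$; (iii) $\sigma$ symplectic with $\tau=\theta$.) In every case $\lambda^\sigma=\pm\lambda$, so applying the anti-automorphism $\sigma$ to $\lambda x=-x\lambda$ shows $\sigma(E)=E$.

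In cases (ii) and (iii) I would take $\mu=\mu'$; then all the universal properties hold as recalled above, and only the sign of $\mu^\sigma$ remains. In case (ii), $\rank_R\Sym_1(A,\sigma)=3$ while $\Sym_1(A,\sigma)\cap B=\{t\in T:t^\theta=t\}=T_0=R$ has rank one; since $A=B\oplus E$ with both summands $\sigma$-stable, $\Sym_1(A,\sigma)\cap E$ has rank $3-1=2=\rank_R E$, whence $E\subseteq\Sym_1(A,\sigma)$ and $\mu^\sigma=\mu$. In case (iii), $\Sym_1(A,\sigma)=R\cdot1$; as $\mu',\mu'^\sigma\in E$ we have $\mu'+\mu'^\sigma\in E$, and also $\mu'+\mu'^\sigma\in\Sym_1(A,\sigma)=R\cdot1\subseteq B$, so $\mu'+\mu'^\sigma\in B\cap E=0$ and $\mu^\sigma=-\mu$.

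Case (i) is the only one requiring a genuine replacement of $\mu'$, and is where I expect the main work to lie. Here $\sigma$ is orthogonal and $\tau=\id_T$, so $B=T\subseteq\Sym_1(A,\sigma)$, hence $\Sym_{-1}(A,\sigma)\cap B=0$ and therefore $\Sym_{-1}(A,\sigma)=\Sym_{-1}(A,\sigma)\cap E\subseteq E$. Since $\deg A=2$ is even, Lemma~\ref{LM:invertible-symmetric-elements} (with $\veps=-1$) produces some $\mu\in\Sym_{-1}(A,\sigma)\cap\units A$, which lies in $E$. Writing $\mu=\mu'b$ with $b\in B=T$ and using that $T$ is commutative with $\Nrd_{A/R}|_T=\Nr_{T/R}$, invertibility of $\mu$ forces $b\in\units B$, so $\mu B=\mu'B=E$; therefore $\{1,\lambda,\mu,\mu\lambda\}$ is an $R$-basis of $A$, $\lambda\mu=-\mu\lambda$, $\pi(b_1+\mu b_2)=b_1$, $\mu^\sigma=-\mu$, and $\mu^2=-\Nrd(\mu)\in\units R$ (because $\mu$ anticommutes with the unit $\lambda$, so it is conjugate to $-\mu$ and $\Trd(\mu)=0$). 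This establishes case (i) and finishes the proof; the delicate points are the rank-and-projectivity bookkeeping for $\Sym_{\pm1}(A,\sigma)$, which is why one reduces to $R$ connected, and keeping careful track of the signs.
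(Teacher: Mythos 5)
Your proof is correct and follows essentially the same route as the paper's: extract $\lambda$ and a provisional $\mu'$ from Lemma~\ref{LM:dfn-of-pi-A-B}(v), then fix the signs case by case using the rank counts of $\Sym_{\pm1}(A,\sigma)=\Sym_{\pm1}(B,\sigma)\oplus\Sym_{\pm1}(E,\sigma)$ from Proposition~\ref{PR:types-of-involutions-Az}, invoking Lemma~\ref{LM:invertible-symmetric-elements} only in case (i) to replace $\mu'$ by an invertible skew element of $E=\mu'B$. The only (harmless) deviation is in case (iii), where you get $\lambda^\sigma=-\lambda$ by showing $\Sym_1(A,\sigma)=R\cdot 1$ rules out $\tau=\id_T$, whereas the paper deduces it afterwards from $\mu^\sigma=-\mu$ and $(\lambda\mu)^\sigma=-\lambda\mu$ via the identity $\lambda=-\mu\lambda\mu^{-1}$.
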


	\begin{proof}
		Let $\lambda$ and $\mu$ be as in Lemma~\ref{LM:dfn-of-pi-A-B}(v).
		Then all the requirements are fulfilled except, maybe, (i)--(iii).
		Note also that we may replace $\mu$ with any element of $\mu \units{B}$.
		
		Let $E=\ker \pi$. Since $E^\sigma=E$, $B^\sigma=B$ and $A=B\oplus E$,
		we have 
		$\Sym_{1}(A,\sigma)=\Sym_{1}(B,\sigma)\oplus \Sym_1(E,\sigma)$ and
		$\Sym_{-1}(A,\sigma)=\Sym_{-1}(B,\sigma)\oplus \Sym_{-1}(E,\sigma)$.
		
		Suppose that $\tau=\id_T$. Then $\lambda^\sigma=\lambda$
		and
		$\Sym_{-1}(A,\sigma)=\Sym_{-1}(B,\sigma)\oplus \Sym_{-1}(E,\sigma)=
		\Sym_{-1}(E,\sigma)$. By Lemma~\ref{LM:invertible-symmetric-elements},
		there exists $\mu'\in \Sym_{-1}(A,\sigma)\cap \units{A}$.
		Since $\mu'\in E=\mu B$, we have $\mu'=\mu t$ for some $t\in T$,
		so we may replace $\mu$ with $\mu'$ and finish.
		
		Suppose that $\tau$ is unitary
		and $\sigma$ is orthogonal. Then $\tau$ is the standard
		$R$-involution of $T$, so $\lambda^\sigma=-\lambda$.
		By Proposition~\ref{PR:types-of-involutions-Az},
		we have $1=\rank_R \Sym_{-1}(A,\sigma)=
		\rank_R \Sym_{-1}(B,\sigma)+ \rank_R\Sym_{-1}(E,\sigma)=
		1+ \rank_R\Sym_{-1}(E,\sigma)$,
		hence $\Sym_{-1}(E,\sigma)=0$.
		Since $E=\Sym_{1}(E,\sigma)\oplus \Sym_{-1}(E,\sigma)$, this means
		that $E=\Sym_1(E,\sigma)$ and $\mu^\sigma=\mu$.
		
		Finally, when $\sigma$ symplectic,  
		using Proposition~\ref{PR:types-of-involutions-Az}
		and the fact that $R$ is a summand of $B$,
		one finds that
		$1=\rank_R \Sym_{1}(A,\sigma)\geq 
		\rank_R \Sym_{1}(R,\sigma)+ \rank_R\Sym_{1}(E,\sigma)=
		1+ \rank_R\Sym_{1}(E,\sigma)$, hence $\Sym_1(E,\sigma)=0$.
		This means that $E=\Sym_{-1}(E,\sigma)$, so $\mu^\sigma=-\mu$
		and $(\lambda\mu)^\sigma=-\lambda\mu$.
		Now, $\lambda=-\mu\lambda\mu^{-1} =(\mu\lambda)^\sigma\mu^{-1}=
		\lambda^\sigma \mu^\sigma\mu^{-1}=-\lambda^{\sigma}$.
	\end{proof}

	\begin{lem}\label{LM:idempotent-in-T}
		With Notation~\ref{NT:proof-of-Es},		
		suppose that 
		$T\cong S\times S$ as $S$-algebras,
		and let $e,e'\in T$ correspond to $(1_S,0_S), (0_S,1_S)$
		under this isomorphism. Then:
		\begin{enumerate}[label=(\roman*)]
			\item $B=eAe+e'Ae'$.
			\item $\rrk_AeA>0$ and $AeA=A$.
			\item 
			$eB$ is an Azumaya $S$-algebra 
			of degree $\frac{1}{2}\deg A$ and $[eB]=[A]$ in $\Br S$.
			\item 
			For every $Q\in\rproj{B}$, we have
			$\rrk_A QA=\rrk_{eB}Qe+\rrk_{e'B}Qe'$.
			\item $\pi:A\to B$ is given by $\pi a=eae+e'ae'$.
		\end{enumerate}
	\end{lem}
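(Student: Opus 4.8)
We are in Notation~\ref{NT:proof-of-Es}, with $T\cong S\times S$ as $S$-algebras, and $e,e'\in T$ are the idempotents corresponding to $(1_S,0_S)$ and $(0_S,1_S)$. Note $e+e'=1$, $ee'=0$, and $e,e'\in\Cent(B)$ since $T=\Cent(B)$. The plan is to exploit the fact that $B=\Cent_A(T)$, so everything in $B$ commutes with $e$.

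\textbf{Part (i).} Since $B$ centralizes $T$, in particular $eb=be$ and $e'b=be'$ for all $b\in B$. Hence $b = (e+e')b(e+e') = ebe + eb e' + e'be + e'be' = ebe + e'be'$, because $ebe' = e e' b = 0$ and likewise $e'be=0$. So $B\subseteq eAe+e'Ae'$. Conversely, $eAe$ and $e'Ae'$ are contained in $B$: for $a\in A$, the element $eae$ commutes with $e$ and with $e'$ (clear), and hence with all of $T=S[e]$, so $eae\in\Cent_A(T)=B$; similarly $e'Ae'\subseteq B$. This gives (i).

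\textbf{Parts (ii)--(iii).} For (ii), I will invoke Proposition~\ref{PR:centralized-in-Az-alg}: $A$ is projective as a right $T$-module with $\rank_T A_A$ constant along the fibers of $\Spec T\to\Spec S$. Writing $A = Ae\oplus Ae'$ as right $T$-modules and noting $\Int(\Int(\mu))$-type symmetry — more directly, the standard $S$-involution of $T$ swaps $e,e'$ and is realized by conjugation by the $\mu$ of Lemma~\ref{LM:dfn-of-pi-A-B}(v) (or by any unit anti-commuting with a generator), giving an $S$-linear isomorphism $Ae\cong Ae'$ — we get that $\rank_S Ae$ equals half of $\rank_S A$, which is positive. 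Translating to reduced ranks over $A$ (via $eA$ rather than $Ae$, which is symmetric), $\rrk_A eA = \frac12\deg A>0$, and then $AeA=A$ by Corollary~\ref{CR:degree-of-endo-ring}. For (iii): $eAe = eBe = eB$ (using $eb=be$, so $eBe=eB$), and by Corollary~\ref{CR:degree-of-endo-ring} applied to the full idempotent $e$, $eAe$ is Azumaya over $S=\Cent(A)$ with $\deg eAe = \rrk_A eA = \frac12\deg A$ and $[eAe]=[A]$ in $\Br S$.

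\textbf{Parts (iv)--(v).} For (v): by the uniqueness statement in Lemma~\ref{LM:dfn-of-pi-A-B}(i), it suffices to check that $a\mapsto eae+e'ae'$ is a $(B,B)$-bimodule homomorphism $A\to B$ restricting to $\id_B$. It lands in $B$ by the argument in (i)/(iii); it fixes $B$ by the identity $b=ebe+e'be'$ proved in (i); and it is $(B,B)$-bilinear because $e,e'$ commute with $B$. Hence it equals $\pi$. For (iv): apply Corollary~\ref{CR:degree-of-endo-ring} twice. Since $e$ is full, $\rrk_A(QA)=\rrk_{eAe}((QA)e)$; now $(QA)e = Q\otimes_B(Ae)$, and one checks $Ae\cong (eAe)\oplus(e'Ae\,)$... more cleanly, decompose $QA = QAe\oplus QAe'$ as right-$A$... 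I would instead argue: $Q\otimes_B A$ as a right $eB$-module (via the idempotent $e$) is $Qe\otimes_{eB}eAe$ plus a cross term — the precise bookkeeping is that $QAe = Qe\,Ae \oplus Qe'\,Ae$, and $\rrk$ is additive. The cleanest route: reduce to $R$ a field and then to algebraically closed, where $A\cong\nMat{S}{n}$, $B\cong\nMat{S}{n/2}\times\nMat{S}{n/2}$ sitting block-diagonally, $eB$ and $e'B$ are the two blocks, and the identity $\rrk_A QA = \rrk_{eB}Qe + \rrk_{e'B}Qe'$ becomes a routine matrix-size count; since reduced rank is defined fiberwise this suffices.

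\textbf{Main obstacle.} The only genuinely non-routine point is (iv): getting the additive decomposition of $\rrk_A(QA)$ into the $eB$- and $e'B$-contributions without circular appeals to the structure theory. I expect the reduction to the algebraically closed field case (legitimate since all these ranks are detected on fibers, by definition of reduced rank and Lemma~\ref{LM:center-base-change}) to be the right move; there the statement is transparent. Everything else — (i), (ii), (iii), (v) — follows directly from $B=\Cent_A(T)$, the centrality of $e,e'$ in $B$, Corollary~\ref{CR:degree-of-endo-ring}, and the uniqueness of $\pi$.
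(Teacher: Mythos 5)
Your proposal is correct, and parts (i), (iii) and (v) run along exactly the same lines as the paper's proof (Peirce decomposition for (i), $eB=eAe$ plus Corollary~\ref{CR:degree-of-endo-ring} for (iii), uniqueness of $\pi$ from Lemma~\ref{LM:dfn-of-pi-A-B}(i) for (v)). The differences are in (ii) and (iv). For (ii), the paper argues more directly that $eT\cong S$ is an $S$-summand of $eA$ (using that ${}_BB$ is a summand of ${}_BA$ by Lemma~\ref{LM:dfn-of-pi-A-B}(iv) and that $T$ is a $T$-summand of $B$), whence $\rrk_AeA>0$; your route through $\rank_S Ae=\frac12\rank_SA$ also works, but your parenthetical justification via the $\mu$ of Lemma~\ref{LM:dfn-of-pi-A-B}(v) is not available here --- that $\mu$ only exists under a semilocality hypothesis that this lemma does not assume. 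Fortunately you do not need it: the equality $\rank_{eT}(Ae)=\rank_{e'T}(Ae')$ is literally the standing assumption of Notation~\ref{NT:proof-of-Es} that $\rank_T A_A$ is constant along the fibers of $\Spec T\to\Spec S$, so the conclusion stands once you delete the detour. For (iv), where you hesitate, the paper's argument is a two-liner you came close to: $Q=Qe\oplus Qe'$ gives $QA=QeA\oplus Qe'A$, and then $\rrk_A QeA=\rrk_{eAe}(QeA)e=\rrk_{eB}Qe$ by the last assertion of Corollary~\ref{CR:degree-of-endo-ring} applied to the full idempotent $e$ (noting $(QeA)e=Qe\cdot eAe=Qe$ since $eAe\subseteq B$). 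Your alternative --- reducing fiberwise to an algebraically closed field via Lemma~\ref{LM:center-base-change} and counting matrix sizes --- is legitimate and is in fact the method the paper itself uses to prove the analogous statements in Proposition~\ref{PR:centralized-in-Az-alg}; it is just longer to write out carefully than the Morita argument.
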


	\begin{proof}
		(i) 
		Since $T=S[e]$, we have $B=\Cent_A(e)$.
		Using the 	Peirce decomposition of $A$ relative to $e$
		(i.e.\ $A=eAe\oplus eAe'\oplus e'Ae\oplus e'Ae'$ as abelian groups),
		it routine to check   that 	
		$\Cent_A(e)=eAe+e'Ae'$.
		
		(ii) 
		Since
		${}_BB$ is a summand of ${}_BA$ (Lemma~\ref{LM:dfn-of-pi-A-B}(iv))
		and $T$ is a $T$-summand of $B$ 
		\cite[Proposition~2.4.6(1)]{Ford_2017_separable_algebras},
		$eT\cong S$ is an $S$-summand of $eA$, hence  $\rrk_AeA>0$.
		That $AeA=A$ follows from 
		Corollary~\ref{CR:degree-of-endo-ring}.		
		
		(iii)
		By (i), $eB=e(eAe+e'Ae')=eAe$,
		and
		by Corollary~\ref{CR:degree-of-endo-ring} and (ii),
		$eAe$ is Azumaya over $S$ and $[eAe]=[A]$.
		That $\deg eB= \frac{1}{2}\deg A$
		follows  from  
		$\deg B=\frac{1}{2}\iota\deg A$.

		(iv) Since $Q=Qe\oplus Qe'$ as $B$-modules, $QA=QeA\oplus Qe'A$,
		so 
		it is enough to check that $\rrk_{eB} Qe=\rrk_A QeA$. 
		By Corollary~\ref{CR:degree-of-endo-ring} and (ii),
		$\rrk_A QeA=\rrk_{eAe} QeAe=\rrk_{eB}Qe$.

		(v) Using (i), it is easy to check that $a\mapsto eae+e'ae'$
		is a homomorphism of $(B,B)$-bimodules
		which splits $B\to A$. Thus, it must
		coincide with $\pi$. 
	\end{proof}

\subsection{The Types of $(\sigma,\veps)$ and $(\tau,\veps)$}

	\begin{lem}\label{LM:sigma-unit-means-tau-unit}
		With Notation~\ref{NT:proof-of-Es},
		if $\sigma$ is unitary, then so is $\tau$.
	\end{lem}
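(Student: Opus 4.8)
The plan is to reduce to a fibrewise statement and then exhibit a visibly nontrivial element of $\Cent(B)$ that $\tau$ sends to its negative. First recall that ``$\sigma$ is unitary'' means, by Proposition~\ref{PR:types-of-involutions-Az}(iii), that $S=\Cent(A)$ is quadratic \'etale over $R$ and $\sigma|_S$ is its standard involution; in particular $R=\Sym_1(S,\sigma)$. We must show $\tau$ is unitary, i.e.\ (applying Proposition~\ref{PR:types-of-involutions-Az}(iii) to the Azumaya $T_0$-algebra with involution $(B,\tau)$, whose center is $T$ since $B=\Cent_A(T)$ is Azumaya over $T$ by Proposition~\ref{PR:centralized-in-Az-alg}) that $\rank_{T_0}T=2$, equivalently that $\tau$ is unitary at every $\frakq\in\Spec T_0$.

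Fix such a $\frakq$, lying over $\frakp\in\Spec R$. The hypotheses of Notation~\ref{NT:proof-of-Es} are stable under the base change $R\to R_\frakp$ (using Lemma~\ref{LM:center-base-change} to see $\Cent(A_{R_\frakp})=S_{R_\frakp}$, $\Cent(B_{R_\frakp})=T_{R_\frakp}$, etc.), and this base change changes neither $k(\frakq)$ nor $\tau(\frakq)$, so I may assume $R$ is local. Then $\Sym_{-1}(S,\sigma)$ is a rank-one projective module over the local ring $R$, hence free; by Lemma~\ref{LM:quad-etale-over-semilocal} (or by a direct argument) there is $\lambda\in S$ with $\lambda^\sigma=-\lambda$ and $\lambda^2\in\units R$. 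Thus $\lambda\in\units S\subseteq\units A$, and as $\lambda$ is central in $A$ it lies in $\Cent(B)=T$, with $\lambda^\tau=\lambda^\sigma=-\lambda$.

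Now pass to $B(\frakq):=B\otimes_{T_0}k(\frakq)$, whose center is $T\otimes_{T_0}k(\frakq)$ by Lemma~\ref{LM:center-base-change}. The image $\bar\lambda$ of $\lambda$ there is a unit (it is the image of a unit), in particular $\bar\lambda\neq0$, and it satisfies $\bar\lambda^{\tau(\frakq)}=-\bar\lambda$; since $2\in\units{k(\frakq)}$ this forces $\bar\lambda^{\tau(\frakq)}\neq\bar\lambda$, so $\tau(\frakq)$ acts nontrivially on $\Cent(B(\frakq))$. By Proposition~\ref{PR:types-of-involutions-Az} the involution $\tau(\frakq)$ is orthogonal, symplectic, or unitary; in the first two cases $\Cent(B(\frakq))=k(\frakq)$ and $\tau(\frakq)$ fixes it, a contradiction. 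Hence $\tau(\frakq)$ is unitary, and since $\frakq$ was arbitrary, $\tau$ is unitary. The only delicate point is resisting a purely global argument: the inclusion $S\subseteq\Cent(B)$ with $\sigma|_S$ nontrivial shows merely that $\tau|_{\Cent(B)}\neq\id$ as a map, whereas unitarity of $\tau$ is a condition at every point of $\Spec T_0$ — which is precisely why one wants a \emph{unit} $\lambda$, so that its reduction is nonzero in every fibre of $B$ over $T_0$. Everything else is routine bookkeeping with Notation~\ref{NT:proof-of-Es}.
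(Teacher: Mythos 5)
Your proof is correct and rests on the same key ingredient as the paper's: after reducing to a local (in the paper, field) base, Lemma~\ref{LM:quad-etale-over-semilocal} produces a unit $\lambda\in S$ with $\lambda^\sigma=-\lambda$, and this element detects that $\tau$ moves $\Cent(B)$ nontrivially. The only cosmetic difference is that you check unitarity fibrewise over $\Spec T_0$ by exhibiting $\bar\lambda$ as a nonzero non-fixed central element, whereas the paper packages the same fact globally as $\rank_{T_0}T=2$ via the isomorphism $t\mapsto\lambda t:T_0\to T_1$.
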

	
	\begin{proof}
		It is enough to prove this when $R$ is a field.
		Recall that  
		$T_0=\Sym_1(T,\sigma)$
		and let
		$T_1=\Sym_{-1}(T,\sigma)$.
		Since $2\in\units{R}$, we have $T=T_0\oplus T_1$.
		Since $\sigma$ is unitary, $S$ is quadratic
		\'etale over $R$ and $\sigma|_S$ is the standard
		$R$-involution of $S$.
		By Lemma~\ref{LM:quad-etale-over-semilocal},
		there exists $\lambda\in \units{S}$ such that $\lambda^\sigma=-\lambda$.
		One readily checks that $t\mapsto\lambda t:T_0\to  T_1$
		is a $T_0$-module isomorphism, so $\rank_{T_0}T=2$. 
		We observed in the comment after Notation~\ref{NT:proof-of-Es}
		that $(B,\tau)$ is Azumaya over $T_0$.
		Since $\rank_{T_0}T=2$, Proposition~\ref{PR:types-of-involutions-Az}(iii)
		implies that $\tau$ is unitary.
	\end{proof}

	\begin{lem}\label{LM:tau-type-is-constant}
		With Notation~\ref{NT:proof-of-Es},
		if $R$ is connected, then the type
		of  $(\tau,\veps)$ is constant on $\Spec T_0$ (see~\ref{subsec:Az-alg-inv}).
	\end{lem}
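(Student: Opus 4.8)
The plan is to reduce everything to Proposition~\ref{PR:types-of-involutions-Az}, after a short case analysis driven by the structure of $\sigma|_T$. First I would record that, since $(B,\tau)$ is an Azumaya $T_0$-algebra with involution (as observed right after Notation~\ref{NT:proof-of-Es}) and $\veps\in S\subseteq T=\Cent(B)$, the type of $(\tau,\veps)$ is a well-defined, locally constant function on $\Spec T_0$ by Proposition~\ref{PR:types-of-involutions-Az}(iv). Hence it is constant on each connected component of $\Spec T_0$, and the task is to rule out that two distinct types both occur. The only subtlety is that $T_0$ need not be connected even though $R$ is, so one cannot simply invoke Proposition~\ref{PR:types-of-involutions-Az}(v).

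Next I would split according to whether $\sigma$ is unitary. If $\sigma$ is unitary, then $\tau$ is unitary by Lemma~\ref{LM:sigma-unit-means-tau-unit}, so $(\tau,\veps)$ is unitary at every point of $\Spec T_0$ by Proposition~\ref{PR:types-of-involutions-Az}(iii), and we are done. If $\sigma$ is not unitary, then since $R$ is connected Proposition~\ref{PR:types-of-involutions-Az}(v) forces $\sigma$ to be orthogonal or symplectic, whence $S=\Cent(A)=R$; in particular $T$ is quadratic \'etale over $R$, and Lemma~\ref{LM:invs-of-quad-et-algs} together with the connectedness of $R$ shows $\sigma|_T$ is either the standard $R$-involution $\theta$ of $T$ or $\id_T$. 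In the former case $T_0=\Sym_1(T,\theta)=R$ is connected and Proposition~\ref{PR:types-of-involutions-Az}(v) finishes the argument.

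The remaining case $\sigma|_T=\id_T$, where $T_0=T$, is the one needing genuine work. If $T$ is connected we again quote Proposition~\ref{PR:types-of-involutions-Az}(v); otherwise, by Lemma~\ref{LM:non-connected-S}, $T\cong R\times R$, and I would take $e,e'\in T$ to be the orthogonal idempotents corresponding to $(1_R,0_R)$ and $(0_R,1_R)$, which are $\sigma$-invariant since $\sigma|_T=\id_T$. Using Lemma~\ref{LM:idempotent-in-T}(i) I would identify $(B,\tau)$ with $(eAe,\sigma|_{eAe})\times(e'Ae',\sigma|_{e'Ae'})$ over $T_0=R\times R$, so that the type of $(\tau,\veps)$ on the two copies of $\Spec R$ making up $\Spec T_0$ is computed by the types of $(\sigma|_{eAe},e\veps)$ and $(\sigma|_{e'Ae'},e'\veps)$. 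Since $\rrk_A eA>0$ and $\rrk_A e'A>0$ by Lemma~\ref{LM:idempotent-in-T}(ii), Corollary~\ref{CR:type-conjugation}(ii) gives that both of these have the same type as $(\sigma,\veps)$, which is constant because $R$ is connected (Proposition~\ref{PR:types-of-involutions-Az}(v)). Therefore the type of $(\tau,\veps)$ is constant on $\Spec T_0$.

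I expect the last case --- correctly matching the decomposition of $(B,\tau)$ over $T_0=R\times R$ against the type function and then collapsing to $(\sigma,\veps)$ via Corollary~\ref{CR:type-conjugation}(ii) --- to be the only delicate point; the rest is organizing the case split and quoting Proposition~\ref{PR:types-of-involutions-Az}.
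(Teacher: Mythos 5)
Your proof is correct and follows essentially the same route as the paper's: the unitary case is dispatched by Lemma~\ref{LM:sigma-unit-means-tau-unit}, and the remaining case reduces to $S=R$, $T\cong R\times R$ with $\sigma|_T=\id_T$, handled via Lemma~\ref{LM:idempotent-in-T}(i) and Corollary~\ref{CR:type-conjugation}(ii). The only differences are cosmetic reorderings of the case split (e.g.\ you note $T_0=R$ is connected when $\sigma|_T$ is the standard involution, where the paper directly concludes $(\tau,\veps)$ is unitary).
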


	\begin{proof}
		By Proposition~\ref{PR:types-of-involutions-Az}(v),
		$(\sigma,\veps)$ is either orthogonal, symplectic or unitary.
		If $(\sigma,\veps)$ is unitary, then the
		lemma follows from Lemma~\ref{LM:sigma-unit-means-tau-unit}.
		Suppose that $(\sigma,\veps)$ is orthogonal or symplectic.
		Then $S=R$. If $T$ is connected, then the type
		of $(\tau,\veps)$ is constant (Proposition~\ref{PR:types-of-involutions-Az}(v)),
		so assume that $T$ is not connected.
		Now, by Lemmas~\ref{LM:non-connected-S} and~\ref{LM:invs-of-quad-et-algs},
		$T=R\times R$ and $\tau|_T$ is either the standard
		$R$-involution of $T$, or $\id_T$.
		In the former case, $(\tau,\veps)$ is unitary,
		so assume $\tau|_T=\id_T$.
		Let $e=(1_R,0_R)$ and $e'=(0_R,1_R)$.
		By
		Lemma~\ref{LM:idempotent-in-T}(i),
		$(B,\tau)=(eAe,\sigma|_{eAe})\times (e'Ae',\sigma|_{e'Ae'})$,
		and
		by Corollary~\ref{CR:type-conjugation}(ii),
		$(\sigma|_{eAe},e\veps)$ and $(\sigma|_{e'Ae'},e'\veps)$ have the same
		type as $(\sigma,\veps)$, so	
		the type of $(\tau,\veps)$ is   constant.
	\end{proof}

%

\subsection{Simultaneous  Conjugation and  Transfer}
\label{subsec:simultaneous}

	We check that the setting of Notation~\ref{NT:proof-of-Es}
	is compatible with $\mu$-conjugation and $e$-transfer
	(see \ref{subsec:conjugation}).

	\begin{prp}\label{PR:simult-conj}
		With Notation~\ref{NT:proof-of-Es},
		let  $(P,f)\in\Herm[\veps]{A,\sigma}$
		and $(Q,g)\in\Herm[\veps]{B,\tau}$.
		Let $\delta\in S$ 
		satisfy $\delta^\sigma\delta=1$
		and let $\mu\in \Sym_\delta(B,\tau)\cap\units{B}$.
		Then:
		\begin{enumerate}[label=(\roman*)]
			\item The assumptions
			of Notation~\ref{NT:proof-of-Es} continue to hold
			upon replacing $\sigma,\tau,\veps$
			with $\Int(\mu)\circ \sigma,\Int(\mu)\circ \tau,\delta\veps$.
			\item
			$\rho (\mu g)=\mu (\rho g)$ and $\pi (\mu f)=\mu (\pi f)$
			(notation as in \ref{subsec:conjugation}).
		\end{enumerate}
	\end{prp}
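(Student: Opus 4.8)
The plan is to prove (i) by tracking exactly which ingredients of Notation~\ref{NT:proof-of-Es} change under the substitution, and to prove (ii) by unwinding the definitions of $\mu$-conjugation, base change, and $\pi$ until each identity collapses to a one-line verification. Throughout, write $\sigma':=\Int(\mu)\circ\sigma$ and $\tau':=\Int(\mu)\circ\tau$.

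For (i): I would first note that $A$, $S=\Cent(A)$, $T$, and $B=\Cent_A(T)$ are untouched, since none of them mentions the involution, and that $(A,\sigma')$ is again an Azumaya $R$-algebra with involution by Corollary~\ref{CR:type-conjugation}(i). That corollary applies because $\mu\in\Sym_\delta(B,\tau)\cap\units B$ automatically lies in $\Sym_\delta(A,\sigma)\cap\units A$: the containment $\Sym_\delta(B,\tau)\subseteq\Sym_\delta(A,\sigma)$ holds since $\tau=\sigma|_B$ and $\delta\in S\subseteq\Cent(A)$, and $\units B\subseteq\units A$. The one genuinely new input is that $\mu$, lying in $B$, commutes with every element of $T=\Cent(B)$; hence $\Int(\mu)$ is the identity on $T$ (\emph{a fortiori} on $S$ and on $T_0$), so $\sigma'|_T=\sigma|_T$, giving $T^{\sigma'}=T$ and leaving $T_0$ unchanged, while $\sigma'|_B=\Int(\mu)\circ(\sigma|_B)=\Int(\mu)\circ\tau=\tau'$. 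Finally $\delta\veps\in\Cent(A)$ and, using centrality of $\delta,\veps$ together with $\delta^\sigma\delta=1$ and $\veps^\sigma\veps=1$, one gets $(\delta\veps)^{\sigma'}(\delta\veps)=\delta^\sigma\delta\,\veps^\sigma\veps=1$, so $\delta\veps$ obeys the relation asked of it; the conditions ``$T$ is quadratic \'etale over $S$'' and ``$\rank_T A_A$ is constant along the fibres of $\Spec T\to\Spec S$'' are unaffected.

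For (ii): I would first record that, by the uniqueness clause of Lemma~\ref{LM:dfn-of-pi-A-B}(i), the $(B,B)$-bimodule section $\pi\colon A\to B$ attached to the new data is the same map as before, since it depends only on $A$ and $B$; likewise $\rho$ remains the inclusion $B\hookrightarrow A$. Then, for $(Q,g)\in\Herm[\veps]{B,\tau}$, both $\rho(\mu g)$ and $\mu(\rho g)$ carry the module $QA=Q\otimes_BA$, and unwinding \ref{subsec:conjugation} and \ref{subsec:herm-base-change} gives, for $x,x'\in Q$ and $a,a'\in A$,
\[
(\rho(\mu g))(x\otimes a,x'\otimes a')=a^{\sigma'}\,\mu\,g(x,x')\,a',\qquad
(\mu(\rho g))(x\otimes a,x'\otimes a')=\mu\,a^{\sigma}\,g(x,x')\,a'.
\]
These coincide precisely because $a^{\sigma'}\mu=(\mu a^{\sigma}\mu^{-1})\mu=\mu a^{\sigma}$, i.e.\ the very definition $\sigma'=\Int(\mu)\circ\sigma$. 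Dually, for $(P,f)\in\Herm[\veps]{A,\sigma}$ the forms $\pi(\mu f)$ and $\mu(\pi f)$ both live on $P$, and for $x,y\in P$,
\[
(\pi(\mu f))(x,y)=\pi\bigl(\mu\,f(x,y)\bigr)=\mu\,\pi\bigl(f(x,y)\bigr)=(\mu(\pi f))(x,y),
\]
the middle step being the left $B$-linearity of $\pi$ applied to $\mu\in B$.

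There is no serious obstacle here; the statement is bookkeeping. The only points that need attention are the two one-line identities above — that $\sigma'$ is by construction intertwined with $\sigma$ via left multiplication by $\mu$, and that $\pi$ is a left $B$-module homomorphism — together with the preliminary sanity checks that $\mu$ is a unit of $A$ lying in $\Sym_\delta(A,\sigma)$ (so that $\mu$-conjugation of forms over $(A,\sigma)$ is even defined) and that the $\pi$ and $\rho$ of the new setup literally agree with the old ones.
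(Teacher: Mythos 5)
Your proof is correct and follows essentially the same route as the paper's: part (i) reduces to the observations that $\mu\in\Sym_\delta(A,\sigma)\cap\units{A}$ centralizes $T$ and that $\delta,\veps$ are central, and part (ii) is exactly the paper's two one-line computations ($a^{\Int(\mu)\circ\sigma}\mu=\mu a^\sigma$ and left $B$-linearity of $\pi$). The extra remark that $\pi$ and $\rho$ for the new data coincide with the old ones by the uniqueness in Lemma~\ref{LM:dfn-of-pi-A-B}(i) is a harmless (and correct) addition.
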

	
	\begin{proof}
	
		(i) We need to check that $\Int(\mu)\circ \sigma$ is an involution
		of $A$ which restricts to an involution of $T$, and
		$(\delta\veps)^{\Int(\mu)\circ \sigma}(\delta \veps)=1$. Noting
		that $\mu\in B=\Cent_A(T)$, $\mu^\sigma=\delta^{-1}\mu$ 
		and $T^\sigma=T$, this follows by straightforward computation.
		
		(ii) Let $y,y'\in Q$ and $a,a'\in A$. Then
		$\rho(\mu g)(y\otimes a,y'\otimes a')=
		a^{\Int(\mu)\circ \sigma}\mu g(y,y') a'=
		\mu a^\sigma \mu^{-1}\mu g(y,y')a'=\mu (\rho g)(y\otimes a,y'\otimes a')$,
		so $\rho (\mu g)=\mu (\rho g)$.
		Now let $x,x'\in P$. Then
		$\pi(\mu f)(x,x')=\pi(\mu \cdot f(x,x'))=\mu\cdot \pi(f(x,x'))=\mu(\pi f)(x,x')$,
		where the second equality holds because $\pi$ is a left $B$-module homomorphism
		and $\mu\in B$.
	\end{proof}
	
	\begin{lem}\label{LM:simult-e-transfer-tensor}
		With Notation~\ref{NT:proof-of-Es},
		let $e\in B$ be an idempotent with $\rrk_B eB>0$
		and let $Q\in\rproj{B}$.
		Then the map $\xi=\xi_Q:Qe\otimes_{eBe}eAe\to Q\otimes_BAe$
		determined by $\xi(x\otimes a)=x\otimes a$ ($x\in Qe$, $a\in eAe$)
		is an isomorphism of $eAe$-modules.
	\end{lem}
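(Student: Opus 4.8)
The plan is to recognize Lemma~\ref{LM:simult-e-transfer-tensor} as an instance of a standard Morita-theoretic fact: if $e$ is a full idempotent of $B$ (i.e.\ $BeB=B$) and $N$ is any $(B,C)$-bimodule, then for every $M\in\rproj{B}$ the natural map $Me\otimes_{eBe}eN\to M\otimes_B N$, $me\otimes en\mapsto m\otimes n$, is an isomorphism of right $C$-modules. Taking $C=eAe$ and $N=Ae$ (a $(B,eAe)$-bimodule via the inclusion $B\subseteq A$), one has $eN=e(Ae)=eAe$ and $M=Q$, and the natural map becomes precisely $\xi_Q$.

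First I would note that $e$ is full in $B$: since $B$ is Azumaya over $T$ and $\rrk_B eB>0$, Corollary~\ref{CR:degree-of-endo-ring} (applied to $B$ over $T$ in place of $A$ over $R$) gives $BeB=B$; equivalently $Be$ is a progenerator over $eBe$ with $Be\otimes_{eBe}eB\cong B$ as $(B,B)$-bimodules (Morita theory, \cite[\S18]{Lam_1999_lectures_on_modules_rings}). I would also record the elementary identity $eB\otimes_B N\cong eN$, $eb\otimes n\mapsto ebn$, valid for any left $B$-module $N$ and any idempotent $e$. Both assignments $Q\mapsto Qe\otimes_{eBe}eAe$ and $Q\mapsto Q\otimes_B Ae$ are additive functors from $\rproj{B}$ to the category of right $eAe$-modules, and $\xi_Q$ is an additive natural transformation between them (well-definedness and $eAe$-linearity are routine tensor bookkeeping), so it suffices to check that $\xi_B$ is an isomorphism, every $Q\in\rproj{B}$ being a direct summand of some $B^n$.

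For $Q=B$ I would compose the natural isomorphisms
\[
Be\otimes_{eBe}eAe\;\cong\;Be\otimes_{eBe}(eB\otimes_B Ae)\;\cong\;(Be\otimes_{eBe}eB)\otimes_B Ae\;\cong\;B\otimes_B Ae\;\cong\;Ae ,
\]
where the first uses $eAe=e(Ae)\cong eB\otimes_B Ae$, the second is associativity of tensor products, and the third is the Morita isomorphism above. Tracking $be\otimes eae$ along this chain shows it maps to $beae$, which is exactly the image of $\xi_B(be\otimes eae)$ under the identification $B\otimes_B Ae=Ae$; hence $\xi_B$ is an isomorphism, and the lemma follows. The only point requiring care is this last element chase, confirming that the composite of the abstract isomorphisms coincides with the explicitly given $\xi_B$ (rather than merely with some isomorphism); this is routine, and there is no genuine obstacle.
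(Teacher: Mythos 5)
Your proof is correct, but it takes a genuinely different route from the paper's. The paper proves the lemma by writing down an explicit inverse: using $BeB=B$ it picks $\{u_i,v_i\}_{i=1}^t\subseteq B$ with $\sum_i u_iev_i=1$ and defines $\psi(x\otimes a)=\sum_i xu_ie\otimes ev_ia$, then checks by direct computation that $\psi$ is well-defined (balanced over $B$) and inverse to $\xi$. You instead observe that $\xi$ is a natural transformation between additive functors on $\rproj{B}$, reduce to $Q=B$, and there identify $\xi_B$ with the composite of the standard isomorphisms $eAe\cong eB\otimes_BAe$, associativity, and the Morita isomorphism $Be\otimes_{eBe}eB\cong B$ for a full idempotent. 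Both arguments ultimately rest on the same input ($BeB=B$, supplied by Corollary~\ref{CR:degree-of-endo-ring} applied to $B$ over $T$), and your element chase for $Q=B$ does check out ($be\otimes eae\mapsto beae$ under both maps). What your version buys is conceptual clarity and generality — it works verbatim for any ring $B$ with a full idempotent and any $(B,C)$-bimodule in place of $Ae$, and it makes the naturality in $Q$ (used implicitly later via Proposition~\ref{PR:simult-e-transfer}) explicit. What the paper's version buys is a concrete formula for the inverse and a completely self-contained verification that avoids invoking associativity of iterated tensor products and the bimodule form of the Morita isomorphism.
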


	\begin{proof}
		By Proposition~\ref{CR:degree-of-endo-ring}, $BeB=B$.
		Choose elements $\{u_i,v_i\}_{i=1}^t\subseteq B$
		such that $\sum_i u_iev_i=1$,
		and consider
		the map    $\psi:Q\otimes_BAe\to Qe\otimes_{eBe}eAe$
		determined by
		$x\otimes a\mapsto \sum_i xu_i e\otimes ev_ia$ ($x\in Q$, $a\in Ae$).
		It is well-defined because
		for all $b\in B$, 
		we have $\psi(xb\otimes a)=\sum_ixbu_ie\otimes ev_ia=
		\sum_{i,j}xu_jev_jbu_ie\otimes ev_ia=
		\sum_{i,j}xu_je\otimes ev_jbu_iev_ia=
		\sum_jxu_je\otimes ev_jba=\psi(x\otimes ba)$.
		A similar computation shows that $\psi$ is an inverse of $\xi$.
	\end{proof}
	
	\begin{prp}\label{PR:simult-e-transfer}
		With Notation~\ref{NT:proof-of-Es},
		let  $(P,f)\in\Herm[\veps]{A,\sigma}$
		and $(Q,g)\in\Herm[\veps]{B,\tau}$.
		Let
		$e\in B$ be an idempotent
		such that $e^\sigma =e$ and $\rrk_BeB$ is positive
		and constant along the fibers of $\Spec T\to \Spec S$.
		Write $\tau_e=\tau|_{eBe}$, $\sigma_e=\sigma|_{eAe}$,
		$\rho_e=\rho|_{eBe}$, $\pi_e=\pi|_{eAe}$, $f_e=f|_{Pe\times Pe}$,
		$g_e=g|_{Qe\times Qe}$ (see \ref{subsec:conjugation}).
		Then:
		\begin{enumerate}[label=(\roman*)]
			\item The assumptions
		of Notation~\ref{NT:proof-of-Es} apply
		upon replacing $A,\sigma,\veps,T,B,\tau,\rho,\pi$
		with $eAe,\sigma_e,e\veps, eT,eBe,\tau_e,\rho_e,\pi_e$.

			\item 	
			Upon identifying $Q\otimes_{eBe}eAe$
			with $Q\otimes_B Ae$ as in Lemma~\ref{LM:simult-e-transfer-tensor},
			we have $\rho_e g_e=(\rho g)_e$.
			Furthermore, 
		the map $L\mapsto Le$
		defines a bijection from the Lagrangians of $\rho g$
		to the Lagrangians of $\rho_e g_e$ and, for a Lagrangian $L$ of $\rho g$, we have 
		$L\oplus Q=QA$ (as $B$-modules) if and only if
		$Le\oplus Qe=QAe$.

		\item $\pi_e f_e =(\pi f)_e$,
		the map $M\mapsto Me$ is a bijection between
		the Lagrangians of $\pi f$ and the Lagrangians of $\pi_e f_e$
		and, for a Lagrangian $M$ of $\pi f$,
		we have   $MA=P$ if and only if $Me\cdot eAe=Pe$.
		\end{enumerate}
	\end{prp}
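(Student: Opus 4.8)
\emph{Plan.} The plan is to deduce all three statements by transporting the corresponding facts through $e$-transfer (in the sense of \ref{subsec:conjugation})---applied to $(A,\sigma)$ for part~(ii) and to $(B,\tau)$ for part~(iii)---together with the tensor-product identification of Lemma~\ref{LM:simult-e-transfer-tensor}. The only part that requires real work is the verification in~(i) that the new data again satisfies Notation~\ref{NT:proof-of-Es}.

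\emph{Part~(i).} The hypothesis $\rrk_B eB>0$ makes $e$ full in $B$ (Proposition~\ref{PR:progenerator-iff-pos-red-rank}), so $e$-transfer relative to $(B,\tau)$ is defined. Writing $A=B\oplus\ker\pi$ as right $B$-modules (Lemma~\ref{LM:dfn-of-pi-A-B}(iv)), we get $\rrk_B eA\geq\rrk_B eB>0$, and since $\rrk_B eA=\iota\rrk_A eA$ by Proposition~\ref{PR:centralized-in-Az-alg}(i), this forces $\rrk_A eA>0$, i.e.\ $AeA=A$. Then $(eAe,\sigma_e)$ is an Azumaya $R$-algebra with involution (Corollary~\ref{CR:type-conjugation}(ii)), with centre $eS$, and $e\veps$ is fixed by $\sigma_e$. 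The subring $eT$ (whose unit is $e$, since $e$ centralises $T$) is the centre of $eBe=\Cent_{eAe}(eT)$ and, via the isomorphism $t\mapsto et$ of Corollary~\ref{CR:type-conjugation}(ii), a $\sigma_e$-stable quadratic \'etale $eS$-algebra. Finally, $eBe$ is Azumaya over $eT$ of degree $\rrk_B eB$ (Corollary~\ref{CR:degree-of-endo-ring}), and a Peirce-decomposition computation over the residue fields of $eS$ shows that $\rank_{eT}(eAe)$ is constant along a fibre of $\Spec eT\to\Spec eS$ exactly when $\rrk_B eB$ is---this is where ``$\rrk_B eB$ constant along the fibres of $\Spec T\to\Spec S$'' enters. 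That $\pi_e=\pi|_{eAe}$ is the unique $(eBe,eBe)$-bimodule splitting of $eBe\hookrightarrow eAe$ follows from Lemma~\ref{LM:dfn-of-pi-A-B}(i): $\pi$ is a $(B,B)$-bimodule map and $e\in B$, so $\pi(eAe)\subseteq eBe$ and $\pi_e$ restricts to the identity on $eBe$.

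\emph{Parts~(ii) and~(iii).} I would first record the two identities. After transporting along $\xi_Q$ of Lemma~\ref{LM:simult-e-transfer-tensor}, both $\rho_e g_e$ and $(\rho g)_e$ send $(y\otimes a,y'\otimes a')$ to $a^{\sigma}g(y,y')a'$ (using that $\rho$ is the inclusion), so $\rho_e g_e=(\rho g)_e$; and $\pi_e f_e=(\pi f)_e$ holds literally, since for $x,x'\in Pe$ one has $f(x,x')\in eAe$ and $\pi_e(f_e(x,x'))=\pi(f(x,x'))=(\pi f)(x,x')$. Granting these, the bijection $L\mapsto Le$ between the Lagrangians of $\rho g$ and of $\rho_e g_e=(\rho g)_e$ is $e$-transfer for $(A,\sigma)$ (property \ref{item:e-transfer-hyperbolic}), and $M\mapsto Me$ between the Lagrangians of $\pi f$ and of $\pi_e f_e=(\pi f)_e$ is $e$-transfer for $(B,\tau)$. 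For the remaining module-theoretic clauses, the key point is that for any $N\in\rproj{B}$ the surjection $N\otimes_BA\twoheadrightarrow N\otimes_BAe\cong Ne\otimes_{eBe}eAe$ (last isomorphism $\xi_N$) identifies $(NA)e$ with $Ne\cdot eAe$ inside the relevant module. Applying this with $N=M$ (which lies in $\rproj{B}$, being a summand of $P$) together with the fact that $(-)e$ is an equivalence $\rMod{A}\to\rMod{eAe}$ (Morita theory, $e$ full in $A$), so that $MA=P$ iff $(MA)e=Pe$, yields ``$MA=P$ iff $Me\cdot eAe=Pe$''; and applying the equivalence $(-)e\colon\rMod{B}\to\rMod{eBe}$ to the map $L\oplus Q\to QA$, and noting $(L\oplus Q)e=Le\oplus Qe$ and $(QA)e=QAe$, yields ``$L\oplus Q=QA$ iff $Le\oplus Qe=QAe$''.

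\emph{The hard part.} The genuine obstacle is the fibre-constancy clause of part~(i): one must check that the requirement on $\rank_{eT}(eAe)$ is matched precisely by the hypothesis on $\rrk_B eB$, which comes down to a short but slightly fiddly computation with the Peirce idempotents of $eT$ over the residue fields of $eS$ (where $eT$ splits as $k\times k$). Everything else---the two identities, the Lagrangian bijections, and the two equivalences of conditions---is a formal consequence of Morita theory, Lemma~\ref{LM:simult-e-transfer-tensor}, and the properties of $e$-transfer already listed in \ref{subsec:conjugation}.
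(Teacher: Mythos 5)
Your overall strategy matches the paper's: establish that $e$ is full in both $B$ and $A$ so that $e$-transfer applies to both algebras, and then reduce (ii) and (iii) to Morita theory, Lemma~\ref{LM:simult-e-transfer-tensor}, and the properties \ref{item:e-transfer-first}--\ref{item:e-transfer-last} of \ref{subsec:conjugation}. Your routes through the two module-theoretic equivalences are sound: for (iii) you identify $(MA)e=MAe$ with $Me\cdot eAe$ via $\xi_M$ and then invoke that $(-)e$ reflects equalities of submodules, which is equivalent to (and arguably cleaner than) the paper's direct manipulation $Me\cdot eAe=MAe=Pe$ using $AeA=A$ and $BeB=B$. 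Your derivation of $AeA=A$ from $\rrk_B eA\geq\rrk_B eB>0$ is also a valid alternative to the paper's one-line $AeA=ABeBA=ABA=A$.

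The one place where you have not actually proved anything is exactly the point the paper singles out as the sole non-straightforward claim in (i): that $\rank_{eT}\bigl((eAe)_{eAe}\bigr)$ is constant along the fibers of $\Spec eT\to\Spec eS$. You correctly locate where the hypothesis on $\rrk_B eB$ must enter, but you defer the verification to an unspecified ``Peirce-decomposition computation over the residue fields.'' That route would work, but it is unnecessary: the paper disposes of it with a closed-form identity,
\[
\rank_T\bigl((eAe)_{eAe}\bigr)=\deg eBe\cdot\rrk_{eBe}(eAe)=\rrk_BeB\cdot\rrk_B(eA)=\rrk_BeB\cdot\iota\rrk_A eA=2(\rrk_BeB)^2,
\]
using Corollary~\ref{CR:degree-of-endo-ring} and Proposition~\ref{PR:centralized-in-Az-alg}(i); constancy of $\rrk_BeB$ along the fibers then gives the claim immediately. (Note that the middle step $\iota\rrk_AeA=2\rrk_BeB$ itself uses the fiber-constancy hypothesis, via $eA=eB\oplus e\ker\pi$ and the fact that conjugation by a local generator of $\ker\pi$ induces the nontrivial automorphism of $T$ over $S$.) I recommend replacing your sketched residue-field argument with this computation; with that done, the proof is complete.
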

	
	\begin{proof}		
		By Proposition~\ref{CR:degree-of-endo-ring},
		$BeB=B$, hence $AeA=ABeBA=ABA=A$, and so $eA_A$ is a progenerator.
		Thus, we can use  \ref{item:e-transfer-first}--\ref{item:e-transfer-last} in
		\ref{subsec:conjugation}
		for both $(B,\tau)$ and $(A,\sigma)$.

		(i) Everything is straightforward
		except the fact that 		
		$\rank_{ T} (eAe_{ eAe})$
		is constant along the fibers of $\Spec  T\to \Spec S$.
		To see this, we use Corollary~\ref{CR:degree-of-endo-ring}
		to get $\rank_{ T} (eAe_{ eAe})=\deg eBe\cdot \rrk_{eBe}(eAe_{eBe})=
		\deg eBe\cdot \rrk_B eA_B=\rrk_B eB\cdot \iota \rrk_A eA=
		\rrk_B eB\cdot 2 \rrk_B eB=2(\rrk_BeB)^2$.
		Since $\rrk_B eB$ is constant along the fibers of $\Spec T\to \Spec S$,
		so is $\rank_T(eAe_{eAe})$.
		
		(ii) 
		This is straightforward; use facts \ref{item:e-transfer-first}--\ref{item:e-transfer-last} in
		\ref{subsec:conjugation} and Morita theory.

		(iii) That $(\pi f)_e=\pi_e f_e$ is straightforward.
		The second assertion is \ref{item:e-transfer-Lags} in \ref{subsec:conjugation}.
		For the third assertion, note that $MA=P$ implies
		$Me\cdot eAe=M(AeA)e=MAe=Pe$, and conversely,
		$Me\cdot eAe=Pe$ implies $MA=M(AeAeA)=Me\cdot eAe\cdot eA=Pe\cdot eA=P(AeA)=PA=P$.
	\end{proof}
	
\subsection{Two Important Reductions}
	
	\begin{reduction}\label{RD:common-reduction}	
		With Notation~\ref{NT:proof-of-Es}, suppose 
		that $R$ is connected   semilocal and $[B]=0$ in $\Br T$
		(note   that $[A]=0$ in $\Br S$ implies $[B]=[A\otimes_S T]=0$).
		We claim that verifying statements 
		about hermitian forms over $(A,\sigma)$ and
		$(B,\tau)$
		which are amenable to  conjugation and
		transfer  (in the sense of \ref{subsec:conjugation}),
		e.g., the statements
		$L\oplus Q=QA$ and $MA=P$
		from parts
		(ii) and (iii) of Proposition~\ref{PR:simult-e-transfer}, 
		can be reduced into verifying them in the following
		setting, and without affecting the
		the types of $(\sigma,\veps)$ and $(\tau,\veps)$:
		\begin{itemize}
			\item $\deg B=1 $,  i.e.\ $B=T$,
			\item $\tau$ is orthogonal or unitary,
			\item $\sigma$ is orthogonal or unitary.
		\end{itemize}

		This is done as follows: 
		Applying Proposition~\ref{PR:types-of-involutions-Az}(v)
		and	Lemma~\ref{LM:tau-type-is-constant} with $\veps=1$,
		we see that the types of $\sigma$ and $\tau$ are constant.
		If $\tau$ is not orthogonal or unitary,
		then it is symplectic. In this case, by 
		Lemma~\ref{LM:invertible-symmetric-elements},
		there exists $\mu\in \Sym_{-1}(B,\tau)\cap \units{B}$.
		By Proposition~\ref{PR:simult-conj},
		we may apply $\mu$-conjugation 
		and replace $\sigma,\tau,\veps$ with $\Int(\mu)\circ \sigma,\Int(\mu),-\veps$,
		thus changing $\tau$ into an orthogonal involution.
		
		Next, by Theorem~\ref{TH:invariant-primitive-idempotent},
		there exists an idempotent $e\in B$ such that $e^\tau=e$
		and $\rrk_B eB=\deg eBe=\ind B=1$.
		By Proposition~\ref{PR:simult-e-transfer},
		we may apply $e$-transfer
		and replace $A,\sigma,\veps,T,B,\tau,\rho,\pi$
		with $eAe,\sigma_e,e\veps, eT,eBe,\tau_e,\rho_e,\pi_e$
		and get $\deg B=\ind B=1$.
 		
		Finally, if $\sigma$ is not orthogonal or unitary,
		then it is symplectic.
		In this case, by Lemma~\ref{LM:strcture-of-quat},
		there exists $\lambda\in\units{T}$ with $\lambda^\sigma=-\lambda$.
		By Proposition~\ref{PR:simult-conj}, we can apply
		$\lambda$-conjugation
		and replace  $\sigma, \veps$ with $\Int(\lambda)\circ \sigma,\Int(\lambda),-\veps$,
		turning $\sigma$ into an orthogonal involution and leaving  $\tau$ unchanged. 		 		
	\end{reduction}
	
	\begin{reduction}\label{RD:common-reduction-II}
		Assume that $R$ is connected semilocal and $[A]=0$ in $\Br S$.
		After performing   Reduction~\ref{RD:common-reduction},
		Proposition~\ref{PR:involution-is-adjoint}
		implies that $\sigma$
		is  adjoint to a unimodular  binary $\delta$-hermitian form over $(S,\sigma|_S)$,
		with $\delta=1$ if $\sigma$ is orthogonal.
		This form can be diagonalized by Proposition~\ref{PR:diagonalizable-herm-forms},
		so, by Example~\ref{EX:adoint-inv-diag-form},
		we may
		assume that $A=\nMat{S}{2}$ and $\sigma$ is given by 
		$[\begin{smallmatrix}a & b \\ c & d\end{smallmatrix}]
		\mapsto[\begin{smallmatrix} a^\sigma & \alpha c^\sigma \\ \alpha^{-1} b^\sigma & 
		d^\sigma \end{smallmatrix}]$
		for some $\alpha\in\Sym_1(S,\sigma)\cap\units{S}=\units{R}$. 
	\end{reduction}

\subsection{Miscellaneous Results}

	\begin{lem}\label{LM:tau-orth-implies-sigma-orth}
		With Notation~\ref{NT:proof-of-Es},
		if $(\tau,\veps)$ is orthogonal,
		then $(\sigma,\veps)$ is orthogonal.
	\end{lem}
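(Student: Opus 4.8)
The statement is local on $\Spec R$: both the hypothesis ``$(\tau,\veps)$ orthogonal'' and the conclusion ``$(\sigma,\veps)$ orthogonal'' are conditions that hold at a prime $\frakp$ iff they hold for the corresponding central simple algebra with involution over $k(\frakp)$, using Lemma~\ref{LM:center-base-change} and the behaviour of $\Sym_\veps$ under base change established in the proof of Proposition~\ref{PR:types-of-involutions-Az}. (Note also that $T_0$ is finite \'etale over $R$, so a prime of $R$ lies below a prime of $T_0$, and the type of $(\tau,\veps)$ is a function on $\Spec T_0$.) Hence I would immediately reduce to the case where $R$ is a field; call it $F$.

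Now $A$ is a central simple $F$-algebra or a product of two, $T\subseteq A$ is a quadratic \'etale $F$-subalgebra with $T^\sigma=T$, $B=\Cent_A(T)$, and $\tau=\sigma|_B$. First I would dispose of the case where $\sigma$ is unitary, i.e.\ $S=\Cent(A)\neq F$: there $(\sigma,\veps)$ is by definition not orthogonal but unitary, so I must show the hypothesis fails, i.e.\ $(\tau,\veps)$ is \emph{not} orthogonal. Indeed, by Lemma~\ref{LM:sigma-unit-means-tau-unit}, $\sigma$ unitary forces $\tau$ unitary, hence $(\tau,\veps)$ is unitary and in particular not orthogonal, a contradiction; so in the remaining analysis $S=F$ and $\veps\in\{\pm1\}$. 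Then $\sigma$ is orthogonal or symplectic on the central simple algebra $A$, and I want: if $(\tau,\veps)$ is orthogonal then $(\sigma,\veps)$ is orthogonal, equivalently (contrapositive) if $(\sigma,\veps)$ is symplectic then $(\tau,\veps)$ is not orthogonal.

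The heart of the matter is the symplectic case. Here I would split according to whether $T$ is a field or $T\cong F\times F$. If $T\cong F\times F$, write $e,e'$ for the two primitive idempotents; by Lemma~\ref{LM:idempotent-in-T} we have $B=eAe\times e'Ae'$ and $\tau$ acts within each factor iff $\tau|_T=\id_T$ — but if $\tau|_T$ is the standard involution of $T$ then $(\tau,\veps)$ is unitary (not orthogonal), so assume $\tau|_T=\id_T$; then by Corollary~\ref{CR:type-conjugation}(ii) each $(\sigma|_{eAe},e\veps)$ has the same type as $(\sigma,\veps)$, namely symplectic, so $(\tau,\veps)$ is symplectic, not orthogonal. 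If $T$ is a field, then either $T^\sigma$ is a degree-$2$ extension of $F$ inside $T$ — impossible since $[T:F]=2$ would force $T^\sigma=F$ or $T^\sigma=T$; if $T^\sigma=T$ pointwise then $\tau|_T=\id_T$ and $(B,\tau)$ is a central simple $T$-algebra with involution, and I claim $\tau$ is symplectic: the cleanest route is to invoke Proposition~\ref{PR:centralized-in-Az-alg} that $(B,\tau)$ is Azumaya over $T_0=T$ with $\deg B=\tfrac12\deg A$, together with a rank computation of $\Sym_\veps(B,\tau)$ inside $\Sym_\veps(A,\sigma)$ — using $A=B\oplus E$ with $E=\ker\pi$ as in Lemma~\ref{LM:dfn-of-pi-A-B}(iv) and that $\sigma$ preserves this decomposition, one gets $\rank_F\Sym_\veps(A,\sigma)=\rank_F\Sym_\veps(B,\sigma)+\rank_F\Sym_\veps(E,\sigma)$ and hence, comparing with the formulas in Proposition~\ref{PR:types-of-involutions-Az}(i),(ii), that $(\tau,\veps)$ cannot be orthogonal when $(\sigma,\veps)$ is symplectic. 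Alternatively, and more in the spirit of the paper, after passing to an algebraic closure (which does not change types) and applying Reduction-style $\mu$-conjugation and $e$-transfer one reaches $A=\nMat{F}{2}$, $\sigma$ symplectic, and can compute $(\tau,\veps)$ by hand. The case $T^\sigma=F$ with $\sigma|_T$ nontrivial again gives $(\tau,\veps)$ unitary.

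\textbf{Main obstacle.} The genuinely delicate point is the symplectic/$T$-a-field/$\tau|_T=\id$ subcase: ruling out that $(\tau,\veps)$ is orthogonal there requires either a careful rank bookkeeping of $\Sym_\veps$ across $A=B\oplus E$ (being careful that $E\cdot E=B$ but $E$ is only a $(B,B)$-bimodule, not an algebra) or a reduction via conjugation/transfer to a $2\times 2$ matrix model — both are short but fiddly, and one must confirm these operations do not alter the type, which is exactly Corollary~\ref{CR:type-conjugation} and the invariance statements in \ref{subsec:conjugation}. Everything else is bookkeeping over the decomposition $R\cong R_o\times R_s\times R_u$ of Proposition~\ref{PR:types-of-involutions-Az}(iv) applied with $\veps$, reducing to connected $R$ and then to a field.
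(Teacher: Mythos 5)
Your proposal is correct, but it is organized quite differently from the paper's proof, and one of the two routes you offer for the key subcase does not actually close. The paper's argument is a five\mbox{-}line affair: since the type of a pair is computed from ranks of $\Sym_{\pm}$ and of centers, which commute with base change, it specializes directly to the \emph{algebraic closure} of each residue field; there $[B]=0$, so Reduction~\ref{RD:common-reduction} brings one to $B=T$, $\deg A=2$ with $\tau$ orthogonal, hence $\tau=\id_T$ and $\veps=1$, and Lemma~\ref{LM:strcture-of-quat}(i) exhibits $\lambda,\mu$ with $\lambda^\sigma=\lambda$, $\mu^\sigma=-\mu$, giving $\dim\Sym_1(A,\sigma)=3$ and the conclusion via Proposition~\ref{PR:types-of-involutions-Az}. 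Your contrapositive with the case split on $T$ (unitary $\sigma$ via Lemma~\ref{LM:sigma-unit-means-tau-unit}, $T\cong F\times F$ via Lemma~\ref{LM:idempotent-in-T} and Corollary~\ref{CR:type-conjugation}(ii), etc.) is complete and correct, at the cost of length; what it buys is that most cases are settled without passing to the algebraic closure or invoking the reduction machinery.

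One caution on the subcase you correctly identify as the heart of the matter ($(\sigma,\veps)$ symplectic, $T$ a field, $\tau|_T=\id_T$): the ``rank bookkeeping'' you propose as the cleanest route is not by itself a proof. The identity $\dim_F\Sym_\veps(A,\sigma)=\dim_F\Sym_\veps(B,\tau)+\dim_F\Sym_\veps(E,\sigma)$ only yields a contradiction with $(\tau,\veps)$ orthogonal if you can pin down $\dim_F\Sym_\veps(E,\sigma)$; with $n=\deg A$, assuming $(\sigma,\veps)$ symplectic and $(\tau,\veps)$ orthogonal forces $\dim_F\Sym_\veps(E,\sigma)=\tfrac{n^2}{4}-n$, which is a perfectly admissible value for $n\geq 4$ absent further information. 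Computing $\dim_F\Sym_\veps(E,\sigma)$ requires knowing how $\sigma$ interacts with a generator $\mu$ of $E=\mu B$, which is exactly the content of Lemma~\ref{LM:strcture-of-quat} --- i.e., the explicit structure theory you were hoping to avoid. Your fallback (algebraic closure, then $\mu$-conjugation and $e$-transfer down to $\nMat{F}{2}$, then compute) is precisely the paper's proof, so the proposal as a whole stands.
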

	
	\begin{proof}
		It is enough to prove the lemma after specializing 
		$R$ to the algebraic closure of each of its residue fields, so assume
		$R$ is an algebraically closed field.
		Then $[B]=0$. 
		We apply Reduction~\ref{RD:common-reduction}
		to assume that   $\deg A=2$ and $\tau$ is orthogonal. 
		Since $(\tau,\veps)$ is orthogonal, $\veps=1$.		
		By Lemma~\ref{LM:strcture-of-quat}(i),
		$\dim_R \Sym_{1}(A,\sigma)=3$, so  $(\sigma,\veps)$ is orthogonal
		by Proposition~\ref{PR:types-of-involutions-Az}.
	\end{proof}
	
	\begin{lem}\label{LM:B-endo-of-P}
		With Notation~\ref{NT:proof-of-Es}, let $P\in\rproj{A}$. 
		\begin{enumerate}[label=(\roman*)]
			\item The map $\End_A(P)\otimes_S T\to\End_B(P)$
			given by sending $\psi\otimes t$ to $[x\mapsto \psi x\cdot t]$
			is an isomorphism of $T$-algebras.
			\item For all $\psi\in\End_A(P)$,
			we have $\Nrd_{\End_A(P)/S}(\psi)=\Nrd_{\End_B(P)/T}(\psi)$ in $T$.
		\end{enumerate}	
	\end{lem}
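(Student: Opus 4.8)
The plan is to prove (i) by a fibrewise check over $\Spec S$, reducing to the case where $S$ is a field, and then to deduce (ii) from (i) by passing to a faithfully flat extension of $S$ that splits $\End_A(P)$, where the reduced norms become determinants. Before either step I would dispose of a degenerate locus: since $\rrk_AP$ is locally constant on $\Spec S$, we may factor $S=S_+\times S_0$ with $\rrk_AP>0$ on $\Spec S_+$ and $\rrk_AP=0$ on $\Spec S_0$, and over $S_0$ the module $P$, being finite projective over $S$ of rank $\deg A\cdot\rrk_AP=0$, vanishes, so both statements hold trivially there. Thus we may assume $\rrk_AP>0$; then $P$ is a progenerator over $A$ (Proposition~\ref{PR:progenerator-iff-pos-red-rank}), and also over $B$ since $\rrk_BP=\iota\rrk_AP>0$ by Proposition~\ref{PR:centralized-in-Az-alg}(i). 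Consequently $\End_A(P)$ is Azumaya over $S$ and $\End_B(P)$ is Azumaya over $T$ (Proposition~\ref{PR:degree-of-endo-ring}(i)), so the reduced norms in (ii) are defined and both terms in (i) are finite projective $S$-modules. A direct check shows that $\psi\otimes t\mapsto[x\mapsto\psi x\cdot t]$ is a well-defined homomorphism of $T$-algebras $\phi\colon\End_A(P)\otimes_ST\to\End_B(P)$, where $\End_B(P)$ carries its natural $T$-algebra structure coming from $T=\Cent(B)$.

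For (i): since both sides are finite projective over $S$, it suffices to prove that $\phi\otimes_Sk(\frakp)$ is an isomorphism for every $\frakp\in\Spec S$. By Lemma~\ref{LM:center-base-change} and the compatibility of $\End$ with base change for finite projective modules, $\phi\otimes_Sk(\frakp)$ is the analogue of $\phi$ attached to $A\otimes_Sk(\frakp)$, $T\otimes_Sk(\frakp)$ and $B\otimes_Sk(\frakp)=\Cent_{A\otimes_Sk(\frakp)}(T\otimes_Sk(\frakp))$, so we may assume $S$ is a field. Then $T$ is a finite product of separable field extensions of $S$, and $\End_A(P)\otimes_ST$ and $\End_B(P)$ are Azumaya $T$-algebras of the same degree $\rrk_AP=\rrk_BP$ (Propositions~\ref{PR:degree-of-endo-ring}(i) and~\ref{PR:centralized-in-Az-alg}(i)). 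The kernel of $\phi$ is a direct summand of $\End_A(P)\otimes_ST$ as a $T$-module because $T$ is semisimple, and for each residue field $k(\frakq)$ of $T$ the map $\phi\otimes_Tk(\frakq)$ is a unital, hence nonzero, hence injective homomorphism of central simple $k(\frakq)$-algebras; therefore $\ker\phi$ has vanishing fibres and is $0$. Being injective with source and target finite projective $T$-modules of equal rank, $\phi$ is an isomorphism.

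For (ii): the reduced norm is compatible with ring extension, so it is enough to verify $\Nrd_{\End_A(P)/S}(\psi)=\Nrd_{\End_B(P)/T}(\psi)$ after a faithfully flat base change $S\to S'$ chosen so that $\End_A(P)\otimes_SS'\cong\nMat{S'}{r}$ with $r=\rrk_AP$; under this identification $\End_A(P)\otimes_SS'\cong\End_{A\otimes_SS'}(P\otimes_SS')$ has reduced norm over $S'$ equal to the determinant. Setting $T'=T\otimes_SS'$, part (i) applied over $S'$ gives $\End_B(P)\otimes_SS'\cong\End_{B\otimes_SS'}(P\otimes_SS')\cong(\End_A(P)\otimes_SS')\otimes_{S'}T'\cong\nMat{T'}{r}$, whose reduced norm over $T'$ is again the determinant, and $\psi$ is carried to the same matrix under both identifications. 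Hence the images of the two reduced norms of $\psi$ in $T'$ both equal $\det\psi$, and since $T\to T'$ is faithfully flat they already agree in $T$.

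The argument is essentially formal, and the closest thing to an obstacle is simply keeping the two base rings $S$ and $T$ apart: one must ensure that base change to a residue field of the centre $S$ commutes with forming $B=\Cent_A(T)$, with $\End$, and with $\Nrd$ — provided by Lemma~\ref{LM:center-base-change} together with routine base-change compatibility — and one must invoke the equality $\rrk_BP=\rrk_AP$ of Proposition~\ref{PR:centralized-in-Az-alg}, which itself rests on the standing hypothesis in Notation~\ref{NT:proof-of-Es} that $\rank_TA_A$ is constant along the fibres of $\Spec T\to\Spec S$.
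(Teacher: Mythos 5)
Your proof is correct, and its skeleton --- reduce to $\rrk_AP>0$, observe that $\End_A(P)$ and $\End_B(P)$ are Azumaya over $S$ and $T$ of the same degree $\rrk_BP=\iota\rrk_AP$, and deduce (ii) from base-change compatibility of the reduced norm --- is the same as the paper's. The one genuine divergence is how (i) is finished: the paper invokes the general fact (Knus, Corollaire III.5.1.18) that a unital homomorphism of Azumaya algebras of equal degree is automatically an isomorphism, whereas you reprove that fact for this particular map by specializing to the residue fields of $S$, using semisimplicity of $T$ over a field to see that the kernel is a direct summand with vanishing fibres, and then comparing ranks. Your fibrewise argument is more self-contained but requires the extra bookkeeping (Lemma~\ref{LM:center-base-change}, compatibility of $\End$ and of $B=\Cent_A(T)$ with base change) that the citation bypasses; in exchange it makes visible exactly where the rank-constancy hypothesis of Notation~\ref{NT:proof-of-Es} enters, via $\rrk_BP=\iota\rrk_AP$. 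Your treatment of (ii) through a splitting extension of $S$ is just an unwinding of the paper's one-line appeal to the invariance of $\Nrd$ under base change and under isomorphisms of Azumaya algebras, and is fine; the degenerate locus where $P=0$ is handled by the same (slightly informal) convention the paper itself uses.
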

	
	\begin{proof}
		We may assume that $\rrk_AP>0$, otherwise
		write $R=R_0\times \ann_RP$ (use \cite[Proposition~1.1.15]{Ford_2017_separable_algebras}) 
		and work over $R_0$.
	
		(i)  
		By Proposition~\ref{PR:degree-of-endo-ring}(i),
		$\End_A(P)$ is Azumaya over $S$,
		$\End_B(P)$ is Azumaya over $T$
		and
		$\deg \End_B(P)=\rrk_BP=\iota \rrk_AP=\iota\deg \End_A(P)=
		\deg \End_A(P)\otimes_ST$.
		Thus, the map $\End_A(P)\otimes_ST\to \End_B(P)$
		is a homomorphism of Azumaya $T$-algebras of equal degrees.
		By 
		\cite[Corollary~III.5.1.18]{Knus_1991_quadratic_hermitian_forms},		
		such a homomorphism is always an isomorphism.
		
		(ii)  This follows from (i) and the fact that reduced norm is preserved under base-change.
	\end{proof}

	\begin{prp}\label{PR:rationality-for-Ei}
		With Notation~\ref{NT:proof-of-Es},
		suppose that $R$ is an infinite field, 
		and
		let $\quo{R}$
		be an algebraic closure of $R$.
		Let $(Q,g)\in \Herm[\veps]{B,\tau}$ and let
		$L$ be a Lagrangian of $\rho g$.
		Let  $(P,f)\in\Herm[\veps]{A,\sigma}$ and let $M$ be a Lagrangian of $\pi f$.
		Then:
		\begin{enumerate}[label=(\roman*)]
			\item  
			If there exists $\vphi\in U^0(\rho g_{\quo{R}})$
			such that $Q_{\quo{R}}\oplus \vphi (L_{\quo{R}})=QA_{\quo{R}}$,
			then there exists $\psi\in U^0(\rho g)$
			such that $Q\oplus \psi L=QA$.
			\item  
			If there exists $\vphi\in U^0(\pi f_{\quo{R}})$
			such that $\vphi M\cdot A_{\quo{R}}=P_{\quo{R}}$,
			then there exists $\psi\in U^0(\pi f)$
			such that $\psi M\cdot A=P$.
		\end{enumerate}
		(See \ref{subsec:isometry-group} for the definition
		of $U^0(-)$.)
	\end{prp}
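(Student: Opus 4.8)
The plan is to deduce both statements from the rationality of $\uU^0(\rho g)$ and $\uU^0(\pi f)$ over the infinite field $R$ (Theorem~\ref{TH:rational-variety}), combined with the elementary fact that over an infinite field a nonempty Zariski-open subset of an irreducible rational variety has a rational point. I describe the argument for (i) in detail; part~(ii) is entirely analogous, with an isomorphism condition replaced by a surjectivity condition.

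First I would package the condition as an open subscheme of $\uU^0(\rho g)$. For an $R$-ring $C$ and $\vphi\in U^0(\rho g_C)$, consider the $B_C$-linear map $\Theta_\vphi\colon Q_C\oplus L_C\to (QA)_C$, $(q,\ell)\mapsto q+\vphi\ell$, where $Q$ is identified with its copy $Q\otimes 1$ in $QA$ as in Notation~\ref{NT:proof-of-Es}. All three modules are finite projective over $B$, and if the hypothesis of (i) is satisfiable at all --- say $Q_{\quo R}\oplus\vphi_0(L_{\quo R})=QA_{\quo R}$ --- then comparing ranks after $\otimes\quo R$ gives $\rrk_B L=\rrk_B QA-\rrk_B Q$, hence $\rrk_B(Q\oplus L)=\rrk_B QA$. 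Consequently $\Theta_\vphi$ is an isomorphism if and only if it is surjective, i.e.\ if and only if $\coker\Theta_\vphi=0$; since the cokernel is a finitely generated module varying functorially and compatibly with base change in $\vphi$, its support is closed, so
\[
\mathbf{V}=\{\vphi\in\uU^0(\rho g)\colon \Theta_\vphi\text{ is an isomorphism}\}
\]
is an open subscheme of $\uU^0(\rho g)$ with $\mathbf{V}(C)=\{\vphi\in U^0(\rho g_C)\colon Q_C\oplus\vphi(L_C)=(QA)_C\}$.

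Next I would show $\mathbf{V}(R)\neq\emptyset$. By hypothesis $\mathbf{V}(\quo R)\neq\emptyset$, so $\mathbf{V}\times_R\quo R$ is nonempty and hence $\mathbf{V}$ itself is nonempty. Now $\uU^0(\rho g)$ is smooth and geometrically connected over the field $R$, hence geometrically irreducible --- a connected regular scheme of finite type over a field is irreducible --- and by Theorem~\ref{TH:rational-variety} it is $R$-rational, so there are dense open subschemes $\mathbf{U}_1\subseteq\uU^0(\rho g)$ and $\mathbf{U}_2\subseteq\mathbb{A}^n_R$ with $\mathbf{U}_1\cong\mathbf{U}_2$. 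By irreducibility, $\mathbf{V}\cap\mathbf{U}_1$ is a nonempty open subscheme of $\uU^0(\rho g)$, corresponding under the isomorphism to a nonempty open $\mathbf{W}\subseteq\mathbb{A}^n_R$. Since $R$ is infinite, $\mathbb{A}^n(R)$ is Zariski-dense in $\mathbb{A}^n_R$, so $\mathbf{W}(R)\neq\emptyset$; transporting such a point back gives $\psi\in\mathbf{V}(R)$, that is, $\psi\in U^0(\rho g)$ with $Q\oplus\psi L=QA$, as required.

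For part~(ii) one replaces $\Theta_\vphi$ by the $A_C$-linear map $M_C\otimes_{B_C}A_C\to P_C$, $m\otimes a\mapsto(\vphi m)a$, and $\mathbf{V}$ by the locus where this map is surjective; surjectivity of a map of finitely generated modules is again an open condition (the cokernel has closed support), no rank comparison is needed, and $\mathbf{V}(C)$ is exactly the set of $\vphi$ with $\vphi M\cdot A=P$, so the argument concludes verbatim. I expect the main technical point --- and the only place demanding genuine care --- to be checking that these loci really are Zariski-open \emph{subschemes of $\uU^0$}: one must argue with cokernels of morphisms of vector bundles rather than with explicit matrices, because $L$, $M$, $Q$ and $P$ need not be free over $B$ or $A$, and one must keep the construction inside the neutral component $\uU^0(\rho g)$ (resp.\ $\uU^0(\pi f)$) so that Theorem~\ref{TH:rational-variety} is applicable. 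Everything else is standard rational-point density over an infinite field.
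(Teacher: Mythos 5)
Your proof is correct and follows essentially the same strategy as the paper: realize the condition $Q\oplus\psi L=QA$ (resp.\ $\psi M\cdot A=P$) as a nonempty open subscheme of the irreducible rational variety $\uU^0(\rho g)$ (resp.\ $\uU^0(\pi f)$) and invoke density of rational points over the infinite field $R$. The only cosmetic difference is that the paper exhibits the open locus as the nonvanishing of an explicit determinant with respect to $R$-bases (which exist since $R$ is a field), whereas you use closedness of the support of a cokernel; both are valid.
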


	\begin{proof}
		(i) 
		Consider  $\uU^0(\rho g)$ as a functor from $R$-rings
		to groups and define a subfunctor
		$R_1\mapsto \bfX(R_1)$ of $\uU^0(\rho g)$ by
		\[
		\bfX(R_1)=\{\psi\in U^0(\rho g_{R_1})\suchthat Q_{R_1}\oplus \psi (L_{R_1})=QA\}.
		\]
		We claim that $\bfX$ is represented by open affine subscheme of $\uU^0(\rho g)$,
		also denoted   $\bfX$.
		
		To see this, fix $R$-bases $\{x_i\}_{i=1}^r$, $\{v_i\}_{i=1}^s$, $\{y_i\}_{i=1}^{r+s}$
		to $Q$, $L$, $QA$, respectively. These will also be viewed
		as $R_1$-bases of $Q_{R_1}$, $L_{R_1}$, $QA_{R_1}$.
		The group $U^0(\rho g_{R_1})$
		is the zero locus of certain polynomial functions
		on $\End_{R_1}(QA_{R_1})\cong R_1^{(r+s)^2}$ with coefficients
		in $R$. Thus, it is enough to show that there exists a
		polynomial   $\xi\in R[x_{11},x_{12},\dots,x_{(r+s)(r+s)}]$ 
		such that $\bfX(R_1)=\{\psi \in U^0(\rho g_{R_1})\suchthat \xi(\psi)\in \units{R_1}\}$.
		To that end, given $y\in QA_{R_1}$,
		let $[y]$ denote the vector $(\alpha_1,\dots,\alpha_{r+s})\in R_1^{r+s}$
		for which $y=\sum_iy_i\alpha_i$. Then
		the function sending $a\in R_1^{r^2}\cong  \End_{R_1}(Q_{R_1})$
		to the determinant of the $(r+s)\times (r+s)$
		matrix with columns $[v_1],\dots,[v_s],[ax_1],\dots,[ax_r]$
		is   a polynomial $\xi\in R[x_{11},x_{12},\dots,x_{(r+s)(r+s)}]$
		having the desired property.
		
		By Theorem~\ref{TH:rational-variety},
		the irreducible $R$-variety $\uU^0(\rho g)$
		is  rational.
		By the previous paragraph, $\bfX$ is an open subvariety
		of $\uU^0(\rho g)$ and it is nonempty because $\vphi\in \bfX(\quo{R})$.
		Thus, $\bfX$ is also rational.
		Since  rational varieties
		have points over any infinite field,  
		$\bfX(R)\neq \emptyset$ and the existence of $\psi$ follows.
		
		(ii) 	
		This is similar to (i), but one
		uses an open subscheme of $\uU^0(\pi f)$
		defined as follows:
		Write $r=\dim_RP$.
		Since $\vphi  M_{\quo{R}} \cdot A_{\quo{R}}=P_{\quo{R}}$,
		there exist   pairs
		$\{(m_i,a_i)\}_{i=1}^r\subseteq M\times A$
		such that $\{\vphi m_i\cdot a_i\}_{i=1}^r$
		forms an $\quo{R}$-basis to $P_{\quo{R}}$.
		Given an $R$-ring $R_1$,
		define $\bfX(R_1)$
		to be the set of $\psi\in U^0(\pi f_{R_1})$
		such that $\{ \psi m_i\cdot a_i\}_{i=1}^r$
		is an $R_1$-basis to $P_{R_1}$. 
	\end{proof}

\section{Verification of (E2) and (E4)}
\label{sec:Ei}

	Keep the assumptions of Notation~\ref{NT:proof-of-Es}.
	The purpose of this section is to prove:

	\begin{thm}\label{TH:Ei-holds}
		With Notation~\ref{NT:proof-of-Es}, suppose that $R$
		is semilocal and
		let $(Q,g)\in\Herm[\veps]{B,\tau}$.
		Assume that   $[\rho g]=0$ in $W^\veps(A,\sigma)$.
		Then:
		\begin{enumerate}[label=(\roman*)]
		\item When $T$ is connected,
		there exists a Lagrangian $L$ of $\rho g$
		such that $L\oplus Q=QA$ if and only if:
			\begin{enumerate}[label=(\arabic*)]
			\item
			$(\sigma,\veps)$ is not orthogonal, or 
			\item  
			$(\tau,\veps)$ is not unitary, or
			\item $[A]=0$ in $\Br S$, or 
			\item $[B]\neq 0$ in $\Br T$, or
			\item $(\tau,\veps)$ is unitary, 
			$[B]=0$, $\rrk_BQ$ is even
			and $[D(g)]=\frac{\rrk_BQ}{2}  [A]$;
			here, $D(g)$ is the discriminant algebra of $g$, see \ref{subsec:disc}.
			\end{enumerate}
			When none of 
			(1)--(5) hold, $\rrk_BQ$ is even,  $[D(g)]=(\frac{\rrk_BQ}{2}+1)\cdot [A]$
			and $g$ is isotropic.
		\item There exists $(Q',g')\in\Herm[\veps]{B,\tau}$ with $[g]=[g']$
    	and   a Lagrangian $L$ of $\rho g'$
    	such that  $L\oplus Q'=Q'A$.
		\end{enumerate}
	\end{thm}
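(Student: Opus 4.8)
The plan is to carry out the strategy outlined in~\ref{subsec:proof-overview}: reduce everything to a computation over the residue fields of $R$, and then globalize. First I would handle the bookkeeping. Working over the connected components of $R$ one may assume $R$ is connected, so that $(\sigma,\veps)$ is orthogonal, symplectic or unitary (Proposition~\ref{PR:types-of-involutions-Az}(v)) and the type of $(\tau,\veps)$ is constant (Lemma~\ref{LM:tau-type-is-constant}); the case $S\cong R\times R$, in which every hermitian space over $(A,\sigma)$ is hyperbolic, is elementary and treated separately, so one may also assume $S$ connected. Since $[\rho g]=0$, the space $\rho g$ is hyperbolic by Theorem~\ref{TH:trivial-in-Witt-ring}(ii), and over the connected ring $S$ every Lagrangian of $\rho g$ lies in $\Lag(\rho g)$ (Corollary~\ref{CR:constant-even-ranks}); fix one, call it $L'$. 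Writing $\frakm_1,\dots,\frakm_t$ for the maximal ideals of $R$, the key reduction is: if for each $i$ there is $\vphi_i\in U^0\big((\rho g)(\frakm_i)\big)$ with $Q(\frakm_i)\oplus\vphi_i\big(L'(\frakm_i)\big)=QA(\frakm_i)$, then Theorem~\ref{TH:U-zero-mapsto-onto-closed-fibers} lifts them to some $\vphi\in U^0(\rho g)$, and Lemma~\ref{LM:semilocal-direct-sum-reduction} gives $Q\oplus\vphi(L')=QA$, so $L:=\vphi(L')$ is as required. Thus part~(i) reduces to deciding, over each residue field $k$, whether the $U^0(\rho g)$-orbit of $L'$ contains a Lagrangian complementing $Q$, and part~(ii) to arranging this simultaneously at all $\frakm_i$ after a Witt-equivalent replacement of $g$ over $R$.

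Second I would dispose of the unobstructed cases, which yield (1)--(4) of part~(i). When $(\sigma,\veps)$ is not orthogonal, $U^0(\rho g)=U(\rho g)$ acts transitively on $\Lag(\rho g)$ (Proposition~\ref{PR:U-zero-description}, Lemma~\ref{LM:Lag-transitive-action}), so it is enough to produce one Lagrangian complementing $Q$. Decomposing $g\cong g_{\mathrm{an}}\oplus h$ into an anisotropic and a hyperbolic summand (Proposition~\ref{PR:ansio-Witt-equivalent}), the anisotropic part is handled by the argument of Remark~\ref{RM:Es-for-field}; for the hyperbolic part one passes, when $k$ is infinite, to an algebraic closure using the rationality of $\uU^0(\rho h)$ (Theorem~\ref{TH:rational-variety}) through Proposition~\ref{PR:rationality-for-Ei}, and over an algebraically closed or finite field one applies Reductions~\ref{RD:common-reduction} and~\ref{RD:common-reduction-II} to reach $A=\nMat{S}{2}$ with $\deg B=1$ and $\sigma$ in explicit form, where a complementing Lagrangian inside a sum of hyperbolic planes is written down by hand. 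The same reasoning settles case~(3) (where $[A]=0$ forces $[B]=0$) and, within the orthogonal case, case~(4): when $[B]\neq0$ one checks that the two $U^0(\rho g)$-orbits of Lagrangians described below collapse to one, restoring transitivity.

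The main obstacle is the orthogonal case with $[B]=0$ and $[A]\neq0$, where $U^0(\rho g)=\ker\big(\Nrd\colon U(\rho g)\to\mu_2\big)$ need not act transitively on $\Lag(\rho g)$. Over the field $k$, Propositions~\ref{PR:transitive-action-of-uUf} and~\ref{PR:partition-of-Lag} show that $\Lag(\rho g)$ is the union of the two fibres of $\Phi_{L'}$, each stable under $U^0(\rho g)$ because $\Phi_{L'}$ is $\Nrd$-equivariant; so over $k_i$ the isometry $\vphi_i$ exists precisely when some Lagrangian complementing $Q$ has $\Phi_{L'(\frakm_i)}$-value $1$. The crux is to prove that, when $\rrk_BQ$ is even, a Lagrangian complementing $Q$ does exist and they all lie in a single $\Phi_{L'}$-fibre, and to identify which fibre: using Proposition~\ref{PR:computation-of-Phi} and the cocycle relations of Proposition~\ref{PR:partition-of-Lag}(i), the relevant value of $\Phi_{L'}$ is matched with an explicit invariant built from $\rrk_BQ$, the discriminant algebra $D(g)$ (see~\ref{subsec:disc}) and $[A]$, which forces the dichotomy $[D(g)]=\frac{\rrk_BQ}{2}[A]$ versus $[D(g)]=(\frac{\rrk_BQ}{2}+1)[A]$ when $(\tau,\veps)$ is unitary, and is always favourable when $(\tau,\veps)$ is not unitary. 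When $\rrk_BQ$ is odd, no Lagrangian complements $Q$ at all, but a separate argument (showing $\Phi$ obstructs every such decomposition, in the spirit of Remark~\ref{RM:Es-for-field}) forces $g$ to be hyperbolic. Collecting the field-level verdicts produces the list (1)--(5) of part~(i), and the concluding assertions ($\rrk_BQ$ even, $[D(g)]=(\frac{\rrk_BQ}{2}+1)[A]$, $g$ isotropic) when none of them hold follow from the same computation together with Proposition~\ref{PR:ansio-Witt-equivalent}.

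Finally, part~(ii) would follow from part~(i) by killing the defect with hyperbolic summands. If $\rrk_BQ$ is odd in the obstructed case then $g$ is hyperbolic by part~(i), hence Witt-equivalent to the zero form and there is nothing to prove. Otherwise replace $g$ by $g\oplus m\cdot\Hyp[\veps]{B}$ for suitable $m\in\N$: this leaves the Witt class unchanged, keeps $\rrk_BQ$ of the same parity, and preserves the discriminant algebra, since hyperbolic planes have trivial discriminant (Proposition~\ref{PR:disc-orth-basic-props}(iv)), while $\rho$ of the new form gains $m$ hyperbolic planes; tracking $\Phi$ across this enlargement via Proposition~\ref{PR:partition-of-Lag}(ii) shows that finitely many such steps place a complementing Lagrangian in the $L'$-fibre over every $\frakm_i$ at once, whereupon part~(i) applies to $g'$. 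Once Notation~\ref{NT:proof-of-Es} is specialized to the setting of~\ref{subsec:octagon}, this is exactly the content needed, via Theorem~\ref{TH:exactness-equiv-conds}, to verify conditions~\ref{item:Ei} and~\ref{item:Ei-tag}.
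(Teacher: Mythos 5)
Your plan reproduces the paper's architecture: reduce to connected $R$, fix a Lagrangian $L'$ of the hyperbolic form $\rho g$, use Theorem~\ref{TH:U-zero-mapsto-onto-closed-fibers} and Lemma~\ref{LM:semilocal-direct-sum-reduction} to reduce to producing $\vphi_i\in U^0(\rho g(\frakm_i))$ over each residue field, settle the transitive cases via the anisotropic-plus-hyperbolic decomposition, rationality and the explicit matrix reductions, and control the orthogonal case by a $\umu_2$-valued invariant on $\Lag(\rho g)$. The serious gap is in the crux of the obstructed case. You assert that the complementing Lagrangians ``all lie in a single $\Phi_{L'}$-fibre'' and that this is extracted from Proposition~\ref{PR:computation-of-Phi} and the cocycle relations of Proposition~\ref{PR:partition-of-Lag}(i). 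It is not. Over an algebraically closed residue field one has $[A]=0$, so $\Nrd:U(\rho g)\to\mu_2$ is onto, both fibres of $\Phi_{L'}$ are nonempty, and each is a single $U^0(\rho g)$-orbit; a priori Lagrangians complementing $Q$ could occur in both fibres, in which case there would be no obstruction and the dichotomy in terms of $[D(g)]$ would be empty. One must \emph{prove} that the $\Phi=-1$ fibre contains no complementing Lagrangian (Proposition~\ref{PR:Ei-orthogonal-II-R-field-A-split}(ii) in the paper), and this is a genuinely geometric statement: the paper parametrizes a dense subset of the non-identity component of the isometry group and checks that its image under the relevant block projection misses $\nGL{R}{n}\times\nGL{R}{n}$. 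No formal property of $\Phi$ yields this, and without it the identification of the ``good'' fibre with the condition $[D(g)]=\tfrac{\rrk_BQ}{2}[A]$ has no content. Relatedly, to make ``which fibre is good'' a well-posed global question you need a normalization of $\Phi$ independent of the choice of $L'$ (the paper's $\Phi_g$, pinned down using the splitting of $T\otimes T$), which your sketch does not provide.

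A second, concrete error sits in part (ii): adding $m\cdot\Hyp[\veps]{B}$ changes the invariant by $(-1)^{m\deg B}$ (the Lagrangian $B$ of $\Hyp[\veps]{B}$ has $\Phi$-value $(-1)^{\rrk_BB}$ by Proposition~\ref{PR:Ei-Lag-partition}(ii), and the invariant is multiplicative by part (iii) of that proposition), so when $\deg B$ is even no choice of $m$ alters the obstruction, and your claim that ``finitely many such steps'' suffice fails. The correct move is to add $\Hyp[\veps]{N}$ for a single $N\in\rproj{B}$ with $\rrk_BN=1$, which exists exactly because $[B]=0$ in the obstructed case. Finally, your reduction to constant $\rrk_BQ$ is incomplete: when $T\cong S\times S$ with both idempotents fixed by $\sigma$, $\rrk_BQ$ need not be constant, and one must first balance the two components by adding hyperbolic forms supported on each factor (as in the proof of Theorem~\ref{TH:Ei-holds-unit-syp}) before the residue-field argument applies.
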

	
	In Section~\ref{sec:completion-of-proof}, we will use Theorem~\ref{TH:Ei-holds}
	to establish    conditions \ref{item:Ei}
	and \ref{item:Ei-tag} of Theorem~\ref{TH:exactness-equiv-conds} when
	$R$ is semilocal. The reader can skip to the next section without loss of continuity.
	
\medskip
	
	It is enough
	to prove Theorem~\ref{TH:Ei-holds}
	when $R$ is connected.
	Indeed, we can write $R$ as a finite product
	of connected semilocal rings and work over each factor separately.
	In this case, by Proposition~\ref{PR:types-of-involutions-Az}(v)
	and Lemma~\ref{LM:tau-type-is-constant}, 
	exactly one of the following hold:
	\begin{enumerate}[label=(\arabic*)]
		\item  \label{item:unit:Ei} $(\sigma,\veps)$ is unitary or symplectic,
		\item  \label{item:orthI:Ei} $(\sigma,\veps)$ is orthogonal and  
		$(\tau,\veps)$ is orthogonal or symplectic,
		\item  \label{item:orthII:Ei} $(\sigma,\veps)$ is orthogonal and 
		$(\tau,\veps)$ is unitary.
	\end{enumerate}
	The first two cases will be
	handled in Theorem~\ref{TH:Ei-holds-unit-syp}
	and the third case will be treated in  
	Theorem~\ref{TH:Ei-holds-orth-II}.

\subsection{Cases (1) and (2)}

	We begin by establishing some special cases of Theorem~\ref{TH:Ei-holds}
	in the context of case  \ref{item:unit:Ei}.

	\begin{prp}\label{PR:Ei-unitary-S-not-field}
		With Notation~\ref{NT:proof-of-Es},
		suppose that
		$R$ is a field, $S=R\times R$  
		and  
		$[A]=0$.
		Let $(Q,g)\in \Herm[\veps]{B,\tau}$
		be a hermitian space such that
		$\rrk_BQ$ is constant.
		Then $ \rho g $ admits a Lagrangian $L$
		satisfying   
		$Q\oplus L=QA$.
	\end{prp}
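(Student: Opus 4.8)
The plan is to exploit the hypothesis $S = R\times R$ to split $(A,\sigma)$ as an exchange-type algebra and reduce the problem to a purely module-theoretic statement over a single central simple $R$-algebra. Write $S = R\times R$ and let $e, e'\in S$ be the primitive idempotents corresponding to $(1,0)$ and $(0,1)$; since $\sigma|_S$ is the exchange involution (Proposition~\ref{PR:types-of-involutions-Az}), $e^\sigma = e'$. First I would observe that $T$, being a quadratic \'etale $S$-algebra with $T^\sigma = T$, decomposes accordingly: either $T$ is already a product of two copies of each factor, or the relevant picture is governed by Lemma~\ref{LM:idempotent-in-T} if $T\cong S\times S$. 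In either sub-case, the key structural input is that $A$ is Morita equivalent (via $e$-transfer through a $\sigma$-invariant idempotent, using $[A]=0$ and Theorem~\ref{TH:invariant-primitive-idempotent}) to an algebra of the shape $C\times C^{\op}$ with the exchange involution, where $C$ is split, i.e.\ we may take $C = R$ or a matrix algebra over $R$; and $B = \Cent_A(T)$ transfers compatibly.

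The main step is then the following: when $(A,\sigma)$ is (Morita equivalent to) an exchange involution, \emph{every} unimodular $\veps$-hermitian space over $(A,\sigma)$ is hyperbolic and is determined by the isomorphism class of its underlying module (this is exactly Example~\ref{EX:exchange-involution}, which gives $W_\veps(A,\sigma)=0$). So $\rho g = (QA, \rho g)$ is automatically hyperbolic, and I need not worry about the vanishing in the Witt group --- the content is entirely about \emph{choosing} a Lagrangian $L$ in the prescribed position relative to $Q = Q\otimes 1$. Concretely, writing $A = B\oplus E$ with $E = \ker\pi$ as in Lemma~\ref{LM:dfn-of-pi-A-B}, we have $QA = (Q\otimes 1)\oplus (Q\otimes E)$ as $B$-modules (here $Q\otimes E$ means the image of $Q\otimes_B E$), and $\rho g$ vanishes on $Q\otimes 1$ because $\rho_1 g$, restricted to $Q\otimes 1$, lands in $\lambda B$ or the appropriate off-diagonal piece --- in the exchange setting, the two "halves" $Q\otimes e$-type submodules are automatically totally isotropic. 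The cleanest route: use the decomposition $A = Ae\oplus Ae'$ (or the analogous $E$-decomposition) to exhibit $QA = (Q\otimes 1)\oplus L_0$ where $L_0$ is a sum of totally isotropic $B$-submodules of the correct reduced rank, then check $L_0$ is a Lagrangian by producing a complementary totally isotropic submodule, invoking the criterion in \ref{subsec:Witt-grp} that $f(L_0,L_0) = 0 = f(M_0,M_0)$ and $L_0\oplus M_0 = QA$ forces $L_0$ to be Lagrangian.

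I expect the main obstacle to be bookkeeping: matching the idempotent decompositions of $A$, $B$, $T$, $S$ simultaneously and verifying that the candidate submodule $L_0$ really is a $B$-submodule of $QA$ complementary to $Q\otimes 1$ with $\rrk_B L_0 = \frac12\rrk_A QA = \rrk_B Q$. The reduced-rank arithmetic is governed by the identities $\iota\rrk_A QA = 2\rrk_B Q$ recorded after Notation~\ref{NT:proof-of-Es} and by Lemma~\ref{LM:idempotent-in-T}(iv), so once the right submodule is identified the rank check is mechanical; the subtlety is purely in writing down $L_0$ correctly and confirming $B$-linearity. Since $R$ is a field here, there is no Zariski-local gluing to do (contrast Lemma~\ref{LM:semilocal-direct-sum-reduction}), which keeps the argument short. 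I would close by noting that the constancy hypothesis on $\rrk_B Q$ is used only to ensure $\rrk_B Q$ is a well-defined integer, matching the constancy of $\iota\deg A$ over the connected pieces, so that the rank comparison $\rrk_B L_0 = \rrk_B Q$ makes sense fiberwise over $\Spec T$.
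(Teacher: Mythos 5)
Your opening moves match the paper's: since $S=R\times R$ and $\sigma|_S$ is the exchange involution, one reduces via Reduction~\ref{RD:common-reduction} (compatibly with the statement ``$Q\oplus L=QA$'' thanks to Proposition~\ref{PR:simult-e-transfer}) to $B=T$, $\deg A=2$, and then uses Example~\ref{EX:exchange-involution} to see that every form in sight is hyperbolic and determined by its underlying module, so that $(Q,g)\cong n\cdot(B,g_1)$ once $\rrk_BQ$ is constant. Up to that point you are on track. (One small correction: constancy of $\rrk_BQ$ is not mere bookkeeping --- it is what forces $Q\cong B^n$ and hence the reduction to the rank-one case; without it $Q$ need not be free over $B$.)

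The gap is in the construction of the Lagrangian. You propose $L_0=Q\otimes E$ (equivalently, the ``$Q\otimes e$-type'' halves) and justify it by claiming that $\rho g$ vanishes on $Q\otimes 1$ and on these halves. That is false in the setting of Notation~\ref{NT:proof-of-Es}: here $\rho$ is plain base change, so $\rho g(x\otimes 1,y\otimes 1)=g(x,y)$, and $g$ is unimodular; likewise $\rho g(x\otimes\mu,y\otimes\mu)=\mu^\sigma g(x,y)\mu\neq 0$ with $E=\mu B$. (You appear to be importing the factor $\lambda$ from the octagon's $\rho_1$, but that twist is applied only in Section~\ref{sec:completion-of-proof}, not here.) Conversely, the submodules that \emph{are} totally isotropic for the exchange form --- namely $Q'A_1\times 0$ and $0\times Q''A_1^{\op}$ in the decomposition $A=A_1\times A_1^{\op}$ --- are not complementary to $Q\otimes 1$, since they meet it in $Q'\times 0$, resp.\ $0\times Q''$. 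So neither candidate works, and the genuine content of the proposition is precisely to produce a Lagrangian that is isotropic \emph{and} transverse to $Q\otimes 1$. The paper does this after reducing to $(Q,g)=(B,g_1)$ by fixing an explicit identification $A_1\cong\nMat{R}{2}$ in which $T_1=R\cdot 1+R\cdot\smallSMatII{0}{r}{1}{0}$, and then verifying by hand that $L=\smallSMatII{1}{0}{0}{0}A_1\times\bigl(A_1\smallSMatII{0}{0}{0}{1}\bigr)^{\op}$ is a Lagrangian with $B\oplus L=A$; some concrete computation of this kind (or an equivalent transversality argument) is unavoidable and is missing from your proposal.
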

	
	\begin{proof}
		We may apply Reduction~\ref{RD:common-reduction} to
		assume that $B=T$ and $\deg A=2$.
	
		Let $\eta$ denote a nontrivial idempotent
		of $S$. Then $\eta^\sigma=1-\eta$.
		By Example~\ref{EX:exchange-involution}, we
		may assume that $T= T_1\times T_1$,
		$B=B_1\times B_1^\op$ and $A=A_1\times A_1^\op$,
		with $T_1\subseteq B_1\subseteq A_1$,
		and  under these identifications, $\sigma$ is the exchange
		involution $(x,y^\op)\mapsto (y,x^\op)$.
		Furthermore, all hermitian
		forms over $(B,\tau)$   are hyperbolic, and
		every hermitian space is determined up to isomorphism by
		its underlying module.

		Write $\veps$ as $(\alpha,\alpha^{-1})\in R\times R$
		and
		consider the $\veps$-hermitian form $g_1:{B\times B}\to B$
		given by $g_1((x_1,x^\op_2),(y_1,y_2^\op))= (\alpha  x_2y_1,(y_2x_1)^\op)$.
		It is easy to see that $(B,g_1)\in \Herm[\veps]{B,\tau}$.
		Since $\rrk_BB=1$ and $\rrk_BQ$
		is constant, we have $Q\cong B^n$ for $n=\rrk_BQ$ (Lemma~\ref{LM:rank-determines}).
		As we noted above, this means that
		$(Q,g)\cong n\cdot (B,g_1)$. 
		It is therefore  enough to prove the proposition for $(Q,g)=(B,g_1)$.
		In this case, the isomorphism $b\otimes a\mapsto ba: B\otimes_BA\to A$
		is an isometry from  $(QA,\rho g)$ to  $(A,f_1)$,
		where
		$f_1$ is given by the same formula as $g_1$.
		
		Fix an identification $A_1\cong \nMat{R}{2}$.
		Since $B_1=T_1$ is a quadratic \'etale $R$-algebra,
		there exists $t\in T_1$
		such that
		$r:=t^2\in\units{R}$ and $T_1=R\oplus t R$ (Lemma~\ref{LM:quad-etale-over-semilocal}).
		Thus, $t$ is conjugate to $[\begin{smallmatrix} 0 & r \\ 1 & 0 \end{smallmatrix}]$
		in $A_1$. Using this, we choose the identification $A_1\cong  \nMat{R}{2}$
		to satisfy   $t=[\begin{smallmatrix} 0 & r \\ 1 & 0 \end{smallmatrix}]$.
		Now,
		$
		B_1=T_1=[\begin{smallmatrix} 1 & 0 \\ 0 & 1 \end{smallmatrix}]R+
		[\begin{smallmatrix} 0 & r \\ 1 & 0 \end{smallmatrix}]R 
		$ and one readily checks
		that $L=[\begin{smallmatrix} 1 & 0 \\ 0 & 0 \end{smallmatrix}]A_1\times 
		(A_1 [\begin{smallmatrix} 0 & 0 \\ 0 & 1 \end{smallmatrix}])^\op$ is 
		a Lagrangian of $f_1=\rho g_1$ satisfying $B\oplus L=A$.
	\end{proof}
	
	\begin{prp}\label{PR:Ei-unitary-S-is-field-A-split}
		With Notation~\ref{NT:proof-of-Es},
		suppose that $S$
		is a field, $[A]=0$
		and $(\sigma,\veps)$ is symplectic or unitary.
		Let $(Q,g)\in \Herm[\veps]{B,\tau}$
		be a hyperbolic hermitian  space such that
		$\rrk_B Q$ is constant.
		Then $ \rho g $ admits a Lagrangian $L$
		satisfying $Q\oplus L=QA$.
	\end{prp}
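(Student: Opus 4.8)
The plan is to follow the strategy of \ref{subsec:proof-overview}: reduce by conjugation and $e$-transfer to an explicit two-dimensional model, and then exhibit the Lagrangian directly.

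\emph{Reductions.} The statement ``$\rho g$ admits a Lagrangian $L$ with $L\oplus Q=QA$ as $B$-modules'' involves only the underlying modules of the hermitian spaces, hence is amenable to conjugation and $e$-transfer by Propositions~\ref{PR:simult-conj} and~\ref{PR:simult-e-transfer}(ii); moreover hyperbolicity of $g$ and constancy of $\rrk_BQ$ are preserved under these operations. So, after passing to a direct factor of $R$, we may assume $R$ is connected, i.e.\ a field. Since $[A]=0$ we have $[B]=[A\otimes_ST]=0$, so Reduction~\ref{RD:common-reduction} applies and we may assume $\deg B=1$, that is $B=T$, and consequently $\deg A=2$; as $S$ is a field with $[A]=0$, this forces $A\cong\nMat{S}{2}$. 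By Lemma~\ref{LM:non-connected-S}, $T$ is either a field or $T\cong S\times S$.

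\emph{Reduction to a hyperbolic plane and the model.} Since $g$ is hyperbolic and $2\in\units{R}$, $g\cong\Hyp[\veps]{V}$ for $V\in\rproj{B}$; using $\rrk_BQ=\rrk_BV+\sigma\rrk_BV$ (Lemma~\ref{LM:unimodular-implies-rrk-sigma-inv}) and constancy of $\rrk_BQ$, a direct check gives $V\cong B^m$, so $g\cong m\cdot\Hyp[\veps]{B}$ with $\Hyp[\veps]{B}$ the hyperbolic space on $B\oplus B^{*}$. As $QA$ and $Q\otimes 1$ then split compatibly into $m$ summands and an orthogonal sum of Lagrangians $L_i$ with $L_i\oplus Q_i=Q_iA$ again has this property, it suffices to treat $(Q,g)=(B\oplus B^{*},\Hyp[\veps]{B})$. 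There is a natural isometry $\rho(\Hyp[\veps]{B})\cong\Hyp[\veps]{A}$ on $A\oplus A^{*}$ (as in \ref{subsec:herm-base-change}), under which $Q\otimes 1=(B\otimes 1)\oplus(B^{*}\otimes 1)$ becomes $B\oplus B^{\vee}$, where $B^{\vee}$ is the copy of $B$ sitting in $A^{*}\cong A$; since $B=T$ this is $T\oplus T\subseteq A\oplus A^{*}$. Now choose a rank-one idempotent $e\in A=\nMat{S}{2}$ with $eA\cap T=0$ and $A(1-e)\cap T=0$: if $T$ is a field this holds for every rank-one $e$, since a non-invertible element of $T$ is $0$; if $T\cong S\times S$ it suffices that the image of $e$ avoid the two ``coordinate lines'' of $T$, and such a line exists because $\bbP^1(S)$ has at least three points. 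Then $L:=eA\oplus A(1-e)$ is an $A$-submodule of $A\oplus A^{*}$ ($eA$ a right ideal and $A(1-e)$ a left ideal, hence a submodule of $A^{*}$ via $\phi\mapsto\phi(1)$); the form $\Hyp[\veps]{A}$ vanishes on $L$ because the terms $\phi(x')$, $\phi'(x)$ occurring in $\Hyp[\veps]{A}(x\oplus\phi,x'\oplus\phi')$ lie in $A(1-e)eA=0$; a reduced-rank count ($\rrk_A L=2=\tfrac12\rrk_A QA$) gives $L\in\Lag(\rho g)$; and $L\cap(T\oplus T)=(eA\cap T)\oplus(A(1-e)\cap T)=0$, so comparing $S$-dimensions yields $L\oplus(Q\otimes 1)=QA$.

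\emph{Main obstacle.} The delicate part is the bookkeeping behind the isometry $\rho(\Hyp[\veps]{B})\cong\Hyp[\veps]{A}$ and the exact location of the image of $Q\otimes 1$ inside $A\oplus A^{*}$; once the model is correctly set up, choosing $e$ and checking the two conditions is routine. The symplectic-or-unitary hypothesis is used only to run the reductions (it lets us arrange $\sigma$ and $\tau$ to be orthogonal or unitary, cf.\ Reduction~\ref{RD:common-reduction}); for a hyperbolic $g$ it plays no further role.
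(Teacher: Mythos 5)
Your reduction to the single hyperbolic plane $(B\oplus B^{*},\Hyp[\veps]{B})$ contains a genuine gap at the step ``a direct check gives $V\cong B^m$''. Writing $g\cong\Hyp[\veps]{V}$, Lemma~\ref{LM:unimodular-implies-rrk-sigma-inv} gives $\rrk_BQ=\rrk_BV+\sigma\rrk_BV$, and constancy of $\rrk_BQ$ does \emph{not} force $\rrk_BV$ to be constant: when $T\cong S\times S$ and $\sigma$ interchanges the two primitive idempotents $e,e'$ of $T$, one can take $V\cong (eB)^r\oplus(e'B)^s$ with $r\neq s$, and $\rrk_BQ=r+s$ is constant but $V$ is not free. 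In particular $\rrk_BQ$ can be \emph{odd}, in which case $g$ is not isomorphic to any $m\cdot\Hyp[\veps]{B}$ (the latter has even reduced rank), and your model $(B\oplus B^{*},\Hyp[\veps]{B})$ simply does not cover $(Q,g)$. This is not a vacuous case: it occurs exactly when $e^{\sigma}=1-e$, so that $\tau$ is the exchange involution on $B=T=S\times S$, and the paper devotes its entire Case~II to it, with separate explicit constructions of $L$ in the symplectic and unitary subcases. When $\rrk_BQ$ is even your conclusion $g\cong m\cdot\Hyp[\veps]{B}$ can be rescued (not by freeness of $V$, but by Lemma~\ref{LM:rank-determines-hyperbolic}, since both sides are hyperbolic of the same constant reduced rank), so the gap is confined to, but unavoidable in, the odd case.

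For the even case your construction is correct and is a genuinely different (and arguably cleaner) one than the paper's: after identifying $\rho\Hyp[\veps]{B}$ with $\Hyp[\veps]{A}$ on $A\oplus A^{*}$ and $Q\otimes 1$ with $T\oplus T$, you take $L=eA\oplus A(1-e)$ for a rank-one idempotent $e$ whose image line avoids the (at most two) lines fixed by $T$; the form vanishes on $L$ because $A(1-e)\cdot eA=0$, and $L\oplus(T\oplus T)=A\oplus A^{*}$ by the choice of $e$ and a dimension count. The paper instead finds an invertible $s\in\Sym_{-\veps}(A,\sigma)\setminus B$ and takes the graph $L=\{(a,sa)\}$. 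Either device works for even rank; but to complete the proof you must add an argument for odd $\rrk_BQ$ along the lines of the paper's Case~II.
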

	
	\begin{proof}
		By Reduction~\ref{RD:common-reduction},
		we may assume that both $\sigma$
		and $\tau$ are orthogonal or unitary and
		$\deg B=1$. Thus, $B=T$ and $\deg A=2$.
		We now split into cases.

\medskip

		\noindent {\it Case I.    $\rrk_B Q$ is even.}	
		We apply Reduction~\ref{RD:common-reduction-II}
		to assume that
		$A=\nMat{S}{2}$ and $\sigma$ is given by 
		$[\begin{smallmatrix}a & b \\ c & d\end{smallmatrix}]
		\mapsto[\begin{smallmatrix} a^\sigma & \alpha c^\sigma \\ \alpha^{-1} b^\sigma & 
		d^\sigma \end{smallmatrix}]$
		for some $\alpha\in\units{R}$.		
		
		Consider the $\veps$-hermitian form
		$g_1:B^2\times B^2\to B$
		given by $g_1((x_1,x_2),(y_1,y_2))=x_1^\sigma y_2+\veps x_2^\sigma y_1$.
		Then $(B^2,g_1)$ is a hyperbolic.
		Since $\deg B=1$ and $n:=\rrk_BQ$ is constant and even,
		we have, by Lemma~\ref{LM:rank-determines-hyperbolic},
		$(Q,g)\cong \frac{n}{2}\cdot (B^2,g_1)$.
		It is therefore enough to prove the proposition for
		$(Q,g)=(B^2,g_1)$. In this case, $(QA,\rho g)$
		can be identified with $(A^2,f_1)$,
		where $f_1:A^2\times A^2\to A$ is given by the same formula as $g_1$.

		Given an $R$-subspace $E$ of $A$, let $\Sym_\veps(E)=\{a\in E\suchthat \veps a^\sigma= a\}$.
		Suppose that there exists $s\in \Sym_{-\veps}( A )\setminus S_{-\veps}( B )$ such
		that $s\in \units{A}$. It is routine to check
		that $L=\{(a,sa)\where a\in A\}$ is a Lagrangian of $A$ satisfying
		$L\cap B^2=0$, which, by $R$-dimension considerations,
		implies $L\oplus B^2=A^2$.
		It is therefore enough to establish the existence of $s$.
		To that end, we split into subcases.

\medskip

		\noindent {\it Subcase I.1.   $(\sigma,\veps)$ is symplectic.}
		This means that $S=R$, $\sigma$
		is orthogonal and $\veps=-1$.
		Let $E_1=\{[\begin{smallmatrix} a & 0 \\ 0 & a \end{smallmatrix}]\where a\in S\}$
		and $E_2=\{[\begin{smallmatrix}0  &  \alpha c \\ c & 0 \end{smallmatrix}]\where c\in S\}$.
		Then $E_1$ and $E_2$ are $1$-dimensional $S$-subspaces of $\Sym_1( A )$.
		If   $E_i\cap \Sym_1( B )=0$ for some $i\in\{1,2\}$, then we can 
		take any $0\neq s\in E_i$. Otherwise, since $\dim_S  B = 2$, we have $ B =E_1+E_2$,
		so  take $s=[\begin{smallmatrix}1  &  \alpha   \\ 1 & 0\end{smallmatrix}]$.

\medskip

		\noindent {\it Subcase I.2.   $(\sigma,\veps)$ is unitary.}
		Then $S$ is quadratic \'etale over $R$ and $\tau$ is unitary 
		(Lemma~\ref{LM:sigma-unit-means-tau-unit}).		
		By Hilbert's Theorem 90, there exists $\delta\in \units{S}$
		such that $\delta^{-1}\delta^\sigma =\veps$.
		Since $\Sym_{-\veps}( A )=\delta^{-1} \Sym_{-1}( A )$,
		we reduce into verifying
		the existence of $s'\in \Sym_{-1}( A )\setminus \Sym_{-1}( B )$
		with $s'\in \units{ A }$.

		Since $\sigma $ and $\tau$ are unitary involutions, we have
		$\dim_R \Sym_{-1}( A )=(\deg  A )^2=4$
		and $\dim_R \Sym_{-1}( B )=(\deg  B )^2\cdot \rank_RS=2$
		(Proposition~\ref{PR:types-of-involutions-Az}).
		Let $E_1=\{[\begin{smallmatrix} a & 0 \\ 0 & a \end{smallmatrix}]\where a\in \Sym_{-1}(S)\}$
		and $E_2=\{[\begin{smallmatrix}0  & -\alpha c^\sigma \\ c & 0 \end{smallmatrix}]\where c\in S\}$.
		Then $E_1$ and $E_2$ are   $R$-subspaces of $\Sym_{-1}(A)$
		of dimensions $1$ and $2$, respectively,
		and $E_2\setminus \{0\}$ consists   of invertible elements.
		If $ \Sym_{-1}(B) \neq E_2$, then take any $s'\in E_2\setminus \Sym_{-1}(B)$.
		If $\Sym_{-1}(B)=E_2$, then $\Sym_{-1}(B)\cap E_1=E_2\cap E_1=0$ and we can choose
		any nonzero $s'\in E_1$.

\medskip

		\noindent {\it Case II. 
		 $\rrk_B Q$ is odd.}
		Writing $(Q,g)\cong (U\oplus U^*,\Hyp[\veps]{U})$ with $U\in\rproj{B}$,
		Lemma~\ref{LM:unimodular-implies-rrk-sigma-inv}
		implies that  $\rrk_B Q=\rrk_B U+\sigma(\rrk_B U)$.
		Since $\rrk_B Q$ is odd, $\rrk_B U$ cannot be $\sigma$-invariant.
		In particular, $\rrk_B U$ is non-constant, forcing $T=S\times S$.	
			
		Let $e$ denote a nontrivial idempotent of $T$.
		We identify
		$A$ with  $\nMat{S}{2}$  in such a way that the idempotent $e$ corresponds
		to $[\begin{smallmatrix} 1 & 0 \\ 0 & 0 \end{smallmatrix}]$.
		Under this identification, $B$ is the subalgebra of diagonal matrices.

		We have $e^\sigma\in \{e,1-e\}$.
		Since $e^\sigma=e$ implies that $\rrk_B U$ is fixed by $\sigma$,
		we must have $e^\sigma=1-e$. 
		We conclude that
		$B=T=S\times S$ and $\tau$ is the exchange involution.
		Now, by Example~\ref{EX:exchange-involution},
		every unimodular $\veps$-hermitian form over $(B,\tau)$ is 
		hyperbolic and   
		determined up to isomorphism by its underlying module.

		At this point, we claim that we may assume that $\veps=1$.
		Indeed, if $\sigma$ is unitary, then
		$S$ is a quadratic \'etale $R$-algebra and $\sigma|_S$
		is its standard involution. Thus, by Hibert's Theorem~90,
		there exists $\mu\in \units{S}$ with $\mu(\mu^{-1})^\sigma =\veps^{-1}$,
		or rather, $\mu\in \Sym_{\veps^{-1}}(S,\sigma|_S)\cap\units{S}$.
		Applying  $\mu$-conjugation, see \ref{subsec:conjugation} and Proposition~\ref{PR:simult-conj},
		we may assume that $\veps=1$.
		If $\sigma$ is not unitary, then $R=S$, $\sigma$ is orthogonal and $\veps=-1$, so we can repeat
		the previous argument with $\mu:=(1_S,-1_S)\in S\times S=T$; this will turn $\sigma$ into a symplectic involution.

		Define $g_1:B\times B\to B$ by $g_1(x,y)=x^\tau y$.
		Then $g_1$ is a hyperbolic $1$-hermitian form.
		Since $\rrk_B B=\deg B=1$, we have $(Q,g)\cong n\cdot (B,g_1)$ for $n=\rrk_BQ$,
		and so it enough to prove the proposition when $(Q,g)=(Q,g_1)$.
		In this case,
		$b\otimes a\mapsto ba:BA\to A_A$ is
		an isomorphism   under which
		$f_1:=\rho g_1$ is given by $f_1(x,y):=x^\sigma y$.
		Again, we  split into subcases.

\medskip

		\noindent {\it Subcase II.1.   $(\sigma,\veps)$ is symplectic.}
		Since $\veps=1$, the involution $\sigma$
		is   the unique symplectic involution of $\nMat{S}{2}$, given
		by 	$
		[\begin{smallmatrix} a & b \\ c & d  \end{smallmatrix}]^\sigma
		=[\begin{smallmatrix} d  & -b \\ -c & a  \end{smallmatrix}] 
		$ \cite[Proposition~2.21]{Knus_1998_book_of_involutions}.	
		Now, it is routine to check that $L=
		\{[\begin{smallmatrix} \alpha & \beta \\ \alpha & \beta  \end{smallmatrix}]\where
		\alpha,\beta\in S\}$ is a Lagrangian of $\rho f$ satisfying
		$B\oplus L=A$.

\medskip

		\noindent {\it Subcase II.2.   $(\sigma,\veps)$ is unitary.}
		Since $e^\sigma=1-e$, there are $\sigma|_S$-linear
		automorphisms $\sigma_2,\sigma_3:S\to S$ such that
		$\sigma:A\to A$ is given by
		$
		[\begin{smallmatrix} a & b \\ c & d  \end{smallmatrix}]^\sigma
		=[\begin{smallmatrix} d^\sigma & \sigma_2b \\ \sigma_3c & a^\sigma  \end{smallmatrix}] 
		$. 	Furthermore, $\sigma_2\circ \sigma_2=\sigma_3\circ\sigma_3=\id_S$.
		
		Since $S$ is a quadratic field extension of $R$
		and $2\in\units{S}$, there exists $\delta\in \units{S}$
		with $\delta^\sigma=-\delta$.
		Choose some  $c \in \units{S}$.
		Then $c=\frac{1}{2}(c-\sigma_3 c)+\frac{1}{2}\delta^{-1}\delta(c+\sigma_3 c)$,
		hence at least one of $ (c-\sigma_3 c)$, $\delta(c+\sigma_3 c)$ is nonzero.
		Replacing $c$ with $ (c-\sigma_3 c)$ or $\delta(c+\sigma_3 c)$,
		we may assume that $\sigma_3 c=-c$ and $c\neq 0$.
		Now, it is straightforward to check
		that  $L=[\begin{smallmatrix} 1 & 0 \\ c & 0  \end{smallmatrix}]A=
		\{[\begin{smallmatrix} \alpha & \beta \\ c\alpha & c\beta  \end{smallmatrix}]\where
		\alpha,\beta\in S\}$ is a Lagrangian of $\rho f$ satisfying
		$B\oplus L=A$.
		This completes the proof.
	\end{proof}
	
	\begin{lem}\label{LM:rank-of-Lagrangian-Ei}
		With Notation~\ref{NT:proof-of-Es},
		suppose that $S$ is  field.
		Let $(Q,g)\in\Herm[\veps]{B,\tau}$
		be an anisotropic hermitian space
		such that $ \rho g $ is hyperbolic.
		Then $\rrk_BQ$ is constant.
	\end{lem}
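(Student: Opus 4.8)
The plan is to reduce to the case of a ground field and then recover $g$, up to negation and hyperbolic summands, as an $e$-transfer of $\rho g$.

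Since $S=\Cent(A)$ is a field that is module-finite over $R$ and contains $R$, the ring $R$ is itself a field, so $A$ is central simple over $S$ and $B,T$ are semisimple artinian. The function $\rrk_BQ$ is defined on $\Spec T$, hence automatically constant when $T$ is a field; by Lemma~\ref{LM:non-connected-S} the only remaining case is $T\cong S\times S$, and I let $e,e'\in T$ be the corresponding idempotents, so that $e^\sigma\in\{e,e'\}$. If $e^\sigma=e'$, then $\tau=\sigma|_B$ is an exchange involution (Example~\ref{EX:exchange-involution}, with the idempotent $e\in\Cent(B)=T$), so every unimodular $\veps$-hermitian space over $(B,\tau)$ is hyperbolic; since a nonzero hyperbolic space has a nonzero summand on which the form vanishes, the anisotropy of $(Q,g)$ forces $Q=0$ and $\rrk_BQ\equiv0$. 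So assume $e^\sigma=e$. Then $e,e'$ are orthogonal central idempotents of $B$, so by Lemma~\ref{LM:idempotent-in-T}(i) we have $(B,\tau)=(eAe,\sigma|_{eAe})\times(e'Ae',\sigma|_{e'Ae'})$ and $(Q,g)$ decomposes as an orthogonal sum $(Q_1,g_1)\oplus(Q_2,g_2)$ with each $g_i$ anisotropic. Writing $n_1=\rrk_{eAe}Q_1$ and $n_2=\rrk_{e'Ae'}Q_2$, the two values of $\rrk_BQ$ are $n_1$ and $n_2$, and it remains to prove $n_1=n_2$.

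Since $e\in A$ is $\sigma$-invariant and full (Lemma~\ref{LM:idempotent-in-T}(ii)), $e$-transfer applies to $(A,\sigma)$; it carries hyperbolic forms to hyperbolic forms and preserves reduced rank, so $(\rho g)_e$ is a hyperbolic form over $(eAe,\sigma|_{eAe})$ and $\rrk(\rho g_i)_e=\rrk_A(Q_i\otimes_BA)=n_i$ by Corollary~\ref{CR:degree-of-endo-ring}. A direct computation---using that the $B$-module $Q_1$ is killed by $e'$, so that $Q_1\otimes_BA=Q_1\otimes_{eAe}eA$ and $(Q_1\otimes_BA)e=Q_1\otimes_{eAe}eAe\cong Q_1$---identifies the $e$-transfer of $\rho g_1$ isometrically with $g_1$; as $\rho$ and $e$-transfer are additive for orthogonal sums, this yields $(\rho g)_e\cong g_1\oplus(\rho g_2)_e$. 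Consequently $[g_1]=-[(\rho g_2)_e]$ in the Witt group of $(eAe,\sigma|_{eAe})$. Writing $(\rho g_2)_e\cong a_2\oplus h$ with $a_2$ anisotropic and $h$ hyperbolic (Proposition~\ref{PR:ansio-Witt-equivalent}), we get $[a_2]=[-g_1]$; since $a_2$ and $-g_1$ are anisotropic and a central simple algebra over a field has a unique anisotropic representative in each Witt class (immediate from Proposition~\ref{PR:ansio-Witt-equivalent} and Witt cancellation, Theorem~\ref{TH:Witt-cancellation}), we conclude $a_2\cong-g_1$, so $n_1=\rrk a_2\le\rrk(\rho g_2)_e=n_2$. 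Running the same argument with $e'$ in place of $e$ (note $(e')^\sigma=e'$) gives $n_2\le n_1$, whence $n_1=n_2$ and $\rrk_BQ$ is constant.

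The heart of the argument is the identification of the $e$-transfer of $\rho g_1$ with $g_1$ together with the reduced-rank bookkeeping across the functors $\rho$ and $e$-transfer; I expect this to be the only step requiring real care, the remaining ingredients being the structure of $T$ over $S$ and elementary Witt theory over a central simple algebra over a field. One caveat worth flagging: Proposition~\ref{PR:simult-e-transfer} does \emph{not} apply to this choice of $e$, because $\rrk_B\,eB$ is not constant along the fibres of $\Spec T\to\Spec S$, which is precisely why the identification $(\rho g_1)_e\cong g_1$ has to be verified by hand rather than quoted.
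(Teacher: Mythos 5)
Your proof is correct and follows essentially the same route as the paper's: split on whether $T$ is connected, dispose of the case $e^\sigma=1-e$ (the paper uses $\tau$-invariance of $\rrk_BQ$ from Lemma~\ref{LM:unimodular-implies-rrk-sigma-inv}, you use the exchange-involution observation — both work), and in the case $e^\sigma=e$ exploit the interplay between $\rho$ and $e$-transfer, including the hand-verified identification $(\rho g_1)_e\cong g_1$ that you rightly flag. The only difference is cosmetic: where you argue directly via uniqueness of anisotropic kernels to get $n_1\le n_2$ and conclude by symmetry, the paper argues by contradiction, noting that two Witt-equivalent forms on non-isomorphic modules over a semisimple algebra cannot both be anisotropic and then transferring the resulting isotropic summand back to $Qe$ or $Qe'$.
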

	
	\begin{proof}
		This is clear if $T$ is a field, so assume  
		$T=S\times S$ and let $e$ denote a nontrivial idempotent
		in $T$. Then $e^\tau\in \{e,1-e\}$.
		By Lemma~\ref{LM:unimodular-implies-rrk-sigma-inv},
		$\rrk_BQ$ is $\tau$-invariant, so it constant
		when $e^\tau=1-e$. 
		It remains to consider the case $e^\tau=e$.
		Writing $e':=1-e$, we need to show that $\rrk_{eB} Qe=\rrk_{e'B}Qe'$.

		By Lemma~\ref{LM:idempotent-in-T}, $eB=eAe$,
		$e'B=e'Ae'$ and  
		$AeA=A$. 
		Moreover, $(B,\tau)=(eB,\tau|_{eB})\times (e'B,\tau|_{e'B})$,
		because $e^\tau=e$.
		Thus, we may   consider
		hermitian forms over $(eB,\tau|_{eB})$, resp.\ $(e'B,\tau|_{e'B})$,
		as hermitian forms over $(B,\tau)$.
		Given $(V,h)\in \Herm[\veps]{eB,\tau|_{eB}}$ (resp.\ 
		$(V,h)\in \Herm[\veps]{e'B,\tau|_{e'B}}$), we write $(VA,\rho h)\in \Herm[\veps]{A,\sigma}$
		for the hermitian space obtained by regarding $(V,h)$ as a hermitian space
		over $(B,\tau)$ and then base-changing along the inclusion morphism
		$\rho:(B,\tau)\to (A,\sigma)$.

		For the sake of contradiction, suppose that $\rrk_{eB}Qe\neq \rrk_{e'B}Qe'$.
		By applying Lemma~\ref{LM:idempotent-in-T}(iv)  to $Qe_B$ and $Qe'_B$, we
		see that $\rrk_AQeA\neq \rrk_A Qe'A$.
		Viewing $g_e$ and $g_{e'}$ (notation as in \ref{subsec:conjugation}) 
		as hermitian forms over $(B,\tau)$,
		we have $(Q,g)=(Qe,g_e)\oplus (Qe',g_{e'})$.
		Thus,    $g_e$ and $g_{e'}$
		are anisotropic. Furthermore, $[\rho (g_e)]+[\rho (g_{e'})]=[\rho g]=0$ in
		$W_\veps(A,\sigma)$, so  $\rho (g_e)$ and $-\rho (g_{e'})$
		are Witt equivalent. Since the  underlying modules of $\rho (g_e)$ and $-\rho (g_{e'})$,
		namely,
		$QeA$ and $Qe'A$, are not isomorphic, either $\rho (g_e)$ or $\rho (g_{e'})$
		is isotropic  \cite[\S3.4(2)]{Quebbemann_1979_hermitian_categories} (for instance).
		Without loss of generality, suppose that $V$ is a nonzero $A$-summand of $QeA$
		such that $\rho (g_e)(V,V)=0$. Then $Ve$ is a nonzero $B$-module
		(because $VeA=VAeA=VA=V$) and summand of   
		$QeAe=Qe$
		such that $g(Ve,Ve)=0$, contradicting our assumption that $g$ is anisotropic.
	\end{proof}
	
	\begin{lem}\label{LM:Ei-Lagrangians-are-iso}
		With Notation~\ref{NT:proof-of-Es},
		let $(Q,g)\in\Herm[\veps]{B,\tau}$, and
		let $L$ be a Lagragian of $\rho g$
		satisfying $Q\oplus L=QA$. 
		Suppose that   $\rrk_BQ$ is constant.		
		Then $\rrk_AQA$ is even and
		$\rrk_A L=\frac{1}{2}\rrk_A QA$.
	\end{lem}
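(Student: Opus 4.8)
The plan is a short rank computation comparing the reduced $A$-rank and reduced $B$-rank of $QA$, $Q$ and $L$, using the identities $\iota\rrk_AP=\rrk_BP$ (for $P\in\rproj{A}$) and $\iota\rrk_A(QA)=2\rrk_BQ$ (valid when $\rrk_BQ$ is constant along the fibers of $\Spec T\to\Spec S$) recorded after Notation~\ref{NT:proof-of-Es}, together with Proposition~\ref{PR:centralized-in-Az-alg}.

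First I would observe that $QA=Q\otimes_BA$ lies in $\rproj{A}$, and that $L$, being a Lagrangian and hence an $A$-module summand of $QA$, also lies in $\rproj{A}$. Since by hypothesis $\rrk_BQ$ is constant, in particular constant along the fibers of $\Spec T\to\Spec S$, the two identities above give $\rrk_B(QA)=\iota\rrk_A(QA)=2\rrk_BQ$. On the other hand, applying $\rrk_B$ to the assumed decomposition $QA=Q\oplus L$ of $B$-modules yields $\rrk_B(QA)=\rrk_BQ+\rrk_BL$, whence $\rrk_BL=\rrk_BQ$. Combining this with $\rrk_BL=\iota\rrk_AL$ gives $\iota\rrk_AL=\rrk_BQ=\frac{1}{2}\iota\rrk_A(QA)$, i.e.\ $\iota\bigl(\rrk_A(QA)-2\rrk_AL\bigr)$ is the zero function on $\Spec T$.

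To conclude I would note that $T$ is faithfully flat over $S$ (a quadratic \'etale algebra is finite projective of everywhere-positive rank), so $\Spec T\to\Spec S$ is surjective; hence a locally constant $\Z$-valued function on $\Spec S$ whose pullback to $\Spec T$ vanishes must itself vanish. Applying this to $\rrk_A(QA)-2\rrk_AL$ gives $\rrk_A(QA)=2\rrk_AL$, which simultaneously shows that $\rrk_A(QA)$ is even and that $\rrk_AL=\frac{1}{2}\rrk_A(QA)$, as desired. The argument is purely formal; the only points needing attention are invoking the constancy of $\rrk_BQ$ exactly where the identity $\iota\rrk_A(QA)=2\rrk_BQ$ requires it, and cancelling $\iota$ via surjectivity of $\Spec T\to\Spec S$. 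Beyond the fact that $L$ is an $A$-summand of $QA$, the hermitian structure plays no role — in particular one need not appeal to the decomposition $QA=L\oplus L^{*}$.
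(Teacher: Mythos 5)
Your proof is correct and follows essentially the same route as the paper's: compute $\rrk_B(QA)=\iota\rrk_A(QA)=2\rrk_BQ$, deduce $\rrk_BL=\frac{1}{2}\rrk_B(QA)$ from the $B$-module decomposition $QA=Q\oplus L$, and translate back via $\rrk_BL=\iota\rrk_AL$. Your only addition is making explicit the final cancellation of $\iota$ via surjectivity of $\Spec T\to\Spec S$, which the paper leaves implicit.
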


	\begin{proof}
		We have $\rrk_B QA=\iota \rrk_A QA=2\rrk_B Q$,
		hence $\rrk_B L= \rrk_B QA-\rrk_B Q=\frac{1}{2}\rrk_B QA$.
		The lemma follows because $\rrk_B L=\iota \rrk_A L$
		and $\rrk_B QA=\iota \rrk_A QA$.
	\end{proof}
	
	\begin{prp}\label{PR:Ei-unitary-R-is-field}
		With Notation~\ref{NT:proof-of-Es},
		suppose that $R$ is a field and  $(\sigma,\veps)$ is symplectic or unitary.
		Let $(Q,g)\in\Herm[\veps]{B,\tau}$ and assume that $\rrk_BQ$ is constant
		and $ \rho g $ admits a Lagrangian $L$
		with $\rrk_AL=\frac{1}{2}\rrk_AQA$.
		Then there exists $\vphi\in U^0(\rho g)$
		such that $Q\oplus \vphi L=QA$.
	\end{prp}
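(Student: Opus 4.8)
The plan is to reduce to the case where $[A]=0$ in $\Br S$, to produce from the earlier structural propositions a Lagrangian $L_0\in\Lag(\rho g)$ with $Q\oplus L_0=QA$, and then to move the given $L$ onto $L_0$ by an isometry of $\rho g$; this isometry will lie in $U^0(\rho g)$ automatically because $(\sigma,\veps)$ is symplectic or unitary, so that $\uU^0(\rho g)=\uU(\rho g)$ by Proposition~\ref{PR:U-zero-description}.

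First I would dispose of the trivial case $\rrk_BQ=0$, where $Q=0$, $L=0$ and $\vphi=\mathrm{id}$ works; so assume $\rrk_BQ>0$, hence $\rrk_AQA>0$ by the rank identities recorded after Notation~\ref{NT:proof-of-Es}. Since $\rho g$ admits a Lagrangian it is metabolic, hence hyperbolic because $2\in\units{R}$ (see~\ref{subsec:Witt-grp}), so $[\rho g]=0$. Next I would reduce to $[A]=0$: if $R$ is finite, then $S$ and $T$ are finite products of finite fields, so $\Br S=0$ and $[A]=0$ already; if $R$ is infinite, then $\uU^0(\rho g)$ is a rational $R$-variety by Theorem~\ref{TH:rational-variety}, and Proposition~\ref{PR:rationality-for-Ei}(i) shows it is enough to produce the desired isometry after base change to an algebraic closure $\quo{R}$ of $R$. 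All hypotheses of the present proposition survive over $\quo{R}$ and $[A]$ becomes trivial there, so we may assume $R=\quo{R}$ and $[A]=0$.

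With $[A]=0$, I would construct $L_0$ as follows. If $S$ is not a field, then $S\cong R\times R$ by Lemma~\ref{LM:non-connected-S} and Proposition~\ref{PR:Ei-unitary-S-not-field} directly yields a Lagrangian $L_0$ of $\rho g$ with $Q\oplus L_0=QA$. If $S$ is a field, write $g\cong g_{\mathrm{an}}\oplus g_{\mathrm{hyp}}$ with $g_{\mathrm{an}}$ anisotropic and $g_{\mathrm{hyp}}$ hyperbolic (Proposition~\ref{PR:ansio-Witt-equivalent}), and let $Q=Q_{\mathrm{an}}\oplus Q_{\mathrm{hyp}}$ be the corresponding splitting of modules. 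Then $\rho g_{\mathrm{hyp}}$ is hyperbolic, and since $[\rho g_{\mathrm{an}}]=[\rho g]-[\rho g_{\mathrm{hyp}}]=0$, Theorem~\ref{TH:trivial-in-Witt-ring}(ii) makes $\rho g_{\mathrm{an}}$ hyperbolic too. By Lemma~\ref{LM:rank-of-Lagrangian-Ei}, $\rrk_B Q_{\mathrm{an}}$ is constant, hence so is $\rrk_B Q_{\mathrm{hyp}}$. For the hyperbolic summand, Proposition~\ref{PR:Ei-unitary-S-is-field-A-split} gives a full-rank Lagrangian $L_{\mathrm{hyp}}$ of $\rho g_{\mathrm{hyp}}$ with $Q_{\mathrm{hyp}}\oplus L_{\mathrm{hyp}}=Q_{\mathrm{hyp}}A$; for the anisotropic summand, the dimension-counting argument of Remark~\ref{RM:Es-for-field} (using that $B$ is semisimple, so $g_{\mathrm{an}}$ has no nonzero isotropic vectors) shows that any Lagrangian $L_{\mathrm{an}}$ of $\rho g_{\mathrm{an}}$ with $\rrk_AL_{\mathrm{an}}=\tfrac12\rrk_AQ_{\mathrm{an}}A$ (such exists as $\rho g_{\mathrm{an}}$ is hyperbolic and $S$ is connected) satisfies $Q_{\mathrm{an}}\oplus L_{\mathrm{an}}=Q_{\mathrm{an}}A$. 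Taking $L_0=L_{\mathrm{an}}\oplus L_{\mathrm{hyp}}$ completes this step.

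To finish, note that by Lemma~\ref{LM:Ei-Lagrangians-are-iso} (applicable since $\rrk_BQ$ is constant) $\rrk_AQA$ is even and $\rrk_AL_0=\tfrac12\rrk_AQA$, so $L_0\in\Lag(\rho g)$; and $L\in\Lag(\rho g)$ by hypothesis. As $R$ is semilocal, Lemma~\ref{LM:Lag-transitive-action} gives $\vphi\in U(\rho g)$ with $\vphi L=L_0$, whence $Q\oplus\vphi L=QA$, and $\vphi\in U^0(\rho g)=U(\rho g)$ by Proposition~\ref{PR:U-zero-description}. The hard part is the reduction to $[A]=0$ together with the observation that $g$ itself need not be hyperbolic: this forces the finite-field case to be handled separately via the triviality of the Brauer group of a finite field, and forces the anisotropic/hyperbolic splitting of $g$ in the field case so that the earlier structural propositions become applicable.
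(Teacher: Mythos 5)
Your proposal is correct and follows essentially the same route as the paper's proof: reduce to $[A]=0$ via Wedderburn in the finite case and via rationality of $\uU^0(\rho g)$ (Proposition~\ref{PR:rationality-for-Ei}(i)) in the infinite case, then split on whether $S$ is a field, handle the field case by the anisotropic--hyperbolic decomposition using Remark~\ref{RM:Es-for-field} and Proposition~\ref{PR:Ei-unitary-S-is-field-A-split}, and conclude with Lemmas~\ref{LM:Ei-Lagrangians-are-iso} and~\ref{LM:Lag-transitive-action} together with $U^0(\rho g)=U(\rho g)$ from Proposition~\ref{PR:U-zero-description}. The extra details you supply (the $Q=0$ case, the explicit observation that metabolic implies hyperbolic) are harmless refinements of the same argument.
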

	
	\begin{proof}
		By Proposition~\ref{PR:rationality-for-Ei}(i),
		when $R$ is infinite, it is enough to prove
		the proposition after base-changing to an algebraic closure
		of $R$, in which case $[A]=0$.
		On the other hand, if $R$ is finite, then
		$[A]=0$ by Wedderburn's theorem. We may therefore assume 
		that $[A]=0$.

		Suppose first that $S$ is a field.
		Using Proposition~\ref{PR:ansio-Witt-equivalent},
		write $(Q,g)=(Q_1,g_1)\oplus (Q_2,g_2)$
		with   $g_1$   anisotropic
		and $g_2$ hyperbolic.
		Then $[\rho g_{1}]=[\rho g]=0$ in $W_\veps(A,\sigma)$,
		so $\rho g_1$ is hyperbolic by Theorem~\ref{TH:trivial-in-Witt-ring}(ii).
		Let $L_1$ be a Lagrangian of  $\rho g_1$. 
		By arguing as in Remark~\ref{RM:Es-for-field},
		we see that $Q_1\oplus L_1=Q_1A$.	
		Now, by Lemma~\ref{LM:rank-of-Lagrangian-Ei},
		$\rrk_BQ_1$ is constant, and thus, so is $\rrk_BQ_2$.
		With this at hand, Proposition~\ref{PR:Ei-unitary-S-is-field-A-split}
		says that
		$\rho g_2$ admits a Lagrangian $L_2$
		such that $Q_2\oplus L_2=Q_2A$.
		
		Let $L'=L_1\oplus L_2$.
		Then $Q\oplus L'=QA$. 
		By Lemmas~\ref{LM:Ei-Lagrangians-are-iso}
		and~\ref{LM:Lag-transitive-action}, 
		there exists $\vphi\in U(\rho g)$
		such that $\vphi L=L'$. 
		Since $(\sigma,\veps)$ is symplectic or unitary,
		we have $U(\rho g)=U^0(\rho g)$ (Proposition~\ref{PR:U-zero-description}),
		so we are done.

		If $S$ is not a field,  
		Proposition~\ref{PR:Ei-unitary-S-not-field}
		implies that there exists $L'\in \Lag(\rho g)$
		with $Q\oplus L'=QA$ and  we can finish
		the proof as in the previous paragraph. 	
	\end{proof}
	
	We proceed with showing that Proposition~\ref{PR:Ei-unitary-R-is-field}
	also holds 
	in the context of Case~\ref{item:orthI:Ei}, namely,	
	when $(\sigma,\veps)$ is orthogonal 
	and $(\tau,\veps)$ is orthogonal or symplectic.
	This  is similar to the proof of Proposition~\ref{PR:Ei-unitary-R-is-field}, but a few modifications 
	are required in order to account for the possibility 
	that $U^0(\rho g)$ is smaller than $U(\rho g)$.
	
	\begin{prp}\label{PR:Ei-orthogonal-I-R-field-A-split}
		With Notation~\ref{NT:proof-of-Es},
		suppose $S$
		is a field, $[A]=0$,
		$(\sigma,\veps)$ is orthogonal,
		and $\tau|_T=\id_T$.  
		Let $(Q,g)\in \Herm[\veps]{B,\tau}$
		be a  hyperbolic hermitian  space such that
		$\rrk_BQ$ is constant and positive, and let $L$ be a Lagrangian
		of $\rho g$.
		Then there exist 
		$\vphi_1,\vphi_{-1}\in U(\rho g)$
		such that $\Nrd(\vphi_1)=1$, $\Nrd(\vphi_{-1})=-1$
		and
		$Q\oplus \vphi_1L=Q\oplus \vphi_{-1}L=QA$.
	\end{prp}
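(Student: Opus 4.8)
The plan is to reduce to a split matrix model and then exhibit two \emph{good} Lagrangians of $\rho g$ (i.e.\ ones complementary to $Q$) whose relative discriminant is $-1$; by transitivity of the isometry action on Lagrangians and the $\Nrd$-equivariance of the invariant $\Phi_L$ from Proposition~\ref{PR:partition-of-Lag}, one of the two will be obtained from $L$ by an isometry of reduced norm $1$ and the other by one of reduced norm $-1$.

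Since $(\sigma,\veps)$ is orthogonal we have $S=R$, a field (hence connected) and $\veps=1$. Apply Reductions~\ref{RD:common-reduction} and~\ref{RD:common-reduction-II}: as $[A]=0$ and $\tau|_T=\id_T$, we may assume $B=T$, $A=\nMat{S}{2}=\End_S(V)$ with $V=S^2$, and $\sigma$ the involution adjoint to $\langle 1,\alpha\rangle_{(S,\id)}$ for some $\alpha\in\units{R}$; these reductions are legitimate because ``$L\oplus Q=QA$'' and the quantity $\Nrd(\varphi)$ for $\varphi\in U(\rho g)$ are compatible with $\mu$-conjugation and $e$-transfer, cf.\ \ref{subsec:simultaneous} and Proposition~\ref{PR:simult-e-transfer}. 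By Lemma~\ref{LM:strcture-of-quat}(i) there are $\lambda,\mu\in A$ with $\lambda^\sigma=\lambda$, $\mu^\sigma=-\mu$, $\lambda\mu=-\mu\lambda$, $\lambda^2,\mu^2\in\units{R}$, $B=T=S\oplus S\lambda$ and $A=B\oplus\mu B$; in particular $\mu\in\Sym_{-1}(A,\sigma)\cap\units{A}$. Finally, $g$ is hyperbolic over $(B,\tau)=(T,\id_T)$ with $\rrk_BQ=:n$ constant, positive and even (a hyperbolic form has even reduced rank by Lemma~\ref{LM:unimodular-implies-rrk-sigma-inv}), so Lemma~\ref{LM:rank-determines-hyperbolic} gives an isometry $(Q,g)\cong\frac n2\cdot(B^2,\Hyp[1]{B})$, visibly compatible with $\rho$; hence we may take $Q=B^n\subseteq QA=A^n$ with $B\hookrightarrow A$ coordinatewise, and, writing $A^n=(A^2)^{n/2}$, each binary block carrying the form $f_1\bigl((u_1,u_2),(v_1,v_2)\bigr)=u_1^\sigma v_2+u_2^\sigma v_1$ and containing the corresponding copy $B^2\subseteq A^2$ of a block of $Q$.

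Now I construct the two Lagrangians, block by block. In a single block $A^2$, put $\Delta=\{(a,\mu a)\mid a\in A\}$; since $\mu^\sigma=-\mu$ it is a Lagrangian of $f_1$, and $\Delta\cap B^2=0$ because $\mu B\cap B=0$, so comparing $R$-ranks $\Delta\oplus B^2=A^2$. For the second, choose a rank-one idempotent $g\in A$ with $gA\cap T=0$ and with $W:=\{v\in A\mid g^\sigma v=0\}$ satisfying $W\cap T=0$ --- such $g$ always exists: when $T$ is a field this is automatic, since $gA$ and $W$ are rank-one right ideals and nonzero elements of $T$ are units, and when $T\cong S\times S$ a direct check exhibits one, e.g.\ $g=\tfrac12\left[\begin{smallmatrix}1&1\\1&1\end{smallmatrix}\right]$. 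Using $g^\sigma W=0$ and $(gA)^\sigma=Ag^\sigma$ one checks $\Sigma:=gA\oplus W\subseteq A^2$ is a Lagrangian of $f_1$ with $\rrk_A\Sigma=1+1=2$, and $\Sigma\cap B^2=(gA\cap T)\oplus(W\cap T)=0$, so $\Sigma\oplus B^2=A^2$ as well. Moreover $g^\sigma\mu g=0$, since a rank-$\le1$ $\sigma$-skew element of $\End_S(V)$ induces an alternating bilinear form on the rank-two module $V$ and is therefore $0$; hence $\mu(gA)\subseteq W$, so $\Delta\cap\Sigma=\{(a,\mu a)\mid a\in gA\}$ has reduced $A$-rank $\rrk_A(gA)=1$. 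By Proposition~\ref{PR:computation-of-Phi}, $\Phi_\Delta(\Sigma)=(-1)^{\rrk_A\Delta-\rrk_A(\Delta\cap\Sigma)}=(-1)^{2-1}=-1$. Let $M_+$ be the orthogonal sum of $n/2$ copies of $\Delta$ and $M_-$ the orthogonal sum of $\Sigma$ with $n/2-1$ copies of $\Delta$; both are Lagrangians of $\rho g$ with $M_\pm\oplus Q=QA$, and by Proposition~\ref{PR:partition-of-Lag}(ii), $\Phi_{M_+}(M_-)=\Phi_\Delta(\Sigma)=-1$.

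To finish: since $\Cent(A)=S$ is connected, every Lagrangian of $\rho g$ lies in $\Lag(\rho g)$ (Lemma~\ref{LM:unimodular-implies-rrk-sigma-inv}), so $L,M_+,M_-\in\Lag(\rho g)$, on which $U(\rho g)$ acts transitively (Lemma~\ref{LM:Lag-transitive-action}). Choose $\psi_\pm\in U(\rho g)$ with $\psi_\pm L=M_\pm$; by the characterizing property of $\Phi_L$ in Proposition~\ref{PR:partition-of-Lag}, $\Nrd(\psi_\pm)=\Phi_L(\psi_\pm L)=\Phi_L(M_\pm)$, and by part~(i) of that proposition $\Phi_L(M_+)\Phi_L(M_-)=\Phi_{M_+}(M_-)=-1$; hence $\{\Nrd(\psi_+),\Nrd(\psi_-)\}=\{1,-1\}$. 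Relabelling yields $\varphi_1,\varphi_{-1}$ with $\Nrd(\varphi_{\pm1})=\pm1$ and $Q\oplus\varphi_{\pm1}L\in\{M_+,M_-\}$, hence $=QA$; transporting back through the reductions of the second paragraph gives the statement. The main obstacle is the construction in the third paragraph: unlike in the classical commutative case, the determinant-$(-1)$ Lagrangian cannot be taken to be a ``half-and-half'' graph (because $\mu$ does not commute with the relevant idempotent), and making $\Sigma$ simultaneously isotropic and transverse to $Q$ forces the careful choice of the rank-one summand, with the split and non-split cases of $T$ needing separate verification.
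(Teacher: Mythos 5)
Your argument is correct in outline and in almost all details, but it takes a genuinely different route from the paper. The paper first uses transitivity of $U(\rho g)$ on $\Lag(\rho g)$ to reduce to one fixed Lagrangian $L_0=A\times 0$ of the binary block, and then writes down two explicit isometries: a ``unipotent'' $\vphi_1=\left[\begin{smallmatrix}1&0\\ s&1\end{smallmatrix}\right]$ with $s^\sigma=-s$, $s\notin T$ (of reduced norm $1$), and a coordinate-permutation $\psi$ of reduced norm $-1$, verifying complementarity of $\vphi_1L_0$ and of $\psi L_0$ or $\psi\vphi_1L_0$ by direct matrix computation (with a case split on whether $\lambda$ is diagonal). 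You instead construct two \emph{good} Lagrangians of the block --- the graph $\Delta$ of $\mu$ and $\Sigma=gA\oplus(1-g^\sigma)A$ --- compute $\Phi_\Delta(\Sigma)=-1$ via Proposition~\ref{PR:computation-of-Phi} using the intersection $\Delta\cap\Sigma=\{(a,\mu a):a\in gA\}$, and then let the $\uU$-equivariance of $\Phi_L$ convert this into the two required reduced norms. This is a clean alternative; it trades the paper's explicit matrix manipulations for the $\Phi$-formalism of Proposition~\ref{PR:partition-of-Lag}, and it handles an arbitrary $L$ directly rather than via an initial transitivity reduction. The key computation $g^\sigma\mu g=0$ (a rank-$\le 1$ skew element is an alternating form of odd rank, hence zero) is correct.

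There is one local error to repair: your explicit idempotent $g=\tfrac12\left[\begin{smallmatrix}1&1\\1&1\end{smallmatrix}\right]$ in the case $T\cong S\times S$ does \emph{not} always satisfy $gA\cap T=0$. For instance, with $\alpha=1$ and $\lambda=\left[\begin{smallmatrix}0&1\\1&0\end{smallmatrix}\right]$ one has $1+\lambda=2g\in gA\cap T$. The existence claim itself is fine and easy to fix: if $0\neq t\in gA\cap T$ then $t$ is a non-unit of $T$, which is impossible when $T$ is a field and forces $e\in gA$ (i.e.\ $ge=e$, i.e.\ $\im g=\im e$) or $ge'=e'$ when $T\cong S\times S$ with idempotents $e,e'$; likewise $W\cap T\neq 0$ forces $\ker g^\sigma=(\im g)^{\perp}\in\{\im e,\im e'\}$, and since $e^\sigma=e$ gives $(\im e)^\perp=\ker e=\im e'$, all four conditions reduce to $\im g\notin\{\im e,\im e'\}$. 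As $\mathbb{P}^1(S)$ has at least four points, such a rank-one idempotent exists over any field with $2\in\units S$. With that substitution the proof goes through.
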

	
	\begin{proof}
		As noted in~\ref{subsec:Lagrangians},
		since $(\sigma,\veps)$ is orthogonal,
		all  Lagrangians of $\rho g$
		have reduced rank $\frac{1}{2}\rrk_AP$
		and are thus isomorphic as $A$-modules (Lemma~\ref{LM:rank-determines}).
		Thus, by   
		Lemma~\ref{LM:Lag-transitive-action},
		$U(\rho g)$ acts
		transitively on $\Lag(\rho g)$.
		It is therefore enough to prove the proposition for a single   Lagrangian $L_0$ of our choice.

		By Reductions~\ref{RD:common-reduction} and~\ref{RD:common-reduction-II},
		we may assume that $B=T$, $A=\nMat{S}{2}$, $\veps=1$
		and $\sigma$ is orthogonal
		and given by  
		$[\begin{smallmatrix} a &  b \\ c & d \end{smallmatrix}]^{\sigma}=
		[\begin{smallmatrix} a &   \alpha c \\  \alpha^{-1} b & d \end{smallmatrix}]
		$ for some $\alpha\in\units{S}$.

		As noted in Case II of the proof of Proposition~\ref{PR:Ei-unitary-S-is-field-A-split},
		since $\sigma|_T=\id_T$, the reduced rank of $Q$
		is even.
		Thus, arguing as in Case I of that proof,
		we may assume
		that $(Q,g)=(B^2,g_1)$ with $g_1$ given
		by $g_1((x_1,x_2),(y_1,y_2))=x_1^\sigma y_2+ x_2^\sigma y_1$.
		We identify $QA$ with $A^2$ and $\End_A(QA)$ with $\nMat{A}{2}$ 
		in the obvious way. The form
		$f_1:=\rho g_1$ is   defined by same formula as $g_1$
		and we take $L_0:=A\times 0$ as our fixed Lagrangian.

\medskip

		\noindent {\it Existence of $\vphi_1$.}
		Let $s=[\begin{smallmatrix} 0 & -\alpha \\ 1 & 0\end{smallmatrix}]\in \nMat{S}{2}=A$.
		Then $s^\sigma=-s$, $s\in \units{A}$ and $s\notin B=T$ because $\sigma|_T=\id_T$.
		It is routine to check that
		$\vphi_1:=[\begin{smallmatrix} 1 &  0 \\ s & 1 \end{smallmatrix}]\in \nMat{A}{2}$
		is an isometry
		of $\rho g$ and $\vphi_1L_0=\{(a,sa)\where a\in A\}$.
		Now, as in Case I of the proof of Proposition~\ref{PR:Ei-unitary-S-is-field-A-split},
		we have $Q\oplus \vphi_1L_0=QA$.

\medskip

		\noindent {\it Existence of $\vphi_{-1}$.}
		Since $B=T$ is a quadratic \'etale 
		$S$-algebra, we can write $B=S\oplus \lambda S$
		with $\lambda^2\in \units{S}$.
		The assumption $\sigma|_T=\id_T$  
		allows us to write $\lambda= [\begin{smallmatrix} x_1 & \alpha x_2 \\  x_2 & x_3 \end{smallmatrix}]$
		with $x_1,x_2,x_3\in S$.
		Let 
		\[\psi :=\left[
		\begin{smallmatrix}
		& & 1 & \\
		& 1 & & \\
		1 & & & \\
		& & & 1
		\end{smallmatrix}
		\right]
		\in\nMat{S}{4}=\nMat{A}{2}
		\]
		It is routine to check that $\psi\in U(\rho g)$,
		$\Nrd(\psi)=-1$,
		and 
		\[\psi L_0=\{(	[\begin{smallmatrix} 0 &  0 \\  c & d \end{smallmatrix}],
		[\begin{smallmatrix} a &  b \\  0 & 0 \end{smallmatrix}])\where
		a,b,c,d\in S\} .\]
		Now, if $x_2\neq 0$, then   $B^2\oplus \psi L_0=A^2$
		and   we can take $\vphi_{-1}=\psi$.
		On the other hand, if $x_2=0$,
		then $B=[\begin{smallmatrix} S &  0 \\ 0 & S \end{smallmatrix}]$,
		and hence $\psi (B^2)=B^2$.
		This means that $B^2+\psi \vphi_1L_0=\psi(B^2+\vphi_1L_0)=\psi (A^2)=A^2$,
		so  we can take $\vphi_{-1}=\psi\vphi_1$.
	\end{proof}
	
	\begin{prp}\label{PR:Ei-orthogonal-I-R-field}
		With Notation~\ref{NT:proof-of-Es},
		suppose that $S$
		is a field,  
		$(\sigma,\veps)$ is orthogonal,
		and $\tau|_T=\id_T$.  
		Let $(Q,g)\in \Herm[\veps]{B,\tau}$
		and assume that
		$\rrk_BQ$ is constant  
		and  $\rho g$
		admits  a Lagrangian
		$L$.
		Then there exists 
		$\vphi \in U^0(\rho g)$		
		such that  
		$Q\oplus \vphi L=QA$.	
	\end{prp}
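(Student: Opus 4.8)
The plan is to run the strategy of Proposition~\ref{PR:Ei-unitary-R-is-field}; the essential new point is that, since $(\sigma,\veps)$ is orthogonal, $U^0(\rho g)$ can be a proper subgroup of $U(\rho g)$ (Proposition~\ref{PR:U-zero-description}), so after producing an isometry carrying $L$ to a ``good'' Lagrangian one must still control its reduced norm. We may assume $\rrk_A\rho g>0$, for otherwise $QA=0=Q$, $g=0$, and $\vphi=\mathrm{id}$ works; and because $(\sigma,\veps)$ is orthogonal, the given $L$ automatically lies in $\Lag(\rho g)$. First I would reduce to the case $[A]=0$ in $\Br S$: if $R$ is finite then $S=R$ is a finite field and $[A]=0$ by Wedderburn's theorem, while if $R$ is infinite then $S=R$ by Proposition~\ref{PR:types-of-involutions-Az}(i), so $S_{\quo R}=\quo R$ stays a field and $[A_{\quo R}]=0$ for an algebraic closure $\quo R$ of $R$; all other hypotheses persist under base change, and Proposition~\ref{PR:rationality-for-Ei}(i) descends the conclusion from $\quo R$ back to $R$.

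Assume now $[A]=0$, so $R=S$ is a field and $B$ is semisimple. Using Proposition~\ref{PR:ansio-Witt-equivalent} I would write $(Q,g)=(Q_1,g_1)\oplus(Q_2,g_2)$ with $g_1$ anisotropic and $g_2$ hyperbolic, so that $\rho g=\rho g_1\oplus\rho g_2$ and $QA=Q_1A\oplus Q_2A$ compatibly with $Q\subseteq QA$. The existence of a Lagrangian of $\rho g$ gives $[\rho g]=0$, and $\rho g_2$ is hyperbolic, hence $[\rho g_1]=0$ and $\rho g_1$ is hyperbolic by Theorem~\ref{TH:trivial-in-Witt-ring}(ii); by Lemma~\ref{LM:rank-of-Lagrangian-Ei} $\rrk_BQ_1$ is constant, hence so is $\rrk_BQ_2$. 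Exactly as in Remark~\ref{RM:Es-for-field}, since $g_1$ is anisotropic and $B$ is semisimple, \emph{every} Lagrangian $L_1$ of $\rho g_1$ satisfies $Q_1\cap L_1=0$ and $\rrk_B(Q_1\oplus L_1)=\rrk_B Q_1A$, so $Q_1\oplus L_1=Q_1A$. In particular, if $\rrk_BQ_2=0$ then $g$ itself is anisotropic, $Q\oplus L=QA$ for the given $L$, and $\vphi=\mathrm{id}_{QA}\in U^0(\rho g)$ is as required.

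In the remaining case $\rrk_BQ_2>0$ I would fix a Lagrangian $L_1$ of $\rho g_1$ (good, by the above) and a Lagrangian $L_2$ of $\rho g_2$, and apply Proposition~\ref{PR:Ei-orthogonal-I-R-field-A-split} to $g_2$ to get $\vphi_1,\vphi_{-1}\in U(\rho g_2)$ with $\Nrd(\vphi_\delta)=\delta$ and $Q_2\oplus\vphi_\delta L_2=Q_2A$. Setting $L'_\delta:=L_1\oplus\vphi_\delta L_2\in\Lag(\rho g)$ one gets $Q\oplus L'_\delta=QA$ for $\delta=\pm1$. Since $\mathrm{id}_{\rho g_1}\oplus\vphi_\delta\in U(\rho g)$ sends $L_1\oplus L_2$ to $L'_\delta$ and has reduced norm $\delta$, Proposition~\ref{PR:partition-of-Lag} gives $\Phi_{L_1\oplus L_2}(L'_\delta)=\delta$, whence $\Phi_{L'_1}(L'_{-1})=-1$ by part~(i) of that proposition (using that $\mu_2(R)$ is $2$-torsion).

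Finally, to handle the prescribed Lagrangian $L$: since $\mu_2(R)=\{\pm1\}$, either $\Phi_{L'_1}(L)=1$, in which case Lemma~\ref{LM:Lag-transitive-action} gives $\psi\in U(\rho g)$ with $\psi L=L'_1$, or $\Phi_{L'_1}(L)=-1$, in which case it gives $\psi\in U(\rho g)$ with $\psi L=L'_{-1}$; in both cases the $\uU(\rho g)$-equivariance of $\Phi_{L'_1}$ (via $\Nrd$) forces $\Nrd(\psi)=1$, so $\psi\in U^0(\rho g)$ by Proposition~\ref{PR:U-zero-description}, and $Q\oplus\psi L=QA$, so $\psi$ is the desired isometry. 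The main obstacle is precisely this last piece of bookkeeping --- exhibiting good Lagrangians in both $\uU^0(\rho g)$-orbits of $\uLag(\rho g)$ and using the invariant $\Phi$ to land inside $U^0$ rather than merely $U$; all the genuinely delicate construction is hidden inside Proposition~\ref{PR:Ei-orthogonal-I-R-field-A-split}, and the anisotropic summand of $g$ plays no role beyond supplying a good Lagrangian freely.
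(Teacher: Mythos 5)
Your proof is correct and follows essentially the same route as the paper's: reduce to $[A]=0$ via Proposition~\ref{PR:rationality-for-Ei} and Wedderburn, split off the anisotropic part (whose Lagrangians are automatically ``good''), and use Proposition~\ref{PR:Ei-orthogonal-I-R-field-A-split} to produce good Lagrangians in both reduced-norm classes for the hyperbolic part. The only cosmetic difference is that you route the final reduced-norm bookkeeping through the invariant $\Phi$ of Proposition~\ref{PR:partition-of-Lag}, whereas the paper adjusts $\psi$ directly by composing with $\id_{Q_1A}\oplus\vphi_{-1}\vphi_1^{-1}$; the two computations are identical in substance.
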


	\begin{proof}		
		As in the proof of 	Proposition~\ref{PR:Ei-unitary-R-is-field},
		we can reduce to the case where $[A]=0$  
		and write $(Q,g)=(Q_1,g_1)\oplus (Q_2,g_2)$
		with  $g_1$  anisotropic  and  $g_2$ hyperbolic.
		Furthermore, $\rrk_BQ_2$ is constant,
		$\rho g_1$ is hyperbolic
		and any Lagrangian  $U$ of $\rho g_1$ satisfies $Q_1\oplus U=Q_1A$.

		If $Q_2=0$, then $L$ is a Lagrangian of $(P_1,f_1)=(P,f)$ and we can take $\vphi=\id_P$.
		
		Assume $Q_2\neq 0$,
		let $U$ be a Lagrangian of $\rho g_1$
		and let $V$ be a Lagrangian  of $\rho g_2$. By Proposition~\ref{PR:Ei-orthogonal-I-R-field-A-split},
		there exist $\vphi_1,\vphi_{-1}\in U(\rho g_2)$
		such that $Q_2\oplus \vphi_i V =Q_2A$ and $\Nrd(\vphi_i)=i$
		for $i\in\{\pm 1\}$.
		Then $L_i:=U\oplus \vphi_i V$ ($i=\pm 1$) is a Lagrangian of $\rho g$ 
		having the same reduced rank as $L$ (see \ref{subsec:Lagrangians})
		and satisfying $Q\oplus L_i=QA$.
		By  
		Lemma~\ref{LM:Lag-transitive-action},
		there exists $\psi\in U(\rho g)$
		such that $\psi  L=L_1$. If
		$\Nrd(\psi)=1$,   take $\vphi=\psi$.
		On the other hand, if
		$\Nrd(\psi)=-1$, then we can take $\vphi:=(\id_{P_1}\oplus \vphi_{-1}\vphi_1^{-1})\psi$,
		because
		$\vphi  L=U\oplus \vphi_{-1}\vphi_1^{-1}(\vphi_1V)=L_{-1}$
		and $\Nrd(\vphi)=\Nrd(\vphi_{-1}\vphi_1)^{-1}\Nrd(\psi)=1$.
	\end{proof}
	
	We can now establish Theorem~\ref{TH:Ei-holds}
	in cases~\ref{item:unit:Ei} and~\ref{item:orthI:Ei}

	\begin{thm}\label{TH:Ei-holds-unit-syp}
		Assuming $R$ is connected,
		Theorem~\ref{TH:Ei-holds}
		holds  when $(\sigma,\veps)$ is symplectic or unitary, or
		$(\tau,\veps)$ is orthogonal or symplectic.
	\end{thm}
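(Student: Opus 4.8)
The plan is to deduce Theorem~\ref{TH:Ei-holds-unit-syp} from the field cases, Propositions~\ref{PR:Ei-unitary-R-is-field} and~\ref{PR:Ei-orthogonal-I-R-field}, by lifting isometries from the residue fields via Theorem~\ref{TH:U-zero-mapsto-onto-closed-fibers}. We may assume $R$ is connected; then by Proposition~\ref{PR:types-of-involutions-Az}(v) and Lemma~\ref{LM:tau-type-is-constant} we are in case~\ref{item:unit:Ei} or~\ref{item:orthI:Ei}, so condition~(1) or~(2) of Theorem~\ref{TH:Ei-holds}(i) always holds; hence the ``if and only if'' and the final clause of~(i) carry no content here, and it remains only to prove: (a) when $T$ is connected, $\rho g$ has a Lagrangian $L$ with $L\oplus Q=QA$; and (b) in general, some $(Q',g')$ with $[g']=[g]$ has this property. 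Note first that when $\Cent(A)=S$ is not connected it must be that $S\cong R\times R$ (by Lemma~\ref{LM:non-connected-S}, since $S$ has $R$-rank at most $2$), $\sigma|_S$ is the exchange involution, and the idempotent $e_1\in S$ with $e_1^\sigma=1-e_1$ lies in $\Cent(B)=T$; thus $\tau$ is an exchange involution and $W_\veps(B,\tau)=0$ by Example~\ref{EX:exchange-involution}, settling~(b) with $(Q',g')=0$. So we may assume $S$ connected throughout.

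For~(a): since $[\rho g]=0$, $\rho g$ is hyperbolic (Theorem~\ref{TH:trivial-in-Witt-ring}(ii)); pick a Lagrangian $L'$. As $QA=L'\oplus(L')^*$ and $S$ is connected, Lemma~\ref{LM:unimodular-implies-rrk-sigma-inv} gives $\rrk_A L'=\tfrac12\rrk_A QA$, so $L'\in\Lag(\rho g)$, and $\rrk_B Q$ is constant by Corollary~\ref{CR:constant-even-ranks}(i) (this is the one place $T$-connectedness is genuinely used). For each $\frakm\in\Max R$, base change along $R\to k(\frakm)$ preserves the data of Notation~\ref{NT:proof-of-Es} (Lemma~\ref{LM:center-base-change}), the constancy of $\rrk_B Q$, and the types of $(\sigma,\veps)$ and $(\tau,\veps)$; so Proposition~\ref{PR:Ei-unitary-R-is-field} in case~\ref{item:unit:Ei}, resp.\ Proposition~\ref{PR:Ei-orthogonal-I-R-field} in case~\ref{item:orthI:Ei} (where $(\tau,\veps)$ non-unitary forces $\tau|_T=\id_T$), yields $\vphi_\frakm\in U^0(\rho g(\frakm))$ with $Q(\frakm)\oplus\vphi_\frakm(L'(\frakm))=QA(\frakm)$. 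Lifting via Theorem~\ref{TH:U-zero-mapsto-onto-closed-fibers} to $\vphi\in U^0(\rho g)$, the $B$-module summands $L:=\vphi(L')$ and $Q\otimes 1$ of $QA$ then satisfy $L\oplus Q=QA$ by Lemma~\ref{LM:semilocal-direct-sum-reduction}.

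For~(b) with $S$ connected: if $T$ is connected, apply~(a) with $(Q',g')=(Q,g)$. Otherwise $T\cong S\times S$ (Lemma~\ref{LM:non-connected-S}); let $e,e'$ be the complementary idempotents. If $e^\sigma=e'$, then $\tau$ is again an exchange involution of $B$ and $W_\veps(B,\tau)=0$, so take $(Q',g')=0$. If $e^\sigma=e$, then $(B,\tau)=(eAe,\sigma|_{eAe})\times(e'Ae',\sigma|_{e'Ae'})$ with both corners Azumaya algebras with involution whose centers $eT$, $e'T$ are connected copies of $S$, and $e,e'$ are full $\sigma$-invariant idempotents of $A$ (Lemma~\ref{LM:idempotent-in-T}(ii)). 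Writing $g=(g_1,g_2)$ and using Lemma~\ref{LM:idempotent-in-T}(iv),(v), Lemma~\ref{LM:simult-e-transfer-tensor} and $e$-transfer on the $A$-side, one checks that $\rho g$ corresponds under $e$-transfer to $g_1\oplus\widetilde{g_2}$ over $(eAe,\sigma|_{eAe})$, where $\widetilde{g_2}$ is the Morita transport of $g_2$; then $[\rho g]=0$ becomes $[g_1]+[\widetilde{g_2}]=0$, and the sought condition $L\oplus Q'=Q'A$ becomes (after the same translation) that the two transported corner forms of $g'$ are anti-isometric, i.e.\ $g'_1\cong-\widetilde{g'_2}$. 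Choosing $g'_1$ and $g'_2$ so that $g'_1$ and $-\widetilde{g'_2}$ are the anisotropic representatives of $[g_1]$ and of $-[\widetilde{g_2}]=[g_1]$ (these being unique up to isometry by Theorem~\ref{TH:trivial-in-Witt-ring}(i)), they coincide, and the required Lagrangian exists.

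The residue-field reduction and the lift of $\vphi$ are routine once Propositions~\ref{PR:Ei-unitary-R-is-field} and~\ref{PR:Ei-orthogonal-I-R-field} are available; the genuinely hard features of condition~\ref{item:Ei} (the $\uU^0$-orbit structure on $\uLag$, and the odd-rank and discriminant phenomena) all belong to case~\ref{item:orthII:Ei}, which is treated separately. The one delicate point in the present theorem is part~(b) when $T\cong S\times S$ with $\tau|_T=\id_T$: one must transport the vanishing of $[\rho g]$ and the complementation condition $L\oplus Q=QA$ faithfully through the corner decomposition $B=eAe\times e'Ae'$ and the accompanying $e$- and $e'$-transfers, keeping the tensor products $Q\otimes_B A$ and $Qe\otimes_{eAe}eA$ of Lemma~\ref{LM:simult-e-transfer-tensor} straight, and then invoke uniqueness of anisotropic representatives to align the two corner forms.
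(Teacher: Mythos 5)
Your main argument coincides with the paper's: the observation that under the present hypotheses one of conditions (1)--(2) of Theorem~\ref{TH:Ei-holds}(i) always holds (so part (i) reduces to the existence statement and its last clause is vacuous), the fact that any Lagrangian of the hyperbolic form $\rho g$ lies in $\Lag(\rho g)$ when $S$ is connected, the application of Propositions~\ref{PR:Ei-unitary-R-is-field} and~\ref{PR:Ei-orthogonal-I-R-field} at each residue field, and the lift of the resulting isometries via Theorem~\ref{TH:U-zero-mapsto-onto-closed-fibers} followed by Lemma~\ref{LM:semilocal-direct-sum-reduction} is exactly how the paper proceeds.

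Where you genuinely diverge is part (ii) in the case $T\cong S\times S$ with $e^\sigma=e$ and $\rrk_BQ$ non-constant. The paper's route is much shorter: writing $Q\cong U^r\oplus V^s$ with $U,V$ of reduced rank $\ind A$ supported on the two corners, it uses Corollary~\ref{CR:constant-even-ranks}(ii) and Corollary~\ref{CR:index-divides-rrk} to see that $\rrk_AQA$ is an \emph{even} multiple of $\ind A$, hence $r\equiv s\bmod 2$, and then adds $\tfrac{|r-s|}{2}$ copies of a hyperbolic form supported on the deficient corner to make $\rrk_BQ$ constant; after that the residue-field argument applies verbatim. Your corner-decomposition argument (anisotropic representatives on each corner, then a graph Lagrangian coming from the anti-isometry $g_1'\cong-\widetilde{g_2'}$) can be completed, but two steps are asserted where real work is hiding. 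First, Lemma~\ref{LM:simult-e-transfer-tensor} does not apply to the idempotent $e\in T$: it requires $BeB=B$, whereas here $BeB=eB\neq B$; the identification $Q\otimes_BAe\cong Q_1\oplus\bigl(Q_2\otimes_{e'Ae'}e'Ae\bigr)$ you need must instead be obtained directly from $Q=Q_1\oplus Q_2$ and $g(Q_1,Q_2)\subseteq eBe'=0$. Second, the translation ``$L\oplus Q'=Q'A$ iff the corner forms are anti-isometric'' is not symmetric in $e$ and $e'$ as stated: the anti-isometry produces a graph $M=Le$ complementing $Q'e$ in $(Q'A)e$, and one must still verify $Le'\oplus Q'e'=(Q'A)e'$ (equivalently $L\cap Q'=0$ on the $e'$-corner); this does follow, because the induced map $Q_2'\cong \widetilde{Q_2'}\otimes_{eAe}eAe'\to Q_1'\otimes_{eAe}eAe'$ is an isomorphism, but it is not automatic. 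Neither point is fatal, and the uniqueness of anisotropic representatives you invoke is available from Theorem~\ref{TH:trivial-in-Witt-ring}; still, the paper's rank-balancing trick avoids all of this Morita bookkeeping at the cost of one small parity computation.
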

	
	\begin{proof}
		We only prove part (ii). It will be clear
		from the proof that we can take $(Q',g')=(Q,g)$ when $T$ is connected,
		which is exactly what we need to show in order to prove (i).
	
		Recall that we are given $(Q,g)\in \Herm[\veps]{B,\tau}$
		such that $[\rho g]=0$ in $W_\veps(A,\sigma)$.
		We need
		to show that $\rho g$ admits a Lagrangian $L$
		such that $Q\oplus L=QA$, possibly after replacing $g$ with
		a Witt equivalent form.
		By Theorem~\ref{TH:trivial-in-Witt-ring}(ii), $\rho g$ is hyperbolic.

		We may   assume that $S$ is connected. If not, then  
		$S=R\times R$ (Lemma~\ref{LM:non-connected-S}),
		and by Example~\ref{EX:exchange-involution},
		$g$ is hyperbolic. We may therefore replace $g$
		with the zero form and take $L=0$.

		We   may also assume that $\rrk_BQ$ is constant.
		Indeed, if $\rrk_BQ$ is not constant, then $T$ is not connected.
		Now, by Lemma~\ref{LM:non-connected-S} and our assumption
		that  $S$ is connected, $T\cong S\times S$, so 
		$T$ has exactly two primitive idempotents, denoted
		$e$ and $e'$.
		If $\sigma$ swaps $e$ and $e'$, 
		then $\rrk_BQ$ is constant because it is $\sigma$-invariant
		(Lemma~\ref{LM:unimodular-implies-rrk-sigma-inv}), so it
		must be the case that $\sigma$ 
		fixes $e$ and $e'$.
		By Lemma~\ref{LM:idempotent-in-T},
		$B=eAe\oplus e'Ae'$ and $[eB]= [A]$.
		Thus,
		$\ind eB =\ind A$,
		and similarly, $\ind e'B=\ind A$.
		Let $U$ be a finite projective $eB$-module of reduced rank $\ind eB$
		and let $V$ be a finite projective $e'B$-module of reduced rank $\ind e'B$;
		they exist by Theorem~\ref{TH:index-description}.
		By Lemma~\ref{LM:rank-determines} and Corollary~\ref{CR:index-divides-rrk}, there are $r,s\in \Z$
		such that $Q\cong U^r\oplus V^s$, and
		by Lemma~\ref{LM:idempotent-in-T}(iv), $\rrk_AQA=\rrk_{eB}U^r+\rrk_{e'B} V^r=(r+s)\ind A$. 
		Applying Corollary~\ref{CR:constant-even-ranks}(ii) to $(QA,\rho g)$,
		we see that
		there exists $W\in\rproj{A}$ with $\rrk_A QA=2\rrk_A W$.
		Since $\ind A\mid\rrk_AW$ 
		(Corollary~\ref{CR:index-divides-rrk}), $\rrk_A QA$ is an even  multiple
		of $\ind A$, and so $r\equiv s\bmod 2$.
		Now, if $r>s$, we can replace 
		$g$
		with 
		$g\oplus (\frac{r-s}{2})\cdot\Hyp[\veps]{V}$
		and if $r<s$, we can replace 
		$g$
		with 
		$g\oplus (\frac{s-r}{2})\cdot\Hyp[\veps]{U}$.
		After this modification, we get $r=s$, which means that $\rrk_BQ$ is constant.
		
		Fix a Lagrangian $L$ of $\rho g$.
		Since $S$ is connected, $\rrk_A L=\frac{1}{2}\rrk_AQA$ (see~\ref{subsec:Lagrangians}).
		Let $\frakm_1,\dots,\frakm_t$
		denote the maximal ideals of $R$.
		By Propositions~\ref{PR:Ei-unitary-R-is-field} and~\ref{PR:Ei-orthogonal-I-R-field},
		for every $1\leq i\leq t$,
		there exists $\vphi_i\in U^0(\rho g(\frakm_i))$
		such that $Q(\frakm_i)\oplus \vphi_i(L(\frakm_i))=QA(\frakm_i)$.
		By 
		Theorem~\ref{TH:U-zero-mapsto-onto-closed-fibers}, 
		there exists $\vphi \in U^0(\rho g)$
		such that $\vphi(\frakm_i)=\vphi_i$.
		This means that $Q(\frakm_i)\oplus (\vphi L)(\frakm_i)=QA(\frakm_i)$
		for all $i$, so by Lemma~\ref{LM:semilocal-direct-sum-reduction}, we have
		$Q\oplus (\vphi L)=QA$. Since $\vphi L$ is a Lagrangian of $QA$, we are done.
	\end{proof}

\subsection{Case (3)}

	We now turn to prove Theorem~\ref{TH:Ei-holds}
	in Case~\ref{item:orthII:Ei}, namely,
	when $R$ is connected, $(\sigma,\veps)$ is orthogonal and 
	$(\tau,\veps)$ is unitary. Note that $S=R$.
	
	This case
	is more subtle than  Cases \ref{item:unit:Ei} and~\ref{item:orthI:Ei}
	because the key Propositions~\ref{PR:Ei-unitary-R-is-field}
	and~\ref{PR:Ei-orthogonal-I-R-field} 
	no longer hold.	
	The proof will therefore consist 
	of characterizing  when these propositions
	fail, and bypassing the failure when they do.

\medskip

	We begin with treating the case where $\rrk_BQ$ is odd;
	this case is degenerate.

	\begin{prp}\label{PR:Ei-orthII-odd-rank-forms}
		With Notation~\ref{NT:proof-of-Es},
		suppose that $R$ is connected semilocal,
		$(\sigma,\veps)$ is orthogonal and $ \tau $
		is unitary.
		Let $(Q,g)\in\Herm[\veps]{B,\tau}$ 
		and assume that $\rho g$ is hyperbolic
		and $\rrk_BQ$ is not constant or not even.
		Then  
		$T$ is not connected and $g$ is hyperbolic.
	\end{prp}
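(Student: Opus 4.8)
The plan is to prove the proposition in two stages: first show that $T$ cannot be connected, and then observe that once $T$ is disconnected the hyperbolicity of $g$ is immediate. For the second stage: if $T$ is not connected then $T\cong R\times R$ by Lemma~\ref{LM:non-connected-S}, and since $\tau$ is unitary, $\tau|_T$ is the standard involution of $T$ by Proposition~\ref{PR:types-of-involutions-Az}(iii), i.e.\ the exchange involution of $R\times R$; then the idempotent $(1,0)\in T=\Cent(B)$ witnesses that $\tau$ is an exchange involution of $B$, so Example~\ref{EX:exchange-involution} gives that every form in $\Herm[\veps]{B,\tau}$, in particular $g$, is hyperbolic. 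I would also note at the outset that $\rrk_B Q$ is in fact always constant here: $(\sigma,\veps)$ orthogonal forces $S=R$ (Proposition~\ref{PR:types-of-involutions-Az}(i)), and if $T$ were connected then $\sigma|_T$ is the standard involution of $T$ (Lemma~\ref{LM:invs-of-quad-et-algs} rules out $\sigma|_T=\id_T$, which would make $\rank_{T_0}\Cent(B)=1$), so $T_0=R$ is connected and Corollary~\ref{CR:constant-even-ranks}(i) applies; if $T$ is disconnected this is clear. So the hypothesis of the proposition reduces to: $\rrk_BQ=n$ with $n$ odd, and everything comes down to deriving a contradiction from the assumption that $T$ is connected.

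So assume $T$ is connected and $n$ is odd. First I would extract that $[A]=0$. Since $\rho g$ is hyperbolic and $\Cent(A)=R$ is connected, $\rho g\cong\Hyp[\veps]{U}$ for some $U\in\rproj{A}$, and by Lemma~\ref{LM:unimodular-implies-rrk-sigma-inv} together with the rank identities recorded after Notation~\ref{NT:proof-of-Es} one gets $\rrk_A U=\tfrac12\rrk_A(QA)=\rrk_BQ=n$; hence $\ind A\mid n$ by Corollary~\ref{CR:index-divides-rrk}, so $\ind A$ is odd. On the other hand $B\sim A\otimes_R T$ carries the unitary involution $\tau$, so $\operatorname{cores}_{T/R}[B]=0$, and as $[B]=\operatorname{res}_{T/R}[A]$ this gives $2[A]=\operatorname{cores}_{T/R}\operatorname{res}_{T/R}[A]=0$. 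Combining this with $\ind A\cdot[A]=0$ (Theorems~\ref{TH:index-description} and~\ref{TH:period-divides-index}) and $\gcd(2,\ind A)=1$ yields $[A]=0$, hence also $[B]=0$.

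Now, with $[A]=0$, I would invoke Reductions~\ref{RD:common-reduction} and~\ref{RD:common-reduction-II} to reduce to the case $\deg A=2$, $B=T$, $A=\nMat{R}{2}$, $\veps=1$, and $\sigma$ orthogonal of the form $\left[\begin{smallmatrix}a&b\\c&d'\end{smallmatrix}\right]\mapsto\left[\begin{smallmatrix}a&\alpha c\\\alpha^{-1}b&d'\end{smallmatrix}\right]$ for some $\alpha\in\units{R}$. Picking $\lambda\in T$ with $\{1,\lambda\}$ an $R$-basis, $\lambda^\theta=-\lambda$, $\lambda^2\in\units{R}$ (Lemma~\ref{LM:quad-etale-over-semilocal}), the requirement $\sigma|_T=\theta$ forces $\lambda$ into the shape $\left[\begin{smallmatrix}0&-\alpha r\\r&0\end{smallmatrix}\right]$ with $r\in\units{R}$, whence $\lambda^2\equiv-\alpha\bmod(\units{R})^2$. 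Diagonalize $g\cong\langle a_1,\dots,a_n\rangle_{(T,\theta)}$ with $a_i\in\units{R}$ (Proposition~\ref{PR:diagonalizable-herm-forms}), choose a $\sigma$-symmetric idempotent $e\in A$ of reduced rank $1$ (Theorem~\ref{TH:invariant-primitive-idempotent}), and apply $e$-transfer: a direct computation identifies $(\rho g)_e$ over $(eAe,\sigma_e)=(R,\id_R)$ with $q\otimes\langle1,\alpha\rangle_{(R,\id_R)}$, where $q=\langle a_1,\dots,a_n\rangle_{(R,\id_R)}$ has rank $n$, and this form is hyperbolic because $\rho g$ is (fact \ref{item:e-transfer-hyperbolic} in~\ref{subsec:conjugation}). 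Writing $q=q_0\perp h$ with $q_0$ anisotropic and $h$ hyperbolic (Proposition~\ref{PR:ansio-Witt-equivalent}), one has $\rank q_0\equiv n\equiv1\pmod2$, and $q_0\otimes\langle1,\alpha\rangle$ is hyperbolic by Witt cancellation (Theorem~\ref{TH:Witt-cancellation}). Hence $[\alpha q_0]=[-q_0]$ in $W(R)$, and since $\alpha q_0$ and $-q_0$ are anisotropic of equal rank, $\alpha q_0\cong-q_0$ by Theorem~\ref{TH:trivial-in-Witt-ring}(iii). Comparing Gram determinants modulo squares gives $(-\alpha)^{\rank q_0}\in(\units{R})^2$, and as $\rank q_0$ is odd this forces $-\alpha\in(\units{R})^2$, i.e.\ $\lambda^2\in(\units{R})^2$. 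Then $T\cong R[x]/(x^2-\lambda^2)\cong R\times R$ is disconnected, contradicting our assumption; therefore $T$ is not connected, and the proof concludes via the reduction of the first paragraph.

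The main obstacle I expect is the arithmetic step of the second paragraph — deducing $[A]=0$ from the coprimality of the two ``periods'' $2$ and $\ind A$ — which is exactly where the oddness of $\rrk_BQ$ and the unitarity of $\tau$ both get used; the only other delicate point is verifying, in the reduced situation, that the transferred form $(\rho g)_e$ is the Pfister-type product $q\otimes\langle1,\alpha\rangle\cong q\otimes n_{T/R}$ and that $\lambda^2\equiv-\alpha$ is forced by $\sigma|_T=\theta$. After that, the determinant computation that finishes the argument is elementary.
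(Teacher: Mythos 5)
Your proof is correct and follows essentially the same route as the paper's: assume $T$ connected, deduce $[A]=0$ from the oddness of $\rrk_BQ$ together with $2[A]=0$, reduce via Reductions~\ref{RD:common-reduction} and~\ref{RD:common-reduction-II} to $A=\nMat{R}{2}$, transfer the diagonalized form to $(R,\id_R)$, and compare discriminants to force $-\alpha\in(\units{R})^2$, which splits $T$ and gives the contradiction. The only cosmetic differences are that the paper obtains $2[A]=0$ directly from $\sigma$ being an involution of the first kind on $A$ (no corestriction machinery is needed) and compares the discriminant of the full transferred hyperbolic form with $n\langle 1,-1\rangle_{(R,\id_R)}$ rather than detouring through the anisotropic part.
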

	
	\begin{proof}
		If $T$ is not connected,
		then 
		$T\cong R\times R$ by Lemma~\ref{LM:non-connected-S},
		and   $g$ is hyperbolic by Example~\ref{EX:exchange-involution} (applied to $(B,\tau)$).
		It is therefore enough to show that $T$ is not connected.

		For the sake of contradiction, suppose that $T$ is connected.
		Then $\rrk_BQ$ is constant and odd.
		Furthermore, by Corollary~\ref{CR:constant-even-ranks}(ii),
		there exists $V\in \rproj{A}$
		such that $2\iota\rrk_A V= \iota\rrk_AQA=2\rrk_BQ$.
		Thus, $n:=\rrk_AV$ is odd.
		By Corollary~\ref{CR:index-divides-rrk},
		$\ind A\mid n$, so by Theorem~\ref{TH:period-divides-index},
		$n [A]=0$ in $\Br R$.
		On the other hand, since $A$ has an $R$-involution, $2[A]=0$, so $[A]=0$.

		We   now apply   Reductions~\ref{RD:common-reduction}
		and~\ref{RD:common-reduction-II}
		to assume that $B=T$, $A=\nMat{S}{2}$, $\veps=1$
		and $\sigma:A\to A$ is orthogonal and given
		by 	$[\begin{smallmatrix} a & b \\ c & d\end{smallmatrix}]^\sigma=
		[\begin{smallmatrix} a & \eta c \\ \eta^{-1}b & d\end{smallmatrix}]$
		for some $\eta\in \units{S}$.
		
		By Lemma~\ref{LM:quad-etale-over-semilocal},
		there exists $\lambda\in \units{T}$
		such that $\lambda^\sigma=-\lambda$ and $T=R\oplus \lambda R$.
		Then   $\lambda=[\begin{smallmatrix} 0 &  \eta c \\ - c & 0\end{smallmatrix}]$
		for some $c\in\units{S}$, and consequently
		$ T=R[\begin{smallmatrix} 1 &  0 \\ 0  & 1\end{smallmatrix}]
		\oplus R[\begin{smallmatrix} 0 &  \eta \\ -1  & 0\end{smallmatrix}]$.
		Furthermore,  
		by Proposition~\ref{PR:diagonalizable-herm-forms},
		$g$ is diagonalizable, so  there 
		exist $\alpha_1,\dots,\alpha_n\in \Sym_{1}(T,\tau) \cap\units{T}=\units{R}$
		such that 
		$g\cong \langle \alpha_1,\dots,\alpha_n\rangle_{(T,\tau )}$
		(notation as in Example~\ref{EX:diagonal-forms}). Note that $n=\rrk_BQ$ is odd.

		Let $e:=[\begin{smallmatrix} 1 & 0 \\  0 & 0\end{smallmatrix}]$.
		Then $e$ is an idempotent satisfying $e^\sigma=e$, $eAe=eR$ and $AeA=A$,
		hence $e$-transfer (see~\ref{subsec:conjugation})
		induces an group isomorphism
		$[f]\mapsto [f_e]:W_1(A,\sigma)\to W_1(R,\id_R)$.
		It is routine to check
		that upon identifying
		$Ae$ with $R^2$ via
		$[\begin{smallmatrix} a & 0 \\  c & 0\end{smallmatrix}]\mapsto (a,c)$,
		the bilinear form $(\rho g)_e: A^ne\times A^ne\to eR\cong R$
		is just
		$\langle \alpha_1,\eta \alpha_1,\dots,\alpha_n,\eta \alpha_n \rangle_{(R,\id_R)}$.
		By assumption, this form is hyperbolic, 
		so it is isomorphic to $n\langle 1,-1\rangle_{(R,\id_R)}$
		(Lemma~\ref{LM:rank-determines-hyperbolic}).
		Comparing discriminants (using Proposition~\ref{PR:disc-orth-basic-props}(iv)),
		we find that $(-\eta)^n$ is a square
		in $\units{R}$.
		Since $n$ is odd, this means that $-\eta$ is a square in $\units{R}$,
		say $-\eta=r^2$.
		Then $\frac{1}{2}[\begin{smallmatrix} 1 & r \\  r^{-1} & 1\end{smallmatrix}]
		=\frac{1}{2}[\begin{smallmatrix} 1 & 0 \\  0 & 1\end{smallmatrix}]-
		\frac{1}{ 2r}[\begin{smallmatrix} 0 & \eta \\ -1  & 0\end{smallmatrix}]$
		is a nontrivial idemptonent in $T$, contradicting our 
		assumption that 
		$T$ is   connected.
	\end{proof}

	Recall from \ref{subsec:Lagrangians} 
	that $\Lag(f)$ denotes
	the set of Lagrangians $L$
	of $(P,f)\in\Herm[\veps]{A,\sigma}$ with $\rrk_AL=\frac{1}{2}\rrk_AP$.
	When $(\sigma,\veps)$ is orthogonal, $\Lag(f)$ consists
	of all the Lagrangians of $f$, and when $R$
	is semilocal,  any two   Lagrangians in $\Lag(f)$ are   isomorphic (Lemma~\ref{LM:rank-determines}).
	These facts will be used without comment in the sequel.

	\begin{prp}\label{PR:Ei-Lag-partition}
		With Notation~\ref{NT:proof-of-Es},
		suppose that $(\sigma,\veps)$ is
		orthogonal
		and $\tau$ is unitary.
		Let $(Q,g)\in\Herm[\veps]{B,\tau}$
		and assume that $ \rho g $ is hyperbolic and $\rrk_BQ$ is even.
		Then there exists a unique $\uU(\rho g)$-equivariant
		natural transformation  of functors from $R$-rings
		to sets,
		\[
		\Phi_g:\uLag(\rho g)\to \umu_{2,R},
		\]
		such that for any $R$-ring $R_1$
		and any idempotent $e_1\in T_{R_1}$ with $e_1+e_1^\sigma=1$,
		one has $\Phi_g(P_{R_1}e_1A_{R_1})=1$.
		The map $\Phi_g$ has the following additional properties:
		\begin{enumerate}[label=(\roman*)]
			\item If there exists an idempotent
			$e\in T$
			such that $e^\sigma+e=1$, then $\Phi_g=\Phi_{PeA}$ (notation
			as in Proposition~\ref{PR:partition-of-Lag}).
			\item If $n:=\rrk_BQ$
			is constant and $M\in \Lag(g)$, then $\Phi_g(MA)=(-1)^{\frac{n}{2}}$.
			\item If $(Q',g')\in \Herm[\veps]{B,\tau}$ is another
			hermitian space
			such that $\rho g'$  is hyperbolic and $\rrk_BQ'$
			is even, then $\Phi_{g\oplus g'}(L\oplus L')=\Phi_g(L)\cdot \Phi_{g'}(L')$
			for all $L\in\Lag(\rho g)$, $L'\in\Lag(\rho g')$.
			\item If $e\in B$ is an idempotent
			such that $e^\tau=e$
			and $\rrk_BeB$ is positive and constant
			on the fibers of $\Spec T\to\Spec R$,
			then $\Phi_g(L)=\Phi_{g_e}(Le)$ for all $L\in\Lag(\rho g)$
			(notation as in \ref{subsec:conjugation}).
		\end{enumerate}
	\end{prp}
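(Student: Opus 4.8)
The plan is to build $\Phi_g$ by fppf descent from a faithfully flat cover of $\Spec R$ over which $T$ acquires a suitable idempotent, so that on the cover the construction reduces to Proposition~\ref{PR:partition-of-Lag}. First note that since $\tau$ is unitary we have $S=R$ and $T$ is quadratic \'etale over $R$ with standard $R$-involution $\sigma|_T$. By Lemma~\ref{LM:splitting-quad-et-algs}, $T_T\cong T\times T$, and under this isomorphism $(\sigma|_T)_T$ is the exchange involution (standard involutions commute with base change, and the standard involution of $T\times T$ is the exchange one). Hence $R':=T$ is faithfully flat over $R$ and $T_{R'}$ contains an idempotent $e$ with $e+e^{\sigma}=1$; put $e'=e^{\sigma}=1-e$. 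More generally, over any $R$-ring $R_1$ carrying an idempotent $e_1\in T_{R_1}$ with $e_1+e_1^{\sigma}=1$, the involution $\tau_{R_1}$ is an exchange involution of $B_{R_1}$ in the sense of Example~\ref{EX:exchange-involution} (via $T_{R_1}=R_1e_1\oplus R_1e_1^{\sigma}$), so $g_{R_1}$ is hyperbolic and therefore so is $\rho g_{R_1}$ (Lemma~\ref{LM:maps-in-oct-are-well-def}). I would then check that $L_0(e_1):=P_{R_1}e_1A_{R_1}$ is a Lagrangian of $\rho g_{R_1}$ lying in $\Lag(\rho g_{R_1})$, i.e.\ with $\rrk_{A_{R_1}}L_0(e_1)=\tfrac12\rrk_{A_{R_1}}P_{R_1}$; this is a Morita/Peirce computation using the structural description of $B_{R_1}$, $\pi$ and $\rho g$ from Lemma~\ref{LM:idempotent-in-T} (in particular $e_1B_{R_1}=e_1A_{R_1}e_1$ is Azumaya, $A_{R_1}e_1A_{R_1}=A_{R_1}$, and $\rrk_{A_{R_1}}P_{R_1}=2\rrk_{B_{R_1}}Q_{R_1}$). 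Granting this, Proposition~\ref{PR:partition-of-Lag} applied to $\rho g_{R_1}$ and $L_0(e_1)$ (we may assume $\rrk_{B}Q>0$ on the relevant locus, else everything is trivial) produces a unique $\uU(\rho g_{R_1})$-equivariant natural transformation $\Phi_{L_0(e_1)}\colon\uLag(\rho g_{R_1})\to\umu_{2,R_1}$ with $\Phi_{L_0(e_1)}(L_0(e_1))=1$.

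\textbf{Independence of the idempotent.} If $e_1,e_2\in T_{R_1}$ both satisfy $e_i+e_i^{\sigma}=1$, I claim $\Phi_{L_0(e_1)}=\Phi_{L_0(e_2)}$; by the uniqueness clause of Proposition~\ref{PR:partition-of-Lag} it suffices to prove $\Phi_{L_0(e_1)}(L_0(e_2))=1$. Working over each fibre of $\Spec R_1$ reduces to $R_1$ connected, where an idempotent $t\in T_{R_1}$ with $t+t^{\sigma}=1$ must be one of $e_1,e_1^{\sigma}$; the case $e_2=e_1$ is trivial, so assume $e_2=e_1^{\sigma}$. Then $P_{R_1}=L_0(e_1)\oplus L_0(e_1^{\sigma})$: both summands have reduced rank $\tfrac12\rrk_{A_{R_1}}P_{R_1}=\rrk_{B_{R_1}}Q_{R_1}$ by the previous paragraph, and their sum is $P_{R_1}$ because $A_{R_1}e_1A_{R_1}=A_{R_1}=A_{R_1}e_1^{\sigma}A_{R_1}$, so the sum is direct by Lemma~\ref{LM:semilocal-direct-sum-reduction}. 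Now Proposition~\ref{PR:computation-of-Phi} gives $\Phi_{L_0(e_1)}(L_0(e_1^{\sigma}))=(-1)^{\rrk_{A_{R_1}}L_0(e_1)}=(-1)^{\rrk_{B_{R_1}}Q_{R_1}}=1$, the last equality being exactly where the hypothesis that $\rrk_BQ$ is even is used. (This is the sole place the parity hypothesis enters; it is what makes the normalisation consistent across choices of $e_1$.)

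\textbf{Descent, uniqueness, and the normalisation.} Over $R''=R'\otimes_RR'$ the two pullbacks of $e$ are idempotents of $T_{R''}$ with the required property, so by the previous paragraph the two pullbacks of $\Phi_{L_0(e)}$ to $\uLag(\rho g_{R''})$ agree. Since $\uLag(\rho g)$ and $\umu_{2,R}$ are fppf sheaves (see \ref{subsec:Lagrangians}), $\Phi_{L_0(e)}$ descends to a natural transformation $\Phi_g\colon\uLag(\rho g)\to\umu_{2,R}$, which is $\uU(\rho g)$-equivariant since equivariance can be checked after the faithfully flat base change to $R'$. For any $R$-ring $R_1$ and idempotent $e_1\in T_{R_1}$ with $e_1+e_1^{\sigma}=1$, base-changing along the faithfully flat map $R_1\to R_1\otimes_RR'$ and comparing $e_1$ with the pullback of $e$ via the independence statement over $R_1\otimes_RR'$ gives $\Phi_g(L_0(e_1))=1$, which is the asserted normalisation. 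Uniqueness is immediate: any $\Phi'$ with the stated properties agrees with $\Phi_g$ after base change to $R'$, since both are $\uU(\rho g_{R'})$-equivariant and send $L_0(e)$ to $1$, hence coincide by Proposition~\ref{PR:partition-of-Lag}; faithful flatness finishes.

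\textbf{Properties (i)--(iv), and the main obstacle.} Each of (i)--(iv) is verified by passing to a cover on which $T$ has an idempotent $e$ with $e+e^{\sigma}=1$, rewriting $\Phi_g$ there as $\Phi_{L_0(e)}=\Phi_{P e A}$ (notation of Proposition~\ref{PR:partition-of-Lag}), invoking the analogous property of the maps in that proposition, and descending. Specifically: (i) is immediate from the normalisation and the uniqueness in Proposition~\ref{PR:partition-of-Lag}; (iii) follows from Proposition~\ref{PR:partition-of-Lag}(ii) and the identity $(P\oplus P')eA=PeA\oplus P'eA$; (iv) follows from item~\ref{item:e-transfer-and-Phi} of \ref{subsec:conjugation}, the compatibility of $P_{R_1}e_1A_{R_1}$ with $e$-transfer recorded in Proposition~\ref{PR:simult-e-transfer}, and descent; and (ii) is obtained, after reducing to $n=2$ via (iii) and passing to the cover, from the explicit formula of Proposition~\ref{PR:computation-of-Phi} applied to $L_0(e)$ and a Lagrangian $MA\in\Lag(\rho g)$ coming from $M\in\Lag(g)$, the type of the adjoint involution of $\rho g$ being orthogonal throughout by Proposition~\ref{PR:type-of-adjoint}. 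The one substantive point, and the place where I expect the real work, is the claim in the first paragraph that $P_{R_1}e_1A_{R_1}$ is genuinely a Lagrangian of $\rho g_{R_1}$ of reduced rank $\tfrac12\rrk P_{R_1}$: this requires chasing the Peirce decomposition of $A_{R_1}$ relative to $e_1$ against the formula for $\rho g$ and Lemma~\ref{LM:idempotent-in-T}; once it is in place, everything downstream is formal descent.
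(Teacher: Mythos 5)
Your construction is correct and follows essentially the same route as the paper: both rest on Proposition~\ref{PR:partition-of-Lag}, the independence-of-idempotent computation via Proposition~\ref{PR:computation-of-Phi} (which is exactly where the evenness of $\rrk_BQ$ enters), and descent along the faithfully flat cover $R\to T$ supplied by Lemma~\ref{LM:splitting-quad-et-algs}; the paper merely packages the descent differently, fixing a Lagrangian $L_0$ of $\rho g$ over $R$ itself and descending only the correcting constant $\theta=\Phi_{L_0}(Qe_0A_{T})\in\mu_2(T)$ down to $\mu_2(R)$, rather than descending the whole natural transformation from $T$. The one caveat is your route to (ii) via ``reduction to $n=2$ using (iii)'': since $R$ is not assumed semilocal here, the Lagrangian $M$ of $g$ need not split into rank-one pieces, so you should either first reduce to residue fields via Lemma~\ref{LM:mu-two-check} or, as the paper does, apply Proposition~\ref{PR:computation-of-Phi} directly to the pair $Qe_0A$, $MA$ over $T$, using $MA\cap Qe_0A=Me_0A$ and Lemma~\ref{LM:idempotent-in-T}(iv).
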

	
	Note that  
	$Q_{R_1}e_1$
	is a Lagrangian of $g_{R_1}$ (see Example~\ref{EX:exchange-involution}),
	and therefore  $Q_{R_1}e_1A_{R_1}$ is a Lagrangian of $\rho g_{R_1}$.
	We alert the reader that $\Phi_g$ is not defined
	when $\rrk_BQ$ is not even.
	
	\begin{proof}
		Fix a Lagrangian $L_0$ of $\rho g$
		and let $\Phi_0:=\Phi_{L_0}$ be as in Proposition~\ref{PR:partition-of-Lag}.
		
		Let $R_1$ be an $R$-ring
		and let $e,e'\in T_{R_1}$
		be two idempotents
		satisfying $e+e^\sigma=e'+e'^\sigma=1$.
		We claim that $\Phi_0(Q_{R_1}eA_{R_1})=\Phi_0(Q_{R_1}e'A_{R_1})$,
		or rather, $\Phi_{Q_{R_1}eA_{R_1}}(Q_{R_1}e'A_{R_1})=1$
		(Proposition~\ref{PR:partition-of-Lag}(i)).
		Base changing along $R\to R_1$,
		we may assume that $R_1=R$.
		Now, by Lemma~\ref{LM:mu-two-check}, it is enough to show that $\Phi_{QeA}(Qe'A)(\frakm)=1$
		in $k(\frakm)$ for all $\frakm\in\Max R$,
		so assume that $R$ is a field. Since $e\in T$
		is 
		an 
		idempotent satisfying $e^\sigma+e=1$,
		it is nontrivial and hence $T=R\times R$.
		Similarly, $e'$ is nontrivial, so
		$e=e'$ or $e=1-e'$.
		In the first case, we have $QeA=Qe'A$
		and $\Phi_{QeA}(Qe'A)=1$.
		In the second case, $QA=QeA\oplus Qe'A$, so
		$\Phi_{QeA}(Qe'A)=(-1)^{\rrk_A QeA}=(-1)^{\rrk_{eB} Qe}=1$
		by Proposition~\ref{PR:computation-of-Phi},
		Lemma~\ref{LM:idempotent-in-T}(iv), and the fact $\rrk_B Q$
		is even. This proves the claim.

		Write $R_0=T$. 
		Then $T_{R_0}\cong R_0\times R_0$ by Lemma~\ref{LM:splitting-quad-et-algs}.
		Let $e_0\in T_{R_0}$ correspond to $(1_{R_0},0_{R_0})$
		under this isomorphism. Since $\tau|_T$
		is the standard $R$-involution of $T$,  we have $e_0+e_0^\sigma=1$.

		Write $\theta:=\Phi_0(P_{R_0}e_0A_{R_0})\in \mu_2(R_0)$.
		We claim that $\theta$ is in fact in $\mu_2(R)$.
		Let $i_1,i_2:R_0\to R_0\otimes R_0$
		denote the maps $r\mapsto r\otimes 1$ and $r\mapsto 1\otimes r$
		respectively.
		By what we have shown above,
		$i_1\theta=\Phi_0(P_{R_0\otimes R_0}(i_1e_0)A_{R_0\otimes R_0})=
		\Phi_0(P_{R_0\otimes R_0}(i_2e_0)A_{R_0\otimes R_0})=i_2\theta$.
		Since $\umu_{2,R}$ is a sheaf on $(\Aff/R)_{\fpqc}$,
		and since $R\to R_0$ is faithfully flat, 
		this means that $\theta\in \mu_2(R)$.

		Define $\Phi_g:=\theta^{-1}\cdot\Phi_0$.
		It is clear that $\Phi_g$ is $\uU(\rho g)$-equivariant.
		Let $R_1$ and $e_1\in T_{R_1}$ be as in the proposition.
		Then, in $\mu_2(R_0\otimes R_1)$,
		we have $\Phi_g(Q_{R_0\otimes R_1}e_1A_{R_0\otimes R_1})=
		\Phi_g(Q_{R_0\otimes R_1}e_0A_{R_0\otimes R_1})=\Phi_g(Q_{R_0}e_0A_{R_0})=\theta^{-1}
		\theta=1$.
		Since $R_1\to R_0\otimes R_1$ is faithfully flat,
		this means that $\Phi_g(Q_{R_1}e_1A_{R_1})=1$
		in $\mu_2(R_1)$.

		Suppose that $\Phi':\uLag(\rho g)\to \umu_{2,R}$
		also satisfies the conditions of the Proposition. Then,
		by Proposition~\ref{PR:partition-of-Lag}, both
		$\Phi'$ and $\Phi$ must coincide
		with $\Phi_{Q_{R_0}e_0 A_{R_0}}$
		on the subcategory of $R_0$-rings.
		Since  $\umu_2$ and $\uLag(\rho g)$
		are sheaves over $(\Aff/R)_{\fpqc}$
		and since $R\to R_0$ is faithfully
		flat, this forces $\Phi'=\Phi_g$.
		
\medskip
		
		We finish with verifying (i)--(iv).
		Since $R\to R_0$ is faithfully
		it is enough to prove
		these statements after base-changing to $R_0$.
		We may therefore assume that $T=R\times R$
		and there exists
		an idempotent $e_0\in T$ with $e_0^\sigma+e_0=1$.
		
		(i) This is immediate from the uniqueness part of Proposition~\ref{PR:partition-of-Lag}.
		
		(ii) We have $Me_0=M\cap Qe_0$. Since ${}_BA$ is flat,
		this means that $Me_0A=MA\cap Qe_0A$.
		By (i) and Proposition~\ref{PR:computation-of-Phi},
		$\Phi_g(MA)=       
		(-1)^{\rrk_AQe_0A-\rrk_AMe_0A}$,
		and $\rrk_AQe_0A-\rrk_AMe_0A=\rrk_{e_0B} Qe_0-\rrk_{e_0B}Me_0 =\frac{n}{2} $
		by Lemma~\ref{LM:idempotent-in-T}(iv).
		
		(iii) This follows from (i) and Proposition~\ref{PR:partition-of-Lag}(ii).
		
		(iv) 
		This follows readily
		from (i), item \ref{item:e-transfer-and-Phi} in \ref{subsec:conjugation}
		and Proposition~\ref{PR:simult-e-transfer}(i).
	\end{proof}

	\begin{prp}\label{PR:Ei-orthogonal-II-R-field-A-split}
		With Notation~\ref{NT:proof-of-Es},
		suppose that $R$ is a field, $[A]=0$, $(\sigma,\veps)$ is orthogonal
		and $\tau$ is unitary.
		Let $(Q,g)\in\Herm[\veps]{B,\tau}$
		be a   hyperbolic hermitian  space such that
		$\rrk_BQ$ is constant and even. Then:
		\begin{enumerate}[label=(\roman*)]
			\item 
			There exists $L\in\Lag(\rho g)$
			such that $Q\oplus L=QA$
			and $\Phi_g(L)=1$.
			\item 
			There is no $L\in\Lag(\rho g)$
			such that $Q\oplus L=QA$
			and $\Phi_g(L)=-1$.
		\end{enumerate}
	\end{prp}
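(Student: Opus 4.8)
The plan is to reduce to an explicit quaternion model and then analyse the Lagrangian Grassmannian of $\rho g$. Since $R$ is a field and $[A]=0$, I would first apply Reductions~\ref{RD:common-reduction} and~\ref{RD:common-reduction-II}: as $(\sigma,\veps)$ is already orthogonal and $\tau$ already unitary, no $\mu$-conjugation is needed and the only operation invoked is an $e$-transfer, whose compatibility with $\Phi_g$ is guaranteed by Proposition~\ref{PR:Ei-Lag-partition}(iv) together with Proposition~\ref{PR:simult-e-transfer}(ii). Thus one may assume $B=T$, $A=\nMat{R}{2}$, $\veps=1$, and $\sigma$ adjoint to $\langle 1,\alpha\rangle_{(R,\id)}$ for some $\alpha\in\units R$; then $T=R[\lambda]$ with $\lambda=\smallSMatII{0}{-\alpha}{1}{0}$, $\lambda^\sigma=-\lambda$, $\lambda^2=-\alpha\in\units R$, and by Lemma~\ref{LM:strcture-of-quat}(ii) also $A=T\oplus\mu T$ with $\mu\in\units A$, $\mu^\sigma=\mu$, $\lambda\mu=-\mu\lambda$, $\mu^2\in\units R$; in particular $\Sym_{-1}(A,\sigma)=R\lambda\subseteq T$. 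Since $\rho g$, hence $g$, is hyperbolic and $n:=\rank_T Q$ is even, Lemma~\ref{LM:rank-determines-hyperbolic} lets us take $(Q,g)=\tfrac n2\cdot(T^2,g_1)$ with $g_1$ the standard hyperbolic form, so $(QA,\rho g)=(A^n,f_1)$ with $f_1$ an orthogonal sum of $n/2$ hyperbolic planes over $(A,\sigma)$ and $Q=T^n\subseteq A^n$ the standard $T$-sublattice.

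For part (i), Proposition~\ref{PR:Ei-Lag-partition}(iii) reduces us to $n=2$. I would base change to an algebraic closure $\quo R$, so that $T$ splits as $\quo Re_0\oplus\quo Re_0'$ with $e_0$ a rank-one idempotent of $A_{\quo R}=\nMat{\quo R}{2}$ and $e_0^\sigma=e_0'=1-e_0$; then $M:=(Q_{\quo R}e_0)A_{\quo R}$ and $M':=(Q_{\quo R}e_0')A_{\quo R}$ are complementary Lagrangians of $\rho g_{\quo R}$ with $QA_{\quo R}=M\oplus M'$, and $\Phi_g=\Phi_M$ over $\quo R$ by Proposition~\ref{PR:Ei-Lag-partition}(i). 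By Propositions~\ref{PR:transitive-action-of-uUf} and~\ref{PR:partition-of-Lag}, $\uLag(\rho g)_{\quo R}$ has exactly two connected components, namely the two $\uU^0(\rho g)$-orbits $\Phi_g^{-1}(\pm1)$, the one through $M$ being $\Phi_g^{-1}(1)$. The other component consists of the ``graph'' Lagrangians $\{(a,sa):a\in A\}$ with $s\in\Sym_{-1}(A,\sigma)=R\lambda$ together with $0\times A$, and each of these meets $Q=T^2$ nontrivially; hence every $Q$-transversal Lagrangian lies in $\Phi_g^{-1}(1)$, and the $Q$-transversal ones form a nonempty open subset of that component. Any point $L$ of this open set satisfies $Q\oplus L=QA$ and $\Phi_g(L)=1$; since the $\Phi_g=1$ component, transversality, and $\Phi_g$ are all defined over $R$ and detected after base change to $\quo R$, one obtains the desired $L$ over $R$ (using Theorem~\ref{TH:rational-variety} when $R$ is infinite, and a direct computation when $R$ is finite).

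For part (ii), let $L$ be any $Q$-transversal Lagrangian and $L_0$ the one from part (i). By Lemma~\ref{LM:Lag-transitive-action} there is $\vphi\in U(\rho g)$ with $\vphi L_0=L$, and then $\Phi_g(L)=\Nrd(\vphi)$; this is independent of the choice of $\vphi$ because the stabilizer of $L_0$ lies in $\ker\Nrd$ (Lemma~\ref{LM:Borel-subgroup-structure}, using $\Nrd(a^*)=\Nrd(a)$). So it suffices to show that $L$ and $L_0$ lie in one connected component of $\uLag(\rho g)_{\quo R}$, equivalently that the locus $\mathcal T$ of $Q$-transversal Lagrangians is connected; indeed $\uLag(\rho g)_{\quo R}$ is smooth with components equal to its $\uU^0(\rho g)$-orbits, so a connected open subset meets only one of them. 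To prove $\mathcal T$ connected I would write $QA=Q\oplus Q^\perp$, with $Q^\perp$ the $\rho g$-orthogonal complement of $Q$ (an $A$-submodule, with $\rho g|_{Q^\perp}$ nondegenerate since $g$ is unimodular); a $Q$-transversal Lagrangian is the graph of a $\quo R$-linear map $\psi\colon Q^\perp\to Q$, and because the $\rho g$-pairing between $Q$ and $Q^\perp$ vanishes, the isotropy condition reads $\rho g(\psi v,\psi v')=-\rho g(v,v')$, subject to the further condition---linear in $\psi$---that the graph be $A$-stable; one checks these equations cut out an irreducible subvariety of $\Hom_{\quo R}(Q^\perp,Q)$. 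Hence $\Phi_g$ is constant on $\mathcal T$, equal to $1$ by part (i), so no $Q$-transversal Lagrangian can have $\Phi_g=-1$.

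The heart of the matter---and the step I expect to be hardest---is the irreducibility of the $Q$-transversal locus $\mathcal T$ (equivalently, the transitivity of $\uU^0(\rho g)$ on it). Because $Q$ is not $\rho g$-isotropic, $\mathcal T$ is not the usual ``big cell'' of a Lagrangian Grassmannian but is cut out by a genuinely quadratic condition on $\psi$, and showing that this condition defines an irreducible variety requires carefully disentangling how the $A$-module structure interacts with the orthogonal decomposition $QA=Q\oplus Q^\perp$. An alternative route that sidesteps this is to compute, via Proposition~\ref{PR:computation-of-Phi} over $\quo R$, $\Phi_g(L)=(-1)^{\,n-\rrk_A(L\cap M)}$ for an arbitrary $Q$-transversal $L$ (with $M=(Q_{\quo R}e_0)A_{\quo R}$) and to show $\rrk_A(L\cap M)$ is even by a dimension count exploiting $L\cap Q=0$, $Q=Qe_0\oplus Qe_0'$, and $QA=M\oplus M'$; this is elementary but is the most laborious point either way.
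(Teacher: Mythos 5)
Your part (i) is a workable, if more roundabout, variant of the paper's argument: the paper simply writes down an explicit Lagrangian $L$ over $R$ after the same reductions (no base change to $\quo{R}$, no reduction to $n=2$, no descent step), so your appeal to Theorem~\ref{TH:rational-variety} plus a separate finite-field case is avoidable. The descent you sketch does go through, but only because Proposition~\ref{PR:Ei:Orth-Phi-constant} supplies an $R$-point of the component $\Phi_g^{-1}(1)$ to serve as base point of the orbit map; as written, "detected after base change" is not by itself a proof of existence of an $R$-point in the open transversal locus.

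The genuine gap is in part (ii), which is the entire content of the proposition, and you have flagged it yourself without closing it. Your reduction of (ii) to "the $Q$-transversal locus $\mathcal{T}$ is connected" is correct, but the irreducibility claim you then assert is precisely the hard point, and your graph description does not make it plausible: a $Q$-transversal Lagrangian is the graph of an anti-isometry $\psi\colon Q^\perp\to Q$, and the locus of anti-isometries between two isomorphic quadratic spaces is a torsor under the full orthogonal group, hence has \emph{two} connected components before the $A$-stability constraint is imposed. Showing that $A$-stability kills exactly one of them is equivalent to the statement you are trying to prove, so "one checks these equations cut out an irreducible subvariety" is circular as it stands. Your alternative route (computing $\Phi_g(L)=(-1)^{n-\rrk_A(L\cap M)}$ via Proposition~\ref{PR:computation-of-Phi} and proving $\rrk_A(L\cap M)\equiv n\bmod 2$ by a dimension count) is likewise not carried out, and I do not see an elementary parity argument from $L\cap Q=0$ alone. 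The paper's proof replaces both of these with a concrete computation: after an auxiliary transfer to $\nMat{A}{n}$ it identifies transversality of $\vphi e\tilde{A}$ with invertibility of the two blocks $x,y$ of $\vphi$, parametrizes a dense subset of the non-identity component $U^1(\tilde{A},\tilde{\sigma})$ by $\xi(a,b,c)=u\cdot\smallSMatII{1}{0}{a}{1}\smallSMatII{c}{0}{0}{(c^{\trans})^{-1}}\smallSMatII{1}{b}{0}{1}$ (dense by a dimension count and Chevalley's theorem), and verifies by hand that the relevant block of $\xi(a,b,c)$ is never invertible because the antisymmetric matrix $a$ has a zero diagonal entry. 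Some argument of this strength is needed; without it your proof of (ii) is incomplete.
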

	
	\begin{proof}
		By Reduction~\ref{RD:common-reduction} and Proposition~\ref{PR:Ei-Lag-partition}(iv),
		we may assume that $B=T$, $A=\nMat{R}{2}$, $\veps=1$
		and $\sigma$ is orthogonal.

\medskip

		(i) By Reduction~\ref{RD:common-reduction-II},
		we may assume that $\sigma$
		is given by $[\begin{smallmatrix} a &  b \\ c & d \end{smallmatrix}]^{\sigma}=
		[\begin{smallmatrix} a &   \alpha c \\  \alpha^{-1} b & d \end{smallmatrix}]
		$ for some $\alpha\in\units{R}$.
		Arguing as in Case I of  the proof of Proposition~\ref{PR:Ei-unitary-S-is-field-A-split},
		we may assume that $(Q,g)=(B^2,g_1)$,
		where $g_1((x_1,x_2),(y_1,y_2))=x_1^\sigma y_2+ x_2^\sigma y_1$.
		By Lemma~\ref{LM:quad-etale-over-semilocal}, there exists
		$\lambda\in T$ such that $T=R\oplus \lambda R$ and $\lambda^\tau=-\lambda$.
		This 
		forces
		$B=T=[\begin{smallmatrix} 1 &  0 \\ 0 & 1 \end{smallmatrix}]R+
		[\begin{smallmatrix} 0 &  \alpha \\ -1  & 0 \end{smallmatrix}]R$.
		Now, it is routine to check that
		$L=\{(	[\begin{smallmatrix} 0 &  0 \\  c & d \end{smallmatrix}],
		[\begin{smallmatrix} a &  b \\  0 & 0 \end{smallmatrix}])\where
		a,b,c,d\in S\}$
		is a Lagrangian of $\rho g$ satisfying $B^2\oplus L=A^2$.
		That $\Phi_g(L)=1$ will follow once we prove (ii).

\medskip	

		(ii) \noindent {\it Step 1.} 
		It is enough to prove the statement 
		after base-changing to an algebraic closure of $R$,
		so assume that $R$ is an algebraically closed field.
		In this case, $B=T=R\times R$ and $\tau$ is the exchange involution.
		By Example~\ref{EX:exchange-involution},
		this means that $g\cong n\langle 1\rangle_{(B,\tau)}$,
		where $n=\rrk_B Q$ and $\langle 1\rangle_{(B,\tau)}$
		is the hermitian form $(x,y)\mapsto x^\tau y $ on $B$.
		We may therefore assume that $(Q,g)=(B^n,n\langle 1\rangle_{(B,\tau)})$.
		
		Arguing as in Subcase II.2 of the proof of  Proposition~\ref{PR:Ei-unitary-S-is-field-A-split},
		we may identify
		$A$ with $\nMat{R}{2}$ in such a way that $B$
		is the algebra of diagonal matrices and $\sigma$
		is given by $ [\begin{smallmatrix} a &  b \\ c & d \end{smallmatrix}]^{\sigma}
		=[\begin{smallmatrix} d &  \sigma_2b \\ \sigma_3c & a \end{smallmatrix}]^{\sigma}$,
		where $\sigma_2$, $\sigma_3$
		are $R$-linear automorphisms of $R$ of order $2$.
		Since $\sigma_2,\sigma_3\in\{\pm\id_R\}$
		and $\dim_R \Sym_{-1}(A,\sigma)=1$ (Proposition~\ref{PR:types-of-involutions-Az}), 
		we must have $\sigma_2=\sigma_3=\id_S$,
		hence $\sigma$ is given by $ [\begin{smallmatrix} a &  b \\ c & d \end{smallmatrix}]^{\sigma}
		=[\begin{smallmatrix} d &   b \\ c & a \end{smallmatrix}]$.

\medskip
		
		\noindent {\it Step 2.}
		Let $\tilde{A}=\nMat{A}{n}$
		and let $\tilde{\sigma}:\tilde{A}\to\tilde{A}$
		be given by $(a_{ij})^{\tilde{\sigma}}=(a_{ji}^\sigma)$.
		Define $\tilde{B}$ and $\tilde{\tau}$
		similarly and let $\tilde{\rho}:\tilde{B}\to\tilde{A}$
		denote the inclusion map.
		Let $e\in\tilde{B}=\nMat{B}{n}$ denote the matrix
		with $1$ in the $(1,1)$-entry and $0$ elsewhere,
		and let $\tilde{g} :\tilde{B}\times \tilde{B}\to 
		\tilde{B}$ denote the 
		diagonal hermitian form $\langle 1\rangle_{(\tilde{B},\tilde{\tau})}$
		(see Example~\ref{EX:diagonal-forms}).
		It is easy to check that the assumptions
		of Notation~\ref{NT:proof-of-Es}
		apply to $\tilde{A}$, $\tilde{\sigma}$, $T$ (embedded diagonally in $\tilde{A}=\nMat{A}{n}$)
		and $\tilde{B}$.
		Furthermore, under the evident isomorphisms
		$e\tilde{B}e\cong B$,
		$\tilde{B}e\cong B^n$, one finds that,
		the $e$-transfer $\tilde{g}_e$
		(see  
		\ref{subsec:conjugation})
		is just $g$.
		Thus, by Propositions~\ref{PR:simult-e-transfer}(ii)
		and~\ref{PR:Ei-Lag-partition}(iv), it is enough to prove that
		$\tilde{\rho}\tilde{g}$ 
		admits no Lagrangians $\tilde{L}$
		with $\tilde{B}\oplus \tilde{L}=\tilde{A}$ and $\Phi_{\tilde{g}}(\tilde{L})=-1$.
		
		Note that $(\tilde{A},\tilde{\sigma})\cong(\nMat{R}{2},\sigma)\otimes(\nMat{R}{n},\trans )$,
		where $\mathrm{t}$ denotes the transpose involution.
		Thus, we may identify $\tilde{A}$ with $\nMat{R}{2n}$ in such a way that $\tilde{\sigma}$
		is given by
		\[
		\SMatII{a}{b}{c}{d}^{\tilde{\sigma}}=\SMatII{d^\trans}{b^\trans}{c^\trans}{a^\trans}, 
		\]
		where $a,b,c,d\in\nMat{R}{n}$.
		Under this identification, $\tilde{B}=\{[\begin{smallmatrix} a & \\ & d\end{smallmatrix}]\where
		a,d\in\nMat{R}{n}\}$ and $T=\{[\begin{smallmatrix} \alpha 1_n & \\ & \beta 1_n \end{smallmatrix}]\where
		\alpha,\beta\in R\}$,
		where $1_n$ is the $n\times n$ identity
		matrix.
		
		Overriding previous notation, let $e=[\begin{smallmatrix}   1_n & 0\\0 & 0 \end{smallmatrix}]$.
		Then $e\tilde{A}=\tilde{B}e\tilde{A}$ is a Lagrangian of $\tilde{\rho}\tilde{g} $,
		and $\Phi_{\tilde{g} }({e}\tilde{A})=1$ by the defining property
		of $\Phi_{\tilde{g} }$.
		Since $U(\tilde{\rho}\tilde{g} )$ acts transitively
		on $\Lag(\tilde{\rho}\tilde{g} )$ 
		(Lemma~\ref{LM:Lag-transitive-action}),
		it is enough to prove that for every
		$\vphi \in U(\tilde{\rho}\tilde{g})$
		with $\Nrd(\vphi)=-1$, we have $\tilde{B}+\vphi e\tilde{A}\neq \tilde{A}$.
		Identifying $\End_{\tilde{A}}(\tilde{A}_{\tilde{A}})$
		with $\tilde{A}$ (acting on the left on itself)
		and writing
		$\vphi=[\begin{smallmatrix}{x}&{x'}\\{y}&{y'}\end{smallmatrix}]$ with $x,x',y,y'\in\nMat{R}{n}$,
		we get $\vphi e\tilde{A}=\{[\begin{smallmatrix} xa & xb \\ ya & yb\end{smallmatrix}]\where
		a,b\in \nMat{R}{n}\}$, from which it follows readily that
		\[
		\tilde{B}+\vphi e\tilde{A}=\tilde{A}
		\qquad\iff\qquad
		x,y\in\nGL{R}{n}.
		\]
		
	\medskip
		
		\noindent {\it Step 3.}
        Recall that $R$ is assumed to be algebraically closed.
        We shall view all
        finite dimensional $R$-vector spaces and 
        the group $U(\tilde{A},\tilde{\sigma})=
        U(\tilde{\rho}\tilde{g} )$
        as   varieties over $R$ in the obvious way. 
        Recall from
        Proposition~\ref{PR:U-zero-description} that $U(\tilde{A},\tilde{\sigma})$ has two
        (Zariski) connected components --- 
        $U^0(\tilde{A},\tilde{\sigma}):=
        U^0(\tilde{\rho}\tilde{g} )$ and $U^1(\tilde{A},\tilde{\sigma}):=U(\tilde{A},\tilde{\sigma})
        \setminus U^0(\tilde{A},\tilde{\sigma})$.
        
		Consider the morphism $\psi:U(\tilde{A},\tilde{\sigma})\to \nMat{R}{n}\times \nMat{R}{n}$
        given by $[\begin{smallmatrix}{x}&{x'}\\{y}&{y'}\end{smallmatrix}]\mapsto (x,y)$.
        By Step 2, we need to show
        that $\psi(U^1(\tilde{A},\tilde{\sigma}))$
		does not meet
		$\nGL{R}{n}\times\nGL{R}{n}$.
		Since $\nGL{R}{n}\times\nGL{R}{n}$ is Zariski open in $\nMat{R}{n}\times \nMat{R}{n}$,
		it is enough to verify  
		this 
		after replacing 
		$U^1(\tilde{A},\tilde{\sigma})$ with a Zariski dense subset.

	\medskip
		
		\noindent {\it Step 4.  }
		In what follows,
        we shall write matrices $a\in\nMat{R}{n}$ in $2\times 2$ block form 
        $\smallSMatII{a_{11}}{a_{12}}{a_{21}}{a_{22}}$,
        where $a_{11}$ is a $1\times 1$ matrix. 
        With this notation, let
        \[
        u:=\SMatII{\smallSMatII{0}{0}{0}{1_{n-1}}}{\smallSMatII{1}{0}{0}{0}}{\smallSMatII{1}{0}{0}{0}}{\smallSMatII{0}{0}{0}{1_{n-1}}}
        \]
        and note that $u\in U(\tilde{A},\tilde{\sigma})$
        and $\Nrd(u)=-1$.
		
        For all $a,b\in\Sym_{-1}(\nMat{R}{n},\trans)$, $c\in\nGL{R}{n}$, define
        \begin{align*}
        \xi(a,b,c)&=
        u\cdot 
        \SMatII{1}{0}{a}{1}\SMatII{c}{0}{0}{(c^{\trans})^{-1}}\SMatII{1}{b}{0}{1}
        =u\cdot \SMatII{c}{cb}{ac}{acb+(c^{\trans})^{-1}}.
        \end{align*}
        It is easy to check that $\xi$ is a morphisms of
        $R$-varieties from $\Sym_{-1}(\nMat{R}{n},\trans)\times
        \Sym_{-1}(\nMat{R}{n},\trans)\times\nGL{R}{n}$
        to $U^1(\tilde{A},\tilde{\sigma})$ that is injective on $R$-points.
		Since $U^1(\tilde{A},\tilde{\sigma})\cong U^0(\tilde{A},\tilde{\sigma})$
		as $R$-varieties, and since $U^0(\tilde{A},\tilde{\sigma})$
		is just $\uSO_{2n}(R)$, it follows
		that the source and target of $\xi$ have the same dimension (i.e.\ 
		$\frac{1}{2}(2n)(2n-1)=\frac{1}{2}n(n-1)+\frac{1}{2}n(n-1)+n^2$).
		Thus, by   Chevalley's Theorem,  $\im(\xi)$ is dense in $U^1(\tilde{A},\tilde{\sigma})$.
	    
	    Writing
        $a=\smallSMatII{0}{a_{12}}{a_{21}}{a_{22}}\in\Sym_{-1}(\nMat{S}{n},\trans)$
        and $c=\smallSMatII{c_{11}}{c_{12}}{c_{21}}{c_{22}}\in\nGL{R}{n}$, one readily checks that
        \[
        \xi(a,b,c)=\SMatII{\smallSMatII{a_{12}c_{21}}{a_{12}c_{22}}{c_{21}}{c_{22}}}{*}
        {*}{*}\ .
        \]
        Since $\smallSMatII{a_{12}c_{21}}{a_{12}c_{22}}{c_{21}}{c_{22}}$ is never invertible
        (multiply by $\smallSMatII{1}{-a_{12}}{0}{1}$ on the left), 
        we see that $\psi(\im(\xi))$ does not meet $\nGL{R}{n}\times\nGL{R}{n}$.
        Since $\im(\xi)$ is dense in $U^1(\tilde{A},\tilde{\sigma})$,
        this completes the proof.
	\end{proof}

	\begin{remark}
		In Proposition~\ref{PR:Ei-orthogonal-II-R-field-A-split},
		one can similarly show
		that if $\rrk_BQ$ is constant and odd, then
		there is no $L\in\Lag(\rho f)$
		such that $Q\oplus L=QA$:
		Replace $\xi$
		with the maps 
		$\xi_0(a,b,c)=\smallSMatII{1}{0}{a}{1}
		[\begin{smallmatrix}
		c & 0 \\ 0 & (c^\trans)^{-1}
		\end{smallmatrix}]
		\smallSMatII{1}{b}{0}{1}$
		and $\xi_1(a,b,c)=[\begin{smallmatrix}
		0 & 1_n \\ 1_n & 0
		\end{smallmatrix}]\xi_0(a,b,c)$
		and note that $a$ cannot be invertible when $n$ is odd.
	\end{remark}

	\begin{cor}\label{CR:Ei-good-Lag-is-in-Lag-zero}
		With Notation~\ref{NT:proof-of-Es},
		suppose  that
		$(\sigma,\veps)$ is orthogonal
		and $\tau $ is unitary.
		Let $(Q,g)\in\Herm[\veps]{B,\tau}$ and assume
		that $\rho g$ hyperbolic and $\rrk_BQ$ is constant and even.
		If $L\in \Lag(\rho g)$
		satisfies $Q\oplus L=QA$, then $\Phi_g(L)=1$.
	\end{cor}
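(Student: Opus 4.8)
The plan is to exploit the fact that $\Phi_g(L)$ is an element of $\mu_2(R)$ and therefore, by Lemma~\ref{LM:mu-two-check}, is completely determined by its specializations to the residue fields of $R$. So it suffices to prove $\Phi_g(L)(\frakm)=1$ for every $\frakm\in\Max R$, and for this I would reduce to the case already settled in Proposition~\ref{PR:Ei-orthogonal-II-R-field-A-split}.

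Concretely: fix $\frakm\in\Max R$, let $k$ be an algebraic closure of $k(\frakm)$, and base change the whole situation along the ring homomorphism $\iota\colon R\to k$. First I would record the routine permanence facts --- base change preserves Lagrangians, reduced ranks, hyperbolicity, the $B$-module decomposition $Q\oplus L=QA$, and the types of $(\sigma,\veps)$ and $\tau$ (orthogonal and unitary respectively) --- so that $L_k\in\Lag(\rho g_k)$ satisfies $Q_k\oplus L_k=Q_kA_k$, while $\rrk_B Q$ stays constant and even and $\rho g_k$ stays hyperbolic; in particular $\Phi_{g_k}$ is defined, and $[A_k]=0$ in $\Br k$ since $k$ is algebraically closed.

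The one point requiring care --- and the only genuine, if minor, obstacle --- is that $\Phi_g$ is compatible with this base change, i.e.\ that $\Phi_g(L)$ maps to $\Phi_{g_k}(L_k)$ under $\mu_2(R)\to\mu_2(k)$. I would argue this from the uniqueness clause of Proposition~\ref{PR:Ei-Lag-partition}: restricting $\Phi_g$ to the subcategory of $k$-rings yields a $\uU(\rho g_k)$-equivariant natural transformation $\uLag(\rho g_k)\to\umu_{2,k}$ whose value on $Q_{R_1}e_1A_{R_1}$ is $1$ for every $k$-ring $R_1$ and every idempotent $e_1\in T_{R_1}$ with $e_1+e_1^{\sigma}=1$, so it must coincide with $\Phi_{g_k}$. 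Feeding this into Proposition~\ref{PR:Ei-orthogonal-II-R-field-A-split}(ii) --- which, since $[A_k]=0$, forbids any $L'\in\Lag(\rho g_k)$ with $Q_k\oplus L'=Q_kA_k$ and $\Phi_{g_k}(L')=-1$ --- and using $\mu_2(k)=\{\pm1\}$ gives $\Phi_{g_k}(L_k)=1$. Since $\iota$ factors through $k(\frakm)$ and $\mu_2(k(\frakm))\hookrightarrow\mu_2(k)$, we conclude $\Phi_g(L)(\frakm)=1$; ranging over all $\frakm$ and invoking Lemma~\ref{LM:mu-two-check} completes the argument. I do not expect any serious difficulty here: all of the real content sits in Proposition~\ref{PR:Ei-orthogonal-II-R-field-A-split}.
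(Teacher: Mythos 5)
Your proposal follows essentially the same route as the paper: specialize to the algebraic closure $k$ of each residue field, apply Proposition~\ref{PR:Ei-orthogonal-II-R-field-A-split}(ii) there (using $[A_k]=0$), and conclude via Lemma~\ref{LM:mu-two-check}. The compatibility of $\Phi_g$ with base change that you flag as the "one point requiring care" is indeed handled by the uniqueness clause of Proposition~\ref{PR:Ei-Lag-partition}, exactly as you say (the paper simply writes $\Phi_g(L_K)$, which makes sense because $\Phi_g$ is a natural transformation on $R$-rings).

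There is one hypothesis you do not verify: Proposition~\ref{PR:Ei-orthogonal-II-R-field-A-split} requires the hermitian space $(Q_k,g_k)$ itself to be \emph{hyperbolic}, whereas you only record that $\rho g_k$ stays hyperbolic (which is all that is assumed of the original data). Since $g$ is not assumed hyperbolic over $R$, this is not a "permanence under base change" fact. The repair is immediate and is the one line the paper's proof devotes to this point: over the algebraically closed field $k$ one has $T_k\cong k\times k$, so $\tau_k|_{T_k}$ is the exchange involution and Example~\ref{EX:exchange-involution} forces $g_k$ to be hyperbolic. With that observation inserted, your argument is complete and coincides with the paper's.
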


	\begin{proof}
		Let $K$ be an algebraically closed
		$R$-field.
		Then $T_K\cong K\times K$.
		Thus, by Example~\ref{EX:exchange-involution},
		$g_K$ is hyperbolic. 
		Now, by Proposition~\ref{PR:Ei-orthogonal-II-R-field-A-split},
		$\Phi_g(L_K)=1$.
		Thanks to Lemma~\ref{LM:mu-two-check},
		$\Phi_g(L)=1$ follows by 
		letting $K$ range over the algebraic closures of 
		the residue fields of $R$.
	\end{proof}
	
	Now we can  prove an analogue to Propositions~\ref{PR:Ei-unitary-R-is-field}
	and~\ref{PR:Ei-orthogonal-I-R-field} in Case~\ref{item:orthII:Ei}.

	\begin{prp}\label{PR:Ei:orthII:good-Lag-Phi-one}
		With Notation~\ref{NT:proof-of-Es},
		suppose  that
		$R$ is a field, 
		$(\sigma,\veps)$ is orthogonal
		and $\tau $ is unitary.
		Let $(Q,g)\in \Herm[\veps]{B,\tau}$,
		let $L\in \Lag(\rho g)$ and assume 
		that $\rrk_BQ$ is constant and even.
		Then  there exists $\vphi\in U^0(\rho g)$
		such that $Q\oplus \vphi L=QA$
		if and only if
		$\Phi_g(L)=1$.
	\end{prp}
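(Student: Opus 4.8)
The plan is to prove the two implications separately. The ``only if'' direction is immediate: suppose $\vphi\in U^0(\rho g)$ satisfies $Q\oplus\vphi L=QA$. Since $L\in\Lag(\rho g)$, the form $\rho g$ is hyperbolic, so $\Phi_g$ is defined (Proposition~\ref{PR:Ei-Lag-partition}), and $U^0(\rho g)=\ker(\Nrd\colon U(\rho g)\to\mu_2(R))$ by Proposition~\ref{PR:U-zero-description}. The $\uU(\rho g)$-equivariance of $\Phi_g$ gives $\Phi_g(\vphi L)=\Nrd(\vphi)\,\Phi_g(L)=\Phi_g(L)$, while Corollary~\ref{CR:Ei-good-Lag-is-in-Lag-zero} applied to $\vphi L$ (legitimate, since $Q\oplus\vphi L=QA$ and $\rrk_BQ$ is constant and even) gives $\Phi_g(\vphi L)=1$. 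Hence $\Phi_g(L)=1$.

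For the ``if'' direction I would follow the template of Propositions~\ref{PR:Ei-unitary-R-is-field} and~\ref{PR:Ei-orthogonal-I-R-field}, now feeding in the finer information of Proposition~\ref{PR:Ei-orthogonal-II-R-field-A-split}. First reduce to the case $[A]=0$ in $\Br S$: when $R$ is finite this holds by Wedderburn's theorem, and when $R$ is infinite it is permitted by Proposition~\ref{PR:rationality-for-Ei}(i) after base change to an algebraic closure $\quo R$ --- here one notes that $\Phi_{g_{\quo R}}(L_{\quo R})=1$ still holds, because $\Phi_g$ is the unique natural transformation with its defining property and hence its formation commutes with base change. Assuming now $[A]=0$, the next step is to produce a single Lagrangian $L_0\in\Lag(\rho g)$ with $Q\oplus L_0=QA$ and $\Phi_g(L_0)=1$. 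If $g$ is hyperbolic this is exactly Proposition~\ref{PR:Ei-orthogonal-II-R-field-A-split}(i). If $g$ is not hyperbolic, then $T$ must be connected: otherwise $T\cong R\times R$ by Lemma~\ref{LM:non-connected-S}, $\tau|_T$ is the exchange involution, and Example~\ref{EX:exchange-involution} would force $g$ to be hyperbolic. With $T$ connected, write $(Q,g)=(Q_1,g_1)\oplus(Q_2,g_2)$ with $g_1$ anisotropic and $g_2$ hyperbolic (Proposition~\ref{PR:ansio-Witt-equivalent}); Corollary~\ref{CR:constant-even-ranks}(ii) makes $\rrk_BQ_2$ even, hence $\rrk_BQ_1=\rrk_BQ-\rrk_BQ_2$ even as well, and $[\rho g_1]=[\rho g]-[\rho g_2]=0$ forces $\rho g_1$ to be hyperbolic by Theorem~\ref{TH:trivial-in-Witt-ring}(ii). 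Choose any $U\in\Lag(\rho g_1)$ and the Lagrangian $V_0\in\Lag(\rho g_2)$ supplied by Proposition~\ref{PR:Ei-orthogonal-II-R-field-A-split}(i): the anisotropy argument of Remark~\ref{RM:Es-for-field} yields $Q_1\oplus U=Q_1A$, whence $\Phi_{g_1}(U)=1$ by Corollary~\ref{CR:Ei-good-Lag-is-in-Lag-zero}, while $Q_2\oplus V_0=Q_2A$ and $\Phi_{g_2}(V_0)=1$ by construction. Then $L_0:=U\oplus V_0$ satisfies $Q\oplus L_0=QA$ and, by Proposition~\ref{PR:Ei-Lag-partition}(iii), $\Phi_g(L_0)=\Phi_{g_1}(U)\,\Phi_{g_2}(V_0)=1$.

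It remains to transport $L_0$ to $L$. Both $L$ and $L_0$ lie in $\Lag(\rho g)$, so they are isomorphic as $A$-modules (Lemma~\ref{LM:rank-determines}), and by Lemma~\ref{LM:Lag-transitive-action} there is $\psi\in U(\rho g)$ with $\psi L=L_0$. Equivariance of $\Phi_g$ gives $\Nrd(\psi)\,\Phi_g(L)=\Phi_g(L_0)=1$, and since $\Phi_g(L)=1$ we get $\Nrd(\psi)=1$, i.e.\ $\psi\in U^0(\rho g)$ by Proposition~\ref{PR:U-zero-description}. Taking $\vphi:=\psi$ we have $Q\oplus\vphi L=Q\oplus L_0=QA$, as required. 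The genuinely hard part of the argument --- the variety-theoretic computation distinguishing the ``good'' component of $\uLag(\rho g)$ --- has already been carried out in Proposition~\ref{PR:Ei-orthogonal-II-R-field-A-split}; the only care needed here is the bookkeeping in the case split above (hyperbolic versus non-hyperbolic $g$, connectedness of $T$, parity of the $\rrk_BQ_i$).
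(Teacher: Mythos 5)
Your proof is correct and follows essentially the same route as the paper's: the ``only if'' direction via equivariance of $\Phi_g$ and Corollary~\ref{CR:Ei-good-Lag-is-in-Lag-zero}, and the ``if'' direction by reducing to $[A]=0$, splitting $g$ into an anisotropic part (handled by Remark~\ref{RM:Es-for-field}) and a hyperbolic part (handled by Proposition~\ref{PR:Ei-orthogonal-II-R-field-A-split}(i)), and transporting via Lemma~\ref{LM:Lag-transitive-action}. The only cosmetic difference is that the paper derives the evenness of $\rrk_BQ_1$ from Proposition~\ref{PR:Ei-orthII-odd-rank-forms}, whereas you obtain it from connectedness of $T$ together with Corollary~\ref{CR:constant-even-ranks}(ii); both are fine.
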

	
	\begin{proof}
		If $Q\oplus \vphi L=QA$ for $\vphi\in U^0(\rho g)$,
		then $\Phi_g(L)=\Nrd(\vphi)\Phi_g(L)=\Phi_g(\vphi L)=1$
		by Corollary~\ref{CR:Ei-good-Lag-is-in-Lag-zero}. We turn
		to prove the converse.	
	
		As in the proof of 	Proposition~\ref{PR:Ei-unitary-R-is-field},
		we can reduce to the case where $[A]=0$  
		and write $(Q,g)=(Q_1,g_1)\oplus (Q_2,g_2)$
		with   $g_1$ anisotropic  and  $g_2$ hyperbolic.
		Furthermore,
		there exists a Lagrangian  $L_1$ of $\rho g_1$ such that $Q_1\oplus L_1=Q_1A$.
		
		By Proposition~\ref{PR:Ei-orthII-odd-rank-forms}, $\rrk_BQ_1$ is  constant and even, and
		hence so is $\rrk_BQ_2$.
		Thus, by Proposition~\ref{PR:Ei-orthogonal-II-R-field-A-split}(i),
		there exists $L_2\in\Lag(\rho g_2)$ with $Q_2\oplus L_2=Q_2A$.
		
		Let $L':=L_1\oplus L_2$. Then $L'\in\Lag(\rho g)$
		and $Q\oplus L'=QA$.
		By  
		Lemma~\ref{LM:Lag-transitive-action},
		there exists $\vphi\in U(\rho g)$ such that $L'=\vphi L$.
		By Corollary~\ref{CR:Ei-good-Lag-is-in-Lag-zero},
		$1=\Phi_g(L')=\Nrd(\vphi)\Phi_g(L)=\Nrd(\vphi)$,
		so $\Nrd(\vphi)=1$ and the proposition follows. 
	\end{proof}
	
	From Proposition~\ref{PR:Ei:orthII:good-Lag-Phi-one},
	we see that in order to apply the proof of Theorem~\ref{TH:Ei-holds-unit-syp}
	to our situation, we have to find   $L\in \Lag(\rho g)$
	satisfying $\Phi_g(L)=1$. The purpose of the following propositions
	is to characterize precisely when
	such $L$ exists.
	
	We begin by noting that, in many cases,
	$\Phi_g$ is constant on the set $\Lag(\rho g)$.
	
	\begin{prp}\label{PR:Ei:Orth-Phi-constant}
		With Notation~\ref{NT:proof-of-Es},
		suppose  that
		$R$ is connected  semilocal, 
		$(\sigma,\veps)$ is orthogonal and 
		$ \tau $ is unitary.
		Let $(Q,g)\in\Herm[\veps]{B,\tau}$
		be an $\veps$-hermitian space
		such that $\rho g$ is hyperbolic
		and $\rrk_BQ$ is even.
		Then $\Phi_g:\Lag(\rho g)\to \mu_2(R)=\{\pm 1\}$
		is onto if and only if $[A]=0$ and $Q\neq 0$.
	\end{prp}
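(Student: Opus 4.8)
The statement asserts that $\Phi_g\colon\Lag(\rho g)\to\{\pm1\}$ is surjective precisely when $[A]=0$ in $\Br S$ and $Q\neq 0$. I would argue the two implications separately, reducing to the residue fields wherever possible via Lemma~\ref{LM:mu-two-check} and the transitivity of the $U(\rho g)$-action on $\Lag(\rho g)$ (Lemma~\ref{LM:Lag-transitive-action}).

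\emph{The easy direction: if $[A]\neq0$ or $Q=0$, then $\Phi_g$ is not onto.} If $Q=0$, then $\Lag(\rho g)=\{0\}$ is a single point mapping to $1$ by the normalization of $\Phi_g$ (Proposition~\ref{PR:Ei-Lag-partition}), so $\Phi_g$ is certainly not onto $\{\pm1\}$. Suppose instead $[A]\neq0$ but $Q\neq0$, so $\rrk_AQA>0$. Since $\Phi_g$ is $\uU(\rho g)$-equivariant through $\Nrd\colon\uU(\rho g)\to\umu_{2,R}$ and $U(\rho g)$ acts transitively on $\Lag(\rho g)$ (Lemma~\ref{LM:Lag-transitive-action}), the image $\Phi_g(\Lag(\rho g))$ is exactly the coset of $\Nrd(U(\rho g))$ through $\Phi_g(L_0)$ for any fixed $L_0$; in particular $\Phi_g$ is onto iff $\Nrd\colon U(\rho g)\to\mu_2(R)$ is onto. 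By Theorem~\ref{TH:criterion-for-det-one} (applied to $(A,\sigma)$, $\veps$, and the hermitian space $\rho g$, which has positive reduced rank), this holds iff $[A]=0$. Hence when $[A]\neq0$ the map $\Nrd\colon U(\rho g)\to\mu_2(R)$ is not onto, and since $\Phi_g$ takes values in a single $\Nrd(U(\rho g))$-coset, it is not onto $\{\pm1\}$.

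\emph{The hard direction: if $[A]=0$ and $Q\neq0$, then $\Phi_g$ is onto.} By the same orbit argument, it suffices to exhibit a single $L\in\Lag(\rho g)$ with $\Phi_g(L)=-1$. Here I would pass to a residue field. By Corollary~\ref{CR:constant-even-ranks}(i), $\rrk_BQ$ is constant since $R$ is connected; combined with $Q\neq0$ this gives $\rrk_BQ>0$, and it is even by hypothesis. Also $[A]=0$ over $R$ forces $[A(\frakm)]=0$ for every $\frakm\in\Max R$, and $\rho g$ is hyperbolic (it is over $R$, hence over each $k(\frakm)$). Fix $\frakm\in\Max R$. I would first upgrade Proposition~\ref{PR:Ei-orthogonal-II-R-field-A-split}(i)--(ii) to the claim that \emph{over an arbitrary field} $k$ with $[A_k]=0$, $(\sigma_k,\veps)$ orthogonal, $\tau_k$ unitary, and $\rrk_BQ$ constant even and positive, there exists $L_{\frakm}\in\Lag(\rho g(\frakm))$ with $\Phi_g(L_{\frakm})=-1$: indeed, Proposition~\ref{PR:Ei-orthogonal-II-R-field-A-split}(ii) only rules out such $L$ with the \emph{additional} property $Q\oplus L=QA$, and the construction in the remark following it (the map $\xi_1$, or simply exchanging the two copies in a hyperbolic plane) produces a Lagrangian on which $\Phi_g$ takes the value $-1$ without any direct-sum complement condition — one checks this using Proposition~\ref{PR:computation-of-Phi} together with the defining normalization of $\Phi_g$. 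With $L_{\frakm}$ in hand for every maximal ideal, I would lift: since $U^0(\rho g)\to\prod_{\frakm}U^0(\rho g(\frakm))$ is surjective (Theorem~\ref{TH:U-zero-mapsto-onto-closed-fibers}) and $U(\rho g)$ acts transitively on $\Lag(\rho g)$ over each $k(\frakm)$, I can find a global Lagrangian $L\in\Lag(\rho g)$ whose reduction at each $\frakm$ is conjugate to $L_{\frakm}$ by an element of $U^0$; since $\Nrd$ of such elements is $1$ and $\Phi_g$ is $\Nrd$-equivariant, $\Phi_g(L)(\frakm)=\Phi_g(L_{\frakm})=-1$ for all $\frakm$, whence $\Phi_g(L)=-1$ by Lemma~\ref{LM:mu-two-check}. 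This shows $\Phi_g$ is onto.

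\emph{Main obstacle.} The delicate point is the field-case claim used in the hard direction: that over any field with $[A_k]=0$ there is a Lagrangian $L$ of $\rho g$ with $\Phi_g(L)=-1$, unconstrained by the complementarity requirement. Proposition~\ref{PR:Ei-orthogonal-II-R-field-A-split} is phrased only in the presence of that requirement, and its part (ii) is precisely a non-existence statement in that restricted setting; so I must be careful to extract the genuinely positive fact — that $\Phi_g$ hits $-1$ somewhere on $\Lag(\rho g)$ — which corresponds to $\Nrd(U(\rho g))=\mu_2(k)$, equivalently $[A_k]=0$, via Theorem~\ref{TH:criterion-for-det-one}. In fact, once one observes that over any field $k$ the surjectivity of $\Phi_g$ onto $\{\pm1\}$ is \emph{equivalent} to $\Nrd\colon U(\rho g)\to\mu_2(k)$ being onto (orbit argument again), Theorem~\ref{TH:criterion-for-det-one} gives the field case directly from $[A_k]=0$ and $\rrk_A(\rho g)>0$, bypassing the explicit matrix computations entirely; the only care needed is that $\rrk_AQA>0$, which follows from $Q\neq0$ and $\rrk_BQ$ constant. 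This streamlines the argument considerably and isolates the residue-field reduction plus the lifting via Theorem~\ref{TH:U-zero-mapsto-onto-closed-fibers} as the remaining routine steps.
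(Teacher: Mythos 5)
Your final ``streamlined'' observation is exactly the paper's proof: since $U(\rho g)$ acts transitively on $\Lag(\rho g)$ (Lemma~\ref{LM:Lag-transitive-action}) and $\Phi_g$ is $\Nrd$-equivariant, surjectivity of $\Phi_g$ is equivalent to surjectivity of $\Nrd:U(\rho g)\to\mu_2(R)$, and Theorem~\ref{TH:criterion-for-det-one} converts that into $[A]=0$ (using $\rrk_AQA>0$, which follows from $Q\neq 0$). Note that Theorem~\ref{TH:criterion-for-det-one} is stated for semilocal $R$, so you do not even need the residue-field reduction in your streamlined version; the whole argument runs directly over $R$.

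The longer route you describe in the body of the proposal should be discarded, not merely because it is unnecessary but because its lifting step does not work as stated: having found $L_\frakm\in\Lag(\rho g(\frakm))$ with $\Phi_g(L_\frakm)=-1$, the isometry carrying a fixed global Lagrangian's reduction to $L_\frakm$ has reduced norm $-1$, hence lies in $U(\rho g(\frakm))\setminus U^0(\rho g(\frakm))$, and Theorem~\ref{TH:U-zero-mapsto-onto-closed-fibers} only lifts elements of the neutral components. The lifting of reduced-norm-$(-1)$ isometries from the residue fields to $R$ is precisely the content of Theorem~\ref{TH:criterion-for-det-one} (via \cite{First_2020_orthogonal_group}), so invoking that theorem directly is not just cleaner but the only way the cited results suffice.
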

	
	\begin{proof}
		The proposition is clear when $Q=0$, so assume $Q\neq 0$.
	
		Suppose that $\Phi_g$ is onto.
		Then there are $L_0,L_1\in \Lag(\rho g)$
		such that $\Phi_g(L_0)=1$ and $\Phi_g(L_1)=-1$.
		By Lemma~\ref{LM:Lag-transitive-action},
		there exists $\vphi\in U(\rho g)$
		such that $\vphi L_0=L_1$,
		hence $\Nrd(\vphi)=\Nrd(\vphi)\Phi_g(L_0)=\Phi_g(L_1)=-1$.
		By Theorem~\ref{TH:criterion-for-det-one},
		this means that $[A]=0$.

		Conversely, if $[A]=0$, then Theorem~\ref{TH:criterion-for-det-one}
		implies the existence of $\vphi \in U(\rho g)$
		with $\Nrd(\vphi)=-1$.
		Choose some $L\in \Lag(\rho g)$.
		Then $\Phi_g(\vphi L)=-\Phi_g(L)$, hence $\Phi_g$
		is onto.
	\end{proof}

	\begin{prp}\label{PR:Ei:Orth-B-non-split-Phi-one}
		Under the assumptions of Proposition~\ref{PR:Ei:Orth-Phi-constant},
		if $T$ is connected and $[B]\neq 0$, then $\Phi_g(L)=1$ for all $L\in\Lag(\rho g)$.
	\end{prp}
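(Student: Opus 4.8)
The plan is to pass to the quadratic étale cover $T$ and exploit the normalization built into $\Phi_g$.

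First I would note that $[A]\neq 0$ in $\Br S=\Br R$ (recall that $S=R$ since $(\sigma,\veps)$ is orthogonal): the restriction map $\Br S\to\Br T$ carries $[A]$ to $[A\otimes_S T]=[B]$, which is nonzero by hypothesis. By Proposition~\ref{PR:Ei:Orth-Phi-constant}, the map $\Phi_g\colon\Lag(\rho g)\to\mu_2(R)=\{\pm1\}$ is therefore not surjective, hence constant; write $c$ for its value. We may assume $Q\neq0$ (the case $Q=0$ being trivial, since $\Lag(\rho g)=\{0\}$ then), and the task is to show $c=1$.

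Next I would base change along the faithfully flat inclusion $\iota\colon R\to T$. The ring $T$ is connected by hypothesis and semilocal (being finite over $R$), and the data $A_T,\sigma_T,\veps_T,T\otimes_R T,B_T,\tau_T$ again satisfy the hypotheses of Notation~\ref{NT:proof-of-Es} and of Proposition~\ref{PR:Ei:Orth-Phi-constant} over the base $T$: the types of $(\sigma,\veps)$ and $\tau$ are preserved, $\rho g_T$ is hyperbolic, $\rrk_{B_T}Q_T=\iota\rrk_BQ$ is even, and $[A_T]=[A\otimes_S T]=[B]\neq0$. Hence the canonical map $\Phi_{g_T}\colon\Lag(\rho g_T)\to\mu_2(T)$ furnished by Proposition~\ref{PR:Ei-Lag-partition} is again constant, by Proposition~\ref{PR:Ei:Orth-Phi-constant} applied over $T$. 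Moreover, by the uniqueness assertion in Proposition~\ref{PR:Ei-Lag-partition}, $\Phi_{g_T}$ coincides with the restriction of $\Phi_g$ to $T$-algebras: for any $T$-ring $R_1$ one has $(T\otimes_R T)\otimes_T R_1=T\otimes_R R_1$, so both maps are $\uU(\rho g_T)$-equivariant natural transformations obeying the same normalization.

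Now $T\otimes_R T\cong T\times T$ by Lemma~\ref{LM:splitting-quad-et-algs}; let $e_0$ be the idempotent corresponding to $(1_T,0_T)$. Since $\sigma|_T=\tau|_T$ is the standard $R$-involution of $T$ (because $\tau$ is unitary), $e_0+e_0^\sigma=1$, so the defining property of $\Phi_g$, applied with the $R$-ring $T$ and the idempotent $e_0$, yields $\Phi_g(Q_Te_0A_T)=1$ in $\mu_2(T)$, with $Q_Te_0A_T\in\Lag(\rho g_T)$. On the other hand $L_T\in\Lag(\rho g_T)$ as well, and by naturality $\Phi_g(L_T)$ is the image of $c$ in $\mu_2(T)$. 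Since $\Phi_{g_T}$ is constant on $\Lag(\rho g_T)$, these two elements of $\mu_2(T)$ agree, so $c$ maps to $1$; as $R\to T$ is faithfully flat and $\umu_2$ is a sheaf, $\mu_2(R)\to\mu_2(T)$ is injective and $c=1$. The only delicate point is the identification of $\Phi_g$ restricted to $T$-algebras with the intrinsic $\Phi_{g_T}$; should one wish to avoid invoking uniqueness, one can instead re-run the short proof of Proposition~\ref{PR:Ei:Orth-Phi-constant}—via the transitivity of Lemma~\ref{LM:Lag-transitive-action} and the criterion of Theorem~\ref{TH:criterion-for-det-one} (using $[A_T]=[B]\neq0$)—directly for that restriction, to conclude it is constant on $\Lag(\rho g_T)$.
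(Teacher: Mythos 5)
Your argument is correct and follows essentially the same route as the paper's: base change along $R\to T$, use the splitting $T\otimes_R T\cong T\times T$ to produce a Lagrangian on which $\Phi_g$ takes the value $1$, and apply Proposition~\ref{PR:Ei:Orth-Phi-constant} over $T$ (where $[A_T]=[B]\neq 0$) to force constancy on $\Lag(\rho g_T)$. The paper phrases this as a proof by contradiction and leaves implicit the identification of $\Phi_{g_T}$ with the restriction of $\Phi_g$ to $T$-algebras, a point you justify explicitly via the uniqueness clause of Proposition~\ref{PR:Ei-Lag-partition}.
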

	
	\begin{proof}
		For the sake of contradiction, suppose that there exists $L\in \Lag(\rho g)$
		with $\Phi_g(L)=-1$.
		By Lemma~\ref{LM:splitting-quad-et-algs},
		$T_T\cong T\times T$ 
		as $T$-algebras. Since $\tau_T$ is unitary, there exists $e\in T_T$
		such that $e^\tau+e=1$. By the definition of $\Phi_g$,
		we have $\Phi_g(Q_TeA_T)=1$, so $\Phi_g$ is not constant
		on $\Lag(\rho g_T)$.
		Now, applying Proposition~\ref{PR:Ei:Orth-Phi-constant} to $g_T$ (here we need $T$ to be connected),
		we get
		$[B]=[A_T]=0$, a contradiction.
	\end{proof}
	
	The next lemmas and proposition 
	concern with the case $[B]=0$. They will
	only be needed in proving part (i) of Theorem~\ref{TH:Ei-holds}.
	We shall make use of the discriminant algebra $D(g)$   defined in~\ref{subsec:disc}.
	
	\begin{lem}
		\label{LM:Ei-qaut-decomposition}
		With Notation~\ref{NT:proof-of-Es},
		suppose that $R$ is semilocal, $\deg B=1$,
		$\sigma$ is orthogonal, $\tau$ is unitary and $\veps=1$.
		Define $\lambda,\mu$ as in Lemma~\ref{LM:strcture-of-quat}(ii)
		(so $\lambda^\sigma=-\lambda$ and $\mu^\sigma=\mu$).
		Let $(Q,g)\in \Herm[1]{B,\tau}$
		and assume that $\rho g$ is hyperbolic
		and $\rrk_BQ$ is constant and even.
		Let $x_1,x_2\in Q$ and write $x=x_1+x_2\mu\in QA$.			
		\begin{enumerate}[label=(\roman*)]
			\item If $\rho g(x,x)=0$ and $g(x_1,x_1)\in\units{B}$,
			then $Q_1:=x_1B+x_2B$ is a summand of $Q$ with $B$-basis
			$\{x_1,x_2\}$.
			Writing $g_1=g|_{Q_1\times Q_1}$,
			the form $g_1$ is unimodular, $xA\in \Lag(\rho g_1)$,
			$Q_1\oplus xA=Q_1A$ and $[D(g_1)]=[A]$ in $\Br R$.
			\item If $\rrk_BQ\geq 4$, then there exist  $x_1,x_2,x$ as in (i). 
		\end{enumerate}  
	\end{lem}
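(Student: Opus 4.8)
The plan is to prove the two parts in order, exploiting the very explicit quaternionic presentation furnished by Lemma~\ref{LM:strcture-of-quat}(ii). Throughout, write $A = B\oplus \mu B$ with $B=T=S\oplus\lambda S$, $\lambda^2,\mu^2\in\units{R}$, $\lambda^\sigma=-\lambda$, $\mu^\sigma=\mu$, $\lambda\mu=-\mu\lambda$, and $\pi(b_1+\mu b_2)=b_1$; recall also that $S=R$ since $\sigma$ is orthogonal. Note $\tau=\sigma|_B$ is the standard involution of $T=R[\lambda]$, so $\tau(s+\lambda r)=s-\lambda r$.

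For part~(i): assume $\rho g(x,x)=0$ and $\beta:=g(x_1,x_1)\in\units{B}$. First I would expand $\rho g(x,x)$ using $x=x_1+x_2\mu$, $\mu^\sigma=\mu$, and the fact that $g$ takes values in $B=\Cent_A(\lambda)$ while $\mu$ anti-commutes with $\lambda$; this yields (after collecting the $B$- and $\mu B$-parts) that $g(x_1,x_1)+\mu^2 g(x_2,x_2)\in\mu B$ and hence, since both summands lie in $B$, that $\beta = -\mu^2 g(x_2,x_2)$. In particular $g(x_2,x_2)=-\mu^{-2}\beta\in\units{B}$ as well, and the cross term gives $g(x_1,x_2)\mu + \mu g(x_2,x_1)^{\,}=0$ type relation controlling $g(x_1,x_2)$. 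Because $g(x_1,x_1)$ and $g(x_2,x_2)$ are units, $x_1$ and $x_2$ each span a free rank-one summand; to see that $\{x_1,x_2\}$ is a basis of a rank-$2$ summand $Q_1$, I would exhibit the Gram matrix $\begin{bmatrix} g(x_1,x_1) & g(x_1,x_2)\\ g(x_2,x_1)& g(x_2,x_2)\end{bmatrix}$ and check it is invertible over $B$ — its ``determinant'' computation reduces, via the relation between $g(x_1,x_2)$ and $g(x_2,x_1)=\veps g(x_1,x_2)^\tau = g(x_1,x_2)^\tau$, to something manifestly a unit times $\beta$. Unimodularity of $g_1$ is then immediate, and $Q_1$ is a summand of $Q$ by Lemma~\ref{LM:semilocal-direct-sum-reduction} (or directly, since $Q_1$ carries a unimodular form). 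That $xA\in\Lag(\rho g_1)$: $\rho g_1(x,x)=0$ by hypothesis, $xA$ has reduced $A$-rank $1 = \frac12\rrk_A Q_1A$, and $xA$ is a summand of $Q_1A$ because $x$ generates it (as $x\cdot 1 = x_1+x_2\mu$ and $x\cdot\mu^{-1}\cdot$-combinations recover $x_1,x_2$ up to units — here I use $\mu^2\in\units R$). For $Q_1\oplus xA = Q_1A$ it suffices, by Lemma~\ref{LM:Eii-equiv-formulation}/rank count, to verify it at each residue field; there $Q_1(\frakm)\cap x(\frakm)A(\frakm)$ would force a $y$ with $g_1(y,y)=0$, but $g_1(x_1,x_1)$ is a unit so $Q_1(\frakm)$ has no isotropic structure intersecting $x(\frakm)A(\frakm)$ — a dimension argument then closes it. Finally $[D(g_1)]=[A]$: by definition $D(g_1)=(T/R,\disc g_1)$, so I compute $\disc g_1\in\units R/\Nr_{T/R}(\units T)$ using the Gram matrix and Proposition~\ref{PR:disc-unitary-basic-props}; the relation $\beta=-\mu^2 g(x_2,x_2)$ together with $A\cong (T/R,\mu^2)$ (which holds because $A$ is generated over $T$ by $\mu$ with $\mu^2\in\units R$ and $\mu$ conjugating $T$ by $\theta$) identifies $\disc g_1$ with $\mu^2\bmod \Nr_{T/R}(\units T)$, giving $[D(g_1)]=[(T/R,\mu^2)]=[A]$.

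For part~(ii): assume $\rrk_BQ\ge 4$. Since $\rho g$ is hyperbolic and $\rrk_BQ$ is even, I first want to produce $x=x_1+x_2\mu$ with $\rho g(x,x)=0$ and $g(x_1,x_1)\in\units B$. The idea is to pick a Lagrangian $L$ of $\rho g$ (exists by Theorem~\ref{TH:trivial-in-Witt-ring}(ii), or because $\rho g$ is hyperbolic), take any $0\neq x\in L$, and argue that after a small perturbation within $L$ one can arrange $g(\pi_?(x),\pi_?(x))$ to be a unit. Concretely, write a general element of $L$ in the form $x_1+x_2\mu$; the condition ``$g(x_1,x_1)\notin\units B$'' is a proper Zariski-closed condition at each residue field, because $g$ restricted to the rank-$\ge 2$ free part is nondegenerate there and $\rrk_BQ\ge 4$ gives enough room inside the (reduced $B$-rank $\ge 2$) Lagrangian $L$ to find $x_1$ with $g(x_1,x_1)$ a square class unit. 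To make this rigorous over the semilocal ring rather than field-by-field, I would invoke Lemma~\ref{LM:invertability-test}: it suffices that $g(x_1(\frakm),x_1(\frakm))\in\units{B(\frakm)}$ for every $\frakm\in\Max R$, so by the Chinese Remainder Theorem I choose, for each $\frakm$, a suitable vector in $L(\frakm)$ (using that over the field $k(\frakm)$ a hyperbolic hermitian form of reduced $B$-rank $\ge 4$ has isotropic vectors $x$ whose ``first coordinate'' has anisotropic value — a direct check on the split hyperbolic model $\Hyp[\veps]{U}$ with $\rrk_B U\ge 2$), and then lift. The main obstacle is precisely this last point: guaranteeing that one can find an isotropic $x$ in $L$ with $g(x_1,x_1)$ a \emph{unit} simultaneously at all maximal ideals — over a residue field it is an elementary but slightly fiddly computation with the hyperbolic model $U\oplus U^*$ and the decomposition $A=B\oplus\mu B$, and the condition $\rrk_BQ\ge 4$ (equivalently $\rrk_BU\ge 2$) is exactly what gives the needed degrees of freedom; with $\rrk_BQ=2$ it can genuinely fail, which is why the hypothesis is stated. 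Once such $x$ is produced at each $\frakm$ and assembled via CRT and $\mu$-conjugation-invariance of the setup, part~(i) applies to finish.
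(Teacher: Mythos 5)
The main gap is in part~(ii). Your plan is to fix a Lagrangian $L$ of $\rho g$, find at each $\frakm\in\Max R$ a vector of $L(\frakm)$ whose $Q\otimes 1$-component has unit self-value, and lift the tuple through $L\to L/L\Jac R$; you justify the residue-field step by asserting that ``$g(x_1,x_1)\notin\units B$'' cuts out a proper closed condition inside $L(\frakm)$. That claim is false for a general Lagrangian: take $Q=B^4$ with $g=\langle 1,-1,\mu^2,-\mu^2\rangle_{(B,\tau)}$ and $L=(e_1+e_2)A+(e_3+e_4)A$. A general element is $v=(e_1+e_2)\otimes(b+\mu c)+(e_3+e_4)\otimes(b'+\mu c')$, whose $Q\otimes1$-component is $v_1=(e_1+e_2)b+(e_3+e_4)b'$, and $g(v_1,v_1)=b^\tau(1-1)b+b'^\tau(\mu^2-\mu^2)b'=0$ identically on $L$; so no perturbation within this $L$ produces the required vector, and $\rrk_BQ\ge 4$ does not help. (A second problem: your residue-field existence argument only inspects the split hyperbolic model $U\oplus U^*$, but only $\rho g(\frakm)$ is known to be hyperbolic, not $g(\frakm)$, so the anisotropic kernel of $g(\frakm)$ must be handled separately.) The paper circumvents all of this by \emph{not} confining the good vector to $L$: over each residue field it finds a good isotropic $x_\frakm$ anywhere in $QA(\frakm)$ (splitting $g(\frakm)$ into anisotropic and hyperbolic parts), proves that $U^0(\rho g(\frakm))$ acts transitively on isotropic vectors of reduced rank $2$, and then lifts the resulting isometries through the surjection $U^0(\rho g)\to\prod_\frakm U^0(\rho g(\frakm))$ of Theorem~\ref{TH:U-zero-mapsto-onto-closed-fibers}, setting $x=\vphi y$ for a fixed generator $y$ of a free summand of $L$. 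This isometry-lifting mechanism --- which is exactly the ``main obstacle'' you flag without resolving --- is the essential content of part~(ii); lifting vectors instead of isometries cannot work, because a CRT-lift of isotropic vectors not confined to a common global Lagrangian only satisfies $\rho g(v,v)\in\Jac(R)A$ rather than $\rho g(v,v)=0$.

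In part~(i) your structure matches the paper's, but the argument for $Q_1\cap xA=0$ is invalid: from $y\in Q_1\cap xA$ you correctly get $g_1(y,y)=0$, yet you then conclude via ``$g_1(x_1,x_1)$ is a unit, so $Q_1(\frakm)$ has no isotropic structure meeting $x(\frakm)A(\frakm)$'' --- a non sequitur, since $g_1\cong\langle\alpha,-\mu^{-2}\alpha\rangle_{(B,\tau)}$ is a binary form that may perfectly well be isotropic. The correct argument is module-theoretic: $\{x_1,x_2\}$ is an $A$-basis of $Q_1A$, so $y=xa=x_1a+x_2(\mu a)\in Q_1$ forces $a\in B$ and $\mu a\in B$, whence $a\in B\cap\mu^{-1}B=0$ and $y=0$. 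The remaining steps of (i) (orthogonality of $x_1,x_2$, unimodularity of $g_1$, and the identification $[D(g_1)]=[(T/R,\mu^2)]=[A]$) are sound and agree with the paper.
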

	
	\begin{proof}
		(i) Write $\alpha:=g(x_1,x_1)\in\units{B}$.
		Since $g$ is $1$-hermitian
		and $\mu b=b^\sigma \mu$
		for all $b\in B$,
		we have $0=\rho g(x,x)=
		g(x_1,x_1)+2\mu g(x_2,x_1)+\mu^2 g(x_2,x_2)$,
		so $g(x_1,x_2)=0$ and $g(x_2,x_2)=-\mu^2g(x_1,x_1)$.
		By examining the Gram matrix of $g$ relative to $\{x_1,x_2\}$,
		we see that $\{x_1,x_2\}$ is a $g$-orthogonal basis to $Q_1$
		and $g_1$ is unimodular and isomorphic to $\langle \alpha,-\mu^2\alpha\rangle_{(B,\tau)}$.
		Thus, $D(g)=(B/R,\mu^2\alpha^2)\cong (B/R,\mu^2)\cong A$ (see~\ref{subsec:disc}).
		Let $x'=x_1-\mu x_2$. One readily checks that $\rho g(x',x')=0$
		and $xA\oplus x'A= Q_1A$, hence $xA\in \Lag(\rho g_1)$.
		
		We finish by checking that $Q_1\oplus xA=Q_1A$.
		If $y\in Q_1\cap xA$, then there is $a\in A$
		such that $y=xa=x_1a+x_2\mu a$. Since $\{x_1,x_2\}$ is a $B$-basis of $Q_1$,
		$\{x_1,x_2\}$ is an $A$-basis of $Q_1A$, so $y\in Q_1$
		implies that
		$a,\mu a\in B$. As a result $a\in B\cap \mu^{-1}B= B\cap \mu(\mu^{-2}B)
		\subseteq B\cap \mu B=0$ (because $\mu^{-2}\in \Cent_A(\lambda)= B$), and $y=xa=0$.
		This means that $Q_1\cap xA=0$.
		On the other hand,  $Q_1+xA\supseteq x_1B+x_2B+(x_1+x_2\mu)B+(x_1\mu+x_2\mu^2)B\supseteq
		x_1B+x_2B+x_1\mu B+x_2\mu B=Q_1A$, so $Q_1\oplus xA=Q_1A$.

\medskip		
		
		(ii) \Step{1}
		We first prove the claim when    $R$ is a field.
		Using Proposition~\ref{PR:ansio-Witt-equivalent},		
		write $(Q,g)=(Q_1,g_1)\oplus (Q_2,g_2)$ with $g_1$
		anisotropic and $g_2$ hyperbolic. Since $[\rho g_1]=[\rho g]=0$,
		the form $\rho g_1$
		is hyperbolic   (Theorem~\ref{TH:trivial-in-Witt-ring}(ii)).
		Since $\rrk_BQ\geq 4$, either $Q_1\neq 0$ or $\rrk_BQ_2\geq 4$.
		
		Assume
		$Q_1\neq 0$. Since $\rho g_1$ is hyperbolic,
		there exists $0\neq x\in Q_1A$ such that $\rho g_1(x,x)=0$.
		Write $x=x_1+x_2\mu$ with $x_1,x_2\in Q_1$.
		If $x_1=0$, replace $x$ with $x\mu$.
		Since $g_1$ is an anisotropic and $T$ is semisimple artinian, $g_1(x_1,x_1)\in \Sym_1(B,\tau)\setminus\{0\}=
		\units{R}$,
		so $x_1,x_2$ satisfy the requirements.
		
		Assume $\rrk_B Q_2\geq 4$.
		Since $g_2$ is hyperbolic,
		it has an orthogonal summand
		isomorphic to the hyperbolic form $\langle 1,-1,\mu^2,-\mu^2\rangle_{(B,\tau)}$
		(Lemma~\ref{LM:rank-determines-hyperbolic}).
		Now take $x_1$ and $x_2$ to be the elements corresponding to
		$(0,0,0,1)$ and $(1,0,0,0)$ in $Q$.
		
\medskip

		\Step{2}
		We continue to assume that $R$ is  field.
		Let $x,y\in QA$ be two elements such that
		$\rho g(x,x)=\rho g(y,y)=0$ and $\rrk_A xA=\rrk_A yA=2$.
		We claim that there exists $\vphi\in U^0(\rho g)$
		such that $\vphi x=y$.
		
		By Theorem~\ref{TH:Witt-extension},
		there exists $\psi\in U(\rho g)$ such that
		$\psi x=y$. If $\Nrd(\psi)=1$,
		then we can take $\vphi=\psi$, so assume
		$\Nrd(\psi)=-1$. In this case, $[A]=0$ by Theorem~\ref{TH:criterion-for-det-one}.
		
		Since $\rho g$ is unimodular and $xA $ is a free summand of $QA$,
		there exists $x'\in QA$
		such that $\rho g(x,x')=1$.
		Write $V=xA+x'A$. Since $\rho g(x,x)=0$,
		the restriction of $\rho g$ to
		$V$ is unimodular (the matrix $[\begin{smallmatrix}
		\rho g(x,x) & \rho g(x,x') \\ \rho g(x',x) & \rho g(x',x')\end{smallmatrix}]=
		[\begin{smallmatrix}
		0 & 1 \\ 1 & *\end{smallmatrix}]$
		is invertible), so $QA=V\oplus V^\perp$.
		Let $h=\rho g|_{V^\perp\times V^\perp}$.
		Since $\iota \rrk_AQA=2\rrk_B Q\geq 8$ and $\rrk_A V=4$,
		we have $V^\perp\neq 0$.
		Thus, by Theorem~\ref{TH:criterion-for-det-one},
		there exists $\psi_1\in U(h)$ with $\Nrd(\psi_1)=-1$.
		Take $\vphi=\psi\circ (\id_V\oplus \psi_1)$. 
		
\medskip

		\Step{3} We finally establish the existence of $x_1,x_2$
		in general.
		Let $L\in \Lag(\rho g)$.
		Then $\iota\rrk_A L=\frac{1}{2}\iota \rrk_A QA=\rrk_BQ\geq 4$,
		so $L$ admits a summand isomorphic to $A_A$ (Lemma~\ref{LM:rank-determines}).
		Let $y$ be a generator of such a summand.
		
		Let $\frakm_1,\dots,\frakm_t$
		denote the maximal ideals of $R$.
		By Step  1,
		for all $1\leq i\leq t$,
		there exists
		$x_i=x_{1i}+x_{2i}\mu$,
		with $x_{1i},x_{2i}\in Q(\frakm_i)$,
		such that $\rho g(\frakm_i)(x_i,x_i)=0$ and
		$g(\frakm_i)(x_{1i},x_{1i})\in \units{B(\frakm_i)}$.
		We observed in the proof of (i)  that $\rrk_{A(\frakm_i)} x_iA(\frakm_i)=2$,
		so by Step 2, there exists
		$\vphi_i\in U^0(\rho g(\frakm_i))$
		such that $\vphi_i(y(\frakm_i))=x_i$.		
		By Theorem~\ref{TH:U-zero-mapsto-onto-closed-fibers},
		there exists $\vphi\in U^0(\rho g)$
		such that
		$\vphi(\frakm_i)=\vphi_i$ for all $i$.
		Let $x=\vphi y$ and write $x=x_1+x_2\mu$
		with $x_1,x_2\in Q$.
		Since $QA=Q\oplus Q\mu$ (because $A=B\oplus B\mu$),
		we have  $x_1(\frakm_i)=x_{1i}$ for all $i$,
		hence  $g(x_1,x_1)\in \units{B}$ (Lemma~\ref{LM:invertability-test}).
		Since   $\rho g(x,x)=\rho g(y,y)=0$, we are done.
	\end{proof}

	\begin{lem}\label{LM:invertible-isotropic-vector}
		With Notation~\ref{NT:proof-of-Es},
		suppose that $R$ is semilocal, $\deg A=2$,
		$\sigma$ is orthogonal and $\veps=1$.
		Let $\alpha,\beta\in \units{R}$.
		If $f:=\langle\alpha,\beta\rangle_{(A,\sigma)}$
		is hyperbolic, then there exists $x\in\units{A}$
		such that $x^\sigma x=-\alpha\beta^{-1}$
	\end{lem}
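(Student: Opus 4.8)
The plan is to reduce the statement to an isometry between two rank-one diagonal forms and then read off the element from the description of module automorphisms of $A_A$. Note first that, since $\sigma$ is orthogonal, Proposition~\ref{PR:types-of-involutions-Az} gives $\Cent(A)=R$, so $A$ is Azumaya over the semilocal ring $R$ and $\alpha,\beta\in\units{R}$ are central in $A$ and fixed by $\sigma$.

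First I would show that $\langle -\beta,\beta\rangle_{(A,\sigma)}$ is hyperbolic: because $2\in\units{A}$, the submodules $L=\{(a,a)\suchthat a\in A\}$ and $M=\{(a,-a)\suchthat a\in A\}$ of $A^2$ satisfy $L\oplus M=A^2$, and $\langle -\beta,\beta\rangle$ vanishes on each of $L$ and $M$ (here one uses that $\beta$ is central and $\sigma$-symmetric). By hypothesis $f=\langle\alpha,\beta\rangle_{(A,\sigma)}$ is hyperbolic as well, and both forms are unimodular $1$-hermitian spaces over $(A,\sigma)$ with underlying module $A^2$, hence of reduced rank $2\deg A$. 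Since $R$ is semilocal and $A$ is Azumaya over $R$, Lemma~\ref{LM:rank-determines-hyperbolic} forces $\langle\alpha,\beta\rangle_{(A,\sigma)}\cong\langle -\beta,\beta\rangle_{(A,\sigma)}$. Now $\langle\alpha\rangle$, $\langle-\beta\rangle$ and $\langle\beta\rangle$ are all unimodular (as $\alpha,\beta\in\units{A}$), and $\langle\alpha\rangle\oplus\langle\beta\rangle\cong\langle-\beta\rangle\oplus\langle\beta\rangle$, so Witt cancellation (Theorem~\ref{TH:Witt-cancellation}) gives $\langle\alpha\rangle_{(A,\sigma)}\cong\langle-\beta\rangle_{(A,\sigma)}$ as $1$-hermitian forms on $A_A$.

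Finally, an isometry $(A_A,\langle\alpha\rangle)\to(A_A,\langle-\beta\rangle)$ is in particular an automorphism of the right $A$-module $A_A$, hence left multiplication by some $x\in\units{A}$; unwinding the isometry condition and evaluating the resulting identity at the pair $(1,1)$ gives $x^\sigma(-\beta)x=\alpha$. Since $\beta$ is central in $A$, this rearranges to $x^\sigma x=-\beta^{-1}\alpha=-\alpha\beta^{-1}$, which is exactly the assertion. Honestly there is no serious obstacle in this argument: the whole content is the observation that hyperbolicity of $\langle\alpha,\beta\rangle$ propagates, via uniqueness of hyperbolic forms of a fixed reduced rank over a semilocal ring together with Witt cancellation, to an isometry $\langle\alpha\rangle\cong\langle-\beta\rangle$, after which the element is forced. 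I would also remark that the hypothesis $\deg A=2$ plays no role and the statement holds for any Azumaya $R$-algebra with orthogonal involution over a semilocal ring.
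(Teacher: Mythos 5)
Your proof is correct, and it takes a genuinely different and substantially shorter route than the paper's. The paper argues geometrically: it reformulates the claim as the existence of an isotropic vector $(x_1,x_2)\in A^2$ with both coordinates invertible, proves this over each residue field by an explicit matrix construction (including the delicate case where the given isotropic vector has non-invertible, rank-one coordinates in $\nMat{k}{2}$, and arranging that the isometry used has reduced norm $1$), and then lifts to semilocal $R$ via the surjectivity of $U^0(f)\to\prod_{\frakm}U^0(f(\frakm))$ (Theorem~\ref{TH:U-zero-mapsto-onto-closed-fibers}). You instead stay entirely inside Witt theory: $\langle-\beta,\beta\rangle\cong\langle\beta\rangle\oplus(-\langle\beta\rangle)$ is hyperbolic, so Lemma~\ref{LM:rank-determines-hyperbolic} identifies it with the hyperbolic form $\langle\alpha,\beta\rangle$ of the same reduced rank, and Theorem~\ref{TH:Witt-cancellation} then yields $\langle\alpha\rangle\cong\langle-\beta\rangle$, from which the unit $x$ drops out by evaluating the isometry at $1$. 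Your observation that $\deg A=2$ is not needed is also right; the paper's hypothesis reflects the context in which the lemma is applied rather than a genuine constraint. What the paper's longer argument buys is slightly finer information — a norm-one isometry carrying a prescribed isotropic generator to one with both coordinates invertible — but that extra output is not used where the lemma is invoked, so your argument would serve equally well there.
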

	
	\begin{proof}
		The claim is equivalent to the existence
		of $x=(x_1,x_2)\in \units{A}\times \units{A}$
		such that $f(x,x)=\alpha x_1x_1^\sigma+\beta x_2x_2^\sigma=0$.
		Note that if the equality holds, then $x_1$ 
		is invertible if and only if $x_2$ is invertible.
		Since $f$ is hyperbolic, there exists an $A$-basis
		$\{u,v\}$ to $A^2$
		such that $f(u,u)=0$. Write $u=(u_1,u_2)\in A^2$.

\medskip

		\Step{1} Suppose $R$ is a field. We   claim that
		there exists $\vphi\in U^0(f)$
		such that $\vphi u\in \units{A}\times \units{A}$.
		If $u_1\in\units{A}$ or $u_2\in\units{A}$,
		then we can take $\vphi=\id_{A^2}$,
		so assume that both $u_1$ and $u_2$
		are not invertible. In particular, $A$ cannot
		be a division algebra, hence $A\cong \nMat{R}{2}$
		and $\rrk_A Au_1$ and $\rrk_A Au_2$ cannot exceed $1$.
		Since $u$ can be completed to an $A$-basis of $A^2$,
		we must have $Au_1+Au_2=A$. Length consideration
		now force $u_1$ and $u_2$ to be rank-$1$ matrices
		with $Au_1\cap Au_2=0$.
		Since $\alpha u_1^\sigma u_1=-\beta u_2^\sigma u_2$,
		this means that $u_1^\sigma u_1=0$.

		Arguing as in Reduction~\ref{RD:common-reduction-II},
		we may identify $A$ with $\nMat{R}{2}$
		in such a way that $\sigma$
		is given by $[\begin{smallmatrix} a & b\\ c & d\end{smallmatrix}]^\sigma=
		[\begin{smallmatrix} a & \gamma c\\ \gamma^{-1} b & d\end{smallmatrix}]$
		for some $\gamma\in \units{R}$.
		The condition $u_1^\sigma u_1=0$
		is easily seen to imply that $-\gamma$ is a square.
		Write $-\gamma=\delta^2$ with $\delta\in \units{R}$
		and let $c:=-\alpha\beta^{-1}$,
		\[x_1=  1_A ,\qquad
		x_2=\left[\begin{matrix} \frac{c+1}{2} & \frac{\delta(c-1)}{2} 
		\\ \frac{c-1 }{2\delta} & \frac{c+1}{2}\end{matrix}\right],
		\]
		and
		$x=(x_1,x_2)\in A^2$.
		It is routine to check that $\det x_2=c$ and $f(x,x)=0$.
		Since $xA$ is a summand of $A^2_A$,
		there exists $\vphi\in U(f)$
		such that $\vphi u=x$ (Theorem~\ref{TH:Witt-extension}). If
		$\Nrd \vphi=1$, we are done. If not,
		replace $x_2$
		with $[\begin{smallmatrix} 1 & 0 \\ 0 & -1\end{smallmatrix}]x_2$
		and $\vphi$ with $\psi\vphi$
		where $\psi\in U(f)$
		is given by $\psi(z_1,z_2)=(z_1,[\begin{smallmatrix} 1 & 0 \\ 0 & -1\end{smallmatrix}]z_2)$.
		
\medskip

		\Step{2} We now prove the general case.
		Let $\frakm_1,\dots,\frakm_t$
		denote the maximal ideals of $R$.
		By Step~1, for all $1\leq i\leq t$,
		there exists $\vphi_i\in U^0(f(\frakm_i))$
		such that $\vphi_i (u(\frakm_i))\in \units{A(\frakm_i)}\times \units{A(\frakm_i)}$.
		By Theorem~\ref{TH:U-zero-mapsto-onto-closed-fibers},
		there exists $\vphi\in U^0(f)$ with $\vphi(\frakm_i)=\vphi_i$
		for all $i$. Now, by Lemma~\ref{LM:invertability-test},
		$x=(x_1,x_2):=\vphi u\in \units{A}\times \units{A}$ and $f(x,x)=f(u,u)=0$.
	\end{proof}
	
	\begin{prp}
		\label{PR:Ei-orth-quat-case}
		With Notation~\ref{NT:proof-of-Es},
		suppose that $R$ is semilocal, $T$ is connected, $[A]\neq 0$, $[B]=0$, 
		$(\sigma,\veps)$
		is orthogonal and $\tau$ is unitary.
		Let $(Q,g)\in \Herm[\veps]{B,\tau}$
		be a hermitian space such that $\rho g$ is hyperbolic
		and $n:=\rrk_B Q$
		is  even.
		Then $\Phi_g(L)=1$ for some $L\in \Lag(\rho g)$
		if and only if $[D(g)]=\frac{n}{2}\cdot [A]$.
		When this fails, $[D(g)]=(\frac{n}{2}+1)\cdot [A]$ and $g$ is isotropic.
	\end{prp}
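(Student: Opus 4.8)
The plan is to reduce to the case $\deg B=1$ and then argue by induction on $n$, with the base case $n=2$ carrying essentially all the content.

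First, note that $T$ connected forces $R=S$ connected (a nontrivial splitting $R=R_1\times R_2$ would give a nontrivial splitting of $T$), so $\mu_2(R)=\{\pm 1\}$. Since $[A]\neq 0$ and $\rho g$ is hyperbolic, Proposition~\ref{PR:Ei:Orth-Phi-constant} shows that $\Phi_g$ takes a single value $\xi(g)\in\{\pm 1\}$ on the nonempty set $\Lag(\rho g)$; thus \textup{``}$\Phi_g(L)=1$ for some $L$\textup{''} is the same as \textup{``}$\xi(g)=1$\textup{''}. All the quantities occurring in the statement --- $n$, $[A]$, $[D(g)]$, $\xi(g)$, and the isotropy of $g$ --- are invariant under $e$-transfer (Corollary~\ref{CR:degree-of-endo-ring}, Proposition~\ref{PR:disc-unitary-basic-props}(ii), Proposition~\ref{PR:Ei-Lag-partition}(iv) together with Proposition~\ref{PR:simult-e-transfer}(ii)). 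Since $\tau$ is unitary and $\sigma$ is orthogonal, only the $e$-transfer step of Reduction~\ref{RD:common-reduction} fires, so we may assume $\deg B=1$, hence $B=T$ and $\veps=1$. By Lemma~\ref{LM:strcture-of-quat}(ii) there are $\lambda,\mu\in A$ with $\lambda^\sigma=-\lambda$, $\mu^\sigma=\mu$, $\lambda\mu=-\mu\lambda$, $\lambda^2,\mu^2\in\units{R}$, with $\{1,\lambda\}$ a basis of $T$ and $\{1,\lambda,\mu,\mu\lambda\}$ a basis of $A$; thus $A$ is the crossed product $(T/R,\mu^2)$ and $[A]=[(T/R,\mu^2)]$.

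For the inductive step, assume $n\geq 4$. By Lemma~\ref{LM:Ei-qaut-decomposition} there is $x=x_1+x_2\mu\in QA$ with $\rho g(x,x)=0$ and $g(x_1,x_1)\in\units{B}$; then $Q_1:=x_1B+x_2B$ is a rank-$2$ summand of $Q$, $g_1:=g|_{Q_1\times Q_1}$ is unimodular, $xA\in\Lag(\rho g_1)$, $Q_1\oplus xA=Q_1A$, and $[D(g_1)]=[A]$. Write $g=g_1\oplus g_2$ with $g_2=g|_{Q_1^{\perp}\times Q_1^{\perp}}$, so $\rrk_B Q_2=n-2$; since $\rho g_1$ is hyperbolic (it has the Lagrangian $xA$) and $[\rho g]=0$, we get $[\rho g_2]=0$ in $W_1(A,\sigma)$, hence $\rho g_2$ is hyperbolic by Theorem~\ref{TH:trivial-in-Witt-ring}(ii). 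Corollary~\ref{CR:Ei-good-Lag-is-in-Lag-zero} gives $\Phi_{g_1}(xA)=1$, so by the multiplicativity of $\Phi$ (Proposition~\ref{PR:Ei-Lag-partition}(iii)) we have $\xi(g)=\Phi_{g_1}(xA)\cdot\xi(g_2)=\xi(g_2)$; and by multiplicativity of the discriminant (Proposition~\ref{PR:disc-unitary-basic-props}(iii)) together with $[D(g_1)]=[A]$ we get $[D(g)]=[A]+[D(g_2)]$, while $\tfrac n2[A]=[A]+\tfrac{n-2}{2}[A]$. Since $g_2$ satisfies the hypotheses of the proposition, the inductive hypothesis applied to $g_2$ yields both the equivalence and the \textup{``}fails\textup{''} clause for $g$ (noting that $g$ is isotropic whenever the summand $g_2$ is).

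It remains to treat $n=0$ (trivial: $\xi(g)=1$ and $[D(g)]=0=\tfrac 02[A]$) and the base case $n=2$, which is where I expect the main difficulty. Here $Q\cong T^2$ is free, so $g\cong\langle a_1,a_2\rangle_{(T,\tau)}$ with $a_1,a_2\in\Sym_1(T,\tau)\cap\units{T}=\units{R}$ by Proposition~\ref{PR:diagonalizable-herm-forms}, and $\rho g\cong\langle a_1,a_2\rangle_{(A,\sigma)}$. A direct computation in $A=T\oplus\mu T$ gives $x^\sigma x=\Nr_{T/R}(s)+\mu^2\Nr_{T/R}(t)+2\mu st$ for $x=s+\mu t$; comparing this with $\Nrd_{A/R}(x)=\Nr_{T/R}(s)-\mu^2\Nr_{T/R}(t)$ shows that $x^\sigma x\in\units{R}$ forces $s=0$ or $t=0$, whence $\{x^\sigma x:x\in\units{A}\}\cap\units{R}=\Nr_{T/R}(\units{T})\cup\mu^2\Nr_{T/R}(\units{T})$, a union of two cosets of $\Nr_{T/R}(\units{T})$ that are distinct precisely because $[A]=[(T/R,\mu^2)]\neq 0$. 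By Lemma~\ref{LM:invertible-isotropic-vector}, hyperbolicity of $\rho g$ gives $-a_1a_2^{-1}=x^\sigma x$ for some $x\in\units{A}$, so $\disc(g)=-a_1a_2\cdot\Nr_{T/R}(\units{T})$ lies in one of these cosets and $[D(g)]=[(T/R,-a_1a_2)]\in\{0,[A]\}$. If $[D(g)]=[A]$, write $-a_1a_2=\mu^2\Nr_{T/R}(r)$ with $r\in\units{T}$; then $w:=(\mu r a_1^{-1},1)\in A^2=QA$ satisfies $\rho g(w,w)=0$, $wA\in\Lag(\rho g)$ and $Q\oplus wA=QA$, so $\xi(g)=\Phi_g(wA)=1$ by Corollary~\ref{CR:Ei-good-Lag-is-in-Lag-zero}, and $[D(g)]=\tfrac n2[A]$. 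If instead $[D(g)]=0$, then $-a_1a_2\in\Nr_{T/R}(\units{T})$, which makes $g=\langle a_1,a_2\rangle_{(T,\tau)}$ isotropic, hence (being unimodular of rank $2$) hyperbolic; it then has a Lagrangian $M$ with $\rrk_B M=1$, so $\xi(g)=\Phi_g(MA)=(-1)^{n/2}=-1$ by Proposition~\ref{PR:Ei-Lag-partition}(ii), while $[D(g)]=0=2[A]=(\tfrac n2+1)[A]$ since $2[A]=0$. This establishes both assertions. The hard part is exactly this base case: identifying the image of $x\mapsto x^\sigma x$ on $\units{A}$ inside $\units{R}$ as the two cosets above, and then reading off $\xi(g)$ from $[D(g)]$ via Corollary~\ref{CR:Ei-good-Lag-is-in-Lag-zero} and Proposition~\ref{PR:Ei-Lag-partition}(ii).
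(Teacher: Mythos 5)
Your proposal is correct and follows essentially the same route as the paper: the same reduction to $\deg B=1$, the same inductive peeling-off of a rank-$2$ piece via Lemma~\ref{LM:Ei-qaut-decomposition}, and the same base case built on Lemma~\ref{LM:invertible-isotropic-vector}, with your ``two cosets of $\Nr_{T/R}(\units{T})$'' description of $\{x^\sigma x : x\in\units{A}\}\cap\units{R}$ being just a repackaging of the paper's dichotomy $b_1=0$ or $b_2=0$ (and your explicit Lagrangians differing only superficially from the paper's). The only nitpick is your claim that ``only the $e$-transfer step of Reduction~\ref{RD:common-reduction} fires'': the hypothesis is that $(\sigma,\veps)$ is orthogonal, which also allows $\sigma$ symplectic with $\veps=-1$, in which case the $\lambda$-conjugation step fires as well — but the endpoint ($\sigma$ orthogonal, $\veps=1$) and the invariance of all relevant quantities under conjugation are as in the paper, so this is harmless.
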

	
	\begin{proof}
		By Reduction~\ref{RD:common-reduction} and Proposition~\ref{PR:Ei-Lag-partition}(iv),
		we may assume that $\deg B=1$, $\deg A=2$,
		$\sigma$ is orthogonal and $\veps=1$.	
		Let $\lambda,\mu\in\units{A}$
		be as in Lemma~\ref{LM:strcture-of-quat}(ii) (so $\lambda^\sigma=-\lambda$
		and $\mu^\sigma =\mu$).
		By Proposition~\ref{PR:Ei:Orth-Phi-constant},
		$\Phi_g$ is constant on $\Lag(\rho g)$; we shall
		denote the value that it attains by $\bar{\Phi}_g\in \{\pm 1\}$.
		The proposition clear if $n=0$, so assume $n>0$.
		
		Suppose $n\geq 4$. Then by Lemma~\ref{LM:Ei-qaut-decomposition},
		we can write $(Q,g)=(Q_1,g_1)\oplus (Q_2,g_2)$,	
		where $\rrk_B Q_1=2$, $[D(g_1)]=[A]$ and there exists
		$L\in\Lag(\rho g_1)$ with $Q_1\oplus L=Q_1A$.
		By Corollary~\ref{CR:Ei-good-Lag-is-in-Lag-zero},
		$\bar{\Phi}_{g_1}=1$.
		Since $\bar{\Phi}_{g}=\bar{\Phi}_{g_1}\bar{\Phi}_{g_2}$
		(Proposition~\ref{PR:Ei-Lag-partition}(iii)),
		$[D(g)]=[D(g_1)]+[D(g_2)]$ (Proposition~\ref{PR:disc-unitary-basic-props}(iii))
		and $[\rho_2 g]=[\rho g]=0$ in $W_\veps(A,\sigma)$ (so $\rho_2g$
		is hyperbolic by Theorem~\ref{TH:trivial-in-Witt-ring}(ii)),
		the proposition will hold for $(Q,g)$
		if it holds  
		for $(Q_2,g_2)$. Repeating this process,
		we reduce to the case $n=2$.
		
		Suppose henceforth that $n=2$. By Proposition~\ref{PR:diagonalizable-herm-forms},
		we may assume that $g = \langle \alpha,\beta\rangle_{(B,\tau)}$
		for some $\alpha,\beta\in \units{B}\cap\Sym_1(B,\tau)=\units{R} $,
		and by Lemma~\ref{LM:invertible-isotropic-vector},
		there exists $x\in\units{A}$
		such that $x^\sigma x=-\alpha\beta^{-1}$.
		Note that 
		$\disc(g)\equiv -\alpha\beta\equiv -\alpha\beta^{-1}\bmod \Nr_{T/R}(\units{T})$,
		hence
		$[D(g)]=[(B/R,-\alpha\beta^{-1})]$ (see~\ref{subsec:disc}).
		
		Write $x=b_1+\mu b_2$ with $b_1,b_2\in B$.
		Since $\mu^\sigma=\mu$
		and $\mu b=b^\sigma \mu$ for all $b\in B$, we have
		\[
		-\alpha\beta^{-1}=x^\sigma x=(b_1^\sigma b_1+\mu^2 b_2^\sigma b_2)+2\mu b_1b_2.
		\]
		Thus, $b_1b_2=0$ and $b_1^\sigma b_1+\mu^2b_2^\sigma b_2=\alpha\beta^{-1}$.
		Arguing as in~\cite[Example~9.4]{Reiner_2003_maximal_orders_reprint}
		(for instance), we see that $\Nrd_{A/R}(x)=b_1^\sigma b_1-\mu^2 b_2^\sigma b_2$.
		Since $\Nrd_{A/R}(x)\in\units{R}$, this means
		that  $b_1B+b_2B=B$.
		
		We claim that $b_1=0$  
		or $b_2=0$. Indeed,
		$b_1B=b_1(b_1B+b_2B)=b_1^2B$, so there exists
		$c\in B$ with $b_1=b_1^2c$. In particular, $b_1c$
		is an idempotent. Since $T$ is connected,
		$b_1c=0$ or $b_1c=1$. In the first case,
		$b_1=b_1^2c=0$, whereas in the second case,
		$b_1\in\units{B}$, so $b_2=0$ because $b_1b_2=0$.
		
		Assume $b_1=0$. Then $x=\mu b_2\in\units{A}$ and $-\alpha\beta^{-1}=\mu^2 b_2^\sigma b_2$,
		hence $[D(g)]=[(B/R,\mu^2)]=[A]$.
		Let $L= [\begin{smallmatrix} 1 \\ \mu b_2 \end{smallmatrix}]A$
		and $L'=[\begin{smallmatrix} -1 \\ \mu b_2 \end{smallmatrix}]A$.
		One readily checks that $\rho g(L,L)=\rho g(L',L')=0$
		and $L\oplus L'=A^2$, hence $L\in\Lag(\rho g)$.
		Furthermore, $B^2\cap L=0$ and $
		[\begin{smallmatrix} 0\\ 1 \end{smallmatrix}],	
		[\begin{smallmatrix} 1\\ 0 \end{smallmatrix}],		
		[\begin{smallmatrix} \mu \\ \mu^2 b_2^\sigma \end{smallmatrix}],
		[\begin{smallmatrix} b_2^{-1} \\ \mu  \end{smallmatrix}]\in B^2+L$,
		so $B^2\oplus L=A^2$ and $\bar{\Phi}_g=1$ by Corollary~\ref{CR:Ei-good-Lag-is-in-Lag-zero}.

		Assume $b_2=0$. Then $x=b_1$ and $-\alpha\beta^{-1}=b_1^\sigma b_1$,
		hence $[D(g)]=[(B/R, 1)]=0$. 
		Now, Theorem~\ref{TH:period-divides-index}  and 
		and our assumption that $[A]\neq 0$ imply that $[D(g)]=(\frac{2}{2}+1)[A]\neq [A]$.
		Furthermore, it is  routine to check that  $M=[\begin{smallmatrix}  1 \\ b_1 \end{smallmatrix}]B$
		is a Lagrangian of $g$, so $g$ is hyperbolic (and in particular
		isotropic) and $\bar{\Phi}_g=-1$
		by Proposition~\ref{PR:Ei-Lag-partition}(ii).
		
		Since we cannot have both 
		$b_1=0$ and $b_2=0$ (because $x\in\units{A}$),
		the proposition follows.
	\end{proof}

	We  finally complete the proof Theorem~\ref{TH:Ei-holds} by
	establishing   case \ref{item:orthII:Ei}.
	 
	\begin{thm}\label{TH:Ei-holds-orth-II}
		Theorem~\ref{TH:Ei-holds}
		holds when $R$ is connected, $(\sigma,\veps)$ is orthogonal
		and $\tau $ is unitary.
	\end{thm}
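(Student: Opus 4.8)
The plan is to assemble the structural results of this section; no essentially new construction will be needed. Throughout we use that $[\rho g]=0$ forces $\rho g$ to be hyperbolic (Theorem~\ref{TH:trivial-in-Witt-ring}(ii)), so that $\Phi_g$ and the propositions built around it are available. Note also that since $\Cent(A)=S=R$ is connected, every Lagrangian of $\rho g$ lies in $\Lag(\rho g)$ (see~\ref{subsec:Lagrangians}), and any Lagrangian $L$ with $L\oplus Q=QA$ lies in $\Lag(\rho g)$ by Lemma~\ref{LM:Ei-Lagrangians-are-iso}; and that, as $(\sigma,\veps)$ is orthogonal and $(\tau,\veps)$ is unitary, conditions (1) and (2) of Theorem~\ref{TH:Ei-holds}(i) can never hold here, so ``one of (1)--(5)'' means ``one of (3)--(5)''.

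For part (i) I would assume $T$ connected, so that Proposition~\ref{PR:Ei-orthII-odd-rank-forms} makes $n:=\rrk_BQ$ constant and even and $\Phi_g\colon\uLag(\rho g)\to\umu_{2,R}$ of Proposition~\ref{PR:Ei-Lag-partition} is defined. The first step is the reformulation: \emph{there is a Lagrangian $L$ of $\rho g$ with $L\oplus Q=QA$ if and only if $\Phi_g(L_0)=1$ for some $L_0\in\Lag(\rho g)$}. The forward implication is Corollary~\ref{CR:Ei-good-Lag-is-in-Lag-zero}. For the converse I would run the specialization argument already used in Theorem~\ref{TH:Ei-holds-unit-syp}: at each $\frakm\in\Max R$ one has $\Phi_{g(\frakm)}(L_0(\frakm))=\Phi_g(L_0)(\frakm)=1$ (naturality of $\Phi_g$), so Proposition~\ref{PR:Ei:orthII:good-Lag-Phi-one} over the field $k(\frakm)$ gives $\vphi_\frakm\in U^0(\rho g(\frakm))$ with $Q(\frakm)\oplus\vphi_\frakm(L_0(\frakm))=QA(\frakm)$; lift to $\vphi\in U^0(\rho g)$ by Theorem~\ref{TH:U-zero-mapsto-onto-closed-fibers} and conclude $Q\oplus\vphi(L_0)=QA$ by Lemma~\ref{LM:semilocal-direct-sum-reduction}. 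It then remains to decide when $1\in\Phi_g(\Lag(\rho g))$, which I would do by a split on the Brauer classes of $A$ and $B$: if $[A]=0$ (hence $[B]=0$) this holds by Proposition~\ref{PR:Ei:Orth-Phi-constant}; if $[A]\neq0$ and $[B]\neq0$ it holds by Proposition~\ref{PR:Ei:Orth-B-non-split-Phi-one}; and if $[A]\neq0$, $[B]=0$, then Proposition~\ref{PR:Ei-orth-quat-case} says it holds exactly when $[D(g)]=\tfrac n2[A]$, and otherwise supplies $[D(g)]=(\tfrac n2+1)[A]$ and the isotropy of $g$. Matching these three cases against (3), (4) and (5) respectively yields precisely the asserted equivalence together with the ``when none hold'' clause.

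For part (ii) I would argue by cases. If $T$ is not connected then $T\cong R\times R$ (Lemma~\ref{LM:non-connected-S}, as $S=R$ is connected) and $\tau|_T$ is the exchange involution, so $g$ is hyperbolic by Example~\ref{EX:exchange-involution} and $[g]=0$; take $(Q',g')=(0,0)$, for which $L=0$ works. If $T$ is connected and one of (3)--(5) holds, take $(Q',g')=(Q,g)$ and invoke part (i). The remaining case is $T$ connected with none of (3)--(5) holding; by part (i) this means $[A]\neq0$, $[B]=0$, $n$ even and $[D(g)]=(\tfrac n2+1)[A]$. Here I would first apply Reduction~\ref{RD:common-reduction} (legitimate since $[B]=0$) to arrange $\deg B=1$ (so $B=T$), $\veps=1$, $\sigma$ orthogonal, $\tau$ unitary, with $T$ still connected and $[A]\neq0$; conjugation and $e$-transfer preserve Witt classes, Lagrangians and the relation ``$L\oplus Q=QA$'' (Propositions~\ref{PR:simult-conj} and~\ref{PR:simult-e-transfer}), the discriminant (Proposition~\ref{PR:disc-unitary-basic-props}(ii)) and reduced ranks, so it suffices to solve the problem in this reduced setting. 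Now put $g'=g\oplus\Hyp[1]{T}$ on $Q'=Q\oplus(T\oplus T^{*})$. A short Gram-matrix computation gives $\disc\Hyp[1]{T}=1$, hence $[D(\Hyp[1]{T})]=[(T/R,1)]=0$, so by Proposition~\ref{PR:disc-unitary-basic-props}(iii) we get $[D(g')]=[D(g)]=(\tfrac n2+1)[A]=\tfrac{n+2}{2}[A]=\tfrac{\rrk_BQ'}{2}[A]$; since also $[g']=[g]$, $\rho g'$ is hyperbolic and $\rrk_BQ'$ is even, $(Q',g')$ satisfies condition (5), and part (i) produces the required Lagrangian.

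The main obstacle, such as it is, is the last manoeuvre in part (ii): one must replace a ``bad'' form by a Witt-equivalent ``good'' one, and the trick is that adjoining a two-dimensional hyperbolic form leaves $[D]$ unchanged while shifting the reduced rank by $2$, which is precisely what converts $[D(g)]=(\tfrac n2+1)[A]$ into $[D(g')]=\tfrac{\rrk_BQ'}{2}[A]$; reducing to $\deg B=1$ beforehand is what makes the discriminant of that hyperbolic form directly computable. The rest is bookkeeping, the only care being to verify that every reduction and specialization is compatible with $\Phi_g$, the discriminant and the relation $L\oplus Q=QA$, all of which is furnished by the lemmas above.
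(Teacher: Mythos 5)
Your proposal is correct and follows the paper's argument for part (i) essentially verbatim: the reformulation via $\Phi_g$ (Corollary~\ref{CR:Ei-good-Lag-is-in-Lag-zero} one way, the specialization/lifting argument via Proposition~\ref{PR:Ei:orthII:good-Lag-Phi-one} and Theorem~\ref{TH:U-zero-mapsto-onto-closed-fibers} the other), followed by the same three-way split on $[A]$ and $[B]$ using Propositions~\ref{PR:Ei:Orth-Phi-constant}, \ref{PR:Ei:Orth-B-non-split-Phi-one} and~\ref{PR:Ei-orth-quat-case}.

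The only genuine divergence is the last case of part (ii). The paper stays with the invariant $\Phi_g$: it picks $N\in\rproj{B}$ with $\rrk_BN=1$ (available since $[B]=0$, no reduction to $\deg B=1$ needed), adds $\Hyp[\veps]{N}$, and flips the sign directly via Proposition~\ref{PR:Ei-Lag-partition}(ii)--(iii), so that $\Phi_{g'}(L\oplus NA)=(-1)(-1)=1$. You instead first reduce to $\deg B=1$, track the discriminant of the added hyperbolic plane, and re-enter through the criterion of part (i). Both are valid; the paper's version is marginally lighter (no reduction, no discriminant computation), while yours makes the mechanism — shifting $\frac{n}{2}\bmod 2$ while fixing $[D(g)]$ — more explicit. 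Your bookkeeping of why the reduction is harmless (Propositions~\ref{PR:simult-conj}, \ref{PR:simult-e-transfer}, \ref{PR:disc-unitary-basic-props}(ii)) is accurate.
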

	
	\begin{proof}
		Recall that we are given $(Q,g)\in\Herm[\veps]{B,\tau}$
		such that $[\rho g]=0$ in $W_\veps(A,\sigma)$.
		By Theorem~\ref{TH:trivial-in-Witt-ring}(ii), $\rho g$
		is hyperbolic.	
	
		(i) Since $T$ is connected, $n:=\rrk_BQ$
		is even by Proposition~\ref{PR:Ei-orthII-odd-rank-forms}.
		
		Suppose that there exists $L\in\Lag(\rho g)$
		with $Q\oplus L=QA$.
		By Corollary~\ref{CR:Ei-good-Lag-is-in-Lag-zero},
		$\Phi_g(L)=1$. Now, if  $[A]\neq 0$ and $[B]=0$, then we must
		have $[D(g)]=\frac{n}{2}\cdot [A]$  by Proposition~\ref{PR:Ei-orth-quat-case},
		as required.

		Conversely, suppose   that $[A]=0$, or $[B]\neq 0$, or $[D(g)]=\frac{n}{2}\cdot [A]$.
		If there exists $L \in\Lag(\rho g)$ with $\Phi_g(L )=1$,
		then we can argue as in 
		the last paragraph of the proof of Theorem~\ref{TH:Ei-holds-unit-syp},
		using Proposition~\ref{PR:Ei:orthII:good-Lag-Phi-one} instead
		of Propositions~\ref{PR:Ei-unitary-R-is-field}
		and~\ref{PR:Ei-orthogonal-I-R-field}, to prove
		the existence of $L'\in \Lag(\rho g)$ with $Q\oplus L'=QA$.
		The existence of $L$ follows from Proposition~\ref{PR:Ei:Orth-Phi-constant} if
		$[A]= 0$, from Proposition~\ref{PR:Ei:Orth-B-non-split-Phi-one} if $[B]\neq 0$,
		and from Proposition~\ref{PR:Ei-orth-quat-case} if $[A]\neq 0$ and $[B]=0$.
		Proposition~\ref{PR:Ei-orth-quat-case}, also tells
		us that  $[D(g)]=(\frac{n}{2}+1)[A]$ 
		and $g$ is isotropic when $[A]\neq 0$, $[B]=0$ and $[D(g)]\neq \frac{n}{2}\cdot [A]$.

		(ii) We need to prove the existence of $L\in \Lag(\rho g)$
		with $Q\oplus L=QA$, possibly
		after replacing $(Q,g)$ with a Witt-equivalent hermitian space.
		
		If $T$ is not connected,
		then, as explained in the proof of Proposition~\ref{PR:Ei-orthII-odd-rank-forms},
		$g$ is   hyperoblic and  can thus be replaced with   zero form.
		
		Suppose that $T$ is connected. As in the proof of (i), $\rrk_BQ$ is even
		and
		it is enough to find $L\in \Lag(\rho g)$ with $\Phi_g(L)=1$.
		Moreover, we showed that $L$ exists if $[B]\neq 0$ in $\Br T$, so we 
		only need to consider
		the case where $[B]=0$. Let $L \in \Lag(\rho g)$.
		If $\Phi_g(L)=1$, we are done, so assume $\Phi_g(L)=-1$.
		Since $[B]=0=[T]$,
		there exists $N\in\rproj{B}$
		with $\rrk_B N=\deg T=1$ (Proposition~\ref{PR:degree-of-endo-ring}(iii)).
		Consider $(Q',g'):=(Q,g)\oplus (N\oplus N^*,\Hyp[\veps]{N})$,
		which is Witt-equivalent to $(Q,g)$.
		By parts (ii) and (iii) of Proposition~\ref{PR:Ei-Lag-partition},
		we have $\Phi_{g'}(L \oplus NA)=\Phi_g(L )\cdot \Phi_{\Hyp[\veps]{N}}(NA)=
		(-1)\cdot (-1)=1$.  We may therefore replace
		$(Q,g)$, $L$ with $(Q',g')$, $L\oplus N$ and finish.
	\end{proof}

\section{Verification of (E1) and (E3)}
\label{sec:Eii}

	Keep the assumptions of Notation~\ref{NT:proof-of-Es}.
	The purpose of this section is to prove:

	\begin{thm}\label{TH:Eii-holds}
		With Notation~\ref{NT:proof-of-Es}, suppose that $R$
		is semilocal and let  $(P,f)\in \Herm[\veps]{A,\tau}$ be a hermitian
		spaces such that $[\pi f]=0 $ in $W_\veps(B,\tau)$.  
		\begin{enumerate}[label=(\roman*)]
		\item Assume that $T$ is connected and $(\tau,\veps)$
		is not orthogonal.
		Then there exists a Lagrangian $M$ of $\pi f$
		such that $MA=P$ if and only if 
		$(\tau,\veps)$ not unitary, or $(\sigma,\veps)$ is not symplectic,
		or  $4\mid \rrk_AP$. 
		When these conditions fail, $[A]=0$ in $\Br S$ and $f$ is hyperbolic.
		\item Assume that $T$ is connected and $(\tau,\veps)$
		is   orthogonal. 
		Then $(\sigma,\veps)$ is orthogonal.
		Moreover, 
		there exists a Lagrangian $M$ of $\pi f$
		such that $MA=P$ if and only if  
		$[B]\neq 0$ in $\Br T$, or 
		$\rrk_AP$ is even and
		$\disc (f)=\disc(T/R)^{\frac{1}{2}\rrk_AP}$
		(see \ref{subsec:disc}).
		When these conditions fail, $[A]=0$ in $\Br S$,
		$\rrk_AP$ is even, $\disc (f)=\disc(T/R)^{\frac{1}{2}\rrk_AP+1}$
		and $f$ is isotropic.
		\item 
		There exists $(P',f')\in \Herm[\veps]{A,\sigma}$ with $[f]=[f']$
		and a Lagrangian $M$ of  $\pi f'$ such that
		$M  A=P'$.
		\end{enumerate}
	\end{thm}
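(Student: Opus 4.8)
The plan is to prove Theorem~\ref{TH:Eii-holds} by the same strategy used for Theorem~\ref{TH:Ei-holds} (whose proof should be adapted by applying $\mu^{-1}$-conjugation to turn statements about $\pi$ and $(\sigma,\veps)$ into statements about $\rho$ and $(\Int(\mu^{-1})\circ\sigma,\ldots)$, as was done at the end of the proof of Theorem~\ref{TH:exactness-equiv-conds}), and, for part (iii), to reduce to the connected case and then patch. First I would observe that it suffices to prove the theorem when $R$ is connected, since we may write $R$ as a finite product of connected semilocal rings and work over each factor; moreover, by Propositions~\ref{PR:types-of-involutions-Az}(v) and~\ref{PR:type-of-adjoint} applied to $\pi f$, together with Lemma~\ref{LM:tau-type-is-constant}, exactly one of the following holds over each connected factor: (1)~$(\tau,\veps)$ is symplectic or unitary with $(\sigma,\veps)$ not orthogonal; (2)~$(\tau,\veps)$ is orthogonal (whence $(\sigma,\veps)$ is orthogonal by Lemma~\ref{LM:tau-orth-implies-sigma-orth}); (3)~some degenerate mixed case. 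In the connected case, parts (i) and (ii) are exactly the analogues of parts (i) of Theorem~\ref{TH:Ei-holds} in its cases~\ref{item:unit:Ei}--\ref{item:orthII:Ei}, and their proofs should transcribe the arguments of Theorems~\ref{TH:Ei-holds-unit-syp} and~\ref{TH:Ei-holds-orth-II} with $\rho g$, $Q\oplus L$, and $\Lag(\rho g)$ replaced by $\pi f$, $MA=P$ (equivalently $M\oplus M\mu=P$ by Lemma~\ref{LM:Eii-equiv-formulation}), and the set of Lagrangians of $\pi f$; the discriminant condition $\disc(f)=\disc(T/R)^{\frac{1}{2}\rrk_AP}$ in part (ii) arises precisely as the counterpart of the condition $[D(g)]=\frac{\rrk_BQ}{2}[A]$ in Theorem~\ref{TH:Ei-holds}(i)(5), using the computation of $\disc\langle a\rangle$ in Proposition~\ref{PR:disc-orth-basic-props}(v) and the structure theory of Lemma~\ref{LM:strcture-of-quat}.

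For part (iii) --- which is what is needed for conditions \ref{item:Eii} and \ref{item:Eii-tag} --- the key point is to upgrade parts (i) and (ii), which only give a Lagrangian over connected factors under side conditions, to an unconditional existence statement after replacing $f$ by a Witt-equivalent form. I would argue as in the proof of Theorem~\ref{TH:Ei-holds-unit-syp}: first reduce to $S$ connected, then to $\rrk_AP$ constant. If $T$ is not connected, then $T\cong S\times S$, and by Example~\ref{EX:exchange-involution} applied to the exchange involution $\tau$ on $T$, the form $g=\pi f$ over $(B,\tau)$ is hyperbolic; but actually what we need is information about $f$, so instead I would use that $[\pi f]=0$ forces (via the structure of $\pi$ and Lemma~\ref{LM:idempotent-in-T}) a splitting allowing us to replace $f$ by a hyperbolic form --- the same device as in Proposition~\ref{PR:Ei-orthII-odd-rank-forms}. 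When $T$ is connected, parts (i) and (ii) identify exactly the obstruction to finding a good Lagrangian; in the orthogonal case it is governed by $\Phi$-type invariants and the discriminant, and one kills the obstruction by adding a hyperbolic summand $\Hyp[\veps]{N}$ with $\rrk_AN=1$ (which exists when $[A]=0$ by Proposition~\ref{PR:degree-of-endo-ring}(iii), and the obstruction only appears when $[A]=0$), using the additivity of the relevant invariant under orthogonal sum --- the analogue of Proposition~\ref{PR:Ei-Lag-partition}(iii) together with the discriminant additivity in Proposition~\ref{PR:disc-orth-basic-props}(iii). Finally, over each maximal ideal one uses the local existence of isometries $\vphi_i\in U^0(\pi f(\frakm_i))$ with $\vphi_iM(\frakm_i)\cdot A(\frakm_i)=P(\frakm_i)$ (provided by the $\pi$-analogues of Propositions~\ref{PR:Ei-unitary-R-is-field}, \ref{PR:Ei-orthogonal-I-R-field}, \ref{PR:Ei:orthII:good-Lag-Phi-one}), lifts them to $\vphi\in U^0(\pi f)$ via Theorem~\ref{TH:U-zero-mapsto-onto-closed-fibers}, and concludes $MA=P$ by Lemma~\ref{LM:semilocal-direct-sum-reduction}.

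The hard part will be carrying out the orthogonal case of part (ii) faithfully: one needs the $\pi$-side analogue of the whole apparatus of Section~\ref{sec:Ei} built around $\Phi_g$ --- that is, a $\uU(\pi f)$-equivariant map $\uLag(\pi f)\to\umu_{2,R}$ whose behaviour characterizes when a Lagrangian $M$ satisfies $M\oplus M\mu=P$, an analogue of Corollary~\ref{CR:Ei-good-Lag-is-in-Lag-zero} over algebraically closed fields (the geometric dimension-count with $\uSO_{2n}$ as in Proposition~\ref{PR:Ei-orthogonal-II-R-field-A-split}), and the quaternion-case computation of Proposition~\ref{PR:Ei-orth-quat-case}. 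These are genuinely parallel but not literally identical to the $\rho$-side arguments, because the roles of $B$ and $A$, and of $\disc$ versus $[D(\cdot)]$, are interchanged; the cleanest route is to invoke $\mu^{-1}$-conjugation to pass between the $\pi$ picture and a $\rho$ picture for a twisted involution, reusing Theorem~\ref{TH:Ei-holds} as a black box wherever possible, and only redoing by hand the bookkeeping that does not survive conjugation. The degenerate mixed connected cases (where $(\tau,\veps)$ is unitary on one primitive idempotent of $T$ and trivial on the other, forcing $[A]=0$ and $f$ hyperbolic) are handled exactly as in Proposition~\ref{PR:Ei-orthII-odd-rank-forms} and present no new difficulty.
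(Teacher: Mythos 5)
Your high-level architecture matches the paper's: reduce to $R$ connected, split on the type of $(\tau,\veps)$, build a $\mu_2$-valued invariant on Lagrangians in the orthogonal case, kill the obstruction in part (iii) by adding a rank-one hyperbolic summand, and patch local solutions over the maximal ideals via Theorem~\ref{TH:U-zero-mapsto-onto-closed-fibers} and Lemma~\ref{LM:semilocal-direct-sum-reduction}. The handling of the non-connected $T$ and the degenerate cases is also in the right spirit.

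However, the load-bearing claim of your proposal --- that the hard cases can be obtained by ``invoking $\mu^{-1}$-conjugation to pass between the $\pi$ picture and a $\rho$ picture \ldots reusing Theorem~\ref{TH:Ei-holds} as a black box'' --- does not work, and this is a genuine gap. Conjugation by $\lambda$, $\mu$ or $\lambda\mu$ changes the involution on $A$ (this is exactly how the paper reduces \ref{item:Ei-tag} to \ref{item:Ei} and \ref{item:Eii-tag} to \ref{item:Eii} in the proof of Theorem~\ref{TH:exactness}), but it never converts the restriction-type functor $\pi$ into the induction-type functor $\rho$: the condition ``$M$ is a Lagrangian of $\pi f$ with $MA=P$'' and the condition ``$L$ is a Lagrangian of $\rho g$ with $L\oplus Q=QA$'' live on different homogeneous spaces and are not exchanged by any twist of $\sigma$. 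Consequently the $\pi$-side requires its own development, which is what Section~\ref{sec:Eii} of the paper supplies and your proposal does not: (a) the invariant here is not $\Phi_g$ but $\Psi_f:\calR_{T/R}\uLag(\pi f)\to\umu_{2,R}$, built from $N=\Nr_{T/R}\circ\Nrd$ (the extra composition with the norm $\mu_2(T)\to\mu_2(R)$ is essential, since $\Nrd$ on $U(\pi f)$ takes values in $\mu_2(T)$) and normalized on Lagrangians of $f$ itself rather than on idempotent-split Lagrangians; (b) the obstruction ``$4\mid\rrk_AP$'' in part (i) has no counterpart in Theorem~\ref{TH:Ei-holds} at all, and its proof (Proposition~\ref{PR:Eii:unitary-symplectic-case}, via the idempotent $\frac{1}{2}(1+\mu^{-1}t)$ and a discriminant comparison of $\pi f$ with $n\cdot\langle 1,-1\rangle$) is new; (c) the discriminant criterion in part (ii) is proved by an explicit rank-two computation (Lemma~\ref{LM:Eii-factoring-x-summand} and Proposition~\ref{PR:Eii-orth-quat-necessary-cond}, producing $d\in\units{T}$ with $d^2\equiv\disc(f)$ and deciding whether $d\in\units{R}$ or $d\in\lambda\units{R}$) that is parallel to but not derivable from Proposition~\ref{PR:Ei-orth-quat-case}, precisely because $\disc(f)\in\units{R}/(\units{R})^2$ and $[D(g)]\in\Br R$ are different invariants attached to forms over different algebras. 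You flag this mismatch yourself, but the only remedy you offer is the conjugation trick, which is the step that fails.
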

	
	In Section~\ref{sec:completion-of-proof}, we will use this theorem
	to establish    conditions \ref{item:Eii}
	and \ref{item:Eii-tag} of Theorem~\ref{TH:exactness-equiv-conds} when
	$R$ is semilocal. The reader can skip to the next section without loss of continuity.
	
\medskip
	
	As with Theorem~\ref{TH:Ei-holds}, it is enough
	to prove the theorem when $R$ is connected. 
	In this case, by Lemma~\ref{LM:tau-type-is-constant},
	exactly one of the following hold:
	\begin{enumerate}[label=(\arabic*)]
		\item \label{item:Eii:sym-or-unit} $(\tau,\veps)$ is symplectic or unitary,
		\item \label{item:Eii:orth} $(\tau,\veps)$ is orthogonal.
	\end{enumerate}
	These cases are treated in Theorems~\ref{TH:Eii-unitary-symplectic}
	and~\ref{TH:Eii-holds-orth}, respectively.

\subsection{Non-Connected Cases}
\label{subsec:Eii-T-not-contd}

	We begin by addressing the  simpler case
	where $T$ is not connected.
	Some of the observations made here will be used
	later.
	
\medskip

	First, we consider the case where $S$ is not connected.
	
	\begin{prp}\label{PR:Eii-S-not-connected}
		Theorem~\ref{TH:Eii-holds}(iii)
		holds when $R$ is connected and $S$ is not connected.
	\end{prp}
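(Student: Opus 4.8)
The plan is to observe that in this situation the Witt group $W_\veps(A,\sigma)$ itself vanishes, so that the given space $(P,f)$ is Witt-equivalent to the zero hermitian space, for which the assertion of Theorem~\ref{TH:Eii-holds}(iii) is immediate; in particular the hypothesis $[\pi f]=0$ will not even be needed.

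First I would pin down the structure of $S=\Cent(A)$. Since $R$ is connected and $A$ is separable projective over $R$, the ring $S$ is finite \'etale over $R$ of constant $R$-rank, and this rank is at least $2$ because $S$ is not connected. By Proposition~\ref{PR:types-of-involutions-Az}(v), $(\sigma,\veps)$ is orthogonal, symplectic or unitary; the first two possibilities are excluded by parts (i)--(ii) of that proposition, which force $\Cent(A)=R$. Hence $(\sigma,\veps)$ is unitary, so by Proposition~\ref{PR:types-of-involutions-Az}(iii) we have $\rank_R S=2$, the algebra $S$ is quadratic \'etale over $R$, and $\sigma|_S$ is its standard $R$-involution.

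Next I would invoke Lemma~\ref{LM:non-connected-S}: a non-connected quadratic \'etale $R$-algebra over a connected ring $R$ is isomorphic to $R\times R$, whose standard involution is the exchange involution. Taking $\eta\in S=\Cent(A)$ to be the idempotent corresponding to $(1_R,0_R)$, we get $\eta^\sigma=1-\eta$, so $\sigma$ is an exchange involution of $A$ in the sense of Example~\ref{EX:exchange-involution}. That example then gives $W_\veps(A,\sigma)=0$, so $[f]=0$ and $(P,f)$ is Witt-equivalent to the zero hermitian space $(0,0)\in\Herm[\veps]{A,\sigma}$. Taking $(P',f')=(0,0)$ and $M=0$, which is a Lagrangian of the zero form $\pi f'=0$ with $M\cdot A=0=P'$, finishes the proof.

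There is no real obstacle here: the case is degenerate, and the whole argument rests on already-established facts. The only point requiring attention is the very first one, namely checking that $S$ is quadratic \'etale over $R$, and this is handled by the classification of involution types over connected rings in Proposition~\ref{PR:types-of-involutions-Az}.
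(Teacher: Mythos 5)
Your proof is correct and follows essentially the same route as the paper: identify $S\cong R\times R$ with $\sigma|_S$ the exchange involution via Lemma~\ref{LM:non-connected-S}, conclude from Example~\ref{EX:exchange-involution} that every hermitian space over $(A,\sigma)$ is hyperbolic, and replace $(P,f)$ by the zero form. Your preliminary verification that $S$ is quadratic \'etale over $R$ (via Proposition~\ref{PR:types-of-involutions-Az}) is a detail the paper leaves implicit, but otherwise the two arguments coincide.
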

	
	\begin{proof}
		In this case,  $S=R\times R$ (Lemma~\ref{LM:non-connected-S})
		and $\tau|_S$ is the exchange involution.
		As observed in Example~\ref{EX:exchange-involution},
		this means that every $(P,f)\in \Herm[\veps]{A,\sigma}$
		is hyperbolic. Replacing $(P,f)$ with zero
		form,  Theorem~\ref{TH:Eii-holds}(iii)
		holds trivially.
	\end{proof}
	
	If $S$ is connected and $T$ is not connected,
	then  $T\cong S\times S$ (Lemma~\ref{LM:non-connected-S}),
	hence $T$ admits two nontrivial idempotents, call
	them $e$ and $e':=1-e$.
	We have $e^\sigma=e$ or $e^\sigma=e'$.
	We devote some attention to the case $e^\sigma=e$, working
	in slightly greater generality for later reference.
	
\medskip	
	
 	With Notation~\ref{NT:proof-of-Es}, suppose that $T= S\times S$ (but not
	that $S$ is connected),
	let $e:=(1_S,0_S)$ and assume $e^\sigma=e$.
	By Lemma~\ref{LM:idempotent-in-T},  
	we have $A=AeA$,  $B=eB\oplus e'B=eAe\oplus e'Ae'$ and $\pi:A\to B$
	is given by $a\mapsto eae+e'ae'$.
	Since $e^\sigma=e$, we may
	view $(B,\tau)$ as $(eB,\tau_e)\times (e'B,\tau_{e'})$,
	where $\tau_e=\tau|_{eB}$ and $\tau_{e'}=\tau|_{e'B}$.
	Thus, every hermitian space $(Q,g)\in \Herm[\veps]{B,\tau}$
	factors as $(Qe,g_e)\times (Qe',g_{e'})$, where $g_e=g|_{Qe\times Qe}$
	and $g_{e'}=g|_{Qe'\times Qe'}$ are $\veps$-hermitian forms
	over $(eB,\tau_e)$ and $(e'B,\tau_{e'})$, respectively.
	The following simple observation will be important in the sequel.
	
	\begin{prp}\label{PR:Eii:pi-f-T-not-connected}
		Under the previous assumptions and identifications,
		for every $(P,f)\in \Herm[\veps]{A,\sigma}$,
		the hermitian space $(P,\pi f)$
		is $(Pe,f_e)\times (Pe',f_{e'})$,
		where $f_e$
		denotes the $e$-transfer of $f$ (see~\ref{subsec:conjugation}), and likewise for $f_{e'}$.
	\end{prp}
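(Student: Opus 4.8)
The plan is to reduce the statement to a direct computation with the explicit formula for $\pi$ provided by Lemma~\ref{LM:idempotent-in-T}(v), namely $\pi a = eae + e'ae'$ for all $a \in A$. First I would record the auxiliary facts needed: since $B$ is Azumaya over $T$ (Proposition~\ref{PR:centralized-in-Az-alg}), we have $T = \Cent(B)$, so $e$ and $e'$ are central idempotents of $B$; consequently $P = Pe \oplus Pe'$ is a decomposition of $P$ as a right $B$-module, with $Pe$ a module over $eB = eAe$ and $Pe'$ a module over $e'B = e'Ae'$ (using Lemma~\ref{LM:idempotent-in-T}(i),(iii)). I would also note that $e^\sigma = e$ forces $e'^\sigma = e'$, and recall from~\ref{subsec:conjugation} that, by definition, $f_e = f|_{Pe \times Pe}$ and $f_{e'} = f|_{Pe' \times Pe'}$.

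Next I would evaluate $\pi f$ on the four Peirce-type blocks of $P \times P$ relative to $P = Pe \oplus Pe'$, using the sesquilinearity identity $f(xa, yb) = a^\sigma f(x,y) b$. For $x, y \in P$ one gets $f(xe, ye) = e f(x,y) e \in eAe$, which lies in $B$, so $\pi f(xe, ye) = e f(x,y) e = f_e(xe, ye)$ because $\pi|_B = \id_B$; the same argument gives $\pi f(xe', ye') = f_{e'}(xe', ye')$. For the off-diagonal blocks, $f(xe, ye') = e f(x,y) e'$ lies in the Peirce component $eAe'$, and applying $\pi a = eae + e'ae'$ to any element of $eAe'$ yields $0$ since $ee' = e'e = 0$; hence $\pi f(xe, ye') = 0$, and symmetrically $\pi f(xe', ye) = 0$.

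Assembling the four computations shows that, relative to $P = Pe \oplus Pe'$, the form $\pi f$ is the orthogonal sum of $f_e$ on $Pe$ and $f_{e'}$ on $Pe'$; under the identification $(B, \tau) = (eB, \tau_e) \times (e'B, \tau_{e'})$ this is exactly the asserted equality $(P, \pi f) = (Pe, f_e) \times (Pe', f_{e'})$. I would close by observing, for coherence with the ambient categories, that $\pi f$ is unimodular by Lemma~\ref{LM:dfn-of-pi-A-B}(vi) and that $f_e$, $f_{e'}$ are unimodular because $e$-transfer preserves unimodularity, so the identity takes place inside $\Herm[\veps]{B,\tau}$. There is no real obstacle here; the only point requiring care is the bookkeeping with the involution and the idempotents, in particular the verification that $\pi$ annihilates the off-diagonal Peirce components $eAe'$ and $e'Ae$ --- which is precisely what the formula $\pi a = eae + e'ae'$ delivers.
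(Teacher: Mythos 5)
Your proof is correct; the paper's own proof is just the remark ``This is straightforward,'' and your computation with $\pi a = eae + e'ae'$ and the Peirce blocks of $P = Pe \oplus Pe'$ is exactly the direct verification the paper leaves to the reader.
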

	
	\begin{proof}
		This is straightforward.
	\end{proof}

	\begin{prp}\label{PR:Eii-T-not-connected}
		Theorem~\ref{TH:Eii-holds}(iii) holds when $S$ is connected and $T$ is not connected.
	\end{prp}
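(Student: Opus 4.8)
The plan is to reduce to the case treated by the preceding results via $e$-transfer.
Since $S$ is connected and $T$ is not connected, Lemma~\ref{LM:non-connected-S}
gives $T\cong S\times S$; let $e,e'=1-e$ be the two nontrivial idempotents
of $T$. If $e^\sigma=e'$, then $\tau|_T$ is the exchange involution of $T=S\times S$,
so by Proposition~\ref{PR:types-of-involutions-Az}(iii), $(\tau,\veps)$ is unitary,
hence $(B,\tau)$ is an algebra with exchange involution (in the sense of
Example~\ref{EX:exchange-involution}, applied to $(B,\tau)$). Then every unimodular
$\veps$-hermitian form over $(B,\tau)$ is hyperbolic, and in particular
$[\pi f]=0$ automatically; more usefully, since $\pi f$ is hyperbolic, one of its
Lagrangians $M$ can be chosen so that $M\oplus M\mu = P$ — indeed in the notation
of Lemma~\ref{LM:dfn-of-pi-A-B}(v) the submodule $M:=P\eta$ (with $\eta$ the central
idempotent of $B$ splitting $\tau$) is a Lagrangian of $\pi f$ and $P = M\oplus M\mu$,
so $MA=P$ by Lemma~\ref{LM:Eii-equiv-formulation}. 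Thus in this subcase we may
take $(P',f')=(P,f)$ and the conclusion holds without modifying $f$.

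The remaining subcase is $e^\sigma=e$. Here we are in the setting of
Proposition~\ref{PR:Eii:pi-f-T-not-connected}: $A=AeA$, $B=eAe\oplus e'Ae'$,
and for every $(P,f)\in\Herm[\veps]{A,\sigma}$ the form $\pi f$ decomposes as
$(Pe,f_e)\times(Pe',f_{e'})$ where $f_e$ is the $e$-transfer of $f$ and $f_{e'}$
its $e'$-transfer. By Lemma~\ref{LM:idempotent-in-T}(iii), $eAe$ is Azumaya over $S$
with $[eAe]=[A]$ and $\deg eAe=\tfrac12\deg A$, and similarly for $e'Ae'$; moreover
by Corollary~\ref{CR:type-conjugation}(ii) the types of $(\sigma|_{eAe},e\veps)$ and
$(\sigma|_{e'Ae'},e'\veps)$ coincide with that of $(\sigma,\veps)$. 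The hypothesis
$[\pi f]=0$ in $W_\veps(B,\tau)=W_{e\veps}(eAe,\tau_e)\times W_{e'\veps}(e'Ae',\tau_{e'})$
means $[f_e]=0$ and $[f_{e'}]=0$, so by Theorem~\ref{TH:trivial-in-Witt-ring}(ii)
both $f_e$ and $f_{e'}$ are hyperbolic.

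It remains to manufacture from this a Lagrangian $M$ of $\pi f$ with $MA=P$.
Using Proposition~\ref{PR:simult-e-transfer}(iii): a submodule $M\subseteq P$ is a
Lagrangian of $\pi f$ with $MA=P$ if and only if $Me\subseteq Pe$ is a Lagrangian of
$\pi_e f_e$ with $Me\cdot eAe=Pe$ — but here $\pi_e$ is the identity (since
$eAe = e(eAe)e$, i.e.\ $\deg$ of the relevant ``$B$'' equals $\deg$ of the relevant
``$A$''; more directly, in the $e$-transferred setting $T$ becomes $eT\cong S$ which
is connected, so we have passed to the situation where $T$ itself is a field-like
connected ring of the previous sections). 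Concretely, I will instead argue that since
$f_e$ is hyperbolic over $(eAe,\tau_e)$ with $T$-part connected, the already-proved
Theorems~\ref{TH:Eii-unitary-symplectic} and~\ref{TH:Eii-holds-orth} (connected case)
apply to $f_e$ and $f_{e'}$ separately: each yields, possibly after replacing by a
Witt-equivalent form, a Lagrangian $M_e$ of $\pi f_e$ with $M_e\cdot eAe = Pe$ and
likewise $M_{e'}$ — here the ``if'' directions are automatic because the
$[B]$-class condition and discriminant conditions are vacuous or forced when the
relevant centralizer algebra has degree one, or when the form is hyperbolic (its
discriminant is the trivial class). Taking $M := M_e A \cap M_{e'}A$, or rather the
submodule of $P$ corresponding to $M_e\times M_{e'}$ under
$P = Pe\oplus Pe'$, gives a Lagrangian of $\pi f$ with $MA=P$. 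The main obstacle
is bookkeeping: checking carefully that the ``$MA=P$'' condition transports correctly
across the product decomposition and the $e$-transfer, for which
Proposition~\ref{PR:simult-e-transfer}(iii), Lemma~\ref{LM:idempotent-in-T}(iv),
and Lemma~\ref{LM:Eii-equiv-formulation} are the tools; none of this is deep, but it
must be assembled with care, and one must be slightly cautious that replacing $f_e$
and $f_{e'}$ by Witt-equivalent forms can be realized simultaneously by a single
Witt-equivalent replacement of $f$ (add the corresponding hyperbolic summands
$\Hyp[\veps]{N}$ over $(A,\sigma)$, whose $e$- and $e'$-transfers are again
hyperbolic of the required reduced ranks by Corollary~\ref{CR:degree-of-endo-ring}
and Lemma~\ref{LM:rank-determines}).
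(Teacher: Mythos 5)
Your first case ($e^\sigma=1-e$) is fine and matches the paper: $Pe$ is a Lagrangian of $\pi f$ by Example~\ref{EX:exchange-involution}, and $PeA=P(AeA)=P$ by Lemma~\ref{LM:idempotent-in-T}(ii), so no replacement of $f$ is needed.

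The case $e^\sigma=e$ contains a genuine gap. First, Proposition~\ref{PR:simult-e-transfer} does not apply to the idempotent $e\in T$: its hypothesis requires $\rrk_B eB$ to be constant along the fibers of $\Spec T\to\Spec S$, whereas here $eB=eAe\times 0$ has reduced rank $(\deg eAe,0)$ on the two copies of $\Spec S$. Moreover, the condition you extract from it, ``$Me\cdot eAe=Pe$'', cannot hold: $Me$ is an $eAe$-submodule, so $Me\cdot eAe=Me$, and a Lagrangian of the unimodular form $f_e$ is a proper submodule of $Pe$ unless $Pe=0$. The actual condition $MA=P$ for $M=M_e\oplus M_{e'}$ unwinds to $M_e+M_{e'}\cdot e'Ae=Pe$ and $M_{e'}+M_e\cdot eAe'=Pe'$, i.e.\ it couples the two factors through the Morita bimodules $eAe'$ and $e'Ae$; applying Theorems~\ref{TH:Eii-unitary-symplectic} and~\ref{TH:Eii-holds-orth} ``to $f_e$ and $f_{e'}$ separately'' does not address this, and in any case those theorems concern the passage from $A$ to $B=\Cent_A(T)$ and have no meaning for the form $f_e$ over $eAe$ alone (there is no residual quadratic \'etale subalgebra after this transfer).

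The fix is much simpler and is the point you walked past: you correctly observe that $f_e$ is hyperbolic, but $f_e$ is precisely the $e$-transfer of $f$ (Proposition~\ref{PR:Eii:pi-f-T-not-connected}), and $e$-transfer reflects hyperbolicity (item \ref{item:e-transfer-hyperbolic} of \ref{subsec:conjugation}). Hence $f$ itself is hyperbolic, so it is Witt-equivalent to the zero form, and Theorem~\ref{TH:Eii-holds}(iii) holds by taking $(P',f')=(0,0)$ and $M=0$. No Lagrangian of $\pi f$ with $MA=P$ needs to be produced for the original $f$.
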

	
	\begin{proof}
		Write $T=S\times S$ and let 
		$e=(1_S,0_S)$.
		By Lemma~\ref{LM:idempotent-in-T}(ii),
		$\rrk_AeA>0$ and   $AeA=A$.
		Since $S$ is connected, $e^\sigma=e$ or $e^\sigma=1-e$.

		If $e^\sigma=e$, then $f_e$ is hyperbolic  by Proposition~\ref{PR:Eii:pi-f-T-not-connected}
		and the fact that $\pi f$ is hyperbolic (Theorem~\ref{TH:trivial-in-Witt-ring}(ii)).
		By   item \ref{item:e-transfer-hyperbolic} in \ref{subsec:conjugation}, this means
		that   $f$ is hyperbolic, so we may replace $(P,f)$ with zero form and finish.
		
		Suppose that $e^\sigma=1-e$. Then $M:=Pe$ is a Lagrangian of $\pi f$
		(Example~\ref{EX:exchange-involution}).
		Since $MA=PeA=PAeA=PA=P$, we are done.
	\end{proof}

\subsection{Case (1)}

	We now prove   Theorem~\ref{TH:Eii-holds}
	in case~\ref{item:Eii:sym-or-unit},
	namely, when  $R$ is connected  and $(B,\tau)$ is unitary
	or symplectic.
	As with Theorem~\ref{TH:Ei-holds}, we first establish some
	special cases.
	
	\begin{prp}\label{PR:Eii-unitary-A-split-S-split}
		With Notation~\ref{NT:proof-of-Es},
		suppose that $R$ is a field,   $S\cong R\times R$  and $[A]=0$.
		Let $(P,f)\in \Herm[\veps]{A,\sigma}$
		be a hermitian space such that 
		$\rrk_AP$ is even.
		Then there exists $M\in \Lag(\pi f)$
		such that $MA=P$ and $\rrk_BM=\frac{1}{2}\rrk_BP$.
	\end{prp}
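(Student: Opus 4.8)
The plan is to exploit the exchange-involution structure forced by $S\cong R\times R$ and then reduce to an explicit computation in a quaternion algebra. Since $S=\Cent(A)\cong R\times R$, the involution $\sigma|_{S}$ is the exchange involution, so $\sigma$ is an exchange involution in the sense of Example~\ref{EX:exchange-involution}; in particular every $(P,f)\in\Herm[\veps]{A,\sigma}$ is hyperbolic and is determined up to isometry by the isomorphism class of $P$, and since $[A]=0$ in $\Br S=\Br R\times\Br R$ we may take $(A,\sigma)\cong(\nMat{R}{m}\times\nMat{R}{m}^{\op},\ (x,y^{\op})\mapsto(y,x^{\op}))$ with $m=\deg A$. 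The conclusion to be proved is that $\pi f$ has a Lagrangian $M$ with $MA=P$ (the condition $\rrk_{B}M=\frac12\rrk_{B}P$ being part of the requirement $M\in\Lag(\pi f)$), and this is a statement amenable to $\mu$-conjugation and $e$-transfer by Propositions~\ref{PR:simult-conj} and~\ref{PR:simult-e-transfer}(iii). As $[B]=[A\otimes_{S}T]=0$ in $\Br T$ and $R$ is a field, hence connected semilocal, Reduction~\ref{RD:common-reduction} lets us assume $\deg B=1$, i.e.\ $B=T$ and $\deg A=2$; here $\sigma$ remains unitary and $\tau$ is unitary by Lemma~\ref{LM:sigma-unit-means-tau-unit}.

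Next I would reduce to $P=A_{A}$. Write $\eta=(1_{R},0_{R})\in S$, so $\eta^{\sigma}=1-\eta$, and set $A_{1}=\eta A\cong\nMat{R}{2}$, $A_{2}=\eta^{\sigma}A$. The hypothesis that $\rrk_{A}P$ is even means $\rrk_{A_{1}}(P\eta)$ is even, so $P\eta\cong A_{1}^{k}$ is a free $A_{1}$-module; hence $P=P\eta\oplus P\eta^{\sigma}\cong A_{1}^{k}\oplus A_{2}^{k}\cong A_{A}^{k}$, and by Example~\ref{EX:exchange-involution} the space $(P,f)$ is isometric to $k$ copies of the unique hyperbolic space $(A_{A},f_{1})$ with underlying module $A_{A}$. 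Since $\pi$ commutes with orthogonal sums and an orthogonal sum of Lagrangians each satisfying $M_{i}A=P_{i}$ again has this property, it suffices to treat $(A_{A},f_{1})$. Now $\rrk_{B}(A_{A})=2\deg B=2$ is constant, and I claim it is enough to produce a rank-one free right $B$-submodule $M$ of $A$ with $\pi f_{1}(M,M)=0$ and $M\oplus M\mu=A$, where $\mu\in\units{A}$ is as in Lemma~\ref{LM:dfn-of-pi-A-B}(v): indeed $MA\supseteq M+M\mu=A$, and since $f_{1}(M,M)\subseteq\ker\pi=\mu B$ one checks that $\pi f_{1}(M\mu,M\mu)=0$ as well, so $M$ and $M\mu$ are complementary totally isotropic summands, whence $M$ is a Lagrangian with $\rrk_{B}M=1=\frac12\rrk_{B}A$, i.e.\ $M\in\Lag(\pi f_{1})$ (cf.\ Lemma~\ref{LM:Eii-equiv-formulation}).

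Finally comes the explicit construction. Using the exchange decomposition $A=A_{1}\times A_{1}^{\op}$ one has $B=T=T_{1}\times T_{1}$ for a quadratic \'etale $R$-subalgebra $T_{1}$ of $A_{1}$, with $\sigma$ interchanging the two factors; the map $\pi$ is the Peirce projection $a\mapsto eae+e'ae'$ of Lemma~\ref{LM:dfn-of-pi-A-B}, where $e,e'$ are the idempotents of $T$ lying over $(1_{R},0_{R}),(0_{R},1_{R})\in S$, and since every form over $(B,\tau)$ is hyperbolic, $f_{1}$ is determined up to isometry and may be put in standard form. Choosing, by Lemma~\ref{LM:quad-etale-over-semilocal}, an element $t\in T_{1}$ with $r:=t^{2}\in\units{R}$ and $T_{1}=R\oplus tR$, I identify $A_{1}\cong\nMat{R}{2}$ so that $t=\smallSMatII{0}{r}{1}{0}$ (as in the proof of Proposition~\ref{PR:Ei-unitary-S-not-field}), after which $\pi$, $\mu$ and $f_{1}$ all become fully explicit, and I write down $M$ as the rank-one $T$-submodule spanned by a suitable element of $A$ of ``graph type'' across the two exchange factors, twisted by $\mu$, then verify $\pi f_{1}(M,M)=0$ and $M\oplus M\mu=A$ by direct computation. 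I expect this last bookkeeping --- tracking the idempotents $e,e'$, the action of $\mu$, and possibly a separate check when $T_{1}$ is a field --- to be the only genuine work; everything preceding it is formal reduction.
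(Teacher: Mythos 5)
Your reductions (using the exchange structure, Reduction~\ref{RD:common-reduction} to get $B=T$ and $\deg A=2$, the identification $P\cong A_A^n$ and the reduction to a single copy $(A,f_1)$, and the criterion that a rank-one totally isotropic right $B$-submodule $M$ with $M\oplus M\mu=A$ is the desired Lagrangian) are all sound and coincide with the paper's. But the proof stops exactly where the content begins: you never exhibit $M$, and you yourself concede that the ``only genuine work'' is the deferred computation. Moreover, the setup you propose for that computation is wrong. You describe $\pi$ as the Peirce projection $a\mapsto eae+e'ae'$ with respect to ``the idempotents of $T$ lying over $(1_R,0_R),(0_R,1_R)\in S$''. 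Those are the central idempotents $\eta,\eta^\sigma$ of $S$ itself, and for them $eae+e'ae'=\eta a+\eta^\sigma a=a$ is the identity map on $A=A_1\times A_1^{\op}$, not $\pi$. The Peirce formula of Lemma~\ref{LM:idempotent-in-T}(v) applies to idempotents splitting $T$ over $S$, and these need not exist here: $T=T_1\times T_1^{\op}$ with $T_1$ possibly a quadratic field extension of $R$, a case you flag at the end yourself. So the ``direct computation'' as you set it up would not get off the ground.

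The fix is to describe $\pi$ componentwise, after which no matrix identification is needed at all. By the uniqueness in Lemma~\ref{LM:dfn-of-pi-A-B}(i), $\pi(x,y^{\op})=(\pi_1x,(\pi_1y)^{\op})$ where $\pi_1=\pi_{A_1,B_1}$. Put $E_1=\ker\pi_1$ and take $M=E_1\times B_1^{\op}$ and $M'=B_1\times E_1^{\op}$. Since $f_1((x_1,x_2^{\op}),(y_1,y_2^{\op}))=(\veps_1x_2y_1,(y_2x_1)^{\op})$ and $B_1E_1\subseteq E_1$, both $M$ and $M'$ land in $\ker\pi=E_1\times E_1^{\op}$ under $f_1$; they are complementary $B$-submodules of $A$, so $M$ is a Lagrangian of $\pi f_1$ of the correct reduced rank; and $MA=E_1A_1\times A_1^{\op}B_1^{\op}=A$ because $E_1A_1=A_1$ by Lemma~\ref{LM:dfn-of-pi-A-B}(iv). (This $M$ equals $(\mu_1,1^{\op})T$, so it is of the shape you were hunting for, but the product description makes the verification two lines rather than a matrix computation.)
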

	
	\begin{proof}
		By Reduction~\ref{RD:common-reduction}, we may
		assume that $B=T$ and $\deg A=2$. Note that $\rrk_AP$
		is constant by Corollary~\ref{CR:constant-even-ranks}(i).
	
		By assumption, there exists a  nontrivial idempotent
		$e\in S$
		such that $e+e^\sigma=1$.
		By Example~\ref{EX:exchange-involution},
		we may assume that $A=A_1\times A_1^\op$
		for a  central simple $R$-algebra $A_1$
		and that $\sigma$ is the exchange involution
		$(x,y^\op)\mapsto (y,x^\op)$.
		It is easy to see that there are $R$-subalgebras
		$T_1\subseteq B_1\subseteq A_1$
		such that  $T=T_1\times T_1^\op$, $B=B_1\times B_1^\op$
		and $B_1=\Cent_{T_1}(A_1)$.
		Furthermore, $\veps=(\veps_1,\veps_1^{-1})$
		for some $\veps_1\in \units{R}$.

		Consider the $\veps$-hermitian form
		$f_1:A\times A\to A$
		given by $f_1((x_1,x_2^\op),(y_1,y_2^\op))=(\veps_1 x_2y_1,(y_2x_1)^\op)$.
		Since $\rrk_AP$ is even and constant, and since $\deg A=2$,
		there exists
		$n\in \N$
		such that $P\cong A_A^n$.
		By Example~\ref{EX:exchange-involution},
		this means that $(P,f)\cong n\cdot (A,f_1)$.
		It is therefore enough to prove the proposition
		for $(P,f)=(A,f_1)$.
		
		Let $\pi_1:=\pi_{A_1,B_1}:A_1\to B_1$ be as in Lemma~\ref{LM:dfn-of-pi-A-B}.
		The uniqueness of $\pi$ forces $\pi(x,y^\op)=(\pi_1x,(\pi_1y)^\op)$ for all
		$x,y\in A_1$.
		Let $E_1=\ker \pi_1$. 
		Then $M:=E_1\times B_1^\op $
		and $M':=B_1\times E_1^\op$
		are submodules of $A_B$
		satisfying $f_1(M,M)=f_1(M',M')=0$ and $M\oplus M'=A$.
		Thus, $M$ is a Lagrangian of $\pi f_1$.
		In addition, Lemma~\ref{LM:dfn-of-pi-A-B}(iv) tells us that $E_1A_1=A_1$,
		so $MA=E_1A_1\times  A_1^\op B_1^\op=A$, as required.
	\end{proof}
	
	The following proposition holds without restrictions on
	the type of $(\tau,\veps)$.
	
	\begin{prp}\label{PR:Eii:rrk-divisible-by-four}
		With Notation~\ref{NT:proof-of-Es},
		suppose that $S$ is a field and $[A]=0$.
		Let $(P,f)\in \Herm[\veps]{A,\sigma}$
		be a hyperbolic hermitian space.
		If $4\mid\rrk_AP$,
		then $\pi f$ admits a Lagrangian $M$
		such that $MA=P$ and $\rrk_B M=\frac{1}{2}\rrk_BP$.
	\end{prp}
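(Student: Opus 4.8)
The plan is to reduce, via $\mu$-conjugation and $e$-transfer in the sense of~\ref{subsec:simultaneous} (Propositions~\ref{PR:simult-conj} and~\ref{PR:simult-e-transfer}), to the case $\deg B = 1$, $\deg A = 2$, so that $A$ is a quaternion $S$-algebra and $B = T$. Since $[A] = 0$ and $S$ is a field we have $A \cong \nMat{S}{2}$; Reduction~\ref{RD:common-reduction} also lets us arrange that $\sigma$ is orthogonal or unitary (not symplectic) and that $\tau$ is orthogonal or unitary, and then Reduction~\ref{RD:common-reduction-II} puts $\sigma$ into the explicit form $[\begin{smallmatrix} a & b \\ c & d\end{smallmatrix}]^\sigma = [\begin{smallmatrix} a^\sigma & \alpha c^\sigma \\ \alpha^{-1}b^\sigma & d^\sigma\end{smallmatrix}]$. (In the unitary case where $\sigma|_S$ is nontrivial one must track $\veps$, but Hilbert 90 reduces to $\veps = 1$ as in the proof of Proposition~\ref{PR:Ei-unitary-S-is-field-A-split}.) Here the hypothesis $4 \mid \rrk_A P$ is what makes the reduction legitimate: after $e$-transfer $\rrk_A P$ is unchanged by Corollary~\ref{CR:degree-of-endo-ring}, and it remains a multiple of $4$; once we reduce $\deg A$ to $2$ this says $\rrk_B P = \iota\rrk_A P$ (equivalently $\dim_R$ considerations) keeps the relevant rank even.

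Next I would reduce to a single model form. Since $f$ is hyperbolic, $(P,f) \cong m \cdot \Hyp[\veps]{A_A}$ or, more usefully, since $4 \mid \rrk_A P$ and $\rrk_A(A_A) = 2$, the space $(P,f)$ is an orthogonal sum of $\frac{1}{4}\rrk_A P$ copies of the hyperbolic form $\Hyp[\veps]{A^2_A}$ on $A^2 \oplus (A^2)^*$ (using Lemma~\ref{LM:rank-determines-hyperbolic}, which says hyperbolic forms of equal reduced rank over a semilocal — here, local — ring are isometric). So it suffices to produce, for the single form $h := \Hyp[\veps]{A^2_A}$ (living on a free $A$-module $W$ of reduced $A$-rank $4$), a Lagrangian $M$ of $\pi h$ with $MA = W$ and $\rrk_B M = \frac{1}{2}\rrk_B W$; then the orthogonal sum of these $M$'s does the job for $(P,f)$. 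Concretely, writing $W = A^2 \oplus (A^2)^*$ and using $A = B \oplus E$ with $E = \ker\pi$ and $EA = A$, $E^2 = B$ (Lemma~\ref{LM:dfn-of-pi-A-B}(iv)), one builds $M$ from the $B$-submodule $E \oplus B$-type pieces inside each $A$-factor, exactly mirroring the construction in Proposition~\ref{PR:Eii-unitary-A-split-S-split}: on a hyperbolic plane $A \oplus A^*$ with form $\Hyp[\veps]{A}$ the submodules $M_0 = E \oplus (E^*)$ and $M_0' = B \oplus (B^*)$ (suitable images under the duality identifications) satisfy $\Hyp[\veps]{A}(M_0,M_0) = 0$ because $E^\sigma = E$ and $\pi|_E$ kills the pairing, and $M_0 \oplus M_0' = A \oplus A^*$ by $A = B \oplus E$. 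Taking two such planes (to use up the factor of $4$) and combining lets one also match the rank condition $\rrk_B M = \frac12 \rrk_B W$.

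The verification that $M A = W$ follows from $EA = A$: each generating piece of $M$ lies in some $E$-factor, and $E \cdot A = A$ forces the $A$-span to be the whole factor. The verification $\rrk_B M = \frac{1}{2}\rrk_B W$ is a dimension count: $\rrk_B W = \iota\rrk_A W$ and $\rrk_B E = \deg B$ while $\rrk_B A = 2\deg B$ (Lemma~\ref{LM:dfn-of-pi-A-B}(iv)), so a Lagrangian built half from $E$-pieces and half from $B$-pieces has reduced $B$-rank exactly $\frac{1}{2}\rrk_B W$; one checks $M$ is genuinely a Lagrangian (not just totally isotropic) by exhibiting the complementary isotropic submodule $M'$ with $M \oplus M' = W$, as in~\ref{subsec:Witt-grp}.

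\textbf{Main obstacle.} The delicate point is the interaction between the $4 \mid \rrk_A P$ hypothesis and the type of $(\sigma,\veps)$: when $\deg A = 2$ and $\sigma$ is symplectic the form $\pi f$ lives over $(B,\tau) = (T,\mathrm{id}_T)$ and the only available totally isotropic $B$-submodules of a hyperbolic plane have half the expected rank unless one has room to pair two planes together — this is precisely why one needs $4 \mid \rrk_A P$ rather than merely $2 \mid \rrk_A P$, and it parallels the even-rank hypotheses in Propositions~\ref{PR:Ei-unitary-S-is-field-A-split} and~\ref{PR:Ei-orthogonal-II-R-field-A-split}. I expect the bookkeeping in the unitary case — keeping $\veps$, the standard involution on $S$, and the idempotents of $S_S$ all straight while performing the reductions — to be the part requiring the most care, though it is routine in the sense that every needed tool (Example~\ref{EX:exchange-involution} when $S$ splits, Hilbert 90, Reductions~\ref{RD:common-reduction} and~\ref{RD:common-reduction-II}) is already in place.
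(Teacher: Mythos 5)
Your overall route is the paper's route: reduce via Reduction~\ref{RD:common-reduction} to $B=T$, $\deg A=2$, reduce to a single hyperbolic model of reduced $A$-rank $4$ using Lemma~\ref{LM:rank-determines-hyperbolic}, and build the Lagrangian of $\pi f$ out of the pieces $B$ and $E:=\ker\pi$ of the decomposition $A=B\oplus E$, using $EA=A$ for the condition $MA=P$. (The detour through Reduction~\ref{RD:common-reduction-II} and Hilbert~90 is unnecessary: the construction below works for any $\veps$ and any type of $\sigma$ once $B=T$ and $\deg A=2$.) However, the central step as you wrote it is wrong. On a hyperbolic plane $A\oplus A^*\cong A^2$ with $f_1((x_1,y_1),(x_2,y_2))=x_1^\sigma y_2+\veps x_2^\sigma y_1$, the submodule you propose, $M_0=E\oplus E^*$ (i.e.\ $E\times E$ after the duality identification), is \emph{not} totally isotropic for $\pi f_1$: the pairing of $E$ against $E$ takes values in $E^\sigma\cdot E=E\cdot E=B$ (Lemma~\ref{LM:dfn-of-pi-A-B}(iv)), and $\pi|_B=\id_B$, so $\pi$ kills nothing there; indeed $\pi f_1$ restricted to $E\times E$ is a unimodular pairing. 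Likewise $B\oplus B^*$ pairs into $B\cdot B=B$. Your justification ``$\pi|_E$ kills the pairing'' conflates $E\cdot E$ (which is $B$) with $E$ (which is $\ker\pi$). The correct choice \emph{mixes} the pieces across the two slots: $M:=B\times E$ and $M':=E\times B$. Then $f_1(M,M)\subseteq B^\sigma E+B^\sigma E= BE\subseteq E=\ker\pi$, so $\pi f_1(M,M)=0$, and symmetrically for $M'$, while $M\oplus M'=(B\oplus E)\times(E\oplus B)=A^2$; this is also what the construction in Proposition~\ref{PR:Eii-unitary-A-split-S-split} actually does ($E_1\times B_1^{\op}$ against $B_1\times E_1^{\op}$, not $E_1\times E_1^{\op}$).

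There is also a rank-bookkeeping slip that propagates through your reduction: since $\rrk_A(A_A)=\deg A=2$, the space $A^2\cong A\oplus A^*$ already has reduced $A$-rank $4$, whereas $\Hyp[\veps]{A^2_A}$ on $A^2\oplus(A^2)^*$ has reduced rank $8$. So the correct model to sum $\tfrac{1}{4}\rrk_AP$ copies of is the rank-$4$ form $f_1$ on $A^2$ above (equivalently $\Hyp[\veps]{A_A}$), and there is no need to ``pair two planes together'': one plane $A\oplus A^*$ uses up the factor of $4$ by itself. With $M=B\times E$ the remaining verifications go through as you describe: $MA=BA+EA=A^2$ by Lemma~\ref{LM:dfn-of-pi-A-B}(iv), and $\rrk_BM=\rrk_BB+\rrk_BE=2\deg B=\tfrac{1}{2}\rrk_B(A^2)$.
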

	
	\begin{proof}
		By Reduction~\ref{RD:common-reduction},
		we may assume that $B=T$
		and $\deg A=2$.
		Consider
		the hyperbolic $\veps$-hermitian
		form $f_1:A^2\times A^2\to A$ given 
		by $f_1((x_1,y_1),(x_2,y_2))=x_1^\sigma y_2+\veps x_2^\sigma y_1$.
		Writing $n=\frac{1}{4}\rrk_AP$, we
		have $n\cdot (A^2,f_1)\cong (P,f)$ by Lemma~\ref{LM:rank-determines-hyperbolic}.
		It is therefore enough to prove the proposition
		for $(A^2,f_1)$.
		Write $E=\ker \pi$
		and let $M:=B\times E$ and $M':=E\times B$;
		both $M$ and $M'$ are right $B$-submodules of $A^2$. One readily
		checks  that $\pi f_1(M,M)=\pi f_1(M',M')=0$
		and $A^2_A=M\oplus M'$. Thus, $M$ is a Lagrangian
		of $\pi f$.  By Lemma~\ref{LM:dfn-of-pi-A-B}(iv), 
		we have $MA=A^2$ and $\rrk_BM=2\rrk_BB=2=\frac{1}{2}\rrk_BA^2_B$.		
	\end{proof}
	
	\begin{prp}\label{PR:Eii-unitary-A-split-S-field}
		With Notation~\ref{NT:proof-of-Es},
		suppose that $S$ is a field, $[A]=0$ and
		$(\tau,\veps)$ is symplectic or unitary.
		If  $\tau$ is unitary, we also assume
		that $(\sigma,\veps)$ is not symplectic.
		Let $(P,f)\in \Herm[\veps]{A,\sigma}$
		be a hyperbolic hermitian space.
		Then there exists a Lagrangian $M$ of $\pi f$
		such that $MA=P$ and $\rrk_B M=\frac{1}{2}\rrk_BP$.
	\end{prp}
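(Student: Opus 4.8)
The plan is to reduce to a normal form via the two reductions of the excerpt and then exhibit an explicit Lagrangian. First I would invoke Reduction~\ref{RD:common-reduction} to arrange that $B=T$ (so $\deg A=2$), $\tau$ is orthogonal or unitary, and $\sigma$ is orthogonal or unitary; note that in the symplectic case for $(\tau,\veps)$ this reduction turns $\tau$ into an orthogonal involution, which contradicts the hypothesis that $(\tau,\veps)$ is symplectic — so I should instead $\mu$-conjugate more carefully, keeping track of what happens to the type of $(\sigma,\veps)$. Then, since $[A]=0$, I would use Reduction~\ref{RD:common-reduction-II} to present $A=\nMat{S}{2}$ with $\sigma$ given by the explicit formula $\smallSMatII{a}{b}{c}{d}\mapsto\smallSMatII{a^\sigma}{\alpha c^\sigma}{\alpha^{-1}b^\sigma}{d^\sigma}$ for some $\alpha\in\units{R}$. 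After that, because $f$ is hyperbolic and $\rrk_AP$ is constant (Corollary~\ref{CR:constant-even-ranks}(i)), Lemma~\ref{LM:rank-determines-hyperbolic} lets me assume $(P,f)$ is a single hyperbolic block, say $(A^2,f_1)$ with $f_1((x_1,x_2),(y_1,y_2))=x_1^\sigma y_2+\veps x_2^\sigma y_1$, plus possibly a copy of the rank-$2$ hyperbolic form over $(A,\sigma)$ itself when $\rrk_AP\equiv 2\bmod 4$ — this last block is the delicate one and is exactly where the hypothesis "$(\tau,\veps)$ not unitary or $(\sigma,\veps)$ not symplectic" enters.

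The key steps, in order, are: (1) dispose of the case $4\mid\rrk_AP$ using Proposition~\ref{PR:Eii:rrk-divisible-by-four}, which already gives the desired $M$ with no further hypotheses on the type; (2) dispose of the case $S=R\times R$ using Proposition~\ref{PR:Eii-unitary-A-split-S-split} (the hypothesis here is that $\rrk_AP$ is even, which holds since $f$ is hyperbolic); (3) reduce, via steps (1) and (2) and Witt equivalence, to the case $S$ a field, $\rrk_AP\equiv 2\bmod 4$, so that $(P,f)\cong (A^2,f_1)\oplus(A^2\text{-worth of }4\mid\text{-part})$, i.e.\ it suffices to treat $(P,f)=(A^2,f_1)$ together with one extra rank-$2$ hyperbolic $(A,\sigma)$-summand. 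Concretely, after splitting off the part with reduced rank divisible by $4$, one is left with exhibiting, for $(A,h)$ the rank-$2$ hyperbolic hermitian space over $(A,\sigma)$ with $\deg A=2$, a Lagrangian $M$ of $\pi h$ with $MA=A$. Since $\deg A=2$ we have $B=T$ a quadratic \'etale $S$-algebra and $A=B\oplus \mu B$ with $\mu^2\in\units{S}$ (Lemma~\ref{LM:dfn-of-pi-A-B}(v)); $h$ hyperbolic means $h\cong\Hyp[\veps]{U}$ for some rank-$1$ $A$-module $U$, i.e.\ $h\cong\Hyp[\veps]{A}$ since $A$ is the only rank-$2$ projective. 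So I must find a $B$-submodule $M\le A\oplus A^{*}$ that is a Lagrangian for $\pi\Hyp[\veps]{A}$ and generates over $A$; the standard candidate $0\oplus A^{*}$ is $B$-isotropic for $\Hyp[\veps]{A}$, hence for $\pi\Hyp[\veps]{A}$, but it does not generate, so instead I would take a "twisted diagonal" $M=\{(a\,\mu,\,\phi)\}$ or similar, and here the type hypothesis is used to choose the twisting element inside $\mu B$ (existence of an invertible symmetric element of the appropriate sign comes from Lemma~\ref{LM:invertible-symmetric-elements} and Lemma~\ref{LM:strcture-of-quat}).

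The main obstacle I expect is exactly the rank-$2$ hyperbolic block when $(\sigma,\veps)$ is symplectic and $\tau$ is unitary — this is the configuration excluded by hypothesis, and the reason it is excluded is that in that case $\pi\Hyp[\veps]{A}$ has \emph{no} Lagrangian $M$ with $MA=A$ (this mirrors the obstruction in part (i) of Theorem~\ref{TH:Eii-holds}, governed by divisibility of $\rrk_AP$ by $4$). So the proof must verify that under the stated hypotheses — $(\tau,\veps)$ symplectic, or $(\tau,\veps)$ unitary with $(\sigma,\veps)$ not symplectic — the twisting element can be found. When $(\tau,\veps)$ is symplectic: $\tau|_T=\id_T$ after conjugation is impossible (that's the orthogonal case), so $\tau$ is the standard involution on $T$, and one uses $\mu\in\Sym_{-1}(E,\sigma)$ from Lemma~\ref{LM:strcture-of-quat}(iii); when $(\tau,\veps)$ is unitary with $\sigma$ orthogonal, Lemma~\ref{LM:strcture-of-quat}(ii) gives $\mu^\sigma=\mu$, and the diagonal submodule $\{(x\otimes 1 + (Jx)\otimes\mu)\}$-style construction (as in the proof of \ref{item:Ei} in Theorem~\ref{TH:exactness-equiv-conds}) produces the generating Lagrangian. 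Once the rank-$2$ block is handled, assembling the global $M$ over a semilocal $R$ is routine: one constructs $M$ at each residue field using the field case plus Witt's extension / the surjectivity of $U^0$ onto closed fibers (Theorem~\ref{TH:U-zero-mapsto-onto-closed-fibers}) and lifts via Lemma~\ref{LM:semilocal-direct-sum-reduction}, exactly as in the proof of Theorem~\ref{TH:Ei-holds-unit-syp}.
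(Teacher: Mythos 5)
Your outline matches the paper's in broad strokes (reduce to $\deg A=2$ via Reduction~\ref{RD:common-reduction}, present $A=\nMat{S}{2}$ via Reduction~\ref{RD:common-reduction-II}, then decompose the hyperbolic space into copies of a reduced-rank-$2$ block using Lemma~\ref{LM:rank-determines-hyperbolic}), but the heart of the argument is missing. Everything reduces to exhibiting, for a single hyperbolic $(P_1,f_1)$ with $\rrk_A P_1=2$, a Lagrangian $M$ of $\pi f_1$ with $MA=P_1$ and $\rrk_B M=1$ --- and at exactly this point you only gesture at ``a twisted diagonal $M=\{(a\mu,\phi)\}$ or similar'' and say the type hypothesis is used ``to choose the twisting element,'' without constructing anything or verifying that it is $\pi f_1$-isotropic and generates. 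The paper's actual idea is concrete and quite different from a twisted diagonal: one arranges (using Hilbert~90 to normalize $\veps$, and the explicit matrix form of $\sigma$ and of a generator $\lambda$ of $T$ over $S$) that $f_1$ is the form $f_1(x,y)=x^\sigma w y$ on $P_1=A_A$ with Gram element $w=\smallSMatII{0}{\pm\alpha}{1}{0}$ satisfying $\pi(w)=0$; then $M:=B$ itself is the Lagrangian, since $\pi f_1(B,B)=B^\tau\pi(w)B=0$, $BA=A$, and $\rrk_B B=1$. That single computation $\pi(w)=0$ (via Lemma~\ref{LM:dfn-of-pi-A-B}(ii)) is what the case split on whether $\tau|_T$ equals the standard involution of $T$ is organized around, and it is absent from your proposal.

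Two further points. First, your worry that Reduction~\ref{RD:common-reduction} ``turns $\tau$ into an orthogonal involution, which contradicts the hypothesis that $(\tau,\veps)$ is symplectic'' is a misreading: the $\mu$-conjugation replaces $(\tau,\veps)$ by $(\Int(\mu)\circ\tau,-\veps)$, which by Corollary~\ref{CR:type-conjugation}(i) has the \emph{same} type; only the type of $\tau$ alone (i.e.\ of $(\tau,1)$) changes. Second, several statements in your reduction are off as written: the rank-$2$ hyperbolic block is $\Hyp[\veps]{U}$ for $U$ of reduced rank $1$ (so the underlying module is $U\oplus U^*\cong A_A$), not $\Hyp[\veps]{A}$ on $A\oplus A^*$, which has reduced rank $4$; the case $S=R\times R$ cannot occur since $S$ is assumed to be a field; and the final semilocal gluing via Theorem~\ref{TH:U-zero-mapsto-onto-closed-fibers} is not part of this proposition ($S$ a field forces $R$ to be a field) --- it belongs to the later Theorem~\ref{TH:Eii-unitary-symplectic}.
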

	
	\begin{proof}
		By Reductions~\ref{RD:common-reduction} and~\ref{RD:common-reduction-II},
		we may assume that $B=T$, $A=\nMat{S}{2}$, 
		$\tau$ is orthogonal or unitary,
		and $\sigma$ is orthogonal or unitary
		and given by $[\begin{smallmatrix} a & b \\
		c & d\end{smallmatrix}]\mapsto[\begin{smallmatrix} a^\sigma & \alpha c^\sigma \\
		\alpha^{-1}b^\sigma & d^\sigma\end{smallmatrix}] $
		for some $\alpha\in \units{R}$. 
		By Corollary~\ref{CR:constant-even-ranks}(ii), $\rrk_AP$ is even.
		
		Let $(P_1,f_1)$
		be a hyperbolic hermitian space
		such that $\rrk_AP_1=2$.
		Then, by Lemma~\ref{LM:rank-determines-hyperbolic}, $(P,f)\cong n\cdot (P_1,f_1)$
		for some $n\in \N$.
		It is therefore enough to prove the proposition
		for $(P_1,f_1)$.
		We now split into cases, making different choices
		of $(P_1,f_1)$ in each case.

\medskip

	\noindent {\it Case I.   $\tau|_T$ is not
		the standard $S$-involution of $T$.}
		We  may assume that $\veps=-1$.
		This already holds when $\sigma|_S=\id_S$,
		because  then $\tau$ is orthogonal while
		$(\tau,\veps)$ is symplectic.
		When $\sigma|_S\neq\id_S$, by
		Hilbert's Theorem 90, there exists
		$\eta\in S$ with $\eta^\sigma\eta^{-1}=-\veps$
		and we can apply $\eta$-conjugation
		(see \ref{subsec:conjugation}, Proposition~\ref{PR:simult-conj})
		to   replace $f$, $\veps$ with  {$\eta f$}, $-1$.

		Consider
		the hyperbolic $(-1)$-hermitian form
		$f_1:A\times A\to A$
		given by $f_1(x,y)=x^\sigma[\begin{smallmatrix} 0 & -\alpha \\ 1 & 0\end{smallmatrix}] y$;
		note that $\rrk_A A=2$ and
		$[\begin{smallmatrix} S & S \\ 0 & 0\end{smallmatrix}]$
		is a Lagrangian of $f_1$.
		
		We claim that there exists
		$\lambda\in T$
		such that $\lambda^2\in \units{S}$, $T=S\oplus \lambda S$
		and $\lambda^\sigma =\lambda$.
		If $\sigma|_S=\id_S$, then $\sigma|_T=\id_T$ 
		and the existence of $\lambda$ follows from Lemma~\ref{LM:quad-etale-over-semilocal}.
		If $\sigma|_S\neq \id_S$, then $\tau$ is unitary (Lemma~\ref{LM:sigma-unit-means-tau-unit}),
		so $T$ is quadratic \'etale over $T_0:=\Sym_1(T,\tau)$,
		which is in turn quadratic \'etale over $R$ and satisfies
		$T_0\cap S=\Sym_1(S,\sigma)=R$. Applying Lemma~\ref{LM:quad-etale-over-semilocal}
		to $T_0$, we see that
		there exists $\lambda\in\units{T_0}\setminus R$   and $\lambda^2\in\units{R}$.
		Thus, $\lambda^\sigma=\lambda$, and $T=S\oplus \lambda S$ because $\lambda\notin S$
		and $\dim_S T=2$.

		The  conditions
		$\lambda=\lambda^\sigma$ and $\lambda^2\in \units{S}$
		force 
		$\lambda=[\begin{smallmatrix} a & \alpha b \\ b & -a\end{smallmatrix}]$
		for some $a,b\in S$
		such that $a^\sigma=a$ and $a^2+\alpha b^2\in \units{S}$.				
		Using Lemma~\ref{LM:dfn-of-pi-A-B}(ii),
		it is routine to check that 
		\[
		\pi ([\begin{smallmatrix} 0 & -\alpha \\ 1 & 0\end{smallmatrix}])
		=\textstyle\frac{1}{2}[\begin{smallmatrix} 0 & -\alpha \\ 1 & 0\end{smallmatrix}]
		+\frac{1}{2}[\begin{smallmatrix} a & \alpha b \\ b & -a\end{smallmatrix}]^{-1}
		[\begin{smallmatrix} 0 & -\alpha \\ 1 & 0\end{smallmatrix}]		
		[\begin{smallmatrix} a & \alpha b \\ b & -a\end{smallmatrix}]
		=0.
		\]	
		Thus, for all $x,y\in B$, we have
		\[
		\pi f_1(x,y)=\pi(x^\tau [\begin{smallmatrix} 0 & -\alpha \\ 1 & 0\end{smallmatrix}]y)=
		x^\tau \pi([\begin{smallmatrix} 0 & -\alpha \\ 1 & 0\end{smallmatrix}]) y=0.
		\]	
		It follows that $B$ is a Lagrangian of $\pi f_1$.
		Since $\rrk_BB_B=1=\frac{1}{2}\rrk_BA$ and
		$BA=A=P$, we are done.
		
\medskip

	\noindent {\it Case II.   $\tau|_T$ is  
		the standard $S$-involution of $T$.}
		Since $\tau$ is unitary,
		$(\sigma,\veps)$ is necessarily orthogonal.
		Thus, $\sigma$ is orthogonal and $\veps=1$.

		By Proposition~\ref{PR:types-of-involutions-Az},
		$\dim_S\Sym_{-1}(T,\sigma)=1=\dim_S\Sym_{-1}(A,\sigma)$,
		so we must have
		$T=S+\Sym_{-1}(A,\sigma)=S\oplus \lambda S$
		with $\lambda:=[\begin{smallmatrix} 0 & \alpha \\ -1 & 0\end{smallmatrix}]$.
		Using Lemma~\ref{LM:dfn-of-pi-A-B}(ii), it is   routine to
		check that $\pi:A\to B$
		is given by $\pi[\begin{smallmatrix} x & y \\ z & w\end{smallmatrix}]
		=\frac{1}{2}(x+w)[\begin{smallmatrix} 1 & 0 \\ 0 & 1\end{smallmatrix}]+
		\frac{1}{2}(\alpha^{-1}y-z)[\begin{smallmatrix} 0 & \alpha \\ -1 & 0\end{smallmatrix}]$.

		Consider the $1$-hermitian hyperbolic form $f_1:A\times A\to A$
		given by $f(x,y)=x^\sigma [\begin{smallmatrix} 0 &  \alpha \\ 1 & 0\end{smallmatrix}] y$ (the space
		$[\begin{smallmatrix} S & S \\ 0 & 0\end{smallmatrix}]$
		is a Lagrangian).
		As in Case I, one readily checks that $\pi [\begin{smallmatrix} 0 &  \alpha \\ 1 & 0\end{smallmatrix}]=0$
		and hence $B$ is  the
		required Lagrangian of $\pi f_1$.
	\end{proof}
	
	The case where $\tau$ is unitary and $(\sigma,\veps)$
	is symplectic
	is addressed in the following proposition.
	Note that we allow  $R$ to be semilocal.

	\begin{prp}\label{PR:Eii:unitary-symplectic-case}
		With Notation~\ref{NT:proof-of-Es},
		suppose that    $R$ is connected semilocal  
		and 
		$(\sigma,\veps)$ is symplectic.
		Let $(P,f)\in\Herm[\veps]{A,\sigma}$. Then:
		\begin{enumerate}[label=(\roman*)]
			\item If $ \tau $ is unitary, $T$ is connected, 
			$[A]\neq 0$
		  	and $\pi f$ is hyperbolic,
			then $4\mid \rrk_A P$.
			\item If $[A]=0$, then $f$ is hyperbolic.
			\item If $ \tau $ is unitary, $T$ is connected, $[A]=0$ and $\pi f$ admits a Lagrangian
			$M$ with $MA=P$, then $4\mid \rrk_A P$.
		\end{enumerate}
	\end{prp}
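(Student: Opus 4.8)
The plan is to treat the three parts in the order (ii), (i), (iii), since (ii) is purely a parity/Brauer argument that feeds into the others, and (iii) follows from (i) once we know $f$ is hyperbolic and we understand the module structure of a Lagrangian. For part (ii), note that $(\sigma,\veps)$ symplectic forces $S=R$ and $\veps\in\{\pm1\}$; since $R$ is connected semilocal, Corollary~\ref{CR:constant-even-ranks}(i) gives that $\rrk_A P$ is constant. The hypothesis $[A]=0$ means $A=\End_R(V)$ for some $V\in\rproj{R}$, and by Reduction~\ref{RD:common-reduction-II} (after performing Reduction~\ref{RD:common-reduction}) we may assume $A=\nMat{R}{2}$ with $\sigma$ the standard symplectic involution. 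Then the adjoint involution $\theta$ of $f$ on $\End_A(P)$ is symplectic by Proposition~\ref{PR:type-of-adjoint}. The key point: a symplectic involution on $\nMat{R}{2m}$ over a semilocal ring $R$ is (up to isomorphism) the adjoint of the hyperbolic form, so $f$ itself is hyperbolic. Concretely I would invoke Proposition~\ref{PR:involution-is-adjoint} to realize $\sigma$ as adjoint to a binary symplectic (i.e.\ alternating, $\delta=-1$) form over $(R,\id_R)$, which is hyperbolic; combined with hermitian Morita theory this shows every $(P,f)\in\Herm[\veps]{A,\sigma}$ with $A=\nMat{R}{2}$ is hyperbolic. (Alternatively: the isometry group of any $(P,f)$ is a symplectic group, which has no anisotropic forms over a field, and one lifts from the residue fields using Lemma~\ref{LM:rank-determines-hyperbolic}; but the adjoint-involution route is cleaner.)

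For part (i): assume $\tau$ unitary, $T$ connected, $[A]\neq0$, and $\pi f$ hyperbolic. Since $(\sigma,\veps)$ is symplectic, $S=R$; since $\tau$ is unitary, $T/R$ is quadratic \'etale and (by Proposition~\ref{PR:centralized-in-Az-alg}) $[B]=[A\otimes_R T]$ in $\Br T$. Because $A$ carries an $R$-involution we have $2[A]=0$ in $\Br R$. Write $n=\rrk_A P$, so $\rrk_B P = \iota\rrk_A P = n$ as well, and $\rrk_B(PA) = 2\rrk_B P$... wait, more relevantly: $\pi f \in \Herm[\veps]{B,\tau}$ has underlying module $P$ with $\rrk_B P = n$. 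Since $\pi f$ is hyperbolic and $T$ is connected, Corollary~\ref{CR:constant-even-ranks}(ii) applied over $(B,\tau)$ gives $V\in\rproj{B}$ with $n = \rrk_B P = 2\rrk_B V$, so $n$ is even, say $n = 2k$ with $k = \rrk_B V$. Moreover $\ind B \mid \rrk_B V$ by Corollary~\ref{CR:index-divides-rrk}. I now argue $k$ is even, which gives $4\mid n$. Over the residue fields $A(\frakm)$ is a symplectic central simple algebra, hence has even degree; and the relation $n[A]=0$ combined with $2[A]=0$ and $[A]\neq0$ forces... here is the crux: $\End_A(P)\in[A]$ with $\deg\End_A(P)=\rrk_A P = n$ (Proposition~\ref{PR:degree-of-endo-ring}(i)), so $n$ is a multiple of $\ind A$; since $[A]\neq 0$ and $2[A]=0$ we have $\ind A$ even (period divides index, Theorem~\ref{TH:period-divides-index}, and period is $2$), hence $n$ even — but I need $4\mid n$. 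The missing input is that $P$, as a $B$-module, carries a hyperbolic form, giving $n=2k$; and then I must see $k[B] + $ (discriminant-type obstruction) controls parity of $k$. The honest approach: reduce via Reduction~\ref{RD:common-reduction-II} over each residue field, use that a symplectic involution on $\nMat{R}{2m}$ restricted to the centralizer of a quadratic \'etale subalgebra relates $[A]$ to the discriminant algebra of $\pi f$ (as in the unitary discriminant theory of \ref{subsec:disc}), and conclude that $\pi f$ hyperbolic with $[A]\neq 0$ is incompatible with $\rrk_B P \equiv 2 \pmod 4$.

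The hard part will be part (i): pinning down exactly why the $2$-divisibility improves to $4$-divisibility. The mechanism should be: after Reductions~\ref{RD:common-reduction} and~\ref{RD:common-reduction-II} we may take $A=\nMat{R}{2}$, $B=T$ quadratic \'etale over $R$, and $\sigma$ symplectic; then $\pi f$ is a $\veps$-hermitian form over $(T,\tau)$ of rank $n$ over $T_0=\Sym_1(T,\tau)=R$... actually rank $n$ over $T$, and $\rho(\pi f)$-type considerations tie $[A] = (B/R, \disc(\pi f))$ up to the factor $[D(\cdot)]^{n/2}$ from the discriminant formalism. Hyperbolicity of $\pi f$ forces $\disc(\pi f)$ trivial, so $(n/2)[A]=0$ in $\Br R$; since $[A]$ has order $2$ and is nonzero this gives $n/2$ even, i.e.\ $4\mid n$. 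I would carry out this discriminant bookkeeping carefully using Proposition~\ref{PR:disc-unitary-basic-props} and Lemma~\ref{LM:Ei-qaut-decomposition}-style quaternion decompositions. Finally part (iii): given $[A]=0$, part (ii) makes $f$ hyperbolic, so $\pi f$ is hyperbolic with a Lagrangian $M$ satisfying $MA=P$; by Lemma~\ref{LM:Eii-equiv-formulation} this means $M\oplus M\mu = P$ (choosing $\mu$ as in Lemma~\ref{LM:dfn-of-pi-A-B}(v)), so $\rrk_B M = \tfrac12\rrk_B P = \tfrac12 n$; but $M$ is also a Lagrangian of the hyperbolic $\pi f$ over $(B,\tau)$ with $T$ connected, so $\rrk_B M$ is $\tau$-invariant and, combined with $\rrk_B M = \rrk_B M^*$, we need $\rrk_B M$ compatible with a Lagrangian decomposition — running the same parity argument as in part (i) on $M$ in place of $P$ (or simply invoking part (i), since $\pi f$ hyperbolic is exactly its hypothesis once we drop $[A]\neq 0$... but (i) needs $[A]\neq 0$). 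So instead: with $[A]=0$, $n$ could a priori be $\equiv 2 \pmod 4$; the existence of $M$ with $MA = P$ and $\rrk_B M = n/2$ together with $M$ Lagrangian in $(B,\tau)$ means $n/2$ is itself realized as the rank of a Lagrangian, hence $n/2$ is even by the hyperbolic-rank parity over a connected quadratic \'etale extension with $\sigma|_T$ standard — that is Corollary~\ref{CR:constant-even-ranks}(ii) applied to $\pi f$ restricted appropriately, yielding $n/2 = 2\rrk_B(\text{something})$ when the relevant involution data is symplectic-induced. I expect this last step to require the same careful type-tracking as (i), so I would unify (i) and (iii) into a single lemma about ranks of Lagrangians of $\rho$-hyperbolic forms.
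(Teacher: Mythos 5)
Your part (ii) and the mechanism you identify for part (i) are essentially the paper's: (ii) is proved by conjugating/transferring down to unimodular alternating forms over $(R,\id_R)$, and (i) does come down to the fact that the unitary discriminant of $\pi f$ equals $\tfrac{1}{2}\rrk_AP\cdot[A]$ in $\Br R$ and is killed by hyperbolicity. But your execution of (i) has two defects. First, you cannot invoke Reduction~\ref{RD:common-reduction-II} to take $A=\nMat{R}{2}$: that reduction requires $[A]=0$, which is precisely the hypothesis you are negating. You may only use Reduction~\ref{RD:common-reduction} to reach $\deg A=2$, $B=T$, with $A$ a possibly nonsplit quaternion algebra, and even that requires knowing $[B]=0$ in $\Br T$ beforehand. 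This is where the contradiction hypothesis must enter: assuming $4\nmid\rrk_AP$, the module $V$ with $\rrk_BP=2\rrk_BV$ has $\rrk_BV$ odd, and $\ind B\mid\rrk_BV$ (Corollary~\ref{CR:index-divides-rrk}) together with $\rrk_BV\cdot[B]=0$ and $2[B]=0$ forces $[B]=0$. You list these ingredients but never assemble them; without this step neither the reduction to $\deg B=1$ nor the discriminant of \ref{subsec:disc} (defined only when the relevant Brauer class vanishes) is available. After that, one diagonalizes $f$ (Proposition~\ref{PR:diagonalizable-herm-forms}), computes $\disc(\pi f)\equiv(\mu^2)^{n}$ where $\rrk_AP=2n$ and $(T/R,\mu^2)\cong A$, and concludes.

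The genuine gap is part (iii). Everything you derive there — $\rrk_BM=\tfrac12\rrk_BP$, $\tau$-invariance of ranks — holds for \emph{every} Lagrangian in $\Lag(\pi f)$ when $T$ is connected, so you have not used the hypothesis $MA=P$ in any effective way; and there is no ``hyperbolic-rank parity'' forcing $\rrk_BM$ itself to be even (Corollary~\ref{CR:constant-even-ranks}(ii) gives $\rrk_BP=2\rrk_BM$ and nothing more). Nor can the mechanism of (i) be recycled: with $[A]=0$, $\mu^2$ is a norm from $T$ and the discriminant obstruction vanishes identically. The missing idea is that $MA=P$, equivalently $M\oplus M\mu=P$ (Lemma~\ref{LM:Eii-equiv-formulation}), is exactly the criterion of Remark~\ref{RM:finer-exactness}(i) for $f$ to descend through the octagon: the form $g:=(\lambda\mu)^{-1}f|_{M\times M}$ is a unimodular $(-\veps)$-hermitian form over $(B,\tau_2)$ with $\tau_2=\Int(\mu^{-1})\circ\tau=\id_T$, and $\rho_2 g\cong f$. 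Since $g$ is then a unimodular alternating form over $T$, its rank $\rrk_TM=\tfrac12\iota\rrk_AP$ is even, which is precisely $4\mid\rrk_AP$. Without this descent step the conclusion of (iii) does not follow from what you have established.
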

	
	\begin{proof}
		Note that $S=R$, and  thus $T$ quadratic \'etale over $R$.	
		By Lemma~\ref{LM:invs-of-quad-et-algs} and the connectivity of $R$,
		the involution $\tau|_T:T\to T$ is either $\id_T$ or the standard
		$R$-involution of $T$. Thus, when $\tau:B\to B$ is unitary, 
		$\tau|_T$ is the standard $R$-involution
		of $T$. 		
	
		(i) Since $\tau$ is unitary, $\tau|_T$ is the standard involution of $T$.
		For the sake of contradiction, 		
		suppose that $\rrk_AP$ is not divisible by $4$.
		By Corollary~\ref{CR:constant-even-ranks}(ii),
		there exists $V\in\rproj{B}$ such that $\rrk_B P=2\rrk_BV$.
		Since $\rrk_B P_B=\iota\rrk_A P$, 
		we have $\rrk_AP=2n$, where $n:=\rrk_BV$.
		Thus,	   
		$\rrk_BV$ is odd. By Corollary~\ref{CR:index-divides-rrk}
		and Theorem~\ref{TH:period-divides-index}, $n  [B]=0$.
		On the other hand, $2[B]=2[A_T]=0$,
		because $A$ has an involution of the first kind, so
		$[B]=0$.
		We now apply Reduction~\ref{RD:common-reduction}
		to assume that $B=T$, $\deg A=2$, $\sigma$ is orthogonal and $\veps=-1$.

		By  Lemma~\ref{LM:invertible-symmetric-elements},
		there exists $c\in \Sym_{-1}(T,\tau)\cap \units{T}$.
		We apply $c$-conjugation, see \ref{subsec:conjugation}
		and Proposition~\ref{PR:simult-conj},
		to replace $\sigma$, $f$, $\veps$ by $\Int(c)\circ \sigma$, $c f$, $-\veps$.
		Now, $\veps=1$ and $\sigma$ is symplectic.
		Let $\lambda,\mu\in\units{A}$ be as in Lemma~\ref{LM:strcture-of-quat}(iii)
		(so $\lambda^\sigma=-\lambda$ and $\mu^\sigma=-\mu$)
		and note that $\Sym_1(A,\sigma)=R$.
		
		Since $  \rrk_AP=2n$,
		the $A$-module $P$ is free
		(Lemma~\ref{LM:rank-determines}).
		Thus,
		by Proposition~\ref{PR:diagonalizable-herm-forms}, we may
		assume that $f=\langle \alpha_1,\dots,\alpha_n\rangle_{(A,\sigma)}$ 
		with $\alpha_1,\dots,\alpha_n\in\Sym_1(A,\sigma)\cap\units{A}=\units{R}$. 
		Now,  it is routine  to check that, upon identifying
		$B_B^2$ with $A_B$ via $(b_1,b_2)\mapsto b_1+\mu b_2$,
		the form $\pi f$ is just
		$\langle \alpha_1, -\eta \alpha_1,\alpha_2,-\eta \alpha_2,\dots,\alpha_n,-\eta \alpha_n\rangle_{(T,\tau)}$,
		where $\mu^2=\eta$.
		Since $\pi f$ is hyperbolic, 
		$\eta^n\equiv \disc(\pi f)	\equiv\disc (n\cdot\langle 1,-1\rangle_{(T,\tau)})
		\equiv 1\bmod \Nr_{T/R}(\units{T})$
		(see~\ref{subsec:disc}).	
		Since $n$ is odd, this means
		that there exists $t\in \units{T}$ with $t^\sigma t=\eta=\mu^2$.
		As $\mu b=b^\sigma \mu$ for all $b\in B$, the element $e:=\frac{1}{2}(1+\mu^{-1}t)$
		is an idempotent. Furthermore, $e\notin\{0,1\}$, otherwise
		$\mu\in B$.
		Thus, $eA_A$ is a proper nonzero summand of $A_A$, so $\rrk_AeA=1$. 
		But this means that $[A]=[eAe]=0$ (Corollary~\ref{CR:degree-of-endo-ring}),
		a contradiction.

		(ii) By Reduction~\ref{RD:common-reduction},
		we may assume that $\sigma$ is orthogonal, hence $\veps=-1$.
		By Theorem~\ref{TH:invariant-primitive-idempotent},
		there exists $e\in A$
		such that $\rrk_AeA=1$ and $e^\sigma=e$.
		Applying $e$-transfer (see~\ref{subsec:conjugation}),
		we may replace $A$, $\sigma$, $f$
		with $eAe$, $\sigma|_{eAe}$, $f_e$
		and assume that $A=R$
		and $f:P\times P\to R$
		is an anti-symmetric unimodular bilinear form.
		Every such $f$ is hyperbolic,
		e.g., apply the argument in \cite[Lemma~7.7.2]{Scharlau_1985_quadratic_and_hermitian_forms}
		to a basis element of $P$.	
		
		(iii) Arguing as in (i), we may
		assume that $B=T$, $\deg A=2$, $\sigma$
		is symplectic and $\veps=1$.
		Let $\lambda,\mu\in\units{A}$ be as in Lemma~\ref{LM:strcture-of-quat}(iii).
		Then, writing $\tau_1:=\tau$, $\tau_2= \id_B$ and $\pi_1:=\pi$,
		we are in the situation of \ref{subsec:octagon}.
		Thus, by Remark~\ref{RM:finer-exactness}(i),
		there exists $(Q,g)\in \Herm[-1]{T,\id_T}$
		such that $\rho_2(Q,g)\cong (P,f)$.
		Since $g:Q\times Q\to T$ is an anti-symmetric unimodular bilinear form,
		$\rrk_BQ$ must be even,
		and since $\rrk_BQ$ is constant  
		($T$ is connected), we have
		$\iota\rrk_AP=\iota \rrk_AQA=2\rrk_BQ$. It follows
		that $4\mid \rrk_AP$. 
	\end{proof}
	
	\begin{prp}\label{PR:Eii-unitary-R-field}
		With Notation~\ref{NT:proof-of-Es},
		suppose that $R$ is a field and $(\tau,\veps)$ is unitary or symplectic.
		Let   $(P,f)\in \Herm[\veps]{A,\sigma}$
		be a   hermitian space such that
		$\rrk_AP$ is  even
		and  
		$\pi f$ is hyperbolic.
		If $ \tau $ is unitary and $(\sigma,\veps)$ is symplectic,
		we also assume that $4\mid \rrk_AP$.
		Let	  $M $ be a Lagrangian of $\pi f$
		such that $\rrk_BM=\frac{1}{2}\rrk_BP$.
		Then there exists $\vphi \in U^0(\pi f)$
		such that $\vphi M\cdot A=P$.
	\end{prp}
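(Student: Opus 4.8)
The plan is to transcribe the proof of the $\rho$-side statement, Proposition~\ref{PR:Ei-unitary-R-is-field}, to the $\pi$-side: after reducing to the case $[A]=0$, one produces \emph{by hand} a Lagrangian $M'$ of $\pi f$ that lies in $\Lag(\pi f)$ and satisfies $M'A=P$, and then Lemma~\ref{LM:Lag-transitive-action} together with Proposition~\ref{PR:U-zero-description} finishes. For the reduction: if $R$ is finite then $[A]=0$ by Wedderburn's theorem; if $R$ is infinite, pass to an algebraic closure $\quo R$, under which the hypotheses are stable (the setting of Notation~\ref{NT:proof-of-Es} is preserved by Lemma~\ref{LM:center-base-change}; $\pi f$ stays hyperbolic; $M_{\quo R}$ is a Lagrangian of $\pi f_{\quo R}$ of reduced rank $\tfrac12\rrk_{B_{\quo R}}P_{\quo R}$; the divisibility conditions on $\rrk_AP$ and the orthogonal/symplectic/unitary types of $\sigma$ and $\tau$ are unchanged), and over $\quo R$ one has $[A_{\quo R}]=0$, hence $[B_{\quo R}]=[A_{\quo R}\otimes_ST_{\quo R}]=0$. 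By Proposition~\ref{PR:rationality-for-Ei}(ii) it then suffices to find $\vphi\in U^0(\pi f_{\quo R})$ with $\vphi M\cdot A_{\quo R}=P_{\quo R}$, so we may assume $[A]=0$ (hence $[B]=0$), and also $P\neq 0$.

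Now I would construct $M'$ case by case. If $S$ is not a field, then $S=\Cent(A)$ being finite \'etale over the field $R$ forces $\sigma$ unitary and $S\cong R\times R$, and Proposition~\ref{PR:Eii-unitary-A-split-S-split} (valid since $\rrk_AP$ is even) gives $M'\in\Lag(\pi f)$ with $M'A=P$. If $S$ is a field and $(\sigma,\veps)$ is symplectic, then $4\mid\rrk_AP$ by hypothesis and $f$ is hyperbolic by Proposition~\ref{PR:Eii:unitary-symplectic-case}(ii), so Proposition~\ref{PR:Eii:rrk-divisible-by-four} yields such an $M'$. If $S$ is a field and $(\sigma,\veps)$ is not symplectic, write $(P,f)=(P_1,f_1)\oplus(P_2,f_2)$ with $f_1$ anisotropic and $f_2$ hyperbolic (Proposition~\ref{PR:ansio-Witt-equivalent}); then $[\pi f_1]=[\pi f]-[\pi f_2]=0$ in $W_\veps(B,\tau)$, so $\pi f_1$ is hyperbolic by Theorem~\ref{TH:trivial-in-Witt-ring}(ii). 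For any Lagrangian $M_1$ of $\pi f_1$, the argument of Remark~\ref{RM:Es-for-field}---valid in the setting of Notation~\ref{NT:proof-of-Es} via the $\lambda,\mu$ of Lemma~\ref{LM:dfn-of-pi-A-B}(v), using $\mu^2\in B$---shows $M_1\cap M_1\mu=0$, whence an $R$-dimension count gives $M_1\oplus M_1\mu=P_1$, i.e.\ $M_1A=P_1$ (Lemma~\ref{LM:Eii-equiv-formulation}); a reduced-rank computation in the spirit of Lemma~\ref{LM:Ei-Lagrangians-are-iso} then shows $M_1\in\Lag(\pi f_1)$. By Proposition~\ref{PR:Eii-unitary-A-split-S-field} there is $M_2\in\Lag(\pi f_2)$ with $M_2A=P_2$, and $M':=M_1\oplus M_2$ is a Lagrangian of $\pi f=\pi f_1\oplus\pi f_2$ with $M'A=M_1A+M_2A=P$ and $\rrk_BM'=\tfrac12\rrk_BP$.

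In all cases we have $M'\in\Lag(\pi f)$ with $M'A=P$; since $M\in\Lag(\pi f)$ as well, Lemma~\ref{LM:Lag-transitive-action} provides $\vphi\in U(\pi f)$ with $\vphi M=M'$, so $\vphi M\cdot A=P$, and as $(\tau,\veps)$ is unitary or symplectic and $\rrk_BP>0$ we have $U(\pi f)=U^0(\pi f)$ by Proposition~\ref{PR:U-zero-description}, so $\vphi\in U^0(\pi f)$. I expect the main obstacle to be the reduced-rank bookkeeping in the second step, above all when $T$ is \emph{not} connected, where a Lagrangian $M$ with $MA=P$ need not a priori have constant (equivalently $\sigma$-invariant) reduced $B$-rank. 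This should be handled using Lemma~\ref{LM:idempotent-in-T}: when $T\cong S\times S$ carries a $\sigma$-fixed primitive idempotent one checks (via $e$-transfer and Corollary~\ref{CR:type-conjugation}(ii)) that $(\sigma,\veps)$ is symplectic, so $f$ is already hyperbolic and the earlier cases apply; when the primitive idempotent $e$ is swapped by $\sigma$ (so $\tau$ is unitary), $Pe$ is a Lagrangian of $\pi f$ with $(Pe)A=P$ but of the wrong reduced rank, and one repairs it by taking $M'=U\oplus V$ with $U\subseteq Pe$ and $V\subseteq Pe'$ of reduced $eAe$- and $e'Ae'$-rank $\tfrac12\rrk_AP$ (possible by Lemma~\ref{LM:rank-determines} and the perfectness of the pairing induced by $\pi f$ on $Pe\times Pe'$), which satisfies $M'A=P$ by Morita theory and $\rrk_BM'=\tfrac12\rrk_BP$ by construction. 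Everything else is a routine transcription of the proof of Proposition~\ref{PR:Ei-unitary-R-is-field}.
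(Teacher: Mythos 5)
Your overall strategy is the same as the paper's (reduce to $[A]=0$ via Proposition~\ref{PR:rationality-for-Ei}(ii) or Wedderburn, construct by hand a Lagrangian $M'\in\Lag(\pi f)$ with $M'A=P$, then move $M$ to $M'$ by Lemma~\ref{LM:Lag-transitive-action} and Proposition~\ref{PR:U-zero-description}), but your case division contains a genuine error. In your second case (``$S$ is a field and $(\sigma,\veps)$ is symplectic'') you assert that $4\mid\rrk_AP$ \emph{by hypothesis}; the hypothesis only imposes this when $\tau$ is unitary \emph{and} $(\sigma,\veps)$ is symplectic. The sub-case where $(\sigma,\veps)$ is symplectic but $(\tau,\veps)$ is symplectic as well (so $\tau$ is not unitary, e.g.\ $\sigma$ orthogonal, $\veps=-1$, $\tau|_T=\id_T$) is non-vacuous and admits $\rrk_AP\equiv 2\bmod 4$: take $A=\nMat{k}{2}$ with an orthogonal $\sigma$, $T$ the split diagonal \'etale algebra fixed pointwise by $\sigma$, $\veps=-1$, and $f=\langle u\rangle$ of reduced rank $2$. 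There Proposition~\ref{PR:Eii:rrk-divisible-by-four} gives you nothing. The correct tool for that sub-case is Proposition~\ref{PR:Eii-unitary-A-split-S-field}, whose hypotheses allow $(\tau,\veps)$ symplectic with arbitrary $\sigma$ and only exclude the combination ``$\tau$ unitary and $(\sigma,\veps)$ symplectic''; this is exactly why the paper splits the field case as ``$\tau$ unitary and $(\sigma,\veps)$ symplectic'' versus its negation, rather than on the type of $(\sigma,\veps)$ alone.

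A secondary weak point: in your third case you claim that \emph{any} Lagrangian $M_1$ of $\pi f_1$ (the anisotropic part) automatically lies in $\Lag(\pi f_1)$, i.e.\ has reduced $B$-rank $\tfrac12\rrk_BP_1$ as a function on $\Spec T$. The dimension count from Remark~\ref{RM:Es-for-field} only controls $\dim_RM_1$, not the constancy of $\rrk_BM_1$ when $T$ is disconnected, and Lemma~\ref{LM:Ei-Lagrangians-are-iso} presupposes constancy rather than proving it. The paper avoids the issue entirely by \emph{constructing} $M_1$: since $[B]=[T]$ there is $V\in\rproj{B}$ with $\rrk_BV=1$ constant, $\rrk_BP_1$ is even, and Lemma~\ref{LM:rank-determines-hyperbolic} gives an isometry $\Hyp[\veps]{V^n}\to\pi f_1$; taking $M_1$ to be the image of $V^n$ yields a Lagrangian of the correct constant reduced rank with $M_1A=P_1$ by Remark~\ref{RM:Es-for-field}. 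Your proposed repair via $\sigma$-fixed versus $\sigma$-swapped idempotents of $T$ is not developed enough to substitute for this, and in the $\sigma$-fixed branch it loops back to the erroneous second case.
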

	
	\begin{proof}
		Thanks to Proposition~\ref{PR:rationality-for-Ei},
		when $R$ is infinite, it is enough to prove the proposition
		after base-changing to an algebraic closure of $R$, in which case $[A]=0$.
		On the other hand, if $R$ is finite, then $[A]=0$ by
		Wedderburn's theorem. We may therefore assume that $[A]=0$.
		
 		We claim that $\pi f$ admits a Lagrangian $M'$
 		such that $M'A=P$ and $\rrk_B M'=\frac{1}{2}\rrk_BP$.
 		To that end, we split into three cases.
 		
\smallskip 		
 		
 		{\it \noindent Case I. $S$ is not a field.}
 		Then $M'$ exists by Proposition~\ref{PR:Eii-unitary-A-split-S-split}.

\smallskip 	
 	
 		{\it \noindent Case II. $S$ is a field, $ \tau $ is unitary and $(\sigma,\veps)$
 		is symplectic.} 
 		Then $f$ is hyperbolic by Proposition~\ref{PR:Eii:unitary-symplectic-case}(ii) 
 		and $4\mid \rrk_BP$ by assumption, so $M'$ exists by Proposition~\ref{PR:Eii:rrk-divisible-by-four}.
 	
\smallskip 	
 		
 		{\it \noindent Case III. $S$ is a field, and $ \tau $ is not unitary or $(\sigma,\veps)$ is not symplectic.} 
		Using Proposition~\ref{PR:ansio-Witt-equivalent},		
		write $(P,f)=(P_1,f_1)\oplus (P_2,f_2)$
		with $f_1$ anisotropic and $f_2$ hyperbolic.
		Then $[\pi f_1]=[\pi f]=0$ in $W_\veps(B,\tau)$, so
		$[\pi f_1]$ is hyperbolic by Theorem~\ref{TH:trivial-in-Witt-ring}(ii).
		
		By virtue of Remark~\ref{RM:Es-for-field},
		any Lagrangian $M_1$ of $\pi f_1$
		satisfies $M_1A=P_1$.
		We claim that one can choose   $M_1$
		such that $\rrk_BM_1=\frac{1}{2}\rrk_BP$.
		Indeed, by Corollary~\ref{CR:constant-even-ranks}(ii),
		$\rrk_A P_2$ is even, so $\rrk_A P_1$ is also even.
		Thus, $\rrk_B P_1$ is even.
		Since $[B]=[A\otimes_ST]=[T]$,
		there exists $V\in\rproj{B}$ such that $\rrk_BV=\deg T=1$ (Proposition~\ref{PR:degree-of-endo-ring}(iii)).
		By Lemmas~\ref{LM:rank-determines-hyperbolic} and~\ref{LM:unimodular-implies-rrk-sigma-inv}, there is an 
		isometry $\Hyp[\veps]{V^n}\to \pi f_1 $,
		where $n=\frac{1}{2}\rrk_B T$. Take $M_1$ to be the image of $V^n$ in $P_1$.

		Next, by Proposition~\ref{PR:Eii-unitary-A-split-S-field},
		there exists a Lagrangian $M_2$ of $\pi f_2$
		with $M_2A=P_2$ and $\rrk_B M_2=\frac{1}{2}\rrk_B P_2$. Take $M'=M_1\oplus M_2$.
		
\smallskip

		Now, since $\rrk_BM'=\frac{1}{2}\rrk_BP=\rrk_B M$,
		Lemma~\ref{LM:Lag-transitive-action} and		
		Proposition~\ref{PR:U-zero-description}
		imply that 
		there exists $\vphi\in U^0(\pi f)$ such that $\vphi M=M'$,
		so $\vphi M\cdot A=P$.
	\end{proof}

	\begin{thm}\label{TH:Eii-unitary-symplectic}
		Theorem~\ref{TH:Eii-holds} holds
		when $R$ is connected and $(\tau,\veps)$ is symplectic  or unitary.
	\end{thm}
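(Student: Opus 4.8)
The plan is to reduce to the case where $R$ is connected (as the statement already assumes) and then split according to the connectedness of $T$. When $S$ is not connected, Proposition~\ref{PR:Eii-S-not-connected} supplies part (iii) and parts (i) and (ii) are vacuous; when $S$ is connected but $T$ is not, Proposition~\ref{PR:Eii-T-not-connected} supplies part (iii) and, again, parts (i) and (ii) are vacuous. So I would henceforth assume $T$ — hence also $S$ — is connected, which is exactly the setting of parts (i) and (ii). Since $(\tau,\veps)$ is symplectic or unitary by hypothesis, part (ii) is vacuous and only parts (i) and (iii) remain to be proved.

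For the ``if'' direction of (i), I would first note that $[\pi f]=0$ forces $\pi f$ to be hyperbolic by Theorem~\ref{TH:trivial-in-Witt-ring}(ii). Applying Corollary~\ref{CR:constant-even-ranks}(ii) over $(B,\tau)$ (whose center $T$ is connected), together with Lemmas~\ref{LM:rank-determines-hyperbolic} and~\ref{LM:unimodular-implies-rrk-sigma-inv}, produces a Lagrangian $M$ of $\pi f$ with $\rrk_BM=\tfrac12\rrk_BP$; in particular $\rrk_AP$ is even. Writing $\frakm_1,\dots,\frakm_t$ for the maximal ideals of $R$, the hypotheses of Proposition~\ref{PR:Eii-unitary-R-field} hold for each $\pi f(\frakm_i)$: the type of $(\tau,\veps)$ is constant on $\Spec R$ by Lemma~\ref{LM:tau-type-is-constant}, and if $\tau(\frakm_i)$ is unitary and $(\sigma(\frakm_i),\veps(\frakm_i))$ is symplectic then the disjunction in (i) can only hold through ``$4\mid\rrk_AP$'', so $4\mid\rrk_AP(\frakm_i)$ as well. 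This yields $\vphi_i\in U^0(\pi f(\frakm_i))$ with $\vphi_i M(\frakm_i)\cdot A(\frakm_i)=P(\frakm_i)$. Lifting to $\vphi\in U^0(\pi f)$ with $\vphi(\frakm_i)=\vphi_i$ via Theorem~\ref{TH:U-zero-mapsto-onto-closed-fibers}, the submodule $\vphi M\cdot A\leq P$ satisfies $(\vphi M\cdot A)+P\frakm_i=P$ for every $i$, so Nakayama's lemma gives $\vphi M\cdot A=P$ and $\vphi M$ is the desired Lagrangian. For the converse direction of (i), assume $(\tau,\veps)$ is unitary, $(\sigma,\veps)$ is symplectic and $4\nmid\rrk_AP$; then the contrapositive of Proposition~\ref{PR:Eii:unitary-symplectic-case}(i) gives $[A]=0$ in $\Br S$, part (ii) of that proposition gives that $f$ is hyperbolic, and the contrapositive of part (iii) of that proposition rules out any Lagrangian $M$ of $\pi f$ with $MA=P$.

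For (iii), if one of the conditions appearing in (i) holds, I would take $(P',f')=(P,f)$ and invoke (i). Otherwise the converse part of (i) just established shows $f$ is hyperbolic, so $[f]=0$; moreover $\rrk_AP$ is even (Corollary~\ref{CR:constant-even-ranks}(ii), using $S$ connected) and not divisible by $4$, hence $\rrk_AP\equiv2\pmod4$. Replacing $(P,f)$ by $(P,f)\oplus(P,f)$, which has Witt class $2[f]=0=[f]$ and reduced rank $2\rrk_AP\equiv0\pmod4$, I would then apply (i) to this new space. I expect the only delicate points to be the local-to-global bookkeeping in the ``if'' direction — checking that the hypotheses of Propositions~\ref{PR:Eii-unitary-R-field} and~\ref{PR:Eii:unitary-symplectic-case} genuinely match up at each residue field and that $MA=P$ is detected on residue fields — and the mild Witt-class adjustment in (iii); all of the substantive analysis is already contained in the cited propositions of Sections~\ref{sec:preparation}--\ref{sec:Eii}, so the present theorem is essentially an assembly of those results.
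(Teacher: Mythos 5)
Your proposal is correct and follows essentially the same route as the paper: reduce to connected $T$ via Propositions~\ref{PR:Eii-S-not-connected} and~\ref{PR:Eii-T-not-connected}, deduce the "only if"/degenerate direction of (i) from the three parts of Proposition~\ref{PR:Eii:unitary-symplectic-case}, and obtain the "if" direction by applying Proposition~\ref{PR:Eii-unitary-R-field} at each residue field, lifting via Theorem~\ref{TH:U-zero-mapsto-onto-closed-fibers}, and finishing with Nakayama. The only (harmless) deviations are that you explicitly verify the existence of a Lagrangian of the correct reduced rank, and that in the residual case of (iii) you double the form to force $4\mid\rrk_AP$ instead of simply replacing $f$ (which is hyperbolic, hence of trivial Witt class) by the zero form as the paper does.
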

	
	\begin{proof}
		Recall that we are given
		$(P,f)\in\Herm[\veps]{A,\sigma}$
		such that $[\pi f]=0$. By Theorem~\ref{TH:trivial-in-Witt-ring}(ii),
		$\pi f$ is hyperbolic.

		(i) Suppose that that $\pi f$
		admits a Lagrangian $M$ such that $MA=P$.
		If $ \tau $ is unitary
		and $(\sigma,\veps)$ is symplectic,
		then  parts (i) and (iii) of Proposition~\ref{PR:Eii:unitary-symplectic-case}  
		imply that $4\mid \rrk_AP$. Moreover,
		part (i) of this proposition implies that $[A]=0$
		when $\tau $ is unitary,
		$(\sigma,\veps)$ is symplectic and $4\nmid \rrk_AP$. In
		this case, part (ii) of that proposition says that $f$ is hyperbolic.
		
		Conversely, suppose that $(\tau,\veps)$
		is symplectic, or $(\sigma,\veps)$ is not symplectic,
		or $4\mid\rrk_AP$.
		Let $M$ be a Lagrangian of $\pi  f$
		and let $\frakm_1,\dots,\frakm_t$ denote
		the maximal ideals of $R$.
		By Proposition~\ref{PR:Eii-unitary-R-field},
		for all $1\leq i\leq t$,
		there exists $\vphi_i\in U^0(\pi f(\frakm_i))$
		such that $\vphi_i (M(\frakm_i))\cdot A(\frakm_i)=P(\frakm_i)$.
		By Theorem~\ref{TH:U-zero-mapsto-onto-closed-fibers},
		there exists $\vphi \in U^0(\pi f)$
		such that $\vphi(\frakm_i)=\vphi_i$ for all $i$.
		Thus, $M':=\vphi M$ is a Lagrangian of $\pi f$
		such that $M'  A+P\frakm_i=P$ for all $i$.
		By  Nakayama's Lemma $\ann_R (P/M'A)$ is not
		contained in any maximal ideal of $R$, so it must be $R$ and $M' A=P$.
		
		(ii) This statement is vacuous under our assumptions.
		
		(iii) 
		By Proposition~\ref{PR:Eii-S-not-connected},
		Proposition~\ref{PR:Eii-T-not-connected} and (i), we only need to consider the case where 
		$T$ is connected,
		$ \tau $ is unitary, $(\sigma,\veps)$ is symplectic and $[A]=0$.
		In this case, 
		$f$ is hyperbolic by Proposition~\ref{PR:Eii:unitary-symplectic-case}(ii), so
		we may  take $f'$ to be the zero form 
		and let $M=0$.
	\end{proof}

\subsection{Case (2)}
\label{subsec:Eii-orthogonal}

	We now prove  
	Theorem~\ref{TH:Eii-holds} in Case \ref{item:Eii:orth},
	namely, when $R$ is connected and   $(\tau,\veps)$
	is orthogonal.
	The main difference with Case~\ref{item:Eii:sym-or-unit} 
	is the failure of Proposition~\ref{PR:Eii-unitary-R-field}.
	Thus,   the majority of the argument will be
	dedicated to effectively characterizing the
	Lagrangians $M$ of $\pi f$ for which Proposition~\ref{PR:Eii-unitary-R-field} fails.

\medskip

	Throughout this subsection, we assume, on top of Notation~\ref{NT:proof-of-Es},
	that $(\tau,\veps)$ is orthogonal, hence
	$\tau|_T=\id_T$ and $S=R$. This also means that $(\sigma,\veps)$
	is orthogonal (Lemma~\ref{LM:tau-orth-implies-sigma-orth}).
	
	Following Remark~\ref{RM:Weil-rest-of-U},
	given
	$(Q,g)\in\Herm[\veps]{B,\tau}$, 
	we write ${\uU}_T(g)$ for the group $T$-scheme
	of isometries of $g$, and $\uU(g)=\uU_R(g)$ for the $R$-scheme
	of isometries of $g$. The corresponding neutral
	components are denoted ${\uU}^0_T(g)$ and $\uU^0(g)=\uU^0_R(g)$.
	It was observed in 	Remark~\ref{RM:Weil-rest-of-U}
	that $\calR_{T/R}{\uU}_T(g)=\uU(g)$
	and $\calR_{T/R}\uU^0_T(g)=\uU^0(g)$,
	where $\calR_{T/R}$ is the Weil restriction.
	Combining this with Proposition~\ref{PR:U-zero-description}, 
	we see that $\uU^0(g)$ is the scheme-theoretic kernel
	of 
	\[
	\calR_{T/R}(\Nrd):\uU (g)=\calR_{T/R}\uU_T (g)\to \calR_{T/R}\umu_{2,T} .
	\]
	We abbreviate $\calR_{T/R}(\Nrd)$ to $\Nrd$.
	The norm map $\Nr_{T/R}:T\to R$ induces a morphism of affine group $R$-schemes,
	\[
	\Nr_{T/R}:\calR_{T/R}\umu_{2,T} \to \umu_{2,R},
	\]
	and its kernel is $\umu_{2,R}$, viewed as a subgroup $R$-scheme of  $\calR_{T/R}\umu_{2,T}$
	via the inclusion $R\to T$.
	We   write
	\[
	N:=\Nr_{T/R}\circ \Nrd:\uU(g)\to \umu_{2,R}.
	\]
	
	Given $(P,f)\in\Herm[\veps]{A,\sigma}$, Lemma~\ref{LM:B-endo-of-P}(ii) implies  that the   diagram
	\[
	\xymatrix{
	U(f) \ar@{^{(}->}[r] \ar[d]^{\Nrd} &
	U(\pi f) \ar[d]^{\Nrd}	\\
	\mu_2(R)\ar@{^{(}->}[r] &
	\mu_2(T)
	}
	\]	
	commutes.
	Thus, given $\vphi \in U(f)$, we may speak about the reduced
	norm of $\vphi$ without specifying if we view $\vphi$ as an
	isometry of $f$ or $\pi f$.

	Finally, recall from \ref{subsec:Lagrangians} that $\Lag(\pi f)$
	denotes the set of Lagrangians $M$ of $\pi f$ with $\rrk_BM=\frac{1}{2}\rrk_BP$,
	and  these are   all the Lagrangians of $\pi f$ because $\tau|_T=\id_T$.
	In particular, if $\pi f$ is hyperbolic, then $\iota \rrk_AP= \rrk_BP$  must
	be even. Recall also the sheaf $\uLag(\pi f)$ over $(\Aff/T)_{\fppf}$;
	we write $\calR_{T/R}\uLag(\pi f)$ for its Weil restriction,
	which is the sheaf on $(\Aff/R)_{\fppf}$ mapping an $R$-ring $S$ to $\Lag(\pi f_S)$.

	\begin{lem}\label{LM:Eii-orth-fT-hyperbolic}
		With Notation~\ref{NT:proof-of-Es},
		suppose that $(\tau,\veps)$ is orthogonal.
		Let  $(P,f)\in\Herm[\veps]{A,\sigma}$
		and assume that $\pi f$
		is hyperbolic. Then $(P_T,f_T)$
		is hyperbolic.
	\end{lem}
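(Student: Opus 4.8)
The plan is to reduce to the product-algebra situation of \ref{subsec:Eii-T-not-contd} by base changing along $R\to T$. Since $(\tau,\veps)$ is orthogonal we have $S=R$ and $\tau|_T=\id_T$; as $T\subseteq B$ this forces $\sigma|_T=\tau|_T=\id_T$, and $T$ is quadratic \'etale over $R$. By the comment following Notation~\ref{NT:proof-of-Es}, the hypotheses of that notation persist after base change to $T$, the new center being $\Cent(A_T)=\Cent(A)\otimes T=T$ and the new quadratic \'etale subalgebra being $T_T:=T\otimes_R T$. By Lemma~\ref{LM:splitting-quad-et-algs}, $T_T\cong T\times T$ as $T$-algebras; let $e\in T_T$ correspond to $(1_T,0_T)$ and put $e'=1-e$. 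Since $\sigma_T|_{T_T}=(\sigma|_T)_T=\id_{T_T}$, we have $e^{\sigma_T}=e$. Thus the base-changed data satisfies the assumptions imposed at the start of \ref{subsec:Eii-T-not-contd}.

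First I would record that $\pi_T f_T=(\pi f)_T$. By the uniqueness in Lemma~\ref{LM:dfn-of-pi-A-B}(i) applied over $T$, the base change $\pi\otimes\id_T\colon A_T\to B_T$ is the canonical projection $\pi_{A_T,B_T}$, so $\pi_T\circ f_T=(\pi\circ f)\otimes\id_T$; in particular $\pi_T f_T$ is hyperbolic, since base change preserves hyperbolicity. Now applying Proposition~\ref{PR:Eii:pi-f-T-not-connected} over the base ring $T$ gives $(P_T,\pi_T f_T)=(P_Te,(f_T)_e)\times(P_Te',(f_T)_{e'})$, where $(f_T)_e$ is the $e$-transfer of $f_T$, a form over $(eA_Te,\sigma_T|_{eA_Te})=(eB_T,\tau_T|_{eB_T})$.

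Next I would extract hyperbolicity of the first factor. Let $L$ be a Lagrangian of $\pi_T f_T$. Since $e,e'$ are the central idempotents of $B_T$, one has $L=Le\oplus Le'$, with $Le$ a summand of $P_Te$. Because $\pi_T f_T$ pairs the two Peirce components trivially (for $a\in eA_Te'$ one computes $\pi_T a=eae+e'ae'=0$), the orthogonal complement is computed componentwise, whence $Le=(Le)^{\perp}$ inside $(P_Te,(f_T)_e)$. So $(P_Te,(f_T)_e)$ is metabolic, hence hyperbolic since $2\in\units R$.

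Finally, by Lemma~\ref{LM:idempotent-in-T}(ii) applied over $T$ we have $A_TeA_T=A_T$, so $eA_T$ is a progenerator; together with $e^{\sigma_T}=e$ this makes $e$-transfer an equivalence of categories $\Herm[\veps]{A_T,\sigma_T}\to\Herm[e\veps]{eA_Te,\sigma_T|_{eA_Te}}$ preserving hyperbolicity (item~\ref{item:e-transfer-hyperbolic} in~\ref{subsec:conjugation}). Since $(f_T)_e$ is hyperbolic, so is $f_T$, which is what we wanted. The argument is essentially formal; the only point that needs care is the base-change bookkeeping — verifying that the situation over $T$ is genuinely the product-algebra setting of \ref{subsec:Eii-T-not-contd} and that $\pi$ commutes with this base change — after which Proposition~\ref{PR:Eii:pi-f-T-not-connected} and the $e$-transfer equivalence do all the work.
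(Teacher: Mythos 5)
Your proof is correct and follows essentially the same route as the paper: base change to $T$, split $T_T\cong T\times T$ via Lemma~\ref{LM:splitting-quad-et-algs}, invoke Proposition~\ref{PR:Eii:pi-f-T-not-connected} to identify the $e$-transfer of $f_T$ as a factor of the hyperbolic form $\pi f_T$, and conclude via the $e$-transfer equivalence. The paper states this in four lines; your additional bookkeeping (that $\pi$ commutes with base change, that $e^{\sigma_T}=e$, and the Lagrangian decomposition $L=Le\oplus Le'$ showing a factor of a hyperbolic form is hyperbolic) just makes explicit what the paper leaves implicit.
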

	
	\begin{proof}
		By Lemma~\ref{LM:splitting-quad-et-algs},
		we have $T_T\cong T\times T$.
		Let $e:=(1_T,0_T)\in T_T$.	
		By assumption, $\pi f_T$ is   hyperbolic,
		so by Proposition~\ref{PR:Eii:pi-f-T-not-connected},
		the
		$e$-transfer of $f_T$ (see~\ref{subsec:conjugation}) 
		is also hyperbolic. Thus,   $f_T$ is hyperbolic.
	\end{proof}

	\begin{prp}\label{PR:dfn-of-Psi-f}
		With Notation~\ref{NT:proof-of-Es},
		suppose that $(\tau,\veps)$ is orthogonal.
		Let  $(P,f)\in\Herm[\veps]{A,\tau}$ and assume that $\pi f$ is hyperbolic.
		Let  $\uU(\pi f)$ act  on $\umu_{2,R}$
		via $N$.
		Then there exists a unique $\uU(\pi f)$-equivariant
		natural transformation,
		\[
		\Psi_f:\calR_{T/R}\uLag(\pi f)\to \umu_{2,R},
		\]
		such that for any $R$-ring 
		$R_1$ and any $L_1\in \Lag(  f_{R_1})$, one has
		$\Psi_f(L_1)=1$ in $\mu_2(R_1)$.
		The map $\Psi_f$ has the following additional
		properties:
		\begin{enumerate}[label=(\roman*)]
			\item If $f$ is hyperbolic and $L\in\Lag(f)$, then
			$\Psi_f=\Nr_{T/R}\circ \calR_{T/R}\Phi_L^{(\pi f)}$ (notation as in Proposition~\ref{PR:partition-of-Lag}).
			\item If $(P',f')\in\Herm[\veps]{A,\sigma}$
			and $\pi f'$ is hyperbolic,
			then $\Psi_{f\oplus f'}(M\oplus M')=\Psi_f(M)\cdot \Psi_{f'}(M')$
			for all $M\in\Lag(\pi f)$, $M'\in \Lag(\pi f')$.
			\item Let $e\in B$ be a $\sigma$-invariant idempotent
			such that $\rrk_BeB$ is positive and constant
			on the fibers of $\Spec T\to \Spec R$.
			Then $\Psi_f(M)=\Psi_{f_e}(Me)$ for all
			$M\in\Lag(\pi f)$ (notation as in \ref{subsec:conjugation}).
		\end{enumerate}
	\end{prp}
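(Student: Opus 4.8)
\textbf{Proof plan for Proposition~\ref{PR:dfn-of-Psi-f}.} The plan is to mimic the construction of $\Phi_L$ in Proposition~\ref{PR:partition-of-Lag}, but pushing everything down along the finite \'etale map $\Spec T\to\Spec R$ via Weil restriction and the norm. First I would reduce to the situation where $(P,f)=(L\oplus L^*,\Hyp[\veps]{L})$ for a suitable $L$: since $\pi f$ is hyperbolic, $(P_T,f_T)$ is hyperbolic by Lemma~\ref{LM:Eii-orth-fT-hyperbolic}, so after base change to $T$ the form $f$ acquires a Lagrangian. Working over the faithfully flat $R$-ring $R_0:=T$ (which splits $T_T\cong T\times T$ by Lemma~\ref{LM:splitting-quad-et-algs}) we may produce a global section of $\calR_{T/R}\uLag(\pi f)$ coming from a genuine Lagrangian of $f_{R_0}$. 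Then I would set $\Psi_0:=\Nr_{T/R}\circ\calR_{T/R}\Phi_{L_0}^{(\pi f)}$ for a fixed such Lagrangian $L_0$ of $f_{R_0}$, where $\Phi_{L_0}^{(\pi f)}:\uLag(\pi f_{R_0})\to\umu_{2,R_0}$ is the map of Proposition~\ref{PR:partition-of-Lag} (applicable because $(\tau,\veps)$, and hence by Lemma~\ref{LM:tau-orth-implies-sigma-orth} also $(\sigma,\veps)$, is orthogonal, and because $\pi f_{R_0}$ is hyperbolic with $\rrk>0$ in the relevant components). The composition $\Nr_{T/R}$ collapses $\calR_{T/R}\umu_{2,T}$ to $\umu_{2,R}$, and on reduced norms of isometries of $\pi f$ this is exactly the map $N=\Nr_{T/R}\circ\Nrd$ by the commuting square involving $\Nrd$ preceding the proposition; this is what makes $\Psi_0$ into a $\uU(\pi f)$-equivariant transformation to $\umu_{2,R}$.

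The next step is a normalization-and-descent argument, exactly parallel to the proof of Proposition~\ref{PR:Ei-Lag-partition}. I would show that for two Lagrangians $L, L'$ of $f_{R_1}$ (any $R$-ring $R_1$), one has $\Psi_0(L)=\Psi_0(L')$: by Proposition~\ref{PR:partition-of-Lag}(i) this amounts to showing $\Nr_{T/R}\big(\Phi_{L}^{(\pi f)}(L')\big)=1$, which can be checked fibrewise over $\Max R_1$ using Lemma~\ref{LM:mu-two-check}, hence over a field, where $\Phi_L^{(\pi f)}$ is computed via Proposition~\ref{PR:computation-of-Phi} and the two copies of $L,L'$ give $\Phi_L^{(\pi f)}(L')$ a value in $\mu_2(T)$ whose $T/R$-norm is a square because the intersection dimensions over the two factors of $T_T$ coincide by the orthogonality of $\tau|_T=\id_T$. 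Granting this, $\theta:=\Psi_0$ evaluated on a split Lagrangian over $R_0\otimes R_0$ is $i_1$- and $i_2$-compatible, so by fppf descent for $\umu_{2,R}$ it already descends from $R_0$, i.e.\ $\Psi_0$ is normalized to take value $1$ on Lagrangians of $f$. Uniqueness follows because $\calR_{T/R}\uLag(\pi f)$ acquires a section coming from $f$ after the faithfully flat base change $R\to R_0$, on which two candidate $\uU(\pi f)$-equivariant maps must agree by the uniqueness clause of Proposition~\ref{PR:partition-of-Lag}, and $\umu_{2,R}$ is an fppf sheaf.

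Finally, the three listed properties are formal consequences, proved after base changing to $R_0$ so that $T=R\times R$ and a $\sigma$-fixed idempotent $e_0\in T$ with $e_0+e_0^\sigma=1$ exists. Property (i) is immediate from the definition $\Psi_0=\Nr_{T/R}\circ\calR_{T/R}\Phi_{L_0}^{(\pi f)}$ together with the uniqueness part of Proposition~\ref{PR:partition-of-Lag} identifying $\Phi_{L}^{(\pi f)}$ for any $L\in\Lag(f)$. Property (ii) follows from Proposition~\ref{PR:partition-of-Lag}(ii) and multiplicativity of $\Nr_{T/R}$. Property (iii) follows from item~\ref{item:e-transfer-and-Phi} in \ref{subsec:conjugation} combined with Proposition~\ref{PR:simult-e-transfer}(i), exactly as in Proposition~\ref{PR:Ei-Lag-partition}(iv). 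The main obstacle I anticipate is the bookkeeping for the Weil restriction: one must be careful that $\Lag(\pi f_{S})$ — a set of $S\otimes_R T$-submodules — is correctly identified with $(\calR_{T/R}\uLag(\pi f))(S)$, that the $\uU(\pi f)$-action factors through $N$ rather than through $\Nrd$ individually on each factor of $T$, and that the fibrewise computation of the $T/R$-norm of $\Phi_L^{(\pi f)}(L')$ genuinely lands in $(\units{R})^2$ rather than merely in $\mu_2$; getting the parity of intersection dimensions right on the two factors of the split $T$ is the crux.
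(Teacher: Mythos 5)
Your overall architecture matches the paper's: set $\Psi_0=\Nr_{T/R}\circ\calR_{T/R}\Phi_{\bullet}^{(\pi f)}$, prove that $\Psi_0$ is constant on Lagrangians of $f$ itself, normalize and descend along the faithfully flat map $R\to R_0:=T$ using that $f_{R_0}$ is hyperbolic (Lemma~\ref{LM:Eii-orth-fT-hyperbolic}), get uniqueness from the transitive $\uU(\pi f)$-action after faithfully flat base change, and deduce (i)--(iii) after base change to $R_0$. The one place you genuinely diverge is the key constancy step $\Psi_0(V)=\Psi_0(W)$ for $V,W\in\Lag(f_{R_1})$: the paper applies Proposition~\ref{PR:transitive-action-of-uUf} to $f$ (not $\pi f$) to produce, after a faithfully flat extension, an isometry $\vphi\in U(f)$ with $\vphi W=V$; since $\Nrd(\vphi)$ lies in $\mu_2$ of the base rather than of $T$ (Lemma~\ref{LM:B-endo-of-P}(ii)), its $T/R$-norm is automatically $1$, and equivariance finishes the argument with no fibrewise computation. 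Your alternative — computing $\Nr_{T/R}(\Phi_V^{(\pi f)}(W))$ residue-field by residue-field via Proposition~\ref{PR:computation-of-Phi} — also works, and the "crux" you flag does go through: when $T_k\cong k\times k$ with idempotents $e,e'$, the Lagrangians $V,W$ and their intersection are $A_k$-submodules, so $Ve\cap We=(V\cap W)e$ and $\rrk_{eA_ke}(Xe)=\rrk_{A_k}X=\rrk_{e'A_ke'}(Xe')$ for any $A_k$-module $X$ (Corollary~\ref{CR:degree-of-endo-ring}), whence the two components of $\Phi_V(W)$ agree and the norm is $1$; but this is strictly more work than the paper's route. Two small imprecisions to fix: basing the construction on $L_0\in\Lag(f_{R_0})$ means $\Nr\circ\Phi_{L_0}$ is a priori defined only over $R_0$-rings, so you must descend the whole natural transformation (not merely a constant as in the paper, which starts from $M_0\in\Lag(\pi f)$ over $R$) — your descent datum is exactly the constancy statement, so this is fine but should be said; and the uniqueness cannot be cited directly from Proposition~\ref{PR:partition-of-Lag} (the equivariance there is via $\Nrd$, here via $N$), though the same transitivity-plus-sheaf argument proves it.
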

	
	Note that $L_1$ is a Lagrangian of $\pi f_{R_1}$ 
	because we can find $L'_1\in \Lag(f_{R_1})$
	such that $L_1\oplus L'_1=P$ as $A$-modules (see~\ref{subsec:Witt-grp})
	and
	$\pi f(L_1,L_1)=\pi f(L'_1,L'_1)=0$.
	
	\begin{proof}
		Fix some $M_0\in\Lag(\pi f)$,
		write $\Phi_0=\calR_{T/R}\Phi_{M_0}^{(\pi f)}:\calR_{T/R}\uLag(\pi f)\to \calR_{T/R}\umu_2$
		(see   Proposition~\ref{PR:partition-of-Lag} for the definition of $\Phi_{M_0}$),
		and let $\Psi_0:=\Nr_{T/R}\circ \Phi_0$. It is clear
		that $\Psi_0:\calR_{T/R}\uLag(\pi f)\to\umu_{2,R}$ is $\uU(\pi f)$-equivariant.

		We   claim that for any $R$-ring   $R_1$
		and $V,W\in \Lag(f_{R_1})$, we have $\Psi_0(V)=\Psi_0(W)$
		in $\mu_2(R_1)$.
		Since $\umu_{2,R}$ is a sheaf on $(\Aff/R)_{\fpqc}$,
		it is enough to check that $\Psi_0(V)=\Psi_0(W)$ after base-changing along
		a faithfully flat ring homomorphism $R_1\to R_2$. 
		By Proposition~\ref{PR:transitive-action-of-uUf},  
		we can choose $R_2$ such that there exists $\vphi\in U(f_{R_2})$
		with $ V\otimes_{R_1}R_2=\vphi (W\otimes_{R_1}R_2)$. 
		Since $\Nrd(\vphi)\in \mu_2(R_2)$ and $\Psi_0$ is $\uU(\pi f)$-equivariant, we have		
		$\Psi_0(V\otimes_{R_1}R_2)=\Nr_{T/R}(\Nrd(\vphi))\cdot  \Psi_0(W\otimes_{R_1}R_2)=
		\Psi_0(W\otimes_{R_1}R_2)$ in $\mu_2(R_2)$, as required.
		
		Let $R_0:=T$.
		Then $f_{R_0}$ is hyperbolic
		by 	Lemma~\ref{LM:Eii-orth-fT-hyperbolic}.
		Fix some $L_0\in \Lag(  f_{R_0})$  and
		write $\theta:=\Psi_0(L_0)\in\mu_2(R_0)$.	
		We claim that $\theta$ is in fact in $\mu_2(R)$.
		To that end, let $i_1,i_2:R_0\to R_0\otimes R_0$ denote the  
		homomorphisms
		$r\mapsto r\otimes 1$, $r\mapsto 1\otimes r$.
		By the previous paragraph, we have
		$i_1\Psi_0(L_0)=\Psi_0(L_0\otimes_{i_1} (R_0\otimes R_0))=\Psi_0(L_0\otimes_{i_2} (R_0\otimes R_0))
		=i_2\Psi_0(L_0)$ in $\mu_2(R_0\otimes R_0)$.
		Since $\umu_{2,R}$ is a sheaf
		on $(\Aff/R)_{\fpqc}$, this means that $\theta\in \mu_2(R)$.

		Define $\Psi_f=\theta^{-1}\cdot \Psi_0$.
		Then $\Psi_f:\calR_{T/R}\uLag(\pi f)\to \umu_2$ is $\uU(\pi f)$-equivariant
		and $\Psi_f(L_0)=1$ in $\mu_2(R_0)$.
		Let $R_1$ be an  $R$-ring and let $L_1\in \Lag(f_{R_1})$.
		By what we have shown above, 
		$\Psi_0(L_1 \otimes_{R_1}(R_0\otimes R_1))=\Psi_0(L_0\otimes_{R_0}(R_0\otimes R_1))=\theta$
		in $\mu_2(R_0\otimes R_1)$.
		Since $R_1\to R_0\otimes R_1$ is faithfully flat, 
		this means that $\Psi_0(L_1)=\theta$ in $\mu_2(R_1)$, so $\Psi_f(L_1)=1$.
		Thus, $\Psi_f$ satisfies the condition in the proposition.

		If $\Psi':\calR_{T/R}\uLag(\pi f)\to \umu_{2,R}$ also satisfies
		the condition in the proposition, then $\Psi'(L_0)=1=\Psi(L_0)$.
		If $R_1$ is an $R$-ring and $M\in \Lag(\pi f_{R_1})$,
		then, by Proposition~\ref{PR:transitive-action-of-uUf},
		there exists a faithfully flat   $R_0\otimes R_1$-ring $R_2$
		and $\vphi\in U(\pi f_{R_2})$
		such that $\vphi(L_0\otimes_{R_0}R_2)=M\otimes_{R_1}R_2$.
		Thus, $\Psi'(M)=N(\vphi)\Psi'(L_0)=N(\vphi)\Psi_f(L_0)=\Psi_f(M)$
		in $\mu_2(R_2)$. Since $R_1 \to R_0\otimes R_1\to R_2$
		is faithfully flat, this means that $\Psi'(M)=\Psi_f(M)$ in $\mu_2(R_1)$,
		so $\Psi'=\Psi_f$.
		
\medskip		
		
		We finally verify the additional properties (i)--(iii).

		(i) Take $M_0=L$ and $L_0=L_{R_0}$ 
		in the construction of $\Psi_f$; one gets $\theta=1$.

		(ii) It is enough to prove the equality
		after base-changing to $R_0$. It is then a consequence of (i)
		(take $L=L_0$) and Proposition~\ref{PR:partition-of-Lag}(ii).
		
		(iii) Again, we may base change  to $R_0$ first. The claim
		then follows from  (i) and item \ref{item:e-transfer-and-Phi}
		in \ref{subsec:conjugation}.
	\end{proof}	
	
	It turns out that  
	$\Psi_f$ is often constant on $\Lag(\pi f)$.

	\begin{lem}\label{LM:Eii-Psi-f-constant}
		With Notation~\ref{NT:proof-of-Es},
		suppose that  $R$ is connected   semilocal and $(\tau,\veps)$ is orthogonal.
		Let $(P,f)\in\Herm[\veps]{A,\tau}$ and assume that $\pi f$ is hyperbolic.
		Then $\Psi_f:\Lag(\pi f)\to \mu_2(R)=\{\pm 1\}$
		is onto if and only if $T\cong R\times R$, $[A]=0$ and $P\neq 0$.
	\end{lem}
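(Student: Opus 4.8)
The statement asserts a clean dichotomy: $\Psi_f$ hits both $\pm1$ exactly when $T\cong R\times R$, $[A]=0$, and $P\neq0$. I will prove the two implications separately, and both directions should reduce, after some preliminary structural reductions, to combining the already-established machinery around the map $\Phi_L$ (Proposition~\ref{PR:partition-of-Lag}), the relation $\Psi_f=\Nr_{T/R}\circ\calR_{T/R}\Phi_L^{(\pi f)}$ when $f$ is hyperbolic (Proposition~\ref{PR:dfn-of-Psi-f}(i)), the transitivity of $\uU(\pi f)$ on $\uLag(\pi f)$ (Lemma~\ref{LM:Lag-transitive-action}), and the reduced-norm surjectivity criterion Theorem~\ref{TH:criterion-for-det-one}, applied over $T$ rather than $R$.

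\textbf{First I would handle the trivial and degenerate cases.} If $P=0$ then $\Lag(\pi f)$ is a single point and $\Psi_f$ is constant $=1$, so $\Psi_f$ is not onto and $P\neq0$ is necessary. Assume $P\neq0$ henceforth; since $\pi f$ is hyperbolic, $\rrk_BP$ is even and positive. Next, if $S=\Cent(A)\neq R$ then (as $(\tau,\veps)$ is orthogonal, hence $S=R$ by the standing assumption of~\ref{subsec:Eii-orthogonal}) this case does not arise; so $S=R$ and $T$ is quadratic \'etale over $R$. If $T$ is connected (in particular if $T$ is a field), then by Proposition~\ref{PR:dfn-of-Psi-f}(i) combined with Lemma~\ref{LM:Lag-transitive-action}, for any $M,M'\in\Lag(\pi f)$ there is $\vphi\in U(\pi f)$ with $\vphi M=M'$, so $\Psi_f(M')=N(\vphi)\Psi_f(M)$; the image of $\Psi_f$ is therefore the single coset $N(U(\pi f))\cdot\Psi_f(M)$, and $\Psi_f$ is onto iff $N:U(\pi f)\to\mu_2(R)$ is onto. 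By Lemma~\ref{LM:Lag-transitive-action} we may fix $M$ to be $M_0 e$-type and use Proposition~\ref{PR:dfn-of-Psi-f}(i) with a Lagrangian $L$ of $f_T$; the point is that $N=\Nr_{T/R}\circ\calR_{T/R}(\Nrd)$, and $\calR_{T/R}(\Nrd):U(\pi f)=U_T(\pi f)\to\mu_2(T)$ is onto iff $[B]=0$ in $\Br T$ by Theorem~\ref{TH:criterion-for-det-one} applied to the $T$-algebra-with-involution $(\End_B(P),\theta)$ (using Proposition~\ref{PR:type-of-adjoint} and that $\rrk_BP>0$). When $T$ is a connected quadratic \'etale $R$-algebra, $\Nr_{T/R}:\mu_2(T)\to\mu_2(R)$ is trivial: indeed $\mu_2(T)=\{\pm1\}$ for $T$ connected and the norm of $\pm1$ is $(\pm1)^2=1$. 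Hence $N$ is never onto when $T$ is connected, matching the conclusion.

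\textbf{Finally the case $T\cong R\times R$.} Write $T=R\times R$, $e=(1_R,0_R)$, $e'=1-e$; then $e^\sigma=e$ (since $\tau|_T=\id_T$), so by Lemma~\ref{LM:idempotent-in-T} and Proposition~\ref{PR:Eii:pi-f-T-not-connected} we have $(P,\pi f)=(Pe,f_e)\times(Pe',f_{e'})$ with $[eB]=[e'B]=[A]$ in $\Br R$, and $U(\pi f)\cong U(f_e)\times U(f_{e'})$. A Lagrangian $M$ of $\pi f$ decomposes as $Me\oplus Me'$ with $Me\in\Lag(f_e)$, $Me'\in\Lag(f_{e'})$, and $N$ factors through $\Nrd$ on each factor; chasing the defining property of $\Psi_f$ (a Lagrangian of $f_{R_1}$ decomposes compatibly, and there $\Psi_f=1$) together with Proposition~\ref{PR:dfn-of-Psi-f}(i) and Proposition~\ref{PR:computation-of-Phi}, one finds $\Psi_f(M)=\Nrd_{eAe}(\cdot)\cdot\Nrd_{e'Ae'}(\cdot)$ evaluated on the transition isometries relating $Me,Me'$ to the reference Lagrangians, i.e.\ $\Psi_f$ is onto iff $\Nrd:U(f_e)\to\mu_2(R)$ is onto (the $e'$-factor, or either factor, suffices since we can leave the other fixed). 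By Theorem~\ref{TH:criterion-for-det-one} applied to $f_e$ (whose endomorphism algebra lies in $[eB]=[A]$, and $\rrk_{eB}Pe=\tfrac12\rrk_BP>0$), this holds iff $[A]=0$ in $\Br R$. The main obstacle I anticipate is bookkeeping: carefully identifying $\Psi_f$ with the appropriate reduced-norm-of-transition-isometry expression in the split-$T$ case and making the reduction to a single factor clean, since $\Psi_f$ is defined via a faithfully flat descent along $R\to T$ and one must check the decomposition is compatible with that descent; once this identification is in place the surjectivity criteria are immediate applications of Theorem~\ref{TH:criterion-for-det-one} and the fact that $N$ is trivial for connected $T$.
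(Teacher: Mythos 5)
Your proposal is correct and follows essentially the same route as the paper: reduce, via transitivity of $U(\pi f)$ on $\Lag(\pi f)$, to whether $N=\Nr_{T/R}\circ\Nrd$ attains $-1$; observe that $\Nr_{T/R}$ kills $\mu_2(T)=\{\pm1\}$ when $T$ is connected; and in the split case decompose $U(\pi f)\cong U(f_e)\times U(f_{e'})$ via Proposition~\ref{PR:Eii:pi-f-T-not-connected} and apply Theorem~\ref{TH:criterion-for-det-one} together with $[eAe]=[e'Ae']=[A]$. The explicit identification of $\Psi_f(M)$ with a product of reduced norms of transition isometries that you worry about is not actually needed — the coset argument alone suffices, exactly as in the paper.
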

	
	\begin{proof}
		The lemma is clear if $P=0$, so assume $P\neq 0$.	
	
		Let $M,M'\in \Lag(\pi f)$.
		By Lemma~\ref{LM:Lag-transitive-action},
		there exists $\vphi\in U(\pi f)$
		such that $\vphi M=M'$, hence $\Nr_{T/R}(\Nrd(\vphi))\Psi_f(M)=\Psi_f(M')$.
		From this we see that the condition  that $\Psi_f:\Lag(\pi f)\to \mu_2(R)$
		is onto
		is equivalent to the existence of $\vphi\in U(\pi f)$
		with $\Nr_{T/R}\Nrd(\vphi)=-1$ in $\mu_2(R)$.
		
		Suppose that $[A]=0$ and $T=R\times R$, and
		let $e=(1_R,0_R)$ and $e'=(0_R,1_R)$. By Proposition~\ref{PR:Eii:pi-f-T-not-connected},
		we may identify $U(\pi f)$ with $U(f_e)\times U(f_{e'})$,
		and under this identification, $\Nrd:U(\pi f)\to \mu_2(T)$
		is just $\Nrd\times \Nrd:U(f_e)\times U(f_{e'})\to \mu_2(R)\times\mu_2(R)$.
		Since $[Be]=[Be']=[A]=0$ (Lemma~\ref{LM:idempotent-in-T}(iii)), 
		this map is onto by Theorem~\ref{TH:criterion-for-det-one}.
		One readily checks that $\Nr_{T/R}:\mu_2(T)\to\mu_2(R)$ is also onto,
		so we conclude that there exists $\vphi\in U(\pi f)$
		with $\Nr_{T/R}\Nrd(\vphi)=-1$.

		Conversely, suppose  that $\vphi\in U(\pi f)$ satisfies
		$\Nr_{T/R}\Nrd(\vphi)=-1$.
		If $T$ were connected, then we would have $\Nr_{T/R}(\mu_2(T))=\Nr_{T/R}(\{\pm1\})=1$,
		so we must have $T\cong R\times R$ (Lemma~\ref{LM:non-connected-S})
		and $\Nrd(\vphi)\in \{(1,-1),(-1,1)\}$.
		Let $e$ and $e'$ denote the nontrivial idempotents of $T$. 
		Appealing to Proposition~\ref{PR:Eii:pi-f-T-not-connected}
		as in the previous paragraph, we see that
		$\vphi|_{Pe}\in U(f_e)$ 
		and $\vphi|_{Pe'}\in U(f_{e'})$,
		and either $\Nrd(\vphi|_{Pe})=-1$
		or  $\Nrd(\vphi|_{Pe'})=-1$.
		Thus, by
		Theorem~\ref{TH:criterion-for-det-one}, $[eAe]=0$ or $[e'Ae']=0$.
		Since $[A]=[eAe]=[e'Ae']$
		(Lemma~\ref{LM:idempotent-in-T}(iii)),  $[A]=0$.
	\end{proof}
	
	\begin{prp}\label{PR:Eii:orth:R-field-A-split}
		With Notation~\ref{NT:proof-of-Es},
		suppose that  $R$ is a field, $T\cong R\times R$,
		$[A]=0$ and  $(\tau,\veps)$
		is orthogonal.
		Let $(P,f)\in\Herm[\veps]{A,\sigma}$
		be a hyperbolic hermitian space. Then  there exists  $M\in\Lag(\pi f)$
		with $MP=A$.
		Every such $M$ satisfies $\Psi_f(M)=(-1)^{\frac{1}{2}\rrk_AP}$.
	\end{prp}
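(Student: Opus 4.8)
The plan is to reduce to an explicit model using the two reductions available to us and then exhibit $M$ by hand, afterwards computing $\Psi_f(M)$ via the anchoring property of $\Psi_f$. First I would invoke Reduction~\ref{RD:common-reduction}: since $R$ is a field with $[A]=0$ (hence $[B]=[A\otimes_S T]=0$), and since $(\sigma,\veps)$ is orthogonal by Lemma~\ref{LM:tau-orth-implies-sigma-orth}, we may assume $\deg B=1$, i.e.\ $B=T\cong R\times R$, and $\deg A=2$, with $\sigma$ orthogonal and $\veps=1$; note $\Psi_f$ is compatible with $e$-transfer by Proposition~\ref{PR:dfn-of-Psi-f}(iii), so this reduction is legitimate for the $\Psi_f(M)$ computation as well. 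Then I would apply Reduction~\ref{RD:common-reduction-II} to put $A=\nMat{R}{2}$ with $\sigma$ given by $[\begin{smallmatrix}a&b\\c&d\end{smallmatrix}]\mapsto[\begin{smallmatrix}a&\alpha c\\\alpha^{-1}b&d\end{smallmatrix}]$ for some $\alpha\in\units{R}$. Since $T\cong R\times R$ and $\tau|_T=\id_T$, by Example~\ref{EX:exchange-involution} every $(P,f)$ is hyperbolic and determined by $\rrk_AP$, so (arguing as in Case~I of the proof of Proposition~\ref{PR:Ei-unitary-S-is-field-A-split}) it suffices to treat $(P,f)=(A^2,f_1)$ with $f_1((x_1,x_2),(y_1,y_2))=x_1^\sigma y_2+x_2^\sigma y_1$, and then pass to a general hyperbolic space by taking orthogonal sums and using Proposition~\ref{PR:dfn-of-Psi-f}(ii) for the multiplicativity of $\Psi_f$.

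Next I would identify $T=R\oplus\lambda R$ inside $A$ with $\lambda^\tau=-\lambda$ (Lemma~\ref{LM:quad-etale-over-semilocal}); the condition $\lambda^\sigma=-\lambda$ forces $\lambda=[\begin{smallmatrix}0&\alpha\\-1&0\end{smallmatrix}]$ up to scaling, so $B=T=[\begin{smallmatrix}1&0\\0&1\end{smallmatrix}]R+[\begin{smallmatrix}0&\alpha\\-1&0\end{smallmatrix}]R$. Using Lemma~\ref{LM:dfn-of-pi-A-B}(ii), $\pi a=\tfrac12(a+\lambda^{-1}a\lambda)$, one checks $\pi$ maps $[\begin{smallmatrix}x&y\\z&w\end{smallmatrix}]$ to the $T$-component; this is the same explicit $\pi$ that appears in Case~II of the proof of Proposition~\ref{PR:Eii-unitary-A-split-S-field}. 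I would then take the explicit candidate $M=\{([\begin{smallmatrix}0&0\\c&d\end{smallmatrix}],[\begin{smallmatrix}a&b\\0&0\end{smallmatrix}])\where a,b,c,d\in R\}\subseteq A^2$, exactly as in part~(i) of the proof of Proposition~\ref{PR:Ei-orthogonal-II-R-field-A-split}, and verify directly that $\pi f_1(M,M)=0$, that $M$ is a summand of $A^2_B$ with $\rrk_BM=\tfrac12\rrk_BA^2$, and that $MA=A^2$ (so $M\in\Lag(\pi f_1)$ with $MA=P$). For a general hyperbolic $(P,f)\cong n\cdot(A^2,f_1)$ the sum $M^{\oplus n}$ works, establishing existence.

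For the value of $\Psi_f$, the key point is that $\Psi_f$ is $\uU(\pi f)$-equivariant with the group acting through $N=\Nr_{T/R}\circ\Nrd$, and by Lemma~\ref{LM:Lag-transitive-action} $U(\pi f)$ acts transitively on $\Lag(\pi f)$; hence $\Psi_f$ is determined on all of $\Lag(\pi f)$ by its value on one Lagrangian together with the reduced norms of isometries. So it suffices to compute $\Psi_f(M)$ for the single explicit $M$ above and invoke that $N(\units{R}) $ only changes the value by signs controlled by $\Nr_{T/R}$. Concretely, I would pick an idempotent $e\in T_{R}$ (here $T\cong R\times R$ already has one, with $e+e^\sigma=1$) and compare $M$ to the ``standard'' Lagrangian $PeA$ via Proposition~\ref{PR:partition-of-Lag} and Proposition~\ref{PR:computation-of-Phi}: since $\Psi_f=\Nr_{T/R}\circ\calR_{T/R}\Phi_{L}^{(\pi f)}$ when $f$ is hyperbolic (Proposition~\ref{PR:dfn-of-Psi-f}(i)), and since $\Phi$-values are computed by the parity formula $(-1)^{\rrk\,L-\rrk(L\cap M)}$, a direct intersection count for our explicit $M$ against $PeA$ yields $\Phi_{PeA}(M)=(-1)^{\tfrac12\rrk_AP}$ componentwise, and applying $\Nr_{T/R}$ (which multiplies the two components, both equal) gives $\Psi_f(M)=(-1)^{\tfrac12\rrk_AP}$. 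The main obstacle I anticipate is the bookkeeping in this last computation: one must be careful that $\Phi$ is being evaluated over $T\cong R\times R$ rather than over $R$, that the defining normalization of $\Psi_f$ (namely $\Psi_f(L_1)=1$ for $L_1\in\Lag(f_{R_1})$) is correctly matched against the normalization of $\Phi_{PeA}$, and that the Weil-restriction $\Nr_{T/R}$ indeed converts the componentwise $\Phi$-value into the stated single sign; the existence half is routine once the model is fixed.
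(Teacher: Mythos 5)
There is a genuine gap, and it starts with the identification of $T$ inside $A$. In this proposition $(\tau,\veps)$ is \emph{orthogonal}, which forces $\tau|_T=\id_T$; since $T\cong R\times R$, the correct model has $T=B$ equal to the diagonal matrices of $A=\nMat{R}{2}$, spanned by $1$ and a $\sigma$-fixed nontrivial idempotent $e$, with $\pi$ the diagonal-part projection, so that $\pi f=f_e\times f_{e'}$ as in Proposition~\ref{PR:Eii:pi-f-T-not-connected}. You instead take $T=R\oplus\lambda R$ with $\lambda^\sigma=-\lambda$ (so $\lambda=[\begin{smallmatrix}0&\alpha\\-1&0\end{smallmatrix}]$ up to scaling); but then $\tau|_T$ is the standard involution of $T$, i.e.\ $(\tau,\veps)$ is \emph{unitary}, contradicting the standing hypothesis. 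The candidate Lagrangian you import from Proposition~\ref{PR:Ei-orthogonal-II-R-field-A-split}(i) belongs to that unitary setting and to the functor $\rho$, not to $\pi$ in the orthogonal setting; with the correct $\pi$ your verification does not go through as written. A second, independent problem in the existence half: reducing to $n\cdot(A^2,f_1)$ with $\rrk_A A^2=4$ only covers $4\mid\rrk_AP$, whereas hyperbolic spaces with $\rrk_AP\equiv 2\bmod 4$ exist here (e.g.\ $\Hyp[\veps]{L}$ for $L$ of reduced rank $1$, which exists since $[A]=0$) and, by Lemma~\ref{LM:rank-determines-hyperbolic}, are not sums of copies of $(A^2,f_1)$. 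The paper's building block is a rank-$2$ form $f_1(x,y)=x^\sigma[\begin{smallmatrix}0&\alpha\\1&0\end{smallmatrix}]y$ on $A_A$, with $M=B$ itself as the Lagrangian.

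The computation of $\Psi_f(M)$ is also not sound as proposed. Your anchor ``$PeA$'' is not a Lagrangian in this setting (here $e^\sigma=e$ and $AeA=A$, so $PeA=P$); the normalization of $\Psi_f$ in Proposition~\ref{PR:dfn-of-Psi-f} is at Lagrangians of $f$ itself, not at idempotent-slices. Moreover, the statement asserts the value for \emph{every} $M$ with $MA=P$, and transitivity of $U(\pi f)$ on $\Lag(\pi f)$ alone does not give this: you would still need to control $N$ of the connecting isometry. The paper's argument works directly with an arbitrary such $M$: since $M\in\Lag(\pi f)$ and $MA=P$, both $MeA$ and $Me'A$ lie in $\Lag(f)$ and $P=MeA\oplus Me'A$; choosing $\vphi\in U(f)$ with $\vphi(MeA)=Me'A$, Proposition~\ref{PR:computation-of-Phi} gives $\Nrd(\vphi)=(-1)^{\frac12\rrk_AP}$, and applying the isometry $(\vphi_e,1)$ of $\pi f$ moves $M$ to the Lagrangian $Me'A$ of $f$, where $\Psi_f$ equals $1$ by definition; equivariance then yields $\Psi_f(M)=\Nrd(\vphi)$. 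You would need to replace your last step by an argument of this type.
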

	
	\begin{proof}
		By Reduction~\ref{RD:common-reduction}, Corollary~\ref{CR:degree-of-endo-ring}
		and Proposition~\ref{PR:dfn-of-Psi-f}(iii),
		we may assume that $B=T$, $\deg A=2$ and $\tau$ is orthogonal.
		As a result, $\veps=1$. 
		Recall that $\sigma$ is also orthogonal in this case
		(Lemma~\ref{LM:tau-orth-implies-sigma-orth}).

		Let $e$ denote a nontrivial idempotent of $T$.
		We identify $A$ with $\nMat{R}{2}$ in such a way that
		$e=[\begin{smallmatrix} 1 & 0 \\ 0 & 0\end{smallmatrix}]$.
		Thus, $B=T=R+Re$ consist of the diagonal matrices,
		and $\pi:A\to B$ is given by $\pi[\begin{smallmatrix} a & b \\ c & d\end{smallmatrix}]=
		[\begin{smallmatrix} a & 0 \\ 0 & d\end{smallmatrix}]$.
		Since $e^\sigma=e$, there exist $\alpha\in \units{R}$
		such that $\sigma$ is given by
		$\sigma:[\begin{smallmatrix} a & b \\ c & d\end{smallmatrix}]\mapsto
		[\begin{smallmatrix} a & \alpha c \\ \alpha^{-1}b & d\end{smallmatrix}]$.
		
		Let $f_1:A  \times A  \to A$ be the hyperbolic
		$1$-hermitian form given by
		$f_1(x,y)=x^\sigma [\begin{smallmatrix} 0 & \alpha  \\ 1 & 0\end{smallmatrix}] y$
		($[\begin{smallmatrix} R & R  \\ 0 & 0\end{smallmatrix}]$ is a Lagrangian).
		Since $\rrk_AA=2$ and $\rrk_AP$ is even (because $\pi f$ is hyperbolic),
		we have $(P,f)\cong \frac{\rrk_AP}{2}\cdot (A,f_1)$ (Lemma~\ref{LM:rank-determines-hyperbolic}).
		Thus, it is enough to prove the existence of $M$ when $(P,f)=(A,f_1)$.
		To that end, take $M:=B=[\begin{smallmatrix} R & 0  \\ 0 & R\end{smallmatrix}]$;
		it is a Lagrangian because $A=M\oplus M'$ and $f_1(M',M')=0$ for $M'=
		[\begin{smallmatrix} 0 & R  \\ R & 0\end{smallmatrix}]$.

		We proceed with proving the second statement of the proposition.
		Suppose that $M\in\Lag(\pi f)$ satisfies $MA=P$.
		Write $e'=1-e$.
		Using   Proposition~\ref{PR:Eii:pi-f-T-not-connected},
		we shall view $\pi f$ as $f_e\times f_{e'}$
		and identify
		$U(\pi f)$ and $\Lag(\pi f)$ with $U(f_e)\times U(f_{e'})$
		and $\Lag(f_e)\times \Lag(f_{e'})$, respectively.

		Since $M\in\Lag(\pi f)$, we have $Me\in\Lag(f_e)$, and so $MeA\in\Lag(f)$
		(see item~\ref{item:e-transfer-Lags} in \ref{subsec:conjugation}).
		Similarly, $Me'A\in\Lag (f)$.
		Since $MA=P$, we have $MeA+Me'A=P$ and $A$-length considerations
		force $P=MeA\oplus Me'A$.

		By   
		Lemma~\ref{LM:Lag-transitive-action},
		there exists $\vphi\in U(f)$ such that $\vphi (MeA)=Me'A$.
		Write $\vphi_e=\vphi|_{Pe}$.
		Then, viewing
		$(\vphi_e,1)$ as an element
		of $U(f_e)\times U(f_{e'})=U(\pi f)$
		and working    in $\Lag(\pi f)=\Lag(f_e)\times\Lag(f_{e'})$,
		we have 
		\begin{align*}
		(\vphi_e,1)\cdot M&=(\vphi_e,1)(Me,Me')
		=(\vphi_e (Me),Me')\\
		&=(\vphi(MeA)\cdot e,Me')=
		(Me'Ae ,Me'Ae')= Me'A.
		\end{align*}
		By Proposition~\ref{PR:dfn-of-Psi-f},
		we have $N(\vphi_e,1)\cdot \Psi_f(M)=\Psi_f(Me'A)=1$, because $Me'A\in\Lag(f)$.
		Furthermore, by Proposition~\ref{PR:computation-of-Phi},
		$MeA\oplus Me'A=P$
		implies that  $\Nrd(\vphi)=(-1)^{\frac{1}{2}\rrk_AP}$.
		Together, this gives 
		$
		\Psi_f(M)=N(\vphi_e,1)^{-1}=\Nrd(\vphi_e)\cdot \Nrd(1)=
		\Nrd(\vphi)= (-1)^{\frac{1}{2}\rrk_AP}$, as required.
	\end{proof}
	
	\begin{cor}\label{CR:Eii-orth-Psi-restriction}
		With Notation~\ref{NT:proof-of-Es},
		suppose that   $(\tau,\veps)$
		is orthogonal.
		Let  $(P,f)\in\Herm[\veps]{A,\sigma}$
		and let $M\in\Lag(\pi f)$.
		If $M A=P$, then $\Psi_f (M)=(-1)^{\frac{1}{2}\rrk_AP}$.
	\end{cor}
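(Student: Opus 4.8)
The plan is to reduce Corollary~\ref{CR:Eii-orth-Psi-restriction} to Proposition~\ref{PR:Eii:orth:R-field-A-split} by base-changing to the residue fields of $R$ and passing to algebraic closures, using Lemma~\ref{LM:mu-two-check} to detect equality in $\mu_2(R)$. Concretely, since $\Psi_f(M)$ and $(-1)^{\frac{1}{2}\rrk_AP}$ both lie in $\mu_2(R)$, it suffices to prove they agree in $\mu_2(k(\frakm))$ for every $\frakm\in\Max R$; and since base change along $k(\frakm)\to \overline{k(\frakm)}$ is injective on $\mu_2$, we may further assume $R$ is an algebraically closed field. Note that the hypothesis $MA=P$ is preserved under base change (both sides are finite $R$-modules, so $M_{R'}\cdot A_{R'}=P_{R'}$ for any $R$-ring $R'$), and $M_{R'}\in\Lag(\pi f_{R'})$ because $\rrk$ is unchanged. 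Also, $\Psi_f$ is a natural transformation, so $\Psi_f(M)(\frakm)=\Psi_{f(\frakm)}(M(\frakm))$, and $\rrk_AP$ is insensitive to the base change.

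Once $R$ is an algebraically closed field, the setup simplifies drastically: $S=R$ (since $(\tau,\veps)$ is orthogonal forces $\tau|_T=\id_T$ and $S=R$), $[A]=0$ by Wedderburn, and $T\cong R\times R$ because $T$ is a quadratic \'etale $R$-algebra over an algebraically closed field — indeed $T$ cannot be a field since $R$ is algebraically closed, so by Lemma~\ref{LM:non-connected-S} (applied with $R$ connected) we get $T\cong R\times R$. Moreover $\pi f$ is hyperbolic: since $\pi f$ is a unimodular $\veps$-hermitian form over $(B,\tau)$ with $T\cong R\times R$, and $\tau|_T$ is the exchange involution on $T_T$... wait — here we need $\pi f$ hyperbolic as a hypothesis of Proposition~\ref{PR:Eii:orth:R-field-A-split}, but in fact $f$ itself need not be hyperbolic in the statement of the corollary. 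However, the existence of a Lagrangian $M$ of $\pi f$ already shows $\pi f$ is metabolic, hence hyperbolic (since $2\in\units R$; see~\ref{subsec:Witt-grp}). So $\pi f$ is hyperbolic, and then $f_T$ is hyperbolic by Lemma~\ref{LM:Eii-orth-fT-hyperbolic} — but over the algebraically closed field with $T=R\times R$ this actually says $f$ itself is hyperbolic: $T_R = T$, and... more carefully, we just need $\pi f$ hyperbolic to invoke Proposition~\ref{PR:Eii:orth:R-field-A-split}, which concerns hyperbolic $f$. Let me instead observe: once $R$ is algebraically closed, $T=R\times R$, and $\pi f$ hyperbolic; we still need $f$ hyperbolic to cite Proposition~\ref{PR:Eii:orth:R-field-A-split} verbatim. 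By Proposition~\ref{PR:Eii:pi-f-T-not-connected} with the idempotent $e=(1_R,0_R)\in T$ satisfying $e^\sigma=e$ or $e^\sigma=1-e$: if $e^\sigma=1-e$ then $\sigma|_T$ is the standard involution, contradicting $\tau|_T=\id_T$; so $e^\sigma=e$, and then $\pi f = f_e\times f_{e'}$ with $f_e$ the $e$-transfer of $f$, so $f_e$ hyperbolic implies $f$ hyperbolic by item~\ref{item:e-transfer-hyperbolic} in~\ref{subsec:conjugation}.

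Therefore $f$ is hyperbolic, all hypotheses of Proposition~\ref{PR:Eii:orth:R-field-A-split} are met, and its second assertion gives exactly $\Psi_f(M)=(-1)^{\frac{1}{2}\rrk_AP}$ for our $M$ (which satisfies $MA=P$). This closes the argument. The main obstacle — and the only real subtlety — is verifying that the reduction hypotheses (especially $\pi f$, equivalently $f$, being hyperbolic and $T$ splitting) genuinely hold after base change to an algebraically closed residue field; once that bookkeeping is in place the corollary is immediate from Proposition~\ref{PR:Eii:orth:R-field-A-split}. I would write the proof as: "Let $K$ be an algebraically closed $R$-field. Then $T_K\cong K\times K$ (Lemma~\ref{LM:non-connected-S}). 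Since $M$ is a Lagrangian of $\pi f$, the form $\pi f_K$ is hyperbolic, hence so is $f_K$ (as in the proof of Proposition~\ref{PR:Eii-T-not-connected}, using $e^\sigma=e$ which holds because $\tau|_T=\id_T$). Since $M_K\cdot A_K=P_K$ and $M_K\in\Lag(\pi f_K)$, Proposition~\ref{PR:Eii:orth:R-field-A-split} gives $\Psi_f(M)(\frakm)=\Psi_{f_K}(M_K)=(-1)^{\frac12\rrk_AP}$ for the prime $\frakm$ below $K$. Letting $K$ range over the algebraic closures of the residue fields of $R$ and applying Lemma~\ref{LM:mu-two-check}, we conclude $\Psi_f(M)=(-1)^{\frac12\rrk_AP}$ in $\mu_2(R)$."
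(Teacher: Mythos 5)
Your proposal is correct and follows essentially the same route as the paper: reduce via Lemma~\ref{LM:mu-two-check} to an algebraically closed residue field, where $[A]=0$ and $T\cong R\times R$, deduce that $f$ is hyperbolic, and invoke the second assertion of Proposition~\ref{PR:Eii:orth:R-field-A-split}. The only cosmetic difference is that the paper deduces hyperbolicity of $f$ from $f_T$ being hyperbolic (Lemma~\ref{LM:Eii-orth-fT-hyperbolic}) together with $T\cong R\times R$, whereas you use the $e$-transfer decomposition of $\pi f$; both are valid.
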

	
	\begin{proof}
		By Lemma~\ref{LM:mu-two-check},
		it is enough to prove the corollary
		after specializing to an algebraic closure 
		of $k(\frakp)$ for all $\frakp\in \Spec R$,
		so assume that $R$ is an algebraically closed field.
		Then
		$[A]=0$ and $T\cong R\times R$. We claim that $f$ is hyperbolic. Indeed,
		$f_T$ is hyperbolic by Lemma~\ref{LM:Eii-orth-fT-hyperbolic}
		and  $T\cong R\times R$, so $f$ is also hyperoblic.
		The corollary therefore
		follows from Proposition~\ref{PR:Eii:orth:R-field-A-split}.
	\end{proof}

	\begin{prp}\label{PR:Eii-orth-R-field-A-arbit}
		With Notation~\ref{NT:proof-of-Es},
		suppose that   $(\tau,\veps)$
		is orthogonal and $R$ is a field.
		Let $(P,f)\in\Herm[\veps]{A,\sigma}$ 
		and let $M\in \Lag(\pi f)$.
		Then   there exists $\vphi\in U^0(\pi f)$
		such that $\vphi M\cdot A=P$
		if and only if   $\Psi_f(M)=(-1)^{\frac{1}{2}\rrk_PA}$.
	\end{prp}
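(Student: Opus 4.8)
The plan is to mimic the structure of the proof of Proposition~\ref{PR:Ei:orthII:good-Lag-Phi-one}, with $\Psi_f$ playing the role that $\Phi_g$ played there. The ``only if'' direction is the easy one: if $\vphi\in U^0(\pi f)$ satisfies $\vphi M\cdot A=P$, then $\vphi M\in\Lag(\pi f)$ and $(\vphi M)A=P$, so Corollary~\ref{CR:Eii-orth-Psi-restriction} gives $\Psi_f(\vphi M)=(-1)^{\frac12\rrk_AP}$; on the other hand $\Psi_f(\vphi M)=N(\vphi)\Psi_f(M)=\Psi_f(M)$ since $\vphi\in U^0(\pi f)=\ker N$ (recall $U^0(\pi f)$ is the kernel of $N=\Nr_{T/R}\circ\Nrd$, as set up in~\ref{subsec:Eii-orthogonal}). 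Hence $\Psi_f(M)=(-1)^{\frac12\rrk_AP}$.

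For the ``if'' direction, suppose $\Psi_f(M)=(-1)^{\frac12\rrk_AP}$. First reduce to $[A]=0$ exactly as in the proof of Proposition~\ref{PR:Eii-unitary-R-field}: when $R$ is infinite use Proposition~\ref{PR:rationality-for-Ei}(ii) to base change to an algebraic closure, and when $R$ is finite invoke Wedderburn. Next, as in the proof of Proposition~\ref{PR:Eii-unitary-R-field} Case III, write $(P,f)=(P_1,f_1)\oplus(P_2,f_2)$ with $f_1$ anisotropic and $f_2$ hyperbolic (Proposition~\ref{PR:ansio-Witt-equivalent}); then $[\pi f_1]=[\pi f]=0$, so $\pi f_1$ is hyperbolic (Theorem~\ref{TH:trivial-in-Witt-ring}(ii)). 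By Remark~\ref{RM:Es-for-field}, any Lagrangian $M_1$ of $\pi f_1$ satisfies $M_1A=P_1$; moreover, arguing as in the proof of Proposition~\ref{PR:Eii-unitary-R-field}, one can choose $M_1$ with $\rrk_BM_1=\frac12\rrk_BP_1$. Now I need a Lagrangian $M_2'$ of $\pi f_2$ with $M_2'A=P_2$ of the correct rank: since $T\cong R\times R$ is forced once $[A]=0$ (otherwise $\Psi_f$ cannot take the value $-1$ — but here I am aiming for a specific value, so more care is needed; see below), I will distinguish cases. If $T$ is a field, then $\mu_2(T)=\{\pm1\}$ and $\Nr_{T/R}$ is trivial on $\mu_2(T)$, forcing $\Psi_f\equiv 1$ on $\Lag(\pi f)$ by $\uU(\pi f)$-equivariance and Lemma~\ref{LM:Lag-transitive-action}; combined with Corollary~\ref{CR:Eii-orth-Psi-restriction} this means $\rrk_AP$ must be even (already known) and $(-1)^{\frac12\rrk_AP}=1$, and then I must produce $M'$ with $M'A=P$ of the right rank and $\Psi_f(M')=1$ — which over a field with $[A]=0$, $T$ a field, follows by combining the $M_1$ above with a hyperbolic-part analysis (here one should really pass to $T$ where $f_T$ splits, using Lemma~\ref{LM:Eii-orth-fT-hyperbolic} and Proposition~\ref{PR:Eii:orth:R-field-A-split}, and descend). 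If $T\cong R\times R$, use Proposition~\ref{PR:Eii:orth:R-field-A-split} directly to get $M_2'\in\Lag(\pi f_2)$ with $M_2'A=P_2$, automatically satisfying $\Psi_{f_2}(M_2')=(-1)^{\frac12\rrk_AP_2}$.

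Set $M':=M_1\oplus M_2'$. Then $M'A=P_1\oplus P_2=P$ and $\rrk_BM'=\frac12\rrk_BP=\rrk_BM$, so by Lemma~\ref{LM:Lag-transitive-action} there is $\psi\in U(\pi f)$ with $\psi M=M'$. Using Proposition~\ref{PR:dfn-of-Psi-f}(ii) and $\Psi_{f_1}(M_1)=1$ (true because $M_1$ can be chosen so that $M_1A=P_1$, whence Corollary~\ref{CR:Eii-orth-Psi-restriction} gives $\Psi_{f_1}(M_1)=(-1)^{\frac12\rrk_AP_1}$ — wait, this forces a parity match, so in fact I should instead deduce $\Psi_f(M')=\Psi_{f_1}(M_1)\Psi_{f_2}(M_2')=(-1)^{\frac12\rrk_AP_1}(-1)^{\frac12\rrk_AP_2}=(-1)^{\frac12\rrk_AP}$ directly from Corollary~\ref{CR:Eii-orth-Psi-restriction} applied to both pieces), we get $N(\psi)\Psi_f(M)=\Psi_f(M')=(-1)^{\frac12\rrk_AP}=\Psi_f(M)$, hence $N(\psi)=1$, i.e. $\psi\in U^0(\pi f)$. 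Then $\vphi:=\psi$ works.

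\textbf{Main obstacle.} The delicate point is the bookkeeping in the ``if'' direction when $T$ is a field, where $\Psi_f$ is identically $1$ and one must still exhibit a good Lagrangian of the correct reduced rank; the cleanest route is to base change to $R_0:=T$ (where $T_{R_0}\cong R_0\times R_0$ and $f_{R_0}$ is hyperbolic by Lemma~\ref{LM:Eii-orth-fT-hyperbolic}), apply Proposition~\ref{PR:Eii:orth:R-field-A-split} there, and descend the resulting splitting using faithful flatness of $R\to R_0$ together with Proposition~\ref{PR:dfn-of-Psi-f} and Lemma~\ref{LM:semilocal-direct-sum-reduction}-type arguments — alternatively, one absorbs this case into the anisotropic$+$hyperbolic decomposition as sketched, being careful that the parity constraint $\Psi_f\equiv 1 \Rightarrow 4\mid \rrk_AP$ need \emph{not} hold (it is $\rrk_AP$ even that holds), so $(-1)^{\frac12\rrk_AP}$ may legitimately be $-1$, which is impossible when $T$ is a field — hence in that sub-case the hypothesis $\Psi_f(M)=(-1)^{\frac12\rrk_AP}$ already forces $\frac12\rrk_AP$ even, and the construction of $M'$ goes through. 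Getting this parity/case analysis exactly right, in the spirit of Proposition~\ref{PR:Eii:orth:R-field-A-split}, is the part requiring the most attention.
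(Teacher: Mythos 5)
Your skeleton matches the paper's: anisotropic-plus-hyperbolic decomposition, Remark~\ref{RM:Es-for-field} for the anisotropic part, Proposition~\ref{PR:Eii:orth:R-field-A-split} for the hyperbolic part when $T\cong R\times R$, and transitivity of $U(\pi f)$ on $\Lag(\pi f)$ to compare Lagrangians. But there are two genuine gaps. First, you identify $U^0(\pi f)$ with $\ker N$. This is wrong: as set up in \ref{subsec:Eii-orthogonal}, $\uU^0(\pi f)$ is the kernel of $\Nrd:\uU(\pi f)\to\calR_{T/R}\umu_{2,T}$, whereas $\ker N$ is the strictly larger subgroup $\{\psi\suchthat \Nrd(\psi)\in\mu_2(R)\}$ (when $T$ is a field, $\ker N$ is all of $U(\pi f)$). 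Your ``only if'' direction survives because $U^0(\pi f)\subseteq\ker N$, but in the final step of the ``if'' direction the conclusion $N(\psi)=1$ only gives $\Nrd(\psi)\in\{\pm1\}\subseteq\mu_2(T)$, and the case $\Nrd(\psi)=-1$ is not handled. The repair is to use the reduction to $[A]=0$: Theorem~\ref{TH:criterion-for-det-one} then supplies $\xi\in U(f)$ with $\Nrd(\xi)=-1$, and since $\xi$ is $A$-linear it preserves the property $M'A=P$, so $\vphi:=\xi\psi$ lies in $U^0(\pi f)$ and still works. This correction is absent from your argument.

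Second, your treatment of the case where $T$ is a field rests on the false claim that $\Psi_f\equiv 1$ there. Equivariance plus transitivity of $U(\pi f)$ on $\Lag(\pi f)$ gives only that $\Psi_f$ is \emph{constant}; the normalization $\Psi_f(L)=1$ is guaranteed only on Lagrangians of $f$ itself, and $f$ need not be hyperbolic over $R$. The correct computation takes $L_2\in\Lag(f_2)$ (a Lagrangian of the hyperbolic summand of $f$, not of $\pi f$) and evaluates $\Psi_f(M_1\oplus L_2)=\Psi_{f_1}(M_1)\cdot\Psi_{f_2}(L_2)=(-1)^{\frac{1}{2}\rrk_AP_1}$ via Proposition~\ref{PR:dfn-of-Psi-f}(ii) and Corollary~\ref{CR:Eii-orth-Psi-restriction}; this constant can be $-1$. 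Comparing with the hypothesis $\Psi_f(M)=(-1)^{\frac{1}{2}\rrk_AP}$ yields $4\mid\rrk_AP_2$ --- not $4\mid\rrk_AP$, which is what you derive and which would not suffice anyway --- and it is precisely this divisibility that lets Proposition~\ref{PR:Eii:rrk-divisible-by-four} produce a Lagrangian $M_2$ of $\pi f_2$ with $M_2A=P_2$. Your alternative of base-changing to $T$ and ``descending the splitting'' is not justified: a Lagrangian of $\pi f_{2,T}$ with the required complementation property does not descend to one over $R$ by faithful flatness alone.
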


	\begin{proof}
		If $\vphi M\cdot A=P$ for $\vphi\in U^0(\pi f)$,
		then $\Psi_f(M)=N(\vphi)\Psi_f(M)=\Psi_f(\vphi M)=(-1)^{\frac{1}{2}\rrk_PA}$
		by Corollary~\ref{CR:Eii-orth-Psi-restriction}. We turn to prove the converse.
	
		Using Proposition~\ref{PR:ansio-Witt-equivalent},
		write $(P,f)=(P_1,f_1)\oplus (P_2,f_2)$
		with $f_1$ anisotropic and $f_2$ hyperbolic.
		Since $[\pi f_1]=[\pi f]=0$ in $W_\veps(B,\tau)$,
		the form $\pi f_1$ is hyperoblic by Theorem~\ref{TH:trivial-in-Witt-ring}(ii).
		Let $M_1\in\Lag(\pi f_1)$. Arguing as in Remark~\ref{RM:Es-for-field},
		we see that $M_1A=P_1$, and $\Psi_{f_1}(M_1)=(-1)^{\frac{1}{2}\rrk_AP_1}$
		by Corollary~\ref{CR:Eii-orth-Psi-restriction}.
		We now split into cases.

\medskip

		\noindent {\it Case I.  $[A]=0$ and $T\cong R\times R$.}
		By Proposition~\ref{PR:Eii:orth:R-field-A-split},
		there exists $M_2\in \Lag(\pi f_2)$ such that $M_2A=P_2$.
		Write $M'=M_1\oplus M_2$. Since $M'A=P$, we have $\Psi_f(M')=(-1)^{\frac{1}{2}\rrk_AP}$
		by Corollary~\ref{CR:Eii-orth-Psi-restriction}.
		By  
		Lemma~\ref{LM:Lag-transitive-action},
		there exists $\psi\in U(\pi f)$
		such that $\psi M=M'$. Since  $\Psi_f(M')=(-1)^{\frac{1}{2}\rrk_AP}=\Psi_f(M)$,
		this means that $\Nrd(\psi)\in\ker (\Nr_{T/R}:\mu_2(T)\to\mu_2(R))=\mu_2(R)$.
		
		If $\Nrd(\psi)=1$, take $\vphi$ to be $\psi$.
		If $\Nrd(\psi)=-1$, then $P\neq 0$. Since $[A]=0$, Theorem~\ref{TH:criterion-for-det-one}
		implies that there exists $\xi\in U(f)$ with $\Nrd(\xi)=-1$.
		Then $\xi$ is an $A$-linear isometry of $\pi f$,
		hence $\xi \psi M$ is a Lagrangian of $\pi f$
		satisfying $\xi \psi M\cdot A=\xi(\psi M\cdot A)=\xi P=P$, so
		take $\vphi=\xi \psi$.
		
\medskip

		\noindent {\it Case II.   $[A]=0$ and $T$ is a field.}
		Let $L_2\in\Lag(f_2)$.
		By definition, we have $\Psi_{f_2}(L_2)=1$,
		hence $\Psi_{f}(M_1\oplus L_2)=	\Psi_{f_1}(M_1)\cdot\Psi_{f_2}(L_2)=
		(-1)^{\frac{1}{2}\rrk_AP_1}$ (Proposition~\ref{PR:dfn-of-Psi-f}(ii)).
		On the other hand, 	
		by Lemma~\ref{LM:Eii-Psi-f-constant}, $\Psi_f(M_1\oplus L_2)=\Psi_f(M)=(-1)^{\frac{1}{2}\rrk_AP}$,
		so $\frac{1}{2}\rrk_AP_2=\frac{1}{2}(\rrk_AP-\rrk_AP_1)$
		must be even.
		Now, by Proposition~\ref{PR:Eii:rrk-divisible-by-four},
		there exists $M_2\in \Lag(\pi f_2)$
		such that $M_2A=P_2$.
		Proceed as in Case~I.

\medskip
		
		\noindent {\it Case III. $[A]\neq 0$.}
		By Wedderburn's Theorem, $R$ is infinite.
		Therefore, thanks to Proposition~\ref{PR:rationality-for-Ei}(ii),
		we are reduced into proving the proposition when
		$R$ is algebraically closed. This is covered
		by   Case I.
	\end{proof}
	
	From Proposition~\ref{PR:Eii-orth-R-field-A-arbit},
	we see that in order to apply the proof of Theorem~\ref{TH:Eii-unitary-symplectic}
	to our situation, we need to find a Lagrangian $M\in\Lag(\pi f)$
	with $\Psi_f(M)=(-1)^{\frac{1}{2}\rrk_AP}$.
	The following two propositions, which address
	the cases $[B]\neq 0$ and $[B]=0$ respectively, 
	characterize precisely when such $M$
	exists.
	
	\begin{prp}\label{PR:Eii:Psi-good-if-B-nonzero}
		With Notation~\ref{NT:proof-of-Es},
		suppose that $R$ is  semilocal, $T$ is connected and
		$(\tau,\veps)$ is orthogonal.
		Let $(P,f)\in \Herm[\veps]{A,\sigma}$
		and assume that $\pi f$ is hyperbolic.
		If $[B]\neq 0$,  
		then $\Psi_f(M)=  (-1)^{\frac{1}{2}\rrk_PA}$
		for all $M\in \Lag(\pi f)$.
	\end{prp}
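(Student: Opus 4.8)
The plan is to reduce to a base field and exploit the behavior of $\Psi_f$ under faithfully flat descent, following the same pattern as the proof of Proposition~\ref{PR:Ei:Orth-B-non-split-Phi-one}. First I would observe that, by Lemma~\ref{LM:mu-two-check}, in order to prove $\Psi_f(M)=(-1)^{\frac{1}{2}\rrk_AP}$ in $\mu_2(R)$ it suffices to verify the corresponding equality in $\mu_2(k(\frakm))$ for each $\frakm\in\Max R$; thus we may assume $R$ is a field (note that the hypothesis ``$T$ connected'' may be lost after passing to residue fields, but what we actually need downstream is that $T$ remains a field, which we handle by a further base change). Since $[B]\neq 0$, in particular $B$ is nonsplit, so $T$ cannot be $R\times R$ --- indeed $[B]=[A\otimes_S T]$ and if $T\cong R\times R$ then $[B]$ would be a pair of Brauer classes over $R$, forcing them to be nontrivial only if $[A]\neq 0$, but then $\Psi_f$ would already be constrained by Lemma~\ref{LM:Eii-Psi-f-constant}; more cleanly, the assumption $[B]\neq 0$ and the reductions below will make this automatic. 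So assume $R$ is a field, $T$ is a quadratic \'etale $R$-algebra with $[B]\neq 0$.

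Next I would set $R_0:=T$ and base change along the faithfully flat homomorphism $R\to R_0$. By Lemma~\ref{LM:splitting-quad-et-algs}, $T_{R_0}\cong R_0\times R_0$, so there exists an idempotent $e\in T_{R_0}$ with $e^\sigma+e=1$ (recall $\tau|_T=\id_T$ since $(\tau,\veps)$ is orthogonal; after base change $\sigma|_{T_{R_0}}$ is the standard $R_0$-involution of $T_{R_0}$ by Proposition~\ref{PR:types-of-involutions-Az}(iii) applied to $(B_{R_0},\tau_{R_0})$, combined with Lemma~\ref{LM:invs-of-quad-et-algs} --- actually one needs $\sigma$ restricted to the new $T$ to be the standard involution, which holds because $(\tau_{R_0},\veps)$ becomes unitary over $R_0$ by Lemma~\ref{LM:idempotent-in-T}, or rather: $T_{R_0}\cong R_0\times R_0$ and $\sigma$ must swap the two idempotents since otherwise $\tau_{R_0}=\sigma|_{T_{R_0}}=\id$, contradicting that the new setup still has $(\tau,\veps)$ not being of the shape forced when $\sigma$ fixes each idempotent). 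The key point: over $R_0$, Lemma~\ref{LM:Eii-Psi-f-constant} applies to $(P_{R_0},f_{R_0})$, and since $[A_{R_0}]=[B]\neq 0$ (using $[B]=[A\otimes_ST]$ over $T=R_0$), the map $\Psi_{f_{R_0}}:\Lag(\pi f_{R_0})\to\mu_2(R_0)$ is \emph{not} onto, hence constant. By Proposition~\ref{PR:dfn-of-Psi-f}, $\Psi_f$ and $\Psi_{f_{R_0}}$ agree via the base-change compatibility ($\Psi_f$ is a natural transformation, so $\Psi_{f_{R_0}}(M_{R_0})$ equals the image of $\Psi_f(M)$ under $\mu_2(R)\to\mu_2(R_0)$), and since $R\to R_0$ is faithfully flat, $\mu_2(R)\to\mu_2(R_0)$ is injective.

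To pin down which constant value $\Psi_{f_{R_0}}$ takes, I would use Proposition~\ref{PR:Eii:orth:R-field-A-split}: over $R_0$ we have $T_{R_0}\cong R_0\times R_0$ but $[A_{R_0}]=[B]\neq 0$, so that proposition does not directly apply; instead I would first perform Reduction~\ref{RD:common-reduction} together with the idempotent extraction of Theorem~\ref{TH:invariant-primitive-idempotent} and Proposition~\ref{PR:dfn-of-Psi-f}(iii) to reduce to $\deg B=1$, $\deg A=2$; but $[A]\neq 0$ obstructs $\deg A=2$ with $A=\nMat{}{2}$. The cleaner route: pass to a \emph{second} base change to an algebraically closed $R_0$-field $K$ (which kills $[A_K]=0$ and keeps $T_K\cong K\times K$), apply Proposition~\ref{PR:Eii:orth:R-field-A-split} to get some $M_0\in\Lag(\pi f_K)$ with $M_0A_K=P_K$ and hence $\Psi_{f_K}(M_0)=(-1)^{\frac{1}{2}\rrk_AP}$ by Corollary~\ref{CR:Eii-orth-Psi-restriction}, and then conclude: $\Psi_{f_K}$ is the image of $\Psi_{f_{R_0}}$, which is constant with value the image of $\Psi_f(M)$; matching these gives $\Psi_f(M)=(-1)^{\frac{1}{2}\rrk_AP}$ in $\mu_2(R)$ by injectivity of $\mu_2(R)\to\mu_2(K)$. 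I expect the main obstacle to be the bookkeeping of the two successive base changes and verifying that the hypotheses of Lemma~\ref{LM:Eii-Psi-f-constant}, Proposition~\ref{PR:dfn-of-Psi-f}, and Corollary~\ref{CR:Eii-orth-Psi-restriction} are each legitimately in force at the appropriate stage --- in particular confirming that $\Psi_f$ is already defined (i.e.\ $\pi f$ hyperbolic, which is given) and that ``$T$ connected, $[B]\neq 0$'' over $R$ correctly propagates to ``$[A]\neq 0$'' over $T$ so that the non-surjectivity of $\Psi_{f_{R_0}}$ genuinely follows from Lemma~\ref{LM:Eii-Psi-f-constant}.
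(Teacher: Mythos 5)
Your argument fails at the final ``matching'' step, and in a way that cannot be repaired without the paper's key idea. After base changing to $R_0=T$ you correctly get from Lemma~\ref{LM:Eii-Psi-f-constant} and $[A_{R_0}]=[B]\neq 0$ that $\Psi_{f_{R_0}}$ is constant on $\Lag(\pi f_{R_0})$. But you then pass to an algebraically closed field $K$ and produce a \emph{new} Lagrangian $M_0\in\Lag(\pi f_K)$ via Proposition~\ref{PR:Eii:orth:R-field-A-split}. Constancy on the $R_0$-points says nothing about $\Psi_{f_K}(M_0)$ for an $M_0$ that does not descend to $R_0$: over $K$ one has $[A_K]=0$ and $T_K\cong K\times K$, so Lemma~\ref{LM:Eii-Psi-f-constant} shows $\Psi_{f_K}$ is \emph{onto}, hence not constant, and there is no way to conclude $\Psi_{f_K}(M_K)=\Psi_{f_K}(M_0)$. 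The computable Lagrangian must be found over $T$ itself. This is what the paper does: $f_T$ is hyperbolic (Lemma~\ref{LM:Eii-orth-fT-hyperbolic}), so it has a Lagrangian $L\in\Lag(f_T)$, and $\Psi_f(L)=1$ by the normalization in Proposition~\ref{PR:dfn-of-Psi-f}. That only yields $\Psi_f(M)=1$, so one still needs to prove $1=(-1)^{\frac{1}{2}\rrk_AP}$; the paper does this by observing that $\rrk_{A_T}L=\frac{1}{2}\rrk_{A_T}P_T$ must be even, since otherwise Corollary~\ref{CR:index-divides-rrk} and Theorem~\ref{TH:period-divides-index} would force $[B]=[A_T]=0$. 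This parity argument is the essential content of the proposition and is entirely absent from your proposal.

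Two further problems. First, the opening reduction to residue fields is not legitimate: $[B]\neq 0$ in $\Br T$ does not imply $[B(\frakm)]\neq 0$ (a nonsplit quaternion Azumaya algebra over $\Z_{(5)}$ splits over $\F_5$), and when $[B(\frakm)]=0$ your method gives you no control over $\Psi_{f(\frakm)}(M(\frakm))$ --- indeed by Proposition~\ref{PR:Eii-orth-quat-necessary-cond} the value then depends on a discriminant you have not computed. The reduction is also unnecessary, since Lemma~\ref{LM:Eii-Psi-f-constant} applies directly over the connected semilocal ring $T$. Second, there is no idempotent $e\in T_{R_0}$ with $e^\sigma+e=1$: because $(\tau,\veps)$ is orthogonal, $\sigma$ fixes $T$ pointwise and therefore fixes both idempotents of $T\otimes_RT$; you are conflating the normalization of $\Psi_f$ (by Lagrangians of $f_{R_1}$, Proposition~\ref{PR:dfn-of-Psi-f}) with that of $\Phi_g$ (by idempotents satisfying $e+e^\sigma=1$, Proposition~\ref{PR:Ei-Lag-partition}), which belongs to the unitary case of Section~\ref{sec:Ei}.
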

	
	\begin{proof}
		For the sake of contradiction,
		suppose that there exists $M\in \Lag(\pi f)$
		with $\Psi_f(M)=  (-1)^{\frac{1}{2}\rrk_PA+1}$.
		By Lemma~\ref{LM:Eii-orth-fT-hyperbolic},
		there exists $L\in\Lag(f_T)$,
		and $\rrk_{A_T}L$ is constant
		because it equals $ \frac{1}{2}\rrk_{A_T}P_T$.
		
		Suppose that $\rrk_{A_T} L$ is odd. Then $(\rrk_{A_T} L)\cdot [A_T]=0$ 
		by
		Corollary~\ref{CR:index-divides-rrk} and Theorem~\ref{TH:period-divides-index},
		and $2[A_T]=0$ because $A_T$ has a  $T$-involution. Thus, $[B]=[A_T]=0$, a contradiction.
		
		Suppose that $\rrk_{A_T} L$ is even.
		Then $ \frac{1}{2}\rrk_AP$ is also even.
		Now, $\Psi_f(M_T)=\Psi_f(M)=-1$ while
		$\Psi_f(L)=1$.
		By Lemma~\ref{LM:Eii-Psi-f-constant}, this means that $[B]=[A_T]=0$,
		so again, we have reached a contradiction.
	\end{proof}

	\begin{lem}\label{LM:Eii-factoring-x-summand}
		With Notation~\ref{NT:proof-of-Es},
		suppose that $R$ is semilocal, $\deg B=1$,
		$\tau=\id_B$ and $\veps=1$.
		Let $(P,f)\in \Herm[\veps]{A,\sigma}$
		and assume  that $\pi f$ is hyperbolic
		and $\rrk_A P$ is   constant and greater than $2$.
		Then 
		there exists $x\in P$ such that $f(x,x)\in\units{A}$
		and $\pi f(x,x)=0$.
	\end{lem}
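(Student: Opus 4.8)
The plan is to reduce to a favorable local model, exhibit an explicit element there, and then lift to the semilocal ring using Theorem~\ref{TH:U-zero-mapsto-onto-closed-fibers} and Lemma~\ref{LM:invertability-test}. Since $\deg B=1$ we have $B=T$, and by Lemma~\ref{LM:strcture-of-quat}(i) (the case $\tau=\id_T$) there exist $\lambda,\mu\in A$ with $\lambda^2,\mu^2\in\units{R}$, $\lambda\mu=-\mu\lambda$, $\lambda^\sigma=\lambda$, $\mu^\sigma=-\mu$, and $\{1,\lambda,\mu,\mu\lambda\}$ an $R$-basis of $A$, with $\pi(b_1+\mu b_2)=b_1$ for $b_1,b_2\in B$. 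Writing $E=\ker\pi$, so $A=B\oplus E$ and $E=\mu B=B\mu$, note that $\pi f(x,x)=0$ means exactly that $f(x,x)\in E$. First I would localize: because $R$ is semilocal with maximal ideals $\frakm_1,\dots,\frakm_t$, it suffices to construct, for each $i$, an element $x_i\in P(\frakm_i)$ with $f(\frakm_i)(x_i,x_i)\in\units{A(\frakm_i)}$ and $\pi f(\frakm_i)(x_i,x_i)=0$, together with an isometry $\vphi_i\in U^0(f(\frakm_i))$ carrying a fixed vector $y$ (a generator of an $A$-summand of $P$, which exists since $\rrk_AP>2\ge 1$ by Lemma~\ref{LM:rank-determines}) onto $x_i$; then Theorem~\ref{TH:U-zero-mapsto-onto-closed-fibers} produces $\vphi\in U^0(f)$ with $\vphi(\frakm_i)=\vphi_i$, we set $x=\vphi y$, and check that $\pi f(x,x)=\pi f(y,y)$ lies in $E$ while $f(x,x)\in\units{A}$ by Lemma~\ref{LM:invertability-test} since each $f(x,x)(\frakm_i)=f(\frakm_i)(x_i,x_i)$ is a unit. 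So the proof reduces to the case where $R$ is a field.

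When $R$ is a field, I would use Proposition~\ref{PR:ansio-Witt-equivalent} to write $(P,f)=(P_1,f_1)\oplus(P_2,f_2)$ with $f_1$ anisotropic and $f_2$ hyperbolic; since $[\pi f_1]=[\pi f]=0$ in $W_\veps(B,\tau)$, Theorem~\ref{TH:trivial-in-Witt-ring}(ii) gives that $\pi f_1$ is hyperbolic. As $\rrk_AP>2$ we have either $P_1\ne 0$ or $\rrk_AP_2\ge 4$. If $P_1\ne 0$: $\pi f_1$ hyperbolic forces some $0\ne x\in P_1A$ with $\pi f_1(x,x)=0$, i.e. $f_1(x,x)\in E$; write $x=x_1+x_2\mu$ with $x_1,x_2\in P_1$, and since $\mu$ is invertible we may assume $x_1\ne 0$ after replacing $x$ by $x\mu$ if needed. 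Then $f_1(x_1,x_1)\in\Sym_1(B,\sigma)$; because $f_1$ is anisotropic and $B=T$ is a field (or a product of fields, but then $\rrk_AP_1$ would be even by Lemma~\ref{LM:unimodular-implies-rrk-sigma-inv}---here I should be careful and split on whether $T$ is connected), a nonzero symmetric value is a unit, so $f_1(x_1,x_1)\in\units{R}\subseteq\units{A}$; take this $x_1$ (a vector in $P_1\subseteq P$) as our $x$, noting $f_1(x_1,x_1)=f(x_1,x_1)$ and $\pi f(x_1,x_1)=f_1(x_1,x_1)$—wait, that is not in $E$. The correct choice is the original $x=x_1+x_2\mu\in P_1A\subseteq PA$; but $x$ must lie in $P$, not $PA$. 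So instead I would argue: $f_1|_{x_1B+x_2B}$ has Gram matrix, relative to $\{x_1,x_2\}$, of the form $\bigl[\begin{smallmatrix}\alpha & 0\\ 0 & -\mu^2\alpha\end{smallmatrix}\bigr]$ with $\alpha=f_1(x_1,x_1)\in\units R$ (the off-diagonal and the second diagonal entry computed from $0=f_1(x,x)=\alpha+2\mu f_1(x_2,x_1)+\mu^2 f_1(x_2,x_2)$ using $\mu b=b^\sigma\mu$ and $\tau=\id$), so taking $x:=x_1$ gives $f(x,x)=\alpha\in\units A$; but $\pi f(x,x)=\alpha\notin E$. The resolution: one wants $x\in P$ with $f(x,x)\in E\cap\units A$. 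Since $E=\mu B$ and $\mu\in\units A$, $E\cap\units A\ne\emptyset$. I would instead seek $x$ of the form $x_1+x_2\mu$ \emph{inside $P$}—this forces $x_2\mu\in P$, i.e. $x_2\in P\mu^{-1}\cap P$; this works only when $P$ is an $A$-module closed under right $\mu$-multiplication, which it is since $P\in\rproj{A}$ and $\mu\in A$. So $x:=x_1+x_2\mu\in P$ genuinely, $f(x,x)=f_1(x,x)=0$; that is wrong too. The honest statement is $f_1(x,x)=0$ only in $P_1A$ after extension; in $P_1$ itself $x$ need not exist.

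Let me instead follow the strategy of Lemma~\ref{LM:Ei-qaut-decomposition}(ii) more faithfully. The hard part is exactly this: producing a single vector $x\in P$ with $f(x,x)$ a unit in $E$. Following Lemma~\ref{LM:Ei-qaut-decomposition}(ii), Step 1, over a field: when $\pi f_1$ is hyperbolic and $P_1\ne 0$, an isotropic vector of $\pi f_1$ in $P_1$ (not $P_1A$) exists because hyperbolicity of $\pi f_1$ gives a Lagrangian $M_1$ of $\pi f_1$ with $M_1\subseteq P_1$, and any nonzero $x\in M_1$ has $\pi f_1(x,x)=0$, i.e. $f_1(x,x)\in E$; anisotropy of $f_1$ gives $f_1(x,x)\ne 0$, and since $E\setminus\{0\}$ consists of units when $B=T$ is a field (as $E\cong B_B$ is a rank-one free $A$-module so $E\setminus\{0\}\subseteq\units A$), we get $f_1(x,x)\in E\cap\units A$. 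When instead $\rrk_AP_2\ge 4$, $f_2$ hyperbolic has an orthogonal summand isometric to $\Hyp[\veps]{A^2}$-type, and an explicit vector $x$ in it with $f_2(x,x)=\mu$ (a unit in $E$) is written down directly. For the lifting step (Step 2/3 of that lemma) I would mimic the use of Theorem~\ref{TH:Witt-extension}, Theorem~\ref{TH:criterion-for-det-one} and Theorem~\ref{TH:U-zero-mapsto-onto-closed-fibers} verbatim: any two such $x$ over a field with $\rrk_A xA$ equal are in the same $U^0(f)$-orbit (using $V^\perp\ne 0$ since $\rrk_AP>2$), and then lift across the maximal ideals of the semilocal $R$. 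The main obstacle is making the field-case orbit argument clean when $T$ is not connected and when $\rrk_AxA$ might be forced to be $2$ versus smaller; I expect the condition $\rrk_AP>2$ is used precisely to guarantee a nonzero orthogonal complement at every residue field so that the transitivity-on-$U^0$ argument via Theorem~\ref{TH:criterion-for-det-one} goes through.
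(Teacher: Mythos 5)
Your overall strategy (decompose into anisotropic plus hyperbolic over each residue field, produce an explicit vector there, then glue over the semilocal ring via Theorem~\ref{TH:U-zero-mapsto-onto-closed-fibers}) is the paper's strategy, but there is a genuine gap in the gluing step: you work with $U^0(f)$ where the argument requires $U^0(\pi f)$. An isometry $\vphi$ of $f$ satisfies $f(\vphi y,\vphi y)=f(y,y)$, so if you fix $y\in P$ and move it by elements of $U^0(f)$ (or their residue-field specializations), the value $f(x,x)$ of the glued vector $x=\vphi y$ is literally $f(y,y)$; the construction can only succeed if $y$ already satisfied the conclusion, which is circular. The correct move is the paper's: since $\pi f$ is hyperbolic over $R$ itself, first choose $y\in P$ \emph{globally} with $\pi f(y,y)=0$ and $yB$ a rank-one $B$-summand; then over each residue field produce $x_i$ with $\pi f(\frakm_i)(x_i,x_i)=0$ and $f(\frakm_i)(x_i,x_i)$ invertible, and show $x_i$ lies in the $U^0(\pi f(\frakm_i))$-orbit of $y(\frakm_i)$ (Witt extension plus Theorem~\ref{TH:criterion-for-det-one}, using $\rrk_AP>2$ to get a nonzero orthogonal complement). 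Lifting via Theorem~\ref{TH:U-zero-mapsto-onto-closed-fibers} then gives $\vphi\in U^0(\pi f)$, so $\pi f(x,x)=\pi f(y,y)=0$ holds \emph{exactly}, while $f(x,x)\in\units{A}$ is verified residue-field-wise by Lemma~\ref{LM:invertability-test}. Isometries of $\pi f$ are only $B$-linear, so they are free to change $f(x,x)$ within the fiber $\pi^{-1}(0)$ --- that freedom is the whole point.

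Two smaller issues. First, your middle paragraph conflates the present setting with that of Lemma~\ref{LM:Ei-qaut-decomposition}: here $P$ is already an $A$-module and $\pi f$ lives on $P$ itself, so there is no passage to $P_1\otimes_BA$; the isotropic vector of $\pi f_1$ is found directly in $P_1$. Second, in the anisotropic case your justification ``$E\setminus\{0\}\subseteq\units{A}$'' only works when $T$ is a field; if $T\cong R\times R$ it fails (e.g.\ $\mu\cdot(1,0)$ is not a unit). The paper avoids this by using that $f_1(x,x)$ is $\sigma$-symmetric, so it lies in $\Sym_1(A,\sigma)\cap\ker\pi=\mu\lambda R$, and any nonzero element of $\mu\lambda R$ is a unit over a field regardless of whether $T$ is connected. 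Relatedly, in the hyperbolic case the diagonal value cannot be $\mu$ (since $\mu^\sigma=-\mu$); the summand to use is $\langle\mu\lambda,-\mu\lambda\rangle_{(A,\sigma)}$ and the value is $\mu\lambda$.
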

	
	\begin{proof}
		Define $\lambda,\mu$ as in Lemma~\ref{LM:strcture-of-quat}(i)
		(so $\lambda^\sigma=\lambda$ and $\mu^\sigma=-\mu$).
		
\medskip	
	
		\Step{1} We first prove the existence
		of $x$ when   $R$ is a field.		
		Write $(P,f)=(P_1,f_1)\oplus (P_2,f_2)$
		with $f_1$ anisotropic and $f_2$ hyperbolic
		(Proposition~\ref{PR:ansio-Witt-equivalent}).
		As in the proof of Proposition~\ref{PR:Eii-orth-R-field-A-arbit},
		$\pi f_1$ and $\pi f_2$ are hyperbolic, so both $\rrk_AP_1$ and $\rrk_AP_2$
		are even.
		By assumption,   $\rrk_AP_1>0 $ or $\rrk_AP_2\geq 4$.
		
		If $\rrk_AP_1>0$, then there exists nonzero $  x\in P_1$ such that $\pi f_1(x,x)=0$.
		Thus, $f_1(x,x)\in\Sym_1(A,\sigma)\cap\ker\pi=\mu\lambda R$.
		Since $f_1$ is anisotropic, $f_1(x,x)\neq 0$,
		so  $f (x,x)=f_1(x,x)\in \mu\lambda \units{ R}\subseteq\units{A}$.
		
		If $\rrk_AP_2\geq 4$, then  $f_2$ has
		an orthogonal summand isomorphic to  $\langle \mu\lambda,-\mu\lambda\rangle_{(A,\sigma)}$ 
		(Lemma~\ref{LM:rank-determines-hyperbolic}). 
		Now take $x$ to be the vector
		corresponding to $(1_A,0_A)\in A^2$ in $P$.

\medskip

		\Step{2} We continue to assume that $R$ is a field.
		Let $x,y\in P$ be two elements such that 
		$\pi f(x,x)=\pi f(y,y)=0$ and $\rrk_B xB=\rrk_B yB=1$.
		We claim that there exists
		$\vphi\in U^0(\pi f)$ such that $\vphi x=y$. 
		
		Suppose first that $T$ is a field.
		Our assumptions imply that $xB\cong yB $ as $B$-modules.
		Since $\pi f$ is unimodular, there exists $x'\in P$
		such that $\pi f(x,x')=1$. In particular, the restriction
		of $f$ to $Q=xB\oplus xB'$ is unimodular,
		so $P=Q\oplus Q^\perp$. Since $\rrk_BP=\iota\rrk_AP\geq 4$, we have
		$Q^\perp\neq 0$. By
		Theorem~\ref{TH:criterion-for-det-one},
		there exists $\psi_0\in U(f|_{Q^\perp\times Q^\perp})$
		with $\Nrd(\psi_0)=-1$. Let $\psi=\id_Q\oplus \psi_0\in U(\pi f)$.
		By Theorem~\ref{TH:Witt-extension},
		there exists $\vphi\in U(\pi f)$ with $\vphi x=y$.
		If $\Nrd(\vphi)=1$, we are done.
		If not, $\Nrd(\vphi)=-1$ (because $T$ is a field)
		and we can replace $\vphi$ with $\vphi\psi$
		to get $\Nrd(\vphi)=1$.
		
		When $T$ is not a field, we have $T=R\times R$
		and we can apply the argument of the previous paragraph separately over each
		factor of $T$.
	
\medskip			
		
		\Step{3} We now prove the proposition for all $R$.
		Since $\rrk_B P=\iota\rrk_AP\geq 4$ and $\pi f$ is hyperbolic,
		there exists $y\in P$ such that $\pi f(y,y)=0$
		and $yB$ is a summand of $P_B$ of reduced rank $1$. 		
		
		Let $\frakm_1,\dots,\frakm_t$ denote the maximal ideals of $R$.
		By Step 1, for all $1\leq i\leq t$, there exists
		$x_i\in P(\frakm_i)$ such that $\pi f(\frakm_i)(x_i,x_i)=0$
		and $f(\frakm_i)(x_i,x_i)\in\units{A(\frakm_i)}$.
		The latter condition implies that $\ann_{B(\frakm_i)} x_i=0$,
		so $\rrk_{B(\frakm_i)} x_iB(\frakm_i)=1$.
		Thus, by Step~2, there exists $\vphi_i\in U^0(\pi f(\frakm_i))$ 
		such that $\vphi_i y=x_i$.
		
		By Theorem~\ref{TH:U-zero-mapsto-onto-closed-fibers},
		there exists $\vphi\in U(\pi f)$
		such that $\vphi (\frakm_i)=\vphi_i$ for all $1\leq i\leq t$.
		Let $x=\vphi y$. Then $f(x,x)=f(y,y)=0$
		and $f(x,x)(\frakm_i)=f(\frakm_i)(x_i,x_i)\in\units{A(\frakm_i)}$
		for all $i$. By Lemma~\ref{LM:invertability-test},
		$f(x,x)\in\units{A}$, as required.		
	\end{proof}
	
	The next proposition makes use of the discriminant of hermitian forms over $(A,\sigma)$, see \ref{subsec:disc}.
	
	\begin{prp}\label{PR:Eii-orth-quat-necessary-cond}
		With Notation~\ref{NT:proof-of-Es},
		suppose that $R$ is   semilocal,  $T$ is connected, 
		$[B]=0$ and $(\tau,\veps)$ is orthogonal.
		Let $(P,f)\in \Herm[\veps]{A,\sigma}$
		and assume that $\pi f$ is hyperbolic.
		Then   $\Psi_f(M)\equiv (-1)^{\frac{1}{2}\rrk_AP}$ for
		some $M\in\Lag(\pi f)$ if and only if
		$\disc(f)=\disc(T/R)^{\frac{1}{2}\rrk_AP}$.
		When this fails, $[A]=0$, $\disc(f)=\disc(T/R)^{\frac{1}{2}\rrk_AP+1}$
		and $f$ is isotropic.
	\end{prp}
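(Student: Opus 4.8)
The plan is to mimic the structure of Proposition~\ref{PR:Ei-orth-quat-case}, replacing the discriminant Brauer class computations there with discriminant computations in $\units{R}/(\units{R})^2$ adapted to the orthogonal $(\tau,\veps)$ setting. First I would invoke Reduction~\ref{RD:common-reduction} together with Corollary~\ref{CR:degree-of-endo-ring} and Proposition~\ref{PR:dfn-of-Psi-f}(iii) to pass to the case $\deg B = 1$, $\deg A = 2$, $\tau$ orthogonal (so $\tau = \id_B$ since $\tau|_T = \id_T$ already and $B = T$), and $\veps = 1$; note $\sigma$ is then orthogonal by Lemma~\ref{LM:tau-orth-implies-sigma-orth}. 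By Lemma~\ref{LM:Eii-Psi-f-constant}, $\Psi_f$ is constant on $\Lag(\pi f)$ (since $T$ is connected, it cannot be onto $\{\pm 1\}$), so let $\bar\Psi_f$ denote its unique value; we must show $\bar\Psi_f = (-1)^{\frac{1}{2}\rrk_AP}$ iff $\disc(f) = \disc(T/R)^{\frac{1}{2}\rrk_AP}$. The case $P = 0$ is trivial.

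The inductive step: when $n := \rrk_AP \geq 4$, use Lemma~\ref{LM:Eii-factoring-x-summand} to produce $x\in P$ with $f(x,x)\in\units{A}$ and $\pi f(x,x) = 0$. Set $Q_1 = xA$ (a free rank-one summand since $f(x,x)$ is invertible) and $f_1 = f|_{Q_1\times Q_1}$, so $(P,f) = (Q_1, f_1)\oplus (Q_1^\perp, f|_{Q_1^\perp})$. Here $f_1 \cong \langle f(x,x)\rangle_{(A,\sigma)}$, its reduced rank is $2$, and I would compute $\disc(f_1)$ explicitly via Proposition~\ref{PR:disc-orth-basic-props}(v): writing $f(x,x) = b_1 + \mu\lambda\, r$ (the $\Sym_1(A,\sigma)$-part lies in $R \oplus \mu\lambda R$ using Lemma~\ref{LM:strcture-of-quat}(i)) and using $\disc(T/R) = \lambda^2(\units{R})^2$, one gets $\disc(f_1) = \disc(T/R)$ precisely when the ``$\mu$-component'' dominates, i.e.\ when $x\notin Q_1B$; the condition $\pi f(x,x) = 0$ forces exactly this. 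Simultaneously, by the analogue of Lemma~\ref{LM:Ei-qaut-decomposition}(i) one checks that $\pi f_1$ admits a Lagrangian $M_1$ with $M_1 A = Q_1$, hence $\bar\Psi_{f_1} = (-1)^{\frac{1}{2}\cdot 2} = -1$ by Corollary~\ref{CR:Eii-orth-Psi-restriction}. Then using multiplicativity of $\Psi$ (Proposition~\ref{PR:dfn-of-Psi-f}(ii)) and of $\disc$ (Proposition~\ref{PR:disc-orth-basic-props}(iii)), together with the fact that $[\pi(f|_{Q_1^\perp})] = [\pi f] = 0$ so $\pi(f|_{Q_1^\perp})$ is hyperbolic (Theorem~\ref{TH:trivial-in-Witt-ring}(ii)), the statement for $(P,f)$ reduces to the statement for $(Q_1^\perp, f|_{Q_1^\perp})$, of strictly smaller reduced rank. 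This reduces everything to the base case $n = 2$.

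For $n = 2$: by Proposition~\ref{PR:diagonalizable-herm-forms} write $f \cong \langle \alpha,\beta\rangle_{(A,\sigma)}$ with $\alpha,\beta\in\Sym_1(A,\sigma)\cap\units{A}$, and since $\pi f$ is hyperbolic an argument parallel to Lemma~\ref{LM:invertible-isotropic-vector} (now phrased for the image under $\pi$ rather than the whole form) produces $x\in\units{A}$ with $\pi f(x\cdot e_1 + e_2, \ldots) = 0$ in a suitable normalization; writing $x = b_1 + \mu b_2$ with $b_1,b_2\in B$ and $b_1 b_2 = 0$, the dichotomy $b_1 = 0$ versus $b_2 = 0$ (forced by connectedness of $T$, exactly as in Proposition~\ref{PR:Ei-orth-quat-case}) splits into: the case where $f$ becomes visibly hyperbolic with $\disc(f) = \disc(T/R)^2$, $\bar\Psi_f = -1 = (-1)^1$ is violated, $[A] = [(B/R,1)] = 0$, and $f$ is isotropic; versus the case where $\disc(f) = \disc(T/R)$, $\bar\Psi_f = -1 = (-1)^{\frac{1}{2}\cdot 2}$ holds. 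Combining both cases via Theorem~\ref{TH:period-divides-index} (to get $\disc(f) = \disc(T/R)^{\frac{1}{2}\rrk_AP + 1}$ in the failing case, since $2[A] = 0$) completes the proof. \textbf{The main obstacle} I anticipate is the $n = 2$ base case: getting the right ``isotropic vector for $\pi f$'' — the analogue of Lemma~\ref{LM:invertible-isotropic-vector} but with the hyperbolicity hypothesis placed on $\pi f$ rather than $f$ itself — and then carefully matching the parity of the $\disc(T/R)$-exponent against the sign $\bar\Psi_f$ through the $b_1$-versus-$b_2$ dichotomy, keeping the bookkeeping consistent with Proposition~\ref{PR:disc-orth-basic-props}(v) and the normalizations from Lemma~\ref{LM:strcture-of-quat}(i).
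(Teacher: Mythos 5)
Your reduction to the base case is essentially the paper's argument: the same initial reductions, the same use of Lemma~\ref{LM:Eii-factoring-x-summand} to split off a reduced-rank-$2$ piece $\langle f(x,x)\rangle_{(A,\sigma)}$ with $f(x,x)\in\mu\lambda\units{R}$, the same computation $\disc(f_1)=\disc(T/R)$, $\bar\Psi_{f_1}=-1$ via $xB\in\Lag(\pi f_1)$ and $xB\cdot A=xA$, and the same multiplicativity bookkeeping. That part is fine.

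The base case, however, is a genuine gap, and it is exactly the point you flag as your ``main obstacle.'' Two problems. First, a rank error: when $\rrk_AP=2$ and $\deg A=2$, $P$ is free of rank one over $A$, so diagonalization gives $f\cong\langle a\rangle_{(A,\sigma)}$ for a \emph{single} $a\in\Sym_1(A,\sigma)\cap\units{A}$ (not necessarily in $\units{R}$, since $\Sym_1(A,\sigma)$ is $3$-dimensional here); a binary form $\langle\alpha,\beta\rangle_{(A,\sigma)}$ has reduced rank $4$, so your setup for $n=2$ contradicts your own inductive step. Second, and more seriously, the dichotomy you want to import from Proposition~\ref{PR:Ei-orth-quat-case} has no analogue here: that argument starts from hyperbolicity of a form over $A$ itself (via Lemma~\ref{LM:invertible-isotropic-vector}, which requires $\langle\alpha,\beta\rangle_{(A,\sigma)}$ hyperbolic with $\alpha,\beta\in\units{R}$), producing an invertible $x$ with $x^\sigma x=-\alpha\beta^{-1}$ and hence the $b_1=0$ versus $b_2=0$ split. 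In the present proposition only $\pi f$ is hyperbolic; in the ``good'' case $\disc(f)=\disc(T/R)$ the form $f$ need not be isotropic at all, so no such $x$ exists and the dichotomy cannot be set up this way. The paper's resolution is different in kind: writing $a=b_1+\mu b_2$, it computes the Gram matrix of $\pi f$ over $T$ in the basis $\{1,\mu\}$, uses triviality of $\disc(\pi f)$ to extract $d\in\units{T}$ with $d^2\equiv\disc(f)$, builds two explicit complementary Lagrangians $M,M'$ of $\pi f$ out of $d$, and then the dichotomy is $d\in\units{R}$ versus $d\in\lambda\units{R}$ (forced by $d^2\in\units{R}$ and connectedness of $R$). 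When $d\in\lambda\units{R}$ one checks $MA=A$, giving $\bar\Psi_f=-1$ and $\disc(f)=\disc(T/R)$; when $d\in\units{R}$ the module $M$ is $A$-stable with $f(M,M)=0$, so $f$ is hyperbolic, $\bar\Psi_f=1$, $\rrk_AM=1$ forces $[A]=0$, and $\disc(f)=(\units{R})^2$. This square-root-and-explicit-Lagrangian construction is the missing idea.
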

	
	\begin{proof}
		Recall that $\rrk_AP$ is even because $\pi f$ is hyperbolic.
		Furthermore, $\rrk_AP$ is constant because $T$ is connected.
		By Reduction~\ref{RD:common-reduction},
		we may assume that $\deg B=1$, $\deg A=2$, $\tau=\id_T$
		and $\veps=1$.
		Define $\lambda,\mu$ as in Lemma~\ref{LM:strcture-of-quat}(i)
		(so $\lambda^\sigma=\lambda$ and $\mu^\sigma=-\mu$).
		By Lemma~\ref{LM:Eii-Psi-f-constant}, $\Psi_f$
		is constant on $\Lag(\pi f)$ and we
		denote the value that it attains  
		by $\bar{\Psi}_f$.  
		The proposition is clear if $P=0$, so   assume $P\neq 0$.

		Suppose first that $\rrk_AP>2$. By Lemma~\ref{LM:Eii-factoring-x-summand},
		there exists $x\in P$ with $f(x,x)\in\units{A}$
		and
		$\pi f(x,x)=0$. 
		Write $P_1=xA$, $P_2=P_1^\perp$ and 
		let $f_i=f|_{P_i\times P_i}$ ($i=1,2$).
		Since $f(x,x)\in\units{A}$,
		we have $(P,f)=(P_1,f_1)\oplus(P_2, f_2)$.
		Moreover, $f(x,x)\in \ker \pi\cap \Sym_1(A,\sigma)\cap\units{A}=\mu\lambda \units{R}$,
		so $(P_1,f_1)\cong \langle \alpha\mu\lambda\rangle_{(A,\sigma)}$
		for some $\alpha\in\units{R}$. 
		Thus, $\disc(f_1)\equiv -\Nrd(\mu)\Nrd(\alpha\mu\lambda)\equiv  \alpha^2\lambda^2\equiv \disc(T/R)\bmod
		(\units{R})^2$ (Proposition~\ref{PR:disc-orth-basic-props}(v)).
		Since $\pi f(x,x)=\pi f(x\mu,x\mu)=0$ and
		$xA=xB\oplus x\mu B$, we have $xB\in \Lag(\pi f_1)$.
		Moreover, 
		$xB\cdot A=xA$ implies that  $\bar{\Psi}_{f_1}=-1$
		(Corollary~\ref{CR:Eii-orth-Psi-restriction}).
		Since $\pi f_1$ is hyperbolic, $[\pi f_2]=[\pi f]=0$ in $W_\veps(B,\tau)$, so  $\pi f_2$
		is hyperbolic by Theorem~\ref{TH:trivial-in-Witt-ring}(ii).
		Now, 
		$\disc(f)=\disc(f_1)\disc(f_2)=\disc(T/R)\disc(f_2)$
		and $\bar{\Psi}_{f}=\bar{\Psi}_{f_1}  \bar{\Psi}_{f_2}=-\bar{\Psi}_{f_2}$
		(Proposition~\ref{PR:dfn-of-Psi-f}(ii)),
		so the proposition holds for $(P,f)$
		if and only if it holds for $(P_2,f_2)$.
		Replacing $(P,f)$ with $(P_2,f_2)$
		and repeating this process, we eventually reduce to the case where $\rrk_AP=2$.
		
		Suppose henceforth that $\rrk_AP=2$. We may assume that $P=A_A$
		and $f=\langle a\rangle_{(A,\sigma)}$ for some $a\in \Sym_1(A,\sigma)\cap\units{A}$.
		Write $a=b_1+\mu b_2$ with
		$b_1,b_2\in B$  and let $\theta$
		denote  the standard $R$-involution of $T$ (so $\lambda^\theta=-\lambda$).
		Note that $a^\sigma=a$ implies $b_2^\theta=-b_2$
		and, by Proposition~\ref{PR:disc-orth-basic-props}(v),
		$\disc(f)
		\equiv -\Nrd(\mu)\Nrd(b_1+\mu b_2)\equiv	  \mu^2(b_1^\theta b_1-\mu^2b_2^\theta b_2)
		\equiv \mu^2(b_1^\theta b_1+\mu^2b_2^2)\bmod(\units{R})^2$.
		
		Straightforward computation shows
		that the Gram matrix of $\pi f$ relative to the $B$-basis $\{1,\mu\}$
		is 
		\[
		X=\SMatII{b_1}{-\mu^2b_2}{-\mu^2b_2}{-\mu^2b_1^\theta} \in \nGL{T}{2}.
		\]
		Thus, $\disc (\pi f)\equiv
		\mu^2b_1^\theta b_1+\mu^4b_2^2\equiv \disc (f)\bmod(\units{T})^2$.
		Since $\pi f$ is hyperbolic, $\disc(\pi f)=(\units{T})^2$,
		so
		there exists $d\in\units{T}$ such that
		\[d^2=\mu^2(b_1^\theta b_1 +\mu^2 b_2^2)\equiv \disc(f)\bmod (\units{R})^2 .\]

		Identifying $A_B$ with $B^2$ via the basis $\{1,\mu\}$,
		let  $M=[\begin{smallmatrix} -\mu^2 b_2+d \\ -b_1 \end{smallmatrix}]T+
		[\begin{smallmatrix} \mu^2b_1^\theta \\ -\mu^2 b_2-d \end{smallmatrix}]T$
		and $M'= [\begin{smallmatrix} -\mu^2 b_2-d \\ -b_1 \end{smallmatrix}]T+
		[\begin{smallmatrix} \mu^2b_1^\theta \\ -\mu^2 b_2+d \end{smallmatrix}]T$.
		Viewing $M$ a subset of $A$,
		we have
		\[M=(-\mu^2 b_2+d   -\mu b_1)T+(\mu^2b_1^\theta   -\mu^3 b_2-\mu d)T.\]
		We claim that $M+M'=B^2$. To see this, observe that
		$[\begin{smallmatrix} -2\mu^2b_2 \\ -2b_1 \end{smallmatrix}]=
		[\begin{smallmatrix} -\mu^2 b_2+d \\ -b_1 \end{smallmatrix}]+
		[\begin{smallmatrix} -\mu^2 b_2-d \\ -b_1 \end{smallmatrix}]$
		and $[\begin{smallmatrix} -2\mu^2b_1^\theta \\ 2\mu^2b_2 \end{smallmatrix}]
		=-[\begin{smallmatrix} \mu^2b_1^\theta \\ -\mu^2 b_2-d \end{smallmatrix}]-
		[\begin{smallmatrix} \mu^2b_1^\theta \\ -\mu^2 b_2+d \end{smallmatrix}]$
		live in $M+M'$, and they generate $B^2_B$ because they
		are the columns of the matrix $[\begin{smallmatrix}
		0 & 2 \\ -2 & 0 \end{smallmatrix}]X\in \nGL{T}{2}$.
		Furthermore, we have $M\cap M'=0$
		because 
		\[\left[\begin{smallmatrix}
		b_1 & -\mu^2 b_2+d\\
		-\mu^2 b_2-d & -\mu^2b_1^\theta
		\end{smallmatrix}\right]M=0
		\qquad
		\text{and}
		\qquad
		\left[\begin{smallmatrix}
		b_1 & -\mu^2 b_2-d \\
		-\mu^2 b_2+d & -\mu^2b_1^\theta
		\end{smallmatrix}\right]M'=0,
		\]
		while
		$[\begin{smallmatrix}
		b_1 & -\mu^2 b_2+d\\
		-\mu^2 b_2-d & -\mu^2b_1^\theta
		\end{smallmatrix}]+[\begin{smallmatrix}
		b_1 & -\mu^2 b_2-d \\
		-\mu^2 b_2+d & -\mu^2b_1^\theta
		\end{smallmatrix}]=2X\in \nGL{T}{2}$.
		It is routine to check that $\pi f(M,M)=\pi f(M',M')=0$,
		so we conclude that $M\in \Lag (\pi f)$.
		
		We claim that 
		$d\in \units{R} $ or $d\in\lambda  \units{R} $.
		Indeed, write $d=\alpha+\beta\lambda$ with $\alpha,\beta\in R$.
		Then $  (\alpha^2+\lambda^2\beta^2)+2\alpha\beta\lambda=d^2\in \units{R}$,
		so  $\alpha\beta=0$ and $\alpha R+\beta R=R$
		(because $d^2=\alpha^2+\lambda^2\beta^2\in \alpha R+\beta R$).
		Multiplying the latter equation by $\alpha$, we get $\alpha^2R=\alpha R$,
		so there exists $c\in R$ with $c\alpha^2=\alpha$.
		The element $\alpha c$ is an idempotent and $R$ is connected,
		hence $\alpha c\in\{0,1\}$. If $\alpha c=0$, then $\alpha =c\alpha^2=0$
		and  $d\in \lambda\units{R}$.
		On the other hand, if $\alpha c=1$, then $\beta= 0$, because
		$\alpha\beta=0$, 
		and $d\in \units{R}$.

		Now,
		if $d = \alpha\lambda $ for some $\alpha\in\units{R}$,
		then    $\disc(f)= \lambda^2(\units{R})^2=\disc(T/R)$,
		and
		\begin{align*}
		2\alpha\lambda\mu&=
		\mu^3 b_2 +\alpha\lambda \mu-\mu^2b_1^\theta+
		\mu^2b_1^\theta   -\mu^3 b_2-\alpha\mu \lambda\\
		&=
		(-\mu^2 b_2+d   -\mu b_1)\mu+
		(\mu^2b_1^\theta   -\mu^3 b_2-\mu d)\in MA 
		\end{align*}
		(note that $b_2\mu=\mu b_2^\theta =-\mu b_2$).
		Thus,
		$MA=A$ and $\bar{\Psi}_f=-1$ by Corollary~\ref{CR:Eii-orth-Psi-restriction}.
		
		On the other hand, if 
		$d\in\units{R}$,
		then		
		$\disc(f)=  (\units{R})^2 =\disc(T/R)^2$ and
 		\begin{align*}
		&(-\mu^2 b_2+d   -\mu b_1)\mu=
		-(\mu^2b_1^\theta   -\mu^3 b_2-\mu d)\in M,\\
		& (\mu^2b_1^\theta   -\mu^3 b_2-\mu d)\mu=-(-\mu^2 b_2+d   -\mu b_1)\mu^2\in M.
		\end{align*}
		Thus, $M$ is  an $A$-module,
		and
		it follows that $\pi(f(M,M)\mu)=\pi f(M,M\mu)=\pi f(M,M)=0$,
		hence $f(M,M)=0$.
		Similarly, $M'$ is also an $A$-module with $f(M',M')=0$,
		so $f$ is hyperbolic and $M$ is a Lagrangian of $f$.
		In particular, $f$ is isotropic.
		Now, by the characterizing property
		of $\Psi_f$ in Proposition~\ref{PR:dfn-of-Psi-f},
		$\bar{\Psi}_f=\Psi_f(M)=1$, and by Corollary~\ref{CR:Eii-orth-Psi-restriction}, 
		there is no $M'\in \Lag(\pi f)$ with $M'A=P$. 
		Moreover, $\rrk_AM=\frac{1}{2}\rrk_AA=1$,
		so $[A]=[\End_A(M)]=[R]=0$ by Proposition~\ref{PR:degree-of-endo-ring}(i).
		
		The proposition follows  because only one of the previous
		cases can hold. Indeed, we cannot have $\bar{\Psi}_f=1$
		and $\bar{\Psi}_f=-1$ simultaneously.
	\end{proof}

	\begin{thm}\label{TH:Eii-holds-orth}
		Theorem~\ref{TH:Eii-holds}
		holds when $(\tau,\veps)$ is orthogonal.
	\end{thm}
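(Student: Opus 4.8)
The plan is to assemble the preparatory results of this section, following the same template as the proof of Theorem~\ref{TH:Ei-holds-orth-II}. First I would reduce to the case where $R$ is connected by writing $R$ as a finite product of connected semilocal rings and working over each factor. Then Lemma~\ref{LM:tau-orth-implies-sigma-orth} gives that $(\sigma,\veps)$ is orthogonal, so $\tau|_T=\id_T$ and $S=R$, and Theorem~\ref{TH:trivial-in-Witt-ring}(ii) shows $\pi f$ is hyperbolic. Part~(i) of Theorem~\ref{TH:Eii-holds} is then vacuous. Part~(iii) when $T$ is not connected is already covered by Propositions~\ref{PR:Eii-S-not-connected} and~\ref{PR:Eii-T-not-connected}, so in all that follows I may assume $T$ is connected; under this hypothesis $\rrk_AP$ is constant (Corollary~\ref{CR:constant-even-ranks}(i)) and even, since $\pi f$ hyperbolic forces $\rrk_BP=\iota\rrk_AP$ to be even (Corollary~\ref{CR:constant-even-ranks}(ii)).

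The crux is the equivalence, for $T$ connected: \emph{there exists $M\in\Lag(\pi f)$ with $\Psi_f(M)=(-1)^{\frac{1}{2}\rrk_AP}$ if and only if $[B]\neq0$ in $\Br T$ or $\disc(f)=\disc(T/R)^{\frac{1}{2}\rrk_AP}$}; moreover when this fails one has $[A]=0$ in $\Br R$, $\disc(f)=\disc(T/R)^{\frac{1}{2}\rrk_AP+1}$ and $f$ isotropic. This is precisely Proposition~\ref{PR:Eii:Psi-good-if-B-nonzero} (handling $[B]\neq0$, where in fact \emph{every} $M\in\Lag(\pi f)$ has the required $\Psi_f$-value) together with Proposition~\ref{PR:Eii-orth-quat-necessary-cond} (handling $[B]=0$, including the failure clause). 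Granting it, part~(ii) follows: if some $M'\in\Lag(\pi f)$ satisfies $M'A=P$ then $\Psi_f(M')=(-1)^{\frac{1}{2}\rrk_AP}$ by Corollary~\ref{CR:Eii-orth-Psi-restriction}, forcing $[B]\neq0$ or the discriminant identity; conversely, if $[B]\neq0$ or $\disc(f)=\disc(T/R)^{\frac{1}{2}\rrk_AP}$, I would pick $M\in\Lag(\pi f)$ with $\Psi_f(M)=(-1)^{\frac{1}{2}\rrk_AP}$, observe that $\Psi_{f(\frakm)}(M(\frakm))=(-1)^{\frac{1}{2}\rrk_{A(\frakm)}P(\frakm)}$ for each $\frakm\in\Max R$ since $\Psi_f$ is a natural transformation (Proposition~\ref{PR:dfn-of-Psi-f}) and the reduced rank is preserved under base change, apply Proposition~\ref{PR:Eii-orth-R-field-A-arbit} over each residue field to obtain $\vphi_\frakm\in U^0(\pi f(\frakm))$ with $\vphi_\frakm M(\frakm)\cdot A(\frakm)=P(\frakm)$, lift these simultaneously to $\vphi\in U^0(\pi f)$ via Theorem~\ref{TH:U-zero-mapsto-onto-closed-fibers}, and conclude $(\vphi M)A=P$ by Nakayama's Lemma, exactly as in the proof of Theorem~\ref{TH:Eii-unitary-symplectic}(i). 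The failure clause of part~(ii) is the failure clause of the equivalence.

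For part~(iii) with $T$ connected: by part~(ii) a Lagrangian $M$ of $\pi f$ with $MA=P$ exists unless $[B]=0$ and $\disc(f)\neq\disc(T/R)^{\frac{1}{2}\rrk_AP}$, in which case part~(ii) also yields $[A]=0$ and $\disc(f)=\disc(T/R)^{\frac{1}{2}\rrk_AP+1}$. Since $[A]=0$, Theorem~\ref{TH:index-description} supplies $N\in\rproj{A}$ with $\rrk_AN=\ind A=1$; then $(P',f'):=(P,f)\oplus(N\oplus N^*,\Hyp[\veps]{N})$ is Witt equivalent to $(P,f)$, $\pi f'$ is hyperbolic, $\rrk_AP'=\rrk_AP+2$, and $\disc(f')=\disc(f)$ because hyperbolic forms have trivial discriminant (Proposition~\ref{PR:disc-orth-basic-props}; for the non-free module $N\oplus N^*$ one reduces to $(R,\id_R)$ by $e$-transfer, which preserves the discriminant). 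Hence $\disc(f')=\disc(T/R)^{\frac{1}{2}\rrk_AP+1}=\disc(T/R)^{\frac{1}{2}\rrk_AP'}$, so the condition of part~(ii) holds for $(P',f')$ and produces the required Lagrangian. Here one uses that $\disc(T/R)$ is nontrivial when $T$ is connected: if $\lambda^2$ were a square in $\units R$ then $T$ would contain a nontrivial idempotent.

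I expect the main difficulty to be bookkeeping rather than any single hard argument: keeping the $\Psi_f$-invariant, the Brauer classes $[A]$ and $[B]$, and the discriminant parities consistent through the ``if and only if'' statement and its failure clauses, and checking that the reductions of Section~\ref{sec:preparation} interact correctly with the standing hypotheses. All the genuinely nontrivial input has been isolated in Propositions~\ref{PR:dfn-of-Psi-f}, \ref{PR:Eii-orth-R-field-A-arbit}, \ref{PR:Eii:Psi-good-if-B-nonzero} and~\ref{PR:Eii-orth-quat-necessary-cond}.
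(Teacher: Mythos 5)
Your proposal is correct and follows essentially the same route as the paper: part (i) vacuous, part (ii) via Corollary~\ref{CR:Eii-orth-Psi-restriction}, Propositions~\ref{PR:Eii:Psi-good-if-B-nonzero} and~\ref{PR:Eii-orth-quat-necessary-cond}, and the lifting argument of Theorem~\ref{TH:Eii-unitary-symplectic}(i) with Proposition~\ref{PR:Eii-orth-R-field-A-arbit}, and part (iii) by adding a hyperbolic plane of reduced rank $2$. The only (cosmetic) deviation is in part (iii), where you track the discriminant of $f\oplus\Hyp[\veps]{N}$ and re-apply part (ii), whereas the paper tracks $\Psi_{f\oplus\Hyp[\veps]{V}}(M\oplus V)$ directly via Proposition~\ref{PR:dfn-of-Psi-f}(ii); these are equivalent by Proposition~\ref{PR:Eii-orth-quat-necessary-cond}.
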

	
	\begin{proof}
		Recall that we are given $(P,f)\in \Herm[\veps]{A,\sigma}$
		such that $[\pi f]=0$ in $W_\veps(B,\tau)$.
		By Theorem~\ref{TH:trivial-in-Witt-ring}(ii),
		$\pi f$ is hyperoblic. As explained
		in the introduction to this subsection,
		this means that $\rrk_AP$ is even.
		
		(i) This part is vacuous under our assumptions.
		
		(ii) The pair $(\sigma,\veps)$ is orthogonal by Lemma~\ref{LM:tau-orth-implies-sigma-orth}.
				
		Suppose that there exists $M\in \Lag (\pi f)$
		with $MA=P$. Then $\Psi_f(M)=(-1)^{\frac{1}{2}\rrk_AP}$
		by Corollary~\ref{CR:Eii-orth-Psi-restriction}. 
		Now, by Proposition~\ref{PR:Eii-orth-quat-necessary-cond},
		either $[B]\neq 0$,
		or $\disc(f)=\disc(T/R)^{\frac{1}{2}\rrk_AP}$, as required.
		
		Conversely, suppose that $[B]\neq 0$,
		or $\disc(f)=\disc(T/R)^{\frac{1}{2}\rrk_AP}$.
		If there exists $M\in \Lag(\pi f)$
		with $\Psi_f(M)=(-1)^{\frac{1}{2}\rrk_AP}$,
		then we can argue as in the second paragraph of the proof of Theorem~\ref{TH:Eii-unitary-symplectic}(i),
		replacing Proposition~\ref{PR:Eii-unitary-R-field}
		with Proposition~\ref{PR:Eii-orth-R-field-A-arbit},
		to show that there exists $M'\in\Lag (\pi f)$
		with $M'A=P$.
		The existence of $M$ follows from Proposition~\ref{PR:Eii:Psi-good-if-B-nonzero}
		if $[B]\neq 0$ and from Proposition~\ref{PR:Eii-orth-quat-necessary-cond}
		if $[B]=0$.
		
		Proposition~\ref{PR:Eii-orth-quat-necessary-cond} also implies
		that $[A]=0$, $\disc(f)=\disc(T/R)^{\frac{1}{2}\rrk_AP+1}$ and $f$ is
		isotropic
		if $[B]=0$ and  $\disc(f)\neq \disc(T/R)^{\frac{1}{2}\rrk_AP }$.

		(iii) By Propositions~\ref{PR:Eii-S-not-connected} 
		and~\ref{PR:Eii-T-not-connected},
		we may assume that $T$ is connected.
		By (ii), we may further assume
		that $[B]=0$ and $\disc(f)\neq \disc(T/R)^{\frac{1}{2}\rrk_AP}$,
		in which case $[A]=0$ and $\Psi_f(M)=(-1)^{\frac{1}{2}\rrk_AP+1}$
		for all $M\in \Lag(\pi f)$.
		
		Fix some $M\in \Lag(\pi f)$.
		Since $[A]=[R]$,
		there exists $V\in \rproj{A}$
		such that $\rrk_AV=\deg R=1$
		(Proposition~\ref{PR:degree-of-endo-ring}(iii)).
		Then $M\oplus V$ is a Lagrangian
		of $\pi (f\oplus \Hyp[\veps]{V})$
		which satisfies $\Psi_{f\oplus \Hyp[\veps]{V}}(M\oplus V)= (-1)^{\frac{1}{2}\rrk_AP+1}
		\Psi_{ \Hyp[\veps]{V}}(V)=(-1)^{\frac{1}{2}\rrk_AP+1}
		=(-1)^{\frac{1}{2}\rrk_A(P\oplus V\oplus V^*)}$ (Proposition~\ref{PR:dfn-of-Psi-f}(ii),
		Lemma~\ref{LM:unimodular-implies-rrk-sigma-inv}).
		Replacing $(P,f)$ and $M$
		with $(P\oplus V\oplus V^*,f\oplus \Hyp[\veps]{V})$ and $M\oplus V$,
		we may assume that $\Psi_f(M)=(-1)^{\frac{1}{2}\rrk_A P}$
		and proceed as in the proof of the ``if'' part of (ii).
	\end{proof}

\section{Proof of Theorem~\ref{TH:exactness}}
\label{sec:completion-of-proof}

	We can now prove Theorem~\ref{TH:exactness}.
	We use the notation of \ref{subsec:octagon}.

	\begin{proof}[Proof of Theorem~\ref{TH:exactness}]
		Assume $R$ is semilocal, and recall
		from \ref{subsec:octagon} that $(A,\sigma)$ is an Azumaya $R$-algebra
		with involution, $\lambda,\mu\in\units{A}$ satisfy
		$\lambda^2\in S:=\Cent(A)$, $\lambda\mu=-\mu\lambda$, $\lambda^\sigma=-\lambda$,
		$\mu^\sigma=-\mu$, and we have $B=\Cent_A(\lambda)$, $T=S[\lambda]$, $\tau_1=\sigma|_B$, 
		$\tau_2=\Int(\mu^{-1})\circ \sigma|_B$. To avoid later ambiguity, we henceforth
		write $\sigma_1$ in place
		of $\sigma$.

		By Lemma~\ref{LM:octagon-basic-properties}, $R,S,T,B,A$
		satisfy the assumptions of Notation~\ref{NT:proof-of-Es}.
		Furthermore, $\pi_1$ coincides with $\pi=\pi_{A,B}$ of Lemma~\ref{LM:dfn-of-pi-A-B}.
		We shall consider
		four possibilities for the involution $\sigma$    in Notation~\ref{NT:proof-of-Es}, namely,
		$\sigma_1$ (i.e.\ $\sigma$ from \ref{subsec:octagon}),
		$\Int(\lambda^{-1})\circ \sigma_1$,
		$\Int(\mu^{-1})\circ \sigma_1$
		and
		$\Int((\lambda\mu)^{-1})\circ \sigma_1$.
		The involution $\tau=\sigma|_B$ from Notation~\ref{NT:proof-of-Es} is $\tau_1$ in the first two cases
		and $\tau_2$ in the last two cases, because $\lambda$ commutes with elements from $B$.
		Recall that $\rho:(B,\tau)\to (A,\sigma)$ is the inclusion map.

		\smallskip

		According to Theorem~\ref{TH:exactness-equiv-conds},
		we need prove conditions \ref{item:Eii}, \ref{item:Ei}, \ref{item:Eii-tag}, \ref{item:Ei-tag}.
		
		\smallskip
		
		{\it \noindent Proof of \ref{item:Eii}.}
		Let $(P,f)\in\Herm[\veps]{A,\sigma_1}$ and assume $[\pi_1 f ]=0$.
		Then existence of the required Lagrangian of $\pi_1 f$ follows
		by applying Theorem~\ref{TH:Eii-holds}(iii) to $(P,f)$ with $\sigma_1$ in place of $\sigma$.
		
		\smallskip
		
		{\it \noindent Proof of \ref{item:Ei}.}
		Let $(Q,g)\in\Herm[\veps]{B,\tau_1}$ and assume $[\rho_1g]=0$
		in $W_{-\veps}(A,\sigma)$. Put
		$\sigma=\Int(\lambda^{-1})\circ \sigma_1$ and $\tau=\tau_1$.
		Then $\rho_1 g=\lambda \rho g$, where the right hand side
		is  $\lambda$-conjugation (see~\ref{subsec:conjugation}) 
		of the base-change of $g$ along $\rho$ (see~\ref{subsec:herm-base-change}).
		Thus, $[\rho  g]=0$ in $W_\veps(A,\sigma)$, so
		by Theorem~\ref{TH:Ei-holds}(ii), there exists $(Q',g')\in \Herm[\veps]{B,\tau_1}$
		with $[g]=[g']$ and a Lagrangian $L$ of $\rho g'$ such that $L\oplus Q'=Q'A$.
		Since $L$ is also a Lagrangian of $\rho_1 g'=\lambda \rho g'$, we have
		established \ref{item:Ei} for $(Q,g)$.
		
		\smallskip
		
		{\it \noindent Proof of \ref{item:Eii-tag}.}
		Let $(P,f)\in\Herm[-\veps]{A,\sigma_1}$ and assume $[\pi_2 f ]=0$.
		Put  $\sigma =\Int(\mu^{-1})\circ \sigma_1$, $\tau=\tau_2$ 
		and let $f_2=\mu^{-1}f\in \Herm[\veps]{A,\sigma }$.
		Then $\pi_2 f=\pi f_2$.
		By applying Theorem~\ref{TH:Eii-holds}(iii) to $(P,f_2)$,
		we see that there exists $(P',f'_2)\in \Herm[\veps]{A,\sigma }$
		with $[f_2]=[f'_2]$ and a Lagrangian $M$ of $\pi_2f'_2$
		such that $MA=P'$. Put $f'=\mu f'_2$. Then $(P',f')\in \Herm[-\veps]{A,\sigma_1}$,
		$[f']=[f]$ and $M$ is a Lagrangian of $\pi_2 f'=\pi f'_2$ with $MA=P'$.
		
		\smallskip
		
		{\it \noindent Proof of \ref{item:Ei-tag}.}
		Let $(Q,g)\in\Herm[\veps]{B,\tau_2}$ and assume $[\rho_2g]=0$.
		Put $\sigma =\Int((\lambda\mu)^{-1})\circ \sigma_1$ 
		and $\tau=\tau_2$.
		Then  $\rho_2 g=(\lambda \mu) \rho g$,
		and the proof proceeds as in the case of \ref{item:Ei}.
		
		\smallskip
		
		\noindent This completes the proof of Theorem~\ref{TH:exactness}.			
	\end{proof}

	In fact, Theorems~\ref{TH:Ei-holds} and~\ref{TH:Eii-holds}
	allow us to describe the image 	of the functors $\pi_1,\pi_2,\rho_1,\rho_2$
	when $T$ is connected. This description is given in the following theorem,
	which can  be regarded as a refinement of Theorem~\ref{TH:exactness};
	it
	will be needed
	for some of the applications.

	\begin{thm}\label{TH:finer-exactness}
		With   notation as in \ref{subsec:octagon},
		suppose that $R$ is semilocal and $T$ is connected.
		\begin{enumerate}[label=(\roman*)]
			\item Let $(P,f)\in\Herm[\veps]{A,\sigma}$.
			Then there exists $(Q,g)\in \Herm[-\veps]{B,\tau_2}$
			with $\rho_2 g\cong f$ if and only if 
			$[\pi_1 f]=0$ and one of the following hold:
			\begin{enumerate}[label=(\arabic*)]
			\item $(\sigma,\veps)$ is not symplectic;
			\item $4\mid \rrk_AP$.
			\end{enumerate}
			If $[\pi_1 f]=0$ and conditions (1)--(2) fail,
			then $[A]=0$ and $f$ is hyperbolic.

			\item Let $(Q,g)\in \Herm[\veps]{B,\tau_1}$.
			Then there exists $(P,f)\in \Herm[\veps]{A,\sigma}$
			with $\pi_1 f\cong g$ if and only if   $[\rho_1 g]=0$
			and one of the the following hold:
			\begin{enumerate}[label=(\arabic*)]
			\item $(\sigma,-\veps)$ is not orthogonal;
			\item $[A]=0$;
			\item $[B]\neq 0$;
			\item $(\sigma,-\veps)$ is   orthogonal, $[B]=0$,
			$\rrk_BQ$ is even and $[D(g)]=\frac{\rrk_QB}{2}[A]$
			(see \ref{subsec:disc}; $\tau_1$ is unitary).
			\end{enumerate}
			If  $[\rho_1 g]=0$ and conditions (1)--(4) fail, then
			$\rrk_BQ$ is even, $[D(g)]=(\frac{\rrk_QB}{2}+1)[A]$
			and $g$ is isotropic.	
			
			\item Let $(P,f)\in\Herm[-\veps]{A,\sigma}$.
			Then there exists $(Q,g)\in \Herm[ \veps]{B,\tau_1}$
			with $\rho_1 g\cong f$ if and only if   $[\pi_2 f]=0$
			and one of the following hold:
			\begin{enumerate}[label=(\arabic*)]
			\item $(\tau_2,\veps)$ is not orthogonal;
			\item $[B]\neq 0$;
			\item $(\tau_2,\veps)$ is orthogonal,  
			$\rrk_AP$ is even, and $\disc(f)=\disc(T/R)^{\frac{1}{2}\rrk_AP}$
			(see \ref{subsec:disc};  $(\sigma,-\veps)$ is orthogonal in this case).
			\end{enumerate}
			If $[\pi_2 f]=0$ and conditions (1)--(3)   fail, then $[A]=0$,
			$\rrk_AP$ is even, $\disc(f)=\disc(T/R)^{\frac{1}{2}\rrk_AP+1}$
			and $f$ is isotropic.
			
			\item  Let $(Q,g)\in \Herm[\veps]{B,\tau_2}$.
			Then there exists $(P,f)\in \Herm[\veps]{A,\sigma}$
			with  $\pi_2 f\cong g$
			if and only if $[\rho_2 g]=0$.
		\end{enumerate}
	\end{thm}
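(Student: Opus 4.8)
The plan is as follows. First I would observe that, in contrast with parts (i)--(iii), part (iv) carries no side conditions, so the whole content is that $[\rho_2 g]=0$ already forces $(Q,g)$ into the essential image of $\pi_2$. The ``only if'' direction is immediate from Proposition~\ref{PR:octagon-is-a-complex}: since the octagon \eqref{EQ:octagon} is a chain complex, if $\pi_2 f\cong g$ then $\rho_2 g\cong\rho_2(\pi_2 f)$ is hyperbolic, whence $[\rho_2 g]=0$.

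For the converse I would mimic the proof of condition \ref{item:Ei-tag} in Theorem~\ref{TH:exactness}. Assuming $[\rho_2 g]=0$, by Remark~\ref{RM:finer-exactness}\ref{item:RM:finer:pi-two-image} it suffices to exhibit a Lagrangian $L$ of $\rho_2 g$ with $L\oplus(Q\otimes 1)=Q\otimes_B A$. Writing $\sigma_1$ for the involution denoted $\sigma$ in \ref{subsec:octagon}, I would set $\sigma:=\Int((\lambda\mu)^{-1})\circ\sigma_1$ and $\tau:=\tau_2=\sigma|_B$; by Lemma~\ref{LM:octagon-basic-properties} and Corollary~\ref{CR:type-conjugation}(i), the data $R,S,T,B,A,\sigma,\tau,\veps$ is an instance of Notation~\ref{NT:proof-of-Es}, and $\rho_2 g=(\lambda\mu)\rho g$, where $\rho g$ is the base change of $g$ along $\rho\colon(B,\tau_2)\to(A,\sigma)$. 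Since $(\lambda\mu)$-conjugation preserves the underlying module, Lagrangians, and the property of being Witt-trivial (see \ref{subsec:conjugation}), the problem reduces to producing a Lagrangian $L$ of $\rho g$ with $L\oplus Q=QA$; and since $T$ is connected (hence so is $R$), $\rho g$ is hyperbolic by Theorem~\ref{TH:trivial-in-Witt-ring}(ii).

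The last step is to apply Theorem~\ref{TH:Ei-holds}(i), and the crux of the argument --- the step I expect to be the main obstacle --- is checking that in this instance of Notation~\ref{NT:proof-of-Es} one of conditions (1), (2) of Theorem~\ref{TH:Ei-holds}(i) always holds, so that the subtler conditions (3)--(5) never enter. For this I would compute types: for $s\in S=\Cent(A)$ one has $\sigma(s)=(\lambda\mu)^{-1}s^{\sigma_1}(\lambda\mu)=s^{\sigma_1}$ and $\tau_2(s)=\mu^{-1}s^{\sigma_1}\mu=s^{\sigma_1}$, so $\sigma|_S=\tau_2|_S=\sigma_1|_S$; and $\mu\lambda=-\lambda\mu$ gives $\mu^{-1}\lambda\mu=-\lambda$, hence $\tau_2(\lambda)=\mu^{-1}\lambda^{\sigma_1}\mu=\lambda$. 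Since $\{1,\lambda\}$ is an $S$-basis of $T$ (Lemma~\ref{LM:octagon-basic-properties}), it follows that $\Sym_1(T,\tau_2)=R\oplus R\lambda$. Therefore, if $\sigma_1$ is not unitary then $S=R$ (Proposition~\ref{PR:types-of-involutions-Az}), so $\Sym_1(T,\tau_2)=T$, i.e.\ $\tau_2|_T=\id_T$ and $(\tau_2,\veps)$ is not unitary, which is condition (2); while if $\sigma_1$ is unitary then $\sigma|_S=\sigma_1|_S$ is the nontrivial standard involution of $S$, so $\sigma$ is unitary and $(\sigma,\veps)$ is not orthogonal, which is condition (1). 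In either case we are in the range covered by Theorem~\ref{TH:Ei-holds-unit-syp}, and Theorem~\ref{TH:Ei-holds}(i) supplies a Lagrangian $L$ of $\rho g$ with $L\oplus Q=QA$, completing the proof. The remaining points --- that the passage to Notation~\ref{NT:proof-of-Es} is legitimate and that $\rho_2 g=(\lambda\mu)\rho g$ --- are routine and already appear in the proof of Theorem~\ref{TH:exactness}.
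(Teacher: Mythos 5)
Your argument for part (iv) is correct and is essentially the paper's own proof of that part: pass to $\sigma:=\Int((\lambda\mu)^{-1})\circ\sigma_1$, $\tau:=\tau_2$, identify $\rho_2 g$ with $(\lambda\mu)\rho g$, and then check that one of conditions (1), (2) of Theorem~\ref{TH:Ei-holds}(i) always holds in this instance of Notation~\ref{NT:proof-of-Es}. Your type computation ($\tau_2(\lambda)=\lambda$, hence $\tau_2|_T=\id_T$ when $S=R$; and $(\sigma,\veps)$ unitary when $\sigma_1$ is unitary) is exactly the parenthetical observation the paper makes at the end of its proof of (iv).

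The genuine gap is that the proposal proves only part (iv) of a four-part statement. Parts (i)--(iii) are not addressed at all, and they are the substantive parts: they are precisely the cases where the ``subtler'' conditions of Theorems~\ref{TH:Ei-holds}(i) and~\ref{TH:Eii-holds}(i),(ii) do enter, and each requires its own choice of twisted involution --- $\sigma_1$ and $\tau_1$ for (i) via Remark~\ref{RM:finer-exactness}\ref{item:RM:finer:rho-two-image} and Theorem~\ref{TH:Eii-holds}(i); $\Int(\lambda^{-1})\circ\sigma_1$ for (ii) via Theorem~\ref{TH:Ei-holds}(i); $\Int(\mu^{-1})\circ\sigma_1$ and the conjugate form $f_2=\mu^{-1}f$ for (iii) via Theorem~\ref{TH:Eii-holds}(i),(ii) --- together with the bookkeeping that translates the hypotheses of those theorems back into the stated conditions (matching types via Corollary~\ref{CR:type-conjugation}(i), and using invariance of the discriminant and of isotropy under conjugation, Proposition~\ref{PR:disc-orth-basic-props}(ii)). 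None of this appears in the proposal, so as written it establishes only one quarter of the theorem.
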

	
	\begin{proof}
		We will use the   notation and observations from  the first two paragraphs of
		the proof of Theorem~\ref{TH:exactness}. In particular, we write $\sigma_1$
		for $\sigma$ of \ref{subsec:octagon}.

		(i) By Remark~\ref{RM:finer-exactness}\ref{item:RM:finer:rho-two-image},
		$(Q,g)$ exists if and only if $\pi_1 f$ admits a Lagrangian
		$M$ with $MA=P$.
		Put $\sigma=\sigma_1$ and $\tau=\tau_1$,
		and observe that $\tau:B\to B$ is unitary
		because $\tau_1|_T\neq \id_T$
		and $T$ is connected (see Proposition~\ref{PR:types-of-involutions-Az}). 
		Applying Theorem~\ref{TH:Eii-holds}(i) to $f$ with $\sigma_1$ in place of $\sigma$
		now gives the required statement.
		
		(ii) By Remark~\ref{RM:finer-exactness}\ref{item:RM:finer:pi-one-image},
		$(P,f)$ exists if and only if $\rho_1 g$ admits   a Lagrangian
		$L$ with $Q\oplus L=QA$.
		Put   $\sigma =\Int(\lambda^{-1})\circ \sigma_1$
		and $\tau=\tau_1$. 
		Then $\rho_1 g=\lambda \rho g$
		(notation as in \ref{subsec:herm-base-change}, \ref{subsec:conjugation}),
		so $\rho g\in \Herm[\veps]{A,\sigma}$ has the same Lagrangians as $\rho_1g$.
		The statement therefore follows by applying
		Theorem~\ref{TH:Ei-holds}(i) to $(Q,g)$; note that $(\sigma,\veps)$
		and $(\sigma_1,-\veps)$ have the same type by Corollary~\ref{CR:type-conjugation}(i).
		
		(iii) By Remark~\ref{RM:finer-exactness}\ref{item:RM:finer:rho-one-image},
		$(Q,g)$ exists if and only if $\pi_2 f$ admits a Lagrangian
		$M$ with $MA=P$.
		Put $\sigma =\Int(\mu^{-1})\circ \sigma_1$,
		$\tau=\tau_2$ and let $f_2=\mu^{-1}f\in \Herm[\veps]{A,\sigma_2}$.
		Then $\pi_2 f=\pi f_2$ and both forms have the same Lagrangians.
		In addition, $f$ is isotropic if and only if $f_2$ is isotropic,
		and $\disc(f)=\disc(f_2)$ (Proposition~\ref{PR:disc-orth-basic-props}(ii)).
		The statement therefore follows by applying parts (i) and (ii) of Theorem~\ref{TH:Eii-holds} to 
		$(P,f_2)$.
		Note that $(\sigma,\veps)$
		and $(\sigma_1,-\veps)$ have the same type by Corollary~\ref{CR:type-conjugation}(i),
		and
		$(\tau,\veps)$ cannot be unitary if $(\sigma,\veps)$ is symplectic,
		because $\tau_2|_T=\id_T$ when $\sigma_1|_S=\id_S$.
	
		(iv) By Remark~\ref{RM:finer-exactness}\ref{item:RM:finer:pi-two-image},
		$(P,f)$ exists if and only if $\rho_2 g$ admits   a Lagrangian
		$L$ with $Q\oplus L=QA$.
		Put $\sigma=\Int((\lambda\mu)^{-1})\circ \sigma_1$
		and $\tau=\tau_2$. 
		Then, as in (ii), $\rho_2 g=(\lambda \mu) \rho g$,
		and
		the statement follows by applying
		Theorem~\ref{TH:Ei-holds}(i) to $(Q,g)$.
		Note that $(\tau,\veps)$
		is not unitary when $(\sigma,\veps)$ is orthogonal,
		again because $\tau_2|_T=\id_T$ when $\sigma|_S=\id_S$.
	\end{proof}
	
	\begin{cor}\label{CR:exactness-anisotropic}
		With the notation of \ref{subsec:octagon},
		suppose that $R$ is a semilocal and $T$ is connected.
		Let  $(P,f)\in\Herm[\veps]{A,\sigma}$ be anisotropic and assume
		that $[\pi_1 f]=0$ in $W_\veps(B,\tau)$.
		Then there exists $(Q,g)\in \Herm[-\veps]{B,\tau_2}$
		with $\rho_2 g\cong f$.
		
		Similar statements hold for the image
		of $\pi_1$, $\rho_1$, $\pi_2$.
	\end{cor}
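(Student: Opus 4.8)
The plan is to deduce everything directly from Theorem~\ref{TH:finer-exactness}, whose four parts already pin down the images of $\rho_2,\pi_1,\rho_1,\pi_2$ (respectively) up to a short list of conditions, together with the observation that whenever those conditions fail the form in question is forced to be isotropic (or even hyperbolic), which an anisotropic form cannot be. So the work is essentially bookkeeping across the four functors.

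For the stated assertion (image of $\rho_2$): given anisotropic $(P,f)\in\Herm[\veps]{A,\sigma}$ with $[\pi_1 f]=0$, I would invoke Theorem~\ref{TH:finer-exactness}(i). If condition (1) or (2) there holds, we are done immediately. Otherwise the ``if'' clause of that theorem gives that $f$ is hyperbolic, say with Lagrangian $L$; since $f(L,L)=0$ and $(P,f)$ is anisotropic, $L=0$, forcing $P=0$. Then $f$ is the zero form, which is $\rho_2^{(-\veps)}$ of the zero $(-\veps)$-hermitian space over $(B,\tau_2)$. Either way $f$ lies in the image of $\rho_2$.

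For the ``similar statements'': for the image of $\pi_1$, let $(Q,g)\in\Herm[\veps]{B,\tau_1}$ be anisotropic with $[\rho_1 g]=0$; apply Theorem~\ref{TH:finer-exactness}(ii). If none of conditions (1)--(4) there held, the ``if'' clause would give that $g$ is isotropic, contradicting anisotropy; hence one of them holds and $g\cong\pi_1 f$ for some $(P,f)$. The image of $\rho_1$ is handled identically using Theorem~\ref{TH:finer-exactness}(iii): an anisotropic $(P,f)\in\Herm[-\veps]{A,\sigma}$ with $[\pi_2 f]=0$ cannot have conditions (1)--(3) all fail, since that would force $f$ isotropic. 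Finally, the image of $\pi_2$ requires no anisotropy hypothesis at all: Theorem~\ref{TH:finer-exactness}(iv) says that $[\rho_2 g]=0$ alone implies $g\cong\pi_2 f$ for some $(P,f)$.

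There is no real obstacle here: the corollary is a direct consequence of the ``failure'' clauses already established in Theorem~\ref{TH:finer-exactness}. The only point that needs a word of care is that part (i) yields ``hyperbolic'' rather than ``isotropic'', so one must separately dispose of the degenerate case $P=0$ (where the zero form is trivially in the image), whereas parts (ii) and (iii) conclude outright with ``isotropic'', which contradicts anisotropy without further comment.
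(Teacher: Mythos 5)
Your proof is correct and is exactly the intended argument: the paper states this as an immediate consequence of Theorem~\ref{TH:finer-exactness}, whose "failure" clauses force isotropy (or hyperbolicity, hence $P=0$ in the anisotropic case) precisely when the image conditions fail. Your extra care with part (i) is sound, though note the degenerate case $P=0$ is already covered by condition (2) there, since $4\mid 0$.
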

	
	\begin{cor}\label{CR:exactness-field-of-frac}
		With the notation of \ref{subsec:octagon},
		suppose that $R$ is a regular semilocal domain with fraction field
		$K$ and $T$ is connected.
		Let  $(P,f)\in\Herm[\veps]{A,\sigma}$ and assume
		that $[\pi_1 f]=0$ in $W_\veps(B,\tau)$.
		Then there exists $(Q,g)\in \Herm[-\veps]{B,\tau_2}$
		with $\rho_2 g\cong f$
		if and only if there exists
		$(Q',g')\in \Herm[-\veps]{B_K,\tau_{2,K}}$
		with $\rho_2 g'\cong f_K$.
		
		Similar statements hold for the image
		of $\pi_1$, $\rho_1$, $\pi_2$.
	\end{cor}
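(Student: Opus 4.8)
The plan is to deduce the corollary from Theorem~\ref{TH:finer-exactness}, which I would apply both over $R$ and over $K$. In each of the four cases the implication from $R$ to $K$ is immediate, since $\pi_1,\pi_2,\rho_1,\rho_2$ commute with base change: an isometry $\rho_2 g\cong f$ base changes to $\rho_{2,K}(g_K)\cong f_K$, and similarly for $\pi_1,\rho_1,\pi_2$. For the reverse implication the first step is to check that Theorem~\ref{TH:finer-exactness} is available over $K$. Here I would use that $R$, being regular, is normal, so the finite \'etale algebras $S=\Cent(A)$ and $T$ (over $R$ and $S$ respectively) are normal; as $T$ is connected, so is $S$, hence both $S$ and $T$ are normal domains, and a standard argument identifies $S_K=S\otimes_R K$ and $T_K=T\otimes_R K$ with their fraction fields (every nonzero element of $S$, resp.\ $T$, satisfies a monic polynomial over $R$ with nonzero constant term, hence becomes invertible after inverting $R\setminus\{0\}$). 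In particular $\Cent(A_K)=S_K$ and $T_K$ are fields, so $T_K$ is connected and Theorem~\ref{TH:finer-exactness} applies to $(A_K,\sigma_K),\lambda_K,\mu_K$.

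The second step is to compare, part by part, the membership criteria of Theorem~\ref{TH:finer-exactness} over $R$ and over $K$. The standing Witt-triviality hypothesis ($[\pi_1 f]=0$ in the $\rho_2$ case, and $[\rho_1 g]=0$ resp.\ $[\pi_2 f]=0$ in the $\pi_1$ resp.\ $\rho_1$ cases) holds over $R$ hence over $K$. The remaining data in parts (i)--(iii) --- the types of $(\sigma,\veps)$ and $(\tau_i,\veps)$, the reduced ranks $\rrk_A P$ and $\rrk_B Q$, the Brauer classes $[A]\in\Br S$, $[B]\in\Br T$ and $[D(g)]$, and the discriminant $\disc(f)\in\units{R}/(\units{R})^2$ together with $\disc(T/R)$ --- are all detected over $K$: types and reduced ranks are constant (as $R$, hence $S$, is connected) and preserved by flat base change (Proposition~\ref{PR:types-of-involutions-Az} together with the base-change behaviour of $\rank_R$ and $\rrk$); the Brauer conditions are preserved because $\Br S\hookrightarrow\Br S_K$ and $\Br T\hookrightarrow\Br T_K$ (injectivity of the Brauer group of a regular domain into that of its fraction field, after Auslander--Goldman), and whenever $[D(g)]$ or $\disc(f)$ intervenes one has $S=R$, so these invariants genuinely live over $R$; and the discriminant conditions are preserved because $\disc$ is compatible with base change and $\units{R}/(\units{R})^2\hookrightarrow\units{K}/(\units{K})^2$ for the normal domain $R$ (a unit of $R$ that becomes a square in $K$ has a square root that is integral over $R$, hence lies in $R$). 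Running this through parts (i), (ii), (iii) of Theorem~\ref{TH:finer-exactness} gives the statements for the images of $\rho_2$, $\pi_1$, $\rho_1$.

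The case of $\pi_2$ --- part (iv) of Theorem~\ref{TH:finer-exactness} --- is the one I expect to be the main obstacle. There the criterion for $(Q,g)$ to lie in the image of $\pi_2$ is simply ``$[\rho_2 g]=0$'', with no auxiliary Brauer or discriminant data, so the comparison reduces to showing that $[\rho_2 g]=0$ in $W_{-\veps}(A,\sigma)$ iff $[\rho_{2,K}(g_K)]=0$ in $W_{-\veps}(A_K,\sigma_K)$; equivalently, by Theorem~\ref{TH:trivial-in-Witt-ring}(ii), that a unimodular hermitian form over $(A,\sigma)$ that becomes hyperbolic over $K$ is already hyperbolic over $R$. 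This is precisely the injectivity of the Witt-group restriction map discussed in the introduction, which is known in many cases (e.g.\ $A=R$ with $\dim R\le 4$, or $R$ local containing a field) but open in general; so the $\pi_2$ statement holds exactly in that range, whereas the statements for $\rho_2$, $\pi_1$, $\rho_1$ are unconditional. The remaining verifications --- commutation of the four functors with base change, the normality and fraction-field computations, and the rank/type bookkeeping --- are routine.
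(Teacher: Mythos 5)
Your overall strategy---apply Theorem~\ref{TH:finer-exactness} over $R$ and over $K$ and check that every condition appearing in its parts (i)--(iii) is detected after passing to $K$---is exactly the paper's proof, and your verifications (the Auslander--Goldman injectivity $\Br S\hookrightarrow \Br S_K$, $\Br T\hookrightarrow\Br T_K$; the injectivity of $\units{R}/(\units{R})^2\to\units{K}/(\units{K})^2$ for the integrally closed domain $R$; invariance of types and reduced ranks under the base change $R\to K$; connectedness of $T_K$) are the correct ones. For the images of $\rho_2$, $\pi_1$ and $\rho_1$ your argument is complete.

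However, your treatment of the $\pi_2$ case contains a genuine error. You conclude that the statement for the image of $\pi_2$ ``holds exactly in that range'' where injectivity of $W_{-\veps}(A,\sigma)\to W_{-\veps}(A_K,\sigma_K)$ is known, i.e.\ that this part of the corollary is conditional on an open problem. That is not what the corollary asserts. The ``similar statement'' for $\pi_2$ inherits the hypothesis structure of the displayed statement: one is given $(Q,g)\in\Herm[\veps]{B,\tau_2}$ together with the assumption that $[\rho_2 g]=0$ \emph{in $W_{-\veps}(A,\sigma)$, i.e.\ already over $R$} (just as the displayed statement assumes $[\pi_1 f]=0$ over $R$, not merely over $K$). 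Under that hypothesis, Theorem~\ref{TH:finer-exactness}(iv) applied over $R$ immediately produces $(P,f)$ with $\pi_2 f\cong g$, and applied over $K$ (using $[\rho_2 g_K]=[(\rho_2 g)_K]=0$, which follows by functoriality) produces $(P',f')$ with $\pi_2 f'\cong g_K$; both sides of the equivalence are simply true, and no descent of Witt-triviality from $K$ to $R$ is needed. The conditional statement you describe would only arise if the Witt-triviality hypothesis were imposed over $K$ alone, which is a different (and indeed harder) assertion than the one being proved. So the corollary is unconditional in all four cases; your proof becomes correct once the $\pi_2$ discussion is replaced by this one-line observation.
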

	
	\begin{proof}
		Since $R$ is a regular domain and $T$
		is connected and finite \'etale over $R$, the ring $T$ is also a regular domain
		\cite[\href{https://stacks.math.columbia.edu/tag/03PC}{Tag 03PC}]{DeJong_2018_stacks_project}.
		In particular, $T_K$ is a field.
		By the Auslander--Goldman theorem \cite[Theorem~7.2]{Auslander_1960_Brauer_Group},
		the natural maps $\Br R\to \Br K$ and $\Br T\to \Br T_K$ are injective. Thus, 
		$[A]=0$ if and only if $[A_K]=0$ and $[B]=0$ if and only if $[B_K]=0$.
		Furthermore, since $R$ is integrally closed, the map $\units{R}/(\units{R})^2\to \units{K}/(\units{K})^2$
		is injective. Finally, since $T$ is connected, the
		type of  $(\tau_2, \veps)$ is the same
		as the type of $(\tau_{2,K},  \veps)$
		and the type of $(\sigma,\pm\veps)$ is the same
		as the type of $(\sigma_K,\pm \veps)$.
		The corollary now follows readily from Theorem~\ref{TH:finer-exactness}.
	\end{proof}

\section{Applications}
\label{sec:applications}

\subsection{Quadratic \'Etale and Quaternion Azumaya Algebras}
\label{subsec:applications:low-degree}

	When   $R$
	is a field, Grenier-Boley and Mahmoudi \cite{Grenier_2005_octagon_of_Witt_grps} noted that 
	the octagon \eqref{EQ:octagon} recovers two exact sequences of 
	Lewis  
	\cite{Lewis_1982_improved_exact_sequences} involving
	Witt groups
	associated to separable quadratic field extensions and quaternion division algebras.
	We generalize these sequences to quadratic \'etale algebras and quaternion (i.e.\ degree-$2$) Azumaya
	algebras  over   semilocal rings.
	
\medskip

	Before we begin, recall from Lemma~\ref{LM:quad-etale-over-semilocal}
	that when $R$ is semilocal (and $2\in\units{R}$),
	every quadratic \'etale $R$-algebra is of the form $R[\lambda\where \lambda^2=\alpha]$
	for some $\alpha\in\units{R}$, and in this case, the standard
	$R$-involution sends $\lambda $ to $-\lambda$.
	Quaternion $R$-algebras admit a similar description, 
	which is well-known when $R$ is a field.
	
	\begin{lem}\label{LM:quat-alg-semilocal}
		Let $A$ be a quaterion Azumaya algebra over a semilocal ring
		$R$.	
		Then there exist $\lambda,\mu\in A$
		such that $\lambda^2,\mu^2\in\units{R}$, $\lambda\mu=-\mu\lambda$
		and $\{1,\lambda,\mu,\mu\lambda\}$
		is an $R$ basis of $A$.
		Furthermore, $A$ admits a unique symplectic involution, $\sigma$,
		which satisfies $\lambda^\sigma=-\lambda$
		and $\mu^\sigma=-\mu$.
	\end{lem}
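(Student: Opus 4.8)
The plan is to pin down the symplectic involution first, and then extract a quaternion basis adapted to it. For existence I would use the \emph{canonical involution} $\gamma$ of the degree-$2$ Azumaya algebra $A$, namely $\gamma(x)=\Trd_{A/R}(x)\cdot 1-x$, where $\Trd_{A/R}$ is the reduced trace (which is defined since $2\in\units{R}$). A routine computation — valid fibrewise, hence over $R$ — shows that $\gamma$ is an $R$-linear anti-automorphism with $\gamma^2=\id$, and since $\gamma$ fixes $R=\Cent(A)$ pointwise, $(A,\gamma)$ is an Azumaya $R$-algebra with involution. Because $2\in\units{R}$, we have $\Sym_1(A,\gamma)=\{x:2x=\Trd_{A/R}(x)\cdot 1\}=R\cdot 1$, so $\rank_R\Sym_1(A,\gamma)=1=\tfrac12\cdot 2\cdot(2-1)$ while $\Cent(A)=R$; by Proposition~\ref{PR:types-of-involutions-Az}(ii), $\gamma$ is symplectic.

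For uniqueness, let $\sigma$ be any symplectic involution of $A$. Proposition~\ref{PR:types-of-involutions-Az}(ii) (with $\deg A=2$) again gives $\Sym_1(A,\sigma)=R\cdot 1$. Since $x+x^\sigma$ and $x\cdot x^\sigma$ are $\sigma$-symmetric for every $x\in A$, they lie in $\Sym_1(A,\sigma)=R$; writing $t(x)=x+x^\sigma$ and $n(x)=x\cdot x^\sigma$, and substituting $x^\sigma=t(x)\cdot 1-x$, one gets $x^2-t(x)x+n(x)=0$. Thus $x$ is a root of the monic quadratic $X^2-t(x)X+n(x)\in R[X]$; comparing with the reduced characteristic polynomial of $x$ — the unique monic quadratic whose square is the characteristic polynomial of left multiplication by $x$, uniqueness of the square root holding because $2\in\units{R}$ — forces this quadratic to be it, so $t(x)=\Trd_{A/R}(x)$ and hence $x^\sigma=\Trd_{A/R}(x)\cdot 1-x=\gamma(x)$. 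Therefore $\sigma=\gamma$, so the symplectic involution is unique; denote it by $\sigma$.

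It remains to build $\lambda,\mu$. Applying Lemma~\ref{LM:invertible-symmetric-elements} with $\veps=-1$ (the hypothesis holds since $\deg A=2$ is even) produces some $\lambda_0\in\Sym_{-1}(A,\sigma)\cap\units{A}$; then $\lambda_0^2$ is $\sigma$-symmetric and invertible, so $\lambda_0^2\in\units{R}$, and $\lambda_0\notin R\cdot 1$ (else $\lambda_0\in\Sym_1\cap\Sym_{-1}=0$). A fibrewise dimension count together with Nakayama's lemma shows $T:=R[\lambda_0]=R\cdot 1\oplus R\lambda_0$ is free on $\{1,\lambda_0\}$, hence $T\cong R[x\where x^2=\lambda_0^2]$ is a quadratic \'etale $R$-subalgebra of $A$ (see \ref{subsec:quad-etale}), and $T^\sigma=T$ since $\lambda_0^\sigma=-\lambda_0$. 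As $\rank_RT=2=\deg A$, Corollary~\ref{CR:rank-over-max-etale} gives $\Cent_A(T)=T$ and $\rank_TA_A=2$ (constant). Taking $S=\Cent(A)=R$, this $T$, $B:=\Cent_A(T)=T$, $\tau:=\sigma|_B$ and $\veps=1$, we are in the situation of Notation~\ref{NT:proof-of-Es} with $\deg B=1$ and $\sigma$ symplectic, so Lemma~\ref{LM:strcture-of-quat}(iii) yields $\lambda,\mu\in A$ with $\lambda^2,\mu^2\in\units{R}$, $\lambda\mu=-\mu\lambda$, $\{1,\lambda,\mu,\mu\lambda\}$ an $R$-basis of $A$, and $\lambda^\sigma=-\lambda$, $\mu^\sigma=-\mu$ — exactly the asserted data.

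The main obstacle is the uniqueness of the symplectic involution: existence is immediate from the canonical involution, and the quaternion basis is essentially Lemma~\ref{LM:strcture-of-quat}, but identifying an arbitrary symplectic involution with $\gamma$ rests on the monic-quadratic argument above, and hence on the existence and uniqueness of the reduced characteristic polynomial over a general commutative ring. (Alternatively, one could write a second symplectic involution as $\Int(w)\circ\sigma$, using that every $R$-algebra automorphism of $A$ is inner since $R$ is semilocal, with $w^\sigma=cw$ for some $c\in\mu_2(R)$, and then force $c=1$ via Corollary~\ref{CR:type-conjugation}(i) and Lemma~\ref{LM:mu-two-check}, so that $w\in\Sym_1(A,\sigma)=R\cdot 1$ and $\Int(w)=\id$; the direct argument is cleaner.)
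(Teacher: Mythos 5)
Your overall route is the paper's: existence of the symplectic involution via $x\mapsto \Trd_{A/R}(x)-x$, an invertible $\sigma$-skew element from Lemma~\ref{LM:invertible-symmetric-elements}, the observation that it generates a quadratic \'etale subalgebra of rank $2=\deg A$, and then Corollary~\ref{CR:rank-over-max-etale} followed by Lemma~\ref{LM:strcture-of-quat}(iii). That part is correct.

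The uniqueness step, however, has a genuine gap. You correctly show that any symplectic involution $\sigma$ has $\Sym_1(A,\sigma)=R\cdot 1$, hence $t(x):=x+x^\sigma$ and $n(x):=xx^\sigma$ lie in $R$ and $x^2-t(x)x+n(x)=0$. But the inference ``$x$ is a root of the monic quadratic $X^2-t(x)X+n(x)$, therefore this quadratic is the reduced characteristic polynomial'' is invalid: a root does not determine a monic quadratic (already $x=1_A$ is a root of both $X^2-2X+1$ and $X^2-1$), and you have not shown that the square of your quadratic equals the characteristic polynomial of left multiplication by $x$, which is what the uniqueness you invoke presupposes. What you have actually proved is that $\sigma$ is a \emph{standard involution} in the sense of \cite[I.\S1.3]{Knus_1991_quadratic_hermitian_forms}; the missing step is precisely the uniqueness of the standard involution on a finite projective algebra, i.e.\ \cite[Proposition~I.1.3.4]{Knus_1991_quadratic_hermitian_forms} --- exactly the reference the paper's proof uses here. (A direct repair is possible but needs more than the quadratic relation: subtracting the two relations gives $(t-\Trd)(x)\cdot x\in R\cdot 1$ for all $x$; writing $A=R\cdot 1\oplus C$ with $C$ a rank-$3$ complement, this forces $(t-\Trd)(x)x=0$ for $x\in C$, and polarizing and localizing then kills $(t-\Trd)|_C$.) Your parenthetical Skolem--Noether alternative --- $\tau=\Int(w)\circ\sigma$ with $w^\sigma=cw$, $c\in\mu_2(R)$, forced to be $1$ by Corollary~\ref{CR:type-conjugation}(i) and Lemma~\ref{LM:mu-two-check} --- is sound and would close the gap; contrary to your closing remark, it is the ``direct'' argument that is the problematic one. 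A smaller instance of the same slip: checking that $\gamma$ is an anti-automorphism ``fibrewise, hence over $R$'' is not a valid inference when $R$ is non-reduced (agreement at all residue fields only puts the difference into $\Nil(R)\cdot A$); the correct justification is faithfully flat descent to a splitting ring, where $A$ becomes $2\times 2$ matrices.
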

	
	\begin{proof}
		It is enough to consider the case where $R$ is connected. Otherwise,
		write $R$ as a product of connected rings and work over each factor separately.	
	
		The map $\sigma:a \mapsto \Trd_{A/R}(a)-a$
		is a symplectic involution of $A$, see \cite[Theorem~4.1]{Saltman_1978_Azumaya_algebras_w_involution}.
		It is unique by  \cite[Proposition~I.1.3.4]{Knus_1991_quadratic_hermitian_forms}.
		By Lemma~\ref{LM:invertible-symmetric-elements},
		there exists $\lambda\in \Sym_{-1}(A,\sigma)\cap \units{A}$.
		Then $-\lambda=\lambda^\sigma=\Trd (\lambda)-\lambda$,
		hence  $\Trd(\lambda)=0$.
		Thus, $\lambda^2=\Trd(\lambda)\lambda-\Nrd(\lambda)=-\Nrd(\lambda)\in\units{R}$.
		Since $R\cap \lambda R\subseteq \Sym_1(A,\sigma)\cap \Sym_{-1}(A,\sigma)=0$,
		it follows that $T:=R[\lambda]$ is a quadratic \'etale $R$-algebra
		with $R$-basis $\{1,\lambda\}$.
		By Corollary~\ref{CR:rank-over-max-etale},
		$\rrk_T A_A=2$, so
		we are in the setting of Notation~\ref{NT:proof-of-Es}
		and the lemma follows from Lemma~\ref{LM:strcture-of-quat}.
		(Of course, there are more direct proofs.)
	\end{proof}

	\begin{cor}\label{CR:exactness-for-quaternions}
		Let $R$ be a semilocal ring,
		let $A$ be a quaternion Azumaya $R$-algebra
		and let $\lambda,\mu,\sigma$ be as in Lemma~\ref{LM:quat-alg-semilocal}.
		Write $B=R[\lambda]$ and $\tau=\sigma|_B$.
		Then the sequence 
		\begin{align*}
		0 \to
		W_1(A,\sigma) & \xrightarrow{\pi_1} 
		W_{1}(B,\tau)\xrightarrow{\rho_1}
		W_{-1}(A,\sigma)\xrightarrow{\pi_2}
		W_{1}(B,\id_B)\xrightarrow{\rho_2}
		W_{-1}(A,\sigma) \\
		& \xrightarrow{\pi_1}  
		W_{-1}(B,\tau) \xrightarrow{\rho_1}
		W_1(A,\sigma)
		\to 0 
		\end{align*}
		in which the maps are defined as in \ref{subsec:octagon}
		(with $\tau_1=\tau$ and $\tau_2=\id_B$)
		is exact.
	\end{cor}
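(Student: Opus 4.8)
The plan is to deduce this corollary directly from Theorem~\ref{TH:exactness}, by recognizing the displayed sequence as the octagon \eqref{EQ:octagon} attached to a suitable choice of data, with one term deleted that turns out to vanish. First I would invoke Lemma~\ref{LM:quat-alg-semilocal} to fix $\lambda,\mu\in A$ and the symplectic involution $\sigma$, and check that $(A,\sigma)$, $\veps:=1$, $\lambda$ and $\mu$ satisfy \ref{item:octagon-given-first}--\ref{item:octagon-given-last} of \ref{subsec:octagon}: here $A$ is central over $R$, so $\Cent(A)=R$ and $\sigma$ restricts to $\id_R$ on $\Cent(A)$; $\veps^\sigma\veps=1$; $\lambda^\sigma=-\lambda$, $\mu^\sigma=-\mu$, $\lambda\mu=-\mu\lambda$, $\lambda^2\in\units{R}=\Cent(A)$; and $\lambda,\mu\in\units{A}$ because $\lambda^2,\mu^2\in\units{R}$.

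Next I would compute the auxiliary objects $S,B,T,\tau_1,\tau_2$ of \ref{subsec:octagon} for this data. Since $\{1,\lambda,\mu,\mu\lambda\}$ is an $R$-basis of $A$ and $\lambda$ anticommutes with both $\mu$ and $\mu\lambda$, its commutant is $B=R\oplus R\lambda=R[\lambda]$, a quadratic \'etale $R$-algebra; consequently $S=\Cent(A)=R$ and $T=\Cent(B)=B$. The involution $\tau_1=\sigma|_B$ sends $\lambda\mapsto-\lambda$, hence is the standard involution $\tau$ of $B$, while $\tau_2=\Int(\mu^{-1})\circ\sigma|_B$ satisfies $\lambda^{\tau_2}=\mu^{-1}(-\lambda)\mu=\lambda$, so $\tau_2=\id_B$. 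Thus the octagon \eqref{EQ:octagon} for this data with $\veps=1$ is, by Theorem~\ref{TH:exactness}, the cyclic exact sequence whose eight consecutive arrows $\pi_1,\rho_1,\pi_2,\rho_2,\pi_1,\rho_1,\pi_2,\rho_2$ run through the groups
\[
W_1(A,\sigma),\ W_1(B,\tau),\ W_{-1}(A,\sigma),\ W_1(B,\id_B),\ W_{-1}(A,\sigma),\ W_{-1}(B,\tau),\ W_1(A,\sigma),\ W_{-1}(B,\id_B).
\]

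The remaining point is that $W_{-1}(B,\id_B)=0$: a unimodular $(-1)$-hermitian space over $(B,\id_B)$ is a finitely generated projective $B$-module equipped with a unimodular alternating bilinear form over the commutative (and semilocal) ring $B$, and such a space is hyperbolic—one splits off hyperbolic planes $xB\oplus yB$ with $f(x,y)=1$ one at a time, exactly as in the proof of Proposition~\ref{PR:Eii:unitary-symplectic-case}(ii). Granting this, $\pi_2^{(1)}\colon W_1(A,\sigma)\to W_{-1}(B,\id_B)$ and $\rho_2^{(-1)}\colon W_{-1}(B,\id_B)\to W_1(A,\sigma)$ are both zero; hence exactness of \eqref{EQ:octagon} at the copy of $W_1(A,\sigma)$ immediately preceding $W_{-1}(B,\id_B)$ reads $\ker\pi_2^{(1)}=\im\rho_1^{(-1)}$ and so forces the last map $\rho_1$ to be onto, and exactness at the copy of $W_1(A,\sigma)$ immediately following $W_{-1}(B,\id_B)$ reads $\ker\pi_1^{(1)}=\im\rho_2^{(-1)}=0$ and so forces the first map $\pi_1$ to be injective. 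Combined with exactness of \eqref{EQ:octagon} at the five remaining terms, this is precisely exactness of the displayed $7$-term sequence. I do not expect any genuine obstacle: the substance is Theorem~\ref{TH:exactness}, and all that must be done by hand is the identification of $B$, $T$, $\tau_1$, $\tau_2$ (in particular $\tau_2=\id_B$) together with the classical vanishing $W_{-1}(B,\id_B)=0$.
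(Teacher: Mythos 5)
Your proposal is correct and is essentially the paper's own argument: the paper likewise identifies the displayed sequence as the octagon \eqref{EQ:octagon} for the data of Lemma~\ref{LM:quat-alg-semilocal} with $\veps=1$, $\tau_1=\tau$, $\tau_2=\id_B$, notes that the eighth term $W_{-1}(B,\id_B)$ vanishes by Proposition~\ref{PR:Eii:unitary-symplectic-case}(ii), and concludes from Theorem~\ref{TH:exactness}. Your additional verifications (that $B=R[\lambda]$, $\tau_2=\id_B$, and how the vanishing term yields injectivity of the first $\pi_1$ and surjectivity of the last $\rho_1$) are exactly the details the paper leaves implicit.
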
	
	
	\begin{proof}
		By Proposition~\ref{PR:Eii:unitary-symplectic-case}(ii), $W_{-1}(B,\id_B)=0$.
		The corollary therefore follows from  Theorem~\ref{TH:exactness}.
	\end{proof}
	
	\begin{cor}\label{CR:exactness-for-quad-etale}
		Let $R$ be a semilocal ring and let $T$ be a quadratic
		\'etale $R$-algebra with standard involution $\theta$.
		Let $\rho: R\to T$ denote the inclusion map,
		viewed as a morphism from $(R,\id_R)$ to $(T,\theta)$
		or $(T,\id_T)$,
		and let $\lambda\in T$ be an element
		such that $\lambda^2\in \units{R}$ and $T=R\oplus\lambda R$
		($\lambda$ always exists by Lemma~\ref{LM:quad-etale-over-semilocal}).
		Then the sequence
		\begin{align*}
		0 \to 
		W_{1}(T,\theta)\xrightarrow{\Tr }
		W_{1}(R,\id_R)\xrightarrow{\lambda\rho}
		W_{1}(T,\id_T)\xrightarrow{\Tr }
		W_{1}(R,\id_R)\xrightarrow{\lambda\rho}
		W_{-1}(T,\theta) 
		\to 0 
		\end{align*}
		with maps given by 
		$\Tr (  g)=   \Tr_{T/R}\circ g $,
		$\lambda \rho( f)=\lambda \cdot \rho f$ (notation as in \ref{subsec:herm-base-change},
		\ref{subsec:conjugation})
		is exact.
	\end{cor}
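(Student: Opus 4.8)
The plan is to deduce this from Corollary~\ref{CR:exactness-for-quaternions} by taking the quaternion algebra there to be the \emph{split} one, $A=\nMat{R}{2}$, and then to collapse the two $A$-terms of the resulting seven-term sequence.

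First I would fix the data. By Lemma~\ref{LM:quad-etale-over-semilocal} we may take $\lambda\in T$ with $\lambda^2=\alpha\in\units{R}$, $\lambda^\theta=-\lambda$ and $T=R\oplus\lambda R$ (this is the $\lambda$ in the statement). Let $A=\nMat{R}{2}$ with its unique symplectic involution $\sigma$, $a\mapsto\Trd_{A/R}(a)-a$ (Lemma~\ref{LM:quat-alg-semilocal}), and embed $T$ into $A$ by sending $\lambda$ to the companion matrix $\smallSMatII{0}{\alpha}{1}{0}$; this matrix is traceless, so $\lambda^\sigma=-\lambda$ in $A$ and the embedding is compatible with $\theta$. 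Put $\mu=\smallSMatII{1}{0}{0}{-1}\in\units{A}$; then $\mu^\sigma=-\mu$, $\mu^2=1\in\units{R}$ and $\lambda\mu=-\mu\lambda$, so $(A,\sigma,\lambda,\mu)$ (with $\veps=1$) satisfies the hypotheses of \ref{subsec:octagon}. One checks directly that in the notation of \ref{subsec:octagon} one has $S=\Cent(A)=R$, $B=\Cent_A(\lambda)=R[\lambda]\cong T$ (this is self-centralizing by Corollary~\ref{CR:rank-over-max-etale}), $\tau_1=\sigma|_B$ corresponds to $\theta$, and $\tau_2=\Int(\mu^{-1})\circ\sigma|_B=\id_B$ (because $\mu^{-1}\lambda\mu=-\lambda$). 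Corollary~\ref{CR:exactness-for-quaternions} applied to this $A$ then gives the exact sequence
\[
0\to W_1(A,\sigma)\xrightarrow{\pi_1} W_1(T,\theta)\xrightarrow{\rho_1} W_{-1}(A,\sigma)\xrightarrow{\pi_2} W_1(T,\id_T)\xrightarrow{\rho_2} W_{-1}(A,\sigma)\xrightarrow{\pi_1} W_{-1}(T,\theta)\xrightarrow{\rho_1} W_1(A,\sigma)\to 0.
\]

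Next I would identify the $A$-terms with Witt groups of $R$. Since $\mu\in\Sym_{-1}(A,\sigma)\cap\units{A}$, $\mu$-conjugation (see~\ref{subsec:conjugation}) identifies $W_\veps(A,\sigma)$ with $W_{-\veps}(A,\sigma')$ for the orthogonal involution $\sigma':=\Int(\mu)\circ\sigma$; then, using a $\sigma'$-invariant idempotent $e\in A$ of reduced rank $1$ (it exists by Theorem~\ref{TH:invariant-primitive-idempotent}, as $\sigma'$ is orthogonal), $e$-transfer identifies $W_\delta(A,\sigma')$ with $W_\delta(eAe,\sigma'|_{eAe})=W_\delta(R,\id_R)$. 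Combining, $W_1(A,\sigma)\cong W_{-1}(R,\id_R)$ and $W_{-1}(A,\sigma)\cong W_1(R,\id_R)$. Now every skew-symmetric unimodular bilinear form over the semilocal ring $R$ is hyperbolic (cf.\ the proof of Proposition~\ref{PR:Eii:unitary-symplectic-case}(ii)), so $W_{-1}(R,\id_R)=0$; the same fact applied to $T$ gives $W_{-1}(T,\id_T)=0$, which is exactly why Corollary~\ref{CR:exactness-for-quaternions} is available. Substituting $W_1(A,\sigma)=0$ and $W_{-1}(A,\sigma)\cong W_1(R,\id_R)$ into the displayed sequence yields a five-term exact sequence
\[
0\to W_1(T,\theta)\xrightarrow{\rho_1} W_1(R,\id_R)\xrightarrow{\pi_2} W_1(T,\id_T)\xrightarrow{\rho_2} W_1(R,\id_R)\xrightarrow{\pi_1} W_{-1}(T,\theta)\to 0,
\]
in which the two copies of $W_1(R,\id_R)$ are literally the same group (both $A$-terms of the octagon are $W_{-1}(A,\sigma)$, identified via the same isomorphism).

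The remaining task — and the one delicate point — is to check that under the identification $W_{-1}(A,\sigma)\cong W_1(R,\id_R)$ of the previous paragraph the two maps $\rho_1,\rho_2$ become the trace (Scharlau transfer) map $g\mapsto\Tr_{T/R}\circ g$, and the two maps $\pi_2,\pi_1$ become the $\lambda$-scaled base-change map $f\mapsto\lambda\cdot\rho f$. This is a direct, if fiddly, computation from the definitions in \ref{subsec:octagon}: for the $\rho$'s one notes $\sigma$ is the adjoint involution of the alternating form on $R^2$ with Gram matrix $\smallSMatII{0}{1}{-1}{0}$, so the Morita bimodule realizing the $e$-transfer is $V=R^2$, which is free of rank $1$ over $T=R[\lambda]$, and one computes that $(Q\otimes_BA)\otimes_AV\cong Q\otimes_TV\cong Q$ as $R$-modules, with the scalars $\lambda$ (in $\rho_1g$) resp.\ $\lambda\mu$ (in $\rho_2g$) absorbed so that the transported form is precisely $\Tr_{T/R}\circ g$; for the $\pi$'s one works with the $B$-basis $\{1,\mu\}$ of $A$ and the $R$-basis $\{1,\lambda\}$ of $T$ and reads off that projection to the $B$-component followed by the inverse Morita identification is $f\mapsto\lambda\cdot\rho f$. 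Since every form over a semilocal ring is diagonalizable (Proposition~\ref{PR:diagonalizable-herm-forms}), it suffices to carry out this bookkeeping on $\langle u\rangle_{(R,\id_R)}$ and on diagonal forms over $T$; keeping track of the constants there so that no spurious unit appears is the only real obstacle, everything else being formal. (One should also record the trivial point that $\rho\colon R\to T$ is simultaneously a morphism $(R,\id_R)\to(T,\id_T)$ and $(R,\id_R)\to(T,\theta)$, since $\theta$ restricts to $\id_R$, so that $\lambda\rho$ makes sense with both targets, as used in the statement.)
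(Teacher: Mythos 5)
Your proposal follows essentially the same route as the paper's proof: embed $(T,\theta)$ into the split quaternion algebra $\nMat{R}{2}$ with its symplectic involution using the same choices of $\lambda$ and $\mu$, invoke Corollary~\ref{CR:exactness-for-quaternions}, kill $W_1(A,\sigma)$ (equivalently $W_{-1}(R,\id_R)=0$), and identify the two copies of $W_{-1}(A,\sigma)$ with $W_1(R,\id_R)$ by a conjugation followed by an $e$-transfer. The only piece you defer is the final bookkeeping identifying the octagon's maps with $\Tr$ and $\lambda\rho$; when it is carried out, $\pi_2$ in fact corresponds to $(-\lambda)\rho$ rather than $\lambda\rho$, which is harmless because $[-h]=-[h]$ leaves kernels and images unchanged.
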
	
	
	Baeza \cite[Korollar~2.9]{Baeza_1974_torsion_of_Witt_group_semilocal}
	and Mandelberg \cite[Proposition~2.1]{Mandelberg_1975_classification_of_quad_forms_semilocal_ring}
	established the exactness at the left-to-middle
	and middle terms, respectively.
	Baeza \cite[Theorem~V.5.8]{Baeza_1978_quadratic_forms_semilocal_rings} later  proved
	the exactness at these terms without assuming that $2\in\units{R}$ 
	and   gave   an alternative ending to the exact sequence.
	
	\begin{proof}	
		Let $A=\nMat{R}{2}$ and let $\sigma:A\to A$ be the symplectic
		involution $[\begin{smallmatrix} a & b \\ c & d\end{smallmatrix}]\mapsto 
		[\begin{smallmatrix} d & -b \\ -c & a\end{smallmatrix}]$.
		Write $\alpha:=\lambda^2$.
		We embed $(T,\theta)$ in $(A,\sigma)$ by identifying
		$\lambda$ with $[\begin{smallmatrix} 0 & \alpha \\ 1 & 0\end{smallmatrix}]$.
		Let $\mu:=[\begin{smallmatrix} 1 & 0 \\ 0 & -1\end{smallmatrix}]$. 
		Then $A,\sigma,\lambda,\mu, B:=T$ and $\tau:=\theta$
		satisfy  the assumptions of Corollary~\ref{CR:exactness-for-quaternions}.		
		By Proposition~\ref{PR:Eii:unitary-symplectic-case}(ii), $W_{1}(A,\sigma)=0$,
		so the exact sequence of Corollary~\ref{CR:exactness-for-quaternions} reduces
		to:
		\begin{align} \label{EQ:five-terms-raw}
		0 \to 
		W_{1}(T,\theta)\xrightarrow{\rho_1}
		W_{-1}(A,\sigma)\xrightarrow{\pi_2}
		W_{1}(T,\id_T)\xrightarrow{\rho_2}
		W_{-1}(A,\sigma)\xrightarrow{\pi_1}
		W_{-1}(T,\theta) 
		\to 0  
		\end{align}
		
		Let $u=2[\begin{smallmatrix} 0 & 1 \\ -1 & 0\end{smallmatrix}]$,
		$e= [\begin{smallmatrix} 1 & 0 \\ 0 & 0\end{smallmatrix}]$,
		and let $\trans$ denote the   transpose involution on $A$.
		Then $u$-conjugation induces an isomorphism 
		$W_{-1}(A,\sigma)\to W_1(\nMat{R}{2},\trans)$ 
		and $e$-transfer induces an isomorphism 
		$W_1(\nMat{R}{2},\trans)\to W_1(R,\id_R)$; see~\ref{subsec:conjugation}.
		We claim that under the resulting isomorphism
		$W_{-1}(A,\sigma)\to W_1(R,\id_R)$,
		the maps $\rho_1,\pi_2,\rho_2,\pi_1$ in  
		\eqref{EQ:five-terms-raw} become
		$\Tr,(-\lambda)\rho,\Tr,\lambda \rho$, respectively.
		This will imply that the sequence in the corollary is exact (the sign
		change in the second map does not matter).
		
		To see that $\rho_1$ and $\rho_2$ correspond to $\Tr$,
		note that for every $Q\in\rproj{B}$, the map $x\mapsto xe:Q\to QAe$ is a natural isomorphism of $R$-modules.
		Indeed, it is routine to check this for $Q=B_B$ and the general case follows
		from the naturality and the fact that every $Q\in\rproj{B}$ is a summand of $B^n_B$
		for some $n$.
		One readily checks that $e^\tau u (\lambda x)e=
		e^\tau u(\lambda\mu x)e=e\Tr_{T/R}(x)$ for all $x\in T$.
		Using this, it is routine to check that, upon identifying
		$eAe$ with $R$, the isomorphism
		$x\mapsto xe:Q\to QAe$ is an isometry
		from $\Tr g$ to $(u(\rho_1g))_e$, resp.\ $(u(\rho_2g))_e$, which is what we want.
		
		We now show that $\pi_1$ and $\pi_2$ correspond to $\lambda\rho$
		and $(-\lambda)\rho$, respectively. 
		Given $V\in \rproj{R}$, we view $V^2$ as a right $A$-module
		by considering pairs in $V^2$ as ${1\times 2}$ matrices and letting
		$A=\nMat{R}{2}$ act by matrix multiplication.
		If $(V,f)\in\Herm[1]{R,\id_R}$, let $f':V^2\times V^2\to A$
		be given by $f'((x,y),(z,w))=[\begin{smallmatrix}f(x,z) & f(x,w) \\ f(y,z)& f(y,w)\end{smallmatrix}]$.
		Then $(V^2,f')$ is a $1$-hermitian space over $(A,\trans)$
		and $f'_e\cong f$.
		It is enough to show that $\pi_1(u^{-1}f')\cong\lambda\rho f$
		and $\pi_2(u^{-1}f')\cong (-\lambda )\rho f$.
		The $A$-module $V^2$ inherits a $T$-module
		structure, and the map $x\otimes (a+\lambda b)\mapsto (2xa,2x\alpha b):V_T\to V^2$
		($x\in V$, $a,b\in R$)
		is an isomorphism of $T$-modules. (As in the previous paragraph,
		it is enough to check this for $V=R_R$.)
		Note also that $\pi_1[\begin{smallmatrix}a & b \\ c& d\end{smallmatrix}]
		=\frac{1}{2}(a+d)+\frac{1}{2}(\alpha^{-1} b+c)\lambda$
		and $\pi_2[\begin{smallmatrix}a & b \\ c& d\end{smallmatrix}]
		=\frac{1}{2}(a-d)+\frac{1}{2}(\alpha^{-1} b-c)\lambda$ (e.g.\
		use Lemma~\ref{LM:dfn-of-pi-A-B}(ii)).
		It is now routine to check that the isomorphism
		$V_T\to V^2$  
		is   an isometry
		from $\lambda \rho f$ to $\pi_1 (u^{-1}(f'))$, resp.\ from $(-\lambda)\rho f$ to $\pi_2(u^{-1} (f'))$,
		which is  what we want.
	\end{proof}

	\begin{cor}\label{CR:injectivity-for-quat-algs}
		Let $A$ be a quaternion Azumaya algebra over a semilocal ring $R$
		and let $\sigma:A\to A$ be the unique symplectic involution of $A$.
		Then the map
		\[ [f]\mapsto [\Trd_{A/R}\circ f]:W_1(A,\sigma)\to W_1(R,\id_R)\]
		is injective.
	\end{cor}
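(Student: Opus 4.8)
The plan is to exhibit the map $[f]\mapsto[\Trd_{A/R}\circ f]$ as a composite of two maps each of which has already been shown to be injective: the first map in the five-term sequence of Corollary~\ref{CR:exactness-for-quad-etale}, and the first map in the seven-term sequence of Corollary~\ref{CR:exactness-for-quaternions}. First I would invoke Lemma~\ref{LM:quat-alg-semilocal} to fix $\lambda,\mu\in A$ with $\lambda^2,\mu^2\in\units{R}$, $\lambda\mu=-\mu\lambda$, $\lambda^\sigma=-\lambda$, $\mu^\sigma=-\mu$, and $\{1,\lambda,\mu,\mu\lambda\}$ an $R$-basis of $A$. Taking $\veps=1$, these data put us in the setting of \ref{subsec:octagon}, where (since $\deg A=2$) one has $B=T=R[\lambda]$, $\tau_1=\sigma|_B$ equal to the standard involution $\theta$ of $T$, $\tau_2=\id_B$, and $\pi_1\colon A\to B$ the map $b_1+\mu b_2\mapsto b_1$.

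Next I would check the pointwise identity $\Trd_{A/R}=\Tr_{T/R}\circ\pi_1$ of maps $A\to R$. Writing $a=b_1+\mu b_2$ with $b_1,b_2\in B$, using $\mu b=b^\sigma\mu$ for $b\in B$ (immediate from $\mu\lambda=-\lambda\mu$), and recalling from the proof of Lemma~\ref{LM:quat-alg-semilocal} that $\Trd_{A/R}(a)=a+a^\sigma$, one computes $a+a^\sigma=(b_1+\mu b_2)+(b_1^\sigma-\mu b_2)=b_1+b_1^\sigma=\Tr_{T/R}(b_1)=\Tr_{T/R}(\pi_1 a)$. Consequently, for every $(P,f)\in\Herm[1]{A,\sigma}$ the symmetric bilinear form $\Trd_{A/R}\circ f$ on $P$ is literally the underlying form of $\Tr(\pi_1(P,f))$, where $\pi_1$ is the octagon functor and $\Tr$ is the functor $(P,g)\mapsto(P,\Tr_{T/R}\circ g)$ of Corollary~\ref{CR:exactness-for-quad-etale}. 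Passing to Witt classes, the map of the corollary factors as
\[
W_1(A,\sigma)\xrightarrow{\ \pi_1\ }W_1(T,\theta)\xrightarrow{\ \Tr\ }W_1(R,\id_R).
\]

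Finally I would conclude by injectivity of each factor. The map $\pi_1\colon W_1(A,\sigma)\to W_1(T,\theta)$ is injective because it is the leftmost map in the exact sequence of Corollary~\ref{CR:exactness-for-quaternions} (itself a consequence of Theorem~\ref{TH:exactness}), and $\Tr\colon W_1(T,\theta)\to W_1(R,\id_R)$ is injective because it is the leftmost map in the exact sequence of Corollary~\ref{CR:exactness-for-quad-etale}. A composite of injective group homomorphisms is injective, so $[f]\mapsto[\Trd_{A/R}\circ f]$ is injective. There is no real obstacle: all the homological content lives in the two cited corollaries, and the only thing requiring attention is the bookkeeping identity $\Trd_{A/R}=\Tr_{T/R}\circ\pi_1$, which is a one-line computation in the basis $\{1,\lambda,\mu,\mu\lambda\}$.
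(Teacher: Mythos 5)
Your proposal is correct and is essentially the paper's own proof: the map is factored as $\Tr_{B/R}\circ\pi_1$ with $B=T=R[\lambda]$, and injectivity of the two factors is read off from the leftmost arrows of the exact sequences in Corollaries~\ref{CR:exactness-for-quaternions} and~\ref{CR:exactness-for-quad-etale}. Your explicit check of the identity $\Trd_{A/R}=\Tr_{T/R}\circ\pi_1$ is a detail the paper leaves implicit, and your computation of it is correct.
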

	
	\begin{proof}
		Let $\lambda,\mu,\sigma$ be as in Lemma~\ref{LM:quat-alg-semilocal}
		and let $B,\tau$ be as in Corollary~\ref{CR:exactness-for-quaternions}.
		Corollaries~\ref{CR:exactness-for-quaternions} and~\ref{CR:exactness-for-quad-etale}
		imply that the maps $\pi_1:W_1(A,\sigma)\to W_1(B,\tau)$
		and $\Tr_{B/R}:W_1(B,\tau)\to W_1(R,\id_R)$
		are injective. Their composition is  
		the map in the corollary.
	\end{proof}
	
	We now generalize a theorem  of Jacobson \cite{Jacobson_1940_note_on_hermitian_forms}
	(see \cite[Theorems~10.1.1, 10.1.7]{Scharlau_1985_quadratic_and_hermitian_forms}
	for a modern restatement) from fields to semilocal rings. Here we  
	need Corollary~\ref{CR:exactness-anisotropic}.
	
	\begin{thm}\label{TH:Jacobson-semilocal}
		Let $A$ be a quadratic \'etale   (resp.\ quaternion Azumaya) algebra over a semilocal ring $R$
		and let $\sigma:A\to A$ be the standard involution (resp.\ unique symplectic involution) of $A$.
		Write $\Tr=\Tr_{A/R}$ (resp.\ $\Tr=\Trd_{A/R}$)
		and let $(P,f),(P',f')\in\Herm[1]{A,\sigma}$. Then:
		\begin{enumerate}[label=(\roman*)]
			\item  $(P,f)$
			is isotropic if and only if $(P,\Tr\circ f)\in\Herm[1]{R,\id_R}$ is isotropic.
			\item $(P,f)\cong (P',f')$
			if and only if  $(P,\Tr\circ f)\cong (P',\Tr\circ f')$ in $\Herm[1]{R,\id_R}$.
		\end{enumerate}
	\end{thm}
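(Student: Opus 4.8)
The two ``easy'' directions are immediate: if $f$ is isotropic then $\Tr\circ f$ is isotropic, and if $f\cong f'$ then $\Tr\circ f\cong\Tr\circ f'$. Indeed, an $A$-linear isometry is in particular $R$-linear, an $A$-module summand is an $R$-module summand, and $\Tr$ of the zero form on a summand is the zero form; also $(P,f)\in\Herm[\veps]{A,\sigma}$ forces $P$ finite projective over $R$ (Lemma~\ref{LM:projective-transfer}), so a nonzero $A$-summand is a nonzero $R$-summand. Hence what must be proved is: (i$'$) $\Tr\circ f$ isotropic $\Rightarrow$ $f$ isotropic; and (ii$'$) $\Tr\circ f\cong\Tr\circ f'\Rightarrow f\cong f'$. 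Factoring $R$ into connected rings and arguing over each factor, I may assume $R$ is connected throughout (isotropy over $R$ holds iff it holds over some factor, isometry over $R$ holds iff over every factor, and $\Tr$ is compatible with the decomposition).

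For (ii$'$): the Witt-group homomorphism $W_1(A,\sigma)\to W_1(R,\id_R)$, $[f]\mapsto[\Tr\circ f]$, is injective — by Corollary~\ref{CR:exactness-for-quad-etale} in the quadratic \'etale case (its exact sequence begins $0\to W_1(T,\theta)$) and by Corollary~\ref{CR:injectivity-for-quat-algs} in the quaternion case. If $\Tr\circ f\cong\Tr\circ f'$ then $[\Tr\circ f]=[\Tr\circ f']$, whence $[f]=[f']$; moreover the two forms have isometric underlying $R$-modules, so $\rank_R P=\rank_R P'$, and hence $\rrk_A P=\rrk_A P'$. Theorem~\ref{TH:trivial-in-Witt-ring}(iii) then gives $f\cong f'$.

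Now (i$'$), first for $A=T$ quadratic \'etale. If $T\cong R\times R$ then $\theta$ is the exchange involution, so every unimodular hermitian form over $(T,\theta)$ is hyperbolic (Example~\ref{EX:exchange-involution}); then $f$ and $\Tr\circ f$ are both hyperbolic and (i$'$) holds. So assume $T$ connected, and place everything in the octagon constructed in the proof of Corollary~\ref{CR:exactness-for-quad-etale}: the Azumaya algebra $\nMat{R}{2}$ with symplectic involution $\sigma_0$, auxiliary quadratic \'etale subalgebra $T$, $\tau_1=\theta$. There $\Tr_{T/R}\circ f$ is carried, through a $\mu$-conjugation and an $e$-transfer — which preserve isotropy, isometry and reduced rank (\ref{subsec:conjugation}) — to $\rho_1^{(1)}(P,f)\in\Herm[-1]{\nMat{R}{2},\sigma_0}$; so it suffices to show $\rho_1^{(1)}f$ isotropic $\Rightarrow$ $f$ isotropic. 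If $\rho_1^{(1)}f$ is isotropic, let $q$ be its anisotropic part (Proposition~\ref{PR:ansio-Witt-equivalent}); its reduced rank is at most $\rrk(\rho_1^{(1)}f)-2$. Since $[q]=[\rho_1^{(1)}f]$ lies in $\im\rho_1^{(1)}=\ker\pi_2^{(-1)}$ by exactness of the octagon (Theorem~\ref{TH:exactness}) and $q$ is anisotropic, the clause of Corollary~\ref{CR:exactness-anisotropic} for the image of $\rho_1$ produces $(Q,g)\in\Herm[1]{T,\theta}$ with $\rho_1^{(1)}g\cong q$. As $\rho_1$ doubles reduced rank, $\rrk_T Q<\rrk_T P$; and $[\rho_1^{(1)}g]=[q]=[\rho_1^{(1)}f]$ forces $[g]=[f]$ by injectivity of $\rho^{(1)}_{1,*}$ (equivalently of $\Tr_{T/R,*}$). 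Theorem~\ref{TH:trivial-in-Witt-ring}(i) then yields $f\cong g\oplus\Hyp[1]{V}$ with $V\neq 0$, so $f$ is isotropic.

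For $A$ quaternion, (i$'$) reduces to the quadratic \'etale case. With $\lambda,\mu,\sigma$ as in Lemma~\ref{LM:quat-alg-semilocal} and $B=R[\lambda]$, $\tau=\sigma|_B$ its standard involution, a direct check on the basis $\{1,\lambda,\mu,\mu\lambda\}$ shows $\Trd_{A/R}=\Tr_{B/R}\circ\pi_1$ as $R$-linear maps $A\to B\to R$, hence $\Trd_{A/R}\circ f=\Tr_{B/R}\circ(\pi_1 f)$ as forms. If $B\cong R\times R$ then $[A]=0$, so $W_1(A,\sigma)=0$ (Proposition~\ref{PR:Eii:unitary-symplectic-case}(ii)), $f$ and $\Trd f$ are hyperbolic, and (i$'$) holds; otherwise $B$ is connected, and the quadratic \'etale case applied to $\pi_1 f\in\Herm[1]{B,\tau}$ gives $\Trd_{A/R}\circ f$ isotropic iff $\pi_1 f$ isotropic. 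It remains to show $\pi_1 f$ isotropic $\Rightarrow$ $f$ isotropic, which is proved exactly as in the previous paragraph but inside the octagon of Corollary~\ref{CR:exactness-for-quaternions}, with $\rho_1$ replaced by $\pi_1$: one uses $\im\pi_1^{(1)}=\ker\rho_1^{(1)}$ (Theorem~\ref{TH:exactness}), the clause of Corollary~\ref{CR:exactness-anisotropic} for the image of $\pi_1$, the injectivity of $\pi_{1,*}$ (Corollary~\ref{CR:exactness-for-quaternions}, whose sequence begins $0\to W_1(A,\sigma)\to W_1(B,\tau)$), and Theorem~\ref{TH:trivial-in-Witt-ring}(i). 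The main obstacle is the reduced-rank bookkeeping in (i$'$): one must verify that the preimage $g$ (resp.\ $h$) extracted from the anisotropic representative genuinely has smaller reduced rank than $P$, which is precisely what upgrades the octagon's finer exactness to an honest hyperbolic summand of $f$; the field-theoretic shortcut — that any nonzero vector spans a summand — is unavailable over a semilocal ring, which is the difficulty flagged in Remark~\ref{RM:why-is-Jacobson-difficult}.
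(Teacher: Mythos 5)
Your proposal is correct and takes essentially the same route as the paper's proof: the easy directions plus reduction to connected base, injectivity of the relevant Witt-group maps (Corollaries~\ref{CR:exactness-for-quad-etale} and~\ref{CR:injectivity-for-quat-algs}) combined with Theorem~\ref{TH:trivial-in-Witt-ring} for part (ii), and for part (i) the application of Corollary~\ref{CR:exactness-anisotropic} to the anisotropic part followed by the reduced-rank comparison and Theorem~\ref{TH:trivial-in-Witt-ring}(i), with the split cases ($A\cong R\times R$, resp.\ $B\cong R\times R$ forcing $[A]=0$ and $f$ hyperbolic) disposed of exactly as in the paper. The only difference is presentational: you run the quadratic \'etale argument inside the octagon via $\rho_1$ rather than directly through the five-term sequence, which amounts to the same thing.
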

	
	\begin{proof}		
		(i) By writing $R$ as  a product of
		connected rings and working over each factor separately,
		we may assume that $R$ is connected.
		
		We begin with the case where $A$ is quadratic \'etale over $R$ and $\Tr=\Tr_{A/R}$.
		If $A$ is not connected, then $A=R\times R$ and $\sigma$ is the exchange involution
		$(x,y)\mapsto (y,x)$ (Lemma~\ref{LM:non-connected-S}). In this case, $(P,f)$ is always
		hyperbolic (Example~\ref{EX:exchange-involution}) and thus so is $(P,\Tr    f)$.
		A hyperbolic space is isotropic if and only if its underlying
		module is nonzero, so the equivalence holds.
		
		Suppose now that $A$ is connected.
		It is clear that if $(P,f)$ is isotropic, then so is $(P,\Tr  f)$.
		Conversely, assume that $(P,\Tr f)$ is isotropic.
		By Proposition~\ref{PR:ansio-Witt-equivalent},
		there is anisotropic  $(Q,g)\in\Herm[1]{R,\id_R}$ and
		a nonzero $U\in\rproj{R}$
		such that $ \Tr f \cong g\oplus \Hyp[1]{U}$. By Corollary~\ref{CR:exactness-anisotropic}
		and the isomorphism between \eqref{EQ:five-terms-raw} and the exact
		sequence in the Corollary~\ref{CR:exactness-for-quad-etale},
		there is $(P'',f'')\in \Herm[1]{A,\sigma}$ such that
		$\Tr f''\cong g$. Corollary~\ref{CR:exactness-for-quad-etale} also tells us that
		$\Tr:W_1(A,\sigma)\to W_1(R,\id_R)$ is injective, so  $[f'']=[f]$. 
		Since $ \Tr f \cong \Tr f''\oplus \Hyp[1]{U}$,
		we have $\rrk_A P'' < \rrk_AP$. Thus,
		Theorem~\ref{TH:trivial-in-Witt-ring}(i) implies that there
		is a nonzero  $V\in \rproj{A}$ such that
		$f\cong f''\oplus \Hyp[1]{V}$, 
		so $f$ is isotropic.
		
		We now consider with the case where $A$ is quaternion Azumaya.
		Define $B$, $\tau_1$ and $\pi_1$ as in Corollary~\ref{CR:exactness-for-quaternions}.
		Since we proved part (i) for 	quadratic \'etale algebras, 
		and since $\Trd_{A/R}=\Tr_{B/R}\circ \pi_1$, it is enough
		to show that $(P,f)$ is isotropic if and only if $(P,\pi_1 f)$
		is isotropic. The ``only if'' part is clear so we turn to the ``if'' part.
		Suppose that $(P,\pi_1f) $ is isotropic.
		Using Proposition~\ref{PR:ansio-Witt-equivalent}, choose anisotropic
		$(Q,g)\in\Herm[\veps]{B,\tau_1}$ and nonzero $U\in\rproj{B}$
		such that $\pi_1 f\cong g\oplus\Hyp[1]{U}$.
		If $B$ is connected, then arguing as in the previous paragraph shows that $f$ is isotropic. 
		If $B$ is not connected, then $B=R\times R$ by Lemma~\ref{LM:non-connected-S}.
		Let $e=(1,0)\in B$. By Lemma~\ref{LM:idempotent-in-T}(iii), we have $[A]=[eB]=[R]=0$
		in $\Br R$, so by Lemma~\ref{PR:Eii:unitary-symplectic-case}(ii), $f$
		is hyperbolic and therefore isotropic ($P\neq 0$ because $\pi_1f$ is isotropic).
		 
		(ii) The map $\Tr:W_1(A,\sigma)\to W_1(R,\id_R)$
		is injective  
		by Corollary~\ref{CR:exactness-for-quad-etale} in
		the quadratic \'etale case and by
		Corollary~\ref{CR:injectivity-for-quat-algs} in the quaternion Azumaya case.
		The statement now follows from Theorem~\ref{TH:trivial-in-Witt-ring}(iii).
	\end{proof}
	
	\begin{remark}\label{RM:why-is-Jacobson-difficult}
		When $R$ is a field,   Theorem~\ref{TH:Jacobson-semilocal}(i)
		is straightforward. Indeed, if there is some nonzero $x\in P$ with
		$\Tr f(x,x)=0$, then $f(x,x)=0$ because $f(x,x)\in\Sym_1(A,\sigma)=R$.
		Since $R$ is a field, $xA$ is an $A$-summand of $P$, so the form 
		$f$ is isotropic. 
		However, this argument does not work when $R$ is a semilocal ring, because $xA$ may not be an
		$A$-summand of $P$ even when $xR$ is an $R$-summand of $P$.
		For example, take $R=\Z_{(5)}$, $A=\Z_{(5)}[i]$ where $i=\sqrt{-1}$, let $\sigma:A\to A$
		act by complex conjugation 
		and consider $f=\Trings{1,-1}_{(A,\sigma)}$. Put
		$x=(1+2i,1+2i)\in A^2$. Then $\Tr f(x,x)=0$
		and $xR$ is an $R$-summand of $A^2_A$, but $A/xA$ has nonzero $5$-torsion elements,
		which means that $xA$ cannot be an $A$-summand of $A^2_A$.
	\end{remark}

\subsection{The Grothendieck--Serre Conjecture}

	Let $R$ be a regular local ring with fraction field $K$.
	Recall from the introduction that the
	Grothendieck--Serre conjecture
	asserts that   for every reductive (connected) group
	$R$-scheme $\bfG$, the restriction map $\HH^1_{\et}(R,\bfG)\to \HH^1_{\et}(K,\bfG)$
	has trivial kernel.
	We now use the corollaries
	of 	\ref{subsec:applications:low-degree}
	and
	results of Balmer--Walter
	\cite{Balmer_2002_Gersten_Witt_complex}
	and Balmer--Preeti \cite{Balmer_2005_shifted_Witt_groups_semilocal}  to establish
	the conjecture
	for some outer forms of $\uGL_n$ and $\uSp_{2n}$
	when   $\dim R\leq 4$. (Note that  in contrast
	to many sources discussing the conjecture, $R$ is not assumed to contain a field.)
	
\medskip

	In order to translate   statements about Witt groups to cases
	of the Grothendieck--Serre conjecture,
	we use the
	following proposition.
	The  case $A=R$ 
	is contained in \cite[Proposition~1.2]{Colliot_1979_quadratic_forms_over_semilocal_rings}.

    \begin{prp}
    	\label{PR:GS-equiv-conds}
    	Let $R$ be a  semilocal regular domain with fraction field
    	$K$, let $(A,\sigma)$ be an Azumaya $R$-algebra
    	with  involution and let $\veps\in \Cent(A)$ be an element
    	such that $\veps^\sigma\veps=1$.
        Then the following conditions are equivalent:
        \begin{enumerate}[label=(\alph*)]
            \item  The restriction map $W_\veps(A,\sigma)\to W_\veps(A_K,\sigma_K)$ is injective.
            \item Every two hermitian spaces $(P,f),(P',f')\in\Herm[\veps]{A,\sigma}$ 
            such that $f_K\cong f'_K$ are isomorphic.
            \item[(c)] 
            The restriction map $\HH^1_{\et}(R,\uU(f))\to \HH^1_{\et}(K,\uU(f))$ has trivial kernel
            for all $(P,f)\in\Herm[\veps]{A,\sigma}$,
        	\item[(d)] 
            The restriction map $\HH^1_{\et}(R,\uU^0(f))\to \HH^1_{\et}(K,\uU^0(f))$ has trivial kernel
            for all  $(P,f)\in\Herm[\veps]{A,\sigma}$.
        \end{enumerate}
    \end{prp}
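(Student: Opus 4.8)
The plan is to prove the cycle of implications (a)$\Rightarrow$(b)$\Rightarrow$(a) by elementary hermitian algebra over semilocal rings, and (b)$\Leftrightarrow$(c)$\Leftrightarrow$(d) by Galois-cohomological descent, exploiting that the isometry group schemes are smooth affine.

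For (a)$\Rightarrow$(b): if $f_K\cong f'_K$, then $f\oplus(-f')$ becomes hyperbolic over $K$ (it is isometric to $f'_K\oplus(-f'_K)$), so its Witt class lies in the kernel of the restriction map and vanishes by (a); by Theorem~\ref{TH:trivial-in-Witt-ring}(ii), $f\oplus(-f')$ is hyperbolic over $R$. Now $f_K\cong f'_K$ forces $\rrk_AP=\rrk_AP'$, and $\rrk_AP'$ is $\sigma$-invariant by Lemma~\ref{LM:unimodular-implies-rrk-sigma-inv}; a reduced-rank count combined with Lemma~\ref{LM:rank-determines-hyperbolic} then shows that the hyperbolic form $f\oplus(-f')\oplus f'\cong f\oplus\Hyp[\veps]{P'}$ is also isometric to $f'\oplus\Hyp[\veps]{P'}$, whence $f\cong f'$ by Witt cancellation (Theorem~\ref{TH:Witt-cancellation}). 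For (b)$\Rightarrow$(a): if $[f]$ restricts to $0$, then $f_K$ is hyperbolic by Theorem~\ref{TH:trivial-in-Witt-ring}(ii); I would produce a hyperbolic space $(H,h)\in\Herm[\veps]{A,\sigma}$ with $h_K\cong f_K$ and apply (b) to conclude $f\cong h$ is hyperbolic, i.e.\ $[f]=0$. Constructing $h$ amounts to exhibiting an $A$-module of reduced rank $\tfrac12\rrk_AP$: hyperbolicity of $f_K$ forces $\rrk_AP$ to be ``even'' with half divisible by $\ind A_K$ (by Proposition~\ref{PR:degree-of-endo-ring}(i) and Corollary~\ref{CR:index-divides-rrk} over $K$), and $\ind A=\ind A_K$ — a consequence of Theorem~\ref{TH:index-description} and the Auslander--Goldman injectivity $\Br R\hookrightarrow\Br K$ — so Theorem~\ref{TH:index-description} and Lemma~\ref{LM:rank-determines} supply the required module.

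For (b)$\Leftrightarrow$(c): the crux is that $H^1_{\et}(R,\uU(f))$, pointed by the trivial torsor, is in natural bijection with the set of isomorphism classes of unimodular $\veps$-hermitian spaces $(P',f')$ over $(A,\sigma)$ with $\rrk_AP'=\rrk_AP$, the base point corresponding to $(P,f)$. Indeed, for such $(P',f')$ the scheme $\uIso(f,f')$ of isometries is affine over $R$, smooth (since $\uU(f)$ is smooth and acts on it simply transitively), and surjective over $\Spec R$ because over an algebraic closure of each residue field any two unimodular $\veps$-hermitian forms of equal reduced dimension are isometric — via Morita theory this reduces to symmetric, alternating, or exchange-hermitian forms over an algebraically closed field — so $\uIso(f,f')$ is an \'etale $\uU(f)$-torsor; conversely, twisting never alters the reduced rank. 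Under this dictionary and its analogue over $K$, the restriction map becomes $[(P',f')]\mapsto[(P'_K,f'_K)]$, so its kernel is trivial for every $f$ precisely when $f'_K\cong f_K$ implies $f'\cong f$ for all $f,f'$, which is (b).

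For (c)$\Leftrightarrow$(d): when $(\sigma,\veps)$ is symplectic or unitary we have $\uU^0(f)=\uU(f)$ by Proposition~\ref{PR:U-zero-description} and there is nothing to prove, so assume $(\sigma,\veps)$ orthogonal, where Proposition~\ref{PR:U-zero-description} provides the exact sequence $1\to\uU^0(f)\to\uU(f)\xrightarrow{\Nrd}\umu_{2,R}\to1$. Its cohomology sequences over $R$ and $K$ form a commutative ladder, and $H^1_{\et}(R,\umu_{2,R})=\units{R}/(\units{R})^2\to\units{K}/(\units{K})^2=H^1_{\et}(K,\umu_{2,K})$ is injective because $R$ is integrally closed (a square root in $\units{K}$ of an element of $\units{R}$ is integral over $R$, hence in $\units{R}$). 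A diagram chase — using Theorem~\ref{TH:criterion-for-det-one} to compare the images of $\Nrd$ over $R$ and over $K$ — then shows that the kernel of restriction on $H^1_{\et}(-,\uU^0(f))$ is trivial for all $f$ iff the same holds for $H^1_{\et}(-,\uU(f))$. The part I expect to be most delicate is making the descent dictionary of the third paragraph airtight — in particular that every unimodular $\veps$-hermitian space of the prescribed reduced rank is an \'etale twist of $(P,f)$, and that ``same reduced rank'' is exactly the right equivalence relation — together with the verification of $\ind A=\ind A_K$ used in (b)$\Rightarrow$(a).
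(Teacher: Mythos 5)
Your proposal follows essentially the same route as the paper's proof: (a)$\Leftrightarrow$(b) via the structure theory of hermitian forms over semilocal rings, (b)$\Leftrightarrow$(c) via the interpretation of $\HH^1_{\et}(R,\uU(f))$ as classifying unimodular $\veps$-hermitian spaces of the same reduced rank as $(P,f)$ (the paper cites this dictionary from the literature where you sketch a proof of it), and (c)$\Leftrightarrow$(d) via the sequence $1\to\uU^0(f)\to\uU(f)\xrightarrow{\Nrd}\umu_{2,R}\to1$, the injectivity of $\units{R}/(\units{R})^2\to\units{K}/(\units{K})^2$, and Theorem~\ref{TH:criterion-for-det-one} combined with Auslander--Goldman. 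Your (a)$\Rightarrow$(b) rederives Theorem~\ref{TH:trivial-in-Witt-ring}(iii) by hand, which the paper simply invokes; that is harmless.

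The one step whose justification does not go through as written is the equality $\ind A=\ind A_K$ in (b)$\Rightarrow$(a). Theorem~\ref{TH:index-description} together with the Auslander--Goldman injectivity $\Br R\hookrightarrow\Br K$ only yields the easy divisibility $\ind A_K\mid\ind A$: base-changing the minimal-degree representative $eAe\in[A]$ gives a representative of $[A_K]$ of degree $\ind A$. What you actually need is the reverse divisibility $\ind A\mid\ind A_K$, i.e., that the index does not drop at the generic point, and injectivity of the Brauer-group map says nothing about the degrees of representatives of a given class. This is a genuine theorem about Azumaya algebras over regular rings; the paper cites Antieau--Williams for it. With that citation substituted for your parenthetical derivation, the argument is complete.
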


    \begin{proof}
		(a)$\implies$(b): 
		By (a),   $[f]=[f']$ in $W_\veps(A,\sigma)$.
		Since $P_K\cong P'_K$, we have $\rrk_AP\cong \rrk_AP'$,
		so    $f\cong f'$ by Theorem~\ref{TH:trivial-in-Witt-ring}(iii).

        (b)$\implies$(a): Let
        $(P,f)\in\Herm[\veps]{A,\sigma}$
        and suppose that $[f_K]=0$.
        Then, by Theorem~\ref{TH:trivial-in-Witt-ring}(ii), $f_K$ is hyperbolic, say
        $(P_K,f_K)\cong (Q\oplus Q^*,\Hyp[\veps]{Q})$ with $Q\in\rproj{A_K}$.
        Since $R$ is regular, $\ind A=\ind A_K$
        \cite[Proposition~6.1]{Antieau_2014_topological_period_index}.
        Thus, by Theorem~\ref{TH:index-description} and Corollary~\ref{CR:index-divides-rrk},
        there is $L\in \rproj{A}$ with $\rrk_A L=\rrk_{A_K} Q$.
        By Lemma~\ref{LM:rank-determines}, $Q\cong L_K$.
        This means that  $f_K \cong \Hyp[\veps]{Q}\cong (\Hyp[\veps]{L})_K$,
        so by (b), $f\cong \Hyp[\veps]{L}$ and $[f]=0$.

        (b)$\iff$(c): It is well-known  
        that   $\HH^1_{\et}(R,\uU(f))$ is in correspondence
        with isomorphism classes of  hermitian spaces $(P',f')\in \Herm[\veps]{A,\sigma}$
        that become isomorphic to   $(P,f)$ after base-changing   to some faithfully flat finitely presented
        (i.e.\ fppf) 
        $R$-algebra,  see \cite[Proposition~5.1]{Bayer_2017_rational_isomorphism_forms}, for instance.
        By \cite[Proposition~A.1]{Bayer_2016_patching_weak_approximation}, $(P',f')$ is such a space
        if and only if  $\rrk_AP=\rrk_AP'$, or equivalently,
        $\rrk_{A_K}P_K=\rrk_{A_K}P'_K$. The equivalence now follows
        from the fact that the correspondence is compatible with base change.
        
        (c)$\iff$(d): 
       	If $(\sigma,\veps)$ is symplectic or unitary,
       	then $\uU^0(f)=\uU(f)$ (Proposition~\ref{PR:U-zero-description})  and the statement is trivial.
       	
       	Assume  $(\sigma,\veps)$ is orthogonal.  
		Then 
        $
        1\to \uU^0(f)\to \uU(f)\xrightarrow{\Nrd} \umu_2 \to 1
        $
        is a short exact sequence of sheaves on $(\Aff/R)_{\et}$.  
        (To see that the last map is surjective, pass
        to the stalks and apply Theorem~\ref{TH:criterion-for-det-one}.)
        This induces a commutative  diagram of pointed sets,
        \[
        \xymatrix{
        U(f) \ar[r]^{\Nrd} \ar[d] & 
        \{\pm 1\} \ar[r]\ar@{=}[d] & 
        \HH^1_{\et}(R,\uU^0(f)) \ar[r]\ar[d]^\alpha &
        \HH^1_{\et}(R,\uU(f))  \ar[d]^\beta \ar[r] &
        \HH^1_{\et}(R,\umu_2) \ar[d]^\gamma\\
        U(f_K) \ar[r]^{\Nrd}  & 
        \{\pm 1\} \ar[r]  & 
        \HH^1_{\et}(K,\uU^0(f)) \ar[r]  &
        \HH^1_{\et}(K,\uU(f)) \ar[r] &
        \HH^1_{\et}(K,\umu_2)   
        }
        \]
        in which the rows
        are  cohomology exact sequences
        and the vertical arrows are restriction maps.
		We need to prove that $\alpha$ has trivial kernel if and only if $\beta$
		has trivial kernel.        
        
        The map $\gamma$ has trivial kernel by \cite[Proposition~2.2]{Colliot_1978_cohomology_of_mult_grps},
        for instance.
        Thus, the Four Lemma implies that $\beta$
        has trivial kernel whenever   $\alpha$ has trivial kernel.
        
        Next,
		since $R$ is regular, $[A]=0$  if and only if $[A_K]=0$ \cite[Theorem~7.2]{Auslander_1960_Brauer_Group}.
		Thus, by Theorem~\ref{TH:criterion-for-det-one}, $\Nrd:U(f)\to \{\pm 1\}$ and
		$\Nrd:U(f_K)\to  \{\pm 1\}$ have the same image.
		Using this, an easy diagram chase shows that if $\beta$ has trivial kernel, then so does $\alpha$.       
    \end{proof}
    
    Conditions (a)--(d) of Proposition~\ref{PR:GS-equiv-conds} are conjectured
    to hold under the assumptions of the proposition. 
    This was affirmed by Gille \cite[Theorem~7.7]{Gille_2013_coherent_herm_Witt_grps}
    (see also section 3.3 of that paper) when $R$ is regular local and contains a field.
    Furthermore, we have:

    \begin{thm}[Balmer, Preeti, Walter] \label{TH:BPW}
		Let $R$ be a  semilocal regular domain with $\dim R\leq 4$ 
		and let  
		$K$ denote the fraction field of $R$.
		Then the restriction map
		$W_1(R,\id_R)\to W_1(K,\id_K)$ is injective. 
	\end{thm}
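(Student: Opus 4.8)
The plan is to obtain Theorem~\ref{TH:BPW} directly from the known low-dimensional cases of the Gersten conjecture for Witt groups, with no new computation. First I would recall the Gersten--Witt complex attached to the regular ring $R$: it is a complex of the shape
\[
0\to W_1(R,\id_R)\to W_1(K,\id_K)\to \bigoplus_{x\in R^{(1)}} W(k(x);\omega_x)\to \bigoplus_{x\in R^{(2)}} W(k(x);\omega_x)\to\cdots ,
\]
where $x$ ranges over the height-$j$ primes of $R$, $\omega_x$ is the one-dimensional $k(x)$-space produced by the Koszul/residue construction, and the leftmost differential is exactly the restriction map in the statement. With this setup, the injectivity of $W_1(R,\id_R)\to W_1(K,\id_K)$ is precisely exactness of this complex at its leftmost nonzero term, i.e.\ the ``first instance'' of the Gersten conjecture for $R$.

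Next I would invoke the theorem of Balmer and Walter \cite[Corollary~10.4]{Balmer_2002_Gersten_Witt_complex} (see also Pardon \cite{Pardon_1982_Gersten_conjecture}): when $R$ is a regular \emph{local} ring of Krull dimension at most $4$, the Gersten--Witt complex is exact everywhere, in particular at $W_1(R,\id_R)$. To pass from the local to the semilocal hypothesis of Theorem~\ref{TH:BPW}, I would cite Balmer and Preeti \cite[p.~3]{Balmer_2005_shifted_Witt_groups_semilocal}, who establish the same exactness for regular \emph{semilocal} rings in the same range of dimensions. Chaining these two inputs gives the stated injectivity immediately.

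Finally, a word on where the genuine content lies — which I would not reproduce here. The engine behind the Balmer--Walter result is the formalism of triangular (derived) Witt groups $W^i$ together with localization and dévissage: the total complex is assembled by splicing the localization long exact sequences for $W^\ast$ along a flag of primes, and the vanishing of $W^i$ of a field for $i\not\equiv 0 \pmod 4$ forces these sequences to fit together into a resolution once $\dim R\le 4$. Re-deriving that machinery is the only real obstacle, and it is exactly what \cite{Balmer_2002_Gersten_Witt_complex} and \cite{Balmer_2005_shifted_Witt_groups_semilocal} supply; so the proof here amounts to invoking those theorems and recording that their leftmost-term exactness is the desired injectivity of $W_1(R,\id_R)\to W_1(K,\id_K)$.
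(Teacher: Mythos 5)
Your proposal is correct and follows the paper's own proof exactly: both cite Balmer--Walter \cite[Corollary~10.4]{Balmer_2002_Gersten_Witt_complex} for the regular local case and Balmer--Preeti \cite[p.~3]{Balmer_2005_shifted_Witt_groups_semilocal} for the extension to semilocal rings. The additional framing via the Gersten--Witt complex is accurate background but does not change the argument, which in both cases is a direct appeal to those two references.
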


	\begin{proof}
		Balmer and Walter  \cite[Corollary~10.4]{Balmer_2002_Gersten_Witt_complex} 
		proved the theorem when $R$ is local,
		and
		Balmer and Preeti  
		\cite[p.~3]{Balmer_2005_shifted_Witt_groups_semilocal} 
		showed  that it is enough to require that $R$ is semilocal. 
	\end{proof}
    
    We   establish the following additional cases.
    
    \begin{thm}\label{TH:GS-new}
    	Let $R$ be a regular semilocal domain with $\dim R\leq 4$,
    	let $K$ denote the fraction field of $R$
    	and let $(A,\sigma)$ be an Azumaya $R$-algebra with involution.
    	Assume that
    	\begin{enumerate}[label=(\arabic*)]
    		\item $\ind A=1$ and $\sigma$ is unitary, or
    		\item $\ind A\leq 2$ and $\sigma$ is symplectic.
    	\end{enumerate}
    	Then restriction map $W_1(A,\sigma)\to W_1(A_K,\sigma_K)$ is injective.
    \end{thm}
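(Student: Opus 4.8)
The plan is to reduce, in each of the two cases, the injectivity of $W_1(A,\sigma)\to W_1(A_K,\sigma_K)$ to the injectivity of $W_1(R,\id_R)\to W_1(K,\id_K)$ (Theorem~\ref{TH:BPW}), using hermitian Morita equivalences that are compatible with base change together with the low-degree exact sequences of \ref{subsec:applications:low-degree}. Since $R$ is a domain it is connected, so $\deg A$, $\ind A$ and the type of $\sigma$ may be regarded as constants, and the equivalent conditions of Proposition~\ref{PR:GS-equiv-conds} apply.

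\textbf{Case (1): $\sigma$ unitary, $\ind A=1$.} If $\Cent(A)$ is not connected then, being quadratic \'etale over the connected ring $R$ (Proposition~\ref{PR:types-of-involutions-Az}(iii)), it is isomorphic to $R\times R$ (Lemma~\ref{LM:non-connected-S}), so $\sigma$ is an exchange involution in the sense of Example~\ref{EX:exchange-involution} and $W_1(A,\sigma)=0$; nothing is to prove. So assume $S:=\Cent(A)$ is connected; it is a quadratic \'etale $R$-algebra with standard involution $\theta:=\sigma|_S$. Since $\ind A=1$, Theorem~\ref{TH:invariant-primitive-idempotent} (applied with the constant function $n=1$, with no parity constraint as $\sigma$ is not symplectic) yields a $\sigma$-invariant idempotent $e\in A$ with $\rrk_A eA=\deg eAe=1$. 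Then $eAe$ is a degree-$1$ Azumaya $S$-algebra, so $eAe=\Cent(eAe)\cong S$ and $(eAe,\sigma|_{eAe})\cong(S,\theta)$ as $R$-algebras with involution (Corollary~\ref{CR:type-conjugation}(ii) identifies the type and hence the involution on the center). By $e$-transfer (see \ref{subsec:conjugation}) one gets an isomorphism $W_1(A,\sigma)\xrightarrow{\sim}W_1(S,\theta)$ which, by fact~\ref{item:e-transfer-first}, commutes with base change to $K$. It therefore suffices to prove $W_1(S,\theta)\to W_1(S_K,\theta_K)$ is injective. This I would deduce from Corollary~\ref{CR:exactness-for-quad-etale}: its exact sequence begins $0\to W_1(S,\theta)\xrightarrow{\Tr_{S/R}}W_1(R,\id_R)\to\cdots$, so $\Tr_{S/R}$ is injective; composing with the injection $W_1(R,\id_R)\to W_1(K,\id_K)$ of Theorem~\ref{TH:BPW} and using that the trace commutes with base change, the composite $W_1(S,\theta)\to W_1(S_K,\theta_K)\xrightarrow{\Tr_{S_K/K}}W_1(K,\id_K)$ is injective, hence so is its first factor.

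\textbf{Case (2): $\sigma$ symplectic, $\ind A\le 2$.} Here $\Cent(A)=R$ and $\deg A$ is even, so $\ind A\in\{1,2\}$ divides $2$. Applying Theorem~\ref{TH:invariant-primitive-idempotent} with the constant function $n=2$ (permissible since $\ind A\mid 2$, $2\le\deg A$, and $2$ is even) gives a $\sigma$-invariant idempotent $e\in A$ with $\rrk_A eA=\deg eAe=2$; thus $eAe$ is a quaternion Azumaya $R$-algebra, and by Corollary~\ref{CR:type-conjugation}(ii) the involution $\sigma|_{eAe}$ is symplectic, hence is the canonical symplectic involution of $eAe$ (unique by Lemma~\ref{LM:quat-alg-semilocal}). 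As before, $e$-transfer gives a base-change-compatible isomorphism $W_1(A,\sigma)\xrightarrow{\sim}W_1(eAe,\sigma|_{eAe})$, so it is enough to prove the restriction map is injective for $(eAe,\sigma|_{eAe})$. But Corollary~\ref{CR:injectivity-for-quat-algs} states that $\Trd_{eAe/R}\colon W_1(eAe,\sigma|_{eAe})\to W_1(R,\id_R)$ is injective; composing with Theorem~\ref{TH:BPW} and using compatibility of the reduced trace with base change finishes the argument exactly as in case (1).

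The proof is thus an assembly argument with no single deep obstacle; the points needing care are (a) checking that the idempotent-transfer isomorphisms and the (reduced) trace maps are compatible with the base change $R\to K$ — the former is fact~\ref{item:e-transfer-first} — and (b) observing that the crucial injectivity $W_1(S,\theta)\hookrightarrow W_1(R,\id_R)$ in the unitary case is already built into the exactness in Corollary~\ref{CR:exactness-for-quad-etale}. The hypotheses $\ind A=1$ (unitary case) and $\ind A\le 2$ (symplectic case) are used precisely to satisfy the divisibility condition needed to invoke Theorem~\ref{TH:invariant-primitive-idempotent} and thereby cut $(A,\sigma)$ down to a quadratic \'etale algebra, respectively a quaternion algebra, with its canonical involution.
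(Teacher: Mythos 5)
Your proof is correct and follows essentially the same route as the paper: use Theorem~\ref{TH:invariant-primitive-idempotent} and $e$-transfer to reduce to a quadratic \'etale or quaternion Azumaya algebra, then combine the injectivity of the (reduced) trace map from Corollaries~\ref{CR:exactness-for-quad-etale} and~\ref{CR:injectivity-for-quat-algs} with Theorem~\ref{TH:BPW} via a base-change-compatible diagram. The only (harmless) deviation is that in the symplectic case you take $n=2$ uniformly, whereas the paper first disposes of $\ind A=1$ by noting $W_1(A,\sigma)=0$ and then transfers to $n=\ind A=2$.
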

    
	\begin{proof}
		When $\ind A=1$ and $\sigma$ is symplectic, we
		have $W_1(A,\sigma)=0$ (Proposition~\ref{PR:Eii:unitary-symplectic-case}(ii)).
		We may therefore assume that
		$\ind A=2$ when $\sigma$ is symplectic.	
	
		By Theorem~\ref{TH:invariant-primitive-idempotent},
		there exists a $\sigma$-invariant idempotent $e\in A$
		such that $\deg eAe=\ind A$.
		Applying $e$-transfer (see~\ref{subsec:conjugation}),
		we may assume that $\deg A=\ind A$, namely,
		that $A$ is quadratic \'etale or quaternion Azumaya over $R$.

		Consider the commutative square
		\[
		\xymatrix{
			W_1(A,\sigma) \ar[r] \ar[d] &
			W_1(R,\id_R) \ar[d] \\
			W_1(A_K,\sigma_K) \ar[r] &
			W_1(K,\id_K)	
		}
		\]
		in which the vertical arrows are restriction maps
		and the horizonal arrows are given by $[f]\mapsto[\Tr_{A/R}\circ f]$
		if $A$ is quadratic \'etale, or $[f]\mapsto [\Trd_{A/R}\circ f]$
		if $A$ is quaternion Azumaya.
		The right vertical arrow is injective 
		by Theorem~\ref{TH:BPW} and the top horizontal arrow 
		is injective by Corollary~\ref{CR:exactness-for-quad-etale}
		when $A$ is quadratic \'etale,
		and by Corollary~\ref{CR:injectivity-for-quat-algs} when
		$A$ is quaternion Azumaya. Thus,
		the left vertical arrow is also injective.
	\end{proof}
	
	As a corollary, we verify some cases of the Grothendieck--Serre conjecture.
	
	\begin{cor}\label{CR:GS-new}
		With notation and assumptions as in Theorem~\ref{TH:GS-new},
		the  restriction map 
		$\HH^1_{\et}(R,\uU (A,\sigma))\to
		\HH^1_{\et}(K,\uU (A,\sigma))$ has trivial kernel.
	\end{cor}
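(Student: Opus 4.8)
The plan is to obtain Corollary~\ref{CR:GS-new} as a formal consequence of Theorem~\ref{TH:GS-new} together with the equivalences recorded in Proposition~\ref{PR:GS-equiv-conds}. The first step is to recognize $\uU(A,\sigma)$ as an isometry group scheme of a hermitian form: taking $P=A_A$ and $f=\langle 1\rangle_{(A,\sigma)}\in\Herm[1]{A,\sigma}$, the standard identification $\End_A(A_A)\cong A$ carries the adjoint involution of $f$ to $\sigma$ itself (Example~\ref{EX:adoint-inv-diag-form} with $\alpha=1$), so by Remark~\ref{RM:relation-to-U-A-sigma} we have $\uU(f)=\uU(A,\sigma)$ as group $R$-schemes. (Here $A$ is finite projective over $R$ because it is separable projective over $R$, so $\uU(A,\sigma)$ is a smooth affine group $R$-scheme in the sense of \ref{subsec:isometry-group}, and the cited identification applies.)

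Next I would invoke Theorem~\ref{TH:GS-new}: under the stated hypotheses ($R$ a regular semilocal domain with $\dim R\leq 4$, and $(A,\sigma)$ as in (1) or (2)) the restriction map $W_1(A,\sigma)\to W_1(A_K,\sigma_K)$ is injective. This is exactly condition~(a) of Proposition~\ref{PR:GS-equiv-conds} taken with $\veps=1$. Since $R$ is a regular semilocal domain and $(A,\sigma)$ is an Azumaya $R$-algebra with involution, Proposition~\ref{PR:GS-equiv-conds} applies and yields condition~(c): for every $(P',f')\in\Herm[1]{A,\sigma}$, the restriction map $\HH^1_{\et}(R,\uU(f'))\to\HH^1_{\et}(K,\uU(f'))$ has trivial kernel. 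Specializing to $f'=\langle 1\rangle_{(A,\sigma)}$ and using $\uU(f')=\uU(A,\sigma)$ from the first step gives the corollary.

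The argument is short, and I do not expect a genuine obstacle: all the substance is already contained in Theorem~\ref{TH:GS-new} (which rests on Balmer--Preeti--Walter via Theorem~\ref{TH:BPW} and on the low-degree octagon results of \ref{subsec:applications:low-degree}) and in Proposition~\ref{PR:GS-equiv-conds}. The only points requiring a line of care are the hypothesis check when invoking Proposition~\ref{PR:GS-equiv-conds} and the routine identification $\uU(A,\sigma)=\uU(\langle 1\rangle_{(A,\sigma)})$.
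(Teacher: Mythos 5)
Your proof is correct and is essentially identical to the paper's own argument: both identify $\uU(A,\sigma)$ with $\uU(\langle 1\rangle_{(A,\sigma)})$ and then combine Theorem~\ref{TH:GS-new} (condition (a)) with Proposition~\ref{PR:GS-equiv-conds} to obtain condition (c). Your extra care in justifying the identification via Example~\ref{EX:adoint-inv-diag-form} and Remark~\ref{RM:relation-to-U-A-sigma} is a welcome but inessential elaboration of what the paper states in one line.
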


	\begin{proof}
		We have $\uU(A,\sigma)\cong \uU(f)$, where $f:A\times A\to A$
		is the $1$-hermitian form $f(x,y)=x^\sigma y$, so 
		the corollary    follows from Proposition~\ref{PR:GS-equiv-conds}
		and Theorem~\ref{TH:GS-new}.
	\end{proof}    
    
\subsection{Purity}

	Let $R$ be a regular domain with fraction field $K$
	and let $\bfG$ be a reductive (connected)
	group $R$-scheme. 
	Recall from the introduction that we say that purity
	holds for $\bfG$   if 
	\[
	\im \left(\HH^1_{\et}(R,\bfG)\to \HH^1_{\et}(K,\bfG)\right)=
	{\textstyle\bigcap_{\frakp\in R^{(1)}}} \left(\HH^1_{\et}(R_\frakp,\bfG)\to \HH^1_{\et}(K,\bfG)\right),
	\]
	where $R^{(1)}$ denotes the set of height-$1$ primes in $\Spec R$.
	The local purity conjecture asserts that purity
	holds for $\bfG$ whenever $R$ is regular semilocal.	
	
	The following proposition allows us to prove purity for some
	group schemes by establishing certain results about hermitian forms.

	\begin{prp}\label{PR:purity-equiv-conds}
		Let $R,A,\sigma,\veps$ and $K$ be as in Proposition~\ref{PR:GS-equiv-conds}.
		Suppose that:
		\begin{enumerate}[label=(\arabic*)]
			\item every anisotropic $\veps$-hermitian space
		over $(A,\sigma)$ remains
		anisotropic after base changing along $R\to K$, and
			\item $\im\big(W_\veps(A,\sigma)\to W_\veps(A_K,\sigma_K)\big)=\bigcap_{\frakp\in R^{(1)}}
			\im\big(W_\veps(A_\frakp,\sigma_\frakp)\to W_\veps(A_K,\sigma_K)\big)$.
		\end{enumerate}
		Then the equivalent conditions of Proposition~\ref{PR:GS-equiv-conds} hold,
		and purity holds for $\uU(f)$ for every $(P,f)\in  \Herm[\veps]{A,\sigma}$.	
	\end{prp}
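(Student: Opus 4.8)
The plan is to deduce the equivalent conditions of Proposition~\ref{PR:GS-equiv-conds} from hypothesis~(1) alone, and then to establish purity for $\uU(f)$ by feeding hypothesis~(2) into the cohomology--hermitian-forms dictionary recalled in the proof of Proposition~\ref{PR:GS-equiv-conds}. For the first part it suffices to verify condition~(a), i.e.\ that $W_\veps(A,\sigma)\to W_\veps(A_K,\sigma_K)$ is injective. Given $[f]$ in the kernel, use Proposition~\ref{PR:ansio-Witt-equivalent} to replace $(P,f)$ by an anisotropic representative $(P_0,f_0)$ of its Witt class; then $[f_{0,K}]=0$, so $f_{0,K}$ is hyperbolic by Theorem~\ref{TH:trivial-in-Witt-ring}(ii) (applied over the semilocal ring $K$), hence isotropic as soon as $P_0\neq 0$ --- contradicting hypothesis~(1). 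Thus $P_0=0$ and $[f]=0$, which gives all of (a)--(d) of Proposition~\ref{PR:GS-equiv-conds}.

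For purity, fix $(P,f)\in\Herm[\veps]{A,\sigma}$ and set $n:=\rrk_A P$. The inclusion $\subseteq$ is formal from functoriality along $R\to R_\frakp\to K$. For $\supseteq$, recall from the proof of Proposition~\ref{PR:GS-equiv-conds} (via \cite[Proposition~5.1]{Bayer_2017_rational_isomorphism_forms} and \cite[Proposition~A.1]{Bayer_2016_patching_weak_approximation}) that for every $R$-ring $R'$ the set $\HH^1_{\et}(R',\uU(f))$ is in base-change-compatible bijection with the isometry classes of $(P',f')\in\Herm[\veps]{A_{R'},\sigma_{R'}}$ having $\rrk_{A_{R'}}P'=n$. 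Let $\xi$ lie in $\bigcap_{\frakp\in R^{(1)}}\im\big(\HH^1_{\et}(R_\frakp,\uU(f))\to\HH^1_{\et}(K,\uU(f))\big)$; it corresponds to some $(P',f')$ over $(A_K,\sigma_K)$ with $\rrk_{A_K}P'=n$, and for each height-one prime $\frakp$ a chosen lift gives $(P_\frakp,f_\frakp)$ over $(A_{R_\frakp},\sigma_{R_\frakp})$ with $(f_\frakp)_K\cong f'$. Hence $[f']=[(f_\frakp)_K]$ lies in $\im\big(W_\veps(A_\frakp,\sigma_\frakp)\to W_\veps(A_K,\sigma_K)\big)$ for every $\frakp\in R^{(1)}$, so by hypothesis~(2) there is $g\in\Herm[\veps]{A,\sigma}$, with underlying module $P_g$, such that $[g_K]=[f']$ in $W_\veps(A_K,\sigma_K)$.

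Next I would pass to anisotropic parts: write $(P_g,g)=(P_{g,a},g_a)\oplus(\text{hyperbolic})$ and $(P',f')=(P'_a,f'_a)\oplus(\text{hyperbolic})$ with $g_a,f'_a$ anisotropic (Proposition~\ref{PR:ansio-Witt-equivalent}), and let $h$ be the hyperbolic complement of $f'_a$ in $f'$. Since $[g_{a,K}]=[f']=[f'_a]$ and $g_{a,K}$ is anisotropic by hypothesis~(1), a Witt-cancellation argument over the field $K$ (Theorem~\ref{TH:Witt-cancellation} together with Lemma~\ref{LM:rank-determines-hyperbolic}) forces $g_{a,K}\cong f'_a$; in particular $\rrk_A P_{g,a}=\rrk_{A_K}P'_a\le n$. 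Because $R$ is regular, $\ind A=\ind A_K$ \cite[Proposition~6.1]{Antieau_2014_topological_period_index}, so Theorem~\ref{TH:index-description} and Corollary~\ref{CR:index-divides-rrk} yield $L\in\rproj{A}$ with $\rrk_A(L\oplus L^*)=n-\rrk_A P_{g,a}$; then $\Hyp[\veps]{L}_K$ and $h$ are hyperbolic $K$-spaces of equal reduced rank, hence isometric by Theorem~\ref{TH:trivial-in-Witt-ring}(iii) over $K$. Setting $(P''',f'''):=(P_{g,a}\oplus L\oplus L^*,\ g_a\oplus\Hyp[\veps]{L})$ we obtain $\rrk_A P'''=n$ and $f'''_K\cong g_{a,K}\oplus h\cong f'$, so the class $\eta\in\HH^1_{\et}(R,\uU(f))$ attached to $(P''',f''')$ restricts to $\xi$. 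This proves $\supseteq$, and with it the purity of $\uU(f)$.

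The main obstacle is the descent step in the previous paragraph: to build $L$ over $A$ with $\Hyp[\veps]{L}_K\cong h$ one must match $\sigma$-invariant reduced-rank functions on $\Spec\Cent(A)$ with those on $\Spec\Cent(A_K)$, using divisibility by $\ind A$ and the parity supplied by Corollary~\ref{CR:constant-even-ranks}(ii) (here $\Cent(A)$ being connected, which holds e.g.\ in the unitary case when $\Cent(A)$ is a domain, makes $n$ an honest integer and the matching routine). When $\Cent(A)$ is disconnected it is $\cong R\times R$ with $\sigma$ an exchange involution (Lemma~\ref{LM:non-connected-S}), so $W_\veps$ vanishes everywhere, hypothesis~(2) is vacuous, and every hermitian space is hyperbolic and determined by its underlying module (Example~\ref{EX:exchange-involution}); this case I would treat separately, reducing purity for $\uU(f)$ to descent of finite projective modules over $A$, which again follows from $\ind A=\ind A_K$ and Lemma~\ref{LM:rank-determines}.
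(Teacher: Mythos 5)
Your proposal is correct and follows essentially the same route as the paper: condition (a) of Proposition~\ref{PR:GS-equiv-conds} is deduced from hypothesis~(1) via an anisotropic representative exactly as in the paper, and the purity statement is obtained by combining hypothesis~(2) with hypothesis~(1) to control the reduced rank of a global representative and then lifting the hyperbolic complement using $\ind A=\ind A_K$ and Theorem~\ref{TH:index-description}. The only (cosmetic) difference is that you decompose both the global class and $f'$ into anisotropic plus hyperbolic parts and match them, whereas the paper takes the global representative anisotropic from the start and compares its reduced rank with that of $P_0$ directly via Theorem~\ref{TH:trivial-in-Witt-ring}(i).
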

	
	Both (1) and (2) are conjectured to hold when $R$ is a regular semilocal domain.
	
	
	\begin{proof}
		We first prove that
		condition (a) of Proposition~\ref{PR:GS-equiv-conds} holds.
		Let $w$ be a Witt class in $\ker(W_\veps(A,\sigma)\to W_\veps(A_K,\sigma_K))$.
		By Proposition~\ref{PR:ansio-Witt-equivalent},
		$w$ is represented by an anisotropic $(P,f)\in\Herm[\veps]{A,\sigma}$.
		Since $[f_K]=0$, the form $f_K$ is hyperoblic (Theorem~\ref{TH:trivial-in-Witt-ring}(ii)).
		If $P\neq 0$, then  $f_K$ is isotropic, and by   assumption (1), so is $f$, contradicting
		our choice of $f$. Thus, $P=0$ and $w=0$.

		Next, let $(P,f)\in \Herm[\veps]{A,\sigma}$. 
		Recall from the proof of Proposition~\ref{PR:GS-equiv-conds} that $\HH^1_{\et}(R,\uU(f))$
		classifies isomorphism classes of hermitian spaces $(P',f')\in\Herm[\veps]{A,\sigma}$
		with $\rrk_AP=\rrk_AP'$.
		Thus,   purity for
		$\uU(f)$  is the equivalent to saying that if $(P_0,f_0)\in \Herm[\veps]{A_K,\sigma_K}$
		is such that for every $\frakp\in R^{(1)}$,
		there is $(P',f')\in \Herm[\veps]{A_\frakp,\sigma_\frakp}$
		with $f'_K\cong f_0$,
		then there is $(P'',f'')\in \Herm[\veps]{A,\sigma}$
		such that $  f''_K\cong f_0$.
		
		Given   $(P_0,f_0)\in \Herm[\veps]{A_K,\sigma_K}$,
		assumption (2) implies that
		there is $(\tilde{P},\tilde{f})\in \Herm[\veps]{A,\sigma}$
		such that $[f_0]=[\tilde{f}_K]$. By Proposition~\ref{PR:ansio-Witt-equivalent},
		we may take $\tilde{f}$ to be anisotropic. By (1), $\tilde{f}_K$ is also anisotropic.
		By Corollary~\ref{CR:constant-even-ranks}(i),
		$\rrk_{A_K} \tilde{P}_K$ and $\rrk_{A_K} P_0$
		are constant. If
		$\rrk_{A_K}\tilde{P}_K>\rrk_{A_K} P_0$,
		then by Theorem~\ref{TH:trivial-in-Witt-ring}(i),
		there is a nonzero $V\in \rproj{A_K}$ such
		that $\tilde{f}_K\cong f_0 \oplus \Hyp[\veps]{V}$, contradicting
		the fact that $\tilde{f}_K$ is anisotropic.
		Thus, $\rrk_{A_K}\tilde{P}_K\leq \rrk_{A_K} P_0$.
		Applying Theorem~\ref{TH:trivial-in-Witt-ring}(i) again,
		we get $V\in \rproj{A_K}$ such
		that $\tilde{f}_K  \oplus \Hyp[\veps]{V} \cong f_0$.
		As in the proof of Proposition~\ref{PR:GS-equiv-conds},
		there is $L\in\rproj{A}$ with $V\cong L_K$,
		so $f_0\cong (\tilde{f}\oplus \Hyp[\veps]{L})_K$.
	\end{proof}
	
	Condition (1) of Proposition~\ref{PR:purity-equiv-conds} is known as  \emph{purity for $W_\veps(A,\sigma)$}.
	Provided $R$ 
	is regular local and contains a field, 
	it was established by Ojanguren and Panin \cite{Ojanguren_1999_purity_for_Witt_group}
	when $A=R$ and $\veps=1$, and by Gille \cite[Theorem~7.7]{Gille_2013_coherent_herm_Witt_grps}
	for general  $A,\sigma,\veps$.

	Condition (2) was proved by Panin and Pimenov \cite[Theorem~1.1]{Panin_2010_rational_isotropy_implies_isotropy}
	for $(A,\sigma)=(R,\id_R)$ 
	when $R$ is a regular semilocal domain  containing an infinite field $k$, and 
	Scully \cite[Theorem 5.1]{Scully_Artin_Springer_over_semilocal} 
	eliminated the assumption that $k$ is infinite.

	We use these results together with Theorem~\ref{TH:Jacobson-semilocal}
	to prove purity
	for some outer forms of $\uGL_n$ and $\uSp_{2n}$.

	\begin{thm}\label{TH:purity-new}
		Let $R$ be a regular local ring
		containing a field  and let $K$ denote the fraction field of $R$.
		Let $(A,\sigma)$ be a quadratic \'etale $R$-algebra with its
		standard  involution or a quaternion Azumaya $R$-algebra
		with its symplectic involution.
		Let $(P_0,f_0)\in\Herm[1]{A_K,\sigma_K}$ be a hermitian
		space such that for every $\frakp\in R^{(1)}$,
		there exists $(P^{(\frakp)},f^{(\frakp)})\in \Herm[1]{A_{ \frakp},\sigma_{ \frakp}}$
		such that $(P_0,f_0)\cong (P^{(\frakp)}_K,f^{(\frakp)}_K)$.
		Then there exists $(P ,f )\in \Herm[1]{A,\sigma}$
		such that $(P_0,f_0)\cong (P_K,f_K)$.
		Equivalently,  for every $(P,f)\in \Herm[1]{A,\sigma}$,
		purity holds for $\uU(f)$.
	\end{thm}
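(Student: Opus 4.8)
The plan is to deduce the theorem from Proposition~\ref{PR:purity-equiv-conds} applied with $\veps=1$: the conclusion of that proposition is precisely that purity holds for $\uU(f)$ for every $(P,f)\in\Herm[1]{A,\sigma}$, and (using Theorem~\ref{TH:trivial-in-Witt-ring}(iii) over $K$ to promote a Witt equivalence to an isometry once reduced ranks agree, exactly as in the proof of Proposition~\ref{PR:purity-equiv-conds}) this is the first assertion of the theorem. So it suffices to verify hypotheses (1) and (2) of Proposition~\ref{PR:purity-equiv-conds}: (1) every anisotropic $1$-hermitian space over $(A,\sigma)$ stays anisotropic over $K$; and (2) $\im\!\big(W_1(A,\sigma)\to W_1(A_K,\sigma_K)\big)=\bigcap_{\frakp\in R^{(1)}}\im\!\big(W_1(A_\frakp,\sigma_\frakp)\to W_1(A_K,\sigma_K)\big)$. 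Condition~(1) is immediate: $R$ is regular local and contains a field, necessarily of characteristic $\neq 2$ since $2\in\units{R}$, so purity for $W_1(A,\sigma)$ holds by Gille \cite[Theorem~7.7]{Gille_2013_coherent_herm_Witt_grps}, which applies to arbitrary Azumaya algebras with involution; as noted in the proof of Proposition~\ref{PR:purity-equiv-conds}, condition~(1) already forces $W_\veps(A,\sigma)\to W_\veps(A_K,\sigma_K)$ to be injective, and the same holds with $\veps=-1$. Thus all the work is in condition~(2).

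For~(2) the inclusion $\subseteq$ is formal, so I argue $\supseteq$. Let $w\in W_1(A_K,\sigma_K)$ lie in $\im(W_1(A_\frakp,\sigma_\frakp)\to W_1(A_K,\sigma_K))$ for every $\frakp\in R^{(1)}$. The trace map $\Tr\colon W_1(A,\sigma)\to W_1(R,\id_R)$, which is $[g]\mapsto[\Tr_{A/R}\circ g]$ in the quadratic \'etale case and $[g]\mapsto[\Trd_{A/R}\circ g]$ in the quaternion case, is injective by Corollary~\ref{CR:exactness-for-quad-etale}, resp.\ Corollary~\ref{CR:injectivity-for-quat-algs}, and is compatible with every base change and localization. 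Applying it shows $\Tr_K(w)$ lies in $\bigcap_{\frakp\in R^{(1)}}\im(W_1(R_\frakp,\id_{R_\frakp})\to W_1(K,\id_K))$, so by condition~(2) for $(R,\id_R)$ — valid for regular semilocal domains containing a field by Panin--Pimenov \cite[Theorem~1.1]{Panin_2010_rational_isotropy_implies_isotropy} and Scully \cite[Theorem~5.1]{Scully_Artin_Springer_over_semilocal} — there is $v\in W_1(R,\id_R)$ with $v_K=\Tr_K(w)$.

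It then remains to descend, i.e.\ to lift $w$ itself to $W_1(A,\sigma)$. When $A$ is quadratic \'etale I would use the exact sequence of Corollary~\ref{CR:exactness-for-quad-etale}, which identifies $\im(\Tr\colon W_1(A,\sigma)\to W_1(R,\id_R))$ with $\ker(\lambda\rho\colon W_1(R,\id_R)\to W_1(A,\id_A))$ and is compatible with base change; since $\Tr_K(w)$ is a trace it lies in that kernel over $K$, so $\lambda\rho(v)\in W_1(A,\id_A)$ vanishes over $K$, and as $A$ is a product of regular (semi)local domains containing a field (use Lemma~\ref{LM:non-connected-S} for the disconnected case), the restriction $W_1(A,\id_A)\to W_1(A_K,\id_{A_K})$ is injective by purity for those rings (again \cite{Panin_2010_rational_isotropy_implies_isotropy}, \cite{Scully_Artin_Springer_over_semilocal}); hence $\lambda\rho(v)=0$, so $v=\Tr(w')$ for some $w'\in W_1(A,\sigma)$, and injectivity of $\Tr_K$ gives $w'_K=w$. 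When $A$ is quaternion Azumaya the argument bootstraps off the quadratic \'etale case: in the octagon of Corollary~\ref{CR:exactness-for-quaternions}, $\pi_1\colon W_1(A,\sigma)\hookrightarrow W_1(B,\tau_1)$ is injective with image $\ker(\rho_1\colon W_1(B,\tau_1)\to W_{-1}(A,\sigma))$, $B=R[\lambda]$ is quadratic \'etale with its standard involution, so the quadratic \'etale case applied to $(B,\tau_1)$ produces $y\in W_1(B,\tau_1)$ with $y_K=\pi_{1,K}(w)$; then $\rho_1(y)\in W_{-1}(A,\sigma)$ vanishes over $K$, and $W_{-1}(A,\sigma)\hookrightarrow W_{-1}(A_K,\sigma_K)$ (Gille, as above), forcing $\rho_1(y)=0$ and hence $y=\pi_1(w')$ with $w'_K=w$. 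Alternatively — and this is the route that stays closest to the rest of the paper — instead of invoking these restriction-injectivity statements for the auxiliary Witt groups one can use Corollary~\ref{CR:exactness-field-of-frac} together with Theorem~\ref{TH:finer-exactness}, whose image criteria involve only Witt-triviality of an auxiliary form plus the values of the Brauer classes $[A]$, $[B]$ and of discriminants in $\units{R}/(\units{R})^2$, resp.\ $\units{R}/\Nr_{T/R}(\units{T})$; these are insensitive to base change $R\to K$ because $R$ (hence $T$) is a regular domain, so $\Br R\hookrightarrow\Br K$ by Auslander--Goldman \cite[Theorem~7.2]{Auslander_1960_Brauer_Group}, the corresponding square-class and norm-class maps are injective, and the discriminant-Brauer-class condition can be read inside $\Br R$ via the Proposition preceding Section~\ref{sec:octagon}.

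The main obstacle is this descent step: one must pin down exactly which octagon term and which functor realize the restriction map $W_1(A,\sigma)\to W_1(A_K,\sigma_K)$, keep careful track of reduced ranks and hyperbolic summands when passing between Witt classes and honest hermitian spaces (Corollary~\ref{CR:constant-even-ranks}, Theorem~\ref{TH:trivial-in-Witt-ring}, Proposition~\ref{PR:ansio-Witt-equivalent}), and check that every numerical or cohomological invariant appearing in Theorem~\ref{TH:finer-exactness} is detected after base change to $K$. Everything else — the appeal to Gille for~(1), the trace reduction and the known case $(R,\id_R)$ for~(2), and the final assembly through Proposition~\ref{PR:purity-equiv-conds} — is routine.
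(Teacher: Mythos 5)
There is a genuine gap: you never establish condition (1) of Proposition~\ref{PR:purity-equiv-conds}, namely that an anisotropic $1$-hermitian space over $(A,\sigma)$ stays anisotropic after base change to $K$. You assert this is ``immediate by Gille'', but Gille's Theorem~7.7 --- as it is used everywhere else in the paper --- yields injectivity of $W_\veps(A,\sigma)\to W_\veps(A_K,\sigma_K)$ and the Witt-group purity statement, i.e.\ condition (2); it does not yield ``rational isotropy implies isotropy''. (Injectivity plus Witt purity do not imply anisotropy preservation: an anisotropic $f$ with $f_K$ isotropic but not hyperbolic contradicts neither.) The confusion is understandable because the paper's discussion after Proposition~\ref{PR:purity-equiv-conds} has the labels (1) and (2) interchanged, but the mathematics is unambiguous: for $(R,\id_R)$ condition (2) is the Ojanguren--Panin/Gille purity theorem, while condition (1) is the much harder Panin--Pimenov/Scully theorem, and no version of the latter is available for general Azumaya algebras with involution. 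Establishing (1) for the two classes of algebras in the statement is the actual content of the proof: one applies Theorem~\ref{TH:Jacobson-semilocal}(i) twice, transferring anisotropy of $f$ to anisotropy of $\Tr\circ f$ over $(R,\id_R)$, invoking Scully's theorem there, and transferring back over $K$. This trace argument is also precisely why the theorem is restricted to quadratic \'etale and quaternion algebras. Your proposal never invokes Theorem~\ref{TH:Jacobson-semilocal}, so the key step is missing.

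By contrast, the condition you spend all your effort on, (2), is the one the paper simply quotes from Gille, whose theorem applies to arbitrary Azumaya algebras with involution over a regular local ring containing a field. Your trace-and-descend derivation of (2) from the case $(A,\sigma)=(R,\id_R)$ is a reasonable alternative in spirit, but as written it has secondary problems: the purity statement for $(R,\id_R)$ is due to Ojanguren--Panin (Panin--Pimenov and Scully prove the isotropy statement, i.e.\ condition (1)); and the descent step needs injectivity of $W_1(A,\id_A)\to W_1(A_K,\id_{A_K})$ for the quadratic \'etale algebra $A$, which is regular semilocal but in general not local, so the cited local-case results do not apply verbatim. None of this would matter if condition (1) were in place, but it is not.
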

	
	\begin{proof}
		We need to prove conditions (1) and (2) of Proposition~\ref{PR:purity-equiv-conds}.
		We noted above that (1) holds in our situation,  see Gille 
		\cite[Theorem~7.7]{Gille_2013_coherent_herm_Witt_grps}, so it remains to prove (2).
		Suppose that $(P,f)\in \Herm[1]{A,\sigma}$ is anisotropic 
		and let $\Tr$ be as in Theorem~\ref{TH:Jacobson-semilocal}.
		By part (i) of that theorem, $(P, \Tr\circ f)$ is an anisotropic
		$1$-hermitian space over $(R,\id_R)$,
		and by   \cite[Theorem 5.1]{Scully_Artin_Springer_over_semilocal}, so
		is $(P_K,\Tr\circ f_K)$.
		Applying Theorem~\ref{TH:Jacobson-semilocal}(i) again, shows that $(P_K,f_K)$
		is anisotropic, which is what we want.
	\end{proof}
	
\subsection{The Kernel of The Restriction Map}

	In our final application we characterize 
	the kernel of the restriction
	map $W_1(R,\id_R)\to W_1(S,\id_S)$
	when $R$ is a $2$-dimensional regular domain (not necessarily semilocal)
	and $S$ is a quadratic \'etale $R$-algebra.
	When $R$ is a field, 
	this is a celebrated theorem of Pfister, see \cite[Theorem~I.5.2]{Scharlau_1985_quadratic_and_hermitian_forms},
	for instance.

	The proof makes use of Colloit-Th\'el\`ene and Sansuc's purity theorem in dimension $2$
	\cite[Corollary~6.14]{Colliot_1979_quadratic_fiberations}
	and a theorem of Pardon \cite[Theorem~5]{Pardon_1982_Gersten_conjecture} 
	asserting that $W_1(R,\id_R)\to W_1(K,\id_K)$ is injective when $R$ is regular of dimension
	$2$ with fraction field $K$.

	\begin{thm}\label{TH:kernel-of-rest-quads}
		Let $R$ be a   regular domain of dimension $\leq 2$ and   let $S$
		be a quadratic \'etale $R$-algebra with standard involution $\theta$.
		Then the sequence
		\[
		W_1(S,\theta)\xrightarrow{[g] \mapsto [\Tr_{S/R}\circ g]} W_1(R,\id_R)\xrightarrow{[f]\mapsto [f_S]}
		W_1(S,\id_S)
		\]
		is exact in the middle.
	\end{thm}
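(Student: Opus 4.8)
The plan is to reduce the statement to the semilocal case established earlier (Corollary~\ref{CR:exactness-for-quad-etale}) by a purity/localization argument. First observe that the composition $W_1(S,\theta)\to W_1(R,\id_R)\to W_1(S,\id_S)$ is zero: for $(Q,g)\in\Herm[1]{S,\theta}$, the $S$-space $(\Tr_{S/R}\circ g)_S=(Q\otimes_R S,\ (\Tr_{S/R}\circ g)\otimes_R S)$ is, after the isomorphism $S\otimes_R S\cong S\times S$ of Lemma~\ref{LM:splitting-quad-et-algs}, base-changed from a hermitian space over an exchange-involution algebra, hence hyperbolic by Example~\ref{EX:exchange-involution}. (Alternatively, this follows from the fact that \eqref{EQ:octagon} is a complex, using the concrete octagon of Corollary~\ref{CR:exactness-for-quad-etale} which specializes here.) So the real content is exactness: every $[f]\in W_1(R,\id_R)$ with $[f_S]=0$ comes from $W_1(S,\theta)$.

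So fix $(P,f)\in\Herm[1]{R,\id_R}$ with $[f_S]=0$ in $W_1(S,\id_S)$; I want $(Q,g)\in\Herm[1]{S,\theta}$ with $[\Tr_{S/R}\circ g]=[f]$. The idea is to test this at all localizations of $R$ at height-$1$ primes and then patch using purity. For each $\frakp\in R^{(1)}$, the local ring $R_\frakp$ is a discrete valuation ring, in particular regular semilocal, and $S_\frakp$ is a quadratic \'etale $R_\frakp$-algebra; since $[f_{S_\frakp}]=0$, Corollary~\ref{CR:exactness-for-quad-etale} (applied over $R_\frakp$) produces $(Q^{(\frakp)},g^{(\frakp)})\in\Herm[1]{S_\frakp,\theta_\frakp}$ with $[\Tr_{S_\frakp/R_\frakp}\circ g^{(\frakp)}]=[f_{R_\frakp}]$ in $W_1(R_\frakp,\id)$. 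Let $h^{(\frakp)}=\Tr_{S_\frakp/R_\frakp}\circ g^{(\frakp)}$. Thus $[f_{R_\frakp}]$ lies in the image of the restriction map $W_1(R_\frakp,\id)\to W_1(K,\id_K)$ of Witt classes that are ``transfers from $S$'', and in particular $[f_K]\in\im(W_1(R_\frakp,\id)\to W_1(K,\id))$ for every $\frakp\in R^{(1)}$. By purity for Witt groups in dimension $\le 2$ (Colliot-Th\'el\`ene--Sansuc, \cite[Corollary~6.14]{Colliot_1979_quadratic_fiberations}, together with Pardon's injectivity \cite[Theorem~5]{Pardon_1982_Gersten_conjecture} identifying $W_1(R,\id)$ with the intersection $\bigcap_{\frakp\in R^{(1)}}\im(W_1(R_\frakp,\id)\to W_1(K,\id))$), this is automatic; the point is rather to upgrade ``transfer from $S$ locally'' to ``transfer from $S$ globally''.

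To do that, work over $S$ rather than $R$: since $S$ is finite \'etale over the regular $2$-dimensional domain $R$, it is itself a regular domain of dimension $\le 2$ \cite[\href{https://stacks.math.columbia.edu/tag/03PC}{Tag 03PC}]{DeJong_2018_stacks_project}, with total ring of fractions $L:=S\otimes_R K$ (a product of fields, one factor if $S$ is connected). The class $[f_L]\in W_1(L,\id_L)$ is $0$ by hypothesis; I claim $[f_K]$ itself — viewed now as an element of $\ker(W_1(K,\id_K)\to W_1(L,\id_L))$ — lies in the image of $W_1(L',\theta_{L'})$ for $L'$ the relevant quadratic \'etale $K$-algebra, by Pfister's theorem over $K$ (respectively its product form when $S$ is split). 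Lift a preimage $[g_K]\in W_1(L',\theta)$; then, exactly as in the proof of Proposition~\ref{PR:purity-equiv-conds} and Proposition~\ref{PR:GS-equiv-conds}(b)$\Rightarrow$(a), one uses the local transfers $g^{(\frakp)}$ together with purity for $W_1(S,\theta)$ — again \cite[Corollary~6.14]{Colliot_1979_quadratic_fiberations}, valid for the hermitian Witt group of $(S,\theta)$ over the $2$-dimensional regular domain with $2$ invertible, since $\uU^0$ of a hermitian form over a quadratic \'etale algebra is a form of $\uO_n$ or $\uU_n$ and purity is known for these in dimension $\le 2$ — to conclude that $[g_K]$ is the generic fibre of a global class $[g]\in W_1(S,\theta)$. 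Finally $[\Tr_{S/R}\circ g]$ and $[f]$ have the same image in $W_1(K,\id_K)$ by construction, and Pardon's injectivity theorem forces $[\Tr_{S/R}\circ g]=[f]$ in $W_1(R,\id_R)$, completing the proof. The statement about quadratic \'etale coverings of regular integral $2$-dimensional schemes follows by the same argument, replacing $R$ by an affine open cover and gluing, since Witt groups form a Zariski sheaf in this setting.

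The main obstacle is the patching step: knowing that $[f_{R_\frakp}]$ is a transfer from $S_\frakp$ for each height-$1$ $\frakp$ does not immediately give a global transfer, because the local preimages $g^{(\frakp)}$ need not glue. This is precisely what forces the passage to $W_1(S,\theta)$ and the second application of purity, now over $S$; verifying that the purity theorem of Colliot-Th\'el\`ene--Sansuc applies to the hermitian Witt group $W_1(S,\theta)$ (i.e.\ identifying the relevant reductive group schemes as classical groups and invoking the dimension-$\le 2$ case) is the technical heart, and one must also be careful with the non-connected case $S\cong R\times R$ where everything is hyperbolic and the assertion is trivial (handle this separately at the outset via Lemma~\ref{LM:non-connected-S} and Example~\ref{EX:exchange-involution}).
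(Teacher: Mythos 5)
Your overall strategy coincides with the paper's: produce a generic solution $g_0$ over $S_K$ via the field case of the exact sequence, show it extends over each $S_\frakp$ for $\frakp\in R^{(1)}$, descend to $S$ by Colliot-Th\'el\`ene--Sansuc purity, and finish with Pardon's injectivity of $W_1(R,\id_R)\to W_1(K,\id_K)$. The gap is in the descent step. What \cite[Corollary~6.14]{Colliot_1979_quadratic_fiberations} provides is purity for $\HH^1_{\et}(-,\uU(h))$, i.e.\ for \emph{isomorphism classes} of hermitian forms over $(S,\theta)$ of a \emph{fixed rank}; it does not directly give ``purity for $W_1(S,\theta)$'' as a statement about Witt classes, and no such Witt-group purity is established anywhere in the paper for hermitian forms over a non-semilocal $2$-dimensional base. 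To invoke torsor purity you must exhibit, for every $\frakp\in R^{(1)}$, a form $g^{(\frakp)}$ over $(S_\frakp,\theta_\frakp)$ with $g^{(\frakp)}_K$ \emph{isometric} to a fixed representative $g_0$ of fixed rank; your local data only gives $[\Tr_{S_\frakp/R_\frakp}\circ g^{(\frakp)}]=[f_{R_\frakp}]$, hence only $[g^{(\frakp)}_K]=[g_0]$ in $W_1(S_K,\theta_K)$, with no control on the rank of $Q^{(\frakp)}$.

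The paper closes exactly this gap with the finer exactness statement (Theorem~\ref{TH:finer-exactness} via Corollary~\ref{CR:exactness-field-of-frac}): after replacing $f$ by $f'=f\oplus\Hyp[1]{U}$ so that $f'_K\cong\Tr_{S_K/K}\circ g_0$ as forms rather than merely as Witt classes, one obtains for each $\frakp$ an \emph{isometry} $\Tr_{S_\frakp/R_\frakp}\circ g^{(\frakp)}\cong f'_\frakp$, which pins down $\rank_{S_\frakp}Q^{(\frakp)}$ uniformly in $\frakp$; combined with the injectivity of the transfer over $K$ and Theorem~\ref{TH:trivial-in-Witt-ring}(iii), this yields $g^{(\frakp)}_K\cong g_0$, and only then does Colliot-Th\'el\`ene--Sansuc apply. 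One could instead try to normalize ranks by adding hyperbolic planes to each $g^{(\frakp)}$, but that still requires a bound on $\rank Q^{(\frakp)}$ uniform over the infinitely many height-one primes, which again comes from the finer exactness. Citing Proposition~\ref{PR:purity-equiv-conds} does not repair this: that proposition \emph{assumes} Witt-group purity as its hypothesis (2) and derives torsor purity, the opposite of the implication you need. (A minor side issue: the scheme-theoretic generalization is not obtained by gluing over a Zariski cover --- Witt groups are not a Zariski sheaf, as the introduction notes --- the paper's remark is that the same proof applies verbatim.)
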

	
	\begin{proof}
		When $S$ is not connected,   $S=R\times R$ (Lemma~\ref{LM:non-connected-S}) 
		and the theorem is straightforward.
		Assume that $S$ is a domain henceforth. We abbreviate $\Tr_{S/R}$ to $\Tr$
		and let $K$ denote the fraction field of $R$.
		
		The sequence is a chain complex in the middle by virtue of Proposition~\ref{PR:octagon-is-a-complex}
		and the proof of Corollary~\ref{CR:exactness-for-quad-etale}; this can also be checked directly.
		
		Let $(P,f)\in \Herm[1]{R,\id_R}$
		and assume that $[f_S]=0$ in $W_1(S,\id_S)$.
		Then $[f_{S\otimes K}]=0$ in $W_1(S_K,\id_{S_K})$.
		By virtue of Corollary~\ref{CR:exactness-for-quad-etale},
		there exists $(Q_0,g_0)\in \Herm[1]{S_K,\theta_K}$
		such that $[\Tr g_0]= [f_K]$.
		Adding a hyperbolic space to $(Q_0,g_0)$,
		we may assume that $\dim_K Q_0 > \dim_K P_K$.
		
		Write $f'_0=\Tr g_0$. Then there exists a $K$-vector space $V$
		such that $f'_0\cong f_K\oplus \Hyp[1]{V}$.
		Choose $U\in\rproj{R}$ with $U_K\cong V$
		and let $f'=f\oplus \Hyp[1]{U}$.
		Then $f'_K\cong f'_0=\Tr g_0$.
		
		Let $\frakp\in R^{(1)}$. By Corollary~\ref{CR:exactness-field-of-frac} and the proof
		of Corollary~\ref{CR:exactness-for-quad-etale},
		there exists
		$(Q^{(\frakp)},g^{(\frakp)})\in \Herm[1]{S_\frakp,\theta_\frakp}$
		such that $\Tr g^{(\frakp)}\cong f'_\frakp$.
		In particular, $\Tr g^{(\frakp)}_K\cong f'_K=\Tr g_0$.
		Since $\Tr:W_1(S_K,\theta_K)\to W_1(K,\id_{K})$
		is injective
		(Corollary~\ref{CR:exactness-for-quad-etale}),
		$[g^{(\frakp)}_K]=[g_0]$, and since $\dim_K Q^{(\frakp)}_K=\dim_K Q_0$,
		this means that $g^{(\frakp)}_K\cong g_0$.
		
		Fix some $(W,h)\in\Herm[1]{S,\theta}$ with $\rank_S W=\dim_{S_K} Q_0$.
		Recall from the proof of (b)$\iff$(c) in Proposition~\ref{PR:GS-equiv-conds},
		that $\HH^1_{\et}(R,\uU(h))$ classifies isomorphism
		classes of unimodular $1$-hermitian forms $(W',h')$ over $(S,\theta)$ with $\rank_S W=\rank_S W'$.
		Furthermore, $\uU(h)\to \Spec R$ is reductive (see~\ref{subsec:isometry-group}).
		Thus, by   Colloit-Th\'el\`ene and Sansuc's theorem on purity   in dimension $2$ 
		\cite[Corollary~6.14]{Colliot_1979_quadratic_fiberations},
		there exists $(Q,g)\in \Herm[1]{S,\theta}$
		such that $g_K\cong g_0$.
		
		Note that $[\Tr g_K]=[\Tr g_0]=[f_K]$.
		By \cite[Theorem~5]{Pardon_1982_Gersten_conjecture} or \cite[Corollary~10.2]{Balmer_2002_Gersten_Witt_complex}
		(here we need $\dim R\leq 3$),
		the map $W_1(R,\id_R)\to W_1(K,\id_K)$ is injective,
		so $[\Tr g]=[f]$.
	\end{proof}
	
	\begin{remark}
		Theorem~\ref{TH:kernel-of-rest-quads} also holds
		if $R\to S$ is replaced with a quadratic \'etale
		covering of regular integral schemes $Y\to X$; the proof is exactly the same.
		For the definition of the Witt group in this more general setting, consult
		\cite[\S1.2.1]{Balmer_2005_Witt_groups} and 
		\cite[\S1.5, \S1.6]{Gille_2009_hermitian_GW_complex_II}.
	\end{remark}

\bibliographystyle{plain}
\bibliography{MyBib_18_05}

\end{document}